\documentclass{scrartcl}

\usepackage{amsmath, amssymb, amsthm, marginnote, ltxcmds, adjustbox, stmaryrd, graphicx, bbold, extarrows}
\usepackage{esvect}

\usepackage{mathtools, stackengine, changepage, ragged2e}
\usepackage{enumitem}

\usepackage{todonotes}

\definecolor{cite}{HTML}{11871E}
\definecolor{url}{HTML}{698996}
\definecolor{link}{HTML}{912F1B}

\usepackage[pdfencoding=unicode, colorlinks=true, linkcolor=link, citecolor=cite, urlcolor=url, linktocpage, breaklinks=true]{hyperref}

\usepackage[backend=biber, style=alphabetic, maxnames=10, maxalphanames=3, minalphanames=3]{biblatex}
\DeclareFieldFormat{eprint:arxiv}{%
  \href{https://arxiv.org/abs/#1}{arXiv:#1}%
}

\addbibresource{references.bib}

\usepackage{tikz}
\usetikzlibrary{cd}
\usetikzlibrary{arrows, arrows.meta, positioning, calc}
\tikzcdset{arrow style=tikz, diagrams={>={Straight Barb[scale=0.8]}}}

\tikzstyle{arrow} = [-{Straight Barb[scale=0.8]}, line width=0.2mm]
\tikzset{
math to/.tip={Glyph[glyph math command=rightarrow]},
loop/.tip={Glyph[glyph math command=looparrowleft, swap]},
}

\usepackage{contour}
\usepackage{ulem}

\contourlength{0.5pt}

\newcommand{\myuline}[1]{%
  \uline{\phantom{#1}}%
  \llap{\contour{white}{#1}}%
}

\makeatletter
\newcommand*{\saved@myuline}{}
\let\saved@myuline\myuline

\newcommand*{\mathuline}{%
  \mathpalette{\math@myuline\saved@myuline}%
}
\newcommand*{\math@myuline}[3]{%
  \mbox{#1{$#2#3\m@th$}}%
}

\renewcommand*{\myuline}{%
  \relax  
  \ifmmode
    \expandafter\mathuline
  \else
    \expandafter\saved@myuline
  \fi
}
\makeatother

\usepackage{libertine}
\usepackage{stmaryrd}


\usepackage[capitalise]{cleveref}

\Crefname{prop}{Proposition}{Propositions}
\Crefname{lem}{Lemma}{Lemmas}
\Crefname{cor}{Corollary}{Corollaries}
\Crefname{thm}{Theorem}{Theorems}
\Crefname{alphThm}{Theorem}{Theorems}
\Crefname{defn}{Definition}{Definitions}
\Crefname{notation}{Notation}{Notations}
\Crefname{cons}{Construction}{Constructions}
\Crefname{rmk}{Remark}{Remarks}
\Crefname{obs}{Observation}{Observations}
\Crefname{warning}{Warning}{Warnings}
\Crefname{conj}{Conjecture}{Conjectures}
\Crefname{assump}{Assumption}{Assumptions}
\Crefname{recollect}{Recollection}{Recollections}
\Crefname{terminology}{Terminology}{Terminologies}
\Crefname{question}{Question}{Questions}
\Crefname{example}{Example}{Examples}
\Crefname{setting}{Setting}{Settings}

\crefformat{equation}{(#2#1#3)}
\crefformat{section}{\S#2#1#3}
\crefmultiformat{section}{\S\S#2#1#3}{ and~#2#1#3}{, #2#1#3}{, and~#2#1#3}
\newtheorem{thm}[subsubsection]{Theorem}
\newtheorem{prop}[subsubsection]{Proposition}
\newtheorem{lem}[subsubsection]{Lemma}
\newtheorem{cor}[subsubsection]{Corollary}

\newtheorem{alphThm}{Theorem}

\newcommand{\neutralize}[1]{\expandafter\let\csname c@#1\endcsname\count@}
\makeatother

\newtheorem*{thm*}{Theorem}
\newtheorem*{prop*}{Proposition}
\newtheorem*{lem*}{Lemma}
\newtheorem*{cor*}{Corollary}
  
\newtheorem{alphConj}{Conjecture}

\makeatletter

\makeatother

\newtheorem{alphCor}{Corrollary}

\makeatletter

\makeatother

\newtheorem{alphProp}{Proposition}

\makeatletter

\makeatother

\theoremstyle{definition}
\newtheorem*{defn*}{Definition}
\newtheorem{defn}[subsubsection]{Definition}
\newtheorem{cons}[subsubsection]{Construction}
\newtheorem{nota}[subsubsection]{Notation}

\newtheorem{recollect}[subsubsection]{Recollections}
\newtheorem{terminology}[subsubsection]{Terminology}

\newtheorem{example}[subsubsection]{Example}
\newtheorem{rmk}[subsubsection]{Remark}
\newtheorem{setting}[subsubsection]{Setting}
\newtheorem{obs}[subsubsection]{Observation}

\theoremstyle{remark}
\newtheorem{fact}[subsubsection]{Fact}




\setcounter{tocdepth}{2}
\makeatletter
\makeatother


\newcommand{\groupSurjection}{\theta}
\newcommand{\groupInjection}{\iota}

\newcommand{\brauerQuotientFamily}[1]{\Phi^{{#1}}}
\newcommand{\trivialFamily}{\mathcal{T}}

\newcommand{\singularPartTwiddle}[1]{_{\widetilde{{#1}}}}
\newcommand{\singularPartComplement}[1]{_{{#1}^c}}
\newcommand{\sUpperStar}[1]{^{{#1}^c}}

\newcommand{\sTwiddleLowerStar}[1]{^{\Phi\widetilde{{#1}}}}
\newcommand{\sLowerStarInclusion}[1]{^{\widetilde{#1}}}

\newcommand{\uniqueMapX}{{X}}
\newcommand{\uniqueMapY}{Y}
\newcommand{\uniqueMap}[1]{#1}

\newcommand{\GeneralProperTate}[1]{^{t_{\mathcal{P}}{#1}}}
\newcommand{\properTate}{^{t_{\mathcal{P}}A}}

\newcommand{\picardObject}{P}
\newcommand{\abelianGroups}{\mathrm{Ab}}

\newcommand{\family}{\mathcal{F}}
\newcommand{\terminalTCat}{\underline{\ast}}

\newcommand{\beckChevalley}{\mathrm{BC}}
\newcommand{\projectionformula}{\mathrm{PF}}

\newcommand{\pointProjection}{r}

\newcommand{\pointProjectiontopIndex}[1]{r^{\hspace{0.3mm}#1}}

\newcommand{\eilenbergMacLaneFp}{\mathrm{H}\mathbb{F}_p}
\newcommand{\eilenbergMacLaneCoeff}{\mathrm{H}\mathbb{Z}}
\newcommand{\thom}{\mathrm{Th}}

\newcommand{\Pic}{\mathcal{P}\mathrm{ic}}
\newcommand{\susps}{\Sigma^{\infty}}

\newcommand{\norm}{\mathrm{N}}
\newcommand{\calg}{\mathrm{CAlg}}
\newcommand{\spc}{\mathcal{S}}

\newcommand{\ind}{\mathrm{Ind}}
\DeclareMathOperator{\coind}{Coind}

\DeclareMathOperator{\cgroup}{\mathrm{CGrp}}
\DeclareMathOperator{\cmonoid}{\mathrm{CMon}}
\newcommand{\canonical}{\mathrm{can}}

\newcommand{\borel}{\mathrm{Bor}}
\DeclareMathOperator{\loops}{\Omega^{\infty}}
\newcommand{\inclusion}{\mathrm{incl}}

\newcommand{\constant}{\operatorname{const}}
\newcommand{\cat}{\mathrm{Cat}}
\newcommand{\largecat}{\widehat{\mathrm{Cat}}}

\newcommand{\presentable}{\mathrm{Pr}}

\newcommand{\A}{\mathcal{A}}

\newcommand{\sC}{{\mathcal C}}
\newcommand{\D}{{\mathcal D}}

\newcommand{\B}{\mathcal{B}}
\newcommand{\op}{^{\mathrm{op}}}

\newcommand{\sphere}{\mathbb{S}}
\newcommand{\sets}{\mathrm{Set}}

\newcommand{\E}{\mathcal{E}}

\DeclareMathOperator{\res}{\mathrm{Res}}
\DeclareMathOperator{\mackey}{\mathrm{Mack}}
\DeclareMathOperator{\presheaf}{\mathrm{PSh}}
\newcommand{\orbit}{\mathcal{O}}

\newcommand{\exact}{^{\mathrm{ex}}}
\newcommand{\spectra}{\mathrm{Sp}}

\newcommand{\inflated}{\mathrm{Infl}}
\newcommand{\stable}{\mathrm{st}}

\DeclareMathOperator{\im}{\mathrm{Im}}
\newcommand{\id}{\mathrm{id}}
\DeclareMathOperator{\map}{\mathrm{Map}}
\DeclareMathOperator{\fib}{\operatorname{fib}}
\DeclareMathOperator{\cofib}{\operatorname{cofib}}
\DeclareMathOperator{\func}{\mathrm{Fun}}

\newcommand{\module}{\mathrm{Mod}}
\newcommand{\unit}{\mathbb{1}}
\newcommand{\hearttstructure}{^{\heartsuit}}

\newcommand{\perfectCat}{\mathrm{Perf}}

\newcommand{\trivial}{\mathrm{triv}}
\newcommand{\picardSpace}{\mathcal{P}\mathrm{ic}}

\newcommand{\stmodSmallProper}{\mathrm{stmod}^{\mathcal{P}}}

\newcommand{\stmodSmall}{\mathrm{stmod}}
\newcommand{\proper}{\mathcal{P}}

\newcommand*\cocolon{%
        \nobreak
        \mskip6mu plus1mu
        \mathpunct{}%
        \nonscript
        \mkern-\thinmuskip
        {:}%
        \mskip2mu
        \relax
}

\newcommand{\internalfunc}[1]{\udl{\func}}

\newcommand{\ambi}[3]{{{#1} \cap_{#2} {#3}}}
\newcommand{\collapse}{\mathrm{clps}}

\newcommand{\udlcatC}{\udl{\category{C}}}
\newcommand{\udlcatD}{\udl{\category{D}}}
\newcommand{\udlcatE}{\udl{\category{E}}}

\newcommand{\catgrpcol}[2]{\cat_{#1,#2}}

\newcommand{\udl}[1]{\underline{{#1}}}

\newcommand{\burnside}[1]{H^0({#1};\udl{A})}

\newcommand{\hrep}{\mathcal{V}}

\def\colim{\qopname\relax m{colim}}

\newcommand{\category}[1]{\mathcal{#1}}
\DeclareMathOperator{\spancategory}{Span}
\DeclareMathOperator{\cospancategory}{Cospan}
\newcommand{\funcex}{\func^{\mathrm{ex}}}
\newcommand{\catex}{\cat^\mathrm{ex}}
\newcommand{\bbZ}{\mathbb{Z}}
\newcommand{\bbF}{\mathbb{F}}

\DeclareMathOperator{\infl}{Infl}
\DeclareMathOperator{\pr}{pr}
\DeclareMathOperator{\const}{Const}

\DeclareMathOperator{\Hom}{Hom}

\DeclareMathOperator{\stab}{Stab}

\newcommand{\obj}[1]{\udl{#1}}

\newcommand{\prst}[1]{\presentable_{#1}^{L, \stable}}
\newcommand{\prGst}[1]{\presentable_{#1}^{L, #1-\stable}}
\newcommand{\catst}[1]{\cat^{\stable}_{#1}}
\newcommand{\catGst}[1]{\cat^{#1-\stable}_{#1}}
\newcommand{\catptd}[1]{\cat^{\mathrm{ptd}}_{#1}}
\newcommand{\catGstfam}[2]{\cat^{#1-\stable}_{#1, #2}}

\DeclareMathOperator{\rcoind}{Coinfl}
\newcommand{\stilde}[1]{\widetilde{s}^*}

\DeclareMathOperator{\degree}{deg}
\newcommand{\Nm}[1]{\widetilde{\operatorname{Nm}}_{#1}}
\newcommand{\stcoind}[2]{\operatorname{Coind}_{#1}^{\sim #2}}

\newcommand{\pointedcotensor}{\wedge}
\newcommand{\pointedtensor}{\udl{\hom}_*}

\newcommand{\arrdisp}{0.33ex}
\newcommand{\arrdisplacementsp}{0.72ex}

\newcommand{\ardis}{\ar@<\arrdisp>}
\newcommand{\ardissp}{\ar@<\arrdisplacementsp>}



\title{Parametrised Poincar\'{e} duality and equivariant fixed points methods}
\author{\textsc{Kaif Hilman}\thanks{kaif@mpim-bonn.mpg.de} \and \textsc{Dominik Kirstein}\thanks{kirstein@mpim-bonn.mpg.de} \and \textsc{Christian Kremer}\thanks{kremer@mpim-bonn.mpg.de}}
\date{\today}

\begin{document}

\maketitle
\begin{abstract}
    In this article, we introduce and develop the notion of parametrised Poincar\'{e} duality in the formalism of parametrised higher category theory by Martini--Wolf, in part generalising Cnossen's theory of twisted ambidexterity to the nonpresentable setting. We prove several basechange results, allowing us to move between different coefficient categories and ambient topoi. We then specialise the general framework to yield a good theory of equivariant Poincar\'{e} duality spaces for compact Lie groups and apply our basechange results to obtain a suite of isotropy separation methods. Finally, we employ this theory to perform various categorical Smith--theoretic manoeuvres to prove, among other things, a generalisation of a theorem of Atiyah--Bott and Conner--Floyd on group actions with single fixed points.
\end{abstract}

\tableofcontents

\section{Introduction}

Poincar\'{e} duality has a distinguished history that goes back right to the birth of algebraic topology at the hands of Henri Poincar\'{e}. Broadly speaking, it says that there is often a hidden symmetry between homology and cohomology, and arguably beyond Poincar\'{e}'s wildest dreams, it is a phenomenon that is  not just endemic to algebraic topology but also pervasive in fields as far as algebraic geometry, arithmetic geometry, and even representation theory. Essentially, it tends to show up in any context in which homological algebra is present. Perhaps a reason as to why it is such a useful principle is that it may be exploited both  computationally as well as theoretically: the former because it halves the amount of homological computations to be made and the latter because, for instance, it may be used to produce ``wrong--way'' maps which opens the way to powerful transfer arguments. No less importantly in the way of theoretical significance, it would also be remiss of us not to mention that Poincar\'{e} duality constitutes one of the starting points of the surgery theory of manifolds.  In either case, it would be fair to summarise that Poincar\'{e} duality provides strong structural constraints on homological invariants which lends a rigidity not seen for a bare homotopy type.

On another front, group actions on manifolds have attracted the attention of topologists nearly since the beginning of the subject. A deep vein in this line of work is the hope of finding algebro--topological constraints on group actions in order to rule out the existence of certain group actions on manifolds. Smith theory, for example, predicts that if a $p$-group $G$ acts on an $\bbF_p$-homology sphere, then the fixed points of the action must also be an $\bbF_p$-homology sphere. In a similar spirit, the Conner--Floyd conjecture, resolved affirmatively by Atiyah--Bott, predicts in a simple version that if $p$ is an odd prime and the group $C_{p^k}$ acts smoothly on a smooth, closed, orientable, positive--dimensional manifold, then the fixed point set of the action cannot consist of a single isolated point.

Given the motivations above, it should come as no surprise that a theory of equivariant Poincar\'{e} duality is desirable in order to incorporate the strong homological constraints aforementioned in the equivariant context. Indeed, so natural  is this a question that there is a very rich corpus of contributions -- too many to mention exhaustively -- in this line of investigation coming from a wide variety of schools of thought. From our point of view, the strand of work that is most pertinent to us (and on which we build, either directly or indirectly), is the parametrised category theoretic one of Costenoble--Waner \cite{costenoble_waner_finite_group_PD,costenoble_waner_book} and May--Sigurdsson \cite{MaySigurdsson2006}, which were informed by the work of tom Dieck in \cite{tomDieck_Transformation_Gruyter}.  More specifically, our work builds heavily upon Cnossen's work on twisted ambidexterity \cite{Cnossen2023}, which is, in turn, built upon the insights of the preceding work. A more detailed account of the relationships between the present work and the ones just mentioned will be given at the end of the introduction.

The main goal of this article is to develop the theory of equivariant Poincar\'{e} duality for compact Lie groups from an $\infty$--categorical perspective. As a proof of concept, we will then apply it to a selection of concrete problems in equivariant geometric topology, some of which have been resolved through different methods before. Our categorical formalism of choice is the parametrised $\infty$--category theory of \cite{Martini2022Cocartesian,Martini2022Yoneda,MartiniWolf2022Presentable,MartiniWolf2024} and a central role will be played by equivariant stable homotopy theory as first systematically developed in \cite{lewis_may_steinberger} and later in \cite{greenlees-may_tate,MNN17}. The rest of the introduction will give an overview of our methods and highlighted results. 

\vspace{1mm}

\noindent \textbf{Notations and conventions:} We work in the setting of $\infty$--categories as developed by Joyal and Lurie without referring to any particular model such as quasicategories. 
To avoid notational clutter, we will refer to $\infty$--categories as just categories, while classical categories (for which there is a set of morphisms between objects as opposed to a space of such) will be referred to as $1$-categories.
 We fix three Grothendieck universes $U \in V \in W$ called small, large and very large.
We denote the large category of small categories by $\cat$ and the very large category of large categories by $\largecat$. The term ``category" will be reserved for small categories. Furthermore, left adjoints will always be written on top of right adjoints in our diagrams.

\subsection*{Equivariant and parametrised homotopy theory}

Let $G$ be compact Lie group. It has long been understood that in order to have good access to inductive methods in equivariant  homotopy theory, the fixed points spaces for all closed subgroups of $G$ should be recorded as part of the structure of a $G$--space. The earliest categorical articulation of this principle is the theorem of Elmendorf's which says that there is an equivalence of ($\infty$--)categories $\spc_G \simeq \func(\orbit(G)\op,\spc)$ between the category of $G$--spaces and presheaves on the orbit category of $G$. 

In the same way that the category of spectra is the universal homology theory on spaces, it has been identified in \cite{segalNice} and fully developed in \cite{lewis_may_steinberger}  that the appropriate replacement of spectra in the equivariant setting is the stable category $\spectra_G$ of \textit{genuine $G$--spectra}. In the finite group case, we may even view  $\spectra_G$ as $G$--Mackey functors valued in spectra. To each $G$--space ${X} \in \spc_G$ we may associate an ``equivariant stable homotopy type" $\Sigma^\infty_+ {X} \in \spectra_G$. From this, we may, among other things, recover the stable homotopy type of all fixed point spaces $X^H$ for subgroups $H \leq G$  via the \textit{geometric fixed points functor} $\Phi^H \colon \spectra_G \rightarrow \spectra$ to obtain $\Phi^H(\susps_+X) \simeq \susps_+(X^H)$. In fact, more generally, there is a geometric fixed points functor  associated to a \textit{family} $\family$ of closed subgroups of $G$, and it should be thought as a functor which universally kills equivariant cells $G/H$ where $H\in\family$.

It turns out to be fruitful to treat questions about $G$--spaces not only through a categorical lense, but rather work with equivariant versions of categories themselves. Two equivalent approaches have been developed, the first by Barwick--Dotto--Glasman--Nardin--Shah \cite{parametrisedIntroduction,expose1Elements,shahThesis,Nardin2017Thesis,nardinShah} and the second by Martini--Wolf \cite{Martini2022Cocartesian,Martini2022Yoneda,MartiniWolf2022Presentable,MartiniWolf2024}. Each formalism has their advantages, and for our purposes in this article, we have chosen to work mainly in the second one since it affords us the flexibility of working over an arbitrary topos: this will allow us to give uniform and streamlined proofs. In either case, the appropriate replacement for $\cat$ in the equivariant setting is the category $\cat_G \coloneqq \func(\orbit(G)\op,\cat)$ of \textit{$G$--categories}. We will write $\udl{\sC}$ for an object in $\cat_G$ and $\sC(G/H)$ for the evaluation of $\udl{\sC}$ at $G/H$. Some $G$--categories of special interest to us, viewed as presheaves on the orbit category, are
\begin{align*}
    \udl{\spc} \colon & \hspace{3mm} G/H \mapsto \spc_H & \text{the $G$--category of $G$--spaces};\\
    \myuline{\spectra} \colon & \hspace{3mm} G/H \mapsto \spectra_H & \text{the $G$--category of $G$--spectra};\\
    \udl{\Pic}(\myuline{\spectra}) \colon  & \hspace{3mm} G/H \mapsto \Pic(\spectra_H) & \text{the $G$--space of invertible $G$--spectra}.
\end{align*}
Crucially, for a large part of this article, we will rely upon a good theory of parametrised presentable categories, whereupon we may speak of, for instance, the category $\presentable^{L,G-\mathrm{st}}_G$ of $G$--stable presentable $G$--categories (in which $\myuline{\spectra}_G$ is the symmetric monoidal unit). Now for a closed normal subgroup $N\leq G$ and writing $Q\coloneqq G/N$ for the quotient group, there is a fully faithful inclusion $\mathrm{incl}\colon \widehat{\cat}_Q\hookrightarrow \widehat{\cat}_G$ of large $Q$--categories into large $G$--categories and it admits a left adjoint $(-)^N$ given by forgetting all information from the subgroups of $G$ that do not contain $N$. These two functors restrict to give functors $\mathrm{incl}\colon \presentable_{Q}^{L,Q-\mathrm{st}}\hookrightarrow \presentable_{G}^{L,G-\mathrm{st}}$ and $(-)^N\colon \presentable_{G}^{L,G-\mathrm{st}}\rightarrow\presentable_{Q}^{L,Q-\mathrm{st}}$ respectively. However, the adjunction on large categories does \textit{not} descend to an adjunction on the  presentable categories because the adjunction unit in $\widehat{\cat}_G$ is not a morphism in $\presentable_G^{L,G\mathrm{-st}}$. Nevertheless, we show the following:

\begin{alphThm}[{\cref{prop:categorical_brauer_quotients,prop:stability_for_quotient_groups}}]\label{alphThm:brauer_quotients}
    Let $G$ and $Q$ be as above. Then the inclusion  $\presentable_{Q}^{L,Q-\mathrm{st}}\hookrightarrow\presentable_G^{L,G-\mathrm{st}}$  admits a left adjoint $\brauerQuotientFamily{N}$ which is a smashing localisation.
\end{alphThm}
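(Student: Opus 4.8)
\emph{Proof sketch.}
The plan is to realise $\mathrm{incl}$ as the inclusion of the local objects of a smashing Bousfield localisation. Write $A \coloneqq \mathrm{incl}(\myuline{\spectra}_Q) \in \prGst{G}$, the inflation of the symmetric monoidal unit $\myuline{\spectra}_Q$ of $\prGst{Q}$. Since $(-)^N$ is a symmetric monoidal restriction functor, its right adjoint $\mathrm{incl}$ is lax symmetric monoidal, so $A$ acquires a canonical $\everythingAlgebra_\infty$--algebra structure and $\mathrm{incl}$ refines to a functor $\Theta\colon \prGst{Q} \to \module_A(\prGst{G})$ whose composite with the forgetful functor is $\mathrm{incl}$. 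I would then establish: (a) $A$ is an idempotent algebra; (b) $\Theta$ is an equivalence; and (c) $\mathrm{incl}$ indeed lands in $\prGst{G}$ (needed for the statement to make sense, this being \cref{prop:stability_for_quotient_groups}). Granting these, the theory of idempotent objects in a presentably symmetric monoidal category shows $\module_A(\prGst{G})$ is the full subcategory of $A$--local objects of $\prGst{G}$ and that $-\otimes_{\myuline{\spectra}_G}A$ is a smashing localisation onto it; combining with (b) then identifies $\mathrm{incl}$ with the inclusion of the $A$--local objects and exhibits its left adjoint $\brauerQuotientFamily{N}\simeq (-)^N\circ(-\otimes_{\myuline{\spectra}_G}A)$ as a smashing localisation. (The bare existence of a left adjoint is also formal via the adjoint functor theorem, $\prGst{G}$ being presentable and $\mathrm{incl}$ accessible and limit--preserving.)

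For (a) I would compute the underlying $G$--category of $A$ sectionwise: the structure map $\myuline{\spectra}_G \to A$ is, on $G/H$, the geometric fixed point (Brauer quotient) functor associated to the family $\family = \{H' \leq G \mid N \not\leq H'\}$, which by \cite{lewis_may_steinberger} exhibits $A(G/H)\simeq\module_{\widetilde{E\family}}(\spectra_H)$ as a smashing localisation of $\spectra_H$ at the idempotent $\everythingAlgebra_\infty$--ring $\widetilde{E\family}$ restricted to $H$; assembling over the orbit category gives an equivalence $A \simeq \module_{\widetilde{E\family}}(\myuline{\spectra}_G)$ in $\prGst{G}$, with $\widetilde{E\family}\in\calg(\myuline{\spectra}_G)$. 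The formula $\module_R(\myuline{\spectra}_G)\otimes_{\myuline{\spectra}_G}\module_S(\myuline{\spectra}_G) \simeq \module_{R\otimes_{\myuline{\spectra}_G}S}(\myuline{\spectra}_G)$ then reduces the idempotency $A\otimes_{\myuline{\spectra}_G}A\simeq A$ to $\widetilde{E\family}\otimes_{\myuline{\spectra}_G}\widetilde{E\family}\simeq\widetilde{E\family}$ in $\myuline{\spectra}_G$, verified sectionwise. (One may also argue directly that $\mathrm{incl}$ is strong symmetric monoidal, whence $A=\mathrm{incl}(\myuline{\spectra}_Q)$ is idempotent and $\mathrm{incl}$ lands in the $A$--local objects for free.)

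For (b) and (c): since $\mathrm{incl}$ is the right Kan extension along the fully faithful $\orbit(Q)\op\hookrightarrow\orbit(G)\op$, one has $\mathrm{incl}\,\udl{\sC}(G/H)\simeq\udl{\sC}(Q/(H/N))$ when $N\leq H$ and $\mathrm{incl}\,\udl{\sC}(G/H)\simeq 0$ otherwise, and all parametrised (co)limits appearing in $\mathrm{incl}\,\udl{\sC}$ are those of $\udl{\sC}$ or are trivial. In particular $\mathrm{incl}\,\udl{\sC}$ is fiberwise stable, and $G$--semiadditive whenever $\udl{\sC}$ is $Q$--stable: the required parametrised products and coproducts exist and agree, being either those of $\udl{\sC}$ or adjoints of the zero functor, which coincide in the stable setting; this gives (c). The same description shows $\mathrm{incl}$ is fully faithful already as a functor of \emph{presentable} $G$--categories, and combined with the idempotency of $A$ (so that $\module_A(\prGst{G})\hookrightarrow\prGst{G}$ is a full embedding) this makes $\Theta$ fully faithful. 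For essential surjectivity I would use that $\module_A(\prGst{G})$ is generated under colimits by the free $A$--modules on the corepresentable objects over the orbits $G/H$, each of which depends only on $H/N$ and is the image under $\Theta$ of the corresponding generator of $\prGst{Q}$; alternatively, granting $\mathrm{incl}$ strong monoidal, $\mathrm{incl}\,((X)^N)$ is $A$--local and the unit $X\to\mathrm{incl}\,((X)^N)$ becomes an equivalence after the localisation $-\otimes_{\myuline{\spectra}_G}A$, hence is an equivalence on every $A$--local $X$. This identifies $\prGst{Q}$ with the subcategory of $A$--local objects of $\prGst{G}$, completing the proof.

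The main obstacle is the ``parametrisation'' step underlying (a), (b) and (c): lifting the classical isotropy separation picture — the description of $A=\mathrm{incl}(\myuline{\spectra}_Q)$ as the smashing localisation $\module_{\widetilde{E\family}}(\myuline{\spectra}_G)$, and the idempotency of $\widetilde{E\family}$ — to a statement over the orbit category in the Martini--Wolf formalism, and above all controlling parametrised colimits (indexed coproducts, i.e.\ transfers and norms) under inflation. It is precisely the failure of $(-)^N$ to preserve these indexed coproducts that prevents it from being the sought-after left adjoint, so any argument must grapple with this point; in particular, the full faithfulness of $\mathrm{incl}$ as a morphism of presentable (not merely underlying) $G$--categories and the $G$--semiadditivity of inflated categories — for which one must understand geometric fixed points on norm maps — are where the real work lies, the rest being formal idempotent-object bookkeeping.
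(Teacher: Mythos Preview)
Your approach is valid in outline but takes a different and more laborious route than the paper's, and your argument for (c) has a gap in the compact Lie case.

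The paper avoids the Lurie tensor product on $\prGst{G}$ and the identification of $A=\mathrm{incl}(\myuline{\spectra}_Q)$ altogether. It first works at the level of \emph{fibrewise} stable presentable $G$--categories $\prst{G}$ and exploits that this pointed category is tensored over pointed $G$--spaces (\cref{cons:cotensoring_over_pointed_groupoids}): the left adjoint to $s_*\colon\prst{G,\family^c}\hookrightarrow\prst{G}$ (with $\family=\Gamma_N$) is simply $\udl{\widetilde{E\family}}\wedge-$. That this lands in, and is inverse on, the image of $s_*$ reduces to the trivial facts $\res^G_H\udl{\widetilde{E\family}}\simeq *$ for $H\in\family$ and $s^*\udl{\widetilde{E\family}}\simeq\udl{S}^0$; smashingness is immediate from the idempotency of $\udl{\widetilde{E\family}}$ in $\spc_{G,*}$. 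Only afterwards (\cref{prop:stability_for_quotient_groups}) does one check this restricts to equivariantly stable categories. Your route rediscovers the same idempotent one categorical level up, via $A\simeq\module_{\widetilde{E\family}}(\myuline{\spectra}_G)\simeq\udl{\widetilde{E\family}}\wedge\myuline{\spectra}_G$, but pays for this with the parametrised identification of $A$ and the essential surjectivity of $\Theta$ --- neither of which the paper needs. Note also that your second argument for essential surjectivity treats the unit $X\to\mathrm{incl}((X)^N)$ as a morphism in $\prGst{G}$, whereas (as the introduction warns) it fails to preserve $G$--colimits, so one cannot simply apply the localisation $-\otimes_{\myuline{\spectra}_G}A$ to it inside $\prGst{G}$.

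On (c): your argument that $\mathrm{incl}\,\udl{\sC}$ is $G$--stable proceeds via $G$--semiadditivity (agreement of indexed products and coproducts). That is the \emph{finite group} characterisation. For compact Lie $G$, the paper defines $G$--stability as being a $\myuline{\spectra}_G$--module, equivalently that all representation spheres act invertibly (\cref{prop:characterisation_G_stability}); \cref{lem:coinflation_stability} then verifies that $\rcoind_\theta=\mathrm{incl}$ preserves this by observing that $(-)^N$ sends the $G$--representation sphere $S^V$ to the $Q$--representation sphere $S^{V^N}$. Your semiadditivity argument does not cover the compact Lie case and would need to be replaced by this.
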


We call the functor $\brauerQuotientFamily{N}$ above  the \textit{Brauer quotient } functor, borrowing the term from classical Mackey functor theory. In fact, in the precise versions of the result, we prove it  more generally for families and we also prove this for small $G$--stable categories when the group is finite. The result above should be viewed as a categorification of the geometric fixed points functors aforementioned. We may indeed recover the usual geometric fixed points functors by considering the adjunction unit evaluated on $\myuline{\spectra}$. Using \cref{alphThm:brauer_quotients}, we may functorially construct geometric fixed points for \textit{any} $G$--stable category, a construction that will be important to us in performing isotropy separation arguments for equivariant Poincar\'{e} duality, as we shall see below.

\subsection*{Equivariant and parametrised Poincar\'{e} duality}

Poincaré duality is usually formulated as the statement that for a closed $d$-manifold $M$, there exists an infinite cyclic local coefficient system $\orbit$ on $M$ and a class $[M] \in H_d(M;\orbit)$ such that the the cap product with $[M]$ induces, for every local coefficient system $\eta$ on $M$, isomorphisms
\[ [M] \cap - \colon H^*(M;\eta) \longrightarrow H_*(M; \eta \otimes \orbit). \]
We will briefly recall a different formulation due to \cite{Klein2001} in terms of local systems of spectra (c.f. \cite[App. A]{markusPoincareSpaces} for a nice and detailed exposition of this point of view). It will let us arrive at an equivariant  version (even a parametrised one, in general) with little creativity. 

Following the notation of \cite{Cnossen2023}, let $M \colon M \rightarrow *$ be the unique map. We get adjunctions
\begin{equation*}
\begin{tikzcd}
    \spectra^M \arrow[rr, "M_!" description, bend left] \arrow[rr, "M_*"' description, bend right] 
    &  
    & \spectra \arrow[ll, "M^*"'description]
\end{tikzcd}
\end{equation*}
where $M_!$ (resp. $M_*$) associates to each local system $\xi \in \spectra^M$ its colimit (resp. limit), and the resulting spectrum should be viewed as the homology (resp. cohomology) of $M$ with coefficients in $\xi$.
For a smooth manifold $M$, the Spivak normal fibration can be used to construct a local system $D_M\in\Pic(\spectra)^M\subset \spectra^M$. The stable Pointryagin--Thom collapse map can be viewed as a map $c \colon \sphere \rightarrow M_!D_M$, which deserves the name ``fundamental class" for $M$. It is possible\footnote{This is conveniently descibed in \cite{Lurie_LTheorySurgery} or \cite{markusPoincareSpaces}, which we generalise in \cref{subsection:parametrised_spivak_data}.} to describe the cap product with the fundamental class as a morphism in $\func(\spectra^M,\spectra)$
\begin{equation}
    \label{eq:PD}
    c \cap - \colon   M_*(-) \rightarrow M_!(- \otimes D_M)
\end{equation}
and Poincar\'{e} duality may be interpreted as demanding that this transformation be an equivalence. It turns out that using general Morita theory, it is possible to construct a unique $D_X \in \spectra^X$ and $c \colon \sphere \rightarrow \uniqueMapX_!D_X$ for \textit{any compact space $X$} such that the associated map $c \cap - \colon   \uniqueMapX_*(-) \rightarrow \uniqueMapX_!(- \otimes D_X)$ is an equivalence (since it is such a crucial property, we term the property of this map being an equivalence as \textit{twisted ambidexterity}, inspired by \cite{Cnossen2023}).  The local system $D_X$ is referred to as the dualising sheaf of $X$.
We call the compact space $X$ a \textit{Poincaré space} if the dualising sheaf $D_X$ takes values in invertible spectra, i.e. in $\Pic(\spectra)$. Poincar\'{e} duality, as formulated above, shows that closed manifolds are Poincar\'{e} spaces. It bears mentioning that Wall, in his seminal paper \cite{WallPD}, introduced the notion of \textit{Poincar\'{e} complexes} and we show in \cref{example:wall_poincare_complex} that his notion coincides with Poincar\'{e} spaces as defined above (see also \cite[Prop. A.12]{markusPoincareSpaces} for a proof for finite spaces).

Now the theory of parametrised higher categories as introduced in \cite{MartiniWolf2022Presentable,MartiniWolf2024} affords us the latitude of considering the situation just presented, but working internally to an arbitrary base topos $\B$ (e.g. the topos $\spc_G$ of $G$--spaces). In this setting, one may speak of $\B$--functor categories, $\B$--adjunctions, $\B$--Kan extensions,  $\B$--(co)limits, $\B$--Morita theory, etc. For example, in the equivariant situation, we may take parametrised colimits with respect to a diagram indexed by a $G$--category (and so in particular, diagrams indexed by $G$--spaces). In light of this, we may just transpose the discussions of the previous paragraph into the parametrised setting with relative ease and make sense of the notion of parametrised Poincar\'{e} duality.

\vspace{1mm}

However, in our theory, we have chosen to strictly generalise the well--known presentation above in two main ways: (1) we do not just consider the coefficient category of spectra (or rather $\B$--spectra), but we allow for arbitrary symmetric monoidal coefficient categories (which was also done in \cite{Cnossen2023} in the twisted ambidextrous setting); (2) we do not just consider \textit{presentable} coefficient $\B$--categories, as in \cite{Cnossen2023}, but also arbitrary $\B$--categories. As we shall see in our geometric applications later, both of these extra flexibilities will play  important and conceptually  natural roles. Crucially, point (2) precludes us from having access to Morita theory, by which token being a $\B$--Poincar\'{e} space is a property. Hence, we will have to declare more structures in order to be able to speak of Poincar\'{e} duality in arbitrary coefficient categories. We axiomatise this as  follows:

\begin{defn*}[Spivak data, {\cref{defn:spivak_data}}]
    Let $\udl{X}$ be a $\B$--space and $\udl{\sC}$ a symmetric monoidal $\B$--category admitting $\udl{X}$--shaped colimits. A \textit{$\udl{\sC}$--Spivak datum for $\udl{X}$} is defined to be a pair $(\xi,c)$ where $\xi\in\udl{\func}(\udl{X},\udl{\sC})$ is called the \textit{dualising sheaf} and $c\colon \unit_{\udl{\sC}}\rightarrow \uniqueMapX_!\xi$ is a morphism in $\udl{\sC}$ called the \textit{fundamental class}.
\end{defn*}

From this datum, provided $\udl{\sC}$ satisfies a standard condition called the $\udl{X}$--projection formula (c.f. \cref{terminology:projection_formula}), we may construct from $(\xi,c)$ a transformation
\begin{equation}\label{eqn:capping_map_intro}
    \ambi{c}{\xi}{-}\colon \uniqueMapX_*(-)\longrightarrow \uniqueMapX_!(-\otimes \xi)
\end{equation}
as in \cref{eq:PD}, called the capping transformation. This is a morphism in $\udl{\func}(\udl{\sC}^{\udl{X}},\udl{\sC})$. We then say that the $\udl{\sC}$--Spivak datum $(\xi,c)$ is \textit{twisted ambidextrous} if \cref{eqn:capping_map_intro} is an equivalence, and we say that it is \textit{Poincar\'{e}} if additionally $\xi\colon \udl{X}\rightarrow \udl{\sC}$ factors through $\udl{\picardSpace}(\udl{\sC})$. As will become clear in the article, one advantage of studying such a structural axiomatisation of the situation is that it provides us with a finer control over the specific fundamental class and capping equivalences at play.
Also note that this approach is very close to traditional formulations of Poincar\'e duality and even covers general duality groups in the sense of Bieri-Eckmann \cite{Bieri_Eckmann}, which we hope helps clarify the relation of modern works such as Cnossen's \cite{Cnossen2023} with classical literature.

Having set up the primitive notions of the paper, we  focus on the equivariant setting (i.e. working over the base topos $\B=\spc_G$ of $G$--spaces) for the rest of the introduction and  point out the more general parametrised versions along the way, as appropriate. 

We now state one of the main theorems of our abstract equivariant Poincar\'{e} duality theory. For ease of statement in this introductory section, we only state it for presentable $G$--categories, where being Poincar\'{e} is a property of a $G$--space (i.e. the Spivak datum is unique, if it exists).

\begin{alphThm}[Poincar\'{e} isotropy basechange, {\cref{thm:poincare_isotropy}}]\label{alphThm:poincare_isotropy_basechange}
    Let $G$ be a compact Lie group, $N$ a  closed normal subgroup, $\udl{X}$ a $G$--space, and $\udl{\sC}$ a presentably symmetric monoidal fibrewise stable $G$--category. If $\udl{X}$ is Poincar\'{e} with coefficient $\udl{\sC}$, then the $G/N$--fixed points space $\udl{X}^N$ is Poincar\'{e}  with coefficient in the fibrewise stable Brauer quotient $G/N$--category $\brauerQuotientFamily{N}\udl{\sC}$ of \cref{alphThm:brauer_quotients}.
\end{alphThm}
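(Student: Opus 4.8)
The plan is to transport the Poincaré structure along the Brauer quotient functor. By hypothesis $\udl X$ carries a (necessarily unique, since $\udl{\sC}$ is presentable) $\udl{\sC}$--Spivak datum $(\xi,c)$ with $\xi$ factoring through $\udl{\picardSpace}(\udl{\sC})$ and with invertible capping transformation $\ambi{c}{\xi}{-}$. By \cref{alphThm:brauer_quotients}, applied in the fibrewise stable setting via \cref{prop:stability_for_quotient_groups}, the functor $\brauerQuotientFamily{N}$ is a symmetric monoidal smashing localisation, hence in particular a symmetric monoidal left adjoint; so $\brauerQuotientFamily{N}\udl{\sC}$ is again presentably symmetric monoidal and fibrewise stable (whence the $\udl X^N$--projection formula holds), and $\brauerQuotientFamily{N}$ preserves parametrised colimits, the symmetric monoidal unit, tensor products, and dualisable — hence invertible — objects. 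The first substantive step is to record the compatibility of $\brauerQuotientFamily{N}$ with the integration functors attached to $\udl X$: functor $G$--categories out of a $G$--space are parametrised colimits of the constant diagram, and $\brauerQuotientFamily{N}$ both preserves parametrised colimits and, being a smashing localisation, converts the $\spc_G$--parametrised colimit over $\udl X$ into the $\spc_{G/N}$--parametrised colimit over $\udl X^N$; this yields a symmetric monoidal identification $\brauerQuotientFamily{N}(\udl{\sC}^{\udl X})\simeq (\brauerQuotientFamily{N}\udl{\sC})^{\udl X^N}$ under which $\brauerQuotientFamily{N}(\uniqueMapX_!)\simeq (X^N)_!$ and, by uniqueness of adjoints, $\brauerQuotientFamily{N}(\uniqueMapX^*)\simeq (X^N)^*$.

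Granting this, put $\xi'\coloneqq\brauerQuotientFamily{N}\xi$ and let $c'$ be the composite $\unit\simeq\brauerQuotientFamily{N}\unit\to\brauerQuotientFamily{N}(\uniqueMapX_!\xi)\simeq (X^N)_!\xi'$; this is a $\brauerQuotientFamily{N}\udl{\sC}$--Spivak datum for $\udl X^N$ whose dualising sheaf still factors through $\udl{\picardSpace}$ because $\brauerQuotientFamily{N}$ preserves invertibility. It remains to check that $(\xi',c')$ is twisted ambidextrous, which I would do by identifying its capping transformation with $\brauerQuotientFamily{N}$ applied to that of $(\xi,c)$. The transformation \cref{eqn:capping_map_intro} is built from $\uniqueMapX_!$, $\uniqueMapX^*$, $\uniqueMapX_*$, the tensor product, the projection formula, and the pertinent unit and counit maps; everything here except $\uniqueMapX_*$ is visibly compatible with the symmetric monoidal cocontinuous functor $\brauerQuotientFamily{N}$ by the previous paragraph. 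The sole genuine point is that $\brauerQuotientFamily{N}$, being only a left adjoint, need not commute with the right adjoint $\uniqueMapX_*$ — and here the Poincaré hypothesis enters decisively: twisted ambidexterity of $(\xi,c)$ supplies an equivalence $\uniqueMapX_*(-)\simeq\uniqueMapX_!(-\otimes\xi)$, whose right--hand side \emph{is} preserved by $\brauerQuotientFamily{N}$, so $\brauerQuotientFamily{N}$ does commute with this particular $\uniqueMapX_*$. Tracing the recipe for the capping map through the identifications above then gives $\ambi{c'}{\xi'}{-}\simeq\brauerQuotientFamily{N}(\ambi{c}{\xi}{-})$, which is an equivalence since $\brauerQuotientFamily{N}$ preserves equivalences and $\ambi{c}{\xi}{-}$ is one. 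Hence $(\xi',c')$ is a Poincaré Spivak datum for $\udl X^N$ with coefficient $\brauerQuotientFamily{N}\udl{\sC}$, so $\udl X^N$ is Poincaré as claimed; this argument is most cleanly organised as an instance of the paper's general coefficient-- and topos--basechange results, applied to the morphism of coefficient systems induced by $\brauerQuotientFamily{N}$ together with $(-)^N$ on base topoi.

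The hard part is the middle step: setting up the identification $\brauerQuotientFamily{N}(\udl{\sC}^{\udl X})\simeq(\brauerQuotientFamily{N}\udl{\sC})^{\udl X^N}$ together with its compatibility with $\uniqueMapX_!$ and $\uniqueMapX^*$, i.e.\ verifying that $\brauerQuotientFamily{N}$ genuinely intertwines the $\udl X$--parametrised formalism over $\spc_G$ with the $\udl X^N$--parametrised formalism over $\spc_{G/N}$. This demands understanding the interaction of $\brauerQuotientFamily{N}$ with the passage $\spc_G\rightsquigarrow\spc_{G/N}$, where the smashing localisation statement of \cref{alphThm:brauer_quotients} does the real work. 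The treatment of $\uniqueMapX_*$ is the one place where the whole argument could collapse for purely formal reasons, and the crux of the proof is that twisted ambidexterity is exactly the hypothesis that prevents this.
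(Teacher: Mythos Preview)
Your overall strategy aligns with the paper's: both deduce the result by combining a coefficient basechange (along the unit $\udl{\sC}\to\udl{\sC}\sTwiddleLowerStar{\family}$ of the Brauer--quotient localisation) with a change of base topos (from $\spc_G$ to $\spc_{G,\family^c}\simeq\spc_{G/N}$). The paper's proof is literally two lines: apply \cref{thm:base_change_of_tw_amb_spivak_data} to the symmetric monoidal $G$--colimit--preserving unit map $\udl{\sC}\to\udl{\sC}\sTwiddleLowerStar{\family}$ to see that $\udl X$ is $\udl{\sC}\sTwiddleLowerStar{\family}$--Poincar\'e, and then apply \cref{lem:poincare_duality_adjunction} (a special case of \cref{thm:omnibus_geometric_pushforward_of_capping}) to pass to $\udl X\sUpperStar{\family}=\udl X^N$. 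The identification you call the ``hard middle step'' is thus absorbed entirely into \cref{lem:pushforwar_fun_manoeuvre,lem:parametrised_colimits_base_change} for the topos change, together with the Morita theory of \cref{thm:classification_of_C_linear_functors} used inside \cref{thm:base_change_of_tw_amb_spivak_data}.

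There is one genuine circularity in your write--up as it stands. You argue that $\brauerQuotientFamily{N}$ ``commutes with this particular $\uniqueMapX_*$'' because twisted ambidexterity gives $\uniqueMapX_*\simeq\uniqueMapX_!(-\otimes\xi)$ and the right--hand side is preserved. But this only yields $\brauerQuotientFamily{N}\uniqueMapX_*\simeq (\uniqueMap{X^N})_!(-\otimes\xi')$, \emph{not} $\brauerQuotientFamily{N}\uniqueMapX_*\simeq (\uniqueMap{X^N})_*$; the latter identification is exactly the twisted ambidexterity of $(\xi',c')$ you are trying to prove. In particular you cannot directly conclude $\ambi{c'}{\xi'}{-}\simeq\brauerQuotientFamily{N}(\ambi{c}{\xi}{-})$ from this, because the source of $\ambi{c'}{\xi'}{-}$ is $(\uniqueMap{X^N})_*$, not $\brauerQuotientFamily{N}\uniqueMapX_*$. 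The fix is precisely what the paper's \cref{thm:base_change_of_tw_amb_spivak_data} does: rather than tracing the capping map, use that twisted ambidexterity makes $\uniqueMapX^*\dashv\uniqueMapX_!(-\otimes\xi)$ an \emph{internal} adjunction in $\module_{\udl{\sC}}(\presentable^L_G)$ (\cref{prop:twisted_ambidextrous_presentable_case}(2)), which basechange along $\udl{\sC}\to\brauerQuotientFamily{N}\udl{\sC}$ preserves, and then invoke uniqueness of right adjoints to identify $(\uniqueMap{X^N})_!(-\otimes\xi')$ with $(\uniqueMap{X^N})_*$. Once you know the new datum is twisted ambidextrous, the identification of the Beck--Chevalley map $\beckChevalley_*$ with an equivalence (as in \cref{cor:transferring_colimit_BC_to_limit_BC_by_ambidexterity}) follows a posteriori; your argument had these two steps in the wrong order.
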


In the full statement, the result above works in the generality of a fixed family of closed subgroups and we also provide a version of the theorem  for small categories with a weaker conclusion, but which is nevertheless strong enough for our applications in \cref{subsection:pulling_back_fixed_points}. Furthermore, \cref{alphThm:poincare_isotropy_basechange} will be the key tool for our categorified Smith--theoretic proof of \cref{alphThm:atiya_bott}. 

It should also be mentioned that the theorem above is an immediate consequence of a much more general set of basechange results for arbitrary base topoi (c.f. \cref{thm:base_change_of_tw_amb_spivak_data,thm:small_base_change,thm:omnibus_geometric_pushforward_of_capping}). These general results constitute the main theorems in our theory of parametrised Poincar\'{e} duality. The operating philosophy of these results, and thus of the paper by extension, is that many important inductive manoeuvres on Poincar\'{e} duality may be casted as instances of basechanging the coefficient categories and basechanging the ambient topoi.

The most important coefficient category for us will be that of genuine $G$--spectra $\myuline{\spectra}$, and we say that a $G$--space is $G$--Poincar\'{e} if it is Poincar\'{e} with respect to $\myuline{\spectra}$. As a straightforward consequence of \cref{alphThm:poincare_isotropy_basechange}, we obtain the following result which says that being Poincar\'{e} is compatible with taking fixed points. It should be viewed as a spectral enhancement of the homological statement \cite[Prop. 2.4]{costenoble2017equivariant} of Costenoble--Waner.

\begin{alphThm}[{\cref{thm:fixed_points_poincare_duality}}]\label{alphThm:fixed_points_PD}
    Let $G$ be a compact Lie group, $H\leq G$ a closed subgroup, and $N\leq G$ a closed normal subgroup. If $\udl{X}$ is a $G$--Poincar\'{e} space, then $X^H$ is a Poincar\'{e} space and $\udl{X}^N$ is a $G/N$--Poincar\'{e} space. 
\end{alphThm}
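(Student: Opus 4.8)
The plan is to deduce both assertions from \cref{alphThm:poincare_isotropy_basechange}, the first being a special case of the second applied to a subgroup. I would begin with the claim about $\udl X^N$. Take the coefficient $G$--category to be $\udl\sC=\myuline\spectra$, which is presentably symmetric monoidal and $G$--stable, hence in particular fibrewise stable, so that ``$G$--Poincaré'' means exactly ``Poincaré with coefficient $\myuline\spectra$''. Then \cref{alphThm:poincare_isotropy_basechange} applied to $N$ gives that the $G/N$--space $\udl X^N$ is Poincaré with coefficient $\brauerQuotientFamily N\myuline\spectra$. It remains to identify $\brauerQuotientFamily N\myuline\spectra\simeq\myuline\spectra_{G/N}$. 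This is forced by \cref{alphThm:brauer_quotients}: since $\brauerQuotientFamily N\colon\presentable^{L,G-\mathrm{st}}_{G}\to\presentable^{L,G/N-\mathrm{st}}_{G/N}$ is a smashing localisation it is in particular symmetric monoidal, hence carries the monoidal unit $\myuline\spectra_G$ of the source to the monoidal unit $\myuline\spectra_{G/N}$ of the target — this being precisely the categorified form of the classical computation that the geometric $N$--fixed points of genuine $G$--spectra are genuine $G/N$--spectra. Plugging this in, $\udl X^N$ is Poincaré with coefficient $\myuline\spectra_{G/N}$, i.e.\ $G/N$--Poincaré.

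For the statement about $X^H$ I would pass through the subgroup topos $\spc_H\simeq(\spc_G)_{/(G/H)}$. The restriction functor $\res^G_H\colon\spc_G\to\spc_H$ is the inverse image of an \'etale geometric morphism of topoi, it intertwines the coefficient $\myuline\spectra_G$ with $\myuline\spectra_H$ by construction of $\myuline\spectra$ as a presheaf on the orbit category, and it preserves compact objects. The general topos--basechange results (\cref{thm:base_change_of_tw_amb_spivak_data,thm:small_base_change,thm:omnibus_geometric_pushforward_of_capping}) therefore apply and show that $\res^G_H\udl X$ is an $H$--Poincaré space. Now I would invoke the case just established, applied to the group $H$ with its normal subgroup $N=H$, so that the quotient group is trivial: $(\res^G_H\udl X)^H$ is Poincaré with coefficient the genuine $\{e\}$--spectra, i.e.\ with coefficient $\spectra$, hence an ordinary Poincaré space. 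Finally $(\res^G_H\udl X)^H\simeq X^H$ is the genuine $H$--fixed point space, which completes the proof.

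The deduction is essentially bookkeeping across already-established machinery; the two inputs I would want pinned down in advance are the genuinely equivariant ones. First, that $\brauerQuotientFamily N$ carries $\myuline\spectra_G$ to $\myuline\spectra_{G/N}$ on the nose — this is where ``categorified geometric fixed points of spectra is spectra of the quotient'' enters, and although it is formally forced by symmetric monoidality of the smashing localisation, it is the only step that uses what $\myuline\spectra$ actually is rather than abstract nonsense about $\presentable^{L,G-\mathrm{st}}_{G}$. Second, that restriction along a subgroup inclusion is genuinely an instance of the paper's topos--basechange with the coefficient $\myuline\spectra$ and compactness all preserved; here one should also check that $\udl X^N$ remains compact when $\udl X$ is, which is presumably folded into \cref{alphThm:poincare_isotropy_basechange} but is worth recording. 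I expect the first of these to be the real obstacle — once it is in place, everything downstream is formal.
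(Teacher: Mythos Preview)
Your proposal is correct and follows essentially the same route as the paper: apply \cref{alphThm:poincare_isotropy_basechange} with $\udl{\sC}=\myuline{\spectra}$, identify $\brauerQuotientFamily{N}\myuline{\spectra}_G\simeq\myuline{\spectra}_{G/N}$ via symmetric monoidality of the smashing localisation (this is the paper's \cref{lem:geometric_fixed_point_spectrum_weil_group}), and handle arbitrary $H$ by first restricting. The paper restricts to the normaliser $N_GH$ rather than to $H$ itself, thereby obtaining the stronger conclusion that $\udl{X}^H$ is $W_GH$--Poincar\'e, but for \cref{alphThm:fixed_points_PD} as stated your direct route suffices. Your closing concern about compactness is unnecessary: being $G$--Poincar\'e already entails twisted ambidexterity, so no compactness hypothesis enters and there is nothing to preserve under $(-)^N$.
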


In fact, we also provide a conditional converse to the result above in \cref{thm:PD_fixed_point_recognition} where we give a recognition principle for equivariant Poincar\'{e} duality in terms of nonequivariant Poincar\'{e} duality by way of the geometric fixed points functors. We warn the reader that the converse - that a compact $G$-space $\udl{X}$ is $G$-Poincar\'e if $X^H$ is Poincar\'e for each closed subgroup $H \leq G$ - is \textit{not} true, and we will comment on this below.

\vspace{1mm}

Next, to justify the theory of equivariant Poincar\'{e} spaces, we first give a large collection of examples of such as encapsulated by the following (the smooth manifolds case is certainly not new and has been proven in various forms for example in \cite{MaySigurdsson2006,costenoble_waner_book,costenoble2017equivariant,horevKlangZou}):
\begin{alphThm}[{\cref{thm:closed_smooth_G_manifolds_are_poincare,thm:generalised_homotopy_representations}}] \label{thm:examples_G_pd}
    Let $G$ be a compact Lie group. Then smooth closed $G$--manifolds and tom Dieck's generalised homotopy representations are  $G$--Poincar\'{e}  spaces.
\end{alphThm}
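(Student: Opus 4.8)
The two families of examples are of rather different natures, and I would treat them by different routes; in both cases the goal is to produce an $\myuline{\spectra}$--Spivak datum in the sense of \cref{defn:spivak_data} on the relevant $G$--space and show it is Poincar\'{e}. Note first that $\myuline{\spectra}$ (and, in the manifold case, it is all we need) is presentably symmetric monoidal and fibrewise stable, hence satisfies the projection formula of \cref{terminology:projection_formula} needed to form capping transformations, and that $\udl{\Pic}(\myuline{\spectra})$ is available fibrewise since $\sphere^W$ is an invertible object of $\spectra_H$ for every $H$--representation $W$.

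\emph{Smooth closed $G$--manifolds.} Let $M$ be a smooth closed $G$--manifold and $\udl{M}$ the $G$--space it represents. Since $M$ admits a finite $G$--CW structure it is a compact $G$--space, so the machinery of \cref{subsection:parametrised_spivak_data} applies. The plan is to write the equivariant Spivak datum down by hand and to identify its twisted ambidexterity with a parametrised form of equivariant Atiyah duality. Using the Mostow--Palais embedding theorem, fix a $G$--embedding $M\hookrightarrow V$ into a real $G$--representation with equivariant normal bundle $\nu$ and a $G$--tubular neighbourhood. The tangent bundle is a $G$--vector bundle on $M$, and fibrewise one--point compactification of the virtual bundle $-TM$ assembles into a local system $D_{\udl{M}}\in\udl{\func}(\udl{M},\myuline{\spectra})$ that factors through $\udl{\Pic}(\myuline{\spectra})$, hence is fibrewise invertible. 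The equivariant Pontryagin--Thom collapse $\sphere^V\to\thom(\nu)$, desuspended by $\sphere^{-V}$ and combined with the bundle identification $\nu\oplus TM\cong\udl{V}$ (the trivial bundle with fibre $V$), furnishes the fundamental class $c\colon\unit\to M_!D_{\udl{M}}$. It then remains to check that the capping transformation $\ambi{c}{D_{\udl{M}}}{-}\colon M_*(-)\to M_!(-\otimes D_{\udl{M}})$ is an equivalence, which I would deduce from the parametrised enhancement of equivariant Atiyah duality -- that $\susps_+M$ is dualisable in $\spectra_G$ with dual the Thom spectrum of $-TM$, realised coherently over $\udl{M}$; via the fibrewise criterion for dualisability this reduces to the classical nonequivariant statement applied in each fibre of $\myuline{\spectra}$. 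Since $D_{\udl{M}}$ is fibrewise invertible, this exhibits $\udl{M}$ as $G$--Poincar\'{e}. (Alternatively one can induct over a $G$--handle decomposition of $M$ using the descent and basechange behaviour of twisted ambidexterity, but the Atiyah--duality route is the most economical.) I expect the real work here to be technical rather than conceptual: upgrading the collapse map, the identification of $M_!D_{\udl{M}}$, and the Atiyah duality datum to \emph{parametrised} structures internal to $\spc_G$, all compatibly.

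\emph{Generalised homotopy representations.} Here there is no normal bundle to take, and I would instead apply the fixed--point recognition principle \cref{thm:PD_fixed_point_recognition}, the conditional converse to \cref{alphThm:fixed_points_PD}. Let $\udl{X}$ be a generalised homotopy representation of $G$, so that $\udl{X}$ is a compact $G$--space and, for each closed $H\leq G$, the fixed point space $X^H$ is a (homology, hence Poincar\'{e}) sphere $S^{n(H)}$ with constant dualising sheaf $\sphere^{-n(H)}$, $X^H$ being orientable. To run \cref{thm:PD_fixed_point_recognition} I would (i) produce a candidate $\myuline{\spectra}$--Spivak datum $(\xi,c)$ on $\udl{X}$ and (ii) check the hypothesis it imposes, namely that each geometric fixed point -- i.e.\ each $X^H\simeq S^{n(H)}$ -- is a nonequivariant Poincar\'{e} space, which holds. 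The content is entirely in (i): one must build $\xi\in\udl{\func}(\udl{X},\myuline{\spectra})$ as an invertible local system whose fixed--point dimensions are prescribed by $n(\cdot)$, directly from the cellular structure (using that for $K\leq H$ the transition $X^H\to X^K$ is homotopic to a sub--sphere inclusion $S^{n(H)}\hookrightarrow S^{n(K)}$) together with tom Dieck's structure theory of dimension functions, which is exactly what guarantees that the pointwise invertible spectra $\sphere^{-n(H)}$ cohere over $\udl{X}$ into a single object of $\udl{\Pic}(\myuline{\spectra})$ rather than a merely compatible family on fixed points. This coherence is precisely the datum whose absence, for an abstract system of fixed--point Poincar\'{e} structures, obstructs the naive converse; for a homotopy representation it is supplied by $\udl{X}$ itself. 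Granting $(\xi,c)$, \cref{thm:PD_fixed_point_recognition} then outputs that $\udl{X}$ is $G$--Poincar\'{e}. I expect step (i) -- matching tom Dieck's admissibility/linearity conditions on $n(\cdot)$ with the input required by the recognition principle -- to be the main obstacle.
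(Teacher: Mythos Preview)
Your overall strategy for homotopy representations is right---use the recognition principle \cref{thm:PD_fixed_point_recognition}---but you have misjudged where the difficulty lies, and for manifolds you have taken a more roundabout route than necessary.

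\textbf{Manifolds.} The paper uses the \emph{same} recognition principle here, not Atiyah duality. After constructing the Pontryagin--Thom Spivak datum $(J(\nu)\otimes\sphere^{-V},c)$ from an embedding $M\hookrightarrow V$ exactly as you describe, one simply observes that applying $\Phi^H$ yields the classical Pontryagin--Thom Spivak datum for the smooth submanifold $M^H\subset V^H$, which is Poincar\'{e} by the nonequivariant theory. Since $J(\nu)\otimes\sphere^{-V}$ is manifestly invertible, \cref{thm:PD_fixed_point_recognition} finishes. Your proposed route via ``parametrised Atiyah duality'' is underspecified: dualisability of $\Sigma^\infty_+ M$ in $\spectra_G$ is not the same statement as twisted ambidexterity of $\udl{M}$, and the phrase ``fibrewise criterion for dualisability'' does not name an argument that bridges them. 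What you call the ``real work'' of upgrading everything to parametrised structures is exactly what the recognition principle is designed to avoid.

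\textbf{Homotopy representations.} You anticipate that constructing the Spivak datum is the obstacle and requires tom Dieck's dimension--function theory to glue the $\sphere^{-n(H)}$ together. In fact the dualising sheaf is \emph{constant}: set $E\coloneqq\fib(\Sigma^\infty_+\udl{\hrep}\to\sphere_G)\in\spectra_G$, which is compact with $\Phi^H E\simeq\sphere^{n(H)}$ for every closed $H\leq G$ and hence invertible. The Spivak datum is $(\hrep^*E^\vee,\,c)$ with $c$ the composite
\[
\sphere_G\xrightarrow{\ \simeq\ } E\otimes E^\vee\longrightarrow\Sigma^\infty_+\udl{\hrep}\otimes E^\vee\simeq\hrep_!\hrep^*E^\vee.
\]
Under $\Phi^H$ this becomes the standard Spivak datum for the sphere $S^{n(H)}$ (constant dualising sheaf $\sphere^{-n(H)}$), so \cref{thm:PD_fixed_point_recognition} applies immediately. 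The coherence you worry about is packaged automatically by the single genuine $G$--spectrum $E$; no admissibility conditions on $n(\cdot)$ enter at all. Generalised homotopy representations are in fact \emph{homotopically parallelisable}.
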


Here, by tom Dieck's generalised homotopy representations,  we mean a compact $G$--space $\udl{\hrep}$ such that all fixed points $\hrep^H$ have the homotopy type of a sphere of some dimension. They are a class of $G$--spaces strictly distinct from smooth $G$--manifolds. For example, Bredon \cite{bredonTrans} gave an example of a generalised $C_2$--homotopy representation $\udl{\hrep}$ such that $\hrep^{C_2}$ and $\hrep^e$ are spheres of the same dimension, although the map $\hrep^{C_2} \rightarrow \hrep^e$ is not an equivalence. Of course, this cannot arise as the underlying $C_2$--space of a smooth closed $C_2$--manifold. 

With plenty of naturally interesting examples in hand, we then provide a suite of construction principles to construct new examples of equivariant Poincar\'{e} spaces from old ones in \cref{subsection:construction_principles}. Among other things, we show that equivariant Poincar\'{e} duality is preserved under various standard equivariant operations such as inflations, restrictions, inductions, coinductions, and Borelifications. We also make contact with the nilpotence theory of Mathew--Naumann--Noel \cite{MNN17,MNNDerivedInduction} and show that Poincar\'{e} duality interacts nicely with nilpotence with respect to families. Furthermore, we also show the following equivariant generalisation of Klein's well--known result \cite[Cor. F]{Klein2001}.
\begin{alphThm}[{\cref{thm:poincare_integration}}]\label{alphThm:poincare_integration}
    Let $\udl{\sC}$ be a presentably symmetric monoidal $G$--category and $f\colon \udl{X}\rightarrow \udl{Y}$  a map of $G$--spaces. If $\udl{Y}$ is $\udl{\sC}$--Poincar\'{e} and for every closed subgroup $H\leq G$, the fibres of $f$ over every $H$--point of $\udl{Y}$ is $\res^G_H\udl{\sC}$--Poincar\'{e}, then $\udl{X}$ is $\udl{\sC}$--Poincar\'{e} too. 
\end{alphThm}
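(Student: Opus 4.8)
The plan is to recast both hypotheses as instances of parametrised Poincar\'{e} duality over (possibly) different base topoi and then to paste the two resulting Poincar\'{e} structures together, following the blueprint of Klein's argument for \cite[Cor.~F]{Klein2001}. Write the terminal maps as $\udl{X}\xrightarrow{\ f\ }\udl{Y}\xrightarrow{\ \uniqueMapY\ }\ast$, so that $\uniqueMapX=\uniqueMapY\circ f$, $\uniqueMapX_!=\uniqueMapY_!f_!$ and $\uniqueMapX_*=\uniqueMapY_*f_*$. Throughout, presentability of $\udl{\sC}$ will be used to guarantee that all of these pushforwards exist, that the projection formulae of \cref{terminology:projection_formula} hold, and that being Poincar\'{e} (or twisted ambidextrous) is a \emph{property} --- in particular that the Spivak datum witnessing it is unique whenever it exists.

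\emph{Step 1 (relativise over $\udl{Y}$).} First I would view $f\colon\udl{X}\to\udl{Y}$ as a space in the slice topos $\B_{/\udl{Y}}$ of $G$--spaces over $\udl{Y}$ --- whose terminal map is $f$ itself --- equipped with the coefficient $\B_{/\udl{Y}}$--category induced by $\udl{\sC}$. The generating points of $\B_{/\udl{Y}}$ are exactly the $H$--points of $\udl{Y}$ (maps $G/H\to\udl{Y}$ for $H\leq G$ closed), and the base change of the $\B_{/\udl{Y}}$--space $f$ along such a point $y$ is the fibre of $f$ over $y$, an $H$--space which by hypothesis is $\res^G_H\udl{\sC}$--Poincar\'{e}. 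Invoking the locality half of the basechange package \cref{thm:base_change_of_tw_amb_spivak_data} (and \cref{thm:small_base_change} for its small variant), the pullback of the putative capping transformation \cref{eqn:capping_map_intro} of $f$ along $y$ is, by Beck--Chevalley for $(-)_!$ along the cartesian fibre square, the capping transformation of the fibre; since a morphism of $\B_{/\udl{Y}}$--functors is an equivalence precisely when it is so on all generating points, $f$ is twisted ambidextrous over $\udl{Y}$. Uniqueness of Spivak data then yields a local system $D_f\in\udl{\func}(\udl{X},\udl{\sC})$ and a relative fundamental class $c_f\colon\uniqueMapY^{*}\unit_{\udl{\sC}}\to f_!D_f$ for $f$ over $\udl{Y}$ whose capping transformation $\ambi{c_f}{D_f}{-}\colon f_*(-)\to f_!(-\otimes D_f)$ is an equivalence; and because invertibility in $\udl{\func}(\udl{X},\udl{\sC})$ is detected on the points of $\udl{X}$ --- each lying over a fibre of $f$ --- the sheaf $D_f$ factors through $\udl{\picardSpace}(\udl{\sC})$.

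\emph{Step 2 (paste with the Poincar\'{e} structure of $\udl{Y}$).} Let $(D_{\udl{Y}},c_{\udl{Y}})$ be the $\udl{\sC}$--Spivak datum of $\udl{Y}$, so $D_{\udl{Y}}$ is $\udl{\picardSpace}(\udl{\sC})$--valued and $\ambi{c_{\udl{Y}}}{D_{\udl{Y}}}{-}\colon\uniqueMapY_*(-)\to\uniqueMapY_!(-\otimes D_{\udl{Y}})$ is an equivalence. I would set $D_{\udl{X}}\coloneqq D_f\otimes f^{*}D_{\udl{Y}}\in\udl{\func}(\udl{X},\udl{\sC})$, which is again $\udl{\picardSpace}(\udl{\sC})$--valued since $f^{*}$ is symmetric monoidal and hence preserves invertible objects. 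Using the projection formula $f_!\big((-\otimes D_f)\otimes f^{*}D_{\udl{Y}}\big)\simeq f_!(-\otimes D_f)\otimes D_{\udl{Y}}$, the capping transformation of $\uniqueMapX$ with respect to $(D_{\udl{X}},c_{\udl{X}})$ is to be identified with the pasting
\begin{equation*}
\begin{aligned}
\uniqueMapY_*f_*(-)\ \xrightarrow{\ \uniqueMapY_*(\ambi{c_f}{D_f}{-})\ }\ &\uniqueMapY_*f_!(-\otimes D_f)\\
\xrightarrow{\ \ambi{c_{\udl{Y}}}{D_{\udl{Y}}}{f_!(-\otimes D_f)}\ }\ &\uniqueMapY_!\big(f_!(-\otimes D_f)\otimes D_{\udl{Y}}\big)\ \simeq\ \uniqueMapX_!\big(-\otimes D_{\udl{X}}\big),
\end{aligned}
\end{equation*}
with $c_{\udl{X}}$ the composite fundamental class assembled from $c_f$ and $c_{\udl{Y}}$. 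The first arrow is an equivalence by Step 1 (and exactness of $\uniqueMapY_*$) and the second by Poincar\'{e} duality for $\udl{Y}$, so the composite --- hence the capping transformation of $\udl{X}$ --- is an equivalence; as $D_{\udl{X}}$ is $\udl{\picardSpace}(\udl{\sC})$--valued, this exhibits $\udl{X}$ as $\udl{\sC}$--Poincar\'{e}.

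\emph{Main obstacle.} The step I expect to require the real work is the identification in Step 2 of the capping transformation of the composite $\uniqueMapY\circ f$ with the pasting above, together with the matching of fundamental classes: one must check that the class for $\udl{X}$ produced by Morita theory is $\uniqueMapY_!$ applied to the evident composite built from $c_f$, $c_{\udl{Y}}$ and the unit/counit data of the adjunctions of $\uniqueMapY$. This is precisely the content of the omnibus pushforward--of--capping statement \cref{thm:omnibus_geometric_pushforward_of_capping}, applied to the geometric morphism $\B_{/\udl{Y}}\to\B$; in the presentable setting the identification is ultimately forced by uniqueness of Spivak data once one exhibits \emph{any} twisted ambidextrous datum with underlying sheaf $D_{\udl{X}}$, so the genuine task is the diagram chase that assembles that datum. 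The remaining ingredients --- Beck--Chevalley for $(-)_!$, fibrewise detection of equivalences and of invertibility, and compatibility of $\udl{\picardSpace}(-)$ with parametrised functor categories and with slice projections --- are routine within the Martini--Wolf formalism.
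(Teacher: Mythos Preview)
Your two-step strategy---show that $f$ is a $\udl{\sC}$--Poincar\'{e} duality map by checking on fibres, then compose with the Poincar\'{e} structure of $\udl{Y}$---is exactly the paper's proof, and your identification $D_{\udl{X}}\simeq D_f\otimes f^*D_{\udl{Y}}$ is the one that falls out. The references you invoke, however, are for different flavours of basechange: Step~1 is handled not by the coefficient--change results \cref{thm:base_change_of_tw_amb_spivak_data,thm:small_base_change} but by \cref{prop:Poincare_duality_descent} (descent for Poincar\'{e} maps along the effective epimorphism $\coprod_{H,y}\myuline{G/H}\to\udl{Y}$, combined with \cref{lem:pd_map_infinite_coproducts}); and the ``Main obstacle'' you flag---identifying the capping map of the composite with the pasting of the two constituent capping maps---is precisely \cref{lem:capping_for_compositions} feeding into \cref{prop:Poincare_duality_composition}, not the topos--basechange omnibus \cref{thm:omnibus_geometric_pushforward_of_capping}. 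With those substitutions the argument goes through verbatim.
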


Having set up a robust and nonempty abstract theory, we now ask ourselves: what does it all mean and what is it useful for?

\subsection*{Phenomena and applications}

It turns out that equivariant Poincar\'{e} duality for a $G$--space $\udl{X}$ offers quite a lot ``hidden'' homotopical information about $\udl{X}$ that is not obvious  from merely having all its fixed points satisfying Poincar\'{e} duality. To put it in a slogan, this is essentially because there is a global fundamental class which ties together the local fundamental classes of the various fixed points spaces in nontrivial ways. This is certainly not a new observation and is one that has been appreciated by many of the forerunners to this story. For the remainder of this introduction, we highlight three applications of a geometric flavour of our theory which exploit this principle in one form or another and which illustrate the rigidity of Poincar\'{e} spaces hinted at before.

\vspace{1mm}

Let  $p$ be a prime and $G$ a finite group. Now for a $G$--space $\underline{X}$, we may view the cohomology group $H^*(X^G;\mathbb{F}_p)$ as a count of the fixed points of $\underline{X}$ mod $p$. As aptly interpreted  by Browder in \cite{browder_fixed_points}, if a map of $G$--spaces $f\colon \underline{X}\rightarrow \underline{Y}$ induces an injection on $H^*(-;\mathbb{F}_p)$, then we may ``pull back'' the mod $p$ fixed points of $\udl{Y}$ to those of $\udl{X}$ and, among many things, he studied situations in which one can upgrade this cohomological statement to an actual surjection on the fixed points as topological spaces. Cohomological injection results of this type were first proved by Bredon as \cite[Thm. 5.1]{bredon_poincare_duality} for the group $G=C_p$ purely homotopy--theoretically and later on generalised by Browder as \cite[Thm. 1.1]{Browder_Poincare} to arbitrary finite abelian $p$--groups under stronger manifold assumptions. This question has also been studied for instance in \cite{ewingStong,hanke_puppe_pulling_back_fixed_points}. In this line, we employ our categorical technology in the generality of Poincar\'{e} duality for small, non--presentable coefficient categories to prove the following version of the aforementioned results: 

\begin{alphThm}[{\cref{thm:bredonBrowderInjection}}]\label{alphThm:bredon_browder_injection}
    Let $A$ be an elementary abelian $p$--group. Let $f\colon \underline{X}\rightarrow \underline{Y}$ be a map of compact $A$--spaces. Suppose $X^e, Y^e$ are $\eilenbergMacLaneFp$--Poincar\'{e} spaces such that $f^e\colon X^e\rightarrow Y^e$ is of degree one.  Then for any $\eilenbergMacLaneFp$--local system  $\zeta\in\func(Y^A,\perfectCat_{\eilenbergMacLaneFp})$ for the fixed point space $Y^A$, the map $f^A$ induces an injection $H^*(Y^{A};\zeta)\rightarrow H^*(X^{A};f^*\zeta)$.
\end{alphThm}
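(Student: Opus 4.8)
The plan is to recast the classical Smith--Bredon--Browder mechanism as an instance of the isotropy basechange for small, non-presentable coefficient categories, followed by an Umkehr argument over the Brauer quotient and a Smith-theoretic descent to $\bbF_p$-coefficients. Concretely, I would first introduce the small fibrewise stable $A$-category $\udl{\sC}$ of perfect modules over the Borel $\eilenbergMacLaneFp$, so that $\sC(A/H)=\func(BH,\perfectCat_{\eilenbergMacLaneFp})$ and a $\udl{\sC}$-local system on an $A$-space $\udl{Z}$ is precisely an $\eilenbergMacLaneFp$-local system on $Z^e$ carrying its residual Borel $A$-action. The key point of the setup is that, \emph{because $\udl X$ is compact}, the $A$-spectrum $\susps_+\udl X$ is dualisable, so the pushforwards along $\udl X\to\udl\ast$ are computed by the expected formulas; hence $\udl X\to\udl\ast$ is twisted ambidextrous with coefficients in $\udl{\sC}$ exactly when $X^e$ is $\eilenbergMacLaneFp$-Poincar\'{e}, in which case the dualising sheaf is the (automatically $A$-equivariant) orientation line of $X^e$ and $\udl X$ is $\udl{\sC}$-Poincar\'{e}; likewise for $\udl Y$. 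Applying the same dualisability input to $f$, the relative $\udl{\sC}$-Spivak datum $(\xi_f,c_f)$ of $f$ restricts along $\sC(A/e)=\perfectCat_{\eilenbergMacLaneFp}$ to the relative Spivak datum of $f^e$; since $f^e$ is of degree one --- equivalently, the relative fundamental class $c_{f^e}$ is a split monomorphism, a splitting being the natural ``integration over the fibre'' map --- and naturality makes this splitting $A$-equivariant, the map $c_f$ is itself a split monomorphism in $\udl{\sC}$.

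Next I would feed $\udl X,\udl Y,f$ and these Spivak data into the small-coefficient isotropy basechange, i.e.\ the omnibus geometric-pushforward-of-capping result \cref{thm:omnibus_geometric_pushforward_of_capping} together with \cref{thm:small_base_change}, applied to the (normal) subgroup $N=A\le A$. This produces twisted-ambidextrous Spivak data on $X^A$ and $Y^A$ with coefficients in $\mathcal{D}:=(\brauerQuotientFamily{A}\udl{\sC})(\ast)$, functorially in maps, and identifies the relative datum of $g:=f^A\colon X^A\to Y^A$ over $\mathcal{D}$ with the image under $\brauerQuotientFamily{A}$ of $(\xi_f,c_f)$; in particular $c_g=\brauerQuotientFamily{A}(c_f)$ is a split monomorphism in $\mathcal{D}$, since any functor preserves split monomorphisms. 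Because the Brauer quotient of $\udl{\sC}$ is computed by geometric fixed points of the Borel ring, $\mathcal{D}\simeq\perfectCat_R$ with $R=\eilenbergMacLaneFp\GeneralProperTate{A}$ the proper Tate construction of $\eilenbergMacLaneFp$; here the hypothesis that $A$ is an elementary abelian $p$-group enters, guaranteeing that $R$ is a \emph{nonzero} generalised Eilenberg--MacLane $\eilenbergMacLaneFp$-algebra, so that as an $\eilenbergMacLaneFp$-module it splits as a sum of shifts of $\eilenbergMacLaneFp$ and the unit $\eilenbergMacLaneFp\to R$ is a split monomorphism of $\eilenbergMacLaneFp$-modules.

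Finally I would run the Umkehr and descent step. The general lemma is: if $g\colon Z'\to Z''$ is a map of compact spaces both twisted ambidextrous for a stable symmetric monoidal coefficient category, with relative dualising sheaf $\xi_g$ and relative fundamental class $c_g\colon\unit\to g_!\xi_g$ a split monomorphism, then for every coefficient local system $\eta$ on $Z''$ the map $H^*(Z'';\eta)\to H^*(Z';g^*\eta)$ induced by $g$ is a split injection --- indeed, the cancellation property of twisted ambidextrous maps (cf.\ the proof of \cref{thm:poincare_integration}) makes $g$ itself twisted ambidextrous, and under this identification together with the projection formula the unit $\eta\to g_*g^*\eta$ becomes $\id_\eta\otimes c_g$, so a retraction of $c_g$ yields a natural retraction, preserved on pushing forward to a point. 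Applied with $g=f^A$ over $\mathcal{D}$, this shows $f^A$ induces a split injection on cohomology with any $\mathcal{D}$-coefficient local system. Given $\zeta\in\func(Y^A,\perfectCat_{\eilenbergMacLaneFp})$, I would then pass to $\eta:=\zeta\otimes_{\eilenbergMacLaneFp}R$: as $X^A,Y^A$ are compact and the coefficients perfect, $H^*(-;-)$ commutes with $-\otimes_{\eilenbergMacLaneFp}R$, and since $\eilenbergMacLaneFp$ is a retract of $R$ over $\eilenbergMacLaneFp$, $H^*(-;\zeta)$ is a natural retract of $H^*(-;\eta)$. A short diagram chase --- the square of $f^A$-induced maps for $\zeta\to\eta$ commutes, its $\eta$-side is injective, its $Y^A$-side is a split mono --- then forces the map $H^*(Y^A;\zeta)\to H^*(X^A;f^*\zeta)$ to be injective, as required.

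The step I expect to be the main obstacle is the invocation of the small-coefficient isotropy basechange: making precise that the relative Spivak datum of $f^A$ over the Brauer quotient really is $\brauerQuotientFamily{A}$ applied to the Borel relative Spivak datum of $f$, and that this uses the compactness of $\udl X,\udl Y$ in an essential way --- dropping compactness already breaks the statement (for $\udl{EA}\to\udl\ast$ the underlying spaces are Poincar\'{e} and the map has degree one, yet $H^*(\ast)\to H^*(\emptyset)$ is not injective). A secondary, more routine point is to fix the chosen meaning of ``degree one'' for $f^e$ so that it is exactly the split-monomorphism condition on $c_{f^e}$ with a natural splitting, which is what gets transported by $\brauerQuotientFamily{A}$.
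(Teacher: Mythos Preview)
Your overall strategy matches the paper's almost exactly: set up Poincar\'e duality for $\udl X,\udl Y$ with small Borel coefficients $\udl{\borel}(\perfectCat_{\eilenbergMacLaneFp})$, push through the small-coefficient isotropy basechange to the Brauer quotient, run an Umkehr argument there to get a splitting, and then descend to $\eilenbergMacLaneFp$ using that $\eilenbergMacLaneFp\hookrightarrow\eilenbergMacLaneFp^{t_\proper A}$ is split over $\eilenbergMacLaneFp$. The paper packages the Umkehr step via the absolute Spivak data and \cref{lem:umkehr_map_degree_one} rather than via your relative ``$c_f$ is split mono'' reformulation, but these are equivalent.

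There are, however, two genuine gaps you should be aware of. First, the identification $\mathcal D\simeq\perfectCat_R$ with $R=\eilenbergMacLaneFp^{t_\proper A}$ is not justified and is in fact not how the paper proceeds: the Brauer quotient $\brauerQuotientFamily{\proper}\udl{\borel}(\perfectCat_{\eilenbergMacLaneFp})$ is the proper stable module category $\stmodSmallProper_A(\eilenbergMacLaneFp)$, and the paper does \emph{not} identify this with $\perfectCat_R$. Instead it constructs a functor $T\colon\stmodSmallProper_A(\eilenbergMacLaneFp)\to\module_{\eilenbergMacLaneFp}$ so that $T\circ\Phi\simeq(-)^{t_\proper A}$ (\cref{recollect:proper_tate}), and then proves separately (\cref{lem:commuting_proper_Tate_with_compact_limits}) that for a \emph{trivial-action} perfect local system $\zeta$ one has $(Y^A_*\zeta)^{t_\proper A}\simeq R\otimes_{\eilenbergMacLaneFp}Y^A_*\zeta$. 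Relatedly, the small basechange \cref{thm:small_base_change} only yields $\Phi$-twisted ambidexterity (\cref{terminology:poincare_duality_for_a_functor}), i.e.\ the capping equivalence holds only for local systems in the image of $\Phi$; you do not get twisted ambidexterity for arbitrary $\mathcal D$-local systems. This is harmless for the final argument since $\zeta$ (viewed with trivial $A$-action) lies in that image, but your Umkehr lemma as stated assumes more than you have.

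Second, the sentence ``naturality makes this splitting $A$-equivariant'' hides a nontrivial step. A nonequivariant $\eilenbergMacLaneFp$-degree one datum for $f^e$ consists of an equivalence $\alpha\colon D_{X^e}\simeq f^*D_{Y^e}$ in $\func(X^e,\picardSpace(\eilenbergMacLaneFp))$ together with a homotopy $c_{Y^e}\simeq\beckChevalley^f_!\circ\alpha\circ c_{X^e}$; neither is automatically $A$-equivariant. The paper supplies this lift as \cref{lem:degree_one_calculation_p-groups}: it uses that $\picardSpace(\eilenbergMacLaneFp)\simeq\bbZ\times B(\bbZ/(p-1))$ together with the fact that $A$ is a $p$-group to force the relevant $(-)^{hA}$ spectral sequences to degenerate, so that $\pi_0\map(X^e,\picardSpace(\eilenbergMacLaneFp))^{hA}\hookrightarrow\pi_0\map(X^e,\picardSpace(\eilenbergMacLaneFp))$ and similarly for the fundamental class. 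Without this argument you do not have a $\udl{\borel}(\module_{\eilenbergMacLaneFp})$-degree one datum for $f\colon\udl X\to\udl Y$, and hence nothing to transport through $\brauerQuotientFamily{A}$.
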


Unlike the cited works above, our methods avoid manifold assumptions altogether and apart from one preliminary standard argument, we also avoid spectral sequences entirely and use instead formal categorical and stable homotopy theoretic manipulations. It should be noted also that our result  works for arbitrary  twisted coefficient systems, which as far as we know, is new and depends crucially on the categorical nature of our approach.  

As an application of \cref{alphThm:bredon_browder_injection} (in fact, the version proved by Bredon suffices), we obtain the following rigidity result for equivariant Poincar\'{e} duality spaces. 

\begin{alphThm}[{\cref{thm:Cp_PD_contractible_underlying_space}}]\label{alphThm:Cp_PD_contractible_underlying_space}
    Let $G$ be a solvable finite group (e.g. a group of odd order) and $\udl{X} \in \spc_{G}^\omega$ a  compact $G$--Poincar\'e space with $X^e \simeq \ast \in\spc$.    Then $\udl{X} \simeq \udl{\ast}\in\spc_G$.
\end{alphThm}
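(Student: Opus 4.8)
The plan is to argue by induction on the order of $G$, showing in each case that $X^{H}\simeq\ast$ for every subgroup $H\leq G$; once that is known, the unique map $\udl{X}\to\udl{\ast}$ is an equivalence on all fixed points and hence an equivalence by Elmendorf's theorem. The case $|G|=1$ is trivial, the essential base case is $G\cong C_p$, and the remaining case $|G|$ composite will be handled by an isotropy-separation reduction.

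\textbf{Base case $G\cong C_p$.} Here it suffices to prove $X^{C_p}\simeq\ast$. First I would invoke (classical) Smith theory, valid since $\udl{X}$ is compact: as $X^{e}\simeq\ast$ is $\mathbb F_p$-acyclic, the fixed-point space $X^{C_p}$ is nonempty and $\mathbb F_p$-acyclic. Picking $x_0\in X^{C_p}$ gives a map $i\colon\udl{\ast}\to\udl{X}$ whose underlying map $i^{e}\colon\ast\to X^{e}\simeq\ast$ is an equivalence, in particular a degree-one map of $\eilenbergMacLaneFp$-Poincar\'e spaces. Applying \cref{alphThm:bredon_browder_injection} with source $\udl{\ast}$ and target $\udl{X}$ then shows that for every $\eilenbergMacLaneFp$-local system $\zeta\in\func(X^{C_p},\perfectCat_{\eilenbergMacLaneFp})$ the map $i^{C_p}$ induces an injection $H^{*}(X^{C_p};\zeta)\hookrightarrow\pi_{-*}(\zeta_{x_0})$ into the stalk. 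On the other hand $X^{C_p}$ is a connected compact Poincar\'e space by \cref{alphThm:fixed_points_PD}, hence has a well-defined formal dimension $d\geq 0$. The idea is to play these two facts against each other: letting $\mathcal L\in\func(X^{C_p},\perfectCat_{\eilenbergMacLaneFp})$ be the rank-one degree-zero local system obtained as the mod-$p$ reduction of the inverse dualising sheaf of $X^{C_p}$, shifted into degree $0$, Poincar\'e duality combined with the $\mathbb F_p$-acyclicity computes $H^{*}(X^{C_p};\mathcal L)$ to be one-dimensional, concentrated in degree $-d$; since $\mathcal L_{x_0}$ sits in degree $0$, the injection with $\zeta=\mathcal L$ forces $d=0$. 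A connected compact Poincar\'e space of formal dimension $0$ is contractible, so $X^{C_p}\simeq\ast$ and hence $\udl{X}\simeq\udl{\ast}$.

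\textbf{Inductive step.} Suppose $|G|$ is composite and the statement holds for all smaller groups. For every proper subgroup $H<G$, the restriction $\res^{G}_{H}\udl{X}$ is a compact $H$-Poincar\'e space --- restriction preserves both compactness and Poincar\'e duality, by the construction principles of \cref{subsection:construction_principles} --- with underlying space $\simeq\ast$, and $H$ is solvable of order $<|G|$; by induction $\res^{G}_{H}\udl{X}\simeq\udl{\ast}$, so $X^{H}\simeq\ast$. It remains to show $X^{G}\simeq\ast$. Since $G$ is solvable and nontrivial, $G/[G,G]$ is a nontrivial finite abelian group and hence surjects onto $C_p$ for some prime $p$; pulling back gives a proper (and, as $G\not\cong C_p$, nontrivial) normal subgroup $N\trianglelefteq G$ with $G/N\cong C_p$. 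By \cref{alphThm:fixed_points_PD}, $\udl{X}^{N}$ is a compact $C_p$-Poincar\'e space, whose underlying space is $(\udl{X}^{N})^{e}=X^{N}\simeq\ast$ by the previous sentence. Applying the base case to $\udl{X}^{N}$ yields $\udl{X}^{N}\simeq\udl{\ast}$, whence $X^{G}=(\udl{X}^{N})^{C_p}\simeq\ast$. Thus $X^{H}\simeq\ast$ for every $H\leq G$, completing the induction.

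\textbf{Main obstacle.} I expect the crux to be the base case $G\cong C_p$, and more precisely the step promoting ``$X^{C_p}$ is $\mathbb F_p$-acyclic'' (cheap, from Smith theory) to ``$X^{C_p}$ is contractible''. These are genuinely different: $\mathbb{RP}^{2}$ is $\mathbb F_p$-acyclic for odd $p$ and is a positive-dimensional Poincar\'e space, so Poincar\'e duality plus $\mathbb F_p$-acyclicity cannot suffice on their own. What rescues the argument is that \cref{alphThm:bredon_browder_injection} controls $H^{*}(X^{C_p};-)$ with \emph{twisted} coefficients in all degrees simultaneously, so that the orientation class that Poincar\'e duality places in the nonzero degree $-d$ has nowhere to land; it is precisely this interaction between the genuine equivariant fundamental class and the orientation local system --- a phenomenon invisible to the underlying space --- that kills the $\mathbb{RP}^{2}$-type behaviour. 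The subsidiary technical points to check are that compactness is preserved under restriction and passage to fixed points, and the (standard) fact that a connected compact Poincar\'e space of formal dimension $0$ is a point.
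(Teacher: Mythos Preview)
Your proof is correct and shares the paper's overall architecture: induct on $|G|$, use solvability to pass to a normal subgroup with quotient $C_p$, and in the base case $G=C_p$ combine the Bredon--Browder injection with Wall's classification of zero--dimensional Poincar\'e spaces. The inductive step is essentially identical to the paper's.

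The genuine difference is in how you force the formal dimension of $X^{C_p}$ to vanish. The paper applies the injection theorem only with \emph{constant} $\mathbb{F}_p$--coefficients, which pins down the top degree only once one knows that $X^{C_p}$ is $\mathbb{F}_p$--orientable; the paper establishes this orientability separately via the Burnside--ring congruences for odd $p$ (trivially for $p=2$). You instead apply the injection theorem with the \emph{twisted} coefficient $\mathcal{L}$ built from the orientation local system, using Smith--theoretic $\mathbb{F}_p$--acyclicity of $X^{C_p}$ to compute $H^*(X^{C_p};\mathcal{L})$ as a single copy of $\mathbb{F}_p$ in degree $d$ (you wrote $-d$, a harmless sign slip). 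This bypasses the orientability lemma entirely and is arguably a cleaner exploitation of precisely the new feature of the paper's injection theorem, namely that it allows twisted coefficients. Conversely, the paper's route avoids invoking Smith theory as an external input, since the constant--coefficient injection already yields the needed $\mathbb{F}_p$--acyclicity; you could eliminate your appeal to Smith theory the same way, by first running the injection with $\zeta$ constant.
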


This is a slightly surprising result in light of Bredon's examples mentioned after \cref{thm:examples_G_pd} which demonstrated that Poincar\'{e}  spaces can be rather counterintuitive when the underlying space is noncontractible. Combining this with the celebrated theorem of Jones \cite{Jones} on the converse to Smith theory, we construct an example of a compact $C_p$--space whose underlying and fixed points spaces are Poincar\'{e} but which is not itself $C_p$--Poincar\'{e}, making good on our warning after \cref{alphThm:fixed_points_PD}.

\vspace{1mm}

We mention one more application, whose investigation was one of the main goals of this project. In \cite{ConnerFloyd64}, Conner--Floyd made the following conjecture:
\begin{quote}
    \textit{There cannot exist a periodic differentiable map of odd prime power period acting on a closed oriented manifold $V^n$, $n>0$ preserving the orientation and possessing exactly one fixed point.}
\end{quote}
The first proof of this statement (in fact, a slightly more general version) was given by Atiyah-Bott in \cite{AtiyahBottLefschetzII} and soon after by Conner--Floyd \cite{ConnerFloydMapsOfOddPeriod} themselves. Many variations have been proven since then, and we mention \cite{lueckDegree, AssadiBarlowKnop92} as further examples. Atiyah-Bott's argument uses Atiyah--Singer's index theory, whereas Conner--Floyd's proof uses a particular  bordism spectrum. In all these cases, local structures in the geometric settings were used in essential ways.

Inspired by the notion of a ``gluing class'' due to L\"{u}ck which measures how the singular part of a $G$--space is glued into the whole space, we consider such a construction in our setting and use it to prove a very general version of the Conner--Floyd conjecture which in particular yields the theorem of Atiyah--Bott as an immediate corollary.

\begin{alphThm}[{\cref{thm:generalised_atiyah_bott}}]\label{alphThm:atiya_bott}
    Let $p$ be an odd prime, $G = C_{p^k}$ for some $k$, and suppose $\underline{X}\in\spc_G^{\omega}$ is a $G$--Poincare space such that the underlying space $X^e\in\spc^{\omega}$ is connected, $\mathbb{Z}$--orientable, and has formal dimension (in the classical sense) $d>0$.   Then $X^G\not\simeq \ast$.
\end{alphThm}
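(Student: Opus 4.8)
The plan is to run an induction on $k$, using the isotropy-basechange machinery to reduce from $C_{p^k}$ to its unique minimal subgroup $C_p$, and then to extract a contradiction from a ``gluing class'' recording how the singular set $X^{C_p}$ is attached inside $\udl{X}$. Throughout, suppose for contradiction that $X^G \simeq \ast$.

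\textbf{Step 1 (passing to the singular set).} Since $C_p$ is the unique minimal nontrivial subgroup of $G=C_{p^k}$, we have $X^G \subseteq X^{C_p}$; if $X^{C_p}=\varnothing$ then $X^G=\varnothing\not\simeq\ast$ and we are done, so assume $X^{C_p}\neq\varnothing$. By \cref{alphThm:fixed_points_PD} applied with $N=C_p$, the space $\udl{X}^{C_p}$ is a $C_{p^{k-1}}$--Poincar\'e space, with dualising sheaf obtained from $D_{\udl{X}}$ by the Brauer quotient $\Phi^{C_p}$. Two pieces of bookkeeping, both using that $p$ is odd: evaluating the dualising sheaf at a point of $X^{C_p}$ produces an invertible $C_p$--spectrum with underlying spectrum $S^{-d}$ and with $\Phi^{C_p}$ the dualising sphere of $X^{C_p}$, so comparing dimensions forces each component of $X^{C_p}$ to be equidimensional with $\dim X^{C_p}\equiv d\pmod 2$; and the ``normal data'' being even-dimensional with $p$ odd makes each component of $X^{C_p}$ again $\bbZ$--orientable.

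\textbf{Step 2 (the inductive reduction).} The single $G$--fixed point $x_0$ lies in a unique component of $X^{C_p}$, which is preserved by $C_{p^{k-1}}=G/C_p$; call this $C_{p^{k-1}}$--invariant clopen subspace $\udl{Z}\subseteq\udl{X}^{C_p}$. As the fundamental class splits along components, $\udl{Z}$ is itself $C_{p^{k-1}}$--Poincar\'e, $Z^e$ is connected and $\bbZ$--orientable of dimension $\dim X^{C_p}$, and $Z^{C_{p^{k-1}}}=X^G\simeq\ast$. If $\dim X^{C_p}>0$, the inductive hypothesis for $C_{p^{k-1}}$ applies to $\udl{Z}$ and gives $Z^{C_{p^{k-1}}}\not\simeq\ast$, a contradiction. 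Hence we may assume $\dim X^{C_p}=0$, i.e.\ $X^{C_p}$ is a nonempty finite set and (by the parity constraint) $d$ is even; when $k=1$ this is exactly the single-fixed-point situation we must rule out, and all content now lies in the next step.

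\textbf{Step 3 (the gluing class).} We are reduced to a compact $G$--Poincar\'e space with $X^{C_p}$ finite, $X^G=\{x_0\}$, and $X^e$ connected, $\bbZ$--orientable of even dimension $d>0$. Write $U=X\setminus X^{C_p}$ for the free part, so that $\Sigma^\infty_+U$ lies in the localising subcategory of $\myuline{\spectra}$ generated by $G_+$, and consider the cofibre sequence $\Sigma^\infty_+X^{C_p}\to\Sigma^\infty_+X\to\Sigma^\infty(X/X^{C_p})$. The Poincar\'e structure supplies a fundamental class $c\colon\unit\to X_!D_{\udl{X}}$, and the \emph{gluing class} $\gamma$ is the obstruction --- measured in the Tate (Borel-$C_p$) cohomology of $U$ --- to the capping equivalence being concentrated on the singular set; equivalently it is the Euler-type class of the normal data of $X^{C_p}\hookrightarrow X$ glued to the free part. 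On one hand, applying $\Phi^{C_p}$ to $c$ recovers the fundamental class of the $0$--dimensional Poincar\'e space $X^{C_p}$ and thereby pins $\gamma$ down in terms of an invertible $C_p$--spectrum of underlying dimension $-d$ with trivial geometric fixed points; since $d>0$ this spectrum is not the unit, forcing $\gamma\neq 0$. On the other hand, a categorified Smith-theoretic computation --- using that $\Sigma^\infty_+U$ is induced from the trivial subgroup, together with the $p$--adic form of the Segal conjecture identifying $\Phi^{C_p}$ with a Tate construction --- shows that once $X^{C_p}$ is $0$--dimensional the restriction of the capping equivalence to $U$ forces $\gamma=0$. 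This contradiction closes the induction.

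The main obstacle is Step 3: precisely defining the gluing class for a parametrised Poincar\'e space, relating it on the one side to $\Phi^{C_p}$ of the fundamental class (where the earlier basechange theorems do the work) and on the other side to a Tate-theoretic computation over the free part, and verifying that these two computations are genuinely incompatible when the singular set collapses to a point. The remaining steps are bookkeeping with \cref{alphThm:fixed_points_PD} and elementary parity/orientability arguments that crucially exploit $p$ being odd.
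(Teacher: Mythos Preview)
Your overall strategy---induction on $k$, passing to $\udl{X}^{C_p}$ as a $C_{p^{k-1}}$--Poincar\'e space, splitting off the component containing the fixed point, and seeking a contradiction from a gluing class---matches the paper's. The execution, however, has real gaps in Steps 2 and 3.

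\textbf{Step 2 overclaims and hides a nontrivial input.} From $\dim Z=0$ you conclude ``$X^{C_p}$ is a nonempty finite set,'' but only the component $Z$ containing $x_0$ is forced to be $0$--dimensional; the remaining components (call their union $\udl{Y}$) may have arbitrary dimension. The paper only obtains---and only needs---the $C_{p^{k-1}}$--equivariant splitting $\udl{X}^{C_p}\simeq\udl{\ast}\sqcup\udl{Y}$ with $Y^{C_{p^{k-1}}}=\varnothing$. More seriously, passing from $Z^e\simeq\ast$ to $\udl{Z}\simeq\udl{\ast}$ as a $C_{p^{k-1}}$--space is exactly the content of \cref{alphThm:Cp_PD_contractible_underlying_space}, which you use implicitly but never name; the paper invokes it explicitly here. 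Your parity claim $\dim X^{C_p}\equiv d\pmod 2$ also rests on an unproved structural fact about $\picardSpace(\spectra_{C_p})$; the paper sidesteps this by replacing $\udl{X}$ with $\udl{X}\times\udl{X}$ to force $d$ even.

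\textbf{Step 3 is too vague and does not match the actual mechanism.} The paper's gluing class lives in $\Sigma(X^{>1}_!\epsilon^*D_{\udl X})_{hG}$ and the contradiction, after $\eilenbergMacLaneCoeff$--linearisation, is very concrete: a general vanishing result forces the class to die after pushing to $\Sigma W_{hG}$, while a direct computation shows it hits an element of the form $1+pa$ in $\pi_0\Sigma W_{hG}\cong\bbZ/p^k$. The three ingredients are (i) the degree-one property of $\epsilon\colon\udl{X}^{>1}\to\udl{X}$ in $\widetilde{EG}\otimes F(EG_+,\eilenbergMacLaneCoeff)$--modules, (ii) the Tate boundary $W^{tG}\to\Sigma W_{hG}$ being a $\pi_0$--isomorphism onto $\bbZ/p^k$ when $d$ is even, and (iii) the $\udl{Y}$--summand contributing only $p^{k-1}$--torsion because $Y^{C_{p^{k-1}}}=\varnothing$. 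Your sketch---``an Euler-type class,'' ``$\gamma\neq 0$ via $\Phi^{C_p}$ of the fundamental class,'' ``$\gamma=0$ via the Segal conjecture once $X^{C_p}$ is $0$--dimensional''---does not correspond to these computations, invokes machinery the paper never uses, and leans on the overclaim from Step~2 that $X^{C_p}$ is finite. As written it is a plausible outline rather than an argument.
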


Our proof uses categorified Smith--theoretic methods afforded to us by \cref{alphThm:poincare_isotropy_basechange} which reduces the problem to various forms of Tate cohomology considerations.  In particular, it is fully homotopical and thus is of a ``global'' nature. We hazard a suggestion here that, apart from being a new generalisation of a very classical result,  \cref{alphThm:atiya_bott} locates the explanation of such phenomena in the global realm of homotopy theory as opposed to the local one of geometry. Finally, let us point out that when paired together, \cref{alphThm:Cp_PD_contractible_underlying_space,alphThm:atiya_bott} give a curious partial dichotomy for $C_{p^k}$--Poincar\'{e} spaces delineated by whether or not the underlying space is contractible.

\vspace{1mm}

Before closing the introduction, we mention that since the theory of Poincar\'{e} duality here was developed in the generality of Martini--Wolf's parametrised category theory, it might be interesting to explore the theory presently developed in the context of  topoi other than the equivariant ones. It could be said that the  defining feature of our work is in exploiting various kinds of geometric morphisms of topoi central to equivariant homotopy theory, and one can imagine that this might also lead to fruitful lines of pursuit in other contexts.

\subsection*{Relations to other work}

The following works are some of the milestones that made this article possible. Wall introduced the notion of a Poincar\'e space motivated by surgery theory, and developed their theory in \cite{WallPD}. Klein built up an impressive amount of theory related to Poincar\'e spaces, one of his most influential concepts being that of the dualising spectrum \cite{Klein2001, Klein2007}. His approach was revisited by Lurie \cite[Lecture 26]{Lurie_LTheorySurgery}, Nikolaus-Scholze \cite[Sec. I.4.1.]{nikolausScholze} and Land \cite[Appendix A]{markusPoincareSpaces}, also providing an account of the ``universality" of Klein's construction. An advantage of Klein's approach is that the stable Spivak fibration of a Poincar\'e space admits a categorically more natural description (as the dualising spectrum) than in Wall's original work, where it had to be constructed.  The theory of dualising spectra in general was coined ``twisted ambidexterity" by Cnossen in \cite{Cnossen2023}.

Cnossen develops twisted ambidexterity in a general topos in terms of parametrised homotopy theory, and his approach is what we most closely follow. His  motivation is a characterisation of the $G$-category of $G$-spectra as the initial presentable, fibrewise stable $G$-category in which all compact $G$-spaces are twisted ambidextrous. An important predecessor in the equivariant context is the book of May-Sigurdsson \cite{MaySigurdsson2006}, which also gives an account of equivariant Poincar\'e duality. A more classical approach to equivariant Poincar\'e duality in terms of equivariant homology and cohomology (over the Burnside ring), much in the spirit of Wall's original definition can be found in the work of Costenoble-Waner \cite{costenoble_waner_finite_group_PD, costenoble2017equivariant}. For finite groups $G$, a nonabelian version of equivariant Poincar\'{e} duality for so--called ``$V$--framed manifolds'' has also been studied in \cite{zouNonabelianPD,horevKlangZou} which in particular implies the homological version of Poincar\'{e} duality for such objects, c.f. \cite[Prop. 4.1.4]{horevKlangZou}. An approach to Poincar\'e duality in the context of six-functor-formalisms is to be found for instance in \cite[Lecture V]{ScholzeSix}.

\subsection*{Organisation of the paper}
In \cref{section:preliminaries}, we introduce and develop the categorical underpinnings that will support the later sections. In  more detail, we recall in \cref{sec:parametrised_category_theory} the Martini--Wolf theory of parametrised higher categories and take the opportunity to record some elementary observations about geometric morphisms that we need. In \cref{subsection:equivariant_categories_and_families}, we specialise the preceding discussions to the equivariant context and recall the standard gamut of equivariant operations on categories; this will lead to the proof of \cref{alphThm:brauer_quotients}, categorifying the well--known geometric fixed points functor. This will be used later to articulate our results about fixed points of equivariant Poincar\'{e} spaces.

Having set up the requisite language, we turn to the matter of defining and studying Poincar\'{e} duality in \cref{section:parametrised_PD} in the general context of parametrising over arbitrary topoi. We define and work out the basic properties of Spivak data in \cref{subsection:parametrised_spivak_data}; we then use this structure to define twisted ambidexterity and Poincar\'{e} duality in \cref{subsection:parametrised_twisteed_ambidex} with respect to arbitrary coefficient $\B$--categories. In \cref{subsection:constructions_with_spivak_data}, we give several constructions one can perform on Spivak data and prove the main results of the section in the form of \cref{thm:base_change_of_tw_amb_spivak_data,thm:small_base_change} on basechanging coefficient categories and \cref{thm:omnibus_geometric_pushforward_of_capping} on basechanging the base topoi. Finally, we set up a  theory of degrees for maps between Poincar\'{e} spaces in \cref{sec:degree_theory}.

We specialise the general parametrised theory in \cref{section:parametrised_PD} to the equivariant situation in \cref{section:equivariant_PD_elements} for compact Lie groups. After recording the specialisations of the notions in \cref{subsection:setting_the_stage}, we state and prove several isotropy separation statements including \cref{alphThm:poincare_isotropy_basechange,alphThm:fixed_points_PD} in \cref{subsection:fixed_points_methods}, which will form  our main suite of techniques for dealing with fixed points of equivariant Poincar\'{e} spaces. Following that, we provide a set of construction principles in \cref{subsection:construction_principles} to generate new equivariant Poincar\'{e} spaces from old ones  and we supply in \cref{sec:examples_G_PD} geometrically natural examples of Poincar\'{e} spaces. We then introduce the notion of gluing classes in \cref{subsection:gluing_classes} that will form the main obstruction class for our applications in \cref{subsection:theorem:single_fixed_points}, and we lay down a rudimentary theory of equivariant degrees in \cref{sec:equivariant_degree}.

In the final \cref{section:geometric_applications}, we use categorified Smith--theoretic methods supported by the abstract theory developed in the article to give two strands of applications: in \cref{subsection:bredonPoincareDuality}, we use degree theory to show \cref{alphThm:bredon_browder_injection}, which is in turn used to show \cref{alphThm:Cp_PD_contractible_underlying_space}; then, in \cref{subsection:theorem:single_fixed_points}, we use the gluing classes to prove \cref{alphThm:atiya_bott}.

Beyond the main body of the article, we record in \cref{sec:G_stability_presentable_categories} several characterisations of $G$--stability for presentable categories when $G$ is a compact Lie group, and we prove a standard observation about reflecting pushout squares in \cref{section:reflecting_pushout_squares}.

\subsection*{Acknowledgements}

We are grateful to Bastiaan Cnossen, Ian Hambleton, Markus Land, Sil Linskens, Wolfgang L\"uck, and Shmuel Weinberger for numerous helpful conversations and encouragements on this project. All three authors are supported by the Max Planck Institute for Mathematics in Bonn.
The second author thanks the Department of Mathematics at the University of Chicago for its hospitality, where parts of this project were written.

\section{Preliminaries}\label{section:preliminaries}

The present section reviews the techniques that are essential to our approach to parametrised Poincar\'e duality. In \cref{sec:parametrised_category_theory}, we first recall some preliminaries on category theory parametrised by a topos $\category{B}$. Special attention is given to presentable $\category{B}$-categories, and basechange methods that allow us to switch the base topos along a geometric morphism. These basechange results will be essential for the isotropy separation arguments in the equivariant context.

After that, \cref{subsection:equivariant_categories_and_families} specialises to topoi related to the category $\spc_G$ of $G$-spaces, where $G$ is a compact Lie group. This also features various change-of-group functors like induction, restriction and coinduction along a homomorphism of compact Lie groups $\alpha \colon H \rightarrow G$, as well as the theory of families. 
We give a quick recollection on the basics of $G$-spaces. 
After that, we record some facts on equivariant stability, preservation of equivariant stability under change-of-group functors, and multiplicative properties of these constructions. We then prove the main result of this section, namely \cref{alphThm:brauer_quotients} on  Brauer quotients which categorifies the geometric fixed points functors. The section ends with some remarks on free actions
that will be used later on.

\subsection{Parametrised category theory} \label{sec:parametrised_category_theory}

For the rest of this section, let $\category{B}$ be a topos. For us, most topoi of interest will actually be presheaf topoi, and for the purpose of this article the most important such will be that of presheaves on the orbit category $
\orbit(G)$ for a compact Lie group $G$.  As category theory internal to $
\category{B}$ is essential to our considerations, we give a short recollection of the formalism developed by Martini and Martini-Wolf in the series of articles \cite{Martini2022Yoneda, Martini2022Cocartesian,MartiniWolf2022Presentable, MartiniWolf2024}.
Let us mention here that the theory of categories parametrised by a topos was preceded by about a decade by the theory of categories parametrised by presheaf topoi, pioneered by Barwick--Dotto--Glasman--Nardin--Shah in \cite{expose1Elements,parametrisedIntroduction,nardinExposeIV,shahThesis}.

\begin{defn}
    A \textit{$\category{B}$--category} $\udl{\category{C}}$ is a limit preserving functor $\udl{\category{C}} \colon \category{B}\op \to \cat$, i.e. a sheaf of categories on $\category{B}$.
    Denote by $\cat_{\category{B}} \subseteq \presheaf_\cat(\category{B})$ the full subcategory on $\category{B}$--categories.
    Maps in $\cat_{\category{B}}$ are called \textit{$\category{B}$-functors}.
\end{defn}

In \cite{Martini2022Yoneda}, Martini produces an equivalence of categories
\[  \presheaf_{\cat}(\category{B}) \supseteq \{ \text{$\cat$-valued sheaves on  $\category{B}$} \}  \simeq \{ \text{Complete Segal objects in $\category{B}$} \} \subseteq s\category{B}. \]
It is worthwhile to study $\category{B}$--categories from both perspectives, the \textit{parametrised} point of view on the left as well as the \textit{internal} point of view on the right.

As our arguments will often require us to work with unparametrised categories and $\category{B}$--categories at the same time, we follow the convention of underlining $\category{B}$ categories, so a generic $\category{B}$ category is denoted $\udl{\category{C}}, \udl{\category{D}}, \udl{\category{E}}, \dots$ and so on. For example, the category of $G$--spaces will be denoted by $\spc_G$ while the $G$--category of $G$--spaces is written $\udl{\spc}$ (or $\udl{\spc}_G$ if we want to emphasise that this is happening for the group $G$).

\begin{example}[Presheaf topoi]
    For a small category $T$, consider the presheaf topos $\presheaf(T) = \func(T\op, \spc)$.
    We write $\cat_T \coloneqq \cat_{\presheaf(T)}$ and call it the category of \textit{$T$--categories}.
    Restriction along the Yoneda embedding $T \hookrightarrow \presheaf(T)$ induces an equivalence $\cat_{T} \xrightarrow{\simeq} \func({T\op}, \cat)$ so a $T$--category is simply a functor $T\op \to \cat$.
    In particular for $T= *$, we see $\cat_{\spc} \simeq \cat$ and an $\spc$--category is just an ordinary ($\infty$-)category.
    In the special case where $G$ is a finite group (or a compact Lie group) and $T = \orbit(G)$ is its orbit category, the category of transitive $G$-sets (or homogeneous $G$--spaces), we obtain the category $\cat_G \coloneqq \cat_{\orbit(G)}$ of $G$--categories.
\end{example}

\begin{example}[$\category{B}$-groupoids]
    The Yoneda embedding $\category{B} \to \func(\category{B}\op, \cat)$ restricts to a limit preserving fully faithful functor $\category{B} \to \cat_{\category{B}}$. An object in the essential image will be referred to as a \textit{$\category{B}$-groupoid}.
\end{example}

Switching to the internal picture, Martini \cite[Section 3.1]{Martini2022Yoneda} also characterised $\category{B}$-groupoids as the complete Segal objects that are equivalent to constant simplicial objects in $\category{B}$. 
We will not distinguish between $\category{B}$-groupoids and objects in $\category{B}$, so to avoid confusion we also denote both with an underline by $\udl{X},\udl{Y},\udl{Z},\dots$, except in the spectial case $\category{B} = \spc$.

With equivariant applications in mind, it will be important for us to change the base topos $\category{B}$. This can be done along \textit{geometric morphisms} of topoi, defined as: 
\begin{defn}\label{defn:geometric_morphisms_of_topoi}
    A geometric morphism is an adjunction between topoi $f^* \colon \category{B} \rightleftharpoons \category{B}' \cocolon f_*$ whose left adjoint $f^*$ is left exact, i.e. commutes with finite limits.
\end{defn}

\begin{cons}[Basechange along geometric morphisms]\label{cons:base_change_cat_geometric_morphism}
    A geometric morphism $f^* \colon \category{B} \rightleftharpoons \category{B}' \cocolon f_*$ of topoi induces an adjoint pair $f^* \colon \cat_{\category{B}} \rightleftharpoons \cat_{\category{B}'} \cocolon f_*$ where 
    the right adjoint $f_*$ is given by restriction along $(f^*)\op \colon \category{B}\op \to (\category{B}')\op$. 
\end{cons}

In the internal picture, the functor $f^* \colon \category{B} \rightarrow \category{B}'$ applied entrywise induces a functor on simplicial objects $s\category{B} \rightarrow s\category{B}'$, which commutes with finite limits. By \cite[Lem. 3.3.1]{Martini2022Yoneda}, it restricts to a functor on complete Segal objects, and this is how to obtain $f^* \colon \cat_{\category{B}} \rightarrow \cat_{\category{B}'}$. 

\begin{lem}
    \label{lem:geometric_morphism_induce_left_exact_morphisms_on_complete_segal_objects}
    For a geometric morphism $f^* \colon \category{B} \rightleftharpoons \category{B}' \cocolon f_*$ of topoi, the functor    $ f^* \colon \cat_{\category{B}} \rightarrow \cat_{\category{B}}$    preserves finite limits. In fact, it preserves all limits if $f^* \colon \category{B} \rightarrow \category{B}'$ does.
\end{lem}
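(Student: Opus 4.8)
The plan is to reduce the statement to the analogous fact about complete Segal objects in topoi, which was essentially recalled in the paragraph just before the lemma. Recall from \cite[Sec. 3.1]{Martini2022Yoneda} that a $\category{B}$-category is the same datum as a complete Segal object in $\category{B}$, i.e. a simplicial object $C_\bullet \in s\category{B}$ satisfying the Segal and completeness conditions; and this equivalence $\cat_{\category{B}} \simeq \mathrm{CSS}(\category{B}) \subseteq s\category{B}$ is natural in $\category{B}$ along geometric morphisms, with the functor $f^* \colon \cat_{\category{B}} \to \cat_{\category{B}'}$ corresponding to the pointwise application $s\category{B} \to s\category{B}'$ of the left exact functor $f^* \colon \category{B} \to \category{B}'$, restricted to complete Segal objects (this restriction being well-defined by \cite[Lem. 3.3.1]{Martini2022Yoneda}). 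So it suffices to show the pointwise functor $f^*\colon s\category{B}\to s\category{B}'$ preserves finite limits, and all limits whenever $f^*\colon\category{B}\to\category{B}'$ does, and that these limits are computed the same way in the full subcategory of complete Segal objects.

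First I would establish that limits in $\cat_{\category{B}}$, under the identification with complete Segal objects, are computed pointwise in $s\category{B}$, i.e. as limits of simplicial diagrams in $\category{B}$ computed levelwise. This is because $\mathrm{CSS}(\category{B}) \subseteq s\category{B}$ is closed under limits: the Segal and completeness conditions are each expressed by certain maps being equivalences, and these conditions are stable under limits since $s\category{B} = \func(\Delta\op, \category{B})$ has limits computed pointwise and equivalences are detected pointwise. (Alternatively, $\cat_{\category{B}} \subseteq \presheaf_{\cat}(\category{B})$ is the full subcategory of limit-preserving functors $\category{B}\op \to \cat$, which is closed under limits in the functor category, and limits there are pointwise — so limits in $\cat_{\category{B}}$ are computed at each object of $\category{B}$ as limits in $\cat$.) Either way, the upshot is that checking $f^*$ preserves a limit of $\category{B}$-categories reduces to checking it on the underlying simplicial objects levelwise.

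Next, with limits reduced to the levelwise picture, the statement becomes: a left exact functor $f^* \colon \category{B} \to \category{B}'$ induces, via pointwise application, a functor $s\category{B} \to s\category{B}'$ that preserves finite limits; and it preserves all limits if $f^*$ does. This is formal: for a functor category $\func(\Delta\op, \category{B})$, limits are computed pointwise, so post-composition with any functor $F\colon \category{B}\to\category{B}'$ preserves exactly those limits that $F$ preserves. Since $f^*$ is left exact by definition of a geometric morphism (\cref{defn:geometric_morphisms_of_topoi}), the induced functor on simplicial objects preserves finite limits; and if moreover $f^*$ preserves all (small) limits — which happens precisely when the geometric morphism is \emph{essential} with a further left adjoint, or in the cases of interest to us — then so does the induced functor. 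Combining with the previous paragraph, $f^*\colon\cat_{\category{B}}\to\cat_{\category{B}'}$ preserves finite limits, and all limits under the additional hypothesis on $f^*$.

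I do not expect a serious obstacle here; the one point requiring a little care — and thus the ``main step'' — is the compatibility claim that the equivalence $\cat_{\category{B}}\simeq\mathrm{CSS}(\category{B})$ intertwines the two descriptions of $f^*$ (parametrised versus internal), together with the observation that limits in both models agree and are pointwise. Both facts are contained in or immediate from \cite[Sec. 3.3]{Martini2022Yoneda}, in particular the discussion around \cite[Lem. 3.3.1]{Martini2022Yoneda} already invoked above. (Note also the typo in the statement: the target of $f^*$ should read $\cat_{\category{B}'}$, not $\cat_{\category{B}}$.)
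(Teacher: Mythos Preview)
Your proposal is correct and follows essentially the same approach as the paper's proof: pass to simplicial objects where $f^*$ acts levelwise and hence preserves finite (resp.\ all) limits, and use that complete Segal objects are closed under limits in $s\category{B}$ so that this restricts to $\cat_{\category{B}}$. The paper's proof is simply a terser version of your argument.
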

\begin{proof}
    The induced functor $sf^* \colon s\category{B} \rightarrow s\category{B}'$ commutes with finite limits, and complete Segal objects are closed under limits. Thus, $f^*$ preserves finite limits by being a restriction of a finite limit preserving functor to a category closed under (finite) limits.
\end{proof}

\begin{example}[Geometric morphisms from presheaves]
A good supply of examples of geometric morphisms is given by considering a functor $f \colon S \rightarrow T$ of small categories.
Then restriction and right Kan extension along $f\op \colon S\op \to T\op$ induce a geometric morphism $f^* \colon \presheaf(T) \rightleftharpoons \presheaf(S) \cocolon f_*$.
\end{example}

\begin{example}[\'Etale geometric morphisms]
    If $\category{B}$ is a topos and $\obj{X} \in \category{B}$, then so is the slice category $\category{B}_{/\obj{X}}$. We have an ajunction
    \begin{equation}
        \label{eq:etale_geometric_morphism}
        (\pi_{\obj{X}})^* \colon  \category{B} \rightleftharpoons \category{B}_{/\obj{X}} \cocolon (\pi_{\obj{X}})_*
    \end{equation}
    whose left adjoint takes $\obj{A} \in \category{B}$ to $\obj{A} \times \obj{X} \rightarrow \obj{X} $.
    Now $\category{B}_{/\obj{X}}$ itself is a topos, and the adjunction above is in fact a geometric morphism of topoi.
    Geometric morphisms equivalent (in the category of topoi) equivalent to such of this kind are called \textit{\'etale geometric morphisms}, see \cite[Sec. 6.3.5.]{lurieHTT} for a detailed account. A special feature of \'etale geometric morphisms is that in the adjunction \cref{eq:etale_geometric_morphism} a further left adjoint exists, and so $(\pi_{\obj{X}})^{*}$ commutes with all limits.  The further left adjoint forgets the map to $\obj{X}$ and is denoted by $(\pi_{\obj{X}})_!$. A useful characterisation of \'etale geometric morphisms is given in \cite[Prop. 6.3.5.11.]{lurieHTT}.
\end{example}

Note that if a geometric morphism $f^* \colon \cat_{\category{B}} \rightleftharpoons \cat_{\category{B}'} \colon f_*$ is \'etale, then \cref{lem:geometric_morphism_induce_left_exact_morphisms_on_complete_segal_objects} shows that $f^* \colon \cat_\category{B} \rightarrow \cat_{\category{B}'}$ preserves all limits.

\begin{example}[Constant categories and global sections]\label{example:constant_and_global_sections}
    Recall from \cite[Prop. 6.3.4.1]{lurieHTT} that there is a unique geometric morphism $\const \colon \spc \rightleftharpoons \category{B} \colon \Gamma$. It induces an adjunction
    $\const \colon \cat \rightleftharpoons \cat_{\category{B}} \colon \Gamma$.
    We refer to $\Gamma$ as the \textit{global sections} functor. Explicitly, it is given by evaluation at the terminal object in $* \in \category{B}$. Since geometric morphisms from $\spc$ are unique, for any geometric morphism $f^*\colon \B\rightleftharpoons \B' : f_*$, we get a triangle of geometric morphisms
    \begin{center}
        \begin{tikzcd}
            \cat \ar[drrr, shift left = 2, "\constant_{\B'}"]\ar[drrr, shift right = 0, "\Gamma_{\B'}"'{xshift=-4pt,yshift=4pt},leftarrow]\dar["\constant_{\B}"', shift right = 1]\dar["\Gamma_{\B}", shift left = 1,leftarrow]\\
            \cat_{\B} \ar[rrr, shift left = 1, "f^*"]\ar[rrr, shift right = 1, "f_*"', leftarrow] &&& \cat_{\B'}
        \end{tikzcd}
    \end{center}
    In particular, note that $\constant_{\B'}\simeq f^*\constant_{\B}$ and $\Gamma_{\B}f_*\simeq \Gamma_{\B'}$.
\end{example}

\begin{example}[Internal functor categories and 2-categorical structures]
    The category $\cat_{\category{B}}$ is cartesian closed.
    This means that for any $\category{B}$--category $\udl{\category{C}}$ the product functor $- \times \udl{\category{C}} \colon \cat_{\category{B}} \to \cat_{\category{B}}$ admits a right adjoint $\internalfunc{\category{B}} (\udl{\category{C}}, \--) \colon \cat_{\category{B}} \to \cat_{\category{B}}$.
    We call $\internalfunc{\category{B}} (\udl{\category{C}}, \udl{\category{D}})$ the \textit{$\category{B}$--category of $\category{B}$-functors} and denote its global sections (in the sense of \cref{example:constant_and_global_sections}) by $\func_{\category{B}} (\udl{\category{C}}, \udl{\category{D}})$.
    Maps in $\func_{\category{B}} (\udl{\category{C}}, \udl{\category{D}})$ are called \textit{$\category{B}$-natural transformations}. $\func_{\category{B}}$ can be enhanced to a $\cat$-enrichment of $\cat_{\category{B}}$ making $\cat_{\category{B}}$ into a 2-category, see \cite[Remark 3.4.3]{Martini2022Yoneda}.
\end{example}

\begin{defn}[Adjoint functors]
    Using the 2-categorical structure on $\cat_{\category{B}}$, one can define an adjoint pair of $\category{B}$-functors as an internal adjunction in $\cat_{\category{B}}$.
    Explicitly, an adjunction consists of a pair of $\category{B}$-functors $ L \colon \udlcatC \rightleftharpoons \udlcatD \colon R$  as well as a pair of natural transformations
    $ \eta \colon \id_{\udlcatC} \rightarrow RL, \:\epsilon \colon LR \rightarrow \id_{\udlcatD}$.
    satisfying the triangle identities in the sense that $\epsilon_L \circ L\eta$ and $R\epsilon \circ \eta_R$ are equivalent to the respective identities. 
\end{defn}

We recall here the key standard categorical concept that will underpin most  this article.

\begin{cons}\label{defn:beck_chevalley_transformation}
    Suppose we have a commuting square of $\B$--categories
    \begin{center}
        \begin{tikzcd}
            \udl{\sC} \dar["g"']\rar["f"] & \udl{\D}\dar["g'"]\\
            \udl{\sC'}\rar["f'"] & \udl{\D'}
        \end{tikzcd}
    \end{center}
    such that $f$, $f'$ admit a right adjoints $h$, $h'$ respetively. Then we obtain a  transformation
    \[\beckChevalley_*\colon gh\xlongrightarrow{\eta_{gh}}h'f'gh \simeq h'g'fh \xlongrightarrow{h'g'\epsilon} h'g'\] called the \textit{right Beck--Chevalley transformation}. Similarly, if $f, f'$ admit left adjoints $\ell$, $\ell'$ respectively, then we obtain a transformation
    \[\beckChevalley_!\colon \ell'g' \xlongrightarrow{\ell'g'\eta}\ell'g'f\ell \simeq \ell'f'g\ell  \xlongrightarrow{\epsilon_{g\ell}} g\ell\]
    called the \textit{left Beck--Chevalley transformation}. We will often omit the words ``left'' and ``right'' when the context is clear. These transformations enjoy excellent functoriality properties, and we refer the reader to \cite[$\S2.2$]{CarmeliSchlankYanovski2022} for a good source on these matters.
\end{cons}

The following is an important class of Beck--Chevalley transformations.

\begin{terminology}[Projection formula]\label{terminology:projection_formula}
    Let $\udl{J}$ be a $\B$--category and $\pointProjection\colon \udl{J}\rightarrow \obj{\ast}$ the unique map. We say that a symmetric monoidal $\category{B}$--category $\udl{\category{C}}$   \textit{satisfies the $\udl{J}$-projection formula} if it admits $\udl{J}$-shaped colimits and the Beck--Chevalley transformation 
    \[ \projectionformula^{{J}}_! \colon \pointProjection_!(\xi \otimes \pointProjection^*(\--)) \rightarrow \pointProjection_!\xi\otimes (\--) \]
    of functors $\udl{\category{C}} \to \udl{\category{C}}$ is an equivalence for all $\xi\in\underline{\sC}^{\udl{J}}$.
\end{terminology}

In \cite[Prop. 3.2.9.]{MartiniWolf2024} it is shown, using work on relative adjunctions due to Lurie, that a functor $R \colon \udl{\category{C}} \to \udl{\category{D}}$ admits a left adjoint if and only if the following conditions are satisfied:
    \begin{enumerate}
        \item For every object $\obj{X} \in \category{B}$ the map $R(\obj{X}) \colon \category{C}(\obj{X}) \to \category{D}(\obj{X})$ admits a left adjoint $L(\obj{X})$.
        \item For every map $f \colon X \to Y$ in $\category{B}$ the Beck--Chevalley transformation $f^* L(\obj{X}) \to L(\obj{Y}) f^*$  is an equivalence.
    \end{enumerate}

\begin{example}[Limits and colimits]
    A $\category{B}$--category $\udl{\category{C}}$ is said to admit \textit{$\udl{I}$-shaped $\category{B}$-(co)limits} if the restriction functor $I^* \colon \udl{\category{C}} \to \internalfunc{\category{B}}(\udl{I}, \udl{\category{C}})$ along $I \colon \udl{I} \rightarrow \udl{*}$ admits a right (resp. left) adjoint. The right adjoint will usually be denoted by $I_*$, the left adjoint by $I_!$. 
    Note that for example, the adjunction unit for $I_! \dashv I^*$ produces for each $F \in \func_{\category{B}}(\udl{I},\udlcatC)$ a natural transformation
    \[ F \rightarrow I^* I_! F \in \func_{\category{B}}(\udl{I},\udlcatC)  \]
    which should be thought of as analogous to the diagram defining a colimit in unparametrised category theory. 
\end{example}

\begin{example}[Symmetric monoidal categories]
    A \textit{symmetric monoidal $\category{B}$--category} is a commutative monoid in $\cat_{\category{B}}$. 
   So $\cmonoid(\cat_{\category{B}})$ is the category of symmetric monoidal $\category{B}$--categories and symmetric monoidal functors.
    Notice that $\cmonoid(\cat_{\category{B}})$ is equivalent to the category of $\cmonoid(\cat)$--valued sheaves on $\category{B}$.
\end{example}

\subsubsection*{Geometric and \'{e}tale morphisms of topoi}

We record here further miscellaneous elementary observations about geometric and \'{e}tale morphisms that will be relevant to us later. Since this will just be a litany of minor technical results, the reader is advised to skip this on first reading and return to it as needed.

\begin{lem}\label{lem:pushforwar_fun_manoeuvre}
    Let $f^* \colon \category{B} \rightleftharpoons \category{B}' \cocolon f_*$ be a geometric morphism of topoi. 
    There is an equivalence, natural in $\obj{X} \in \B$ and $\udl{\category{C}} \in \cat_{\category{B}'}$ of functors $f_* \internalfunc{\category{B'}}(f^* \udl{X}, \udl{\category{C}}) \simeq \internalfunc{\category{B}}(\udl{X}, f_* \udl{\category{C}})$. Moreover, if $\udl{\category{C}}$ were a symmetric monoidal $\B$--category, then this equivalence naturally upgrades to a symmetric monoidal one.
\end{lem}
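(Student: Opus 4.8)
The plan is to reduce everything to the defining adjunction $f^* \colon \cat_{\B} \rightleftharpoons \cat_{\B'} \cocolon f_*$ together with the cartesian closure of both $\cat_{\B}$ and $\cat_{\B'}$. First I would recall that the internal functor category is characterised by the natural equivalence $\func_{\B}(\udl{\catA}, \internalfunc{\B}(\udl{X}, f_* \udl{\sC})) \simeq \func_{\B}(\udl{\catA} \times \udl{X}, f_* \udl{\sC})$ for all $\udl{\catA} \in \cat_{\B}$, and similarly over $\B'$. Applying the adjunction $f^* \dashv f_*$ (and its enriched/internal enhancement, which exists since $\cat_{\B}$ is a $2$-category and $f^*$ preserves finite products, hence is compatible with the cartesian-closed structure in the appropriate sense), one rewrites $\func_{\B}(\udl{\catA} \times \udl{X}, f_* \udl{\sC}) \simeq \func_{\B'}(f^*(\udl{\catA} \times \udl{X}), \udl{\sC}) \simeq \func_{\B'}(f^*\udl{\catA} \times f^*\udl{X}, \udl{\sC})$, using that $f^*$ commutes with products by \cref{lem:geometric_morphism_induce_left_exact_morphisms_on_complete_segal_objects}. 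Running this backwards gives $\func_{\B}(\udl{\catA}, \internalfunc{\B}(\udl{X}, f_* \udl{\sC})) \simeq \func_{\B}(f^*\udl{\catA}, f_*\internalfunc{\B'}(f^*\udl{X}, \udl{\sC})) \simeq \func_{\B}(\udl{\catA}, f_*\internalfunc{\B'}(f^*\udl{X}, \udl{\sC}))$, naturally in $\udl{\catA}$, and the internal Yoneda lemma of Martini \cite{Martini2022Yoneda} then yields the claimed equivalence. Naturality in $\udl{X}$ and in $\udl{\sC}$ is built into this chain.

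The cleanest way to actually organise the argument is probably to promote the entire chain to an equivalence of $\B$--categories (not just of their global sections) by replacing $\func$ with $\internalfunc{\B}$ throughout, or equivalently by testing against slices $\B_{/\obj{Y}}$ and using naturality; once one has the equivalence of $\B$--categories, the statement about functors $f_*\internalfunc{\B'}(f^*\udl{X},\udl{\sC}) \simeq \internalfunc{\B}(\udl{X}, f_*\udl{\sC})$ follows. Alternatively, one can cite the fact that $f_*$, being a right adjoint between cartesian-closed $2$-categories whose left adjoint $f^*$ is finite-product-preserving, is automatically a ``closed'' functor up to the canonical comparison map $f_*\internalfunc{\B'}(f^*\udl{X},\udl{\sC}) \to \internalfunc{\B}(\udl{X}, f_*\udl{\sC})$, and then check this comparison is an equivalence; but the Yoneda argument above is more self-contained.

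For the symmetric monoidal refinement: when $\udl{\sC}$ is a symmetric monoidal $\B'$--category, $\internalfunc{\B'}(f^*\udl{X}, \udl{\sC})$ inherits a pointwise symmetric monoidal structure (as a limit of copies of $\udl{\sC}$ indexed by the $\B'$--groupoid $f^*\udl{X}$, or via the cocartesian fibration description), and likewise $\internalfunc{\B}(\udl{X}, f_*\udl{\sC})$; the functor $f_*$ is lax symmetric monoidal, being a right adjoint to the symmetric monoidal functor $f^*$ (here one uses that $f^*$ preserves finite products so is symmetric monoidal for the cartesian structures, hence lifts to $\cmonoid$). The point is then that the equivalence constructed above is visibly compatible with these structures — it is assembled entirely from the unit/counit of $f^* \dashv f^*$ and product-preservation — so one checks it is a map of commutative monoids in $\cat_{\B}$, and a map of commutative monoids that is an underlying equivalence is an equivalence of symmetric monoidal $\B$--categories. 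The main obstacle I anticipate is purely bookkeeping: making the ``internal/enriched'' version of the $f^* \dashv f_*$ adjunction precise enough to manipulate $\internalfunc{}$ rather than $\func$, and pinning down the symmetric monoidal structure on internal functor categories out of a $\B$--groupoid so that the lax structure on $f_*$ interacts correctly with it. Both are standard given the Martini--Wolf framework, but neither is completely formal.
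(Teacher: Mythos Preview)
Your argument is correct, modulo a typo (in the displayed chain you write $\func_{\B}(f^*\udl{\A}, \ldots)$ where you mean $\func_{\B'}(f^*\udl{\A}, \ldots)$), but it takes a genuinely different route from the paper. The paper does not use cartesian closure and Yoneda directly; instead it identifies $\internalfunc{\B}(\udl{X}, -)$ with the composite $(\pi_{\udl{X}})_* (\pi_{\udl{X}})^* \colon \cat_{\B} \to \cat_{\B_{/\udl{X}}} \to \cat_{\B}$ through the slice topos, and then checks that the evident two-square diagram built from $f_*$, $(\pi_{\udl{X}})^*$, $(\pi_{\udl{X}})_*$ and their primed analogues over $\B'$ commutes (which is verified after passing to left adjoints on the underlying topoi). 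The symmetric monoidal upgrade in the paper is then a one-liner: every functor in that diagram is a right adjoint, hence preserves finite products, so the whole diagram lifts to $\cmonoid(\cat_{\B})$.

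Your approach has the virtue of being more elementary and self-contained --- it needs nothing beyond the adjunction $f^* \dashv f_*$, product-preservation of $f^*$, and the defining property of the internal hom --- whereas the paper's approach pays a small upfront cost (the slice-topos description of $\internalfunc{\B}(\udl{X},-)$) but then gets naturality and the symmetric monoidal refinement essentially for free, with none of the bookkeeping about lax structures on $f_*$ that you flag as the main obstacle. Your honest assessment of that obstacle is accurate: in your route the symmetric monoidal claim is the part that actually requires care, while in the paper's route it is automatic.
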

\begin{proof}
    Note that the diagram
    \begin{equation}\label{diag:Base_change_pullback_commutation}
    \begin{tikzcd}
        \cat_{\category{B}'} \ar[r, "(\pi_{f^*\obj{X}})^*"] \ar[d, "f_*"]
        & \cat_{\category{B}'_{/f^*\obj{X}}} \ar[r, "(\pi_{f^*\obj{X}})_*"] \ar[d, "f_*"]
        & \cat_{\category{B}'} \ar[d, "f_*"]
        \\
        \cat_{\category{B}} \ar[r, "(\pi_{\obj{X}})^*"]
        & \cat_{\category{B}_{/\obj{X}}} \ar[r, "(\pi_{\obj{X}})_*"]
        & \cat_{\category{B}}
    \end{tikzcd}       
    \end{equation}
    commutes as the corresponding diagram
    \begin{equation*}
    \begin{tikzcd}
        \category{B}' \ar[r, "(\pi_{f^*\obj{X}})^*"] \ar[d, "f_*"]
        & \category{B}'_{/f^*\obj{X}} \ar[r, "(\pi_{f^*\obj{X}})_*"] \ar[d, "f_*"]
        & \category{B}' \ar[d, "f_*"]
        \\
        \category{B} \ar[r, "(\pi_{\obj{X}})^*"]
        & \category{B}_{/\obj{X}} \ar[r, "(\pi_{\obj{X}})_*"]
        & \category{B}
    \end{tikzcd}       
    \end{equation*}
    of topoi commutes (this can be checked after passing to left adjoints everywhere where it is easy to see, see e.g. \cite[Remark 6.3.5.8.]{lurieHTT}).
    Now the composite $\cat_{\category{B}} \xrightarrow{(\pi_{\obj{X}})^*} \cat_{\category{B}_{/\obj{X}}} \xrightarrow{(\pi_{\obj{X}})_*} \cat_{\category{B}}$ sends $\udl{\category{C}} \in \cat_{\category{B}}$ to $\internalfunc{\category{B}}(\udl{X}, \udl{\category{C}}) = \lim_{\udl{X}} \udl{\category{C}}$ which proves the first part of the statement.

    For the part about symmetric monoidality, note that all functors in \cref{diag:Base_change_pullback_commutation} are finite limit preserving.
    As the forgetful functors $\cmonoid(\cat_{\category{B}}) \to \cat_{\category{B}}$ are limit preserving, this shows that the equivalence $f_* \internalfunc{\category{B'}}(f^* \udl{X}, \udl{\category{C}}) \simeq \internalfunc{\category{B}}(\udl{X}, f_* \udl{\category{C}})$ from the first part naturally refines to a symmetric monoidal one.
\end{proof}

\begin{lem}\label{lem:restriction_fun_manoeuvre}
    Let $f^* \colon \category{B} \rightleftharpoons \category{B}' \cocolon f_*$ be an \'etale morphism of topoi. 
    There is an equivalence, natural in $\obj{X} \in \B$ and $\udl{\category{C}} \in \cat_{\category{B}}$ of functors $f^* \internalfunc{\category{B}}(\udl{X}, \udl{\category{C}}) \simeq \internalfunc{\category{B'}}(f^* \udl{X}, f^* \udl{\category{C}})$. Moreover, if $\udl{\category{C}}$ were a symmetric monoidal $\B$--category, then this equivalence naturally upgrades to a symmetric monoidal one.
\end{lem}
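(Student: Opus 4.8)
The plan is to reproduce the proof of \cref{lem:pushforwar_fun_manoeuvre} almost verbatim, the only substitution being that the fact used there --- that $f_*$ preserves all limits --- is replaced by the fact that $f^*$ preserves all limits, which is available precisely because $f$ is \emph{\'etale}, so that $f^*$ acquires a further left adjoint $f_!$ (cf.\ the discussion around \cref{eq:etale_geometric_morphism}). Concretely, I would first recall from the proof of \cref{lem:pushforwar_fun_manoeuvre} that $\internalfunc{\category{B}}(\udl{X},\udl{\sC}) \simeq (\pi_{\udl{X}})_*(\pi_{\udl{X}})^*\udl{\sC}$, i.e.\ the cotensor is computed by the composite $\cat_{\category{B}} \xrightarrow{(\pi_{\udl{X}})^*} \cat_{\category{B}_{/\udl{X}}} \xrightarrow{(\pi_{\udl{X}})_*} \cat_{\category{B}}$, and similarly over $\category{B}'$. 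It is then enough to produce a natural equivalence $f^*(\pi_{\udl{X}})_*(\pi_{\udl{X}})^* \simeq (\pi_{f^*\udl{X}})_*(\pi_{f^*\udl{X}})^*f^*$ of functors $\cat_{\category{B}} \to \cat_{\category{B}'}$, since evaluated on $\udl{\sC}$ the two sides are $f^*\internalfunc{\category{B}}(\udl{X},\udl{\sC})$ and $\internalfunc{\category{B}'}(f^*\udl{X},f^*\udl{\sC})$.

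To build this equivalence I would use that \'etale geometric morphisms are stable under basechange \cite[Prop. 6.3.5.11.]{lurieHTT}, so that the square of topoi with corners $\category{B}$, $\category{B}'$, $\category{B}_{/\udl{X}}$, $\category{B}'_{/f^*\udl{X}}$, the maps $f$, $\pi_{\udl{X}}$, $\pi_{f^*\udl{X}}$ and the induced $g\colon \category{B}'_{/f^*\udl{X}} \to \category{B}_{/\udl{X}}$ is a pullback, with $g$ again \'etale. Passing to the associated categories $\cat_{(-)}$ of sheaves of categories (\cref{cons:base_change_cat_geometric_morphism}), I would then paste the two squares
\begin{center}
\begin{tikzcd}
\cat_{\category{B}} \ar[r, "(\pi_{\udl{X}})^*"] \ar[d, "f^*"'] & \cat_{\category{B}_{/\udl{X}}} \ar[r, "(\pi_{\udl{X}})_*"] \ar[d, "g^*"] & \cat_{\category{B}} \ar[d, "f^*"] \\
\cat_{\category{B}'} \ar[r, "(\pi_{f^*\udl{X}})^*"'] & \cat_{\category{B}'_{/f^*\udl{X}}} \ar[r, "(\pi_{f^*\udl{X}})_*"'] & \cat_{\category{B}'}
\end{tikzcd}
\end{center}
The left-hand square commutes for exactly the reason given in \cref{lem:pushforwar_fun_manoeuvre}: the associated square of topoi commutes, which one checks after passing to left adjoints (using that $f^*$ is left exact together with the pullback description of $\category{B}'_{/f^*\udl{X}}$; cf.\ \cite[Remark 6.3.5.8.]{lurieHTT}), whence the induced square on $\cat$-valued sheaves commutes too. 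The right-hand square is the basechange equivalence $f^*(\pi_{\udl{X}})_* \simeq (\pi_{f^*\udl{X}})_* g^*$, expressing that pushforward along the \'etale morphism $\pi_{\udl{X}}$ commutes with the basechange $f$; this is the single place where \'etaleness of $f$, rather than mere geometricity, is used --- through the resulting fact that $f^*$ preserves all limits while $(\pi_{\udl{X}})_*$ is of limit type --- and pasting the two squares then yields the required natural equivalence.

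For the symmetric monoidal refinement I would argue verbatim as in \cref{lem:pushforwar_fun_manoeuvre}: every functor above preserves finite limits and the forgetful functors $\cmonoid(\cat_{(-)}) \to \cat_{(-)}$ are limit-preserving, so the equivalence just constructed refines canonically to one of symmetric monoidal $\category{B}'$--categories whenever $\udl{\sC}$ is symmetric monoidal. The step I expect to be the genuine obstacle is the right-hand square, i.e.\ the \'etale basechange $f^*(\pi_{\udl{X}})_* \simeq (\pi_{f^*\udl{X}})_* g^*$: for a general geometric morphism this is false --- which is exactly why \cref{lem:pushforwar_fun_manoeuvre} must be phrased with $f_*$ and not $f^*$ --- so some real input from the structure theory of \'etale geometric morphisms is unavoidable here. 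An alternative route I would keep in reserve is to verify the projection formula $f_!(\udl{\D} \times f^*\udl{\sC}) \simeq f_!\udl{\D} \times \udl{\sC}$ for the \'etale adjunction $f_! \dashv f^*$, and then deduce $f^*\internalfunc{\category{B}}(\udl{\sC},\udl{\D}) \simeq \internalfunc{\category{B}'}(f^*\udl{\sC},f^*\udl{\D})$ for all $\category{B}$--categories --- not just $\category{B}$--groupoids --- by a Yoneda argument.
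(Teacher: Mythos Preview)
Your proposal is correct and takes essentially the paper's approach: the paper likewise decomposes the internal hom as $(\pi_{\udl{X}})_*(\pi_{\udl{X}})^*$ and pastes the same two squares, but it immediately specialises the \'etale $f$ to $(\pi_{\udl{Y}})^*$ for some $\udl{Y}\in\category{B}$, so that your abstract $g$ becomes another $(\pi_{\udl{Y}})^*$ and the whole diagram lives over slices of $\category{B}$. The one place your write-up is imprecise is the justification of the right square: the paper does not invoke ``$f^*$ preserves limits while $(\pi_{\udl{X}})_*$ is of limit type'' (which by itself does not yield the Beck--Chevalley equivalence), but simply passes to left adjoints of all four sides --- available because everything is \'etale --- reducing to the elementary identity $(\pi_{\udl{X}})^*(\pi_{\udl{Y}})_! \simeq (\pi_{\udl{Y}})_!(\pi_{\udl{X}})^*$; the same adjoint argument works verbatim in your abstract setting since $f^*, g^*$ have left adjoints $f_!, g_!$ by \'etaleness.
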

\begin{proof}
    The proof is similar to \cref{lem:pushforwar_fun_manoeuvre}.
    As $f$ is \'etale, it is equivalent to a functor of the form $(\pi_{\obj{Y}})^* \colon \category{B} \rightleftharpoons \category{B}_{/\obj{Y}} \cocolon (\pi_{\obj{Y}})_*$ for some $\obj{Y} \in \category{B}$.
    Observe that there is a commutative diagram
    \begin{equation}\label{diag:fun_etale_base_change_manouver}
    \begin{tikzcd}
        \cat_{\category{B}} \ar[r, "(\pi_{\obj{X}})^*"] \ar[d, "(\pi_{\obj{Y}})^*"]
        & \cat_{\category{B}_{/\obj{X}}} \ar[r, "(\pi_{\obj{X}})_*"] \ar[d, "(\pi_{\obj{Y}})^*"]
        & \cat_{\category{B}} \ar[d, "(\pi_{\obj{Y}})^*"]
        \\
        \cat_{\category{B}_{/\obj{Y}}} \ar[r, "(\pi_{\obj{X}})^*"']
        & \cat_{\category{B}_{/\obj{X \times Y}}} \ar[r, "(\pi_{\obj{X}})_*"']
        & \cat_{\category{B}_{/\obj{Y}}}
    \end{tikzcd}
    \end{equation}
    coming from the commutative diagram of topoi
    \begin{equation*}
    \begin{tikzcd}
        \category{B} \ar[r, "(\pi_{\obj{X}})^*"] \ar[d, "(\pi_{\obj{Y}})^*"]
        & \category{B}_{/\obj{X}} \ar[r, "(\pi_{\obj{X}})_*"] \ar[d, "(\pi_{\obj{Y}})^*"]
        & \category{B} \ar[d, "(\pi_{\obj{Y}})^*"]
        \\
        \category{B}_{\obj{Y}} \ar[r, "(\pi_{\obj{X}})^*"']
        & \category{B}_{/\obj{X \times Y}} \ar[r, "(\pi_{\obj{X}})_*"']
        & \category{B}_{/\obj{Y}}.
    \end{tikzcd}
    \end{equation*}
    The left square here obviously commutes and commutativity of the right square is easy to check after passing to left adjoints.
    Now the top right composite sends $\udl{\category{C}}$ to $f^* \internalfunc{\category{B}}(\udl{X}, \udl{\category{C}})$ while the bottom left composite sends it to $\internalfunc{\category{B'}}(f^*\udl{X}, f^*\udl{\category{C}})$.    For the statement about symmetric monoidality, again note that all functors in \cref{diag:fun_etale_base_change_manouver} are product preserving and use the same argument as in the proof of \cref{lem:pushforwar_fun_manoeuvre}.
\end{proof}

\begin{lem}[Pushforward of parametrised (co)limits]\label{lem:parametrised_colimits_base_change}
    Let $f^* \colon \category{B} \rightleftharpoons \category{B}' \cocolon f_*$ be a geometric morphism of topoi.
    Consider $X \in \category{B}$ and a $\category{B}'$--category $\udl{\category{C}}$ which admits $f^*\udl{X}$-shaped limits and colimits.
    Then $f_* \udl{\category{C}}$ admits $\udl{X}$-shaped limits and colimits.
    Furthermore, the equivalence from \cref{lem:pushforwar_fun_manoeuvre} induces an identification of adjoint triples 
    \begin{equation}
    \begin{tikzcd} \label{diag:base_change_twisted_ambidexterity_geometric_morphism}
        f_* \internalfunc{\category{B'}}(f^* \obj{X}, \udl{\category{C}}) \dar[equal] \rar[shift left = 4, "f_* \pointProjection_!"] \rar[shift right = 4, "f_* \pointProjection_*"]
        & f_* \udl{\category{C}} \dar[equal] \lar["f_* \pointProjection^*"']
        \\
        \internalfunc{\category{B}}(\obj{X}, f_* \udl{\category{C}}) \rar[shift left = 4, "\pointProjection_!"] \rar[shift right = 4, "\pointProjection_*"]
        & f_* \udl{\category{C}} \lar["\pointProjection^*"'] .
    \end{tikzcd}
    \end{equation}
\end{lem}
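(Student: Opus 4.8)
The plan is to deduce the statement from \cref{lem:pushforwar_fun_manoeuvre} together with the pointwise criterion for the existence of parametrised adjoints recalled above, i.e.\ \cite[Prop.~3.2.9]{MartiniWolf2024}. First observe that $f^*\udl{X}$ is again a $\B'$-groupoid, since $f^*$ restricts to a functor $\B\to\B'$; thus the hypothesis on $\udl{\sC}$ unwinds to say precisely that the restriction functor $\pointProjection^*\colon\udl{\sC}\to\internalfunc{\B'}(f^*\udl{X},\udl{\sC})$ along $f^*\udl{X}\to\udl{\ast}$ admits a left adjoint $\pointProjection_!$ and a right adjoint $\pointProjection_*$ in $\cat_{\B'}$. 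The crucial preliminary step is to identify, under the equivalence $f_*\internalfunc{\B'}(f^*\udl{X},\udl{\sC})\simeq\internalfunc{\B}(\udl{X},f_*\udl{\sC})$ of \cref{lem:pushforwar_fun_manoeuvre}, the restriction functor $\pointProjection^*\colon f_*\udl{\sC}\to\internalfunc{\B}(\udl{X},f_*\udl{\sC})$ with $f_*$ applied to $\pointProjection^*\colon\udl{\sC}\to\internalfunc{\B'}(f^*\udl{X},\udl{\sC})$. Since the equivalence of \cref{lem:pushforwar_fun_manoeuvre} is natural in $\udl{X}\in\B$, and not merely in $\udl{\sC}$, instantiating that naturality at the terminal projection $\udl{X}\to\udl{\ast}$ (using $f^*\udl{\ast}\simeq\udl{\ast}$, valid as $f^*$ is left exact) produces the desired commuting square. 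Concretely this is read off from the proof of \cref{lem:pushforwar_fun_manoeuvre}: the diagram \cref{diag:Base_change_pullback_commutation} is natural in $\udl{X}$, and under the identifications it induces the two restriction functors become the units of the adjunctions $(\pi_{\udl{X}})^*\dashv(\pi_{\udl{X}})_*$ and $(\pi_{f^*\udl{X}})^*\dashv(\pi_{f^*\udl{X}})_*$, which $f_*$ intertwines.

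Granting this, I would verify the two conditions of the pointwise criterion for $\pointProjection^*\colon f_*\udl{\sC}\to\internalfunc{\B}(\udl{X},f_*\udl{\sC})$, and dually for its right adjoint. For condition (1): by \cref{cons:base_change_cat_geometric_morphism} and the identification of the previous paragraph, the evaluation of this functor at an object $Y\in\B$ is the fibre over $f^*Y\in\B'$ of $\pointProjection^*\colon\udl{\sC}\to\internalfunc{\B'}(f^*\udl{X},\udl{\sC})$, which admits both a left and a right adjoint because $\udl{\sC}$ already satisfies the criterion. For condition (2): the Beck--Chevalley square attached to a morphism $g\colon Y\to Y'$ in $\B$ is, after these identifications, precisely the Beck--Chevalley square attached to $f^*g$ in $\B'$, hence an equivalence. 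It follows that $\pointProjection^*$ for $f_*\udl{\sC}$ admits a left and a right adjoint, i.e.\ $f_*\udl{\sC}$ admits $\udl{X}$-shaped colimits and limits. Moreover the left (resp.\ right) adjoint produced this way agrees over each $Y\in\B$ with the fibre over $f^*Y$ of $\pointProjection_!$ (resp.\ $\pointProjection_*$), which is the fibre of $f_*\pointProjection_!$ (resp.\ $f_*\pointProjection_*$); by uniqueness of parametrised adjoints this upgrades to an equivalence of $\B$-functors, yielding the identification of adjoint triples in \cref{diag:base_change_twisted_ambidexterity_geometric_morphism}. (Alternatively, one may invoke the general fact that $f_*$ preserves adjunctions, which follows either from this very pointwise criterion or from $2$-functoriality of $f_*$, and simply apply $f_*$ to the adjoint triple $\pointProjection_!\dashv\pointProjection^*\dashv\pointProjection_*$.)

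I expect the main obstacle to be the naturality bookkeeping in the first step: one must ensure that the equivalence of \cref{lem:pushforwar_fun_manoeuvre} carries the restriction functor of $f_*\udl{\sC}$ to $f_*$ of the restriction functor of $\udl{\sC}$, rather than to some other functor between the same pair of $\B$-categories, and this genuinely uses naturality in the groupoid variable $\udl{X}$; correspondingly one must track that the Beck--Chevalley comparison maps are matched up under all the identifications. Once that identification is in hand, everything downstream is a routine application of the pointwise adjoint criterion together with the formula $(f_*\udl{\sC})(Y)\simeq\udl{\sC}(f^*Y)$ for the fibres of a pushforward $\B'$-category.
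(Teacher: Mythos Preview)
Your proposal is correct and contains the paper's own argument as the parenthetical alternative at the end. The paper's proof is essentially two sentences: it notes that the diagram commutes on the $\pointProjection^*$ arrows (your first paragraph), and then invokes directly that $f_*\colon\cat_{\B'}\to\cat_{\B}$ preserves adjunctions by \cite[Cor.~3.1.9]{MartiniWolf2024}, whence $f_*\pointProjection_!$ and $f_*\pointProjection_*$ furnish the required adjoints. Your main route via the pointwise criterion of \cite[Prop.~3.2.9]{MartiniWolf2024} is a valid unpacking of that same fact, but it is more work than necessary; the paper simply cites the preservation-of-adjunctions result and is done. Your careful justification of the identification of restriction functors via naturality of \cref{lem:pushforwar_fun_manoeuvre} in $\udl{X}$ is a welcome elaboration of what the paper asserts in a single clause.
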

\begin{proof}
    First note that \cref{diag:base_change_twisted_ambidexterity_geometric_morphism} commutes with the leftwards pointing arrows.
    Since the functor $f_* \colon \cat_{\category{B}'} \to \cat_{\category{B}}$ preserves adjunctions (see e.g. \cite[Cor. 3.1.9.]{MartiniWolf2024}), it follows that $f_* \pointProjection_!$ and $f_* \pointProjection_*$ define left and right adjoints to $\pointProjection^*$ and \cref{diag:base_change_twisted_ambidexterity_geometric_morphism} also commutes with the rightwards pointing arrows.
\end{proof}

\begin{lem}[Pullback of parametrised (co)limits]\label{lem:parametrised_colimits_etale_base_change}
    Let $f^* \colon \category{B} \rightleftharpoons \category{B}' \cocolon f_*$ be an \'etale morphism of topoi.
    Consider $X \in \category{B}$ and a $\category{B}$--category $\udl{\category{C}}$ which admits $\udl{X}$-shaped limits and colimits.
    Then $f^* \udl{\category{C}}$ admits $f^*\udl{X}$-shaped limits and colimits.
    Furthermore, the equivalence from \cref{lem:restriction_fun_manoeuvre} induces an identification of adjoint triples 
    \begin{equation}
    \begin{tikzcd} \label{diag:base_change_twisted_ambidexterity_etale_morphism}
        f^* \internalfunc{\category{B}}(\obj{X}, \udl{\category{C}}) \dar[equal] \rar[shift left = 4, "f^* \pointProjection_!"] \rar[shift right = 4, "f^* \pointProjection_*"]
        & f^* \category{C} \dar[equal] \lar["f^* \pointProjection^*"']
        \\
        \internalfunc{\category{B'}}(f^*\obj{X}, f^* \udl{\category{C}}) \rar[shift left = 4, "\pointProjection_!"] \rar[shift right = 4, "\pointProjection_*"]
        & f^* \category{C} \lar["\pointProjection^*"'] .
    \end{tikzcd}
    \end{equation}
\end{lem}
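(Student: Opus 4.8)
The plan is to argue exactly as in the proof of \cref{lem:parametrised_colimits_base_change}, with the role of ``$f_*$ preserves adjunctions'' now played by the inverse image functor $f^*$. There are two things to check: (i) that \cref{diag:base_change_twisted_ambidexterity_etale_morphism} commutes with the leftward-pointing arrows (the restriction functors $\pointProjection^*$), and (ii) that $f^*\colon\cat_{\category{B}}\to\cat_{\category{B}'}$ carries the two adjunctions $\pointProjection_!\dashv\pointProjection^*\dashv\pointProjection_*$ witnessing that $\udl{\category{C}}$ admits $\udl{X}$-shaped (co)limits to adjunctions on $f^*\udl{\category{C}}$. Granting (i) and (ii), the functors $f^*\pointProjection_!$ and $f^*\pointProjection_*$ are a left and a right adjoint to $f^*\pointProjection^*$, which under the identification of (i) is $\pointProjection^*$; this simultaneously shows that $f^*\udl{\category{C}}$ admits $f^*\udl{X}$-shaped colimits and limits and that \cref{diag:base_change_twisted_ambidexterity_etale_morphism} commutes with the rightward-pointing arrows, giving the asserted identification of adjoint triples.

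For (i), I would use that $f^*$ is left exact, so it preserves the terminal object and hence carries the structure map $\pointProjection\colon\udl{X}\to\udl{\ast}$ of $\cat_{\category{B}}$ to the structure map $f^*\pointProjection\colon f^*\udl{X}\to\udl{\ast}$ of $\cat_{\category{B}'}$. The equivalence of \cref{lem:restriction_fun_manoeuvre} is natural in the $\category{B}$-groupoid variable, so applying it to $\pointProjection$ yields a commuting square whose horizontal arrows are restriction along $\pointProjection$ and along $f^*\pointProjection$; since $\internalfunc{\category{B}}(\udl{\ast},-)\simeq\id$, this square is precisely the left-hand square of \cref{diag:base_change_twisted_ambidexterity_etale_morphism}.

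Point (ii) is where I expect the main difficulty to lie. The cleanest route is to observe that $f^*\colon\cat_{\category{B}}\to\cat_{\category{B}'}$ is a $2$-functor: it is obtained by applying the left-exact functor $f^*\colon\category{B}\to\category{B}'$ entrywise to complete Segal objects, so (by \cref{lem:geometric_morphism_induce_left_exact_morphisms_on_complete_segal_objects}) it preserves finite products, and by \cref{example:constant_and_global_sections} it carries $\constant_{\category{B}}$ to $\constant_{\category{B}'}$; these facts let one check that $f^*$ acts on $\category{B}$-natural transformations compatibly with composition, and any $2$-functor preserves adjunctions along with their units and counits. Feeding the two defining (co)limit adjunctions of $\udl{\category{C}}$ into this would give (ii), and hence the lemma. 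If making the $2$-functoriality precise turns out to be awkward to reference, the fallback is to bypass $2$-categorical language entirely: using \cref{lem:restriction_fun_manoeuvre} and the description $f^*\simeq(\pi_{\obj{Y}})^*$ for some $\obj{Y}\in\category{B}$, one reduces the existence of the adjoints and the Beck--Chevalley compatibility over each object of $\category{B}'$ to the criterion of \cite[Prop. 3.2.9.]{MartiniWolf2024}, which can then be verified entrywise.
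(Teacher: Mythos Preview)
Your proposal is correct and follows essentially the same approach as the paper: the paper's proof says only that it is identical to that of \cref{lem:parametrised_colimits_base_change}, using \cref{lem:restriction_fun_manoeuvre} in place of \cref{lem:pushforwar_fun_manoeuvre}, which amounts exactly to your points (i) and (ii). For (ii) the paper simply invokes \cite[Cor.~3.1.9]{MartiniWolf2024} (as it does elsewhere, e.g.\ in the proof of \cref{prop:projection_formula_geometric_pushforwards}) to conclude that $f^*$ preserves adjunctions, rather than spelling out the 2-functoriality argument you sketch; your fallback via \cite[Prop.~3.2.9]{MartiniWolf2024} is unnecessary once that citation is in hand.
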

\begin{proof}
    The proof is identical to \cref{lem:parametrised_colimits_base_change},  using \cref{lem:restriction_fun_manoeuvre} instead of \cref{lem:pushforwar_fun_manoeuvre}.
\end{proof}

\begin{lem}\label{lem:natural_equivalences_preserved_under_geometric_morphisms}
    Let $f^*\colon \B\rightleftharpoons \B' \cocolon f_*$ be a geometric morphism of topoi. Let $\udl{J}\in\cat_{\B}$ and $\udl{\sC},\udl{\D}\in\cat_{\B'}$, and let $\alpha\colon \udl{\func}(f^*\udl{J},\udl{\sC})\times \constant_{\B'}\Delta^1\rightarrow\udl{\D}$ be a natural transformation and $f_*\alpha\circ (\id\times\eta)\colon \udl{\func}(\udl{J},f_*\udl{\sC})\times \constant_{\B}\Delta^1\rightarrow \udl{\func}(\udl{J},f_*\udl{\sC})\times f_*f^*\constant_{\B}\Delta^1\xrightarrow{f_*\alpha} f_*\udl{\D}$ the associated transformation.  Then $\alpha$ is a natural equivalence if and only if $f_*\alpha\circ(\id\times\eta)$ is a natural equivalence.
\end{lem}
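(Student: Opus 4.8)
The plan is to prove the two implications separately; the forward one is essentially formal, and all the content sits in the reverse one.

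For the forward direction I would first record the two structural facts that make the construction of $f_*\alpha\circ(\id\times\eta)$ possible and well behaved. First, $f_*$ is a right adjoint, hence preserves finite products, and one has $f_*\udl{\func}(f^*\udl{J},\udl{\sC})\simeq\udl{\func}(\udl{J},f_*\udl{\sC})$: this is \cref{lem:pushforwar_fun_manoeuvre} when $\udl{J}$ is a $\B$--groupoid, and in general it follows by passing to right adjoints in the identity $f^*\circ(\udl{J}\times(-))\simeq(f^*\udl{J}\times(-))\circ f^*$ of functors $\cat_\B\to\cat_{\B'}$. Second, since $f_*f^*$ preserves the terminal $\B$--category and $\constant$ is fully faithful, the unit $\eta$ restricts to the identity along the endpoint inclusions $\constant_\B\{i\}\hookrightarrow\constant_\B\Delta^1$ for $i\in\{0,1\}$. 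Combining these, $f_*\alpha\circ(\id\times\eta)$ is precisely the image of the natural transformation $\alpha$ under the functor $\func_{\B'}(\udl{\func}(f^*\udl{J},\udl{\sC}),\udl{\D})\to\func_\B(\udl{\func}(\udl{J},f_*\udl{\sC}),f_*\udl{\D})$ induced by $f_*$ together with the above identifications; since $f_*$ is a functor it carries the invertible morphism $\alpha$ to an invertible one, so $f_*\alpha\circ(\id\times\eta)$ is a natural equivalence whenever $\alpha$ is.

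For the reverse direction I would invoke the objectwise criterion for equivalences in $\cat_{\B'}$: a natural transformation of $\B'$--functors is a natural equivalence precisely when, for every $b'\in\B'$, its component over $b'$ is a natural isomorphism. It then suffices to check this condition only for the objects $b'=f^*b$ with $b\in\B$, because the family $\{f^*b\}_{b\in\B}$ generates $\B'$ under colimits (for $f$ \'etale since $\B'$ is then a slice of $\B$, and for the change--of--group morphisms appearing later since there $f_*$ is a fully faithful right Kan extension), so for an arbitrary $b'$ one has $\udl{\D}(b')\simeq\lim_i\udl{\D}(f^*b_i)$ for a presentation $b'\simeq\colim_i f^*b_i$, and a limit of natural isomorphisms is a natural isomorphism; hence $f_*$ reflects the property of being a natural equivalence for transformations of $\B'$--functors. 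Now the totality of the components of $\alpha$ over the objects $f^*b$ is exactly the transformation recorded by $f_*\alpha$, and hence by $f_*\alpha\circ(\id\times\eta)$ via the identifications of the previous paragraph; so if the latter is a natural equivalence, all those components of $\alpha$ are natural isomorphisms, and therefore $\alpha$ is a natural equivalence.

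The hard part will be the reverse direction, and within it precisely the step that $f_*$ reflects equivalences here — the reduction to components over the generators $f^*b$. Everything else is bookkeeping: the product--preservation of $f_*$, the triviality of $\eta$ on the two endpoints of $\Delta^1$, and the identification $f_*\udl{\func}(f^*\udl{J},\udl{\sC})\simeq\udl{\func}(\udl{J},f_*\udl{\sC})$; and the forward implication uses nothing beyond the functoriality of $f_*$.
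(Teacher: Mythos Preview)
Your forward direction is correct: $f_*\colon\cat_{\B'}\to\cat_\B$ is a $2$-functor, and your identification of $f_*\alpha\circ(\id\times\eta)$ as the image of $\alpha$ under the induced map on hom-categories $\func_{\B'}(\udl{\func}(f^*\udl{J},\udl{\sC}),\udl{\D})\to\func_\B(\udl{\func}(\udl{J},f_*\udl{\sC}),f_*\udl{\D})$ shows at once that equivalences go to equivalences.

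Your reverse direction has a genuine gap: the claim that $\{f^*b\}_{b\in\B}$ generates $\B'$ under colimits is \emph{false} for a general geometric morphism. For the terminal morphism $\constant\colon\spc\rightleftharpoons\spc_G$ with $G$ nontrivial, the essential image of $f^*=\constant$ is the class of inflated $G$--spaces; since $f^*$ preserves colimits this class is already colimit-closed, and it omits for instance the free orbit $G/e$. Your \'etale sub-case fails for the same reason: for $f^*=Y\times(-)\colon\B\to\B_{/Y}$ the image consists of the trivial fibrations over $Y$, again colimit-closed and not all of $\B_{/Y}$ when $Y\not\simeq *$. Only your ``$f_*$ fully faithful'' sub-case actually works, because there $f^*f_*\simeq\id$ makes $f^*$ essentially surjective. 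So the argument you give does not establish the reverse implication in the stated generality. The paper takes a different route and does not invoke generation at all: using $\Gamma_\B f_*\simeq\Gamma_{\B'}$ it reduces both parametrised conditions to the single \emph{unparametrised} condition that the global-sections transformation $\func_{\B'}(f^*\udl{J},\udl{\sC})\times\Delta^1\to\Gamma_{\B'}\udl{\D}$ be a natural equivalence, thereby bypassing any discussion of how objects of $\B'$ are built from the image of $f^*$.
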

\begin{proof}
    By using that $\Gamma_{\B}f_*\simeq \Gamma_{\B'}$ from \cref{example:constant_and_global_sections}, the two putatively equivalent statements are equivalent to the condition that the natural transformation $\func_{\B'}(f^*\udl{J},\udl{\sC})\times \Delta^1\rightarrow \Gamma_{\B'}\udl{\D}$ of unparametrised categories is a natural equivalence. 
\end{proof}

\begin{nota}
    Recall that there is the \textit{Picard space functor} $\picardSpace\colon \cmonoid(\cat)\rightarrow\cgroup(\spc)$ which takes as input a symmetric monoidal category and outputs a space of invertible objects. This functor is right adjoint to the inclusion $\cgroup(\spc)\hookrightarrow \cmonoid(\spc)\hookrightarrow \cmonoid(\cat)$ and is corepresented as $\picardSpace(-)\simeq \map_{\cmonoid(\cat)}(\loops\sphere,-)$.
\end{nota}

\begin{lem}\label{lem:picard_of_geometric_morphisms}
    Let $f^*\colon \B\rightleftharpoons \B' \cocolon f_*$ be a geometric morphism. Then we have an equivalence of functors $\udl{\picardSpace}(f_*-)\simeq f_*\udl{\picardSpace}(-)\colon \cmonoid(\cat_{\B'})\rightarrow \cgroup(\B)$.
    If $f^*\dashv f_*$ were moreover \'{e}tale, then we also have an equivalence $f^*\udl{\picardSpace}(-)\simeq \udl{\picardSpace}(f^*-)\colon \cmonoid(\cat_{\B})\rightarrow \cgroup(\B')$.
\end{lem}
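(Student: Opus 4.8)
The plan is to reduce both equivalences to the triviality that precomposition and postcomposition of functors commute, after unwinding the three functors involved into concrete operations on sheaves. I will use throughout the identifications $\cmonoid(\cat_\B)\simeq\{\cmonoid(\cat)\text{-valued sheaves on }\B\}$ and $\cgroup(\B)\simeq\{\cgroup(\spc)\text{-valued sheaves on }\B\}$. Under these, $\udl{\picardSpace}$ is postcomposition with the ordinary Picard functor $\picardSpace\colon\cmonoid(\cat)\to\cgroup(\spc)$: as $\picardSpace$ is a right adjoint it preserves limits, so postcomposing a limit-preserving functor $\B\op\to\cmonoid(\cat)$ with it again lands in sheaves, and this operation is right adjoint to the parametrised inclusion $\cgroup(\B)\hookrightarrow\cmonoid(\cat_\B)$, which is itself postcomposition with the limit-preserving inclusion $\cgroup(\spc)\hookrightarrow\cmonoid(\cat)$; thus it agrees with $\udl{\picardSpace}$ (as one sees equally well from the objectwise formula $\udl{\picardSpace}(\udl{\sC})\colon\obj{X}\mapsto\picardSpace(\udl{\sC}(\obj{X}))$). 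On the other hand, by \cref{cons:base_change_cat_geometric_morphism} the functor $f_*$ on $\cat_\B$ --- and hence on $\cmonoid(\cat_\B)$ and on $\cgroup(\B)$, compatibly with the forgetful functors to $\cat_\B$ --- is precomposition along $(f^*)\op\colon\B\op\to(\B')\op$.

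Granting this, the first claim is immediate: both $\udl{\picardSpace}\circ f_*$ and $f_*\circ\udl{\picardSpace}$ are, under the identifications above, the single operation ``postcompose with $\picardSpace$, precompose with $(f^*)\op$'' on sheaves, and the two orders agree strictly; in particular the resulting equivalence is natural in the input, the two functors being literally equal rather than merely pointwise equivalent. More invariantly, one observes that the square of left adjoints $f^*\iota_\B\simeq\iota_{\B'}f^*$ commutes --- because $f^*$, being the left-exact left adjoint of a geometric morphism, preserves the objectwise, limit-preserving inclusions carving grouplike commutative monoids out of categories --- and then passes to the mate square of right adjoints using $\iota\dashv\udl{\picardSpace}$ and $f^*\dashv f_*$.

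For the second claim the point is that when $f$ is \'etale, say $f\simeq\pi_{\obj{Y}}$ for some $\obj{Y}\in\B$, the functor $f^*\colon\cat_\B\to\cat_{\B'}$ is itself a restriction: writing $U\coloneqq(\pi_{\obj{Y}})_!\colon\B_{/\obj{Y}}\to\B$ for the forgetful functor, one has $f^*\udl{\sC}\simeq\udl{\sC}\circ U\op$, naturally in $\udl{\sC}\in\cat_\B$. I would establish this by combining the fact that $(\pi_{\obj{Y}})^*$ preserves all limits (as recorded after the \'etale example, via \cref{lem:geometric_morphism_induce_left_exact_morphisms_on_complete_segal_objects}) with its value on $\B$-groupoids, $\udl{X}\mapsto(\udl{X}\times\obj{Y}\to\obj{Y})$, which corresponds precisely to $\udl{X}\circ U\op$; alternatively one may cite the account of \'etale geometric morphisms in \cite{lurieHTT}. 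Since $f^*$ is then once more ``precompose with a functor'', it commutes with postcomposition by $\picardSpace$, and the same bookkeeping as before yields $f^*\circ\udl{\picardSpace}\simeq\udl{\picardSpace}\circ f^*$; equivalently, it is the mate of $f^*\iota_\B\simeq\iota_{\B'}f^*$, this time using the further left adjoint $f_!$ to $f^*$ that exists precisely in the \'etale case, which is exactly what makes $f^*$ preserve the limits implicit in $\udl{\picardSpace}$. I expect the only real work to be in pinning down these ``restriction $=$ precomposition'' normalisations rigorously; once they are in place both statements are purely formal, and naturality in the monoidal $\B$-category is automatic from the sheaf descriptions.
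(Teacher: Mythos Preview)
Your argument is correct. For the first claim you and the paper do the same thing: the paper simply records that the square of left adjoints
\[
\begin{tikzcd}
\cmonoid(\cat_{\B})\dar["f^*"'] & \cgroup(\B)\lar[hook]\dar["f^*"]\\
\cmonoid(\cat_{\B'}) & \cgroup(\B')\lar[hook]
\end{tikzcd}
\]
commutes and passes to right adjoints, which is exactly your mate argument (and your ``precompose--postcompose'' description is just this made concrete on sheaves).

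For the second claim your route is genuinely different from the paper's. The paper invokes the corepresentation $\udl{\picardSpace}(-)\simeq\myuline{\map}_{\cat_\B}(\constant_\B\loops\sphere,-)$ and then uses that an \'etale $f^*$ commutes with internal mapping objects and satisfies $f^*\constant_\B\simeq\constant_{\B'}$, yielding $f^*\udl{\picardSpace}(-)\simeq\myuline{\map}_{\cat_{\B'}}(\constant_{\B'}\loops\sphere,f^*-)\simeq\udl{\picardSpace}(f^*-)$. You instead observe that for an \'etale morphism $\pi_{\obj{Y}}$ the functor $f^*$ on $\cat_\B$ is itself precomposition along $((\pi_{\obj{Y}})_!)\op$ --- which is correct, since $(\pi_{\obj{Y}})_!\dashv\pi_{\obj{Y}}^*$ makes precomposition with $((\pi_{\obj{Y}})_!)\op$ left adjoint to precomposition with $(\pi_{\obj{Y}}^*)\op=f_*$, and $(\pi_{\obj{Y}})_!$ preserves colimits so this lands in sheaves --- and hence commutes with postcomposition by $\picardSpace$. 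Your argument is more self-contained and avoids appealing to the corepresentability formula or to \cref{lem:restriction_fun_manoeuvre}; the paper's argument is shorter once those tools are in place and fits the pattern used elsewhere in the text. Your closing ``mate using $f_!$'' remark is a little imprecise as stated, but the main precomposition argument already suffices.
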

\begin{proof}
    The first part is an immediate consequence of the fact that the diagram of left adjoints 
    \begin{center}
        \begin{tikzcd}
            \cmonoid(\cat_{\B})\dar["f^*"'] & \cgroup(\B)\lar[hook]\dar["f^*"]\\
            \cmonoid(\cat_{\B'}) & \cgroup(\B')\lar[hook]
        \end{tikzcd}
    \end{center}
    commutes, which is clear. For the second part, we note that $\udl{\picardSpace}(-)\colon \cmonoid(\cat_{\B})\rightarrow\cgroup(\B)$ is given by $\myuline{\map}_{\cat_{\B}}(\constant_{\B}\loops\sphere, -)$. Thus, since $f^*\dashv f_*$ was \'{e}tale, we get that $f^*\udl{\picardSpace}(-)\simeq f^*\myuline{\map}_{\cat_{\B}}(\constant_{\B}\loops\sphere, -)\simeq \myuline{\map}_{\cat_{\B'}}(\constant_{\B'}\loops\sphere, f^*-)\simeq \udl{\picardSpace}(f^*-)$.
\end{proof}

\begin{cor}\label{cor:factoring_through_picard_geometric_morphisms}
    Let $f^*\colon \B\rightleftharpoons \B' \cocolon f_*$ be a geometric morphism of topoi, $\udl{X}\in\B$,  $\udl{\D}\in\cmonoid(\cat_{\B'})$, and $\udl{\E}\in\cmonoid(\cat_{\B})$.
    \begin{enumerate}[label=(\arabic*)]
        \item A functor $\udl{X}\rightarrow f_*\udl{\D}$ has the property that it factors through $\udl{\picardSpace}(f_*\udl{\D})\hookrightarrow f_*\udl{\D}$ if and only if the associated functor $f^*\udl{X}\rightarrow \udl{\D}$ factors through $\udl{\picardSpace}(\udl{\D})\hookrightarrow \udl{\D}$,
        \item Suppose $f^*\dashv f_*$ is moreover \'{e}tale. If a functor $\udl{X}\rightarrow \udl{\E}$ factors through $\udl{\picardSpace}(\udl{\E})$, then $f^*\udl{X}\rightarrow f^*\udl{\E}$ factors through $\udl{\picardSpace}(f^*\udl{\E})$.
    \end{enumerate}
\end{cor}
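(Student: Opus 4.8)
The plan is to deduce both statements from \cref{lem:picard_of_geometric_morphisms} together with a small amount of adjunction bookkeeping, the key point being that the equivalences furnished by that lemma are compatible with the natural forgetful maps into the ambient categories, i.e. that they are equivalences \emph{over} $f_*\udl{\D}$, respectively $f^*\udl{\E}$. To that end I would first recall that for a symmetric monoidal category the inclusion $\picardSpace(\sC)\hookrightarrow\sC$ of the maximal subgroupoid on invertible objects is a monomorphism, that $\udl{\picardSpace}$ is computed sectionwise on symmetric monoidal $\B'$--categories (being postcomposition with the limit-preserving functor $\picardSpace$), and that $f_*\colon\cat_{\B'}\to\cat_{\B}$ is restriction along $(f^*)\op$. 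From this it is immediate that, sectionwise over $\udl{Y}\in\B$, both $\udl{\picardSpace}(f_*\udl{\D})$ and $f_*\udl{\picardSpace}(\udl{\D})$ are the maximal subgroupoid on the invertible objects of $\udl{\D}(f^*\udl{Y})$ sitting inside $\udl{\D}(f^*\udl{Y}) = (f_*\udl{\D})(\udl{Y})$, so the equivalence of \cref{lem:picard_of_geometric_morphisms} is indeed one over $f_*\udl{\D}$. In the \'{e}tale case I would instead read off the analogous compatibility from the proof of that lemma, using the corepresentability $\udl{\picardSpace}(-)\simeq\myuline{\map}_{\cat_{\B'}}(\constant_{\B'}\loops\sphere,-)$ and \cref{lem:restriction_fun_manoeuvre}, together with naturality of the forgetful map to $\cat_{\B'}$.

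Granting this, statement (1) follows formally. Factoring a morphism through a monomorphism is a property of the morphism that depends only on the target equipped with that monomorphism up to equivalence; hence, by the first paragraph, a functor $\udl{X}\to f_*\udl{\D}$ factors through $\udl{\picardSpace}(f_*\udl{\D})$ if and only if it factors through $f_*\udl{\picardSpace}(\udl{\D})$. Next I would invoke the natural equivalence $\map_{\cat_{\B}}(\udl{X},f_*-)\simeq\map_{\cat_{\B'}}(f^*\udl{X},-)$ coming from the adjunction $f^*\dashv f_*$, and apply its naturality to the monomorphism $\udl{\picardSpace}(\udl{\D})\hookrightarrow\udl{\D}$: this yields that $\udl{X}\to f_*\udl{\D}$ factors through $f_*\udl{\picardSpace}(\udl{\D})$ precisely when the adjunct $f^*\udl{X}\to\udl{\D}$ factors through $\udl{\picardSpace}(\udl{\D})$, which combined with the previous sentence gives exactly (1). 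For statement (2) I would simply apply $f^*$ to a factorisation $\udl{X}\to\udl{\picardSpace}(\udl{\E})\hookrightarrow\udl{\E}$ and rewrite the middle term via the \'{e}tale compatibility from the first paragraph, producing a factorisation $f^*\udl{X}\to\udl{\picardSpace}(f^*\udl{\E})\hookrightarrow f^*\udl{\E}$. Here one obtains only an implication, not an equivalence, because a general functor $f^*\udl{X}\to f^*\udl{\E}$ need not be $f^*$ of anything; and the \'{e}tale hypothesis is essential, since \cref{lem:picard_of_geometric_morphisms} only identifies $f^*\udl{\picardSpace}$ with $\udl{\picardSpace}f^*$ in that setting.

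I do not anticipate any real difficulty here, as the mathematical content is entirely contained in \cref{lem:picard_of_geometric_morphisms}; the one point that genuinely needs care is promoting the abstract equivalences of that lemma to equivalences over the ambient categories $f_*\udl{\D}$ and $f^*\udl{\E}$ and then aligning this correctly with the naturality of the $f^*\dashv f_*$ adjunction. Everything else is a routine unwinding of definitions.
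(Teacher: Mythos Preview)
Your proposal is correct and follows essentially the same approach as the paper: both parts rest on \cref{lem:picard_of_geometric_morphisms}, with part (1) completed by the $f^*\dashv f_*$ adjunction (the paper phrases this via \cref{lem:pushforwar_fun_manoeuvre} and global sections, which amounts to the same manipulation) and part (2) by applying $f^*$ directly to a given factorisation. Your explicit remark that the equivalences of \cref{lem:picard_of_geometric_morphisms} must be equivalences \emph{over} $f_*\udl{\D}$ and $f^*\udl{\E}$ is a point the paper leaves implicit, so your write-up is in that respect slightly more careful.
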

\begin{proof}
    Part (1) is an immediate consequence of the equivalence \[\map_{\cat_{\B}}(\constant_{\B}\ast,f_*\udl{\func}(f^*\udl{X},\udl{\picardSpace}(\udl{\D})))\simeq \map_{\cat_{\B'}}(\constant_{\B'}\ast,\udl{\func}(f^*\udl{X},\udl{\picardSpace}(\udl{\D})))\] and the computation
    \[\udl{\func}(\udl{X},\udl{\picardSpace}(f_*\udl{\D}))\simeq \udl{\func}(\udl{X},f_*\udl{\picardSpace}(\udl{\D}))\simeq f_*\udl{\func}(f^*\udl{X},\udl{\picardSpace}(\udl{\D}))\]
    where the first equivalence is by \cref{lem:picard_of_geometric_morphisms} and the second by \cref{lem:pushforwar_fun_manoeuvre}. For part (2), if we have a factorisation $\udl{X}\rightarrow \udl{\picardSpace}(\udl{\E})\hookrightarrow \udl{\E}$, then applying $f^*$ to this and using the second part of \cref{lem:picard_of_geometric_morphisms} gives the required factorisation.
\end{proof}

\begin{prop}\label{prop:projection_formula_geometric_pushforwards}
    Let $f^*\colon \B\rightleftharpoons \B' \cocolon f_*$ be a geometric morphism of topoi, $\udl{X}\in \B$,  $\udl{\D}\in \cmonoid(\cat_{\B'})$, and $\udl{\E}\in\cmonoid(\cat_{\B})$.
    \begin{enumerate}[label=(\arabic*)]
        \item The symmetric monoidal $\B'$--category $\udl{\D}$ satisfies the $f^*\udl{X}$--projection formula if and only if the symmetric monoidal $\B$--category $f_*\udl{\D}$ satisfies the $\udl{X}$--projection formula,
        \item If $f_*$ is fully faithful, then the colimit $\uniqueMapX_!\colon \udl{\func}(\udl{X},f_*\udl{\D})\rightarrow f_*\udl{\D}$ (resp. limit $\uniqueMapX_*$) exists if and only if the colimit $(f^*\uniqueMapX)_!\colon \udl{\func}(f^*\udl{X},\udl{\D})\rightarrow \udl{\D}$ (resp. limit $(f^*\uniqueMapX)_*$) does,
        \item If $f^*\dashv f_*$ is moreover \'{e}tale, then if $\udl{\E}$ satisfies the $\udl{X}$--projection formula, then $f^*\udl{\E}$ satisfies the $f^*\udl{X}$--projection formula.
    \end{enumerate}
\end{prop}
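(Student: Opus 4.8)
The plan is to treat all three parts uniformly: transport the data defining the $\udl{X}$-projection formula (\cref{terminology:projection_formula}), or the existence of a parametrised colimit/limit, across the geometric morphism by means of the functor-category and (co)limit base-change equivalences already in hand; identify the transported Beck--Chevalley transformation with the image of the original one under $f_*$ (parts (1) and (2)) or under $f^*$ (part (3)); and then read off the desired equivalence or adjunction from \cref{lem:natural_equivalences_preserved_under_geometric_morphisms}, from the fact that $f_*$ preserves adjunctions (\cite[Cor.~3.1.9]{MartiniWolf2024}), and from the fact that $f^*$---being computed levelwise on complete Segal objects by a left-exact functor (\cref{lem:geometric_morphism_induce_left_exact_morphisms_on_complete_segal_objects})---is a $2$-functor and hence preserves adjunctions and equivalences.

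For (1) I would start from the symmetric monoidal equivalence $\udl{\func}(\udl{X},f_*\udl{\D})\simeq f_*\udl{\func}(f^*\udl{X},\udl{\D})$ of \cref{lem:pushforwar_fun_manoeuvre}, which by \cref{lem:parametrised_colimits_base_change} carries the adjunction $\pointProjection_!\dashv\pointProjection^*$ for $f_*\udl{\D}$ to $f_*$ of the adjunction $(f^*\pointProjection)_!\dashv(f^*\pointProjection)^*$ for $\udl{\D}$. Since $f_*\colon\cmonoid(\cat_{\B})\to\cmonoid(\cat_{\B})$ preserves products it carries the tensor of $\udl{\D}$ to that of $f_*\udl{\D}$, and since it preserves adjunctions it carries the unit and counit of $\pointProjection_!\dashv\pointProjection^*$ to the corresponding $2$-cells for $f_*\udl{\D}$; feeding all of this through the functoriality of the Beck--Chevalley construction (\cref{defn:beck_chevalley_transformation}, cf.\ \cite[$\S2.2$]{CarmeliSchlankYanovski2022}) identifies the projection-formula transformation $\projectionformula^{\udl{X}}_!$ of $f_*\udl{\D}$ with the pushforward along $f_*$ of $\projectionformula^{f^*\udl{X}}_!$ of $\udl{\D}$ (up to the precomposition with a unit that appears in \cref{lem:natural_equivalences_preserved_under_geometric_morphisms}, and after repackaging the transformation's two variables as a single functor-category variable). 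Then \cref{lem:natural_equivalences_preserved_under_geometric_morphisms} gives the biconditional at the level of the Beck--Chevalley condition; that $f_*\udl{\D}$ admits $\udl{X}$-shaped colimits whenever $\udl{\D}$ admits $f^*\udl{X}$-shaped ones is \cref{lem:parametrised_colimits_base_change}, and the reverse transfer of colimit-existence is handled exactly as in (2).

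For (2), the implication ``$(f^*\uniqueMapX)_!$ exists $\Rightarrow\uniqueMapX_!$ exists'' holds for an arbitrary geometric morphism: $f_*$ preserves adjunctions, and by the restriction-functor part of the proof of \cref{lem:parametrised_colimits_base_change} (which uses no adjoints) the equivalence of \cref{lem:pushforwar_fun_manoeuvre} identifies $f_*\bigl((f^*\uniqueMapX)^*\bigr)$ with $\uniqueMapX^*\colon f_*\udl{\D}\to\udl{\func}(\udl{X},f_*\udl{\D})$, so a left adjoint of the former pushes forward to one of the latter---and symmetrically with right adjoints for $\uniqueMapX_*$. For the converse I would use full faithfulness: if $\uniqueMapX^*\simeq f_*\bigl((f^*\uniqueMapX)^*\bigr)$ admits a left adjoint $L$ in $\cat_{\B}$, then applying the $2$-functor $f^*$ to this adjunction and using that the counit $f^*f_*\to\id$ is an equivalence (as $f_*$ is fully faithful) produces a left adjoint of $(f^*\uniqueMapX)^*$, so $(f^*\uniqueMapX)_!$ exists; replacing left by right adjoints throughout gives the $\uniqueMapX_*$ statement. (Alternatively: since $f_*$ is fully faithful, $f^*\colon\B\to\B$ is essentially surjective, so the pointwise left-adjoint criterion \cite[Prop.~3.2.9]{MartiniWolf2024} for the two restriction functors $\uniqueMapX^*$ and $(f^*\uniqueMapX)^*$ becomes the same condition.)

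For (3), \'etaleness lets me use \cref{lem:restriction_fun_manoeuvre} for a symmetric monoidal equivalence $f^*\udl{\func}(\udl{X},\udl{\E})\simeq\udl{\func}(f^*\udl{X},f^*\udl{\E})$ and \cref{lem:parametrised_colimits_etale_base_change} to see that $f^*\udl{\E}$ admits $f^*\udl{X}$-shaped colimits with adjoint triple $f^*$ of that for $\udl{\E}$; since $f^*$ is symmetric monoidal (it preserves finite products) and preserves adjunctions, the Beck--Chevalley bookkeeping of (1) identifies $\projectionformula^{f^*\udl{X}}_!$ of $f^*\udl{\E}$ with $f^*$ applied to $\projectionformula^{\udl{X}}_!$ of $\udl{\E}$, which is an equivalence by hypothesis---and since this is an identification of the \emph{entire} natural transformation, the conclusion holds for every $\xi\in(f^*\udl{\E})^{f^*\udl{X}}$, not merely those pulled back from $\udl{\E}$. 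In all three parts the one non-formal point---and where I expect the real work to lie---is this first identification step: checking that the base-change equivalences of \cref{lem:pushforwar_fun_manoeuvre,lem:restriction_fun_manoeuvre} are compatible with the unit/counit $2$-cells entering the Beck--Chevalley construction, so that the projection-formula transformation is genuinely carried to ($f_*$ or $f^*$ of) the original one and not merely to some transformation sharing its source and target.
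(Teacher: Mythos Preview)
Your proposal is correct and follows essentially the same route as the paper: for (1) and (3) you transport the projection-formula Beck--Chevalley map across the base-change equivalences of \cref{lem:pushforwar_fun_manoeuvre,lem:restriction_fun_manoeuvre} and the adjunction identifications of \cref{lem:parametrised_colimits_base_change,lem:parametrised_colimits_etale_base_change}, then invoke \cref{lem:natural_equivalences_preserved_under_geometric_morphisms} (resp.\ that $f^*$ of an equivalence is an equivalence), and for (2) you use that $f_*$ and $f^*$ preserve adjunctions together with $f^*f_*\simeq\id$---exactly the paper's argument. Your closing caveat about compatibility of the base-change equivalences with the unit/counit $2$-cells is well taken and is indeed left implicit in the paper as well.
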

\begin{proof}
    For (1), by the symmetric monoidal identification \cref{lem:pushforwar_fun_manoeuvre} and the identification of adjunctions \cref{lem:parametrised_colimits_base_change}, we see that for a fixed $A\in\udl{\D}$, applying $f_*$ to the  projection formula transformation on the left in
    \begin{center}
        \begin{tikzcd}
            \udl{\func}(f^*\udl{X},\udl{\D})\ar[rrr,phantom, "\Downarrow \mathrm{PF}"] \ar[rrr, bend left = 30, "(f^*\uniqueMapX)_!(-\otimes (f^*\uniqueMapX)^*A)"{xshift=0pt}]\ar[rrr, bend right = 30, "(f^*\uniqueMapX)_!(-)\otimes A"']&&& \udl{\D} & \udl{\func}(\udl{X},f_*\udl{\D})\ar[rrr,phantom, "\Downarrow \mathrm{PF}"] \ar[rrr, bend left = 28, "\uniqueMapX_!(-\otimes \uniqueMapX^*A)"{xshift=0pt}]\ar[rrr, bend right = 28, "\uniqueMapX_!(-)\otimes f_*A"']&&& f_*\udl{\D}
        \end{tikzcd}
    \end{center}
    yields the projection formula transformation on the right. Thus, by \cref{lem:natural_equivalences_preserved_under_geometric_morphisms}, we see that the left projection formula transformation  is an equivalence if and only if the right one is.

    For (2), that the existence of $(f^*\uniqueMapX)_!$ implies the existence of $\uniqueMapX_!$ is by \cref{lem:parametrised_colimits_base_change}. For the converse, we use again the diagram \cref{lem:parametrised_colimits_base_change} together with the fact $f^*$ preserves adjunctions by \cite[Cor. 3.1.9]{MartiniWolf2024} and that $f^*f_*\simeq \id$ by fully faithfulness.

    Part (3) is proved similarly as in (1), but using \cref{lem:restriction_fun_manoeuvre} and \cref{lem:parametrised_colimits_etale_base_change} instead.
\end{proof}

\subsubsection*{Presentability}

Presentable categories are useful for many reasons, among them being that they have all (co)limits, fulfill the adjoint functor theorem, and have a symmetric monoidal structure coming from the Lurie tensor product. Presentability in the parametrised context was first studied in \cite{Nardin2017Thesis} and later on in \cite{kaifPresentable}. Subsequently,  Martini-Wolf  \cite{MartiniWolf2022Presentable} introduced and developed a much more general theory for $
\category{B}$--categories, and this is the theory that we will use. 
Recall that presentable categories are usually large categories. To talk about presentable $\category{B}$--categories, we define a \textit{large $\category{B}$--category} to be a sheaf of large categories on $\category{B}$, i.e. a limit preserving functor $\category{B}\op \rightarrow \largecat$. The very large category of large $\category{B}$--categories will be denoted by $\largecat_{\category{B}}$.  

\begin{defn}\label{def:parametrised_presentablility}
    A $\category{B}$--category $\udl{\category{C}}$ is called \textit{fibrewise presentable} if the map $\udl{\category{C}} \colon \category{B}\op \to \largecat$ factors through $\presentable^L \subset \largecat$.
    Furthermore, $\udl{\category{C}}$ is called \textit{presentable} if it is fibrewise presentable and the following conditions hold:
    \begin{enumerate}
        \item For any map $f \colon X \to Y$ in $\category{B}$ the map $f^* \colon \category{C}(\udl{Y}) \to \category{C}(\udl{X})$ admits a left adjoint $f_!$.
        \item For any pullback square
        \begin{equation}\label{diag:base_change_square}
        \begin{tikzcd}
            \udl{X'} \ar[r, "g'"] \ar[d, "f'"]
            & \udl{X} \ar[d, "f"] 
            \\
            \udl{Y'} \ar[r, "g"]
            & \udl{Y}
        \end{tikzcd}
        \end{equation}
        in $\category{B}$ the Beck--Chevalley transformation $ f'_! (g')^* \to g^* f_!$ between functors $\category{C}(\udl{X}) \to \category{C}(\udl{Y'})$ is an equivalence.
        \end{enumerate}
\end{defn}

The above definition was chosen because it is easy to state, but there are many useful ways to characterise presentable $\category{B}$--categories, see \cite[Thm. 6.2.4]{MartiniWolf2022Presentable}.

\begin{defn}\label{def:parametrised_colimit_preserving}
    A map $F \colon \udl{\category{C}} \to \udl{\category{D}}$ between presentable $\category{B}$--categories is said to \textit{preserve $\category{B}$-colimits} if it satisfies the following conditions:
    \begin{enumerate}
        \item For any object $\udl{X} \in \category{B}$ the map $F(\udl{X}) \colon \category{C}(\udl{X}) \to \category{D}(\udl{X})$ preserves colimits.
        \item For any map $f \colon \udl{X} \to \udl{Y}$ in $\category{B}$ the Beck--Chevalley transformation $f_! F(\udl{X}) \to F(\udl{Y}) f_!$ is an equivalence.
    \end{enumerate}
\end{defn}

If $\udl{\category{C}}$ is presentable, then $f^* \colon \category{C}(\udl{Y}) \to \category{C}(\udl{X})$ also admits a right adjoint $f_*$.
By passing to right adjoints one can see that for any pull back square \cref{diag:base_change_square} the Beck--Chevalley transformation $g^* f_* \to (f')_* (g')^*$ is an equivalence, see e.g. \cite[Observation 1.6.2]{Haine2022}.

\begin{defn}
    Denote by $\presentable^L_{\category{B}}$ the (nonfull) subcategory of $\largecat_{\category{B}}$ of presentable $\category{B}$--categories and $\category{B}$-colimit preserving functors.
    We write $\udl{\func}^L_{\category{B}}(\category{C}, \category{D})$ for the full $\category{B}$-subcategory of $\internalfunc{\category{B}}(\category{C}, \category{D})$ of colimit preserving functors.
    The subcategories $\presentable^L_{\category{B}_{/X}} \subset \largecat_{\category{B}_{/X}}$ assemble into the $\category{B}$--category $\udl{\presentable}^L_{\category{B}} \subset \udl{\largecat}_{\category{B}}$ of presentable $\category{B}$--categories.
\end{defn}

The $\category{B}$--category $\udl{\presentable}^L_{\category{B}}$ admits all $\category{B}$-limits and $\category{B}$-colimits \cite[Cor. 6.4.11.]{MartiniWolf2022Presentable}. Moreover, for two presentable $\category{B}$--categories $\udlcatC$ and $\udlcatD$, their functor category $\internalfunc{\category{B}}^L(\udlcatC,\udlcatD)$ is presentable.

\begin{cons}[Tensor product of presentable categories]
    Given two presentable $\category{B}$--categories $\udl{\category{C}}$ and $\udl{\category{D}}$, their tensor product $\udl{\category{C}} \otimes \udl{\category{D}}$ is a presentable $\category{B}$--category together with a functor $\udl{\category{C}} \times \udl{\category{D}} \to \udl{\category{C}} \otimes \udl{\category{D}}$ which preserves colimits in each variable such that precomposition along it induces an equivalence $\func_{\category{B}}^L(\udl{\category{C}} \otimes \udl{\category{D}}, \udl{\category{E}}) \xrightarrow{\simeq} \func_{\category{B}}^L(\udl{\category{C}}, \internalfunc{\category{B}}^L(\udl{\category{D}}, \udl{\category{E}}))$ for any presentable $\category{B}$--category $\udl{\category{E}}$. This equips $\presentable_{\category{B}}^L$ with the structure of a closed symmetric monoidal category.   It can even be extended to a symmetric monoidal structure on the $\category{B}$--category $\udl{\presentable}_{\category{B}}^L$, see \cite[Proposition 8.2.9]{MartiniWolf2022Presentable}.
    Furthermore, the tensor product $\-- \otimes \-- \colon \udl{\presentable}_{\category{B}}^L \times \udl{\presentable}_{\category{B}}^L \to \udl{\presentable}_{\category{B}}^L$ preserves $\category{B}$-colimits in each variable. 
\end{cons}

\begin{defn}
    A \textit{presentably symmetric monoidal $\category{B}$--category} is a commutative algebra $\udl{\category{C}} \in \calg(\presentable^L_{\category{B}})$.
    Explicitly, this means that $\udl{\category{C}}$ is a symmetric monoidal $\udl{B}$--category which is presentable such that the tensor product $\-- \otimes \-- \colon \udl{\category{C}} \times \udl{\category{C}} \to \udl{\category{C}}$ preserves $\category{B}$-colimits in both variables.
    The latter condition means that:
    \begin{enumerate}
        \item For all $\obj{X} \in \category{B}$ the tensor product $\category{C}(\obj{X}) \times \category{C}(\obj{X}) \to \category{C}(\obj{X})$ preserves colimits.
        \item For all maps $f \colon \obj{X} \to \obj{Y}$ in $\category{B}$ and all $A \in \category{C}(\obj{X})$ and $B \in \category{C}(\obj{Y})$ the Beck--Chevalley transformation $f_!(A \otimes f^* B) \to f_! A \otimes B$ is an equivalence.
    \end{enumerate}
\end{defn}

To construct examples of presentable $\category{B}$--categories, the following proposition is useful.

\begin{prop}[{\cite[Section 8.3]{MartiniWolf2022Presentable}}]\label{cons:embedding_presentable_into_parametrised_presentable_categories}
    There is a symmetric monoidal colimit preserving fully faithful functor $\-- \otimes_{\category{B}} \Omega \colon \module_{\category{B}}(\presentable^L) \hookrightarrow \presentable^L_{\category{B}}$
    whose right adjoint $\Gamma^{\mathrm{lin}}$ refines the global sections functor $\Gamma \colon \presentable^L_{\category{B}} \to \presentable^L$.
\end{prop}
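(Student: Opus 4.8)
The plan is to recover this (it is \cite[Section 8.3]{MartiniWolf2022Presentable}) from the symmetric monoidal structure on $\udl{\presentable}^L_{\category{B}}$ together with the general theory of modules over a monoidal unit, so I will only outline the argument. First I would fix notation: write $\Omega \coloneqq \unit_{\presentable^L_{\category{B}}}$ for the monoidal unit of $\presentable^L_{\category{B}}$, namely the $\category{B}$--category of $\category{B}$--groupoids, whose sections are $\Omega(\obj{X}) \simeq \category{B}_{/\obj{X}}$ and whose global sections $\Gamma(\Omega) \simeq \category{B}_{/\ast} \simeq \category{B}$ recover the base topos. Since colimits in a topos are universal, $(\category{B}, \times)$ is a commutative algebra object of $\presentable^L$, so $\module_{\category{B}}(\presentable^L)$ is itself presentably symmetric monoidal, with unit the free module $\category{B}$ and tensor product the relative tensor $\otimes_{\category{B}}$.

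Next I would invoke the symmetric monoidal global sections adjunction $\const \colon \presentable^L \rightleftharpoons \presentable^L_{\category{B}} \cocolon \Gamma$ (with $\const$ strong monoidal and $\Gamma$ lax monoidal; this refines the unparametrised one of \cref{example:constant_and_global_sections}). Because $\Gamma(\Omega) \simeq \category{B}$, the functor $\Gamma$ refines canonically to $\Gamma^{\mathrm{lin}} \colon \presentable^L_{\category{B}} \to \module_{\category{B}}(\presentable^L)$ lying over $\Gamma$ along the monadic forgetful functor; since $\Gamma$ is evaluation at $\ast \in \category{B}$ it preserves all colimits, and as the forgetful functor out of $\module_{\category{B}}(\presentable^L)$ creates them, $\Gamma^{\mathrm{lin}}$ preserves colimits too. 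Dually, I would construct the candidate left adjoint $\mathcal{M} \mapsto \mathcal{M} \otimes_{\category{B}} \Omega$ as the relative tensor against $\Omega$, the latter regarded as a $\const(\category{B})$--module via the algebra counit $\const(\category{B}) = \const\Gamma(\Omega) \to \Omega$. The ``module over the unit'' formalism then yields that this is a colimit-preserving symmetric monoidal functor right adjoint to $\Gamma^{\mathrm{lin}}$; on sections it is computed by $(\mathcal{M} \otimes_{\category{B}} \Omega)(\obj{X}) \simeq \mathcal{M} \otimes_{\category{B}} \category{B}_{/\obj{X}}$, so on a free module it returns $(\category{B} \otimes \mathcal{C}) \otimes_{\category{B}} \Omega \simeq \const(\mathcal{C})$.

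It then remains to prove full faithfulness, i.e. that the unit $\mathcal{M} \to \Gamma^{\mathrm{lin}}(\mathcal{M} \otimes_{\category{B}} \Omega)$ is an equivalence. Both sides preserve colimits in $\mathcal{M}$, and every $\category{B}$--module is a geometric realisation of free ones (e.g. its canonical bar resolution by the free modules $\category{B}^{\otimes(\bullet+1)} \otimes \mathcal{M}$), so I would reduce to $\mathcal{M} = \category{B} \otimes \mathcal{C}$. There $\mathcal{M} \otimes_{\category{B}} \Omega \simeq \const(\mathcal{C})$ has sections $\mathcal{C} \otimes \category{B}_{/\obj{X}}$, so $\Gamma^{\mathrm{lin}}(\const(\mathcal{C}))$ has underlying category $\mathcal{C} \otimes \category{B}$ with the evident $\category{B}$--action, i.e. is the free module $\category{B} \otimes \mathcal{C}$, and under this identification the unit is the identity; this finishes the proof.

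The step I expect to be the main obstacle is the middle one: making the ``module over the monoidal unit'' machinery precise enough that relative tensoring against $\Omega$ is genuinely symmetric monoidal and genuinely right adjoint to $\Gamma^{\mathrm{lin}}$ --- concretely, that $\const(\category{B}) \to \Omega$ exhibits $\Omega$ as an idempotent $\const(\category{B})$--algebra --- together with pinning down the compatibility of the monoidal unit $\Omega$ of $\presentable^L_{\category{B}}$ with global sections and with the $\presentable^L$--linearity of $\Gamma$. All of this is worked out in \cite[Section 8]{MartiniWolf2022Presentable}.
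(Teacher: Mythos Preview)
The paper does not give its own proof of this proposition: it is stated purely as a citation to \cite[Section 8.3]{MartiniWolf2022Presentable}, with no argument supplied. So there is nothing in the paper to compare your proposal against; the authors simply import the result wholesale.

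Your outline is a reasonable sketch of how the Martini--Wolf argument goes, and you correctly identify the delicate point (that $\const(\category{B}) \to \Omega$ exhibits $\Omega$ as idempotent, which is exactly what makes the relative tensor construction symmetric monoidal and fully faithful). One small slip: in the middle paragraph you write that $\mathcal{M} \mapsto \mathcal{M} \otimes_{\category{B}} \Omega$ is ``right adjoint to $\Gamma^{\mathrm{lin}}$''; it is the \emph{left} adjoint, as you yourself say a sentence earlier and as the statement asserts. Otherwise the strategy --- lift $\Gamma$ to $\module_{\category{B}}(\presentable^L)$ via $\Gamma(\Omega)\simeq\category{B}$, build the left adjoint by relative tensoring, and check full faithfulness on free modules using a bar resolution --- is sound and is essentially what happens in the cited reference.
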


\begin{lem}[Presentablility and basechange]\label{lem:lax_monoidal_base_change_geometric_morphism}
    Let $f^* \colon \category{B} \rightleftharpoons \category{B}' \cocolon f_*$ be a geometric morphism.
    Then the functor $f_* \colon \largecat_{\category{B}'} \to \largecat_{\category{B}}$ restricts to a functor $f_* \colon \presentable^L_{\category{B}'} \to \presentable^L_{\category{B}}$.
    It admits a unique lax symmetric monoidal refinement $f_*^\otimes \colon \presentable^{L, \otimes}_{\category{B}'} \to \presentable^{L, \otimes}_{\category{B}}$ lifting the lax symmetric monoidal functor $\presentable^{L, \otimes}_{\category{B}'} \to \largecat_{\category{B}'}^\times \xrightarrow{f_*} \largecat_{\category{B}}^\times$ along the lax symmetric monoidal functor $\presentable^{L, \otimes}_{\category{B}} \to \largecat_{\category{B}}^\times$.
\end{lem}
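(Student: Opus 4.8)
The plan is to establish the two assertions in turn: that $f_*$ carries presentable $\category{B}'$--categories and $\category{B}'$--colimit preserving functors to their $\category{B}$--analogues, and then that the resulting functor admits a unique lax symmetric monoidal refinement. For the first assertion I would argue directly from the explicit description of presentability in \cref{def:parametrised_presentablility}. Recall from \cref{cons:base_change_cat_geometric_morphism} that $f_*$ is restriction along $(f^*)\op$, so that $(f_*\udl{\sC})(X)=\udl{\sC}(f^*X)$ for $X\in\category{B}$ and $(f_*\udl{\sC})$ acts on a morphism $\phi$ of $\category{B}$ as $\udl{\sC}$ does on $f^*\phi$. Given this, fibrewise presentability of $f_*\udl{\sC}$ is immediate; the left adjoints $f_!$ exist because $f^*\phi$ is again a morphism of $\category{B}'$ and $\udl{\sC}$ is presentable; and the Beck--Chevalley condition of \cref{def:parametrised_presentablility}(2) holds because $f^*\colon\category{B}\to\category{B}'$ is left exact and hence sends pullback squares in $\category{B}$ to pullback squares in $\category{B}'$. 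The same reduction against \cref{def:parametrised_colimit_preserving} shows that $f_*$ sends $\category{B}'$--colimit preserving functors to $\category{B}$--colimit preserving ones; alternatively one can invoke the preservation of adjunctions \cite[Cor. 3.1.9]{MartiniWolf2024}.

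For the monoidal refinement, I would first note that $f_*\colon\largecat_{\category{B}'}^\times\to\largecat_{\category{B}}^\times$ preserves finite products --- again because it is computed fibrewise and $f^*$ is left exact --- and is therefore canonically strong symmetric monoidal for the cartesian structures. The forgetful functor $U_{\category{B}}\colon\presentable^{L,\otimes}_{\category{B}}\to\largecat_{\category{B}}^\times$ is lax symmetric monoidal, its structure maps being the universal $\category{B}$--bilinear (and more generally $\category{B}$--multilinear) functors $\udl{\sC}\times\udl{\D}\to\udl{\sC}\otimes\udl{\D}$. The key input I would use is that $U_{\category{B}}$ \emph{detects} lax symmetric monoidal structures: precomposition with $U_{\category{B}}$ identifies the space of lax symmetric monoidal functors $\mathcal{E}^\otimes\to\presentable^{L,\otimes}_{\category{B}}$ with the space of lax symmetric monoidal functors $\mathcal{E}^\otimes\to\largecat_{\category{B}}^\times$ whose underlying functor factors through $\presentable^L_{\category{B}}\hookrightarrow\largecat_{\category{B}}$ via $\category{B}$--colimit preserving functors and whose lax structure maps are $\category{B}$--multilinear. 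This is a formal consequence of the universal property of $\otimes_{\category{B}}$ (cf. the construction of the tensor product of presentable $\category{B}$--categories and \cite[\S8.2]{MartiniWolf2022Presentable}), a $\category{B}$--multilinear map out of a finite product of presentable $\category{B}$--categories being the same datum as a $\category{B}$--colimit preserving map out of their tensor product; it may also be bootstrapped from the unparametrised analogue.

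To conclude, I would apply this with $\mathcal{E}^\otimes=\presentable^{L,\otimes}_{\category{B}'}$ to the composite $f_*\circ U_{\category{B}'}$, which is lax symmetric monoidal (a lax monoidal functor followed by a strong one). Its underlying functor factors through $\presentable^L_{\category{B}}$ by the first step, and its lax structure maps are $f_*$ applied to the universal $\category{B}'$--multilinear maps; these are again $\category{B}$--multilinear by the same fibrewise-plus-Beck--Chevalley reduction used in the first step (left exactness of $f^*$ turns the relevant pullback squares of $\category{B}$ into pullback squares of $\category{B}'$). The detection statement then yields the essentially unique lift $f_*^\otimes\colon\presentable^{L,\otimes}_{\category{B}'}\to\presentable^{L,\otimes}_{\category{B}}$ along $U_{\category{B}}$, and uniqueness is part of the same identification.

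The step I expect to be the main obstacle is precisely the detection statement for $U_{\category{B}}$ invoked in the second paragraph: one must check that the cartesian-monoidal coherence data of $f_*\circ U_{\category{B}'}$, together with the bilinearity of its structure maps, genuinely assembles into $\otimes_{\category{B}}$--lax monoidal data with all higher coherences and not merely on objects and $1$--morphisms. Here it is cleanest either to read this off from the construction of $\presentable^{L,\otimes}_{\category{B}}$ in \cite[\S8.2]{MartiniWolf2022Presentable} or to transport the unparametrised statement; the remaining verifications are routine bookkeeping with Beck--Chevalley transformations under the left-exact $f^*$.
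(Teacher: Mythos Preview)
Your proposal is essentially the same approach as the paper's. Both arguments verify the restriction to $\presentable^L$ directly from \cref{def:parametrised_presentablility} and \cref{def:parametrised_colimit_preserving} using that $f_*$ is precomposition with the left exact $f^*$; both then observe that $f_*$ is product-preserving and hence symmetric monoidal for the cartesian structures, and finish by invoking the description of $\presentable^{L,\otimes}_{\category{B}}$ from \cite[\S8.2]{MartiniWolf2022Presentable} in terms of multilinear functors.

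One point the paper makes explicit that you gloss over: the operad $\presentable^{L,\otimes}_{\category{B}}\looparrowright\largecat_{\category{B}}^\times$ is built from \emph{locally} multilinear functors, not just multilinear ones (multilinearity is tested after pulling back along effective epimorphisms). Your fibrewise-plus-Beck--Chevalley reduction shows that $f_*$ preserves multilinear functors, but to pass to locally multilinear functors you need that $f^*\colon\category{B}\to\category{B}'$ preserves effective epimorphisms. The paper notes this follows because $f^*$ preserves both colimits and finite limits. This is exactly the coherence obstacle you flagged in your final paragraph, so you have correctly located the issue; the missing ingredient is just this one observation about effective epimorphisms.
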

\begin{proof}
    Recall that $f_* \colon \largecat_{\category{B}'} \to \largecat_{\category{B}}$ is given by precomposition with $f^* \colon \category{B}\op \to (\category{B}')\op$. Hence, the first condition in \cref{def:parametrised_presentablility} is immediate. The second condition follows from the fact that $f^*\colon \category{B}\op \to (\category{B}')\op$ also preserves finite limits, and in particular pullbacks.

    For the statement about lax symmetric monoidality, we will freely use the terminologies from \cite{MartiniWolf2022Presentable}. First observe that as $f_* \colon \largecat_{\category{B}'} \to \largecat_{\category{B}}$ preserves products, it is symmetric monoidal with respect to the cartesian symmetric monoidal structure on both sides.
    Recall from \cite[Section 8.2]{MartiniWolf2022Presentable} that $\presentable^{L, \otimes}_{\category{B}'} \looparrowright \largecat_{\category{B}'}^\times$ is the subcategory generated by presentable $\category{B}'$--categories and locally multilinear functors.
    We know from the first part that $f_* \colon \largecat_{\category{B}'} \to \largecat_{\category{B}}$ preserves presentable categories and multilinear functors between those.
    As $f^* \colon \largecat_{\category{B}} \to \largecat_{\category{B}'}$ preserves colimits and finite limits it preserves effective epimorphisms.
    From this it follows that $f_*$ also preserves locally multilinear functors.
\end{proof}

\begin{lem}[Presentability and fully faithful basechange]\label{lem:base_change_presentable_fully_faithful_geometric_morphism}
    Let $f^* \colon \category{B} \rightleftharpoons \category{B}' \cocolon f_*$ be a geometric morphism of topoi and assume that $f_*$ is fully faithful.
    Then the square
    \begin{equation*}
    \begin{tikzcd}
        \presentable^L_{\category{B}'} \ar[r, "f_*"] \ar[d, loop->]
        & \presentable^L_{\category{B}} \ar[d, loop->]
        \\ \largecat_{\category{B}'} \ar[r, "f_*", hook]
        & \largecat_{\category{B}}
    \end{tikzcd}
    \end{equation*}
    is cartesian.
    In particular, $f_* \colon \presentable^L_{\category{B}'} \to \presentable^L_{\category{B}}$ is fully faithful.

    If, in addition, the image of $f_*  \colon \presentable^L_{\category{B}'} \to \presentable^L_{\category{B}}$ is closed under $\otimes$, then the maps $f_* \udl{\category{C}} \otimes f_* \udl{\category{D}} \to f_* (\udl{\category{C}} \otimes \udl{\category{D}})$ coming from the lax symmetric monoidal structure are equivalences.
\end{lem}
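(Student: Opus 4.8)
The plan is to reduce everything to the observation that $f_*$ is already fully faithful on $\largecat_{\B}$: indeed $f_* \colon \largecat_{\B'} \to \largecat_{\B}$ is precomposition along $(f^*)\op \colon \B\op \to (\B')\op$, and since $f_*$ is fully faithful its left adjoint $f^*$ is (equivalent to) a reflective localization, so $(f^*)\op$ is a localization and precomposition along a localization is fully faithful. Granting this, the right leg $\largecat_{\B'} \hookrightarrow \largecat_{\B}$ of the defining cospan is fully faithful, so the pullback $\presentable^L_{\B} \times_{\largecat_{\B}} \largecat_{\B'}$ is equivalent to the full subcategory $\mathcal{E} \subseteq \presentable^L_{\B}$ spanned by the presentable $\B$--categories lying in the essential image of $f_*$. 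Hence proving the square cartesian amounts to showing that $f_* \colon \presentable^L_{\B'} \to \presentable^L_{\B}$ is fully faithful with essential image exactly $\mathcal{E}$, and the ``in particular'' is then automatic.

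For essential surjectivity: suppose $\udl{\sD} \in \largecat_{\B'}$ is such that $f_* \udl{\sD}$ is presentable as a $\B$--category; I want to verify the three conditions of \cref{def:parametrised_presentablility} for $\udl{\sD}$. The point is that, since the counit $f^* f_* \to \id_{\B'}$ is an equivalence and $f^*$ is left exact, every object of $\B'$, every morphism of $\B'$, and every pullback square in $\B'$ is equivalent to the image under $f^*$ of the corresponding diagram in $\B$ obtained by applying $f_*$ (using for the last one that the right adjoint $f_*$ preserves pullbacks); evaluating $\udl{\sD}$ on such an $f^*$--diagram recovers the evaluation of $f_* \udl{\sD}$ on the original $\B$--diagram, together with its transition functors and Beck--Chevalley comparisons. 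Fibrewise presentability, the existence of the relative left adjoints $f_!$, and the Beck--Chevalley equivalences for $\udl{\sD}$ therefore follow from those for $f_* \udl{\sD}$. For fully faithfulness, since $f_*$ is already fully faithful on $\largecat_{\B}$, it suffices to check that a $\B'$--functor $F$ between presentable $\B'$--categories preserves $\B'$--colimits if and only if $f_* F$ preserves $\B$--colimits; the forward implication is part of \cref{lem:lax_monoidal_base_change_geometric_morphism}, while the reverse is obtained by transporting the two conditions of \cref{def:parametrised_colimit_preserving} along the equivalences $X' \simeq f^* f_* X'$ exactly as above.

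For the last statement, assume in addition $\mathcal{E}$ is closed under $\otimes_{\B}$. The comparison map $\mu \colon f_* \udl{\sC} \otimes_{\B} f_* \udl{\sD} \to f_*(\udl{\sC} \otimes_{\B'} \udl{\sD})$ has both source (by this hypothesis) and target in $\mathcal{E}$; since $\mathcal{E} \hookrightarrow \presentable^L_{\B}$ is fully faithful and $f_* \colon \presentable^L_{\B'} \xrightarrow{\simeq} \mathcal{E}$ is the equivalence just established, $\mu$ is an equivalence as soon as, for every $\udl{\mathcal F} \in \presentable^L_{\B'}$, the precomposition map $\map_{\presentable^L_{\B}}(f_*(\udl{\sC} \otimes_{\B'} \udl{\sD}), f_* \udl{\mathcal F}) \to \map_{\presentable^L_{\B}}(f_* \udl{\sC} \otimes_{\B} f_* \udl{\sD}, f_* \udl{\mathcal F})$ is an equivalence. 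Using the universal properties of the two relative tensor products, that $f_*$ preserves finite products, the full faithfulness of $f_*$ on $\presentable^L$ from the first part, and the description of $\mu$ as induced by $f_*$ applied to the universal bilinear functor $\udl{\sC} \times \udl{\sD} \to \udl{\sC} \otimes_{\B'} \udl{\sD}$, this map is identified with the restriction to spaces of bilinear functors of the equivalence $\map_{\largecat_{\B'}}(\udl{\sC} \times \udl{\sD}, \udl{\mathcal F}) \simeq \map_{\largecat_{\B}}(f_* \udl{\sC} \times f_* \udl{\sD}, f_* \udl{\mathcal F})$; as $f_*$ both preserves bilinearity (\cref{lem:lax_monoidal_base_change_geometric_morphism}) and reflects it (same argument as in the previous paragraph), this restriction is an equivalence, and we are done.

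I expect the genuine difficulty here to be purely organisational rather than conceptual: the two ``transport along $f^* f_* \simeq \id$'' arguments must be carried out with some care for the transition functors and the Beck--Chevalley transformations (not just for objects), and in the last paragraph one must check that $\mu$ really is identified with the evident map on bilinear functors, which requires unwinding the construction of the lax symmetric monoidal refinement $f_*^\otimes$ from \cref{lem:lax_monoidal_base_change_geometric_morphism}. Alternatively, the final assertion can be phrased more abstractly as the general fact that a fully faithful lax symmetric monoidal functor whose essential image is closed under the tensor product is symmetric monoidal.
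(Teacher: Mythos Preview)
Your proposal is correct and follows essentially the same approach as the paper: both arguments reduce the cartesian square to transporting the conditions of \cref{def:parametrised_presentablility} and \cref{def:parametrised_colimit_preserving} along the counit equivalence $f^*f_* \simeq \id$, and both handle the lax monoidal comparison by showing that $f_*$ induces an equivalence on (bi)linear functors into objects of the form $f_*\udl{\E}$ and then applying Yoneda. Your framing in terms of the full subcategory $\mathcal{E}$ and your closing abstract remark are helpful organisational touches, but the mathematical content is the same.
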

\begin{proof}
    We have to show that a $\category{B}'$--category $\udl{\category{C}}$ is presentable if $f_* \udl{\category{C}}$ is presentable and similarly that a $\category{B}'$-functor $F \colon \udl{\category{C}} \to \udl{\category{D}}$ is $\category{B}'$-colimit preserving if $f_* F$ is $\category{B}$-colimit preserving.
    Notice that the counit map $f^* f_* \to \id$ is an equivalence as $f_*$ is fully faithful.
    This implies that for $\obj{X} \in \category{B}$ we have $f_* \udl{\category{C}}(f_*\obj{X}) = \category{C}(f^* f_* \obj{X}) \simeq \category{C}(\obj{X})$.
    The statements about presentablility and colimit preservation now directly follow from the definitions. 
    
    For the statement about the lax monoidal multiplication map, observe that for presentable $\category{B}'$--categories $\udl{\category{C}}, \udl{\category{D}}, \udl{\category{E}}$, the functor $f_*$ induces an equivalence between $\category{B}'$-multilinear functors $\udl{\category{C}} \times \udl{\category{D}} \to \udl{\category{E}}$ and $\category{B}$-multilinear functors $f_*\udl{\category{C}} \times f_*\udl{\category{D}} \to f_*\udl{\category{E}}$:
    It is clear that if $g$ is multilinear, then $f_* g$ is multilinear while the converse follows from essential surjectivity of $f^*$.
    In particular, precomposition along the map $f_* \udl{\category{C}} \otimes f_* \udl{\category{D}} \to f_* (\udl{\category{C}} \otimes \udl{\category{D}})$ induces an equivalence
    \begin{equation*}
        \func^L(f_* (\udl{\category{C}} \otimes \udl{\category{D}}), f_* \udl{\category{E}}) \xrightarrow{\simeq } 
        \func^L(f_* \udl{\category{C}} \otimes f_*\udl{\category{D}}, f_* \udl{\category{E}})
    \end{equation*}
    from which the claim follows.
\end{proof}

\begin{lem}[Presentability and \'etale basechange]\label{lem:lax_monoidal_base_change_etale_morphism}
    For $X\in\category{B}$, the basechange adjunction $\pi_X^* \colon \largecat_{\category{B}} \rightleftharpoons \largecat_{\category{B}_{/X}} \cocolon (\pi_X)_*$ along the \'etale geometric morphism $\pi_X^* \colon \category{B} \rightleftharpoons \category{B}_{/X} \cocolon (\pi_X)_*$ restricts to an adjunction $\pi_X^* \colon \presentable^L_{\category{B}} \rightleftharpoons \presentable^L_{\category{B}_{/X}} \cocolon (\pi_X)_*$.
    The left adjoint $\pi_X^* \colon \presentable^L_{\category{B}} \to \presentable^L_{\category{B}_{/X}}$ admits a unique symmetric monoidal refinement which lifts the lax symmetric monoidal functor $\presentable^L_{\category{B}} \to \largecat_{\category{B}} \xrightarrow{\pi_X^*} \largecat_{\category{B}_{/X}}$.
\end{lem}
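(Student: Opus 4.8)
The plan is to treat the two assertions — that the adjunction $\pi_X^* \dashv (\pi_X)_*$ restricts to presentable categories, and that $\pi_X^*$ admits a unique strong symmetric monoidal refinement — by two essentially independent arguments: the pushforward side is already supplied by \cref{lem:lax_monoidal_base_change_geometric_morphism}, the pullback side is an elementary unwinding of the slice topos $\category{B}_{/X}$, and the monoidality is read off from the fact (recalled in the excerpt, following \cite[Prop.~8.2.9]{MartiniWolf2022Presentable}) that $\udl{\presentable}^L_{\category{B}}$ is itself a symmetric monoidal $\category{B}$--category.

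First I would record the pointwise formula for the \'etale inverse image: since $(\pi_X)_!\colon \category{B}_{/X}\to\category{B}$ sends $a\colon A\to X$ to $A$, one has $(\pi_X^*\udl{\category{C}})(a)\simeq\udl{\category{C}}(A)$ naturally in $a$ — on sheaves of spaces this is $\map_{\category{B}}((\pi_X)_!a, -)$, and it persists for $\cat$--valued sheaves since $\pi_X^*$ is applied levelwise to the associated complete Segal objects (\cref{lem:geometric_morphism_induce_left_exact_morphisms_on_complete_segal_objects}). Granting this, checking that $\pi_X^*$ preserves presentable categories and $\category{B}$--colimit--preserving functors is routine: fibrewise presentability (resp.\ colimit preservation) is inherited value by value; a morphism of $\category{B}_{/X}$ is just a morphism $g\colon A\to B$ of $\category{B}$ over $X$, so conditions (1) of \cref{def:parametrised_presentablility} and \cref{def:parametrised_colimit_preserving} for $\pi_X^*\udl{\category{C}}$ are those for $\udl{\category{C}}$; and the forgetful functor $\category{B}_{/X}\to\category{B}$ preserves pullbacks, so the Beck--Chevalley conditions (2) transfer verbatim. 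Since $(\pi_X)_*$ already preserves presentability and colimit--preservation by \cref{lem:lax_monoidal_base_change_geometric_morphism}, it only remains to see the unit and counit of $\pi_X^* \dashv (\pi_X)_*$ have $\category{B}$-- (resp.\ $\category{B}_{/X}$--) colimit--preserving components: for $\udl{\category{C}}\in\presentable^L_{\category{B}}$ the unit component at $A$ is restriction $\udl{\category{C}}(A)\to\udl{\category{C}}(A\times X)$ along the projection, which is a left adjoint inside a presentable $\category{B}$--category (hence colimit--preserving) and satisfies the Beck--Chevalley compatibility because the naturality square of the projections along $g\colon A\to B$ is a pullback in $\category{B}$; the counit is handled identically. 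This yields the restricted adjunction. (One could also deduce it from the excerpt's statement that $\udl{\presentable}^L_{\category{B}}$ admits all $\category{B}$--limits, once that abstract right adjoint is matched with $(\pi_X)_*$.)

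For the symmetric monoidal refinement, the plan is to \emph{identify} $\pi_X^*\colon \presentable^L_{\category{B}}\to\presentable^L_{\category{B}_{/X}}$ with the structure map (restriction functor) of the symmetric monoidal $\category{B}$--category $\udl{\presentable}^L_{\category{B}}$ along $\udl{X}\to\udl{\ast}$: its value at $\ast$ is $\presentable^L_{\category{B}_{/\ast}}\simeq\presentable^L_{\category{B}}$, its value at $X$ is $\presentable^L_{\category{B}_{/X}}$, and the transition functor along $X\to\ast$ is precisely base change along the \'etale geometric morphism $\pi_X$. Now a symmetric monoidal $\category{B}$--category is — by the excerpt's own remark — a sheaf on $\category{B}$ valued in $\cmonoid(\cat)$, so \emph{every} transition functor of it is canonically a symmetric monoidal functor; specialising to $\udl{\presentable}^L_{\category{B}}$ equips $\pi_X^*$ with a symmetric monoidal structure. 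That this lifts the prescribed lax functor is because $\udl{\presentable}^L_{\category{B}}\hookrightarrow\udl{\largecat}_{\category{B}}$ is a lax symmetric monoidal $\category{B}$--functor whose transition functors are the respective $\pi_X^*$'s, so composing the symmetric monoidal $\pi_X^*$ with the lax forgetful $\presentable^L_{\category{B}_{/X}}\to\largecat_{\category{B}_{/X}}$ reproduces the lax composite $\presentable^L_{\category{B}}\to\largecat_{\category{B}}\xrightarrow{\pi_X^*}\largecat_{\category{B}_{/X}}$. For uniqueness I would argue as in \cref{lem:lax_monoidal_base_change_geometric_morphism}: $\presentable^{L,\otimes}_{\category{B}_{/X}}\looparrowright\largecat^\times_{\category{B}_{/X}}$ is a subcategory, so any lift of the given lax functor to a strong symmetric monoidal one is unique if it exists.

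The step I expect to be the real obstacle is not the diagram chasing but the identification in the previous paragraph: verifying carefully that the $\pi_X^*$ produced from the \'etale geometric morphism genuinely \emph{is} the transition functor of $\udl{\presentable}^L_{\category{B}}$, and that the monoidal structure so obtained is the one lifting the prescribed lax functor — this amounts to unwinding the construction of $\udl{\presentable}^L_{\category{B}}$ in \cite{MartiniWolf2022Presentable} — together with tidying up the uniqueness clause. If one prefers not to invoke the monoidal structure on $\udl{\presentable}^L_{\category{B}}$, the fallback is the more computational route: use the universal property of $\otimes$ together with the \'etale base change equivalence $(\pi_X)_*\internalfunc{\category{B}_{/X}}^L(\pi_X^*\udl{\category{D}},\udl{\category{E}})\simeq\internalfunc{\category{B}}^L(\udl{\category{D}},(\pi_X)_*\udl{\category{E}})$ — the presentable enhancement of \cref{lem:pushforwar_fun_manoeuvre}, provable as there from the commuting squares of \'etale base changes — to check that the oplax structure maps $\pi_X^*(\udl{\category{C}}\otimes\udl{\category{D}})\to\pi_X^*\udl{\category{C}}\otimes\pi_X^*\udl{\category{D}}$ become equivalences after mapping into an arbitrary presentable $\udl{\category{E}}$, and separately observe that $\pi_X^*$ sends the unit of $\presentable^L_{\category{B}}$ (fibrewise $\category{B}_{/A}$) to the unit of $\presentable^L_{\category{B}_{/X}}$.
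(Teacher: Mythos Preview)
Your proposal is correct and, for the symmetric monoidal part, takes essentially the same route as the paper: both arguments extract the monoidal refinement of $\pi_X^*$ from the fact that $\udl{\presentable}^L_{\category{B}}$ is a symmetric monoidal $\category{B}$--category. You phrase this as ``transition functors of a $\cmonoid(\cat)$--valued sheaf are symmetric monoidal''; the paper phrases it as ``the adjunction unit $\udl{\presentable}^L_{\category{B}}\to(\pi_X)_*\pi_X^*\udl{\presentable}^L_{\category{B}}$ is symmetric monoidal (since $(\pi_X)_*$ preserves products), and its global sections give $\pi_X^*$''. These are the same observation viewed from two sides, and the identification you flag as the real obstacle --- matching the sheaf--theoretic $\pi_X^*$ on $\udl{\presentable}^L_{\category{B}}$ with the functor $\pi_X^*\colon\presentable^L_{\category{B}}\to\presentable^L_{\category{B}_{/X}}$ --- is likewise left implicit in the paper.

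The one genuine difference is in the first assertion. The paper dispatches the restriction of the adjunction to presentable categories by a bare citation to \cite[Cor.~2.14]{Cnossen2023}, whereas you give a direct verification: the pointwise formula $(\pi_X^*\udl{\category{C}})(a)\simeq\udl{\category{C}}((\pi_X)_!a)$, the transfer of the Beck--Chevalley conditions via pullback--preservation of $\category{B}_{/X}\to\category{B}$, and the check that the unit and counit components are $\category{B}$--colimit preserving (necessary since $\presentable^L_{\category{B}}\subset\largecat_{\category{B}}$ is not full). Your argument is self--contained and correct; the paper's is shorter but outsources the content.
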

\begin{proof}
    That $\pi_X^* \colon \largecat_{\category{B}} \rightleftharpoons \largecat_{\category{B}_{/X}} \cocolon (\pi_X)_*$ restricts to an adjuction $\pi_X^* \colon \presentable^L_{\category{B}} \rightleftharpoons \presentable^L_{\category{B}_{/X}} \colon (\pi_X)_*$ is shown in \cite[Corollary 2.14]{Cnossen2023}.
    For the statement about symmetric monoidality, note that $(\pi_X)_* \colon \largecat_{\category{B}_{/X}} \to \largecat_{\category{B}}$ is product preserving as it admits the left adjoint $(\pi_X)_!$.
    In particular, we obtain the symmetric monoidal unit map $\udl{\presentable}^L_{\category{B}} \to (\pi_X)_* \pi_X^* \udl{\presentable}^L_{\category{B}}$ which on global sections gives the desired symmetric monoidal refinement of $(\pi_X)^* \colon \presentable^L_{\category{B}} \to \presentable^L_{\category{B}_{/X}}$.
\end{proof}

\subsubsection*{\texorpdfstring{$\udl{\category{C}}$}{C}-linear categories}

Here we recall some facts about $\udl{\category{C}}$-linear categories and the classification of $\udl{\category{C}}$-linear functors from \cite[Section 2.2]{Cnossen2023}.

\begin{defn}($\udl{\category{C}}$-linear categories)
    Consider a presentably symmetric monoidal category $\udl{\category{C}} \in \calg(\presentable^L_{\category{B}})$.
    A \textit{$\udl{\category{C}}$-linear category} is a left $\udl{\category{C}}$-module in $\presentable^L_{\category{B}}$. 
    The category of $\udlcatC$-linear categories and $\udlcatC$-linear functors is defined as the category $\module_{\udl{\category{C}}}(\presentable^L_{\category{B}})$.
\end{defn}

The categories $\module_{\pi_X^* \udl{\category{C}}}(\presentable^L_{\category{B}_{/X}})$ assemble into the $\category{B}$--category $\udl{\module}_{\udl{\category{C}}}(\udl{\presentable}^L_{\category{B}})$.
The relative tensor product from \cite[Proposition 7.2.7]{MartiniWolf2022Presentable} equips this with the structure of a symmetric monoidal $\category{B}$--category which is $\category{B}$-complete and $\category{B}$-cocomplete such that the tensor product is bilinear.
This symmetric monoidal structure on $\module_{\udl{\category{C}}}(\presentable^L_{\category{B}})$ is closed and we denote the internal mapping object by $\udl{\func}_{\udl{\category{C}}}(\--, \--)$.
As in \cite[Remark 3.4.3]{Martini2022Yoneda} it endows $\module_{\udl{\category{C}}}(\presentable^L_{\category{B}})$ with a 2-categorical structure.
This allows us to talk about internal adjunctions in $\module_{\udl{\category{C}}}(\presentable^L_{\category{B}})$. The following lemma gives convenient criteria for a $\udlcatC$-linear functor to be an internal left adjoint.

\begin{lem}[\cite{Cnossen2023}, Lem. 2.21]
    A $\udlcatC$-linear functor $F \colon \udlcatD \rightarrow \udlcatE$ is an internal left adjoint in $\module_{\udl{\category{C}}}(\presentable^L_{\category{B}})$ if and only if its right adjoint $G$ preserves fibrewise colimits and
         satisfies the projection formula, i.e. for each $\udl{X} \in \category{B}$ and $e \in \udlcatE(\udl{X})$ the map
        $\mathrm{PF}_* \colon c \otimes G(e) \rightarrow G(c \otimes e)$     is an equivalence.
\end{lem}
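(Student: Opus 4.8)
The approach is to reduce the statement to a doctrinal-adjunction bookkeeping: identify ``being an internal left adjoint in $\module_{\udl{\category{C}}}(\presentable^L_{\category{B}})$'' with a condition on the right adjoint $\category{B}$-functor of $F$, using the calculus of mates and the parametrised adjoint functor theorem. Since $F$ is a morphism in $\module_{\udl{\category{C}}}(\presentable^L_{\category{B}})$ it is in particular $\category{B}$-colimit preserving between presentable $\category{B}$-categories, so by the criterion for admitting an adjoint recalled above (\cite[Prop.~3.2.9]{MartiniWolf2024}) it admits a right adjoint $G\colon\udlcatE\to\udlcatD$ as a $\category{B}$-functor, with unit $\eta$ and counit $\epsilon$. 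Being the right adjoint of the strong $\udl{\category{C}}$-linear functor $F$, this $G$ carries a canonical \emph{lax} $\udl{\category{C}}$-linear structure (the mate of the module structure of $F$), whose structure maps unwind to the composites
\[ \projectionformula_*\colon\ c\otimes G(e)\ \xrightarrow{\ \eta\ }\ GF\bigl(c\otimes G(e)\bigr)\ \xrightarrow{\ \simeq\ }\ G\bigl(c\otimes FG(e)\bigr)\ \xrightarrow{\ G(c\otimes\epsilon)\ }\ G(c\otimes e), \]
the middle equivalence coming from the $\udl{\category{C}}$-linearity of $F$. With this in place the claim becomes: $F$ is an internal left adjoint in $\module_{\udl{\category{C}}}(\presentable^L_{\category{B}})$ if and only if $G$ refines to a morphism of $\module_{\udl{\category{C}}}(\presentable^L_{\category{B}})$, i.e.\ iff $G$ preserves $\category{B}$-colimits and its canonical lax $\udl{\category{C}}$-linear structure is strong (equivalently, all $\projectionformula_*$ are equivalences).

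For the ``only if'' direction I would argue that an internal right adjoint $G'$ of $F$ in $\module_{\udl{\category{C}}}(\presentable^L_{\category{B}})$ becomes, after applying the forgetful $2$-functor to $\cat_{\category{B}}$, a right adjoint of $F$ there, whence $G'\simeq G$ as $\category{B}$-functors by uniqueness of adjoints. Thus $G$ preserves $\category{B}$-colimits (in particular fibrewise colimits) and is strongly $\udl{\category{C}}$-linear; and the module structure on $G'$, being compatible with the adjunction, is forced to agree with the mate of the one on $F$, i.e.\ with $\projectionformula_*$, so the latter consists of equivalences.

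For the ``if'' direction, assume $G$ preserves fibrewise colimits and all $\projectionformula_*$ are equivalences. The one non-formal point is upgrading ``fibrewise'' to ``$\category{B}$-colimit preserving''. Here I would restrict the $\udl{\category{C}}$-module structures on $\udlcatD,\udlcatE$ along the unit map $\unit_{\presentable^L_{\category{B}}}\to\udl{\category{C}}$ from the monoidal unit of $\presentable^L_{\category{B}}$ (the $\category{B}$-category of $\category{B}$-groupoids): then $F$ is a morphism of $\unit_{\presentable^L_{\category{B}}}$-modules and $G$ satisfies the corresponding projection formula as a special case of the $\udl{\category{C}}$-one, hence is $\unit_{\presentable^L_{\category{B}}}$-linear by the same mate argument. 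Since a fibrewise-cocontinuous $\category{B}$-functor between presentable $\category{B}$-categories preserves all $\category{B}$-colimits once it is linear over the unit --- because $\category{B}$-colimits are generated by fibrewise colimits together with the $\category{B}$-groupoid-indexed colimits encoded by tensoring over the unit, by Martini--Wolf's structure theory of $\category{B}$-(co)completeness --- we conclude that $G$ is a morphism of $\presentable^L_{\category{B}}$. As the $\projectionformula_*$ are equivalences, the canonical lax $\udl{\category{C}}$-linear structure on $G$ is strong, so $G$ upgrades to an object of $\module_{\udl{\category{C}}}(\presentable^L_{\category{B}})$; finally $\eta$ and $\epsilon$ are $\udl{\category{C}}$-linear $2$-cells, being the mates of identities, which exhibits $F\dashv G$ as an internal adjunction in $\module_{\udl{\category{C}}}(\presentable^L_{\category{B}})$, so $F$ is an internal left adjoint there.

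The main obstacle is making precise the ``doctrinal adjunction in the $2$-category $\module_{\udl{\category{C}}}(\presentable^L_{\category{B}})$'': one needs that the fibrewise adjunction assembles into a $\category{B}$-adjunction (parametrised adjoint functor theorem), that its right adjoint together with the unit and counit are $\udl{\category{C}}$-linear exactly when the projection-formula maps are invertible (calculus of mates, resting on Lurie's relative adjoint functor theorem as used by Martini--Wolf), and that fibrewise cocontinuity plus linearity over the monoidal unit is equivalent to $\category{B}$-cocontinuity. Granting these inputs, the rest is formal manipulation with mates and the projection formula; this is the pattern of \cite[Lem.~2.21]{Cnossen2023}.
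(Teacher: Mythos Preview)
The paper does not supply its own proof of this lemma: it is stated with the citation to \cite[Lem.~2.21]{Cnossen2023} and no proof environment follows. Your sketch is essentially the doctrinal-adjunction argument that underlies Cnossen's result --- the mate construction endowing $G$ with a lax $\udl{\category{C}}$-linear structure whose strength is precisely the projection formula, together with the observation that restricting the projection formula along the unit map $\udl{\Omega}_{\category{B}}\to\udl{\category{C}}$ upgrades fibrewise cocontinuity to full $\category{B}$-cocontinuity --- so your approach is aligned with the cited source and there is nothing substantive to compare.
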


\begin{defn}[{Free and cofree categories, \cite[Definition 2.23]{Cnossen2023}}]
    For $\udl{\category{D}} \in \module_{\udl{\category{C}}}(\udl{\presentable}^L_{\category{B}})$ and an object $\obj{X} \in \category{B}$ we define the \textit{cofree $\udl{\category{C}}$-linear $\category{B}$--category on $\obj{X}$} by $\udl{\category{D}}^{\obj{X}} \coloneqq \lim_{\udl{X}} \udl{\category{D}}$    where the $\category{B}$-limit is formed over the constant diagram $\udl{X} \to \udl{\module}_{\udl{\category{C}}}(\udl{\presentable}^L_{\category{B}})$ with value $\udl{\category{D}}$.
\end{defn}
Note that after forgetting the $\udl{\category{C}}$-linear structure, $\udl{\category{D}}^{\obj{X}}$ is given by $\internalfunc{\category{B}}(\udl{X}, \udl{\category{D}})$ as the forgetful functor $\udl{\module}_{\udl{\category{C}}}(\udl{\presentable}^L_{\category{B}}) \to \udl{\cat}_{\category{B}}$ preserves $\udl{\category{B}}$-limits. If $\udl{\category{D}} \in \calg(\module_{\udl{\category{C}}}(\udl{\presentable}^L_{\category{B}}))$ is a $\udl{\category{C}}$-algebra, then $\udl{\category{D}}^{\obj{X}}$ has a canonical pointwise symmetric monoidal structure as the forgetful functor $\udl{\calg}(\udl{\module}_{\udl{\category{C}}}(\udl{\presentable}^L_{\category{B}})) \to \udl{\module}_{\udl{\category{C}}}(\udl{\presentable}^L_{\category{B}})$ preserves $\category{B}$-limits.

The following theorem is crucial for the development of Poincar\'e duality in a presentable context, even in classical terms. It should be viewed as a generalisation of the fact that for a small category for a space $X$ there is an equivalence $\func^L(\spc^X,\spc) \simeq \spc^X$, that is sometimes referred to as Morita theory.

\begin{thm}[{Classification of $\udl{\category{C}}$-linear functors, \cite[Theorem 2.32]{Cnossen2023}}]\label{thm:classification_of_C_linear_functors}
    Let $\udl{\category{C}} \in \calg(\presentable^L_{\category{B}})$ and $\obj{X} \in \udl{\category{C}}$. Then there is an equivalence of $\udl{\category{C}}$-linear $\category{B}$--categories
    \begin{equation*}
        \udl{\category{C}}^{\udl{X}} \to \udl{\func}_{\udl{\category{C}}}(\udl{\category{C}}^{\udl{X}}, \udl{\category{C}}),\hspace{3mm}  \zeta \mapsto r_! (\-- \otimes \zeta).
    \end{equation*}
\end{thm}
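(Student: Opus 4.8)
The plan is to prove this as a parametrised form of ``Kan/Morita duality'': I would show that $\udl{\category{C}}^{\udl{X}}$ is a \emph{self-dual dualizable object} of the closed symmetric monoidal $\category{B}$--category $\udl{\module}_{\udl{\category{C}}}(\udl{\presentable}^L_{\category{B}})$, whereupon the asserted equivalence becomes the instance
\[
\udl{\func}_{\udl{\category{C}}}(\udl{\category{C}}^{\udl{X}},\udl{\category{C}}) \;\simeq\; (\udl{\category{C}}^{\udl{X}})^{\vee}\otimes_{\udl{\category{C}}}\udl{\category{C}} \;\simeq\; (\udl{\category{C}}^{\udl{X}})^{\vee} \;\simeq\; \udl{\category{C}}^{\udl{X}}
\]
of the formal fact that for a dualizable object $M$ in a closed symmetric monoidal category one has $\udl{\func}(M,-)\simeq M^{\vee}\otimes(-)$ (both are right adjoint to $M\otimes(-)$). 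Since the argument is carried out internally to $\module_{\udl{\category{C}}}(\presentable^L_{\category{B}})$, using only its closed symmetric monoidal structure together with $\category{B}$--(co)completeness, it automatically yields an equivalence of $\udl{\category{C}}$-linear $\category{B}$--categories; alternatively, the étale base-change lemmas of this subsection reduce the whole statement to the fibres $\category{B}_{/\obj{Y}}$. (Note that here $\obj{X}$ must be a $\category{B}$--groupoid, i.e.\ an object of $\category{B}$, and $\pointProjection$ denotes the structure map $\udl{X}\to\udl{\ast}$.)

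To set up the self-duality I would first record the identification $\udl{\category{C}}^{\udl{X}}\otimes_{\udl{\category{C}}}\udl{\category{C}}^{\udl{X}}\simeq\udl{\category{C}}^{\udl{X}\times\udl{X}}$, reducing along $\udl{\category{C}}^{\udl{X}}\simeq\udl{\category{C}}\otimes\Omega^{\udl{X}}$ (notation of \cref{cons:embedding_presentable_into_parametrised_presentable_categories}) to the statement that $\Omega^{\udl{X}}\otimes\Omega^{\udl{X}}\simeq\Omega^{\udl{X}\times\udl{X}}$, i.e.\ that presheaves of $\category{B}$--spaces carry products of $\category{B}$--groupoids to tensor products. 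Then I would define evaluation and coevaluation using the diagonal $\delta\colon\udl{X}\to\udl{X}\times\udl{X}$ (a map of $\category{B}$--groupoids precisely because $\udl{X}\in\category{B}$) and the projection $\pointProjection$:
\[
\mathrm{coev}\colon\ \udl{\category{C}}\ \xrightarrow{\ \pointProjection^{*}\ }\ \udl{\category{C}}^{\udl{X}}\ \xrightarrow{\ \delta_{!}\ }\ \udl{\category{C}}^{\udl{X}\times\udl{X}},\qquad \mathrm{ev}\colon\ \udl{\category{C}}^{\udl{X}\times\udl{X}}\ \xrightarrow{\ \delta^{*}\ }\ \udl{\category{C}}^{\udl{X}}\ \xrightarrow{\ \pointProjection_{!}\ }\ \udl{\category{C}}.
\]
One checks these are $\udl{\category{C}}$-linear and $\category{B}$--colimit preserving: $\pointProjection^{*}$ and $\delta^{*}$ are symmetric monoidal (hence $\udl{\category{C}}$-linear) and colimit preserving, while $\pointProjection_{!}$ and $\delta_{!}$ exist and satisfy the relevant projection formulas because $\udl{\category{C}}$ is presentably symmetric monoidal and $\pointProjection,\delta$ are maps of $\category{B}$--groupoids (once more one may reduce to $\udl{\category{C}}=\Omega$).

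The heart of the argument — and the step I expect to be the main obstacle — is verifying the two triangle identities for $(\mathrm{ev},\mathrm{coev})$. Unwinding one of them turns it into a Beck--Chevalley chase around a pullback square of $\category{B}$--groupoids built from $\delta$ and the projections $\udl{X}\times\udl{X}\times\udl{X}\to\udl{X}\times\udl{X}$, schematically
\[
\begin{tikzcd}
\udl{X} \ar[r] \ar[d] & \udl{X}\times\udl{X} \ar[d] \\
\udl{X}\times\udl{X} \ar[r] & \udl{X}\times\udl{X}\times\udl{X}
\end{tikzcd}
\]
with the maps suitable diagonals. The base-change equivalence $g^{*}f_{!}\simeq f'_{!}(g')^{*}$ attached to such a square holds because $\udl{\category{C}}$ is a presentable $\category{B}$--category — the second presentability axiom applies to \emph{every} pullback square of $\category{B}$--groupoids — and, combined with the projection formulas for $\pointProjection_{!}$ and $\delta_{!}$, the composite collapses to an identity $p_{!}\delta_{!}\simeq\id$, where $p\colon\udl{X}\times\udl{X}\to\udl{X}$ is a projection and $p\circ\delta=\id_{\udl{X}}$. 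The very same computation also yields the co-Yoneda/density identity $\id_{\udl{\category{C}}^{\udl{X}}}\simeq p_{1,!}\big(p_{2}^{*}(-)\otimes\delta_{!}\unit\big)$, with $p_{i}\colon\udl{X}\times\udl{X}\to\udl{X}$ the projections, which is the concrete incarnation of self-duality.

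Granting dualizability and self-duality, the displayed chain of equivalences establishes the equivalence of the statement, and it only remains to match it with the formula $\zeta\mapsto\pointProjection_{!}(-\otimes\zeta)$. Viewing $\zeta\in\udl{\category{C}}^{\udl{X}}$ through $\func^{L}_{\udl{\category{C}}}(\udl{\category{C}},\udl{\category{C}}^{\udl{X}})\simeq\Gamma\,\udl{\category{C}}^{\udl{X}}$ as the $\udl{\category{C}}$-linear functor $c\mapsto c\otimes\zeta$, its image under $\udl{\category{C}}^{\udl{X}}\simeq(\udl{\category{C}}^{\udl{X}})^{\vee}\xhookrightarrow{} \udl{\func}_{\udl{\category{C}}}(\udl{\category{C}}^{\udl{X}},\udl{\category{C}})$ is the functor $\xi\mapsto\mathrm{ev}(\xi\boxtimes\zeta)=\pointProjection_{!}\delta^{*}(\xi\boxtimes\zeta)=\pointProjection_{!}(\xi\otimes\zeta)$, the last equality because $\delta^{*}(p_{1}^{*}\xi\otimes p_{2}^{*}\zeta)=\xi\otimes\zeta$. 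Thus the abstract equivalence is exactly $\zeta\mapsto\pointProjection_{!}(-\otimes\zeta)$, as desired.
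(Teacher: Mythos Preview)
The paper does not give its own proof of this statement; it is quoted verbatim as \cite[Theorem 2.32]{Cnossen2023}, with only the remark afterwards that the displayed map is adjoint to $\udl{\category{C}}^{\udl{X}}\otimes\udl{\category{C}}^{\udl{X}}\to\udl{\category{C}}^{\udl{X}}\xrightarrow{r_!}\udl{\category{C}}$. So there is no internal argument to compare against.

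That said, your approach is the correct one, and it is essentially the route taken in the cited source: Cnossen establishes self-duality of $\udl{\category{C}}^{\udl{X}}$ in $\module_{\udl{\category{C}}}(\presentable^L_{\category{B}})$ (the paper itself later invokes this as \cite[Corollary 2.27]{Cnossen2023} in the proof of \cref{thm:base_change_of_tw_amb_spivak_data}), and the classification of $\udl{\category{C}}$-linear functors follows formally from dualizability via $\udl{\func}_{\udl{\category{C}}}(M,-)\simeq M^{\vee}\otimes_{\udl{\category{C}}}(-)$. Your construction of (co)evaluation via $\delta_!,\delta^*,r_!,r^*$ and the Beck--Chevalley verification of the triangle identities is the standard and expected argument; the identification $\mathrm{ev}(\xi\boxtimes\zeta)=r_!(\xi\otimes\zeta)$ matching the formula is also on target. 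One small remark: you correctly flag that $\udl{X}$ should be a $\category{B}$--groupoid (an object of $\category{B}$), not a general object of $\udl{\category{C}}$ as the statement literally reads; this is indeed a typo in the paper.
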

Here, the map in the statement of the theorem is adjoint to the map
$\udl{\category{C}}^{\udl{X}} \otimes \udl{\category{C}}^{\udl{X}} \to \udl{\category{C}}^{\udl{X}} \xrightarrow{r_!} \udl{\category{C}}$.

\begin{prop}[Basechange of module categories, {\cite[Prop. 7.2.7]{MartiniWolf2022Presentable}}]\label{prop:base_change_module_categories}
    Suppose that $f \colon \udl{\category{C}} \to \udl{\category{D}}$ is a map in $\calg(\presentable^L_{\category{B}})$.
    Then the restriction functor $f^* \colon \udl{\module}_{\udl{\category{D}}}(\presentable^L_{\category{B}}) \to \udl{\module}_{\udl{\category{C}}}(\presentable^L_{\category{B}})$
    admits a symmetric monoidal left adjoint $\-- \otimes_{\udl{\category{C}}} \udl{\category{D}} \colon \udl{\module}_{\udl{\category{C}}}(\presentable^L(\category{B})) \to \udl{\module}_{\udl{\category{D}}}(\presentable^L(\category{B}))$
\end{prop}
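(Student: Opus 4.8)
The plan is to treat this as the $\category{B}$--internal incarnation of the classical statement that, in a symmetric monoidal $\infty$--category compatible with colimits, base change along a map of commutative algebras is symmetric monoidal and left adjoint to restriction of scalars (as in Lurie's \emph{Higher Algebra}); since all the relevant $\category{B}$--internal higher algebra has already been recorded above, the argument is then purely formal. Concretely: regard $\udl{\category{D}}$, via $f$, as a $(\udl{\category{C}},\udl{\category{D}})$--bimodule in $\udl{\presentable}^L_{\category{B}}$ and let $-\otimes_{\udl{\category{C}}}\udl{\category{D}}\colon\udl{\module}_{\udl{\category{C}}}(\udl{\presentable}^L_{\category{B}})\to\udl{\module}_{\udl{\category{D}}}(\udl{\presentable}^L_{\category{B}})$ be the associated base change functor; this is defined because $\udl{\presentable}^L_{\category{B}}$ admits all $\category{B}$--colimits and $-\otimes-$ preserves them separately in each variable, so that the two--sided bar resolution computing $\otimes_{\udl{\category{C}}}$ converges $\category{B}$--internally.

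First I would establish the adjunction $-\otimes_{\udl{\category{C}}}\udl{\category{D}}\dashv f^*$. The $(\udl{\category{C}},\udl{\category{D}})$--bimodule structure on $\udl{\category{D}}$ produces in the standard way a unit $\id\to f^*(-\otimes_{\udl{\category{C}}}\udl{\category{D}})$ and counit $(-\otimes_{\udl{\category{C}}}\udl{\category{D}})\circ f^*\to\id$, and the triangle identities are verified on the bar resolutions exactly as in the unparametrised case, using that $\udl{\module}_{\udl{\category{C}}}(\udl{\presentable}^L_{\category{B}})$ is $\category{B}$--cocomplete with bilinear relative tensor product, as recorded above. Since $f^*$ is the $\category{B}$--functor on module categories induced by a morphism of commutative algebras, it is canonically lax symmetric monoidal, and hence taking mates equips its left adjoint $-\otimes_{\udl{\category{C}}}\udl{\category{D}}$ with a canonical \emph{oplax} symmetric monoidal structure.

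It then remains to promote this oplax structure to a strong one, which is where the commutativity of $\udl{\category{D}}$ enters: one must check that the oplax structure maps are equivalences. On the monoidal unit this is the canonical equivalence $\udl{\category{C}}\otimes_{\udl{\category{C}}}\udl{\category{D}}\simeq\udl{\category{D}}$ identifying $\unit_{\udl{\category{D}}}$, and on binary tensors it is the equivalence
\[ (M\otimes_{\udl{\category{C}}}\udl{\category{D}})\otimes_{\udl{\category{D}}}(N\otimes_{\udl{\category{C}}}\udl{\category{D}})\;\simeq\;(M\otimes_{\udl{\category{C}}}N)\otimes_{\udl{\category{C}}}\udl{\category{D}} \]
obtained from associativity and unitality of the relative tensor product together with $\udl{\category{D}}\otimes_{\udl{\category{D}}}\udl{\category{D}}\simeq\udl{\category{D}}$. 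Structurally, the cleanest way to organise this last step is to assemble $\udl{\category{C}}\mapsto\udl{\module}_{\udl{\category{C}}}(\udl{\presentable}^L_{\category{B}})$ into a functor from $\calg(\presentable^L_{\category{B}})$ to symmetric monoidal $\category{B}$--categories and symmetric monoidal $\category{B}$--functors whose value on a morphism is, by construction, the base change functor together with its monoidal structure — which is precisely what is carried out in \cite[\S7.2]{MartiniWolf2022Presentable}.

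The one genuinely nontrivial ingredient — and the reason this is best invoked rather than reproved in full — is the $\category{B}$--internal foundations underlying everything above: the existence and coherence of the relative tensor product, the convergence of the bar resolution, and the functoriality, with all its higher coherences, of $\udl{\module}_{(-)}(\udl{\presentable}^L_{\category{B}})$ in the algebra. These are exactly the content of \cite[\S\S7--8]{MartiniWolf2022Presentable}, and granting them the proposition is immediate; indeed it is literally \cite[Prop.~7.2.7]{MartiniWolf2022Presentable}, so in the write--up I would simply cite that result.
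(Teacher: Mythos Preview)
Your proposal is correct and lands on exactly the same argument as the paper: simply invoke \cite[Prop.~7.2.7]{MartiniWolf2022Presentable}. The paper's entire proof is the one-line observation that this result applies with $R=\udl{\category{D}}\in\calg(\udl{\module}_{\udl{\category{C}}}(\presentable^L_{\category{B}}))$; your extended discussion of bar resolutions and oplax-to-strong promotion is a helpful unpacking of what that cited result contains, but is not needed in the write-up.
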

\begin{proof}
    Apply \cite[Proposition 7.2.7]{MartiniWolf2022Presentable} to the case $R = \udl{\category{D}} \in \calg(\udl{\module}_{\udl{\category{C}}}(\presentable^L_{\category{B}}))$.
\end{proof}

\subsection{Equivariant categories and the theory of families}\label{subsection:equivariant_categories_and_families}

\subsubsection*{Change of group functors}
We now specialise the previous general theory to the equivariant setting for a compact Lie group $G$. We set $\spc_G \coloneqq\presheaf(\orbit(G))$, the \textit{category of $G$--spaces}, where $\orbit(G)$ is the \textit{orbit category of $G$}. This is a topos, and we write $\cat_G \coloneqq \cat_{\spc_G} \simeq \func(\orbit(G)\op, \cat)$ for the category of $G$--categories.
The value of a $G$-category $\udlcatC$ at an orbit $G/H$ will be denoted by $\sC(\myuline{G/H})$ or $\category{C}^H$.

\begin{recollect}\label{rec:orbit_category}
    Recall that the category of locally compact Hausdorff topological $G$--spaces is enriched over topological spaces by employing the compact--open topology on morphism sets. 
    The full subcategory on the homogenous $G$--spaces, that is Hausdorff spaces with a transitive $G$-action, is equivalent to the full subcategory spanned by the orbits $G/H$, where $H \leq G$ is a closed subgroup. By $\orbit(G)$ we denote the associated ($\infty$-) category which we call the \textit{orbit category of $G$}.
    
    Later we will need the following standard facts:
    For any morphism $\alpha \colon H \to G$ of compact Lie groups, there is an induction functor
    \begin{equation*}
        \ind_\alpha^{\orbit} \colon \orbit(H) \longrightarrow \orbit(G), \hspace{3mm} S \mapsto G \times_H S.
    \end{equation*}
    If $\alpha \colon H \longrightarrow G$ is an epimorphism, this admits the restriction functor
    \begin{equation*}
        \res^{\orbit}_\alpha \colon \orbit(G) \to \orbit(H)
    \end{equation*}
    as a fully faithful right adjoint.
    Both functors and the adjunction can be constructed on the level of topological categories.
    For a closed subgroup $H \leq G$, induction induces an equivalence of categories $\orbit(H) \xlongrightarrow{\simeq} \orbit(G)_{/(G/H)}$ whose inverse sends $T \rightarrow G/H$ to the homogeneous $H$-space given as the fibre over $eH \in G/H$.

    More information on orbit categories of compact Lie groups can be found in \cite[Sec. 6]{LinskensNardinPol2022global} or \cite[Chapters I.3 and I.4]{bredonTrans}.
\end{recollect}

\begin{nota}[Restrictions, (co)inductions, and (co)inflations]
\label{cons:res_ind_coind}
    Consider a continuous homorphism $\alpha \colon K \to G$ of compact Lie groups.
    We obtain the two adjunctions
    \begin{equation*}
    \begin{tikzcd}[row sep = large, column sep = large]
        \cat_H
        \ar[r, bend right = 40, "\coind_\alpha"']
        \ar[r, bend left = 40, "\ind_\alpha"]
        & \cat_G, \ar[l, "\res_\alpha" description]
    \end{tikzcd}
    \end{equation*}
    called the \textit{induction, restriction}, and \textit{coinduction} functors, respectively. Here, $\res_{\alpha}$ is given by restriction along  $\ind_\alpha^{\orbit} \colon \orbit(K) \to \orbit(G)$ and $\ind_{\alpha}$ and $\coind_{\alpha}$ are given by left and right Kan extensions. The functors $\res_{\alpha}$ and $\coind_{\alpha}$ restrict to a geometric morphism of topoi $\res_{\alpha} \colon \spc_G \rightleftharpoons \spc_K \cocolon \coind_{\alpha}$. The two main classes of examples are: 
    \begin{enumerate}[label=(\alph*)]
        \item If $\alpha$ were an injection $\groupInjection\colon H \rightarrowtail G$, then the geometric morphism $\res_{\groupInjection}\colon\spc_G\rightleftharpoons \spc_H \cocolon \coind_{\groupInjection}$ is \'{e}tale.    We will often also write $\ind_{\groupInjection}, \res_{\groupInjection}, $ and $\coind_{\groupInjection}$ as $\ind^G_H$, $\res^G_H$, and $\coind^G_H$ respectively;
        
        \item If $\alpha$ were an epimorphism $\groupSurjection\colon G\twoheadrightarrow G/N =: Q$ (so that $N\leq G$ is a closed normal subgroup), then $\coind_{\groupSurjection}$ admits a further right adjoint which we write as $\rcoind_{\groupSurjection}$ given by right Kan extension along the  fully faithful right adjoint $\res^{\orbit}_{\groupSurjection}$ to $\ind^{\orbit}_{\groupSurjection}$. In particular, $\res_{\groupSurjection} = (\ind^{\orbit}_{\groupSurjection})^*\simeq  (\res^{\orbit}_{\groupSurjection})_!$ and $\rcoind_{\groupSurjection} = (\res^{\orbit}_{\groupSurjection})_*$ are fully faithful in this epimorphic case. Note also that in this case, $\coind_{\groupSurjection}\simeq (\res^{\orbit}_{\groupSurjection})^*$, i.e. $\coind_{\groupSurjection}$ may be computed by restricting along $\res^{\orbit}_{\groupSurjection}\colon \orbit(G)\rightarrow \orbit(K)$. We summarise in the following diagram the special notations we will also use in the epimorphic case as follows:
    \begin{center}
        \begin{tikzcd}
        \cat_Q \ar[rrrr, bend right =50, "\rcoind_{\groupSurjection} "'description, hook]
        \ar[rrrr, bend right = 25, "(-)^N\coloneqq \coind_{\groupSurjection}"'description,leftarrow, shift left = 1]
        \ar[rrrr, bend left = 25, "N\backslash(-)\coloneqq\ind_{\groupSurjection}"description,leftarrow, shift left = 1] \ar[rrrr, "\inflated_{\groupSurjection}\coloneqq\res_{\groupSurjection}" description ,hook, shift left = 1]
        &&&& \cat_G, 
        \end{tikzcd}
    \end{center}
    The maps $N\backslash(-),\: \inflated_{\groupSurjection},\: (-)^N, $ and $\rcoind_{\groupSurjection}$ are called the \textit{genuine quotient, inflation, genuine fixed points, } and \textit{coinflation}, respectively. We often also write $\inflated_{\groupSurjection}$ and $\rcoind_{\groupSurjection}$ as $\inflated^Q_G$ and $\rcoind^Q_G$ respectively.
    \end{enumerate}
\end{nota}

\begin{rmk}
    From the left Kan extension formula defining the genuine quotient, we obtain
    \begin{equation}
        \label{eq:formula_for_genuine_quotient}
        (N \backslash \udlcatC) (\myuline{Q/H}) \simeq \colim_{G/K,\hspace{1mm} \myuline{Q/H} \rightarrow N\backslash(\myuline{G/K}) } \category{C}(\myuline{G/K}).
    \end{equation}
\end{rmk}

\subsubsection*{Stability}

In the parametrised theory, the theory of stability is more subtle than in the nonparametrised setting.
The most naive version is the following, equivalent characterisations of which can be found in \cite[Section 7.3]{MartiniWolf2022Presentable}.
\begin{defn}
    A $\category{B}$--category $\udl{\category{C}}$ is called fibrewise pointed (resp. stable) if the functor $\udl{\category{C}} \colon \category{B}\op \to \cat$ factors through the subcategory $\cat_* \subset \cat$ of pointed categories and pointed functors (resp. $\cat^{\stable} \subset \cat$ of stable categories and exact functors).
    We denote by $\catptd{\category{B}}$ (resp. $\catst{\category{B}}$) the category of fibrewise pointed (resp. stable) categories and pointed (resp. exact) functors.
\end{defn}

As a parametrised analogue of the category of spectra, there is the $G$-category of $G$-spectra $\myuline{\spectra}_G$ whose value at an orbit $G/H$ is given by the category $\myuline{\spectra}_G(\myuline{G/H}) = \spectra_H$ of genuine $H$-spectra together with restriction maps between them, see \cref{def:G_spectra} for a definition.
If $G$ is clear form the context, we will also just write $\myuline{\spectra}$ for $\myuline{\spectra}_G$.
In addition to being fibrewise stable, $\myuline{\spectra}_G$ satsifies some form of the Wirthmüller isomorphism in the sense that indexed products and coproducts over orbits $G/H$ are canonically equivalent.
This was used by Nardin in \cite{Nardin2017Thesis} to define the notion of $G$-stability for finite groups $G$.
We will not recall the definition here and refer the interested reader to \cite{Nardin2017Thesis} or \cite[Section 4.1]{kaifPresentable} for an exposition of this theory.
In \cref{sec:G_stability_presentable_categories} we generalise this to define $G$-stability for presentable $G$-categories.
This will be sufficient for our purposes.

\begin{nota}
    For a compact Lie group $G$, we denote by $\prGst{G} \subseteq \prst{G} \subseteq \presentable^L_G$ the full subcategories on $G$-stable and fibrewise stable presentable $G$-categories.
    For a finite group $G$, we also denote by $\catGst{G} \subset \cat_G$ the subcategory of $G$-stable $G$-categories and $G$-exact functors.
\end{nota}

Now we study the behaviour of $G$-stability with respect to  standard equivariant operations.

\begin{lem}[Coinduction and stability]\label{lem:lax_monoidal_structure_coind}
    Let $\alpha \colon K \to G$ be a continous group homomorphism of compact Lie groups.
    The lax symmetric monoidal functor $\coind_\alpha \colon \presentable^L_K \to \presentable^L_G$ from \cref{lem:lax_monoidal_base_change_geometric_morphism} restricts to a lax symmetric monoidal functor $\coind_\alpha \colon \prGst{K} \to \prGst{G}$
\end{lem}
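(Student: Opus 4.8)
The plan is to bootstrap from \cref{lem:lax_monoidal_base_change_geometric_morphism}, which already supplies the lax symmetric monoidal functor $\coind_\alpha\colon \presentable^L_K\to\presentable^L_G$; what remains is to see that it carries $K$-stable objects to $G$-stable objects and that the lax structure survives the passage to full subcategories. Throughout I will use the description of $G$-stability from \cref{sec:G_stability_presentable_categories}, by which $\prGst{G}\subseteq\presentable^L_G$ is the smashing localisation onto $\myuline{\spectra}_G$-modules, so that $\prGst{G}\simeq\module_{\myuline{\spectra}_G}(\presentable^L_G)$ with symmetric monoidal unit $\myuline{\spectra}_G$, and likewise for $K$. (If one prefers the orbit-ambidexterity characterisation, it can be checked directly for $\coind_\alpha\udl{\category{C}}$ by transporting the norm equivalences of $\udl{\category{C}}$ across the base-change identities of \cref{lem:parametrised_colimits_base_change}, using that $\res_\alpha$ sends $G$-orbits to finite coproducts of $K$-orbits; but the module-theoretic route keeps the Wirth\-m\"uller twists packaged inside $\myuline{\spectra}_{(-)}$ and is cleaner.)

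The heart of the argument is to produce a canonical commutative algebra map $u_\alpha\colon \myuline{\spectra}_G\to\coind_\alpha\myuline{\spectra}_K$ in $\calg(\presentable^L_G)$. Since $\alpha(K)\leq G$ is a closed subgroup, $\alpha$ factors as $\groupSurjection\colon K\twoheadrightarrow\alpha(K)$ followed by $\groupInjection\colon \alpha(K)\hookrightarrow G$, and correspondingly $\coind_\alpha\simeq\coind_{\groupInjection}\circ\coind_{\groupSurjection}$ by \cref{cons:res_ind_coind}. For the injection, the adjunction $\res_{\groupInjection}\dashv\coind_{\groupInjection}$ is symmetric monoidal on presentable categories because $\groupInjection$ is \'etale (\cref{lem:lax_monoidal_base_change_etale_morphism}), and one has $\res_{\groupInjection}\myuline{\spectra}_G\simeq\myuline{\spectra}_{\alpha(K)}$ as commutative algebras, so the adjunction unit gives an algebra map $\myuline{\spectra}_G\to\coind_{\groupInjection}\myuline{\spectra}_{\alpha(K)}$. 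For the quotient, $\coind_{\groupSurjection}$ is computed fibrewise by $(\coind_{\groupSurjection}\udl{\category{C}})(\alpha(K)/\bar{H})\simeq\category{C}(K/\groupSurjection^{-1}(\bar{H}))$, and the genuine inflation functors $\inflated\colon \spectra_{\bar{H}}\to\spectra_{\groupSurjection^{-1}(\bar{H})}$ assemble into a symmetric monoidal $\alpha(K)$-functor $\myuline{\spectra}_{\alpha(K)}\to\coind_{\groupSurjection}\myuline{\spectra}_K$; applying $\coind_{\groupInjection}$ and composing yields $u_\alpha$. I expect the verification that this fibrewise inflation assembles coherently into a symmetric monoidal functor of $G$-categories to be the one genuinely fiddly step; it is standard in equivariant stable homotopy theory and could alternatively be extracted from the functoriality of $\myuline{\spectra}_{(-)}$ recalled around \cref{def:G_spectra}.

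With $u_\alpha$ in hand the lemma follows formally. Given $\udl{\category{C}}\in\prGst{K}=\module_{\myuline{\spectra}_K}(\presentable^L_K)$, the lax symmetric monoidal functor $\coind_\alpha$ sends it to an object of $\module_{\coind_\alpha\myuline{\spectra}_K}(\presentable^L_G)$, and restricting the module structure along $u_\alpha$ (\cref{prop:base_change_module_categories}) exhibits $\coind_\alpha\udl{\category{C}}$ as a $\myuline{\spectra}_G$-module, i.e.\ as an object of $\prGst{G}$; taking $\udl{\category{C}}=\myuline{\spectra}_K$ in particular shows $\coind_\alpha\myuline{\spectra}_K$ is itself already $G$-stable. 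Finally, that the lax symmetric monoidal structure descends is immediate: on the full subcategories $\prGst{(-)}$ the relative tensor products agree with $\otimes$ in $\presentable^L_{(-)}$ since the localisations are smashing, the lax structure maps of $\coind_\alpha$ are therefore inherited verbatim, and the unit constraint for the restricted functor is precisely $u_\alpha$.
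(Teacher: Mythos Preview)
Your approach via characterisation (1) of $G$-stability is valid in principle, but it is more laborious than necessary and leaves the genuinely hard step (your ``fiddly step'' of assembling the inflation functors into a coherent symmetric monoidal $\alpha(K)$-functor $\myuline{\spectra}_{\alpha(K)}\to\coind_{\groupSurjection}\myuline{\spectra}_K$) unresolved. Your appeal to ``the functoriality of $\myuline{\spectra}_{(-)}$ recalled around \cref{def:G_spectra}'' does not land: no such functoriality is stated there. Worse, your parenthetical fallback via orbit-ambidexterity rests on the claim that $\res_\alpha$ sends $G$-orbits to finite coproducts of $K$-orbits, which is false for compact Lie groups (take $G=S^1$, $K=1$, and $G/e$); this route is simply unavailable in the stated generality.

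The paper's proof is a two-line argument using characterisation (3) of \cref{prop:characterisation_G_stability} instead: for a finite-dimensional $G$-representation $V$, the action $S^V\otimes-$ on $\coind_\alpha\udl{\category{C}}$ is identified, via \cref{lem:parametrised_colimits_base_change}, with $\coind_\alpha(S^{\res_\alpha V}\otimes-)$, and the latter is invertible by $K$-stability of $\udl{\category{C}}$ since $\res_\alpha V$ is a finite-dimensional $K$-representation. This completely sidesteps the need to construct $u_\alpha$. In fact the paper builds your map $u_\alpha$ later, in \cref{cons:spectral_restriction}, precisely by \emph{using} the present lemma to know that $\coind_\alpha\myuline{\spectra}_K$ lies in $\prGst{G}$ and then invoking initiality of $\myuline{\spectra}_G$ there; so relative to the paper's development your argument is circular. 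Once preservation of $G$-stability is established, the lax symmetric monoidal structure restricts formally (the inclusion $\prGst{K}\hookrightarrow\presentable^L_K$ is lax monoidal as right adjoint to a smashing localisation, and $-\otimes\myuline{\spectra}_G$ is symmetric monoidal), so your final paragraph is fine and no separate construction of $u_\alpha$ is needed for that purpose either.
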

\begin{proof}
    We apply \cref{prop:characterisation_G_stability} (3) to show that $\coind_\alpha$ sends $K$-stable to $G$-stable categories.
    Suppose that $\udl{\category{C}}$ is a $K$-stable category and $V$ is a finite dimensional $G$-representation.
    By \cref{lem:parametrised_colimits_base_change} we can identify the maps $S^V \otimes -$ and $\coind_\alpha(S^{\res_\alpha V} \otimes -)$ on $\coind_\alpha \udl{\category{C}}$.
    But as $\res_\alpha V$ is a finite dimensional $K$-representation, $K$-stability of $\udl{\category{C}}$ implies that the second map is invertible.
\end{proof}

\begin{lem}[Restriction and stability]\label{lem:restriction_presentable_categories}
    For an injective continuous homomorphism $\alpha \colon H \rightarrowtail G$ of compact Lie groups, the adjunction $\res_\alpha \colon \Pr^{L}_G \rightleftharpoons \Pr^{L}_H \cocolon \coind_\alpha$ from \cref{lem:lax_monoidal_base_change_etale_morphism} restricts to an adjunction $\res_\alpha \colon \prGst{G} \rightleftharpoons \prGst{H} \cocolon \coind_\alpha$ with symmetric monoidal left adjoint
\end{lem}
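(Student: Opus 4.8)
The plan is to reduce the whole statement to the single claim that $\res_\alpha$ preserves $G$-stability. Indeed, \cref{lem:lax_monoidal_base_change_etale_morphism}, applied to the \'etale geometric morphism $\res_\alpha\colon\spc_G\rightleftharpoons\spc_H\cocolon\coind_\alpha$ (which under $\spc_H\simeq(\spc_G)_{/(G/H)}$ is the \'etale base change $\pi_{G/H}^*\dashv(\pi_{G/H})_*$), already supplies the adjunction $\res_\alpha\colon\presentable^L_G\rightleftharpoons\presentable^L_H\cocolon\coind_\alpha$ together with a symmetric monoidal refinement of the left adjoint. Since $\prGst{G}\subseteq\presentable^L_G$ and $\prGst{H}\subseteq\presentable^L_H$ are full subcategories, this adjunction automatically restricts once one knows that $\res_\alpha$ carries $G$-stable $G$-categories into $H$-stable $H$-categories and that $\coind_\alpha$ carries $H$-stable $H$-categories into $G$-stable $G$-categories; the latter is precisely \cref{lem:lax_monoidal_structure_coind} applied to $\alpha\colon H\to G$, and the symmetric monoidality of the restricted left adjoint is inherited from \cref{lem:lax_monoidal_base_change_etale_morphism}. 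So all the work goes into the former claim.

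To prove it, I would take $\udl{\sC}\in\prGst{G}$ and apply the same criterion for stability used in the proof of \cref{lem:lax_monoidal_structure_coind}, namely \cref{prop:characterisation_G_stability}(3): a presentable $H$-category is $H$-stable precisely when it is fibrewise stable and $S^W\otimes(-)$ is an equivalence for every finite-dimensional $H$-representation $W$. Fibrewise stability of $\res_\alpha\udl{\sC}$ is immediate from \cref{rec:orbit_category}, since $(\res_\alpha\udl{\sC})(\myuline{H/K})\simeq\sC(\myuline{G/K})$ for closed $K\leq H$ and the transition functors are restrictions of exact functors. For the representation spheres, the key input is the standard fact about compact Lie groups that every finite-dimensional $H$-representation $W$ is a direct summand of $\res^G_H V$ for some finite-dimensional $G$-representation $V$; fixing such a $V$ and writing $\res^G_H V\simeq W\oplus W'$, one uses that $\res_\alpha$ is symmetric monoidal and compatible with the pointed tensoring (so that $\res^G_H(S^V)\simeq S^{\res^G_H V}\simeq S^W\wedge S^{W'}$, with \cref{lem:restriction_fun_manoeuvre} doing the bookkeeping) to see that restricting the equivalence $S^V\otimes(-)\simeq\id$ of endofunctors of $\udl{\sC}$ coming from $G$-stability exhibits $\bigl(S^W\otimes(-)\bigr)\circ\bigl(S^{W'}\otimes(-)\bigr)$ as an equivalence of endofunctors of $\res_\alpha\udl{\sC}$. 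Since $S^W\otimes(-)$ and $S^{W'}\otimes(-)$ are commuting endofunctors whose composite is invertible, each is invertible; in particular so is $S^W\otimes(-)$, which is what was needed.

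The step I expect to require the most care is this ``direct summand'' reduction: a general finite-dimensional $H$-representation need not extend to $G$, so the argument of \cref{lem:lax_monoidal_structure_coind} (which only needed invertibility of $S^{\res_\alpha V}\otimes(-)$ for $G$-representations $V$) does not transfer verbatim, and one genuinely has to invoke --- and perhaps briefly justify, via equivariant vector bundle theory over the compact manifold $G/H$ --- that each such $W$ is a retract of some $\res^G_H V$. Once that is granted, the remaining points (fibrewise stability, the smash decomposition $S^{\res^G_H V}\simeq S^W\wedge S^{W'}$, and the observation that two commuting endofunctors with invertible composite are both invertible) are purely formal. Should \cref{prop:characterisation_G_stability} not contain the representation-sphere criterion in a form convenient for restriction, the fallback is to argue instead from an orbit-ambidexterity characterisation of $G$-stability, using \cref{lem:parametrised_colimits_etale_base_change} to identify the Wirthm\"{u}ller norm maps for $\res_\alpha\udl{\sC}$ over $H$-orbits with restrictions of the corresponding norm maps for $\udl{\sC}$.
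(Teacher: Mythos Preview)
Your proposal is correct and follows essentially the same approach as the paper: reduce to showing $\res_\alpha$ preserves equivariant stability via the representation-sphere criterion of \cref{prop:characterisation_G_stability}, then use that every finite-dimensional $H$-representation is a summand of some $\res^G_H V$ (the paper attributes this to the Peter--Weyl theorem) together with \cref{lem:parametrised_colimits_etale_base_change} to identify $\res^G_H(S^V\otimes -)$ with $S^{\res^G_H V}\otimes -$. Your fallback via orbit-ambidexterity is unnecessary, and your slight overstatement of the hypothesis as ``fibrewise stable'' rather than ``fibrewise pointed'' is harmless since the former implies the latter.
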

\begin{proof}
    By \cref{lem:lax_monoidal_structure_coind} $\coind_H^G$ restricts to a functor between equivariantly stable categories. To show that restriction $\res_H^G$ preserves equivariantly stable categories we employ \cref{prop:characterisation_G_stability}.
    Recall that, by the Peter-Weyl theorem, for any finite dimensional $H$-representation $W$ there is a finite dimensional $G$-representaiton $V$ such that $W$ is a summand of $\res_H^G V$.
    Now, if $\udl{\category{C}}$ is a $G$-stable category, then $S^{V} \otimes \--$ is invertible on $\udl{\category{C}}$.
    By \cref{lem:parametrised_colimits_etale_base_change}, we can identify the two maps $\res_H^G (S^{V} \otimes -)$ and $S^{\res_H^G V} \otimes -$ on $\res_H^G \udl{\category{C}}$.
    This shows that $S^{\res_H^G V}$ and thus also $S^W$ act invertibly on $\res_H^G \udl{\category{C}}$.
\end{proof}

\begin{lem}[Coinflation and stability]\label{lem:coinflation_stability}
    Let $\groupSurjection \colon G \twoheadrightarrow Q = G/N$ be a continuous epimorphism of compact Lie groups.
    Then the lax symmetric monoidal functor $\rcoind_{\groupSurjection} \colon \presentable^L_Q \to \presentable^L_G$ from \cref{lem:lax_monoidal_base_change_geometric_morphism} restricts to a lax symmetric monoidal functor $\rcoind_{\groupSurjection} \colon \prGst{Q} \to \prGst{G}$.    
\end{lem}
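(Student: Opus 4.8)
The plan is to follow the strategy of \cref{lem:lax_monoidal_structure_coind}. Note first that $\rcoind_\theta$ is precisely the geometric pushforward $f_*$ attached to the geometric morphism $f^*=\coind_\theta\colon\spc_G\rightleftharpoons\spc_Q\cocolon f_*=\rcoind_\theta$ of \cref{cons:res_ind_coind}: indeed $\coind_\theta\simeq(\res^{\orbit}_\theta)^*$ is a restriction functor between presheaf topoi, hence left exact, so this genuinely is a geometric morphism. Therefore \cref{lem:lax_monoidal_base_change_geometric_morphism} already supplies the lax symmetric monoidal functor $\rcoind_\theta\colon\presentable^L_Q\to\presentable^L_G$, and it only remains to show that it carries $Q$-stable categories to $G$-stable ones. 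Fibrewise stability is clear: the value of $\rcoind_\theta\udl{\sC}$ at a $G$-space is $\udl{\sC}$ evaluated at the corresponding $Q$-space, which is a limit of values of $\udl{\sC}$ at orbits along exact functors, and stable categories are closed under such limits.

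Now let $\udl{\sC}$ be $Q$-stable. By \cref{prop:characterisation_G_stability} (3) we must check that $S^V\otimes-$ is invertible on $\rcoind_\theta\udl{\sC}$ for every finite-dimensional $G$-representation $V$. The key input is the computation $\coind_\theta(S^V)\simeq S^{V^N}$ of pointed $Q$-spaces, and likewise $\coind_\theta(S(V))\simeq S(V^N)$ for the unit spheres, where $V^N\subseteq V$ is the $N$-fixed subspace, regarded as a finite-dimensional representation of $Q=G/N$. This is immediate from $\coind_\theta\simeq(\res^{\orbit}_\theta)^*$: the value of $\coind_\theta X$ at $Q/L$ is $X$ evaluated at $\res^{\orbit}_\theta(Q/L)=G/\theta^{-1}(L)$, and since $\theta^{-1}(L)\supseteq N$ one has $(S^V)^{\theta^{-1}(L)}=S^{V^{\theta^{-1}(L)}}=(S^{V^N})^L$.

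Finally, apply \cref{lem:parametrised_colimits_base_change} with the $G$-space $X=S(V)$: since $\udl{\sC}$ is presentable it admits $f^*X=S(V^N)$-shaped colimits, and the identification of adjoint triples provided there shows that the endofunctor $S(V)_+\otimes-=\pointProjection_!\pointProjection^*$ of $\rcoind_\theta\udl{\sC}$ agrees with $\rcoind_\theta$ applied to the endofunctor $S(V^N)_+\otimes-$ of $\udl{\sC}$ (using the previous paragraph). Passing to cofibres along $S(V)_+\to S^0$ — legitimate because all categories in play are fibrewise stable and $\rcoind_\theta$, being a right adjoint, commutes with the relevant fibrewise finite limits, equivalently finite colimits — identifies $S^V\otimes-$ on $\rcoind_\theta\udl{\sC}$ with $\rcoind_\theta(S^{V^N}\otimes-)$. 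As $V^N$ is a finite-dimensional $Q$-representation and $\udl{\sC}$ is $Q$-stable, $S^{V^N}\otimes-$ is an auto-equivalence of $\udl{\sC}$, hence so is its image under $\rcoind_\theta$, which is exactly the condition we needed. The main obstacle is the bookkeeping of the first two paragraphs: recognising $\rcoind_\theta$ as a geometric pushforward and, above all, establishing the representation-theoretic identity $\coind_\theta(S^V)\simeq S^{V^N}$, which is precisely what reconciles the apparent tension that coinflation replaces a $G$-representation by the smaller $N$-fixed $Q$-representation while $Q$-stability only tests invertibility against $Q$-representations; once these are in place, \cref{lem:parametrised_colimits_base_change} finishes the argument just as in \cref{lem:lax_monoidal_structure_coind}.
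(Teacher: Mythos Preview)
Your proof is correct and follows essentially the same approach as the paper: both invoke \cref{prop:characterisation_G_stability}(3), establish the key identity $\coind_\theta S^V\simeq S^{V^N}$, and use \cref{lem:parametrised_colimits_base_change} to identify $S^V\otimes-$ on $\rcoind_\theta\udl{\sC}$ with $\rcoind_\theta(S^{V^N}\otimes-)$. The paper's version is terser, applying \cref{lem:parametrised_colimits_base_change} directly to the pointed tensoring, whereas you spell out the passage through the unpointed $S(V)_+\otimes-$ and then take cofibres; this is a harmless elaboration rather than a different strategy.
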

\begin{proof}
    We apply \cref{prop:characterisation_G_stability} to show that $\rcoind_\groupSurjection$ sends $Q$-stable to $G$-stable categories.
    Suppose that $\udl{\category{C}}$ is a $Q$-stable category and $V$ is a finite dimensional $G$-representation.
    By \cref{lem:parametrised_colimits_base_change} we can identify the maps $S^V \otimes -$ and $\rcoind_\groupSurjection(\coind_\groupSurjection S^{V} \otimes -)$ on $\rcoind_\groupSurjection \udl{\category{C}}$.
    Note that $\coind_\groupSurjection S^{V} \simeq S^{V^N}$ is the representation sphere of the finite dimensional $Q$-representation carrying the residual action.
    By $Q$-stability of $\udl{\category{C}}$, this map is invertible.
\end{proof}

\begin{cons}[Spectral restriction]\label{cons:spectral_restriction}
    Let $\alpha \colon K \rightarrow G$ be a homomorphism of compact Lie groups.
    \cref{lem:lax_monoidal_structure_coind}
    endows $\coind_\alpha \colon \Pr^{L, K-\stable}_K \to \Pr^{L, G-\stable}_G$ with a lax symmetric monoidal structure.
    This endows $\coind_\alpha \myuline{\spectra}_K$ with the structure of a commutative algebra in $\Pr^{L, G-\stable}_G$.
    In particular, using that $\myuline{\spectra}_G$ is the initial commutative algebra in $\presentable^{L,G-\stable}_G$, we obtain the symmetric monoidal $G$-colimit preserving functor $\res_\alpha \colon \myuline{\spectra}_G \to \coind_\alpha \myuline{\spectra}_K$ called the \textit{restriction map}.
    If $\alpha=\groupSurjection \colon G \twoheadrightarrow Q$ is an epimorphism, we also call $\res_\groupSurjection = \infl_\groupSurjection \colon \myuline{\spectra}_Q \to \coind_\groupSurjection \myuline{\spectra}_G$ the \textit{inflation map}.
\end{cons}

\subsubsection*{Categorical isotropy separation}

At various places in this article we will use isotropy separation arguments.
For this, we recall here some constructions on $G$--categories given a family $\family$ of subgroups of $G$. Recall that a family of subgroups of a compact Lie group $G$ is a collection of closed subgroups of $G$ which is closed under subgroups and conjugation. 

Note that conjugacy classes of subgroups of $G$ correspond bijectively to isomorphism classes of objects in $\orbit(G)$. Given any collection $S$ of closed subgroups of $G$ that is closed under conjugacy, we set $\orbit_S(G) \subset \orbit(G)$ to be the full subcategory on those $G/H$ with $H \in S$.
One important example of this is the collection $S = \family^c$ given by the collection of all subgroups which lie in the complent of a family $\family$. This never forms a family, except in the extreme cases of the empty family or the family of all subgroups.

\begin{example}[A family for quotients]
    \label{ex:normal_subgroup_family}
    Suppose that $ N \leq G$ is a closed normal subgroup of $G$. An interesting family is provided by $\Gamma_N \coloneqq \{ H \leq G \mid N \nleq H \}$. Then $\Gamma_N^c$ consists of those $H \leq G$ with $N \leq H$. 
    Let $\alpha \colon G \rightarrow G/N$ denote the quotient homomorphism. Observe that the adjunction 
    $\ind_\alpha^{\orbit} \dashv \res_\alpha^{\orbit}$ restricts to an equivalence of categories
    \[ \ind_\alpha^{\orbit} \colon  \orbit_{\Gamma_N^c}(G) \simeq \orbit(G/N) \colon \res_\alpha^{\orbit}. \]
\end{example}

\begin{example}[A family for free actions]
    \label{ex:normal_subgroup_another_family}
    Suppose again that $N \leq G$ is a closed normal subgroup of $G$. Another family is given as $\family_N \coloneqq \{ H \subset G \mid H \cap N = \{1\} \}$.    Note that when $N \neq \{1\}$, there is an inclusion of families $\family_N \subseteq \Gamma_N$. Thus, there is an inclusion $\Gamma_N^c \subseteq \family_N^c$.
\end{example}

\begin{defn}
    Let $G$ be a compact Lie group and $S$ a collection of subgroups, closed under conjugacy. Then we write $\catgrpcol{G}{S} \coloneqq \func(\orbit_S(G)\op,\cat)$    for the \textit{category of $S$--categories}.
\end{defn}

If $\family$ is a family of closed subgroups of $G$, we have the following variant of the standard isotropy separation sequence relating the categories $\cat_G$, $\catgrpcol{G}{\family}$ and $\catgrpcol{G}{\family^c}$. This will allow us to ``separate'' our problems into orthogonal pieces, one part concentrated in $\family^c$ and the part which is $\family$--local.

\begin{cons}[Isotropy separation for $G$--categories]\label{cons:isotropy_separation_recollement}
    Let $\family$ be a family of subgroups of a compact Lie group $G$ and denote by $b \colon \orbit_{\family}(G) \hookrightarrow \orbit(G)$ and $s \colon \orbit_{\family^c}(G) \hookrightarrow \orbit(G)$ the inclusions.    We obtain the adjoint triples 
    \begin{equation*}
    \begin{tikzcd}
        \cat_{G, \family^c} \ar[r, "s_!" description, hook, bend left=40] 
        \ar[r, "s_*" description, hook, bend right=40] 
        & \cat_G \ar[l, "s^*" description] 
        & 
        &
        \cat_{G}  
        \ar[r, "b^*" description] 
        & \cat_{G, \family} 
        \ar[l, "b_!" description, hook, bend right=40]
        \ar[l, "b_*" description, hook, bend left=40] 
    \end{tikzcd}
    \end{equation*}
    by restriction and Kan extension along $s$ and $b$.
    Without making this precise, let us mention that these can be made into an unstable recollement using the cofibre sequence $\underline{E\family}_+ \xrightarrow{b} \udl{S}^0 \to \widetilde{\underline{E\family}}$ of pointed $G$--spaces. For example, the map $b^*\colon \cat_G\rightarrow \cat_{G,\family}$ is equivalently given by taking the global sections on the map $b^*\colon \udl{\cat}\rightarrow\udl{\func}(\udl{E\family},\udl{\cat})$.
    
    Unwinding the right Kan extension formula, one obtains for example that $s_*$ is given by
    \begin{equation*}
        (s_* \category{C})^H = \begin{cases}
            \category{C}^H, & H \notin \family \\
            *, & H \in \family.
        \end{cases}
    \end{equation*}

    Notice that the adjunction $b^* \dashv b_*$ is the basechange adjunction associated to the \'etale morphism $\pi_{E\family}^* \colon \spc_G \rightleftharpoons (\spc_G)_{/E\family} \cocolon (\pi_{E\family})_*$.
    In particular, it restricts to an adjunction $\presentable^L_G \rightleftharpoons \presentable^L_{G,\family}$ by \cref{lem:lax_monoidal_base_change_etale_morphism}.
    Similarly, the adjunction $s^* \dashv s_*$ is the basechange adjunction associated to the geometric morphism $s^* \colon \spc_G \rightleftharpoons \spc_{G,\family^c} \cocolon s_*$.
\end{cons}

\begin{example}[Isotropy separation and coinduction]\label{ex:isotropy_separation_coinduction}
    Consider a continuous epimorphism $\groupSurjection \colon G \twoheadrightarrow G/N$ of compact Lie groups.
    Recall from \cref{ex:normal_subgroup_family} that there is the  functor $\res^\orbit_\groupSurjection \colon \orbit(Q) \hookrightarrow \orbit(G)$ which restricts to an equivalence $\orbit(G/N) \simeq \orbit_{\Gamma_N^c}(G)$.
    This identifies the adjunctions $\coind_\groupSurjection \colon \cat_G \rightleftharpoons \cat_{Q} \cocolon \rcoind_\groupSurjection$ and $s^* \colon \cat_G \rightleftharpoons \cat_{G, \Gamma_N^c} \cocolon s_*$.
\end{example}

\begin{cons}[Singular part]\label{defn:singular_part_inclusion}
    Consider the inclusion  $s\colon \orbit_{\family^c}(G)\hookrightarrow \orbit(G)$. Then we get the Bousfield  colocalisation $s_!\colon \spc_{G,\family^c} \rightleftharpoons \spc_G \cocolon s^*$ such $s_!s^*(\myuline{G/H}) = \myuline{G/H}$ for every  and $s_!s^*(\myuline{G/H})=\varnothing$ . Since $s_!s^*$ picks out the isotropy of a $G$--space $\udl{X}$ not in $\family$, we shall also use the notations (which will be part of a larger notational package in \cref{nota:isotropy_separation_package})
    \[\udl{X}_{\family^c}\coloneqq s^*\udl{X}\in\spc_{\family^c}\quad\quad\quad \udl{X}_{\widetilde{\family}}\coloneqq s_!s^*\udl{X} = s_!\udl{X}_{\family^c}\in \spc_G.\]
    The  adjunction counit $\epsilon\colon \udl{X}_{\widetilde{\family}}\rightarrow \udl{X}$ thus admits the classical interpretation as the inclusion of the $\family$--singular part of the $G$--space $\udl{X}$.  It is the identity map on $G/H$ for $H\notin\family$ and the map $\varnothing\rightarrow \myuline{G/H}$ for  $H\in\family$. We refer to $\epsilon \colon \udl{X}_{\widetilde{\family}} \rightarrow \udl{X}$ as the \textit{inclusion of the $\family$--singular part of $\udl{X}$}.
\end{cons}

\begin{example}
    For $\family = \proper$, the family of proper subgroups, $\udl{X}_{\widetilde{\proper}}$ is given by the fixed points space $X^G$, considered as a $G$--space with trivial action. For $\family = \{ e \}$, the trivial family, the intuition for $\udl{X}_{\widetilde{\family}}$ is that is gives the $G$--space of all points in $\udl{X}$ with nontrivial isotropy.
\end{example}

Having recounted the constructions relevant to the complementary part $\family^c$, we now recall some language associated to the $\family$--local part. Recall that for a family $\family$, we denoted by $b\colon \udl{E\family}\rightarrow\terminalTCat$ the unique map.

\begin{defn}
    We say that a $G$--category $\underline{\sC}$ is $\family$\textit{--Borel} if that the map $\underline{\sC}\rightarrow b_*b^*\underline{\sC}$ is an equivalence.
    A $G$--category $\udl{\category{C}}$ will be called $\family$--\textit{coBorel} if the map $b_! b^* \udl{\category{C}} \to \udl{\category{C}}$ is an equivalence. 
\end{defn}

\begin{example}[Borel categories]\label{cons:Borel_categories}
    For the trivial family $\family=\{1\}$, we will also write $\underline{E\family}$ as $\underline{EG}$ and  write $
        \udl{\borel} \coloneqq b_* \colon \cat_{G,\{1\}}\simeq \cat^{BG} \hookrightarrow \cat_G$. We call $\udl{\borel}(\category{C})$ the \textit{Borel-$G$-category} associated to $\category{C}$.
    Explicitly, $\udl{\borel}(\category{C})(\myuline{G/H}) = \category{C}^{hH}$. In this case, the adjunctions \cref{cons:isotropy_separation_recollement} produce the Borelification Bousfield (co)localisations studied in \cite[$\S2.4$]{kaifNoncommMotives}. While we will not need it in this article, we mention that there, it was shown in the case of finite groups $G$ that $b^*\colon \cat_G\rightarrow \cat^{BG}$ naturally assemble to a $G$--symmetric monoidal Bousfield localisation and thus interacts well with the multiplicative norms.
\end{example}

\begin{fact}\label{lem:recoginition_borel_cocomplete_categories}
    An alternative description for $b_!b^*\underline{\sC}$  and $b_*b^*\underline{\sC}$ are $\underline{E\family}\times \underline{\sC}$ and $\underline{\func}(\underline{E\family},\underline{\sC})$ respectively. 
\end{fact}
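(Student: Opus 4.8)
The plan is to deduce both identifications formally, using only the adjoint triple $b_!\dashv b^*\dashv b_*$ of \cref{cons:isotropy_separation_recollement}, the computation of the composite $b_*b^*$ that is already carried out inside the proof of \cref{lem:pushforwar_fun_manoeuvre}, and the cartesian closedness of $\cat_G$. Throughout I would exploit that, as noted in \cref{cons:isotropy_separation_recollement}, the adjunction $b^*\dashv b_*$ is the categorical basechange of the \'etale geometric morphism $\pi_{E\family}^*\colon\spc_G\rightleftharpoons(\spc_G)_{/E\family}\cocolon(\pi_{E\family})_*$, so that under the equivalence $\cat_{G,\family}\simeq\cat_{(\spc_G)_{/E\family}}$ the functors $b^*,b_*,b_!$ are identified with $(\pi_{E\family})^*,(\pi_{E\family})_*,(\pi_{E\family})_!$ respectively.

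First I would settle the description of $b_*b^*$, which is essentially already on record. Specialising the computation inside the proof of \cref{lem:pushforwar_fun_manoeuvre} to the topos $\spc_G$ and the object $\udl{E\family}$, the composite $b_*b^*=(\pi_{E\family})_*(\pi_{E\family})^*$ sends $\udl{\sC}\in\cat_G$ to $\lim_{\udl{E\family}}\udl{\sC}=\udl{\func}(\udl{E\family},\udl{\sC})$, which is precisely the $G$--category appearing in the statement.

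For the description of $b_!b^*$ I would then argue by the Yoneda lemma. For an arbitrary $\udl{\D}\in\cat_G$, chaining the adjunction $b_!\dashv b^*$, then the adjunction $b^*\dashv b_*$ together with the identification of $b_*b^*$ just obtained, and finally the tensor--hom adjunction $-\times\udl{E\family}\dashv\udl{\func}(\udl{E\family},-)$ expressing cartesian closedness of $\cat_G$, one gets equivalences natural in $\udl{\D}$
\begin{align*}
    \map_{\cat_G}(b_!b^*\udl{\sC},\udl{\D})
    &\simeq\map_{\cat_{G,\family}}(b^*\udl{\sC},b^*\udl{\D})
    \simeq\map_{\cat_G}(\udl{\sC},b_*b^*\udl{\D})\\
    &\simeq\map_{\cat_G}\bigl(\udl{\sC},\udl{\func}(\udl{E\family},\udl{\D})\bigr)
    \simeq\map_{\cat_G}(\udl{\sC}\times\udl{E\family},\udl{\D}).
\end{align*}
Since this composite is natural in $\udl{\D}$, the objects $b_!b^*\udl{\sC}$ and $\udl{\sC}\times\udl{E\family}$ corepresent the same functor on $\cat_G$, so the Yoneda lemma forces $b_!b^*\udl{\sC}\simeq\udl{\sC}\times\udl{E\family}$, as claimed.

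I do not expect any genuine obstacle here: the only point requiring (routine) care is the bookkeeping that translates between the Kan-extension description of $b_!\dashv b^*\dashv b_*$ along $\orbit_{\family}(G)\hookrightarrow\orbit(G)$ and the \'etale-basechange description along $\udl{E\family}\in\spc_G$, namely the equivalence $\cat_{G,\family}\simeq\cat_{(\spc_G)_{/E\family}}$ and the compatibility of the three adjoints with it. This is exactly what \cref{cons:isotropy_separation_recollement}, combined with the standard identification of slices of presheaf topoi, provides.
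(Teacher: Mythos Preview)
The paper states this as a \emph{Fact} without proof, so there is no paper argument to compare against. Your proposal is correct: the identification $b_*b^*\udl{\sC}\simeq\udl{\func}(\udl{E\family},\udl{\sC})$ is exactly the computation recorded inside the proof of \cref{lem:pushforwar_fun_manoeuvre} applied to the \'etale morphism $\pi_{E\family}$ (using the identification recalled in \cref{cons:isotropy_separation_recollement}), and your Yoneda argument for $b_!b^*\udl{\sC}\simeq\udl{E\family}\times\udl{\sC}$ via the chain $b_!\dashv b^*\dashv b_*$ and cartesian closedness is clean and correct.
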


\begin{nota}
    For a subgroup $K\leq G$, we write $\family_K$ for the family of subgroups of $K$ which belong to $\family$. Note that, in particular, the equivalence $\orbit(G)_{/(G/K)}\simeq \orbit(K)$ induces an equivalence $\orbit_{\family}(G)_{/(G/K)}\simeq \orbit_{\family_K}(K)$.
\end{nota}

\begin{prop}[Characterisations of (co)Borelness]\label{prop:characterisation_coborelness}
    Let $\udl{\sC}\in\cat_G$. Then:
    \begin{enumerate}[label=(\alph*)]
        \item $\udl{\sC}$ is $\family$--coBorel if and only if $\sC^H\simeq \varnothing$ for all $H\in\family^c$,
        \item $\udl{\sC}$ is $\family$--Borel if and only if for all $K\leq G$, the canonical map $\sC^K \rightarrow \lim_{G/H\in\orbit_{\family_K}(K)\op}\sC^H$        induced by restrictions is an equivalence.
    \end{enumerate}
\end{prop}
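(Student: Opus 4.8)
\textit{Proof strategy.} The plan is to compute the two $G$--categories $b_!b^*\udl{\sC}$ and $b_*b^*\udl{\sC}$ levelwise. Recall that $b^*$ is restriction along $b\op\colon \orbit_{\family}(G)\op\hookrightarrow\orbit(G)\op$, so $b_!$ and $b_*$ are the left and right Kan extensions along $b\op$; since $\cat$ is complete and cocomplete these Kan extensions are pointwise, given at an orbit $G/H$ by a colimit (resp.\ limit) over the comma category whose objects are the maps of orbits $G/H\to G/K$ (resp.\ $G/K\to G/H$) with $K\in\family$. Both halves of the proposition are then obtained by identifying these comma categories and the resulting (co)unit maps.

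For part~(a): since $b$ is fully faithful, the counit $b_!b^*\udl{\sC}\to\udl{\sC}$ is an equivalence after restriction along $b$, so the map $(b_!b^*\udl{\sC})^H\to\sC^H$ is an equivalence for every $H\in\family$. For $H\in\family^c$ I would observe that there is no map of orbits $G/H\to G/K$ with $K\in\family$: such a map exhibits $H$ as subconjugate to $K$, and as $\family$ is closed under conjugation and passage to subgroups this would force $H\in\family$. Hence the indexing comma category is empty, $(b_!b^*\udl{\sC})^H\simeq\varnothing$, and the counit at $G/H$ is the unique map $\varnothing\to\sC^H$. Combining the two cases, $b_!b^*\udl{\sC}\to\udl{\sC}$ is an equivalence if and only if $\varnothing\to\sC^H$ is an equivalence for every $H\in\family^c$, i.e.\ $\sC^H\simeq\varnothing$ for all such $H$; this is (a). (Alternatively this reads off \cref{lem:recoginition_borel_cocomplete_categories}, using $b_!b^*\udl{\sC}\simeq\udl{E\family}\times\udl{\sC}$ and the fact that $(E\family)^H\simeq\varnothing$ precisely for $H\in\family^c$.)

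For part~(b): the pointwise formula gives $(b_*b^*\udl{\sC})^K\simeq\lim$ over $(\orbit_{\family}(G)_{/(G/K)})\op$ of the diagram $(G/L\to G/K)\mapsto\sC^L$. The equivalence $\orbit(K)\simeq\orbit(G)_{/(G/K)}$ of \cref{rec:orbit_category} restricts to an equivalence $\orbit_{\family_K}(K)\simeq\orbit_{\family}(G)_{/(G/K)}$ (as recalled just above the statement), carrying $K/L$ to $(G/L\to G/K)$ with diagram value $\sC^L$; hence $(b_*b^*\udl{\sC})^K\simeq\lim_{K/L\in\orbit_{\family_K}(K)\op}\sC^L$. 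Unwinding the $b^*\dashv b_*$ adjunction, the unit $\udl{\sC}\to b_*b^*\udl{\sC}$ at $G/K$ is exactly the canonical map $\sC^K\to\lim_{K/L\in\orbit_{\family_K}(K)\op}\sC^L$ whose component at an object $(G/L\to G/K)$ is $\udl{\sC}$ evaluated on that orbit map, i.e.\ the restriction $\sC^K\to\sC^L$. Therefore $\udl{\sC}$ is $\family$--Borel if and only if this map is an equivalence for every $K\leq G$, which is the assertion of (b). (When $K\in\family$ the orbit $K/K$ is terminal in $\orbit_{\family_K}(K)$, so the map is automatically an equivalence; the content lies in the subgroups $K\notin\family$, consistently with the first observation in part (a).)

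Neither step is conceptually deep. The only real care is in tracking the variances in the pointwise Kan extension formulas: identifying the comma categories with the slices $\orbit_{\family_K}(K)$ and, above all, checking that the comparison map into the limit in (b) is the one assembled from the restriction functors $\sC^K\to\sC^L$ rather than some twist of it. This bookkeeping is the main obstacle; once it is in place both statements are immediate.
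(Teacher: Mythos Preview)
Your proposal is correct and follows essentially the same approach as the paper. For (b) your argument is identical to the paper's: compute $b_*$ via the pointwise right Kan extension formula and identify the comma category $\big(\orbit_{\family}(G)\op\big)_{(G/K)/}\simeq\orbit_{\family_K}(K)\op$. For (a) the paper just invokes the description $b_!b^*\udl{\sC}\simeq\udl{E\family}\times\udl{\sC}$ from \cref{lem:recoginition_borel_cocomplete_categories}, which you list as your alternative; your primary argument via the emptiness of the comma category for $H\in\family^c$ is a direct unpacking of that same fact and is equally valid.
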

\begin{proof}
    Part (a) is immediate using the description $b_!b^*\udl{\sC}\simeq \udl{E\family}\times \udl{\sC}$. For part (b), the  comma category used to compute the value of the right Kan extension $b_*\colon \func(\orbit_{\family}(G)\op,\cat)\rightarrow \func(\orbit(G)\op,\cat)$ at $G/K$ is 
    \[\big(\orbit_{\family}(G)\op\big)_{(G/K)/}\simeq \big(\orbit_{\family}(G)_{/(G/K)}\big)\op\simeq \orbit_{\family_K}(K)\op\]
    whence the claim.
\end{proof}

\begin{example}[Modules over $\family$--nilpotent rings]\label{example:modules_over_F-nilpotent-rings}
    Suppose $G$ is a finite group and $\family$ is a family of subgroups. By \cite[Prop. 6.38 (1), Thm. 6.42]{MNN17} and the concrete characterisation of $\family$--Borelness from \cref{prop:characterisation_coborelness} (b), we learn that if $R \in \calg(\spectra_G)$ is $\family$--nilpotent, then $\udl{\module}_{\myuline{\spectra}_G}(R)$ is an $\family$--Borel $G$--category.
\end{example}

\subsubsection*{Categorified Brauer quotients}

Consider a family $\family$ of closed subgroups of $G$.
The adjunction $s^* \colon \cat_G \rightleftharpoons \catgrpcol{G}{\family^c} \cocolon s_*$ from \cref{cons:isotropy_separation_recollement} does \textit{not} restrict to an adjunction between presentable or (fibrewise) stable categories as the adjunction unit does not preserve $G$-colimits.
The main result of this section shows that the restriction $s_* \colon \prst{G, \family^c} \hookrightarrow \prst{G}$ (which is fully faithful by \cref{lem:base_change_presentable_fully_faithful_geometric_morphism}) admits a symmetric monoidal left adjoint $\widetilde{s}^*$.
We do this by showing that it is a smashing localisation.

\begin{cons}[(Co)tensoring over pointed groupoids]\label{cons:cotensoring_over_pointed_groupoids}
    Let $\udl{\category{E}}$ be a pointed $\B$--category admitting all parametrised (co)limits. Then $\udl{\E}$ is naturally tensored and cotensored over pointed $\B$--groupoids $\B_*$ as follows: for $\terminalTCat\rightarrow \udl{X}$ in $\B_*$ and $E\in\udl{\E}$, we define 
    \[
    \obj{X} \pointedcotensor E \coloneqq \cofib\left( E \simeq \colim_{\terminalTCat} E \rightarrow \colim_{\udl{X}} E \right) 
    \quad\quad 
    \pointedtensor(\obj{X},E) \coloneqq \fib\left( \lim_{\udl{X}}E \rightarrow \lim_{\terminalTCat}E \simeq E \right)
    \]
    These exhibit $\udl{\E}$ as being tensored and cotensored over $\B_*$, respectively, since for example, for a fixed $F\in \udl{\E}$, we have
    \begin{equation*}
        \begin{split}
            \map_{\udl{\E}}(\obj{X} \pointedcotensor E, F) & \simeq \fib\big(\map_{\udl{\E}}(\colim_{\udl{X}}E,F)\rightarrow\map_{\udl{\E}}(E,F)\big)\\
            &\simeq \map_{\B}(\udl{X},\myuline{\map}_{\udl{\E}}(E,F))\times_{\map_{\B}(\udl{\ast},\myuline{\map}_{\udl{\E}}(E,F))}\{\ast\}\\
            &\simeq \map_{\B_*}(\udl{X},\myuline{\map}_{\udl{\E}}(E,F))
        \end{split}
    \end{equation*}
    Observe also that these constructions give us an adjunction $\obj{X} \pointedcotensor \-- \colon \udl{\E} \rightleftharpoons \udl{\E} \cocolon \pointedtensor(\udl{X},-)$. Moreover, it is easy to see that for $\obj{X},\obj{Y}\in\B_*$, we have $\obj{X}\pointedcotensor (\obj{Y}\pointedcotensor E)\simeq (\obj{X}\wedge \obj{Y})\pointedcotensor E$ where $\obj{X}\wedge \obj{Y} \simeq \cofib(\obj{X}\vee \obj{Y}\rightarrow \obj{X}\times \obj{Y})$.
\end{cons}

\begin{obs}\label{obs:cotensor_commutes_with_geometric_morphisms}
    Let $\family$ be a family of closed subgroups of $G$ and $s^* \colon \cat_{G,*}\rightleftharpoons \cat_{\family^c,*} \cocolon s_*$ the associate Bousfield localisation of the geometric morphism $s^*\colon \spc_G\rightleftharpoons \spc_{\family^c}$. Let $\udl{X}\in\spc_{G,*}$ and $\udl{\sC}\in\cat_{\family^c,*}$. Then there is an equivalence $\pointedtensor(\udl{X},s_*\udl{\sC})\simeq s_*\pointedtensor(s^*\udl{X},\udl{\sC})$ by virtue of the following computation
    \[
    \pointedtensor(\udl{X},s_*\udl{\sC}) \simeq \fib(\lim_{\udl{X}}s_*\udl{\sC} \rightarrow
     \lim_{\terminalTCat}s_*\udl{\sC}) \simeq s_*\fib(\lim_{s^*\udl{X}}\udl{\sC} \rightarrow \lim_{\terminalTCat}\udl{\sC}) \simeq s_*\pointedtensor(s^*\udl{X},\udl{\sC}).
     \]
     Here we have used that $s_*$ commutes with limits and the equivalence $\lim_{\udl{X}}s_*\udl{\sC}\simeq s_*\lim_{s^*\udl{X}}\udl{\sC}$ coming from the identifications of adjunctions in \cref{lem:parametrised_colimits_base_change}.
\end{obs}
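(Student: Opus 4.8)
The plan is to verify the displayed chain of equivalences in \cref{obs:cotensor_commutes_with_geometric_morphisms} step by step, the statement being purely formal (we tacitly assume, as in \cref{cons:cotensoring_over_pointed_groupoids}, that $\udl{\sC}$ admits the relevant parametrised (co)limits). The first and last equivalences merely unwind the definition of the pointed cotensor from \cref{cons:cotensoring_over_pointed_groupoids}: the first expresses $\pointedtensor(\udl{X}, s_*\udl{\sC})$ as the fibre of the comparison map $\lim_{\udl{X}}(-)\to\lim_{\terminalTCat}(-)\simeq(-)$ of $\spc_G$--parametrised limits taken in the $G$--category $s_*\udl{\sC}$, and the last recognises $s_*\pointedtensor(s^*\udl{X}, \udl{\sC})$ as $s_*$ applied to the analogous fibre formed with $\spc_{\family^c}$--parametrised limits in $\udl{\sC}$. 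Before invoking these I would check the two hypotheses required by \cref{cons:cotensoring_over_pointed_groupoids}: that $s_*\udl{\sC}$ is fibrewise pointed and admits $\udl{X}$--shaped limits, and that $s^*\udl{X}$ is pointed. The first holds because every value of $s_*\udl{\sC}$ is either $\ast$ or a value of $\udl{\sC}$ and hence pointed, while the existence of $\udl{X}$--shaped limits on $s_*\udl{\sC}$ is furnished by \cref{lem:parametrised_colimits_base_change} from the $s^*\udl{X}$--shaped limits on $\udl{\sC}$; the second holds because $s^*$ is left exact (being the left adjoint of a geometric morphism), so it preserves the terminal object and therefore carries the basepoint $\terminalTCat\to\udl{X}$ to a basepoint $\terminalTCat\to s^*\udl{X}$.

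For the middle equivalence I would combine two facts. First, $s_*$ is a right adjoint and hence preserves all limits; in particular it commutes with $\lim_{\terminalTCat}(-)\simeq(-)$ and with the fibre $\fib(-)$, the latter being a finite limit in a pointed category. Second, \cref{lem:parametrised_colimits_base_change}, via the functor-category identification of \cref{lem:pushforwar_fun_manoeuvre}, supplies a canonical equivalence $\lim_{\udl{X}} s_*\udl{\sC}\simeq s_*\lim_{s^*\udl{X}}\udl{\sC}$ identifying the $\pointProjection_*$--legs of the two adjoint triples in \cref{lem:parametrised_colimits_base_change}. Substituting this into $s_*\fib(-)\simeq\fib(s_*(-))$, and using that $s_*$ matches the two copies of $\lim_{\terminalTCat}(-)$, pulls $s_*$ outside the fibre, which is precisely the middle equivalence.

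The one point that deserves genuine care --- and so the main obstacle --- is naturality. I must check that, under the identification of \cref{lem:parametrised_colimits_base_change}, the comparison map $\lim_{\udl{X}} s_*\udl{\sC}\to\lim_{\terminalTCat}s_*\udl{\sC}\simeq s_*\udl{\sC}$ induced by the basepoint $\terminalTCat\to\udl{X}$ corresponds to $s_*$ applied to the map $\lim_{s^*\udl{X}}\udl{\sC}\to\udl{\sC}$ induced by $\terminalTCat\to s^*\udl{X}$; only then is passing to fibres legitimate. This is a compatibility of the equivalences of \cref{lem:pushforwar_fun_manoeuvre,lem:parametrised_colimits_base_change} with the base-changed map of parametrising groupoids, and it follows because those equivalences are built from the canonical commutations of the triples $(\pointProjection_!,\pointProjection^*,\pointProjection_*)$ with $s^*$ and $s_*$, which are natural in the indexing diagram. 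Granting this, the three displayed equivalences compose to give the claim, and the rest is bookkeeping.
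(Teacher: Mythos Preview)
Your proposal is correct and follows essentially the same approach as the paper: unwinding the definition of $\pointedtensor$, using that $s_*$ preserves limits, and invoking \cref{lem:parametrised_colimits_base_change} for the identification $\lim_{\udl{X}}s_*\udl{\sC}\simeq s_*\lim_{s^*\udl{X}}\udl{\sC}$. Your version is simply more explicit about verifying the standing hypotheses (pointedness, existence of the relevant limits) and the naturality needed to pass to fibres, which the paper leaves implicit.
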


We introduce now the key notion of \textit{Brauer quotients} of categories with respect to a fixed family. As will be clear from the next terminology, they will be a special case of the standard categorical construction of Verdier quotients. However, since they will play such a key role in this article and are so specific to the equivariant situation, we have chosen to dignify them with a special name, borrowing from the classical theory of Mackey functors. 

\begin{terminology}[$\family$--Brauer quotients]\label{terminology:family_brauer_quotient}
    For a finite group $G$, we  define the $\family$--Brauer quotient $\udl{\D}/\langle\family\rangle$  of a small $G$--stable category $\udl{\D}$ as a $G$--stable category admitting a $G$--exact functor $\Phi^{\family}\colon \udl{\D}\rightarrow
    \udl{\D}/\langle \family\rangle$ which, for all $G$--stable categories $\udl{\E}$, induces an equivalence 
    \[(\Phi^{\family})^*\colon \udl{\func}\exact(\udl{\D}/\langle \family\rangle, \udl{\E})\xlongrightarrow{\simeq}\udl{\func}^{\mathrm{ex},\family=0}(\udl{\D},\udl{\E})\]
    where $\udl{\func}^{\mathrm{ex},\family=0}(\udl{\D},\udl{\E})\subseteq \udl{\func}^{\mathrm{ex}}(\udl{\D},\udl{\E})$ is the full $G$--subcategory of $G$--exact functors $F\colon \udl{\D}\rightarrow \udl{\E}$ such that $\res^G_HF\colon\res^G_H\udl{\D}\rightarrow\res^G_H\udl{\E}$ is the zero functor for all $H\in\family$. Observe that $\udl{\D}/\langle\family\rangle$ must be unique if it exists.
    We denote by $\catGstfam{G}{\family^c} \subseteq \catGst{G}$ the full subcategory given by those $G$-stable categories lying in the image of $s_* \colon \catgrpcol{G}{\family^c} \hookrightarrow \cat_G$, i.e. those with value 0 on $\orbit_\family(G)$.

    Analogously in the presentable setting, for a compact Lie group $G$ and a family $\family$ of closed subgroups, we may define the $\family$--Brauer quotient of an object $\udl{\sC}\in\presentable_G^{L,\mathrm{st}}$ as a presentable $G$--category $\udl{\sC}/\langle\family\rangle$ equipped with a parametrised colimit--preserving functor $\Phi^{\family}\colon \udl{\sC}\rightarrow
    \udl{\sC}/\langle \family\rangle$ inducing for every fibrewise stable presentable $G$--category $\udl{\E}$ an equivalence
    \[(\Phi^{\family})^*\colon \udl{\func}^L(\udl{\sC}/\langle \family\rangle, \udl{\E})\xlongrightarrow{\simeq}\udl{\func}^{L,\family=0}(\udl{\sC},\udl{\E})\]
\end{terminology}

\begin{thm}[Categorified Brauer quotients]\label{prop:categorical_brauer_quotients}
    Let $G$ be a compact Lie group, $H$ a finite group, $\family$ a family of closed subgroups of $G$ and $\E$ a family of subgroups of $H$. Then the fully faithful inclusions 
    \[
    s_* \colon \prst{G, \family^c} \hookrightarrow \prst{G} \quad \quad 
    s_* \colon \catGstfam{H}{\E^c} \hookrightarrow \catGst{H} 
    \]
    all admit symmetric monoidal left adjoints $\widetilde{s}^*$ which are smashing localisations at the idempotent algebra $\widetilde{EF}$.
    In the first case, the induced lax symmetric monoidal structure on $s_*$ agrees with the one from \cref{lem:lax_monoidal_base_change_geometric_morphism}.
    Moreover, $\widetilde{s}^*$ satisfies the universal property of the $\family$--Brauer quotient.
\end{thm}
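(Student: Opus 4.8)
The plan is to realise $\widetilde{s}^*$ as a smashing Bousfield localisation built out of the cotensoring action of pointed $G$--spaces, and then to match this localisation with the recollement of \cref{cons:isotropy_separation_recollement}. I will spell out the presentable case $s_*\colon\prst{G,\family^c}\hookrightarrow\prst{G}$; the small case over the finite group $H$ runs in parallel, except that all parametrised (co)limits must be replaced by the finite ones guaranteed by $H$--stability and $\widetilde{E\family}\pointedcotensor(-)$ must be replaced by the Verdier quotient of $\udl{\D}$ by the $H$--stable subcategory generated by the objects induced from subgroups in $\E$ -- this is precisely why the conclusion for small categories will be weaker.

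Since $\prst{G}$ is pointed (the zero $G$--category is a zero object) and $\B$--(co)complete, \cref{cons:cotensoring_over_pointed_groupoids} endows it with the adjunction $\underline{E\family}_+\pointedcotensor(-)\dashv\pointedtensor(\underline{E\family}_+,-)$ of endofunctors, and the cofibre sequence $\underline{E\family}_+\to\udl{S}^0\to\widetilde{\underline{E\family}}$ of pointed $G$--spaces yields the complementary endofunctor $\widetilde{s}^*:=\widetilde{E\family}\pointedcotensor(-)$, namely the cofibre of $\underline{E\family}_+\pointedcotensor(-)\to\udl{S}^0\pointedcotensor(-)=\id$. Feeding the standard idempotence $\widetilde{\underline{E\family}}\wedge\widetilde{\underline{E\family}}\simeq\widetilde{\underline{E\family}}$ into the associativity relation $\obj{X}\pointedcotensor(\obj{Y}\pointedcotensor(-))\simeq(\obj{X}\wedge\obj{Y})\pointedcotensor(-)$ of \cref{cons:cotensoring_over_pointed_groupoids} shows that $\widetilde{s}^*$ is idempotent; equivalently it is the smashing localisation $(-)\otimes\widetilde{\mathbb{E}}_\family$ at the idempotent algebra $\widetilde{\mathbb{E}}_\family:=\widetilde{E\family}\pointedcotensor\unit\in\calg(\prst{G})$, which plays the role of $\widetilde{E\family}$ in the statement. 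One also checks here that these cotensoring endofunctors preserve presentability and fibrewise stability, so that $\widetilde{s}^*$ genuinely defines an endofunctor of $\prst{G}$.

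The crux is to identify the essential image of $\widetilde{s}^*$ with that of $s_*$. By the defining cofibre sequence, $\udl{\sC}$ is $\widetilde{s}^*$--local iff $\pointedtensor(\underline{E\family}_+,\udl{\sC})\simeq 0$. On the one hand $s^*\underline{E\family}_+\simeq 0$ in pointed $\family^c$--spaces, since $(E\family)^H=\varnothing$ for $H\in\family^c$, so \cref{obs:cotensor_commutes_with_geometric_morphisms} gives $\pointedtensor(\underline{E\family}_+,s_*\udl{\D})\simeq s_*\pointedtensor(0,\udl{\D})\simeq 0$, hence everything in the image of $s_*$ is local. On the other hand $b^*$ commutes with the parametrised limits computing $\pointedtensor$ by \cref{lem:parametrised_colimits_etale_base_change}, and $b^*\underline{E\family}_+\simeq\udl{S}^0$ in pointed $\family$--spaces, so $b^*\pointedtensor(\underline{E\family}_+,\udl{\sC})\simeq\pointedtensor(\udl{S}^0,b^*\udl{\sC})\simeq b^*\udl{\sC}$; thus a local $\udl{\sC}$ has $b^*\udl{\sC}\simeq 0$ and so lies in the image of $s_*$ by the recollement. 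Hence $\widetilde{s}^*$ corestricts to a left adjoint $\widetilde{s}^*\colon\prst{G}\to\prst{G,\family^c}$ of $s_*$. This image identification, wired together with the bookkeeping needed to keep the cotensoring inside $\prst{G}$, is the main obstacle.

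It remains to treat the monoidal refinements and the universal property. As a smashing localisation at an idempotent commutative algebra, $\widetilde{s}^*$ is canonically symmetric monoidal and identifies $\prst{G,\family^c}$ with $\module_{\widetilde{\mathbb{E}}_\family}(\prst{G})$; in particular the image of $s_*$ is closed under $\otimes$, so by \cref{lem:base_change_presentable_fully_faithful_geometric_morphism} the lax symmetric monoidal structure on $s_*$ from \cref{lem:lax_monoidal_base_change_geometric_morphism} is in fact strong, and, a strong monoidal structure on a fully faithful functor with $\otimes$--closed image being essentially unique, it agrees with the one carried by $s_*$ as the right adjoint of the symmetric monoidal $\widetilde{s}^*$. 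For the Brauer quotient, the associated idempotent monad $s_*\widetilde{s}^*$ on $\prst{G}$ is a Bousfield localisation whose local objects are precisely the image of $s_*$, so $\udl{\func}^L(s_*\widetilde{s}^*\udl{\sC},\udl{\E})$ is the full subcategory of $\udl{\func}^L(\udl{\sC},\udl{\E})$ on those $F$ inverting the localisation equivalences; unwinding this via the recollement -- the acyclics being precisely the $G$--categories supported on $\orbit_\family(G)$ -- identifies it with $\udl{\func}^{L,\family=0}(\udl{\sC},\udl{\E})$, which is the asserted universal property (and the small case is identical with Verdier quotients in place of $\widetilde{s}^*$).
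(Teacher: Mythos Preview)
Your core construction matches the paper exactly: both define $\widetilde{s}^*=\widetilde{E\family}\pointedcotensor(-)$ via the pointed cotensoring of \cref{cons:cotensoring_over_pointed_groupoids}, use the idempotence of $\widetilde{E\family}$ for the smashing claim, and identify the essential image with that of $s_*$ through \cref{obs:cotensor_commutes_with_geometric_morphisms}. The paper verifies the adjunction directly by computing $\map(\widetilde{E\family}\pointedcotensor\udl{\sC},s_*\udl{\D})$, whereas you go via local objects; these are equivalent. One small slip: ``by the defining cofibre sequence'' you get $E\family_+\pointedcotensor\udl{\sC}\simeq 0$, not $\pointedtensor(E\family_+,\udl{\sC})\simeq 0$; the two conditions happen to agree (both amount to $b^*\udl{\sC}\simeq 0$), but you should say why.

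There is, however, a genuine gap in your Brauer quotient paragraph. You write that $\udl{\func}^L(s_*\widetilde{s}^*\udl{\sC},\udl{\E})$ consists of those $F$ ``inverting the localisation equivalences'', invoking the Bousfield localisation of $\prst{G}$. But $F$ is a \emph{morphism} in $\prst{G}$, not a functor \emph{on} $\prst{G}$, so this slogan does not apply: the universal property of the localisation $s_*\widetilde{s}^*$ on $\prst{G}$ only controls maps into local objects, and $\udl{\E}$ is arbitrary here. What is actually needed is the fibre sequence
\[
\udl{\func}^L(\widetilde{E\family}\pointedcotensor\udl{\sC},\udl{\E})\to\udl{\func}^L(\udl{\sC},\udl{\E})\to\udl{\func}^L(E\family_+\pointedcotensor\udl{\sC},\udl{\E})
\]
of stable categories to get fully faithfulness of the first map, and then an explicit factorisation argument: if $\res^G_H F=0$ for all $H\in\family$, take the $G$--stable presentable subcategory $\langle\im(F)\rangle\subseteq\udl{\E}$, observe it lies in the image of $s_*$, and use that the unit $\langle\im(F)\rangle\to\widetilde{E\family}\pointedcotensor\langle\im(F)\rangle$ is therefore an equivalence to produce the dashed factorisation. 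Your recollement remark gestures at this but does not supply it.

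Finally, your aside on the small case is off. The paper runs the \emph{same} construction $\widetilde{E\family}\pointedcotensor(-)$ in $\catGst{H}$ (which is pointed with the requisite finite parametrised (co)limits), and obtains the same conclusion; there is no weaker statement and no need to swap in a separate Verdier quotient description. The identification with fibrewise Verdier quotients is a consequence (\cref{rmk:F-Verdier-quotients_agree_with_quigley_shah}), not an alternative construction.
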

\begin{proof}
    We only prove the first case since the second one can be done entirely analogously. 
    $\udl{\presentable}^{L, \stable}_G$ is a pointed category admitting all parametrised (co)limits and is thus tensored over $\spc_{G,*}$ by \cref{cons:cotensoring_over_pointed_groupoids} (in the presentable case, this also comes from pointedness being classified by the idempotent algebra $\spc_{G,*} \in \presentable^L_G$).
    
    We claim that the left adjoint to $s_*$ is given by $\udl{\widetilde{E\family}}\pointedcotensor \--$.
    To see that this functor does indeed take values in $\prst{G, \family^c} \hookrightarrow \prst{G}$ note that for all $K \in \family$ and $\udl{\category{C}} \in \prst{G}$ we have
    \begin{equation*}
        \res^G_K(\udl{\widetilde{E\family}}\pointedcotensor \udl{\category{C}}) \simeq \res^G_K \udl{\widetilde{E\family}} \pointedcotensor \res^G_K \udl{\category{C}} \simeq * \pointedcotensor \res^G_K \udl{\category{C}} \simeq 0.
    \end{equation*}
    To see that it is the left adjoint to $s_*$, let $\udl{\category{D}} \in \prst{G, \family^c}$.
    Observe that since $s^*\udl{\widetilde{E\family}} \simeq \udl{S}^0$, we have equivalences $\pointedtensor(\udl{\widetilde{E\family}},s_*\udl{\D})\simeq s_*\pointedtensor(\udl{S}^0,\udl{\D})\simeq s_*\udl{\D}$, where the first equivalence is by \cref{obs:cotensor_commutes_with_geometric_morphisms}. 
    Thus, the computation
    \begin{equation*}
        \map_{\prst{G}}(\udl{\widetilde{E\family}} \pointedcotensor\underline{\sC}, s_*\udl{\D}) 
        \simeq \map_{\prst{G}} (\udl{\sC}, \pointedtensor(\udl{\widetilde{E\family}}, s_*\udl{\D}))
        \simeq \map_{\prst{G}}(\udl{\sC}, s_*\udl{\D}).    
    \end{equation*}
    shows that $\udl{\widetilde{E\family}}\pointedcotensor\--$ is indeed the left adjoint to $s_*$ as claimed.
    
    Next, since $\udl{\widetilde{E\family}}$ is an idempotent algebra in $\spc_{G,*}$, the left adjoint $\widetilde{s}^*(-)\simeq \udl{\widetilde{E\family}}\pointedcotensor\--$ is a smashing localisation and in particular attains a canonical symmetric monoidal structure.
    To show that the induced lax symmetric monoidal structure on $s_*$ is equivalent to the one from \cref{lem:lax_monoidal_base_change_geometric_morphism}, by \cref{lem:base_change_presentable_fully_faithful_geometric_morphism} 
    we only have to show that the map $u \colon \widetilde{s}^* \unit_{\prst{G}} \to \unit_{\prst{G, \family^c}}$ adjoint to the lax unit $\unit_{\prst{G}} \to s_* \unit_{\prst{G, \family^c}}$ is an equivalence.
    But by construction of the lax symmetric monoidal structure on $s_*$, we have for $\udl{\category{C}} \in \prst{G, \family^c}$ the commutative diagram
    \begin{equation*}
    \begin{tikzcd}
        \func^L_{\family}\left(\unit_{\prst{G, \family^c}}, \udl{\category{C}} \right) \ar[d, "u^*"] \ar[rr, "\simeq"] 
        & 
        & \Gamma(\udl{\category{C}}) \ar[d, "\simeq"] \\
        \func^L_{\family}\left(\widetilde{s}^* \unit_{\prst{G}}, \udl{\category{C}} \right) \ar[r, "\simeq"]
        & \func^L_{G}\left(\unit_{\prst{G}}, s_* \udl{\category{C}} \right) \ar[r, "\simeq"]
        & \Gamma(s_* \udl{\category{C}})
    \end{tikzcd}
    \end{equation*}
    showing that the left vertical map is an equivalence.
    
    Finally, for the statement about $\family$--Brauer quotients, notice that the unit map $\udl{\category{C}} \to \udl{\widetilde{E \family}} \pointedcotensor \udl{\category{C}}$ has trivial restriction to each group $H \in \family$ as $\res_H^G \udl{\widetilde{E \family}} \pointedcotensor \udl{\category{C}} \simeq \res^G_H\udl{\widetilde{E \family}} \pointedcotensor \res^G_H\udl{\category{C}} \simeq * \pointedcotensor \res^G_H\udl{\category{C}} \simeq 0$.
    We have to show that for all $\udl{\D} \in \prst{G}$, the induced map $\udl{\func}^L(s_* \widetilde{s}^*\udl{\category{C}},\udl{\D}) \rightarrow \udl{\func}^{L,\family=0}(\udl{\category{C}},\udl{\D})$ is an equivalence.
    From the cofibre sequence $\udl{E \family}_+ \to \udl{S^0} \to {\udl{\widetilde{E\family}}}$ in $\spc_{G, *}$ we obtain the fibre sequence
    \begin{equation*}
        \udl{\func}^L(\udl{\widetilde{E\family}} \pointedcotensor \udl{\category{C}}, \udl{\category{D}}) 
        \to \udl{\func}^L(\udl{\category{C}}, \udl{\category{D}})
        \to \udl{\func}^L(\udl{E\family}_+ \pointedcotensor \udl{\category{C}}, \udl{\category{D}})
    \end{equation*}
    in $\prst{G}$.
    This shows that $\udl{\func}^L(\udl{\widetilde{E\family}} \pointedcotensor \udl{\category{C}}, \udl{\category{D}})$ is a full $G$-subcategory of $\udl{\func}^L(\udl{\category{C}}, \udl{\category{D}})$ (this is true for any fibre sequence of stable categories).
    Now suppose that $f \colon \udl{\category{C}} \to \udl{\category{D}}$ vanishes on $\family$.
    Denote by $\langle \im(f) \rangle \subseteq \udl{\category{D}}$ the full presentable fibrewise stable $G$-subcategory generated by the image of $f$.
    The assumption on $f$ guarantees that $\res_H^G \langle \im(f) \rangle = 0$ whenever $H \in \family$, i.e. $\langle \im(f) \rangle$ lies in the image of $s_*$.
    Consider the commutative diagram
    \begin{equation*}
    \begin{tikzcd}
        \udl{\category{C}} \ar[r, "f"] \ar[d] 
        & \langle \im(f) \rangle \ar[r, hook] \ar[d, "\simeq"] 
        & \udl{\category{D}} \\
        \udl{\widetilde{E\family}} \pointedcotensor \udl{\category{C}} \ar[r, "f"] 
        & \udl{\widetilde{E\family}} \pointedcotensor \langle \im(f) \rangle \ar[ur, dashed] 
        &
    \end{tikzcd}.
    \end{equation*}
    As $\langle \im(f) \rangle$ lies in the image of $s_*$, the first part shows that the middle vertical arrow is an equivalence from which we obtain the dashed factorisation.
    This concludes the proof of the theorem.
\end{proof}

\begin{rmk}\label{rmk:F-Verdier-quotients_agree_with_quigley_shah}
    In the setting of finite groups $G$, for a $G$--stable category $\udl{\D}$, since $\widetilde{s}^*\udl{\D}$ is the $\family$--Brauer quotient, it satisfies the universal property of the Verdier quotient articulated in \cite[Thm. 5.23]{quigleyShahParametrisedTate}. Thus, by \cite[Def. 5.21]{quigleyShahParametrisedTate}, it may alternatively be described as a fibrewise Verdier quotient in the nonequivariant sense.
\end{rmk}

\begin{nota}\label{nota:isotropy_separation_package}
    Now that we have all the fixed points functors that will concern us, let us collect and summarise them, introducing some new notations along the way. While the notations $\{s_!, s^*, s_*, \widetilde{s}^*\}$ are compact and lithe, useful to prove results, we believe that the notations presently introduced  have more intuitive appeal. The starting point will be the inclusion $s\colon \orbit_{\family^c}(G)\op\hookrightarrow \orbit(G)\op$ from \cref{cons:isotropy_separation_recollement}.
    \begin{enumerate}[label=(\alph*)]
        \item Recall the notations from \cref{defn:singular_part_inclusion} which gives us the top adjunctions in
        \begin{center}
            \begin{tikzcd}
                \spc_{\family^c}\ar[dd,hook] \ar[rrr, bend  left = 20, "(-)\singularPartTwiddle{\family}\coloneqq s_!" description, hook]\ar[rrr, bend  left = -20, "(-)\sLowerStarInclusion{\family}\coloneqq s_*"' description, hook]& &&\spc_G \ar[lll, "(-)\sUpperStar{\family} \coloneqq s^*" description]\ar[dd,hook]\\
                \\
                \cat_{G,\family^c} \ar[rrr, bend  left = 20, "(-)\singularPartTwiddle{\family}\coloneqq s_!" description, hook]\ar[rrr, bend  left = -20, "(-)\sLowerStarInclusion{\family}\coloneqq s_*"' description, hook]& &&\cat_G \ar[lll, "(-)\sUpperStar{\family} \coloneqq s^*" description]
            \end{tikzcd}
        \end{center}
        and that we have the commuting squares of adjunctions is an easy check using that $s^*$ commutes with the vertical maps and their adjoints. Since  $(-)\singularPartTwiddle{\family}$ and $(-)\sLowerStarInclusion{\family}$ are fully faithful, we  also write $(-)\singularPartTwiddle{\family}$ and $(-)\sLowerStarInclusion{\family}$ for $s_!s^*$ and $s_*s^*$ respectively. In particular, for $\udl{X}\in\spc_G$, the  counit gives us a map $\epsilon\colon \udl{X}\singularPartTwiddle{\family}=s_!s^*\udl{X}\rightarrow \udl{X}$ as in \cref{defn:singular_part_inclusion}.  Moreover, by \cref{cons:isotropy_separation_recollement}, we have for $\udl{\sC}\in\cat_{G,\family^c}$ the description
        \begin{equation*}
            \udl{\sC}\sLowerStarInclusion{\family} = \begin{cases}
                \sC(\myuline{G/H}) & \text{ if } H\in \family^c;\\
                \ast & \text{ if } H\in \family.
            \end{cases}
        \end{equation*}
        \item We also have the following solid commuting squares 
        \begin{center}
            \begin{tikzcd}
                \presentable_G^{\mathrm{st}} \ar[d, loop->]\ar[rr, shift left =2, "\brauerQuotientFamily{\family}(-)\coloneqq \widetilde{s}^*",dashed ]& &\presentable_{G,\family^c}^{\mathrm{st}} \ar[d, loop->]\ar[ll, "(-)\sTwiddleLowerStar{\family}\coloneqq s_*", hook] && \cat_G\exact \ar[d, loop->]\ar[rr, shift left =2, "\brauerQuotientFamily{\family}(-)\coloneqq \widetilde{s}^*",dashed]&& \cat_{G,\family^c}\exact\ar[d, loop->]\ar[ll, "(-)\sTwiddleLowerStar{\family}\coloneqq s_*", hook]\\
                \widehat{\cat}_G && \widehat{\cat}_{G,\family^c}\ar[ll, "(-)\sLowerStarInclusion{\family}\coloneqq s_*"', hook] && \cat_G && \cat_{G,\family^c}\ar[ll, "(-)\sLowerStarInclusion{\family}\coloneqq s_*"', hook]
            \end{tikzcd}
        \end{center}
        where the top maps admit the dashed left adjoints. Here, the left diagram holds for general compact Lie groups $G$ whereas the right diagram is only defined for finite groups $G$ from \cref{prop:categorical_brauer_quotients}. As above, since the functors $(-)\sTwiddleLowerStar{\family}$ are fully faithful, we will also write $(-)\sTwiddleLowerStar{\family}$ to denote $s_*\widetilde{s}^*$. The adjunction unit $\id\rightarrow s_*\widetilde{s}^*$ will be denoted by $\Phi^{\family}\colon (-)\rightarrow (-)\sTwiddleLowerStar{\family}$ or just $\Phi\colon (-)\rightarrow (-)\sTwiddleLowerStar{\family}$ when the family $\family$ is understood.
    \end{enumerate}
\end{nota}

\subsubsection*{Stability for quotient groups}

Let $N \le G$ be a closed normal subgroup of the compact Lie group $G$ and denote by $\groupSurjection \colon G \twoheadrightarrow G/N = Q$ the quotient map.
We will use the categorified Brauer quotient from \cref{prop:categorical_brauer_quotients} for the family $\Gamma_N$ from \cref{ex:normal_subgroup_family} to relate $G$-- and $Q$--stable categories.

\begin{prop}\label{prop:stability_for_quotient_groups}
    Suppose that $\groupSurjection \colon G \twoheadrightarrow G/N = Q$ is a continuous epimorphism of compact Lie groups.
    Then there is an adjunction
    \begin{equation*}
    \begin{tikzcd}
            \prGst{G} \ar[rr, shift left = 1, "\stcoind{\alpha}{}"]
            && \prGst{Q} \ar[ll, shift left = 1, "\rcoind_\alpha", hook]
    \end{tikzcd}
    \end{equation*}
    which is a smashing localisation.
    The lax symmetric monoidal structure on $\rcoind_\alpha$ from \cref{lem:coinflation_stability} is equivalent to the lax symmetric monoidal structure from this smashing localisation.
    We thus may view $G/N$--stable presentable categories precisely as $G$--stable categories which vanish for all subgroups $H \leq G$  not containing $N$.
\end{prop}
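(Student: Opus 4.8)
The plan is to realise this adjunction as the categorified Brauer quotient of \cref{prop:categorical_brauer_quotients} for a suitable family, and then to upgrade it from fibrewise stable to genuinely stable categories. First I would fix the family: take $\Gamma_N = \{H \le G \mid N \nleq H\}$ from \cref{ex:normal_subgroup_family}. By \cref{ex:isotropy_separation_coinduction}, the equivalence $\orbit_{\Gamma_N^c}(G) \simeq \orbit(Q)$ identifies the adjunction $\coind_\groupSurjection \dashv \rcoind_\groupSurjection$ on $\cat$ with the isotropy-separation adjunction $s^* \dashv s_*$ for $\family = \Gamma_N$, and since this is an equivalence of the underlying orbit categories it restricts to an identification $\prst{G,\Gamma_N^c} \simeq \prst{Q}$ of presentable fibrewise stable categories taking $s_*$ to $\rcoind_\groupSurjection$. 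Then \cref{prop:categorical_brauer_quotients} applied to $\Gamma_N$ immediately gives that $\rcoind_\groupSurjection \colon \prst{Q} \hookrightarrow \prst{G}$ is a smashing localisation with symmetric monoidal left adjoint $\brauerQuotientFamily{\Gamma_N} = \widetilde{s}^* = \udl{\widetilde{E\Gamma_N}} \pointedcotensor -$, and that the induced lax symmetric monoidal structure on $\rcoind_\groupSurjection$ is the one of \cref{lem:lax_monoidal_base_change_geometric_morphism} for the geometric morphism $\coind_\groupSurjection \dashv \rcoind_\groupSurjection$, i.e. exactly the structure appearing in \cref{lem:coinflation_stability}. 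As $\prGst{G} \subseteq \prst{G}$ and $\prGst{Q} \subseteq \prst{Q}$ are full subcategories closed under $\otimes$, all these lax monoidal refinements restrict compatibly, so the monoidality assertion reduces to showing that the localisation itself restricts to the genuinely stable subcategories.

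For that restriction, one inclusion is \cref{lem:coinflation_stability}: $\rcoind_\groupSurjection$ carries $\prGst{Q}$ into $\prGst{G}$. The real point is that $\widetilde{s}^* = \brauerQuotientFamily{\Gamma_N}$ carries a $G$-stable $\udl{\sC}$ to a $Q$-stable category, which I would check through the representation-sphere characterisation of $Q$-stability from \cref{prop:characterisation_G_stability}: it suffices that $S^W \pointedcotensor -$ (the representation-sphere action through the $\spc_{Q,*}$-cotensoring of \cref{cons:cotensoring_over_pointed_groupoids}) is invertible on $\widetilde{s}^*\udl{\sC}$ for every finite-dimensional $Q$-representation $W$. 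Writing $V \coloneqq \infl_\groupSurjection W$ for the inflated $G$-representation, one has $V^N = V$ with residual $Q$-action $W$, hence $\coind_\groupSurjection S^V \simeq S^{V^N} = S^W$ exactly as in the proof of \cref{lem:coinflation_stability}. Since $\widetilde{s}^*$ is symmetric monoidal and $G$-colimit preserving, it intertwines the action of the $G$-representation sphere $S^V$ on $\udl{\sC}$ with that of $\coind_\groupSurjection S^V \simeq S^W$ on $\widetilde{s}^*\udl{\sC}$; granting this, $G$-stability of $\udl{\sC}$ makes $S^V \pointedcotensor -$ invertible on $\udl{\sC}$, hence $S^W \pointedcotensor -$ invertible on $\widetilde{s}^*\udl{\sC}$, and since every finite-dimensional $Q$-representation arises as such a $V^N$ this yields $Q$-stability.

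With both functors known to preserve the genuinely stable subcategories, the adjunction restricts (an adjunction restricts to any pair of full subcategories preserved by its two functors), giving $\stcoind{\groupSurjection}{} \coloneqq \widetilde{s}^* \dashv \rcoind_\groupSurjection \colon \prGst{G} \rightleftharpoons \prGst{Q}$, still a smashing localisation since $\widetilde{s}^*$ is again $\udl{\widetilde{E\Gamma_N}} \pointedcotensor -$, localisation at the same idempotent algebra. For the final clause, \cref{cons:isotropy_separation_recollement} gives $(\rcoind_\groupSurjection \udl{\D})^H = (s_* \udl{\D})^H$, which is $0$ precisely for $H \in \Gamma_N$, i.e. for $H$ not containing $N$; conversely, if a $G$-stable $\udl{\sC}$ vanishes on $\Gamma_N$, then $\res^G_H(\udl{\widetilde{E\Gamma_N}} \pointedcotensor \udl{\sC})$ is $\res^G_H \udl{\sC}$ for $H \in \Gamma_N^c$ (using $\res^G_H \udl{\widetilde{E\Gamma_N}} \simeq S^0$) and $0$ for $H \in \Gamma_N$, so the unit $\udl{\sC} \to \rcoind_\groupSurjection \stcoind{\groupSurjection}{} \udl{\sC}$ is an equivalence and $\udl{\sC}$ lies in the (fully faithful) image of $\rcoind_\groupSurjection$, necessarily of a $Q$-stable category by the previous step. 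This yields the stated identification of $\prGst{Q}$ with the $G$-stable categories that vanish on all subgroups not containing $N$. The main obstacle throughout is the representation-sphere bookkeeping in the second paragraph --- morally the categorified version of $\Phi^N(S^V) \simeq S^{V^N}$ --- which is conceptually harmless but requires keeping the various $\spc_{G,*}$- and $\spc_{Q,*}$-(co)tensorings and the base-change identification $\spc_{G,\Gamma_N^c} \simeq \spc_Q$ straight.
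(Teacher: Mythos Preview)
Your proof is correct and follows essentially the same route as the paper. The paper's argument is terser: after invoking \cref{ex:isotropy_separation_coinduction} and \cref{prop:categorical_brauer_quotients} to obtain the smashing localisation at the fibrewise-stable level, it simply cites \cref{lem:coinflation_stability} for the right adjoint and the combination of \cref{lem:lax_monoidal_structure_coind} with the observation that $\udl{\widetilde{E\Gamma_N}} \pointedcotensor -$ preserves $G$-stability for the left adjoint. Your second paragraph unpacks precisely the content of \cref{lem:lax_monoidal_structure_coind} in the case $\alpha = \groupSurjection$ (the inflation $V = \infl_\groupSurjection W$ is exactly the $\res_\alpha V$ appearing there), so the difference is only one of packaging. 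One small remark: your phrase ``symmetric monoidal and $G$-colimit preserving'' is loose for a functor between categories of parametrised categories; what you actually use is that $\widetilde{s}^* = s^* \circ (\udl{\widetilde{E\Gamma_N}} \pointedcotensor -)$ intertwines the pointed-space cotensorings via \cref{lem:parametrised_colimits_base_change} (cf.\ the proof of \cref{lem:lax_monoidal_structure_coind}). Your final paragraph on the vanishing characterisation is a welcome elaboration of what the paper leaves implicit in the smashing-localisation description.
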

\begin{proof}
    By combining \cref{ex:isotropy_separation_coinduction} and \cref{prop:categorical_brauer_quotients}, $\rcoind_\alpha \colon \prst{Q} \hookrightarrow \prst{G}$ admits a symmetric monoidal left adjoint $\stcoind{\alpha}{} = \coind_\alpha(\udl{\widetilde{E \Gamma}_N} \wedge \--)$ which is a smashing localisation.
    We only have to show that this restricts to an adjunction between $G$- and $Q$-stable categories.
    But this follows by combining \cref{lem:lax_monoidal_structure_coind}, \cref{lem:coinflation_stability} and observing that $\udl{\widetilde{E \Gamma}_N} \wedge \--$ preserves $G$-stable categories.
\end{proof}

\begin{cor}\label{lem:geometric_fixed_point_spectrum_weil_group}
    Writing $\groupSurjection\colon G\twoheadrightarrow G/N$ for the quotient map by a closed normal subgroup, the symmetric monoidal unit map $\myuline{\spectra}_{G/N} \to \stcoind{\alpha}{} \myuline{\spectra}_G$ is an equivalence.
\end{cor}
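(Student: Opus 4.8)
The plan is to obtain this as a purely formal consequence of \cref{prop:stability_for_quotient_groups}, using that a strong symmetric monoidal functor carries the monoidal unit to the monoidal unit. First I would assemble the two ingredients. On the one hand, \cref{prop:stability_for_quotient_groups} equips $\stcoind{\alpha}{}\colon\prGst{G}\rightarrow\prGst{Q}$ with a symmetric monoidal structure (indeed it exhibits it as a smashing localisation). On the other hand, $\myuline{\spectra}_G$ and $\myuline{\spectra}_{G/N}=\myuline{\spectra}_{Q}$ are the monoidal units of $\prGst{G}$ and $\prGst{Q}$ respectively, equivalently the initial objects of $\calg(\prGst{G})$ and $\calg(\prGst{Q})$; this initiality is precisely the property already invoked in \cref{cons:spectral_restriction}, and the ambient symmetric monoidal structures are those recorded in \cref{sec:G_stability_presentable_categories}.

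With these in hand the argument is immediate. Being strongly symmetric monoidal, $\stcoind{\alpha}{}$ preserves commutative algebras, so $\stcoind{\alpha}{}\myuline{\spectra}_G$ is a commutative algebra in $\prGst{Q}$, and the unit constraint of $\stcoind{\alpha}{}$ supplies an equivalence $\myuline{\spectra}_{Q}=\unit_{\prGst{Q}}\xrightarrow{\simeq}\stcoind{\alpha}{}(\unit_{\prGst{G}})=\stcoind{\alpha}{}\myuline{\spectra}_G$ in $\calg(\prGst{Q})$. Since $\myuline{\spectra}_{Q}$ is the initial object of $\calg(\prGst{Q})$, there is a unique morphism $\myuline{\spectra}_{G/N}\rightarrow\stcoind{\alpha}{}\myuline{\spectra}_G$ of commutative algebras, so this unit constraint coincides with the map named in the statement; hence that map is an equivalence.

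I do not expect a genuine obstacle here, since essentially all the content sits in \cref{prop:stability_for_quotient_groups}. The only points requiring care are bookkeeping: one must check that the map appearing in the statement is genuinely the canonical morphism out of the initial commutative algebra $\myuline{\spectra}_{G/N}$ (in parallel with the construction of $\res_\alpha$ in \cref{cons:spectral_restriction}), and that $\stcoind{\alpha}{}$ is \emph{strongly}, not merely laxly, symmetric monoidal on $G$-stable presentable categories -- both of which are part of \cref{prop:stability_for_quotient_groups}. As a sanity check one could alternatively verify the equivalence directly from the explicit formula for $\stcoind{\alpha}{}$ given in \cref{prop:stability_for_quotient_groups}, but the unit-preservation argument is cleaner and suffices.
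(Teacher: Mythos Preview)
Your proposal is correct and is essentially the same as the paper's own proof, which simply states that this is a direct consequence of the symmetric monoidality of the adjunction in \cref{prop:stability_for_quotient_groups}. You have just unpacked that one-line justification into the unit-preservation argument, which is exactly the intended reading.
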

\begin{proof}
    This is a direct consequence of symmetric monoidality of the adjunction in \cref{prop:stability_for_quotient_groups}.
\end{proof}

\begin{cons}[Geometric fixed points]\label{cons:geometric_fixed_points}
    Let $\groupSurjection_G\colon G\twoheadrightarrow 1$ be the quotient map. The symmetric monoidal $G$-colimit preserving unit map 
    \begin{equation*}
        \Phi^G \colon \myuline{\spectra}_G \longrightarrow \rcoind_{\groupSurjection_G} \stcoind{\groupSurjection_G}{} \myuline{\spectra}_G \simeq \rcoind_{\groupSurjection_G} \spectra
    \end{equation*}
    restricts to 
    a symmetric monoidal colimit preserving functor $\Phi^G \colon \spectra_G \to \spectra$.
    There is an equivalence equivalence $\Phi^G \circ \Sigma^\infty_G (\--) \simeq \Sigma^\infty (\--)^G$ as, by construction, $\Phi_G$ is $\spc_{G, *}$-linear and sends the unit to the unit.
    This shows that $\Phi^G$ recovers the classical geometric fixed points functor which is uniquely determined by these properties.

    If $H \le G$ is a closed subgroup, we have the symmetric monoidal $G$-colimit preserving functor $\Phi^H \colon \myuline{\spectra}_G \to \coind_H^G \myuline{\spectra}_H \to \coind_H^G \rcoind_{\groupSurjection_H} \spectra$ which on global sections recovers the classical geometric fixed point functors $\Phi^H \colon \spectra_G \to \spectra$.
\end{cons}

\begin{defn}
    A collection of $\B$--functors $\{F_s\colon \udl{\sC}\rightarrow\udl{\D}_s\}_{s\in S}$ is \textit{jointly conservative} if for all $X\in \B$, the collection $\{F_s(X)\colon \sC(\udl{X})\rightarrow \D_s(\udl{X})\}_{s\in S}$ is jointly conservative.
\end{defn}

\begin{obs}\label{obs:functors_preserve_joint_conservativity}
    Let $\{F_s\colon \udl{\sC}\rightarrow\udl{\D}_s\}_{s\in S}$ be a jointly conservative collection of $\B$--functors and $\udl{X}\in\B$. Then the collection $\{F_s\colon \udl{\func}(\udl{X},\udl{\sC})\rightarrow\udl{\func}(\udl{X},\udl{\D}_s)\}_{s\in S}$ is also a jointly conservative collection. This is an immediate consequence of the definition and that the evaluation at $Y\in\B$ for the $\B$--category $\udl{\func}(\udl{X},\udl{\sC})$ is $\sC(\udl{Y}\times \udl{X})$.
\end{obs}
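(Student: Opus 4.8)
The plan is to unwind both occurrences of ``jointly conservative'' into statements about ordinary functors between fibres, and then to match these up using the standard computation of the fibres of an internal functor $\B$--category.

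First I would recall, exactly as in the proof of \cref{lem:pushforwar_fun_manoeuvre}, that the $\B$--category $\udl{\func}(\udl{X},\udl{\sC})$ is obtained by applying the composite endofunctor $\cat_{\B} \xrightarrow{(\pi_{\udl{X}})^*} \cat_{\B_{/\udl{X}}} \xrightarrow{(\pi_{\udl{X}})_*} \cat_{\B}$ to $\udl{\sC}$, where $(\pi_{\udl{X}})_*$ is restriction along $(\pi_{\udl{X}})^* \colon \B\op \to \B_{/\udl{X}}\op$, $\;\udl{Y} \mapsto (\udl{Y}\times\udl{X} \to \udl{X})$. Evaluating at $\udl{Y}\in\B$ therefore produces an identification $\udl{\func}(\udl{X},\udl{\sC})(\udl{Y}) \simeq \sC(\udl{Y}\times\udl{X})$, which, since that composite is a single functor $\cat_{\B}\to\cat_{\B}$, is natural in $\udl{\sC}$. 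Applying the composite to the $\B$--functors $F_s \colon \udl{\sC} \to \udl{\D}_s$ and then evaluating at $\udl{Y}$, I would conclude that under these identifications the induced $\B$--functor $F_s \colon \udl{\func}(\udl{X},\udl{\sC}) \to \udl{\func}(\udl{X},\udl{\D}_s)$ has $\udl{Y}$--component the ordinary functor $F_s(\udl{Y}\times\udl{X}) \colon \sC(\udl{Y}\times\udl{X}) \to \D_s(\udl{Y}\times\udl{X})$.

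With this in hand the rest is immediate. By definition, the collection $\{F_s \colon \udl{\func}(\udl{X},\udl{\sC}) \to \udl{\func}(\udl{X},\udl{\D}_s)\}_{s\in S}$ is jointly conservative precisely when, for every $\udl{Y}\in\B$, the collection of $\udl{Y}$--components is jointly conservative as ordinary functors; by the previous step this is the collection $\{F_s(\udl{Y}\times\udl{X})\}_{s\in S}$, which is jointly conservative because the hypothesis on $\{F_s \colon \udl{\sC} \to \udl{\D}_s\}_{s\in S}$ says exactly that $\{F_s(\udl{Z})\}_{s\in S}$ is jointly conservative for every $\udl{Z}\in\B$ — now apply this with $\udl{Z} = \udl{Y}\times\udl{X}$.

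I do not expect any genuine obstacle here: the only point that needs a moment's care is the naturality in $\udl{\sC}$ of the fibrewise identification $\udl{\func}(\udl{X},\udl{\sC})(\udl{Y}) \simeq \sC(\udl{Y}\times\udl{X})$, which is why I would package the first step through the endofunctor $(\pi_{\udl{X}})_*(\pi_{\udl{X}})^*$ of $\cat_{\B}$ rather than analysing a single fibre in isolation.
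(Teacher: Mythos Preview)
Your argument is correct and is exactly the unwinding of the paper's one-sentence justification: the paper simply asserts that this follows from the definition together with the identification $\udl{\func}(\udl{X},\udl{\sC})(\udl{Y}) \simeq \sC(\udl{Y}\times\udl{X})$, and you have spelled out precisely this reasoning, including the (correct) remark that naturality in $\udl{\sC}$ is needed to identify the induced functor on fibres as $F_s(\udl{Y}\times\udl{X})$.
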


\begin{rmk}
    \label{rem:joint_conservativity_detectable_on_generators}
    If $\{ \udl{X}_i\}_{i \in I}$ is a set of objects generating $\category{B}$ under colimits, then a collection of $\B$--functors $\{F_s\colon \udl{\sC}\rightarrow\udl{\D}_s\}_{s\in S}$  is jointly conservative if $\{F_s(X)\colon \sC(\udl{X}_i)\rightarrow \D_s(\udl{X}_i)\}_{s\in S}$ is jointly conservative for each $i$. Indeed, let $\udl{X}$ be an object. Then $\sC(\udl{X}) \simeq \lim_{ (i,f \colon \udl{X}_i \rightarrow \udl{X}) } \sC(\udl{X}_i)$  and the collection $T = \{  \sC(\udl{X}) \xrightarrow{f^*} \sC(\udl{X}_i) \mid i \in I, f \colon \udl{X}_i \rightarrow \udl{X} \}$ is jointly conservative. Suppose that $h$ is a morphism in $\sC(\udl{X})$ which maps to an equivalence in $\category{D}_s(\udl{X})$ for each $s \in S$. For $(i,f) \in T$, we observe that in the commutative diagram
    \begin{center}
        \begin{tikzcd}
            \category{C}(\udl{X}) \ar[r, "f^*"] \ar[d] & \category{D}_s(\udl{X}) \ar[d] \\
            \category{C}(\udl{X}_i) \ar[r]  & \category{D}_s(\udl{X}_i)
        \end{tikzcd}
    \end{center}
    the morphism $h$ maps to an equivalence in the lower right corner for each $s \in S$, so by assumption it mapped to an equivalence in the lower left corner. As that holds true for each $(i,f) \in T$ and the collection $T$ was jointly conservative, we see that $h$ was an equivalence to start with, as desired.
\end{rmk}

\begin{prop}[Joint conservativity of geometric fixed points]\label{prop:joint_conservativity_geometric_fixed_points}
    The collection of $G$--functors
    \begin{equation*}
        \resizebox{.95\hsize}{!}{$\Big\{\Phi^H\colon \myuline{\spectra}\xrightarrow{\eta} \coind^G_H\res^G_H\myuline{\spectra} \xrightarrow{\coind^G_H\res^G_H\Phi^{\proper_H}} \coind^G_H\res^G_H\myuline{\spectra}\sTwiddleLowerStar{\proper_H} \quad \mid \quad H\leq G \text{ closed  }\Big\} $}
    \end{equation*}
    is jointly conservative.
\end{prop}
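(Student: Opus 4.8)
The plan is to reduce the statement to the classical fact that, for any compact Lie group $K$, the family of geometric fixed point functors $\{\Phi^L\colon \spectra_K\to\spectra\mid L\leq K\text{ closed}\}$ is jointly conservative — equivalently, a $K$-spectrum all of whose geometric fixed points vanish is zero; see e.g.\ \cite{lewis_may_steinberger}. Since the orbits $\myuline{G/K}$ generate $\spc_G$ under colimits, \cref{rem:joint_conservativity_detectable_on_generators} will allow us to replace the claim by: for every closed $K\leq G$, the collection $\{\Phi^H(\myuline{G/K})\}_{H\leq G}$ of evaluated functors is jointly conservative on $\myuline{\spectra}(\myuline{G/K})=\spectra_K$. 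As all the categories in sight are fibrewise stable, this amounts to showing that an object $E\in\spectra_K$ killed by all $\Phi^H(\myuline{G/K})$ is zero.

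The key idea is that, for $K$ fixed, it is enough to use those $\Phi^H$ in our collection with $H=L$ a \emph{closed subgroup of} $K$, and that each of these already ``sees'' the classical geometric $L$-fixed points of $K$-spectra. Concretely, I would fix such an $L$, recall that $\Phi^L\colon \myuline{\spectra}\xrightarrow{\eta}\coind^G_L\res^G_L\myuline{\spectra}\to \coind^G_L\big((\res^G_L\myuline{\spectra})\sTwiddleLowerStar{\proper_L}\big)$, and unwind the right Kan extension formula of \cref{cons:res_ind_coind} for $\coind^G_L$ along $\ind_{\groupInjection}^{\orbit}\colon\orbit(L)\to\orbit(G)$: the value of the target at $\myuline{G/K}$ is a limit indexed by pairs $(\myuline{L/M},\,\psi\colon\myuline{G/M}\to\myuline{G/K})$. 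The pair given by $\myuline{L/L}$ together with the canonical projection $\psi_0\colon\myuline{G/L}\to\myuline{G/K}$ (which exists because $L\leq K$) yields a projection $\pr$ from this limit onto $(\res^G_L\myuline{\spectra})\sTwiddleLowerStar{\proper_L}(\myuline{L/L})$. Since $\orbit_{\proper_L^c}(L)$ is the one-object category on $\myuline{L/L}$, combining \cref{ex:isotropy_separation_coinduction}, \cref{lem:geometric_fixed_point_spectrum_weil_group} and \cref{cons:geometric_fixed_points} identifies this last category with $\spectra$ and the unit $\spectra_L=\myuline{\spectra}_L(\myuline{L/L})\to(\myuline{\spectra}_L)\sTwiddleLowerStar{\proper_L}(\myuline{L/L})$ with the classical geometric fixed point functor $\Phi^L$.

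The remaining computation — and the step I expect to be the main obstacle, though it is not deep — is to chase the definition of $\Phi^L(\myuline{G/K})$ through these identifications and verify that
\[
\pr\circ\Phi^L(\myuline{G/K})\ \simeq\ \Phi^L\circ\res^K_L\colon\ \spectra_K\longrightarrow\spectra,
\]
using that the $(\myuline{L/L},\psi_0)$-component of the unit $\eta$ of $\res^G_L\dashv\coind^G_L$ is restriction along $\psi_0$, i.e.\ $\res^K_L\colon\spectra_K\to\spectra_L$. Granting this, the proof concludes quickly: if $E\in\spectra_K$ satisfies $\Phi^H(\myuline{G/K})(E)\simeq 0$ for all closed $H\leq G$, then applying $\pr$ in the cases $H=L\leq K$ gives $\Phi^L(\res^K_L E)\simeq 0$ for every closed $L\leq K$, hence $E\simeq 0$ by the classical joint conservativity recalled at the start; by \cref{rem:joint_conservativity_detectable_on_generators} this establishes the proposition.
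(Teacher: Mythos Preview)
Your proposal is correct and follows essentially the same approach as the paper: both reduce to orbits via \cref{rem:joint_conservativity_detectable_on_generators}, and for $L\leq K$ produce a retraction of $\Phi^L(\myuline{G/K})$ onto the classical $\Phi^L\circ\res^K_L$, then invoke the nonparametrised joint conservativity of geometric fixed points. The only difference is packaging: you use the pointwise limit formula for the right Kan extension $\coind^G_L$ and project onto the $(\myuline{L/L},\psi_0)$--component, whereas the paper constructs an explicit map $h\colon\myuline{G/H}\to\ind^G_H\res^G_H\myuline{G/K}$ (via the section of the counit and the projection $\myuline{G/H}\to\myuline{G/K}$) and precomposes with it; these are Yoneda--dual descriptions of the same projection map.
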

\begin{proof}
    By \cref{rem:joint_conservativity_detectable_on_generators}, it suffices to show that the collection is a jointly conservative collection of functors when evaluated at each $\myuline{G/K}\in\spc_G$. Let $H\leq K$ be a subgroup. Since $\myuline{G/H}\simeq \ind^G_H\terminalTCat$, by the triangle identity, the counit $\ind^G_H\res^G_H\myuline{G/H}\xrightarrow{\epsilon_{\myuline{G/H}}} \myuline{G/H}$ admits a section. Using this and the map $\myuline{G/H}\rightarrow \myuline{G/K}$ we obtain in total a map $h\colon \myuline{G/H}\rightarrow \ind^G_H\res^G_H\myuline{G/K}$. Hence, since we have  $(\coind^G_H\res^G_H\udl{\sC})(G/K)\simeq \func_G(\ind^G_H\res^G_H\myuline{G/K},\udl{\sC})$ and $\sC(G/H)\simeq \func_G(\myuline{G/H},\udl{\sC})$ for any $G$--category $\udl{\sC}$,  we get a transformation $h^* \colon (\coind^G_H\res^G_H\udl{\sC})(G/K)\rightarrow \sC(G/H)$ natural in $\udl{\sC}$. Therefore, for $H\leq K$, evaluating the functor $\Phi^H$ in the statement at $G/K$ together with the transformation above gives the following commuting diagram
    \begin{center}
        \begin{tikzcd}
            \spectra_K \rar["\eta"]\dar[equal] & (\coind^G_H\res^G_H\myuline{\spectra})(G/K)\ar[rrr,"\coind^G_H\res^G_H\Phi^{\proper_H}"]\dar["h^*"] & & &(\coind^G_H\res^G_H\myuline{\spectra}\sTwiddleLowerStar{\proper_H})(G/K)\dar["h^*"] \\
            \spectra_K \rar["\res^K_H"] & \spectra_H \ar[rrr,"\Phi^H"] && &\spectra
        \end{tikzcd}
    \end{center}
    But since the bottom compositions are jointly conservative when we let $H$ vary over all closed subgroups of $K$ (this is well--known, see for example \cite[Prop. 3.3.10]{Schwede_Global}), we thus get similarly that the top compositions are too. This completes the proof.
\end{proof}

\subsubsection*{Free actions}

Here we review a few geometric facts on $G$-spaces on which a normal subgroup $N$ acts freely. It will be needed later on to argue for example, if $\udl{X}$ is a $G$-Poincar\'e space with free $N$-action, then also the quotient $N\backslash \udl{X}$ is $G/N$-Poincar\'e.

\begin{defn}[Free actions]\label{def:free_action}
    Consider a group $G$ together with a normal subgroup $N \le G$.
    We say that the action of $N$ on $\obj{X} \in \spc_G$ is \textit{free} if $\obj{X}$ is coBorel with respect to the family $\family_N$. 
\end{defn}

That is, $N$ acts freely on $\udl{X}$ if whenever $N \cap H \neq \{1\}$ we have $X^H = \varnothing$ as is shown in \cref{lem:recoginition_borel_cocomplete_categories}.

\begin{rmk}[Quotients of free $G$--spaces]\label{rem:quotient_free_action}
    Consider $\obj{X} \in \spc_G$ and isotropy separation with respect to the trivial family $\family = 1$ (which is the case $N = G$ in \cref{def:free_action}). Recalling the operation of genuine quotients from \cref{cons:res_ind_coind}, note that for this family we obtain $G\backslash b_!(-) \simeq (-)_{hG}$ as the first functor is left adjoint to the composite $b^* \infl_G^1 \colon \spc \to \spc^{BG}$ which is the restriction functor along the projection $BG \to *$.
    In particular, we obtain a map
    \begin{equation*}
        X_{hG} \simeq G\backslash (b_! b^* \obj{X}) \to G\backslash \obj{X}.
    \end{equation*}
    This is an equivalence if $G$ acts freely on $\obj{X}$ since $X\simeq b_!Y$ for some $Y\in \spc^{BG}$.
\end{rmk}

We will now prove two lemmas about quotients by free actions. Together, they are useful in studying the fibres of the map $\udl{X} \rightarrow \infl_G^Q N \backslash \udl{X}$, as we will see in \cref{cor:fibre_of_quotient}. 

\begin{lem}
    \label{lem:quotient_cartesian_natural_transformation}
    Let $G$ be a group and let $N \subset G$ be a closed normal subgroup. Let $f \colon \udl{X} \rightarrow \udl{Y}$ be a map of $G$--spaces, where $\udl{X}$ and $\udl{Y}$ have free $N$-actions. Then the square
    \begin{equation}
        \label{diag:quotient_by_free_action}
        \begin{tikzcd}
            \udl{X} \ar[r] \ar[d] & \udl{Y} \ar[d] \\
            \infl^{G/N}_G N\backslash \udl{X} \ar[r] &\infl^{G/N}_G N\backslash \udl{Y} 
        \end{tikzcd}
    \end{equation}
    is cartesian.
\end{lem}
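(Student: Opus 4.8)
The plan is to recognise the displayed square as a naturality square of the unit of an adjunction and then to check cartesianness pointwise over the orbit category.

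First I would unwind the notation: by \cref{cons:res_ind_coind} one has $N\backslash(-)=\ind_{\groupSurjection}$ and $\infl^{G/N}_G=\res_{\groupSurjection}$ with $\ind_{\groupSurjection}\dashv\res_{\groupSurjection}$, so the two vertical maps of \cref{diag:quotient_by_free_action} are the components at $\udl X$ and $\udl Y$ of the unit $\eta\colon\id\Rightarrow\infl^{G/N}_G\circ(N\backslash-)$, and the square is exactly its naturality square along $f$. Hence it suffices to show that $\eta$ has cartesian naturality squares over all maps of $G$-spaces with free $N$-action. Since $\spc_G=\presheaf(\orbit(G))$ and both finite limits and equivalences there are detected pointwise, I would reduce to proving, for each closed $K\le G$, that the square obtained by evaluating \cref{diag:quotient_by_free_action} at $G/K$ is a pullback of spaces. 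If $K\cap N\neq\{1\}$ this is immediate: by \cref{def:free_action} we have $X^K=Y^K=\varnothing$, and since $\varnothing$ is strictly initial in $\spc$, the pullback of the cospan $(\infl^{G/N}_G N\backslash\udl X)^K\leftarrow\varnothing\rightarrow(\infl^{G/N}_G N\backslash\udl Y)^K$ admits a map to $\varnothing$ and is therefore $\varnothing$ itself.

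The substantive case is $K\cap N=\{1\}$. Then $\groupSurjection$ restricts to an isomorphism $K\xrightarrow{\sim}\bar K\coloneqq\groupSurjection(K)$, and $KN=\groupSurjection^{-1}(\bar K)$ is an internal semidirect product of $N$ with $K$, with $K$ a complement to $N$. Using functoriality of $\ind^{\orbit}$ in the group homomorphism one gets $\res^G_K\circ\infl^{G/N}_G\simeq\res^{G/N}_{\bar K}$ (under $\bar K\cong K$), and since the commuting square of group homomorphisms with edges $KN\hookrightarrow G$, $G\twoheadrightarrow G/N$ and $KN\twoheadrightarrow\bar K$, $\bar K\hookrightarrow G/N$ is cartesian, the associated Beck--Chevalley equivalence gives $\res^{G/N}_{\bar K}\circ(N\backslash_G-)\simeq(N\backslash_{KN}-)\circ\res^G_{KN}$. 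Feeding these identifications in, the square \cref{diag:quotient_by_free_action} evaluated at $G/K$ becomes the same square for the group $KN$ and the free $N$-spaces $\res^G_{KN}\udl X$, $\res^G_{KN}\udl Y$, evaluated at the orbit $KN/K$. So I may assume $G=K\ltimes N$ with $K$ a complement and that I am evaluating at $G/K$; note that here $\res^G_K\circ\infl^{G/N}_G\simeq\id$, so the vertical maps become the restrictions to $K$ of the components of $\eta$.

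The hard part is the last step: showing that, for $G=K\ltimes N$, the restriction to the complement of the unit $\eta_{\udl X}\colon\udl X\to\infl^{G/N}_G N\backslash\udl X$ is the projection of an (equivariant) principal $N$-bundle. I would prove this from the colimit formula \cref{eq:formula_for_genuine_quotient} for the genuine quotient together with freeness of the $N$-action --- which removes all orbits $G/L$ with $L\cap N\neq\{1\}$ from the indexing diagram and collapses what remains, in the spirit of \cref{rem:quotient_free_action} --- so that the target is identified with the homotopy $N$-orbits of the relevant fixed-point spaces and $\eta_{\udl X}$ with the bundle projection, whose fibre over every point is the underlying space of the compact Lie group $N$, independently of $\udl X$. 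Granting this, $f$ is in particular an $N$-equivariant map, hence a map of principal $N$-bundles covering $N\backslash\udl X\to N\backslash\udl Y$; the comparison morphism $\udl X\to\udl Y\times_{\infl^{G/N}_G N\backslash\udl Y}\infl^{G/N}_G N\backslash\udl X$ is then a map of principal $N$-bundles over the common base $\infl^{G/N}_G N\backslash\udl X$, hence an equivalence, which is the desired pullback square. (Alternatively, since every free $N$-space admits a unique map to $\udl{E\family_N}$, one can first use the pasting lemma for $\udl X\to\udl Y\to\udl{E\family_N}$ to reduce to the case $\udl Y=\udl{E\family_N}$ and then check that $\eta_{\udl X}$ is the pullback of the universal bundle $\eta_{\udl{E\family_N}}$.) The genuinely computational ingredient, and the main obstacle, is exactly this identification: controlling the colimit \cref{eq:formula_for_genuine_quotient} at a free orbit and verifying the fibres of $\eta_{\udl X}$ are $\udl X$-independent --- that is, making the principal-bundle picture precise. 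Everything else is formal bookkeeping with change-of-group functors.
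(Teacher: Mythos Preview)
Your approach differs from the paper's, and the step you flag as the ``hard part'' contains a real gap: the assertion that $\res^G_K\eta_{\udl X}$ is an equivariant principal $N$-bundle, with fibre the underlying space of $N$ over every point, is not correct as stated. Take $G=S_3$, $N=C_3$, $K=\langle(12)\rangle$, $\udl X=\myuline{S_3/K}$ (free $N$-action since $K\cap N=1$): one finds $X^K=\ast$ and $(N\backslash\udl X)^K=\ast$, so at level $G/K$ the fibre of $\eta_{\udl X}$ is a point, not $|C_3|$. The issue is that $N$ does not act on $X^K$ --- for $x\in X^K$ and $n\in N$ one has $k\cdot(nx)=(knk^{-1})x$, which equals $nx$ only when $k$ centralises $n$ --- so only $N^K$ acts, and here $N^K=1$. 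Hence the ``map of principal $N$-bundles over a common base is an equivalence'' step is unavailable at the fixed-point level where you invoke it. Replacing $N$ by $N^K$ does not help either: for $G=D_8=\langle r,s\mid r^4=s^2=1,\,srs=r^{-1}\rangle$, $N=\langle r\rangle$, $K=\langle s\rangle$, $\udl X=\myuline{D_8/\langle sr\rangle}$ one has $X^K=\varnothing$ while $(N\backslash\udl X)^K=\ast$, so $\eta_{\udl X}^K$ is not a torsor for any nontrivial group. Your alternative via $\udl{E\family_N}$ reduces to proving the lemma for the map $\udl X\to\udl{E\family_N}$, which you do not carry out and which for arbitrary $\udl X$ is not visibly easier.

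The paper avoids this by building up from orbits inside $\spc_G$ rather than evaluating pointwise. It first checks the square directly when $\udl X$ and $\udl Y$ are both $N$-free orbits; extends to arbitrary $N$-free $\udl Y$ by writing it as a colimit of orbits and using stability of cartesian squares under such colimits; and finally extends to arbitrary $N$-free $\udl X$ by producing (via \cref{lem:fixed_points_of_quotients}) an effective epimorphism from a coproduct of $N$-free orbits onto $\infl^{G/N}_G N\backslash\udl X$ and applying pullback-pasting along it. The hard content is thus confined to the orbit case, where it is a direct check.
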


\begin{proof}
    We want to check that the square \cref{diag:quotient_by_free_action} is cartesian
    and we will do so in three steps\footnote{An (arguably shorter) proof is possible if one recalls the model from \cite[Prop. B.7.]{Schwede_Global} and observes that given an $N$-free topological $G$-CW space $\mathcal{X}$, the map $\mathcal{X} \rightarrow \infl_G^{G/N} N \backslash \mathcal{X}$ is a fibration with point-set fibre $N$.}. First, a computation shows that it is cartesian whenever $\udl{X}$ and $\udl{Y}$ are $N$-free $G$-orbits. Second, writing $\udl{Y} = \colim_{H \leq G, \hspace{1mm}\myuline{G/H} \rightarrow \udl{Y}} \myuline{G/H}  $ an application of \cite[6.1.3.9.(4)]{lurieHTT} shows that the claim is true for $\udl{X}$ an $N$-free $G$-orbit and $\udl{Y}$ an  $N$-free $G$--space. It is easy to see that the claim now also holds when $\udl{X}$ is a disjoint union of $N$-free $G$-orbits. For the general statement, note that by \cref{lem:fixed_points_of_quotients}, we may find a collection of $N$-free $G$-orbits $\myuline{G/H_i}$ together with maps $q_i \colon \myuline{G/H_i} \rightarrow \udl{X}$ such that the map
    \[ \coprod \infl_G^{G/N} N \backslash q_i \colon \coprod \infl_G^{G/N} N \backslash \myuline{G/H_i} \rightarrow \infl_G^{G/N} N \backslash\udl{X} \]
    induces a $\pi_0$-surjection on all fixed points\footnote{Such morphisms are effective epimorphisms.}. In the diagram
    \begin{center}
        \begin{tikzcd}
            \coprod \myuline{G/H_i} \ar[r] \ar[d] &\udl{X} \ar[r] \ar[d] & \udl{Y} \ar[d] \\
            \coprod \infl^{G/N}_G N\backslash \myuline{G/H_i} \ar[r]&\infl^{G/N}_G N\backslash \udl{X} \ar[r] &\infl^{G/N}_G N\backslash \udl{Y} 
        \end{tikzcd}
    \end{center}
    we know that the outer square and left square are cartesian. As the bottom left map induces a $\pi_0$-surjection on all fixed points, this implies that the right square is cartesian as well.
\end{proof}

\begin{lem}
    \label{lem:fixed_points_of_quotients}
    Let $\udl{X}$ be a $G$--space with free $N$-action. Then, for each map $f \colon \myuline{Q/H} \rightarrow N \backslash \udl{X}$ there exists a subgroup $K \in \family_N$ and a commutative diagram
    \begin{center}
        \begin{tikzcd}
            & \myuline{G/K} \ar[d] \ar[r] & \udl{X} \ar[d] \\
            \infl_G^Q \myuline{Q/H} \ar[r] & \infl_G^Q N \backslash \myuline{G/K}  \ar[r] & \infl_G^Q N \backslash \udl{X}
        \end{tikzcd}
    \end{center}
    where the lower composition is $\infl_G^Q(f)$.
\end{lem}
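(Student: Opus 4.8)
The plan is to realise $f$ by factoring it through the canonical colimit presentation of $\udl{X}$ by its orbits. Recall from \cref{cons:res_ind_coind} that the genuine quotient $N\backslash(-)\colon\spc_G\to\spc_Q$ is the left adjoint of $\infl_G^Q\colon\spc_Q\to\spc_G$; in particular it preserves colimits, and on an orbit it is given by $N\backslash\myuline{G/L}\simeq\myuline{Q/\groupSurjection(L)}$, again an orbit. Write $\eta\colon\id\Rightarrow\infl_G^Q\circ(N\backslash-)$ for the unit of this adjunction.

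The first thing I would observe is that \emph{every} $G$-orbit admitting a map to $\udl{X}$ already lies in $\family_N$. Since $\udl{X}$ has free $N$-action it is $\family_N$-coBorel, so $X^L=\varnothing$ whenever $L\cap N\neq\{1\}$ (\cref{def:free_action} and the discussion following it). If $q\colon\myuline{G/L}\to\udl{X}$ is any map, evaluating at the orbit $G/L$ yields a map of spaces $\map_{\orbit(G)}(G/L,G/L)\to X^L$ carrying $\id_{G/L}$ to a point of $X^L$; hence $X^L\neq\varnothing$ and so $L\in\family_N$.

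Next I would write $\udl{X}\simeq\colim_{L\leq G,\,\myuline{G/L}\to\udl{X}}\myuline{G/L}$, the colimit over the slice $\orbit(G)_{/\udl{X}}$ of the tautological diagram of orbits over $\udl{X}$, whose cocone legs are the structure maps $\myuline{G/L}\to\udl{X}$ themselves. Applying the colimit-preserving functor $N\backslash(-)$ gives $N\backslash\udl{X}\simeq\colim_{L\leq G,\,\myuline{G/L}\to\udl{X}}N\backslash\myuline{G/L}$, with cocone legs $N\backslash q$. Since $\myuline{Q/H}$ corepresents evaluation at $Q/H$ and evaluation preserves colimits, $\map_{\spc_Q}(\myuline{Q/H},N\backslash\udl{X})\simeq\colim_{L\leq G,\,\myuline{G/L}\to\udl{X}}\map_{\spc_Q}(\myuline{Q/H},N\backslash\myuline{G/L})$; as a point of a colimit of spaces, $f$ therefore lifts up to homotopy to a map $g\colon\myuline{Q/H}\to N\backslash\myuline{G/K}$, for some object $(G/K,\,q\colon\myuline{G/K}\to\udl{X})$ of $\orbit(G)_{/\udl{X}}$, such that $(N\backslash q)\circ g\simeq f$. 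By the previous paragraph, $K\in\family_N$.

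Finally I would assemble the required diagram with top arrow $q$, vertical arrows $\eta_{\myuline{G/K}}$ and $\eta_{\udl{X}}$, lower-left arrow $\infl_G^Q(g)$, and lower-right arrow $\infl_G^Q(N\backslash q)$: the right-hand square commutes by naturality of $\eta$ applied to $q$, and the lower composite is $\infl_G^Q(N\backslash q)\circ\infl_G^Q(g)\simeq\infl_G^Q\bigl((N\backslash q)\circ g\bigr)\simeq\infl_G^Q(f)$. The one step that deserves a little care is the colimit-factorisation — that a point of a colimit of spaces lifts, compatibly with the cocone maps, to a point of one of the terms — but this is standard and I anticipate no real obstacle; the rest is formal bookkeeping with the adjunction $N\backslash(-)\dashv\infl_G^Q$.
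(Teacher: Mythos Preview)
Your proof is correct and follows essentially the same strategy as the paper: both factor $f$ through an orbit by expressing $\map_{\spc_Q}(\myuline{Q/H},N\backslash\udl{X})$ as a colimit (the paper via the left Kan extension formula \cref{eq:formula_for_genuine_quotient}, you via the canonical orbit-presentation of $\udl{X}$ and commuting $N\backslash(-)$ and evaluation through it), and then observe that any orbit mapping to $\udl{X}$ must lie in $\family_N$ by coBorelness. Your additional bookkeeping with the unit $\eta$ to assemble the diagram is a helpful elaboration the paper leaves implicit.
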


\begin{proof}
    Using the explicit formula from \cref{eq:formula_for_genuine_quotient} we compute
    \[ \map_{\spc_Q}(\myuline{Q/H},N \backslash \udl{X}) \simeq \colim_{G/K,\hspace{1mm} Q/H \rightarrow N\backslash(G/K)  } \map_{\spc_G}(\myuline{G/K},\udl{X}).  \]
    The map from the left hand side takes $g \colon \myuline{G/K} \rightarrow \udl{X}$, applies $N\backslash(-)$ to it and precomposes with $Q/H \rightarrow N \backslash \myuline{G/K}$. Thus, $f$ eviently factors through some map $N \backslash \myuline{G/K} \rightarrow N \backslash \udl{X}$ that is of the from $N\backslash g$ for some $g \colon \myuline{G/H} \rightarrow \udl{X}$. Now as $\map(\myuline{G/H},\udl{X})$ can only be nonempty if $H \in \family$, we have proved the assertion.
\end{proof}

\begin{cor}
    \label{cor:fibre_of_quotient}
    Let $\udl{X}$ be a $G$--space on which the closed normal subgroup $N \leq G$ acts freely.
    Consider any map $f \colon \myuline{G/H} \rightarrow \infl_G^Q N \backslash \udl{X}$. Then there exists a cartesian diagram
    \begin{center}
        \begin{tikzcd}
            \myuline{G/H} \times_{\infl_G^Q N \backslash \udl{X}} \udl{X} \ar[r] \ar[d, "\text{proj}"] & \myuline{G/K_0} \ar[d]\\
            \myuline{G/H} \ar[r] & \myuline{G/K_1}
        \end{tikzcd}
    \end{center}
    where $K_0 \in \family$ and $\myuline{G/K_1} \simeq \infl N \backslash \myuline{G/K_0}$.
\end{cor}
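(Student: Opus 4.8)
The plan is to reduce the fibre product to one involving a single $N$--free orbit by means of \cref{lem:fixed_points_of_quotients}, and then to invoke the cartesianness of the quotient square from \cref{lem:quotient_cartesian_natural_transformation}. The first step is to transport $f$ across the adjunction $N\backslash(-) = \ind_{\groupSurjection} \dashv \res_{\groupSurjection} = \infl_G^Q$ from \cref{cons:res_ind_coind}. Since $N\backslash\myuline{G/H} \simeq \myuline{Q/\groupSurjection(H)}$ by construction of the genuine quotient, the map $f$ is the adjunct of a $Q$--map $\bar f \colon \myuline{Q/\groupSurjection(H)} \to N\backslash\udl{X}$, and explicitly $f = \infl_G^Q(\bar f)\circ\eta$, where $\eta \colon \myuline{G/H} \to \infl_G^Q\myuline{Q/\groupSurjection(H)}$ is the adjunction unit.

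Next I would feed $\bar f$ into \cref{lem:fixed_points_of_quotients}, which produces a subgroup $K_0 \in \family_N$, a $G$--map $h \colon \myuline{G/K_0} \to \udl{X}$, and a map $p \colon \infl_G^Q\myuline{Q/\groupSurjection(H)} \to \infl_G^Q N\backslash\myuline{G/K_0}$ such that $\infl_G^Q(N\backslash h)\circ p = \infl_G^Q(\bar f)$. Precomposing with $\eta$ and setting $g \coloneqq p\circ\eta \colon \myuline{G/H} \to \infl_G^Q N\backslash\myuline{G/K_0}$ then gives the factorisation $f = \infl_G^Q(N\backslash h)\circ g$. Since $K_0 \cap N = \{1\}$ and $N \trianglelefteq G$, any subgroup subconjugate to $K_0$ also meets $N$ trivially, so $\myuline{G/K_0}$ has free $N$--action by \cref{prop:characterisation_coborelness}(a); as $\udl{X}$ has free $N$--action by hypothesis, \cref{lem:quotient_cartesian_natural_transformation} applied to $h \colon \myuline{G/K_0} \to \udl{X}$ shows that the square with top row $h$, bottom row $\infl_G^Q(N\backslash h)$, and vertical maps the canonical maps to the inflated $N$--quotients, is cartesian.

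I would then pull this square back along $g$; using the pasting law for pullbacks and the factorisation $f = \infl_G^Q(N\backslash h)\circ g$, this yields
\[
\myuline{G/H} \times_{\infl_G^Q N\backslash\udl{X}} \udl{X} \;\simeq\; \myuline{G/H} \times_{\infl_G^Q N\backslash\myuline{G/K_0}} \myuline{G/K_0},
\]
the right-hand fibre product being taken along $g$ and the canonical map $\myuline{G/K_0} \to \infl_G^Q N\backslash\myuline{G/K_0}$. The final step is bookkeeping: by \cref{cons:res_ind_coind}, $N\backslash\myuline{G/K_0} \simeq \myuline{Q/\groupSurjection(K_0)}$, whence $\infl_G^Q N\backslash\myuline{G/K_0} \simeq \myuline{G/K_1}$ with $K_1 \coloneqq \groupSurjection^{-1}(\groupSurjection(K_0)) = K_0 N$, and under this identification the canonical map $\myuline{G/K_0} \to \infl_G^Q N\backslash\myuline{G/K_0}$ becomes the evident projection $\myuline{G/K_0} \to \myuline{G/K_1}$. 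This presents the fibre product as the asserted cartesian square, with $K_0 \in \family_N$ and $\myuline{G/K_1} \simeq \infl_G^Q N\backslash\myuline{G/K_0}$.

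I do not anticipate a genuine obstacle. The only points requiring care are in the first two steps --- ensuring the factorisation returned by \cref{lem:fixed_points_of_quotients} is the one for the correct adjunct $\bar f$ of $f$, and that $g$ genuinely lands in $\infl_G^Q N\backslash\myuline{G/K_0}$ so that the pasting law applies without modification --- and the closing identification of the inflated $N$--quotient $\infl_G^Q N\backslash\myuline{G/K_0}$ of an orbit with the orbit $\myuline{G/K_0N}$; both are routine once the notation is set up.
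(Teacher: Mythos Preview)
Your proposal is correct and follows essentially the same approach as the paper: factor $f$ through the adjunction unit, apply \cref{lem:fixed_points_of_quotients} to the adjunct map to extract the $N$--free orbit $\myuline{G/K_0}$, invoke \cref{lem:quotient_cartesian_natural_transformation} for cartesianness, and then pull back. You are slightly more explicit than the paper about the adjunction bookkeeping and the identification $K_1 = K_0N$, but the argument is the same.
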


\begin{proof}
    Set $H' = H/(H\cap N) \subset Q$.
    The map $f \colon \myuline{G/H} \rightarrow \infl_G^Q N \backslash \udl{X}$ factors through the adjunction unit $\myuline{G/H} \rightarrow \infl_G^Q N \backslash \myuline{G/H} \simeq \infl_G^Q \myuline{Q/H'}$. As the functor $\infl_G^Q$ is fully faithful, we can apply \cref{lem:fixed_points_of_quotients} to the corresponding map $\myuline{Q/H'} \rightarrow N \backslash \udl{X}$ and obtain a commutative diagram
    \begin{center}
        \begin{tikzcd}
            && \myuline{G/K} \ar[d] \ar[r] & \udl{X} \ar[d] \\
            \myuline{G/H} \ar[r] &\infl_G^Q \myuline{Q/H} \ar[r] & \infl_G^Q N \backslash \myuline{G/K}  \ar[r] & \infl_G^Q N \backslash \udl{X}
        \end{tikzcd}
    \end{center}
    in which the square is cartesian by \cref{lem:quotient_cartesian_natural_transformation}.
    Completing the cospan involving $\myuline{G/K}$ and $\myuline{G/H}$ to a pullback gives the desired pullback. 
\end{proof}

\section{Parametrised Poincar\'e duality}\label{section:parametrised_PD}

In this section we start developing the basic formalism of Poincaré duality within the context of  categories parametrised over a topos as summarised in \cref{sec:parametrised_category_theory}. This general theory will later be specialised to the equivariant setting for compact Lie groups in \cref{section:equivariant_PD_elements}.

As a motivation for the definitions appearing in this section recall that, for a closed smooth manifold $M^d$, an embedding $M \hookrightarrow \mathbb{R}^N$ gives rise to a collapse map
\begin{equation*}
    c \colon S^N \rightarrow \thom(\nu_{M\subset \mathbb{R}^N})
\end{equation*}
where $\nu_{M \subset \mathbb{R}^N}$ is the normal bundle of $M$ in $\mathbb{R}^N$.
It turns out that neither the stable homotopy type of the Thom space $\thom(\nu_{M\subset \mathbb{R}^N})$ nor the stable homotopy class of the collapse map $c$ depend on the choice of embedding.
The collapse map defines a class $[c] \in H_N(\thom(\nu_{M\subset \mathbb{R}^N})) \simeq H_d(M; \mathcal{O}_\nu)$, where the isomorphism is the Thom isomorphism and $\mathcal{O}_\nu$ denotes the orientation local system of $\nu$.
Classical Poincar\'e duality now says that 
\begin{equation}\label{eq:classical_PD}
    [c] \cap \-- \colon H^k(M) \to H_{k-d}(M; \mathcal{O}_\nu)
\end{equation}
is an isomorphism.

We start by axiomatising in \cref{subsection:parametrised_spivak_data} such stable collapse maps as \textit{Spivak data} with respect to a fixed coefficient category, upon which we may demand the further condition of being twisted ambidextrous and Poincar\'{e} in \cref{subsection:parametrised_twisteed_ambidex}, generalising the situation sketched above. We then investigate in \cref{subsection:constructions_with_spivak_data} various operations one can perform on Spivak data, proving along the way the main results of the section (c.f. \cref{thm:base_change_of_tw_amb_spivak_data,thm:small_base_change})  about basechanging coefficient categories, which will be the key inputs to our equivariant theory. We then end the section with a discussion of degree theory which will serve as the foundation for our theory of equivariant degrees in \cref{sec:equivariant_degree} and our geometric applications in \cref{section:geometric_applications}.

\subsection{Spivak data}\label{subsection:parametrised_spivak_data}
For an object $\udl{X} \in \category{B}$ we denote by $\uniqueMapX \colon \udl{X} \rightarrow \udl{\ast}$
the map to the final object. Recall that a $\category{B}$--category $\udl{\category{C}}$ admits $\udl{X}$--shaped limits (resp. colimits) if
$\uniqueMapX^* \colon \udl{\sC} \simeq \internalfunc{\category{B}}(\terminalTCat,\udl{\category{C}}) \rightarrow \internalfunc{\category{B}}(\udl{X},\udl{\category{C}})$ admits a right adjoint $\uniqueMapX_*$ (resp. left adjoint $\uniqueMapX_!$).

\begin{defn}\label{defn:spivak_data}
    Let $\udl{X} \in \category{B}$ and $\udl{\category{C}}$ a
    symmetric monoidal $\category{B}$--category which admits $\obj{X}$-shaped colimits.
    A \textit{$\udl{\sC}$--Spivak datum for $\udl{X}$} consists of
    \begin{enumerate}[label=(\arabic*)]
        \item an object $\xi \in \udl{\func}(\udl{X}, \udl{\sC})$ called the dualising sheaf;
        \item a map $c \colon \unit_{\udlcatC} \rightarrow  \uniqueMapX_!\xi$ in $\udl{\sC}$, called the fundamental class (or collapse map).
    \end{enumerate}
\end{defn}

The importance of Spivak data comes from the following construction, which allows us to compare the $\udl{X}$--shaped limit functor with a twisted $\udl{X}$--colimit functor.
It is a generalisation of the map \cref{eq:classical_PD} given by capping with the fundamental class appearing in classical Poincar\'e duality.

\begin{cons}[Capping map]\label{cons:capping_map}
    Let $\udlcatC$ be a symmetric monoidal $\category{B}$--category  which admits $\udl{X}$--shaped limits and colimits and satisfies the $\udl{X}$--projection formula (c.f. \cref{terminology:projection_formula}).
    For each $\udl{\sC}$--Spivak datum $(\xi,c)$ on $\udl{X}$ we can construct a natural transformation
    \begin{equation*}
        \ambi{c}{\xi}{-} \colon 
        \uniqueMapX_*(-) \xlongrightarrow{c \otimes \--}
        \uniqueMapX_! \xi \otimes \uniqueMapX_*(-) \xleftarrow[\simeq]{\projectionformula^X}
        \uniqueMapX_!(\xi \otimes \uniqueMapX^* \uniqueMapX_* (\--)) \xlongrightarrow{\uniqueMapX_!(\id\otimes\epsilon)}
        \uniqueMapX_!(\xi \otimes -)
    \end{equation*}
    which is a morphism in $\udl{\func}(\udl{\category{C}}^{\udl{X}},\udl{\category{C}})$ where $\epsilon \colon \uniqueMapX^* \uniqueMapX_* \to \id$ denotes the adjunction counit. To avoid notational clutter, we will often omit the $\xi$ from $\ambi{c}{\xi}{-}$ when the context is clear.
\end{cons}

There is also a construction in the other direction,  which produces a fundamental class for $\xi$ from a natural transformation $\uniqueMapX_*(-) \rightarrow \uniqueMapX_!(\xi \otimes -)$.

\begin{cons}
    Given a natural transformation $t \colon \uniqueMapX_*(-) \rightarrow \uniqueMapX_!(\xi \otimes -)$ and writing $\eta \colon \id \rightarrow \uniqueMapX_* \uniqueMapX^*$ for the adjunction unit, we obtain a collapse map as the composite
    \begin{equation*}
        \collapse_\xi(t) \colon \unit_{\udl{\sC}} \xlongrightarrow{\eta}
        \uniqueMapX_* \uniqueMapX^* \unit_{\udl{\sC}} \xlongrightarrow{t}
        \uniqueMapX_!(\xi \otimes \uniqueMapX^* \unit_{\udl{\sC}}) \simeq \uniqueMapX_!\xi.    
    \end{equation*}
\end{cons}

\begin{lem}\label{lem:collapse_of_capping}
    There is an equivalence $\collapse_{\xi} (\ambi{c}{\xi}{-}) \simeq c\in\map_{\udl{\sC}}(\unit_{\udl{\sC}},\uniqueMapX_!\xi)$. 
\end{lem}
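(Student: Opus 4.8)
The plan is to unwind both constructions and use the triangle identities for the adjunction $\uniqueMapX^* \dashv \uniqueMapX_*$, together with the naturality of the Beck--Chevalley transformation $\projectionformula^X$.

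First I would recall what $\collapse_\xi(t)$ is for $t = \ambi{c}{\xi}{-}$: it is the composite $\unit_{\udl{\sC}} \xrightarrow{\eta} \uniqueMapX_*\uniqueMapX^*\unit_{\udl{\sC}} \xrightarrow{(\ambi{c}{\xi}{-})_{\uniqueMapX^*\unit}} \uniqueMapX_!(\xi \otimes \uniqueMapX^*\unit_{\udl{\sC}}) \simeq \uniqueMapX_!\xi$. Now I substitute the definition of the capping map evaluated at the object $\uniqueMapX^*\unit_{\udl{\sC}} \in \udl{\sC}^{\udl{X}}$: it is the composite $\uniqueMapX_*\uniqueMapX^*\unit \xrightarrow{c \otimes -} \uniqueMapX_!\xi \otimes \uniqueMapX_*\uniqueMapX^*\unit \xleftarrow{\projectionformula^X,\ \simeq} \uniqueMapX_!(\xi \otimes \uniqueMapX^*\uniqueMapX_*\uniqueMapX^*\unit) \xrightarrow{\uniqueMapX_!(\id \otimes \epsilon)} \uniqueMapX_!(\xi \otimes \uniqueMapX^*\unit)$. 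So altogether $\collapse_\xi(\ambi{c}{\xi}{-})$ is the long composite obtained by prepending $\eta$ and appending the identification $\uniqueMapX_!(\xi \otimes \uniqueMapX^*\unit) \simeq \uniqueMapX_!\xi$ (which comes from $\uniqueMapX^*$ being monoidal, so $\uniqueMapX^*\unit \simeq \unit$).

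The key simplification is the following: consider the map $c \otimes - \colon \unit \to \uniqueMapX_!\xi \otimes (-)$ applied at $\uniqueMapX^*\unit$, precomposed with $\eta$. I want to compare the two ways of getting from $\unit$ to $\uniqueMapX_!\xi \otimes \uniqueMapX_*\uniqueMapX^*\unit$: one is $c \otimes \eta$ in the obvious sense, the other factors through the projection formula. Using naturality of the tensor structure and the projection formula transformation $\projectionformula^X \colon \uniqueMapX_!(\xi \otimes \uniqueMapX^*(-)) \xrightarrow{\simeq} \uniqueMapX_!\xi \otimes (-)$ being natural in the second variable, I would rewrite the relevant portion of the composite. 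Concretely, the square
\begin{equation*}
\begin{tikzcd}
\uniqueMapX_!(\xi \otimes \uniqueMapX^*\unit) \ar[r, "\simeq"] \ar[d, "\uniqueMapX_!(\xi \otimes \uniqueMapX^*\eta)"'] & \uniqueMapX_!\xi \otimes \unit \ar[d, "\id \otimes \eta"] \\
\uniqueMapX_!(\xi \otimes \uniqueMapX^*\uniqueMapX_*\uniqueMapX^*\unit) \ar[r, "\projectionformula^X,\ \simeq"] & \uniqueMapX_!\xi \otimes \uniqueMapX_*\uniqueMapX^*\unit
\end{tikzcd}
\end{equation*}
commutes by naturality of $\projectionformula^X$, and then $\uniqueMapX_!(\id \otimes \epsilon) \circ \uniqueMapX_!(\xi \otimes \uniqueMapX^*\eta) = \uniqueMapX_!(\xi \otimes \uniqueMapX^*(\epsilon\uniqueMapX^* \circ \uniqueMapX^*\eta)) = \id$ by the triangle identity $\epsilon_{\uniqueMapX^*} \circ \uniqueMapX^*\eta = \id$. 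Chasing this through collapses the whole composite down to $\unit \xrightarrow{c} \uniqueMapX_!\xi$, after identifying $\uniqueMapX_!\xi \otimes \unit \simeq \uniqueMapX_!\xi$. The main obstacle I anticipate is purely bookkeeping: keeping straight which $\eta$/$\epsilon$ belong to which adjunction (the $\uniqueMapX^* \dashv \uniqueMapX_*$ adjunction appears both as the "$\uniqueMapX_!$–free" unit $\eta$ used in the $\collapse$ construction and through the counit $\epsilon$ inside the capping map), and making sure the monoidal-unit identifications $\uniqueMapX^*\unit \simeq \unit$ are inserted coherently; but no genuinely hard input is needed beyond the triangle identities and naturality of Beck--Chevalley, both of which are available from \cref{defn:beck_chevalley_transformation} and the cited functoriality in \cite[\S2.2]{CarmeliSchlankYanovski2022}.
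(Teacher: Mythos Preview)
Your proposal is correct and follows essentially the same approach as the paper: both arguments use naturality of the projection formula $\projectionformula^X$ in its second variable to commute the unit $\eta$ past the Beck--Chevalley equivalence, and then apply the triangle identity $\epsilon_{\uniqueMapX^*} \circ \uniqueMapX^*\eta \simeq \id$ to collapse the composite to $c$. The paper simply packages the same chase into a single commutative rectangle, whereas you describe the steps sequentially.
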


\begin{proof}
    Consider the commutative diagram
    \begin{equation*}
    \begin{tikzcd}
        \unit_{\udl{\sC}} \ar[r, "\eta"] \ar[d, "c"]
        & \uniqueMapX_* \uniqueMapX^* \unit_{\udl{\sC}} \ar[d, "c \otimes \--"]
        &
        &
        \\
        \uniqueMapX_! \xi \ar[r, "\-- \otimes \eta"]
        & \uniqueMapX_! \xi \otimes \uniqueMapX_* \uniqueMapX^* \unit_{\udl{\sC}}
        &
        &
        \\
        \uniqueMapX_!(\xi \otimes \uniqueMapX^* \unit_{\udl{\sC}}) \ar[u, "\simeq", "\projectionformula^X"'] \ar[r, "\uniqueMapX^*\eta"]
        & \uniqueMapX_!(\xi \otimes \uniqueMapX^* \uniqueMapX_* \uniqueMapX^* \unit_{\udl{\sC}}) \ar[u, "\simeq", "\projectionformula^X"'] \ar[r, "\epsilon_{\uniqueMapX^*}"]
        & \uniqueMapX_!(\xi \otimes \uniqueMapX^* \unit_{\udl{\sC}}) \simeq \uniqueMapX_! \xi.
    \end{tikzcd}
    \end{equation*}
    The composite $\unit_{\udl{\sC}} \to \uniqueMapX_! \xi$ going through the upper right corner of the rectangle is by definition equal to $\collapse_{\xi} (\ambi{c}{\xi}{-})$.
    The composite $\unit_{\udl{\sC}} \to \uniqueMapX_! \xi$ going through the bottom left corner of the rectangle is equivalent to $c$ using the triangle identity $\epsilon_{\uniqueMapX^*} \circ \uniqueMapX^* \eta \simeq \id$.
\end{proof}

\subsubsection*{Intertwining capping with module maps}
As we shall see throughout the article, the capping maps produced from Spivak data often intertwine the left and right Beck--Chevalley transformations. Our aim now is to give the first expression of this principle in the form of \cref{prop:intertwining_principle_for_capping}, the other one being \cref{lem:umkehr_map_degree_one}.

\begin{setting}[Module pushforwards from multiplicative basechanges]\label{setting:master_setting}
    Suppose we have:
    \begin{itemize}
        \item symmetric monoidal $\B$--categories $\underline{\sC}, \underline{\D}$,
        \item a symmetric monoidal parametrised colimit--preserving functor $U\colon \underline{\sC}\rightarrow \underline{\D}$ as well as a $\underline{\sC}$--linear functor $F\colon \underline{\sC}\rightarrow \underline{\D}$  using the $\underline{\sC}$--linear structure on $\underline{\D}$ coming from $U$,
        \item a map $r\colon \underline{J}\rightarrow \underline{K}$  in $\cat_{\B}$ (to disambiguate notations, we will write $\rho \coloneqq r$ when we use it in the context of the category $\underline{\D}$),
        \item $\underline{\sC}$ and $\udl{\D}$ admit left Kan extensions along $\udl{J}\rightarrow{\udl{K}}$.
    \end{itemize}
   For  $(\xi,c)$  a $\underline{\sC}$--Spivak datum for $r$,   we define \[\big(\zeta,\:\: d\big)\coloneqq\Big( U(\xi),\:\:  U(c)\colon \unit_{\udl{\category{D}}}\rightarrow U(r_!\xi)\simeq \rho_!\zeta\Big)\] as the associated $\underline{\D}$--Spivak datum for $\rho$. From the data above, we also obtain symmetric monoidal functors $U\colon \underline{\sC}^{\underline{K}}\rightarrow\udl{\sC}^{\udl{J}}$ and $U\colon \underline{\D}^{\underline{K}}\rightarrow\udl{\D}^{\udl{J}}$, using which we may upgrade the functors $F\colon \underline{\sC}^{\underline{K}}\rightarrow \underline{\D}^{\underline{K}}, F\colon \underline{\sC}^{\underline{J}}\rightarrow \underline{\D}^{\underline{J}}$ to a $\underline{\sC}^{\underline{K}}$-- and a $\underline{\sC}^{\underline{J}}$--linear one, respectively. Note that by virtue of $\underline{\sC}$--linearity in all its guises as explained in the previous sentence, we have for any $A\in \{\underline{\sC},\underline{\sC}^{\underline{J}}, \underline{\sC}^{\underline{K}}\}$ a natural map $UA\otimes F(-)\rightarrow F(A\otimes-)$ which is an equivalence. Furthermore, note also that we clearly have equivalences $\rho^*F\simeq Fr^*$. Since $U$ was parametrised colimit preserving, we have an equivalence $Ur_!\simeq \rho_!U$.
\end{setting}

\begin{example}\label{example:master_setting}
    The following will be the examples of the abstract \cref{setting:master_setting} that will be important for us:
    \begin{enumerate}[label=(\alph*)]
        \item In the case $F=U$, the $\underline{\sC}$--linear structure on $F=U$ will be given by the symmetric monoidality structure $UA\otimes U(-)\xrightarrow{\simeq} U(A\otimes-)$;
        \item In the case when $\underline{\D}=\underline{\sC}$, $U=\id_{\underline{\sC}}$, and $F = a\otimes-$ for some fixed object $a\in \underline{\sC}$, the $\underline{\sC}$--linear structure on $F$ is the tautological one given by $\id(A)\otimes a\otimes- \simeq a\otimes A\otimes -$ coming from the symmetric monoidal structure on $\underline{\sC}$.
    \end{enumerate}
\end{example}

\begin{lem}\label{lem:beck_chevalley_and_linearity}
    Suppose we are in the \cref{setting:master_setting}. For all $A\in\underline{\sC}^{\underline{J}}$, writing $B\coloneqq U(A)\in\underline{\D}^{\underline{J}}$, we have a commuting diagram
    \begin{center}
        \begin{tikzcd}
             F(-)\otimes {\rho}_!B  && F(-\otimes r_!A)  \ar[ll,"\mathrm{linearity}"', "\simeq", leftarrow]\\
             && Fr_!(r^*(-)\otimes A)\ar[u,"F(\beckChevalley_!)"']\\
             {\rho}_!(\rho^*F(-)\otimes B)\ar[uu,"\beckChevalley_!"]&& \rho_!F(r^*(-)\otimes A)\ar[ll,"\rho_!(\mathrm{linearity})"', "\simeq", leftarrow]\ar[u, "\beckChevalley_!"']
        \end{tikzcd}
    \end{center}
\end{lem}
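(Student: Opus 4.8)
The plan is to reduce the commutativity of the pentagon to an identity of adjoint transposes, and then to recognise that identity as a combination of standard coherences together with two formal compatibilities that are already built into \cref{setting:master_setting}. Observe first that both composites around the pentagon are maps $\rho_!\bigl(\rho^*F(-)\otimes B\bigr)\to F(-\otimes r_!A)$ out of an object of the form $\rho_!(-)$; hence, by the adjunction $\rho_!\dashv\rho^*$, it suffices to prove that their adjoint transposes $\rho^*F(-)\otimes B\to\rho^*F(-\otimes r_!A)\simeq F\bigl(r^*(-)\otimes r^*r_!A\bigr)$ agree.

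Next I would compute each transpose by unwinding the three arrows labelled $\beckChevalley_!$ via \cref{defn:beck_chevalley_transformation} and the triangle identities. The transpose (under $\rho_!\dashv\rho^*$) of the left-hand $\beckChevalley_!$, namely the $\udl{\D}$--projection formula for $F(-)$ and $B$, is $\id\otimes\eta^{\rho}_{B}$ followed by the symmetric monoidal structure map of $\rho^*$; the transpose of the comparison $\beckChevalley_!\colon\rho_!F\Rightarrow Fr_!$ attached to the square $\rho^*F\simeq Fr^*$ is $F(\eta^{r})$ composed with the identification $Fr^*r_!\simeq\rho^*Fr_!$; and $F$ applied to the $\udl{\sC}$--projection formula for $(-)$ and $A$ contributes, after this transposition, $F$ applied to the $r_!\dashv r^*$--transpose of that projection formula, which is $\id\otimes\eta^{r}_{A}$ followed by the symmetric monoidal structure of $r^*$. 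The two ``linearity'' arrows contribute only the linearity equivalences of $F$, which carry their own coherences. The two external inputs I need are: (a) since $U$ is parametrised colimit--preserving, the equivalence $Ur_!\simeq\rho_!U$ carries $U(\eta^{r}_{A})$ to $\eta^{\rho}_{U(A)}=\eta^{\rho}_{B}$; and (b) the $\udl{\sC}$--linear structure of $F$ is compatible with restriction along $r$, so that $\rho^*$ sends the $\udl{\sC}^{\udl{K}}$--linearity equivalence of $F$ to the $\udl{\sC}^{\udl{J}}$--linearity equivalence of $F$ --- both are immediate from the construction of the data in \cref{setting:master_setting}.

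Feeding all of this in, I expect both transposes to collapse to the single composite
\[
F(r^*(-))\otimes U(A)\;\xrightarrow{\ \simeq\ }\;U(A)\otimes F(r^*(-))\;\xrightarrow{\,U(\eta^{r}_{A})\otimes\id\,}\;U(r^*r_!A)\otimes F(r^*(-))\;\xrightarrow{\ \simeq\ }\;F(r^*r_!A\otimes r^*(-))\;\xrightarrow{\ \simeq\ }\;F\bigl(r^*(-\otimes r_!A)\bigr),
\]
where the first equivalence is the braiding, the middle one the $\udl{\sC}^{\udl{J}}$--linearity equivalence of $F$, and the last the symmetric monoidal structure of $r^*$. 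Reaching this composite from the first transpose amounts to postponing the linearity equivalence past $F(\eta^{r}_{A})$ (naturality of linearity in the module variable); reaching it from the second amounts to commuting $U(\eta^{r}_{A})$ past the braiding (naturality of the symmetry); the leftover discrepancies are instances of the hexagon/coherence for $r^*$.

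The main obstacle is purely organisational: one must juggle three adjunctions ($r_!\dashv r^*$, $\rho_!\dashv\rho^*$, and the mate square for $F$), the two ``levels'' $\udl{J}$ and $\udl{K}$, and roughly half a dozen coherence equivalences ($\rho^*F\simeq Fr^*$, $\rho^*U\simeq Ur^*$, $\rho_!U\simeq Ur_!$, the monoidal structures of $U$, $r^*$, $\rho^*$, and the linear coherences of $F$). There is no conceptual content beyond the mate calculus, but it is easy to get lost; in practice I would draw one large diagram, transpose it along $\rho_!\dashv\rho^*$, and tile it with naturality squares and triangle identities, relying on \cite[$\S2.2$]{CarmeliSchlankYanovski2022} for the bookkeeping on Beck--Chevalley transformations. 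A shorter write-up is possible by exhibiting each $\beckChevalley_!$ as a mate of a pasting of (lax-commuting) squares and invoking functoriality of mates under pasting, but setting up the projection formula in that language is itself enough work that the direct transpose argument is cleaner to present.
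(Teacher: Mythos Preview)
Your approach is correct in principle, but the paper takes precisely the route you dismiss in your final paragraph, and it is much shorter than you seem to anticipate. The paper fixes an object $x\in\udl{\sC}^{\udl{K}}$ and draws a prism of \emph{categories and functors}
\begin{center}
\begin{tikzcd}
    & & & \udl{\D}^{\udl{K}}\ar[dd, "\rho^*"]\\
    \udl{\sC}^{\udl{K}} \ar[rr, "x\otimes -"'{xshift=0pt}, crossing over]\ar[dd, "r^*"'{yshift=0pt}]\ar[urrr, "Fx\otimes U(-)"]& & \udl{\sC}^{\udl{K}}\ar[ur, "F"']\\
    & & & \udl{\D}^{\udl{J}}\\
    \udl{\sC}^{\udl{J}}\ar[urrr,"\rho^*Fx\otimes U(-)"] \ar[rr, "r^*x\otimes -"'] & & \udl{\sC}^{\udl{J}} \ar[uu, "r^*"{yshift=0pt}, leftarrow, crossing over]\ar[ur, "F"']
\end{tikzcd}
\end{center}
in which every face already commutes: the triangles are exactly the $\udl{\sC}$--linearity of $F$ at the two levels, the back face is the square $\rho^*F\simeq Fr^*$, and the front face is the compatibility of $r^*$ with tensoring --- all of this is built into \cref{setting:master_setting} for free. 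One then passes to left adjoints of the vertical arrows and invokes Beck--Chevalley pasting (functoriality of mates) as in \cite[Lem.~2.2.4]{CarmeliSchlankYanovski2022}; the pentagon of $\beckChevalley_!$'s in the statement is exactly the mate of this prism. The ``setup'' cost you worry about is negligible here because the projection formula \emph{is already} the mate of a tensoring square, so no reformulation is required.

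What your direct-transpose approach buys is that it never leaves the level of objects and morphisms, so it is in principle more elementary; what the paper's approach buys is that the half-dozen coherences you list are all absorbed into the single statement ``this diagram of categories commutes,'' leaving a five-line proof. Given that you already cite the same reference for the bookkeeping, the mate-pasting argument is strictly less work.
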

\begin{proof}
    Let $x\in \udl{\sC}^{\udl{K}}$ be an arbitrary object. Consider the  diagram
    \begin{center}
        \begin{tikzcd}
            & & & \udl{\D}^{\udl{K}}\ar[dd, "\rho^*"]\\
            \udl{\sC}^{\udl{K}} \ar[rr, "x\otimes -"'{xshift=0pt}, crossing over]\ar[dd, "r^*"'{yshift=0pt}]\ar[urrr, "Fx\otimes U(-)"]& & \udl{\sC}^{\udl{K}}\ar[ur, "F"']\\
            & & & \udl{\D}^{\udl{J}}\\
            \udl{\sC}^{\udl{J}}\ar[urrr,"\rho^*Fx\otimes U(-)"] \ar[rr, "r^*x\otimes -"'] & & \udl{\sC}^{\udl{J}} \ar[uu, "r^*"{yshift=0pt}, leftarrow, crossing over]\ar[ur, "F"']
        \end{tikzcd}
    \end{center}
    where the commuting triangles come from the $\underline{\sC}$--linearity of the functor $F$ with the $\udl
    {\sC}$--linear structure on $\underline{\D}$ coming from the symmetric monoidal colimit--preserving functor $U\colon \underline{\sC}\rightarrow\underline{\D}$.  By passing to the left adjoints $r_!\dashv r^*$ and $\rho_!\dashv \rho^*$ of the vertical functors and Beck--Chevalley pasting \cite[Lem. 2.2.4]{CarmeliSchlankYanovski2022}, we obtain the required commuting diagram.
\end{proof}

\begin{obs}
    A funny consequence of the preceding lemma is that if we supposed that $\underline{\sC}$ satisfied the $r$--projection formula and $\underline{\D}$ the $\rho$--projection formula so that the left vertical $\beckChevalley_!$ map and $F(\beckChevalley_!)$ are equivalences, then  $\beckChevalley_!\colon \rho_!F(r^*(-)\otimes A)\rightarrow Fr_!(r^*(-)\otimes A)$ is automatically an equivalence.
\end{obs}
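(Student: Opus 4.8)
The plan is to read everything off the commuting diagram furnished by \cref{lem:beck_chevalley_and_linearity}. That diagram is a commuting rectangle, with one intermediate vertex $Fr_!(r^*(-)\otimes A)$ on its right-hand edge, whose edges are: the left vertical $\beckChevalley_!\colon \rho_!(\rho^*F(-)\otimes B)\to F(-)\otimes\rho_!B$; the upper-right vertical $F(\beckChevalley_!)\colon Fr_!(r^*(-)\otimes A)\to F(-\otimes r_!A)$; the top ``linearity'' arrow $F(-\otimes r_!A)\xrightarrow{\simeq}F(-)\otimes\rho_!B$; the bottom arrow $\rho_!$ applied to a ``linearity'' equivalence; and the lower-right vertical $\beckChevalley_!\colon \rho_!F(r^*(-)\otimes A)\to Fr_!(r^*(-)\otimes A)$, which is precisely the map we wish to invert.

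First I would identify the left vertical edge with the projection formula transformation of \cref{terminology:projection_formula} for $\underline{\D}$, applied to $B=U(A)\in\underline{\D}^{\underline{J}}$ and evaluated on $F(-)$; hence it is an equivalence under the hypothesis that $\underline{\D}$ satisfies the $\rho$--projection formula. Likewise, the arrow $\beckChevalley_!$ inside $F(\beckChevalley_!)$ is the projection formula transformation for $\underline{\sC}$, so it is an equivalence by the hypothesis on $\underline{\sC}$, and since $F$ preserves equivalences, $F(\beckChevalley_!)$ is an equivalence as well. The two ``linearity'' edges are equivalences by construction in \cref{setting:master_setting}: they come from $F$ being $\underline{\sC}^{(-)}$--linear, combined with $\rho^*F\simeq Fr^*$, with $Ur_!\simeq\rho_!U$, and with $B=U(A)$.

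It then remains to conclude by an iterated two-out-of-three argument along the rectangle: the composite
\[
\rho_!F(r^*(-)\otimes A)\xrightarrow{\beckChevalley_!}Fr_!(r^*(-)\otimes A)\xrightarrow{F(\beckChevalley_!)}F(-\otimes r_!A)\xrightarrow{\simeq}F(-)\otimes\rho_!B
\]
agrees with the composite $\rho_!F(r^*(-)\otimes A)\xrightarrow{\simeq}\rho_!(\rho^*F(-)\otimes B)\xrightarrow{\beckChevalley_!}F(-)\otimes\rho_!B$. The latter is a composite of two equivalences, hence an equivalence; in the former, the last two maps are equivalences by the previous paragraph; therefore the first map $\beckChevalley_!\colon \rho_!F(r^*(-)\otimes A)\to Fr_!(r^*(-)\otimes A)$ is an equivalence. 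Since there are no nontrivial calculations here, I anticipate no real obstacle; the only point demanding a little care is matching the various $\beckChevalley_!$ transformations appearing in the diagram with the projection formula transformations of \cref{terminology:projection_formula}, which is however exactly how those Beck--Chevalley maps are defined.
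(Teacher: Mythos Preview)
Your proposal is correct and is exactly the argument the paper intends: the observation is simply read off the commuting diagram of the preceding lemma by a two-out-of-three argument, using that the two horizontal linearity arrows are equivalences and that the projection formula hypotheses make the left vertical $\beckChevalley_!$ and $F(\beckChevalley_!)$ equivalences. The paper does not give a separate proof for this observation, so your explicit unpacking is just a spelled-out version of the same reasoning.
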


\begin{prop}[Linear intertwining principle]\label{prop:intertwining_principle_for_capping}
    Suppose we are as in \cref{setting:master_setting} and that $\udl{\sC}$ and $\udl{\D}$ admit right Kan extensions along $\udl{J}\rightarrow\udl{K}$. Then we have a commuting square
    \begin{center}
        \begin{tikzcd}
            Fr_*\ar[d, "\beckChevalley_*"'] \ar[rr,"F(c\cap-)"]&& Fr_!(\xi\otimes-)\\
            \rho_*F \ar[r,"d\cap F"']& \rho_!(\zeta\otimes F-)\rar["\simeq", "\mathrm{linearity}"'] & \rho_!F(\xi\otimes-)\uar["\beckChevalley_!"']
        \end{tikzcd}
    \end{center}
\end{prop}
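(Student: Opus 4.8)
The plan is to verify the square commutes by expanding both capping transformations into their defining three-step composites (as in \cref{cons:capping_map}) and comparing them piece by piece, using \cref{lem:beck_chevalley_and_linearity} as the crucial intertwining ingredient. Recall that $c\cap-$ is the composite $r_*(-)\xrightarrow{c\otimes-} r_!\xi\otimes r_*(-)\xleftarrow{\simeq}{\projectionformula^J} r_!(\xi\otimes r^*r_*(-))\xrightarrow{r_!(\id\otimes\epsilon)} r_!(\xi\otimes-)$, and similarly for $d\cap-$ using $\rho$, $\zeta$, $d$, and the $\rho$--projection formula on $\udl\D$. First I would decompose the big square into three stacked subdiagrams, one for each of the three morphisms in the capping composite: (i) a ``multiply by the fundamental class'' square, (ii) a ``projection formula'' square, and (iii) a ``counit'' square. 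The outer square is then the vertical pasting of these three, so it suffices to check each one commutes.

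For subdiagram (i), I would use that $d = U(c)$ and $\zeta = U(\xi)$ together with the compatibility of the linearity structure on $F$ with $U$: the square
\begin{center}
\begin{tikzcd}
Fr_*(-)\ar[r,"F(c\otimes-)"]\ar[d,"\beckChevalley_*"'] & F(r_!\xi\otimes r_*(-))\ar[d]\\
\rho_*F(-)\ar[r,"d\otimes F(-)"'] & \rho_!\zeta\otimes \rho_*F(-)
\end{tikzcd}
\end{center}
commutes because, after using the linearity equivalence $F(r_!\xi\otimes r_*(-))\simeq Fr_!\xi\otimes F(r_*(-))\simeq U(r_!\xi)\otimes F(r_*(-))\simeq \rho_!\zeta\otimes F(r_*(-))$ on the right column, the horizontal maps become tensoring with the fundamental class and the only thing to check is naturality of $\beckChevalley_*$ against $-\otimes\,(\text{constant})$, which is formal, plus $U(c)=d$ and $Ur_!\simeq\rho_!U$ on the class itself. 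Subdiagram (ii) is exactly (a functor-category instance of) \cref{lem:beck_chevalley_and_linearity}: that lemma, applied with $A$ running over $r^*r_*(-)$ (or rather the relevant functorial input) and with the ambient categories replaced by $\udl\sC^{\udl J}\to\udl\sC^{\udl K}$ etc., expresses precisely the compatibility of $F(\beckChevalley_!)$, $\rho_!$-applied-to-linearity, the left $\beckChevalley_!$, and the linearity equivalence that we need here; it is the reason the right-hand $\beckChevalley_!$ and ``linearity'' maps appear in the statement. For subdiagram (iii), the counit $\epsilon\colon\rho^*\rho_*\to\id$ is compatible with $\beckChevalley_*$ in the standard way (this is Beck--Chevalley pasting, \cite[Lem. 2.2.4]{CarmeliSchlankYanovski2022}, together with $\rho^*F\simeq Fr^*$), so $F(\id\otimes\epsilon)$ over $r$ matches $\id\otimes\epsilon$ over $\rho$ after transporting along the linearity equivalence and the identification $Ur^*\simeq\rho^*U$.

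Pasting (i), (ii), (iii) vertically gives the desired commuting square, once one checks the intermediate linearity/projection-formula identifications glue correctly at the shared horizontal edges — this bookkeeping, keeping straight the four incarnations of $\udl\sC$--linearity (on $F\colon\udl\sC\to\udl\D$, on $F\colon\udl\sC^{\udl J}\to\udl\D^{\udl J}$, on $F\colon\udl\sC^{\udl K}\to\udl\D^{\udl K}$, and the ambient symmetric monoidal structures), is what I expect to be the main obstacle. In particular one must be careful that the projection-formula equivalence $\projectionformula^J$ for $\udl\sC$ is carried by $F$ to the one compatible with $\projectionformula^K$ for $\udl\D$; this is where the observation following \cref{lem:beck_chevalley_and_linearity} is implicitly used, namely that once $\udl\sC$ satisfies the $r$--projection formula and $\udl\D$ the $\rho$--projection formula, the relevant $\beckChevalley_!$ maps are automatically equivalences and the square of \cref{lem:beck_chevalley_and_linearity} degenerates to exactly the coherence we need. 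Everything else is diagram-chasing with the functoriality of Beck--Chevalley transformations.
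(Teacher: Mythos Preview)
Your proposal is correct and follows essentially the same route as the paper. The paper lays out the full three--by--three diagram explicitly and checks each cell, but the content is exactly your (i)--(iii): naturality of $\beckChevalley_*$ against tensoring with the fundamental class (using $d=U(c)$ and linearity), \cref{lem:beck_chevalley_and_linearity} for the projection--formula cell, and a counit compatibility for the last cell. Two small remarks: in applying \cref{lem:beck_chevalley_and_linearity} the fixed object $A$ is $\xi\in\udl\sC^{\udl J}$ and the $\udl\sC^{\udl K}$--variable is $r_*(-)$ (not $A=r^*r_*(-)$ as you wrote); and for (iii) the paper obtains the identity $\epsilon_F\circ\rho^*\beckChevalley_* \simeq F\epsilon$ by directly observing it is adjoint to the tautological square $\beckChevalley_* = \id\circ\beckChevalley_*$, which is perhaps cleaner than invoking general Beck--Chevalley pasting.
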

\begin{proof}
    Consider the following large commuting diagram
    \begin{center}\scriptsize
        \begin{tikzcd}
            Fr_*(-)\ar[ddd, "\beckChevalley_*"']\rar["F(\id\otimes c)"]&F(r_*(-)\otimes r_!\xi)\ar[rdd, phantom, "\mathrm{(A)}"]& Fr_!(r^*r_*(-)\otimes\xi)\lar["F(\beckChevalley_!)"',"\simeq"]\rar["Fr_!(\epsilon\otimes\id)"]& Fr_!(-\otimes\xi)\\ 
            && \rho_!F(r^*r_*(-)\otimes \xi)\uar["\beckChevalley_!"]\\
            &Fr_*(-)\otimes\rho_!\zeta\dar["\beckChevalley_*\otimes\id"']\ar[uu,"\mathrm{linearity}", "\simeq"']& \rho_!(\rho^*Fr_*(-)\otimes\zeta)= \rho_!(Fr^*r_*(-)\otimes\zeta)\ar[dr, "\rho_!(F\epsilon\otimes\id)", "\mathrm{(B)}"'{xshift=-15pt}]
            \uar["\mathrm{linearity}", "\simeq"']\lar["\simeq", "\beckChevalley_!"'] \dar["\rho_!(\rho^*\beckChevalley_*\otimes\id)"']& \rho_!F(-\otimes\xi)\ar[uu, "\beckChevalley_!"']\\
            \rho_*F(-)\rar["\id\otimes d"']&\rho_*F(-)\otimes \rho_!\zeta& \rho_!(\rho^*\rho_*F(-)\otimes\zeta) \lar["\simeq"', "\beckChevalley_!"]\rar["\rho_!(\epsilon\otimes\id)"']& \rho_!(F(-)\otimes \zeta)\uar["\rho_!(\mathrm{linearity})"', "\simeq"]
        \end{tikzcd}
    \end{center}\normalsize
    where three of the squares clearly commute, square (A) commute by \cref{lem:beck_chevalley_and_linearity}, and triangle (B) commutes since the left triangle in the diagram
    \begin{center}
        \begin{tikzcd}
            \rho^*Fr_* \simeq Fr^*r_*\dar["\rho^*\beckChevalley_*"']\ar[dr, "F\epsilon"]& && Fr_*\ar[dr,"\beckChevalley_*"]\dar["\beckChevalley_*"']\\
            \rho^*\rho_*F\rar["\epsilon_F"] & F && \rho_*F\rar[equal] & \rho_*F
        \end{tikzcd}
    \end{center}
    is adjoint to the right one, which clearly commutes.  Now we may take the outer square of the large diagram to conclude.
\end{proof}

\subsection{Twisted ambidexterity and Poincar\'{e} duality}\label{subsection:parametrised_twisteed_ambidex}\label{subsection:parametrised_poincare_duality}
Our aim in this subsection is to introduce the notion of Poincar\'{e} duality for Spivak data. To this end, it would be beneficial first to isolate a property that we will demand Poincar\'{e} Spivak data to satisfy, namely that of \textit{twisted ambidexterity}, i.e. that the associated capping map is an equivalence. This notion gives the equivalence of homology with cohomology necessary for Poincaré duality. While our definition makes sense in more generality -- a level of flexibility we will need for some of our applications -- we show in \cref{rmk:bastiaan_twisted_ambidex_agrees_with_ours} that our notion of twisted ambidexterity nevertheless coincides with the one given in \cite{Cnossen2023} for presentably symmetric monoidal coefficient categories.

For this subsection, we consider $\udl{X} \in \category{B}$ and $\udl{\category{C}}$ a symmetric monoidal $\category{B}$--category which admits $\udl{X}$-shaped limits and colimits and satisfies the $\udl{X}$-projection formula.
Notice that these conditions are satisfied whenever $\udl{\category{C}}$ is a presentably symmetric monoidal $\category{B}$--category.

\subsubsection*{Twisted ambidexterity}

\begin{defn}\label{def:twisted_ambidexterity}
    A $\udl{\sC}$--Spivak datum $(\xi,c)$ for $\obj{X}$
    is \textit{twisted ambidextrous} if the capping transformation 
    $\ambi{c}{\xi}{(-)} \colon \uniqueMapX_*(-) \rightarrow \uniqueMapX_!(\xi \otimes -)$ from \cref{cons:capping_map} is an equivalence.
\end{defn}

There is also the following relative version of this definition.
Recall that associated to an object $\obj{Y} \in \category{B}$ there is the basechange adjunction $\pi_Y^* \colon \category{B} \rightleftharpoons \category{B}_{/Y} \cocolon (\pi_Y)_*$.

\begin{defn}(Twisted ambidextrous maps)\label{defn:twisted_ambidex_maps}
    Consider a map $f \colon \obj{X} \to  \obj{Y}$ in $\category{B}$ and a symmetric monoidal $\category{B}$--category $\udl{\category{C}}$ such that the $\category{B}_{/Y}$--category $(\pi_Y)^* \udl{\category{C}}$ admits $f$ shaped limits and colimits and satisfies the $f$-projection formula. 
    A \textit{$\udl{\category{C}}$-Spivak datum} for $f$ is a $(\pi_Y)^* \udl{\category{C}}$-Spivak datum for $f \in \category{B}_{/Y}$.
    We say that such a Spivak datum exhibits $f$ as a \textit{$\udl{\category{C}}$--twisted ambidextrous map} if it exhibits $f \in \category{B}_{/Y}$ as $(\pi_Y)^* \udl{\category{C}}$--twisted ambidextrous object.
\end{defn}

We will see in \cref{prop:Poincare_duality_descent} that $f$ being $\udl{\category{C}}$--twisted ambidextrous is closely related to the fibres of $f$ being $\udl{\category{C}}$--twisted ambidextrous, see also \cite[Prop. 3.13]{Cnossen2023}.

Next we set out to show that in the presentable case, twisted ambidextrous Spivak data are unique and demonstrate that our notion of twisted ambidexterity is equivalent to the one defined in \cite[Def. 3.4]{Cnossen2023}.

\begin{lem}\label{lem:adjunction_unit_for_twisted_ambi_spivak_data}
    Let $(\xi,c)$ be a twisted ambidextrous $\underline{\sC}$--Spivak datum for $\underline{X}\in\B$. The adjunction $\uniqueMapX^*\dashv \uniqueMapX_*$ induces an adjunction $\uniqueMapX^*\dashv \uniqueMapX_!(\xi\otimes-)$ whose unit is given by 
        \[\id(-)\xlongrightarrow{\id\otimes c}\id(-)\otimes \uniqueMapX_!\xi\xleftarrow[\simeq]{\beckChevalley_!} \uniqueMapX_!(\uniqueMapX^*(-)\otimes\xi) = \uniqueMapX_!(-\otimes\xi)\circ \uniqueMapX^*(-),\]
\end{lem}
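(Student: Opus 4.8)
The statement has two parts: (1) upgrade $\uniqueMapX^* \dashv \uniqueMapX_*$ to an adjunction $\uniqueMapX^* \dashv \uniqueMapX_!(\xi\otimes -)$, and (2) identify its unit explicitly. For (1), the cleanest route is to transport the adjunction $\uniqueMapX^* \dashv \uniqueMapX_*$ along the equivalence $\ambi{c}{\xi}{-}\colon \uniqueMapX_*(-) \xrightarrow{\simeq} \uniqueMapX_!(\xi\otimes -)$ furnished by twisted ambidexterity (\cref{def:twisted_ambidexterity}): an equivalence between functors canonically transports a right adjoint structure, so $\uniqueMapX_!(\xi\otimes -)$ acquires the structure of a right adjoint to $\uniqueMapX^*$, with unit obtained by postcomposing the old unit $\eta\colon \id \to \uniqueMapX_*\uniqueMapX^*$ with the natural transformation $(\ambi{c}{\xi}{-})_{\uniqueMapX^*(-)}$. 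So the new unit is the composite $\id(-) \xrightarrow{\eta} \uniqueMapX_*\uniqueMapX^*(-) \xrightarrow{\ambi{c}{\xi}{\uniqueMapX^*(-)}} \uniqueMapX_!(\xi \otimes \uniqueMapX^*(-)) = \uniqueMapX_!(- \otimes \xi)$, using here that $\uniqueMapX^*\uniqueMapX_*\uniqueMapX^* \xrightarrow{\epsilon} \uniqueMapX^*$ simplifies things, or rather just reading off the composite directly.

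\textbf{The computation.} The substance is then to show this composite $\eta$ followed by $\ambi{c}{\xi}{\uniqueMapX^*(-)}$ agrees with the asserted formula $\id(-)\xrightarrow{\id\otimes c}\id(-)\otimes \uniqueMapX_!\xi\xleftarrow[\simeq]{\beckChevalley_!} \uniqueMapX_!(\uniqueMapX^*(-)\otimes\xi)$. I would unwind the definition of $\ambi{c}{\xi}{-}$ from \cref{cons:capping_map}: it is the zigzag $\uniqueMapX_*(-) \xrightarrow{c\otimes -} \uniqueMapX_!\xi \otimes \uniqueMapX_*(-) \xleftarrow[\simeq]{\projectionformula^X} \uniqueMapX_!(\xi\otimes\uniqueMapX^*\uniqueMapX_*(-)) \xrightarrow{\uniqueMapX_!(\id\otimes\epsilon)} \uniqueMapX_!(\xi\otimes -)$. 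Substituting $\uniqueMapX^*(-)$ for the argument and precomposing with $\eta$, the key simplification is the triangle identity: the composite $\uniqueMapX^*(-) \xrightarrow{\uniqueMapX^*\eta} \uniqueMapX^*\uniqueMapX_*\uniqueMapX^*(-) \xrightarrow{\epsilon} \uniqueMapX^*(-)$ is the identity, which collapses the $\epsilon$ at the end of the capping zigzag against the $\eta$ we precomposed with. Chasing through the (symmetric monoidal) naturality of $c\otimes-$ and the projection formula $\projectionformula^X$ — which when applied to the unit argument is exactly $\beckChevalley_!\colon \uniqueMapX_!(\uniqueMapX^*(-)\otimes\xi) \xrightarrow{\simeq} (-)\otimes\uniqueMapX_!\xi$ — leaves precisely $\id(-)\xrightarrow{\id\otimes c}(-)\otimes\uniqueMapX_!\xi$ composed with the inverse of that Beck--Chevalley equivalence. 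This is a diagram-chase of the same flavour as \cref{lem:collapse_of_capping}, and indeed that lemma's proof diagram can be recycled almost verbatim.

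\textbf{Main obstacle.} The only real subtlety is bookkeeping the various coherence isomorphisms: the symmetry of $\otimes$ (since $\ambi{c}{\xi}{-}$ lands in $\uniqueMapX_!(\xi\otimes -)$ whereas the target is written $\uniqueMapX_!(-\otimes\xi)$), and matching the instance of $\projectionformula^X$ used in $\ambi{c}{\xi}{-}$ with the $\beckChevalley_!$ in the claimed formula — these are the same Beck--Chevalley transformation for the colimit functor, but one must check the identification is the expected one and not off by a symmetry. I expect no conceptual difficulty here; it is the standard ``triangle identity plus projection formula'' manipulation. I would present it as: first invoke abstract transport of adjunctions to get existence and the shape of the unit, then a short diagram identical in structure to the one in \cref{lem:collapse_of_capping} to rewrite the unit in the stated form, commenting that the triangle identity $\epsilon_{\uniqueMapX^*}\circ\uniqueMapX^*\eta\simeq\id$ is what effects the collapse.
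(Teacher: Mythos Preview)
Your proposal is correct and matches the paper's proof essentially line for line: the paper also transports the adjunction along the capping equivalence, declares the resulting unit to be $\eta$ followed by $\ambi{c}{\xi}{\uniqueMapX^*(-)}$, and then draws a commuting rectangle (naturality of $\id\otimes c$ and of $\beckChevalley_!$ under $\eta$) whose right-hand triangle collapses via the triangle identity $\epsilon_{\uniqueMapX^*}\circ\uniqueMapX^*\eta\simeq\id$. Your remark that the diagram chase recycles the structure of \cref{lem:collapse_of_capping} is apt; the paper's diagram is indeed the same shape.
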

\begin{proof}
    That the equivalence $\uniqueMapX_*(-)\simeq \uniqueMapX_!(\xi\otimes-)$ induces an adjunction $\uniqueMapX^*\dashv \uniqueMapX_!(\xi\otimes-)$ is clear. For the description of the adjunction unit, observe that we have the commuting diagram
    \begin{center}
        \begin{tikzcd}[scale=0.5]
            (-) \rar["\id\otimes c"]\dar["\eta"']& (-)\otimes \uniqueMapX_!\xi \dar["\eta\otimes\id"']& \uniqueMapX_!(\uniqueMapX^*(-)\otimes \xi) \lar["\simeq","\beckChevalley_!"']\dar["\uniqueMapX_!(\uniqueMapX^*\eta\otimes\id)"']\ar[drr, equal]\\
            \uniqueMapX_*\uniqueMapX^*(-) \rar["\id\otimes c"'] & \uniqueMapX_*\uniqueMapX^*(-)\otimes \uniqueMapX_!\xi & \uniqueMapX_!(\uniqueMapX^*\uniqueMapX_*\uniqueMapX^*(-)\otimes \xi)\lar["\simeq"',"\beckChevalley_!"]\ar[rr,"r_!(\epsilon_{\uniqueMapX^*}\otimes\id)"'] && \uniqueMapX_!(\uniqueMapX^*(-)\otimes \xi)
        \end{tikzcd}
    \end{center}\normalsize
    where the bottom composite is the capping equivalence and the right triangle is by the triangle identity.    This shows that the claimed map is compatible with the unit $\eta\colon \id\rightarrow \uniqueMapX_*\uniqueMapX^*$ under the capping equivalence $c\cap -\colon \uniqueMapX_*(-)\xrightarrow{\simeq} \uniqueMapX_!(\xi\otimes-)$ as required.
\end{proof}

\begin{obs}
    Let $\udl{\sC}\in\calg(\presentable^L_{\B})$. If $\uniqueMapX^*\colon \udl{\sC}\rightarrow \udl{\sC}^{\udl{X}}$ is an internal left adjoint in $\module_{\udl{\category{C}}}(\presentable^L_{\category{B}})$, then its right adjoint must be  of the form $\uniqueMapX_!(D_{\obj{X}} \otimes \--)$ for a unique $D_X$ by \cref{thm:classification_of_C_linear_functors}
\end{obs}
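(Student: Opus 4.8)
The plan is to deduce the statement directly from the classification of $\udl{\sC}$-linear functors recorded in \cref{thm:classification_of_C_linear_functors}. The first step is to observe that if $\uniqueMapX^*\colon\udl{\sC}\to\udl{\sC}^{\udl{X}}$ is an internal left adjoint in the $2$-category $\module_{\udl{\sC}}(\presentable^L_{\B})$, then its internal right adjoint is in particular a $1$-morphism there, i.e.\ a $\udl{\sC}$-linear (and $\B$-colimit preserving) functor $\udl{\sC}^{\udl{X}}\to\udl{\sC}$. Applying the forgetful $2$-functor $\module_{\udl{\sC}}(\presentable^L_{\B})\to\cat_{\B}$ carries this internal adjunction to an honest adjunction with left adjoint $\uniqueMapX^*$; since right adjoints in $\cat_{\B}$ are essentially unique, the internal right adjoint is identified with the $\udl{X}$-shaped limit functor $\uniqueMapX_*$, which thereby acquires a $\udl{\sC}$-linear structure.

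Next I would feed $\uniqueMapX_*$ into \cref{thm:classification_of_C_linear_functors}. That result provides an equivalence of $\udl{\sC}$-linear $\B$-categories
\[
\udl{\sC}^{\udl{X}}\;\xrightarrow{\ \simeq\ }\;\udl{\func}_{\udl{\sC}}\!\bigl(\udl{\sC}^{\udl{X}},\udl{\sC}\bigr),\qquad \zeta\mapsto \uniqueMapX_!(-\otimes\zeta).
\]
Taking global sections turns the right-hand side into the category of $\udl{\sC}$-linear functors $\udl{\sC}^{\udl{X}}\to\udl{\sC}$ and the left-hand side into $\func_{\B}(\udl{X},\udl{\sC})$, and since an equivalence of categories is essentially surjective and fully faithful, the object $\uniqueMapX_*$ admits an essentially unique preimage $D_{\udl{X}}\in\func_{\B}(\udl{X},\udl{\sC})$ together with an equivalence $\uniqueMapX_*\simeq\uniqueMapX_!(-\otimes D_{\udl{X}})$. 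Rewriting via the symmetry of the symmetric monoidal structure on $\udl{\sC}$ gives $\uniqueMapX_*\simeq\uniqueMapX_!(D_{\udl{X}}\otimes-)$, and the uniqueness of $D_{\udl{X}}$ is precisely the fact that the comparison functor of \cref{thm:classification_of_C_linear_functors} is an equivalence.

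There is no genuine obstacle here: the argument is essentially a one-line invocation of \cref{thm:classification_of_C_linear_functors} once we know that $\uniqueMapX_*$ is $\udl{\sC}$-linear. The only point deserving a sentence is that very first step, namely that an internal left adjoint in $\module_{\udl{\sC}}(\presentable^L_{\B})$ has a right adjoint which is again a morphism of that $2$-category; this is formal from the definition of an adjunction internal to a $2$-category together with the compatibility of the $2$-categorical structure on $\module_{\udl{\sC}}(\presentable^L_{\B})$ with the one on $\cat_{\B}$. Alternatively, one may sidestep it by invoking the criterion of \cite[Lem.~2.21]{Cnossen2023} recalled above, which characterises internal left adjointness of a $\udl{\sC}$-linear functor in terms of its ordinary right adjoint preserving fibrewise colimits and satisfying the projection formula, and then apply \cref{thm:classification_of_C_linear_functors} to that right adjoint.
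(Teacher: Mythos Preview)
Your proposal is correct and takes the same approach as the paper: the observation is stated without proof beyond the citation of \cref{thm:classification_of_C_linear_functors}, and your argument simply makes explicit why that theorem applies, namely that the internal right adjoint is a $\udl{\sC}$-linear functor $\udl{\sC}^{\udl{X}}\to\udl{\sC}$ and hence lands in the target of the classifying equivalence.
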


\begin{prop}[The presentable case]\label{prop:twisted_ambidextrous_presentable_case}
    Let $\udl{\category{C}} \in \calg(\presentable^L_{\category{B}})$ be a presentably symmetric monoidal $\category{B}$--category and $\udl{X} \in \category{B}$.
    \begin{enumerate}[label=(\arabic*)]
        \item If $\uniqueMapX^*$ is an internal left adjoint in $\module_{\udl{\category{C}}}(\presentable^L_{\category{B}})$ with right adjoint  $\uniqueMapX_!(D_{\obj{X}} \otimes \--)$,
        then the unit map $c: \unit_{\udl{\sC}} \to \uniqueMapX_!(\uniqueMapX^* \unit_{\udl{\sC}} \otimes D_{\obj{X}}) = \uniqueMapX_! D_{\obj{X}}$ forms a $\udl{\category{C}}$--twisted ambidextrous Spivak datum $(D_{\obj{X}},c)$ for $\obj{X}$.
        \item If $(\xi, c)$ is a $\udl{\category{C}}$-twisted ambidextrous Spivak datum for $\obj{X}$, then the map \[(\--) \xrightarrow{\id\otimes c} (\--) \otimes \uniqueMapX_! \xi \simeq \uniqueMapX_!( \uniqueMapX^*(\--) \otimes \xi)\] is the unit map of a $\udl{\category{C}}$-linear adjunction $\uniqueMapX^* \dashv \uniqueMapX_!(\-- \otimes \xi)$.        
    \end{enumerate}
    In particular, if $(\xi,c)$ and $(\xi',c')$ are twisted ambidextrous Spivak data, then there is an equivalence $\xi \simeq \xi'$ so that the composition $\unit_{\underline{\category{C}}} \xrightarrow{c} \uniqueMapX_!\xi \simeq \uniqueMapX_!\xi'$ is equivalent to $c'$.
\end{prop}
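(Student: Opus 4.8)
The plan is to prove the two numbered statements and then deduce uniqueness. For (1), suppose $\uniqueMapX^* \colon \udl{\sC} \to \udl{\sC}^{\udl{X}}$ is an internal left adjoint in $\module_{\udl{\sC}}(\presentable^L_{\B})$. By \cref{thm:classification_of_C_linear_functors}, its right adjoint is $\udl{\sC}$-linearly equivalent to $\uniqueMapX_!(D_{\obj{X}} \otimes \--)$ for a unique $D_X \in \udl{\sC}^{\udl{X}}$. Setting $c$ to be the adjunction unit at the monoidal unit, $c \colon \unit_{\udl{\sC}} \to \uniqueMapX_!(\uniqueMapX^*\unit_{\udl{\sC}} \otimes D_X) = \uniqueMapX_! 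D_X$, I would show that the capping transformation $\ambi{c}{D_X}{-}$ coincides with the adjunction counit comparison exhibiting $\uniqueMapX_!(D_X \otimes \--)$ as right adjoint to $\uniqueMapX^*$ applied after $\uniqueMapX^*$, hence is an equivalence on the essential image, and then promote to an equivalence everywhere using that both $\uniqueMapX_*$ and $\uniqueMapX_!(D_X \otimes \--)$ are right adjoints to $\uniqueMapX^*$: indeed any two right adjoints to the same functor are canonically equivalent, and the point is that \emph{this specific} natural transformation $\ambi{c}{D_X}{-}$ realises that canonical equivalence. To check this I would compare the unit of $\uniqueMapX^* \dashv \uniqueMapX_!(D_X \otimes \--)$ with the unit of $\uniqueMapX^* \dashv \uniqueMapX_*$ and invoke \cref{lem:collapse_of_capping} together with the explicit unit formula proved in a form analogous to \cref{lem:adjunction_unit_for_twisted_ambi_spivak_data} — but run in reverse, starting from the internal-left-adjoint hypothesis rather than from twisted ambidexterity.

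For (2), assume $(\xi, c)$ is twisted ambidextrous. Then $\ambi{c}{\xi}{-} \colon \uniqueMapX_*(-) \xrightarrow{\simeq} \uniqueMapX_!(\xi \otimes -)$ is an equivalence of functors $\udl{\sC}^{\udl{X}} \to \udl{\sC}$. Transporting the adjunction $\uniqueMapX^* \dashv \uniqueMapX_*$ along this equivalence gives an adjunction $\uniqueMapX^* \dashv \uniqueMapX_!(\xi \otimes -)$; this is exactly the content of \cref{lem:adjunction_unit_for_twisted_ambi_spivak_data}, which moreover identifies the unit with the stated composite $(-) \xrightarrow{\id \otimes c} (-) \otimes \uniqueMapX_!\xi \xleftarrow[\simeq]{\beckChevalley_!} \uniqueMapX_!(\uniqueMapX^*(-) \otimes \xi)$. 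The remaining claim is that this adjunction is $\udl{\sC}$-linear. Here I would use \cref{cons:capping_map} and the projection formula: the functor $\uniqueMapX^*$ is symmetric monoidal, hence $\udl{\sC}$-linear, and its right adjoint $\uniqueMapX_!(\xi \otimes -)$ satisfies the projection formula because, for $a \in \udl{\sC}$ and $\eta \in \udl{\sC}^{\udl{X}}$, the map $a \otimes \uniqueMapX_!(\xi \otimes \eta) \to \uniqueMapX_!(\xi \otimes \eta \otimes \uniqueMapX^* a)$ is an instance of (the inverse of) $\projectionformula^X$ — so by the cited \cite[Lem. 2.21]{Cnossen2023} the adjunction is internal in $\module_{\udl{\sC}}(\presentable^L_{\B})$. (One should verify the projection-formula map for $\uniqueMapX_!(\xi \otimes -)$ agrees with the Beck--Chevalley/projection-formula map for $\uniqueMapX_!$ precomposed with tensoring by $\xi$; this is routine.)

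Finally, for uniqueness: given twisted ambidextrous Spivak data $(\xi, c)$ and $(\xi', c')$, part (2) presents both $\uniqueMapX_!(\xi \otimes -)$ and $\uniqueMapX_!(\xi' \otimes -)$ as $\udl{\sC}$-linear right adjoints to the same $\udl{\sC}$-linear functor $\uniqueMapX^*$, hence they are $\udl{\sC}$-linearly equivalent, and by the classification \cref{thm:classification_of_C_linear_functors} this forces $\xi \simeq \xi'$ in $\udl{\sC}^{\udl{X}}$. Under this equivalence the two units correspond, and evaluating the units at $\unit_{\udl{\sC}}$ — using $\collapse_\xi(\ambi{c}{\xi}{-}) \simeq c$ from \cref{lem:collapse_of_capping} to recover the fundamental class from the unit — identifies the composite $\unit_{\udl{\sC}} \xrightarrow{c} \uniqueMapX_!\xi \simeq \uniqueMapX_!\xi'$ with $c'$. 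The main obstacle I anticipate is part (1): one must check that the abstract equivalence of right adjoints supplied by \cref{thm:classification_of_C_linear_functors} is implemented precisely by the capping transformation built from the \emph{unit}, which requires carefully matching the two descriptions of the adjunction unit (the tautological one from the internal adjoint hypothesis versus the one coming from $\ambi{c}{D_X}{-}$ via the transported adjunction), essentially a diagram chase dual to the proof of \cref{lem:adjunction_unit_for_twisted_ambi_spivak_data}.
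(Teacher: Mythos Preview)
Your proposal is essentially correct and follows the paper's strategy: parts (1), (2) and the uniqueness argument are organised the same way, and you correctly identify \cref{lem:adjunction_unit_for_twisted_ambi_spivak_data}, \cref{lem:collapse_of_capping}, and \cref{thm:classification_of_C_linear_functors} as the key inputs.

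The one substantive difference is in part (2), in how $\udl{\sC}$-linearity of the adjunction is established. You propose to verify the projection formula directly for the transported right adjoint $\uniqueMapX_!(\xi\otimes -)$, observing that the map $a\otimes \uniqueMapX_!(\xi\otimes\eta)\to \uniqueMapX_!(\xi\otimes X^*a\otimes\eta)$ ``is an instance of (the inverse of) $\projectionformula^X$'', and then appeal to \cite[Lem.~2.21]{Cnossen2023}. The paper instead checks the projection formula for $\uniqueMapX_*$ itself (i.e.\ that $\beckChevalley_*\colon a\otimes \uniqueMapX_*E\to \uniqueMapX_*(X^*a\otimes E)$ is an equivalence) by invoking the intertwining principle \cref{prop:intertwining_principle_for_capping} with $F=a\otimes(-)$ as in \cref{example:master_setting}(b): the intertwining square relates $\beckChevalley_*$ to $\beckChevalley_!$, and the latter is the assumed $\projectionformula^X$. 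Having made $\uniqueMapX^*\dashv \uniqueMapX_*$ into a $\udl{\sC}$-linear adjunction, the paper then observes that each constituent of the capping equivalence is itself $\udl{\sC}$-linear, so the $\udl{\sC}$-linear structure transports. Your route is viable, but the step you call ``routine'' --- identifying the Beck--Chevalley mate for the \emph{transported} adjunction $\uniqueMapX^*\dashv \uniqueMapX_!(\xi\otimes -)$ with the projection formula for $\uniqueMapX_!$ --- is exactly the content packaged by \cref{prop:intertwining_principle_for_capping}, so you would end up reproving a special case of it. For part (1), note also that the paper reduces the comparison of $\ambi{c}{D_X}{-}$ with the given equivalence $\phi$ to evaluation at $\unit_{\udl{\sC}}$ by first arguing that both are $\udl{\sC}$-linear and then applying \cref{thm:classification_of_C_linear_functors}; your sketch is compatible with this but leaves that $\udl{\sC}$-linearity step implicit.
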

\begin{proof}
    For point (1),  suppose that $\uniqueMapX^*$ is an internal left adjoint in $\module_{\udl{\category{C}}}(\udl{\presentable}_{\category{B}}^L)$.
    This means that $\uniqueMapX_*$ has a $\udl{\category{C}}$-linear structure making the adjunction $\uniqueMapX^* \dashv \uniqueMapX_*$ into a $\udl{\category{C}}$-linear one.
    By \cref{thm:classification_of_C_linear_functors}, there is an object $D_{\obj{X}} \in \udl{\category{C}}^{\udl{X}}$ together with a $\udl{\category{C}}$-linear equivalence $\phi \colon \uniqueMapX_* \simeq \uniqueMapX_!(\-- \otimes D_{\obj{X}})$.
    In light of the $\underline{\sC}$--linearity of the adjunction $\uniqueMapX^* \dashv \uniqueMapX_*$ and $\uniqueMapX_! \dashv \uniqueMapX^*$, we see that the capping transformation
    \begin{equation}\label{eq:capping_equivalence}
            \ambi{c}{}{(-)} \colon \uniqueMapX_*(\--)\xrightarrow{\-- \otimes c} \uniqueMapX_*(\--) \otimes \uniqueMapX_! D_{\udl{X}} \xleftarrow{\simeq} \uniqueMapX_!( \uniqueMapX^* \uniqueMapX_*(\--) \otimes D_{\udl{X}}) \to \uniqueMapX_!(\-- \otimes D_{\udl{X}}) 
    \end{equation}
    refines to a $\underline{\sC}$--linear transformation because each constituent map refines canonically to a $\underline{\sC}$--linear transformation: the first map is clear; the second map  is so since the Beck--Chevalley equivalence $\uniqueMapX_!(\uniqueMapX^*(-) \otimes -) \to -\otimes \uniqueMapX_!(-)$ is canonically a $\udl{\category{C}}$-linear equivalence; the third map is so since it is the counit to a  $\udl{\category{C}}$-linear adjunction $\uniqueMapX^* \uniqueMapX_* \to \id$.
    We claim that $\ambi{c}{}{(-)}$ is equivalent to the equivalence $\phi$, which would prove the statement.
    By standard adjunction arguments, it suffices  to show that the transformations $(-) \xrightarrow{\eta} X_*X^*(-)\xrightarrow{\ambi{c}{}{X^*(-)}} X_!(X^*(\--) \otimes D_X)$ and $(-) \xrightarrow{\eta} X_*X^*(-)\xrightarrow{\phi(X^*(-))} X_!(X^*(\--) \otimes D_X)$ are equivalent.
    Employing \cref{thm:classification_of_C_linear_functors}, we can test this after evaluating at $\unit_{\udl{\category{C}}}$.
    Now the composite $\unit_{\udl{\category{C}}} \to X_*X^* \unit_{\udl{\category{C}}} \xrightarrow{\phi} X_! D_X$ is by definition the collapse map $c$.
    The composite $\unit_{\udl{\category{C}}} \to X_*X^* \unit_{\udl{\category{C}}} \xrightarrow{c \cap X^* \unit_{\udl{\category{C}}}} X_! D_X$ is also equivalent to $c$ by \cref{lem:collapse_of_capping}.

    Next, for point (2), suppose that $(\xi, c)$ is a $\udl{\category{C}}$-twisted ambidextrous Spivak datum for $\obj{X}$.
    Then $\uniqueMapX_*$ is $\udl{B}$-colimit preserving.
    First, we check the condition in \cite[Prop. A.5]{Cnossen2023} which guarantees that the adjunction $\uniqueMapX^*\dashv \uniqueMapX_*$ is $\underline{\sC}$--linear.
    For this, we need to show that for $a\in\underline{\sC}$ and $E\in\underline{\sC}^{\underline{X}}$, the Beck--Chevalley map $\beckChevalley_*\colon a\otimes \uniqueMapX_*E\rightarrow \uniqueMapX_*(\uniqueMapX^*a\otimes E)$ is an equivalence.
    By the intertwining square in \cref{prop:intertwining_principle_for_capping} applied to \cref{example:master_setting} (b), we see that $\beckChevalley_*$ is an equivalence because $\beckChevalley_! \colon \uniqueMapX_!(\uniqueMapX^* a \otimes E \otimes \xi) \to a \otimes \uniqueMapX_!(E \otimes \xi)$ is an equivalence by presentably symmetric monoidality of $\udl{\category{C}}$.
    As in part (1) we see that the capping equivalence \cref{eq:capping_equivalence} refines to a $\udl{\category{C}}$-linear equivalence from which we obtain a $\udl{\category{C}}$-linear adjunction $\uniqueMapX^* \dashv \uniqueMapX_!(\-- \otimes \xi)$
    The claimed description of the adjunction unit comes from \cref{lem:adjunction_unit_for_twisted_ambi_spivak_data}.

    For the final statement, since both $\uniqueMapX_!(\xi \otimes -)$ and $\uniqueMapX_!(\xi' \otimes -)$ are $\udl{\category{C}}$-linear right adjoints to $\uniqueMapX^*$ by (2),we see by (1) that there is an equivalence $\xi \simeq D_X \simeq \xi'$. To see the coincidence of $c$ and $c'$, we use \cref{lem:collapse_of_capping} to obtain the two commuting triangles in
    \begin{center}
        \begin{tikzcd}
            \uniqueMapX_!\xi' && \uniqueMapX_*\uniqueMapX^*\unit \ar[rr,"c\cap_{\xi}\uniqueMapX^*\unit", "\simeq"']\ar[ll,"c'\cap_{\xi'}\uniqueMapX^*\unit"', "\simeq"]&& \uniqueMapX_!\xi\\
            && \unit \uar["\eta"]\ar[urr, "c"']\ar[ull, "c'"]
        \end{tikzcd}
    \end{center}
    witnessing that $c\simeq c'$ as required.
\end{proof}

\begin{rmk}\label{rmk:bastiaan_twisted_ambidex_agrees_with_ours}
    By combining \cref{prop:twisted_ambidextrous_presentable_case} and \cite[Prop. 3.8]{Cnossen2023}, we see that $\obj{X}$ is $\udl{\category{C}}$-twisted ambidextrous in the sense of \cref{def:twisted_ambidexterity} if and only if it is so in the sense of \cite[Def. 3.4]{Cnossen2023}.
    If that is the case, the twisted norm map $\Nm{\obj{X}} \colon \uniqueMapX_!(\-- \otimes D_{\obj{X}}) \to \uniqueMapX_*(\--)$ constructed in \cite[Def. 3.3]{Cnossen2023} is an equivalence with inverse the map $\ambi{\Nm{\obj{X}}^{-1}(\unit)}{D_{\obj{X}}}{(\--)}$.
\end{rmk}

\begin{defn}
    Let $\udl{\category{C}}$ be a presentably symmetric monoidal $\category{B}$--category.
    An object of $\category{B}$ is called \textit{$\udl{\category{C}}$--twisted ambidextrous} if it admits a (necessarily unique) twisted ambidextrous Spivak datum with coefficients in $\udl{\category{C}}$.
\end{defn}

\begin{nota}
    The twisted ambidextrous Spivak datum of a twisted $\udl{\category{C}}$-ambidextrous object $\udl{X} \in \category{B}$ will be denoted by $(D^{\udl{\category{C}}}_{\udl{X}}, c)$. If $\udl{\category{C}}$ is clear from the context, we will sometimes abbreviate this to $(D_{\udl{X}}, c)$.
\end{nota}

\subsubsection*{Poincar\'{e} duality}

We now come to the definition of Poincaré duality in the parametrised setting.

\begin{defn}\label{defn:poincare_spivak_datum}
    A Spivak datum $(\xi,c)$ for $\obj{X}$ with coefficients in $\udl{\category{C}}$
    is \textit{Poincaré} if it is twisted ambidextrous and $\xi$ takes values in $\udl{\Pic}(\udl{\category{C}})$.
\end{defn}

\begin{defn}
    \label{defn:PD_object_with_psm_coefficients}
    Let $\udlcatC$ be a presentably symmetric monoidal $\category{B}$--category.
    An object $\obj{X} \in \category{B}$ is called \textit{$\udl{\category{C}}$--Poincar\'{e}} if it is twisted $\udl{\category{C}}$--ambidextrous and the unique twisted ambidextrous Spivak datum $(D_{\obj{X}}, c)$ from \cref{prop:twisted_ambidextrous_presentable_case} is Poincar\'{e}.
\end{defn}

\begin{rmk}
    In \cite{Quinn_Normal_Spaces}, Quinn defines the notion of a \textit{normal space} to be a space together with (the unstable analog of) a Spivak datum $(\xi,c)$, where $\xi$ takes values in $\Pic(\spectra)$. He does not require the Spivak datum to be twisted ambidextrous though. 
\end{rmk}

We again have the following relative version.

\begin{defn}\label{def:pd_map}(Poincar\'{e} duality maps)
    Consider a map $f \colon X \to Y$ in $\category{B}$ and a symmetric monoidal $\category{B}$--category $\udl{\category{C}}$ such that the $\category{B}_{/Y}$--category $(\pi_Y)^* \udl{\category{C}}$ admits $f$-shaped limits and colimits and satisfies the $f$-projection formula. 
    We say that a $\udl{\category{C}}$-Spivak datum for $f$ exhibits $f$ as a \textit{$\udl{\category{C}}$--Poincar\'e duality map} if it exhibits $f \in \category{B}_{/Y}$ as a $(\pi_Y)^* \udl{\category{C}}$--Poincar\'{e} duality object.
\end{defn}

Using Costenoble-Waner duality, one can show the following standard result saying that dualisability of the dualising object implies its invertibility.
We will not use it anywhere in the rest of this article but include it for completeness.
In the setting of $\category{B}$-categories, Costenoble-Waner duality was introduced in \cite[Section 3.3]{Cnossen2023} and we follow the notation used there. 
In the nonparametrised context, the following result appears in \cite[Remark A.9]{markusPoincareSpaces}.

\begin{prop}[Invertibility of dualising objects]
    Let $\udl{\category{C}}$ be a presentable symmetric monoidal $\category{B}$--category.
    Suppose that $\obj{X} \in \category{B}$ is $\udl{\category{C}}$-twisted ambidextrous and that $D_{\obj{X}} \in \udl{\category{C}}^{\udl{X}}$ is dualisable.
    Then $D_{\obj{X}}$ is invertible, i.e. $\obj{X}$ is Poincar\'e.
\end{prop}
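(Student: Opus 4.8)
The plan is to combine the $\udl{\category{C}}$-linear structure produced by twisted ambidexterity with Costenoble--Waner duality and the Morita-style classification of \cref{thm:classification_of_C_linear_functors}. First, since $\obj{X}$ is $\udl{\category{C}}$-twisted ambidextrous, \cref{prop:twisted_ambidextrous_presentable_case} gives the $\udl{\category{C}}$-linear adjunction $\uniqueMapX^* \dashv \uniqueMapX_*$ together with a $\udl{\category{C}}$-linear equivalence $\uniqueMapX_*(-) \simeq \uniqueMapX_!(D_{\obj{X}} \otimes -)$. As $\udl{\category{C}}$ is presentably symmetric monoidal the $\udl{X}$-projection formula holds, so $\uniqueMapX_! \colon \udl{\category{C}}^{\udl{X}} \to \udl{\category{C}}$ is itself $\udl{\category{C}}$-linear and colimit-preserving; under \cref{thm:classification_of_C_linear_functors} the functors $\uniqueMapX_!$ and $\uniqueMapX_*$ then correspond respectively to the local systems $\unit_{\udl{X}} := \uniqueMapX^*\unit_{\udl{\category{C}}}$ and $D_{\obj{X}}$ in $\udl{\category{C}}^{\udl{X}}$. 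The extra input I would invoke is Costenoble--Waner duality in the sense of \cite[Section 3.3]{Cnossen2023}: twisted ambidexterity of $\obj{X}$ amounts to $\unit_{\udl{X}}$ being Costenoble--Waner dualizable with CW-dual $D_{\obj{X}}$, with CW-(co)evaluation built from the fundamental class $c$ and the capping equivalence of \cref{cons:capping_map}. Concretely this provides a $\udl{\category{C}}$-linear left adjoint of $\uniqueMapX_!$ described through the CW-dual $D_{\obj{X}}$ --- data that is \emph{not} formally available from twisted ambidexterity alone, since $\uniqueMapX_!$ admits a left adjoint at all only when $\obj{X}$ is untwisted ambidextrous.

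Next I would bring in the hypothesis: $D_{\obj{X}}$ is dualizable in the pointwise symmetric monoidal $\udl{\category{C}}^{\udl{X}}$, with dual $D_{\obj{X}}^{\vee}$, so that $D_{\obj{X}} \otimes (-)$ is an auto-equivalence of $\udl{\category{C}}^{\udl{X}}$ with inverse $D_{\obj{X}}^{\vee} \otimes (-)$. Consequently $\uniqueMapX_*(D_{\obj{X}}^{\vee} \otimes -) \simeq \uniqueMapX_!(D_{\obj{X}} \otimes D_{\obj{X}}^{\vee} \otimes -)$ is again a $\udl{\category{C}}$-linear colimit-preserving functor, corresponding under \cref{thm:classification_of_C_linear_functors} to the local system $D_{\obj{X}} \otimes D_{\obj{X}}^{\vee}$. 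Since $D_{\obj{X}}$ is invertible precisely when the coevaluation $\mathrm{coev} \colon \unit_{\udl{X}} \to D_{\obj{X}} \otimes D_{\obj{X}}^{\vee}$ is an equivalence, and since the classification equivalence of \cref{thm:classification_of_C_linear_functors} is fully faithful, it suffices to exhibit an equivalence $\uniqueMapX_!(D_{\obj{X}} \otimes D_{\obj{X}}^{\vee} \otimes -) \simeq \uniqueMapX_!$ of $\udl{\category{C}}$-linear functors carrying $\mathrm{coev}$ to the identity of $\uniqueMapX_!$.

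The crux --- and the step I expect to be the main obstacle --- is producing that identification, which is where the Costenoble--Waner structure must be used honestly rather than through a formal adjoint chase. The idea is to match the two descriptions of the $\udl{\category{C}}$-linear left adjoint of $\uniqueMapX^*$: on one side $\uniqueMapX_!$ itself, on the other the functor obtained from $\uniqueMapX_* \simeq \uniqueMapX_!(D_{\obj{X}} \otimes -)$ by precomposing with the auto-equivalence $D_{\obj{X}}^{\vee} \otimes (-)$ and re-expressing the result via the CW-dual of $D_{\obj{X}}$; the CW-duality triangle identities, together with the fibrewise (co)evaluation of $D_{\obj{X}}$, are what glue these together and send $\mathrm{coev}$ to an equivalence. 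Granting this, \cref{thm:classification_of_C_linear_functors} yields $D_{\obj{X}} \otimes D_{\obj{X}}^{\vee} \simeq \unit_{\udl{X}}$ with $\mathrm{coev}$ an equivalence, so $D_{\obj{X}}$ is invertible in $\udl{\category{C}}^{\udl{X}}$; as invertible objects of the pointwise symmetric monoidal $\udl{\category{C}}^{\udl{X}}$ are exactly those factoring through $\udl{\Pic}(\udl{\category{C}})$, the (necessarily unique) twisted ambidextrous Spivak datum $(D_{\obj{X}}, c)$ is Poincar\'e, i.e.\ $\obj{X}$ is $\udl{\category{C}}$-Poincar\'e.
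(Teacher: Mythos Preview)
Your strategy matches the paper's: use Costenoble--Waner duality and dualisability to produce a $\udl{\category{C}}$-linear equivalence $X_! \simeq X_!(D_{\obj{X}} \otimes D_{\obj{X}}^\vee \otimes -)$, then read off $D_{\obj{X}} \otimes D_{\obj{X}}^\vee \simeq \unit_X$ from \cref{thm:classification_of_C_linear_functors}. However, two steps misfire. First, the claim that dualisability makes $D_{\obj{X}} \otimes (-)$ an \emph{auto-equivalence} with inverse $D_{\obj{X}}^\vee \otimes (-)$ is circular --- that is exactly invertibility, which is what you are proving. Dualisability only yields the two-sided adjunction $D_{\obj{X}} \otimes (-) \dashv D_{\obj{X}}^\vee \otimes (-)$. (The identity $X_*(D_{\obj{X}}^\vee \otimes -) \simeq X_!(D_{\obj{X}} \otimes D_{\obj{X}}^\vee \otimes -)$ you deduce next is harmless regardless; it is just substitution into $X_* \simeq X_!(D_{\obj{X}} \otimes -)$.)

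Second, in your ``crux'' you propose to match two \emph{left} adjoints of $X^*$, but $X_*(D_{\obj{X}}^\vee \otimes -)$ is not a left adjoint of $X^*$: composing the adjunctions $X^* \dashv X_*$ and $D_{\obj{X}} \otimes (-) \dashv D_{\obj{X}}^\vee \otimes (-)$ shows it is the \emph{right} adjoint of $D_{\obj{X}} \otimes X^*(-)$. The fix --- and this is precisely the paper's argument, spelled out there as a $\Hom$-computation --- is to match two \emph{right} adjoints of $X^*(-) \otimes D_{\obj{X}}$. On one hand, CW-duality (\cite[Prop.~3.29, 3.30]{Cnossen2023}) gives $\Hom(E, X_! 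F) \simeq \Hom(E, F \odot \unit_X) \simeq \Hom(E \odot D_{\obj{X}}, F) = \Hom(X^*E \otimes D_{\obj{X}}, F)$, exhibiting $X_!$ as right adjoint to $X^*(-) \otimes D_{\obj{X}}$. On the other hand, dualisability plus $X^* \dashv X_*$ exhibits $X_*(D_{\obj{X}}^\vee \otimes -)$ as right adjoint to the same functor. Uniqueness of adjoints then gives the $\udl{\category{C}}$-linear equivalence $X_! \simeq X_*(D_{\obj{X}}^\vee \otimes -) \simeq X_!(D_{\obj{X}} \otimes D_{\obj{X}}^\vee \otimes -)$, and \cref{thm:classification_of_C_linear_functors} finishes. (You also need not track that the identification is realised by $\mathrm{coev}$: in a symmetric monoidal category any equivalence $D_{\obj{X}} \otimes D_{\obj{X}}^\vee \simeq \unit$ already forces invertibility.)
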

\begin{proof}
    By \cite[Proposition 3.29]{Cnossen2023}, the unit $\unit_{X} \in \category{C}({\obj{X}} \times *)$ is left Costenoble-Waner dualisable with left dual $D_{\obj{X}} \in \category{C}(* \times {\obj{X}})$.
    By \cite[Proposition 3.30]{Cnossen2023}, this implies that for $F \in \category{C}({\obj{X}} \times Y)$ and $E \in \category{C}(Y)$ we have equivalences
    \begin{align*}
        \Hom(E, X_! F) &\simeq \Hom(E, F \odot \unit_{X}) 
        \simeq \Hom(E \odot D_{\obj{X}}, F) \\
        &= \Hom(X^* E \otimes D_{\obj{X}}, F) 
        \simeq \Hom(X^* E, D_{\obj{X}}^\vee \otimes F) \\
        &\simeq \Hom(E, X_*(D_{\obj{X}}^\vee \otimes F)) \simeq \Hom(E, X_!(D_{\obj{X}} \otimes D_{\obj{X}}^\vee \otimes F)).
    \end{align*}
    giving a $\udl{\category{C}}$--linear equivalence $X_!(-) \simeq X_!(- \otimes D_{\obj{X}} \otimes D_{\obj{X}}^\vee)$.
    It now follows from \cref{thm:classification_of_C_linear_functors} that $D_{\obj{X}} \otimes D_{\obj{X}}^\vee \simeq \unit_X$ so that $D_{\obj{X}}$ is invertible.
\end{proof}

\begin{example}\label{example:examples_of_ambidex_point_semiadditive}
    The phenomenon of higher semiadditivity introduced by \cite{HopkinsLurie2013} provides many instances of Poincar\'{e} duality with trivial dualising sheaf. 
    \begin{enumerate}[label=(\arabic*)]
        \item For any topos $\category{B}$ and any symmetric monoidal $\category{B}$--category $\udlcatC$, the terminal object $\obj{*}$ has the tautological Poincar\'e $\udl{\sC}$--Spivak datum $(\unit, \id_{\unit})$.
        \item If $\underline{\sC}$ is pointed, then by \cite[Rmk. 4.4.6]{HopkinsLurie2013}, the map $\udl{\varnothing}\rightarrow \udl{X}$ in $\B$  is $\underline{\sC}$--Poincar\'{e}. 
        \item If $\underline{\sC}$ is semiadditive, then by \cite[Prop. 4.4.9]{HopkinsLurie2013}, any finite fold map $\nabla\colon \coprod_{i=1}^n\udl{X}\rightarrow \udl{X}$ is $\underline{\sC}$--Poincar\'{e}. 
        \item More generally, a good supply of Poincar\'e spaces with trivial dualising sheaf comes from the theory of higher semiadditivity of \cite{HopkinsLurie2013,CarmeliSchlankYanovski2022}, as worked out in \cite{Cnossen2023}.
    \end{enumerate}
\end{example}

\begin{example}[Wall's Poincar\'{e} complexes]\label{example:wall_poincare_complex}
    Next, we recount some parts of the classical story that began from Wall's seminal paper \cite{WallPD}. In this setting, our base topos $\B$ will be the category  $\spc$ of spaces. Wall defined a Poincar\'e complex (he used the word complex, because he worked with CW-complexes) to be a compact space $X$ together a Spivak datum $(\xi \in \Pic(\module_{\eilenbergMacLaneCoeff})^X, c \colon \eilenbergMacLaneCoeff \rightarrow \uniqueMapX_!\xi )$ such that for each $\psi \in (\module_{\eilenbergMacLaneCoeff}\hearttstructure)^X$ the map
    \begin{equation}
    \label{eq:classical_PD_Wall}
    \ambi{c}{\xi}{\psi} \colon \uniqueMapX_*\psi \longrightarrow \uniqueMapX_!(\xi \otimes \psi)
    \end{equation}
    is an equivalence. As $X$ was assumed to be compact, both sides of \cref{eq:classical_PD_Wall} commute with all (co)limits and so this also implies that the same transformation is an equivalence for arbitrary $\psi \in \module_\bbZ$.  On the other hand, to compute the value of the $\spectra$--dualising sheaf $D_X$ of a space $X$ at a point $x \colon * \rightarrow X$, one calculates
    \[ D_X(x) = x^*D_{X} \simeq X_!x_!x^*D_{X}\simeq  \uniqueMapX_!(D_X \otimes x_!\sphere) \simeq \uniqueMapX_*x_!\sphere. \]
    Note that $x_!$ preserves connective objects while $\uniqueMapX_*$ preserves bounded below objects if it is a retract of a space admitting a finite-dimensional cell structure . So we see that if $X$ is compact (i.e. a retract of a space having a finite cell structure), then $D_X$ is pointwise bounded below. This implies that if $D_X \otimes \bbZ \in \Pic(\module_\bbZ)^X$, then $D_X$ is pointwise given by shifts of spheres and in particular, $D_X \in \Pic(\spectra)^X$. In conclusion,  by combining the points above, a space $X$ is a Poincar\'e complex in the sense of Wall if and only if it is compact and $\spectra$--Poincar\'e in the sense of \cref{defn:PD_object_with_psm_coefficients}. See also \cite[Prop. A.12]{markusPoincareSpaces} for a proof in the case of finite spaces.
\end{example}

\begin{example}[Weak Poincar\'{e} spaces]
    After Wall, some authors subsequently relaxed the compactness condition in the definition of Poincar\'e complexes. For example, in group theory it is not unusual to completely drop it. We say that a space $X$ is weakly Poincar\'e if it admits a Spivak datum $(\xi \in \Pic(\module_{\eilenbergMacLaneCoeff})^X, c \colon \eilenbergMacLaneCoeff \rightarrow \uniqueMapX_!\xi )$ such that for each $\psi \in \module_{\eilenbergMacLaneCoeff}\hearttstructure$ the map in \cref{eq:classical_PD_Wall} is an equivalence. As $\uniqueMapX_*$ preserves coconnectivity, $\uniqueMapX_!$ preserves connectivity, and both preserve fibre sequences, we see that they restrict to functors
    \[ \uniqueMapX_*, \uniqueMapX_!(- \otimes \xi) \colon (\module_{\eilenbergMacLaneCoeff}^b)^X \rightarrow \module_{\eilenbergMacLaneCoeff}^b \]
    where $\module_\bbZ^b$ denotes the category of bounded $\bbZ$--chain complexes. Being weakly Poincar\'e is seen to be equivalent to admitting a Poincar\'e Spivak datum in the sense of \cref{defn:poincare_spivak_datum} with respect to the symmetric monoidal stable category $\module_{\eilenbergMacLaneCoeff}^b$.
\end{example}

\begin{rmk}
    Having now established the three notions and implications
    \[ \text{$\spectra$--Poincar\'e and compact} \implies \text{$\spectra$--Poincar\'e } \implies \text{weakly Poincar\'e},  \]
    we cannot give a conclusive answer about their precise relation. In \cite{Browder_Poincare}, Browder notes that if $X$ is weakly Poincar\'e with finitely presented fundamental group, then it is even compact, and so by \cref{example:wall_poincare_complex}, also $\spectra$--Poincar\'e. On the other hand, Davis shows in \cite{Davis98} that there are weakly Poincar\'e spaces  whose fundamental groups do not admit a finite presentation.
\end{rmk}

From here on, we will reserve the term \textit{Poincar\'e space} for what we referred to as $\spectra$-Poincar\'e spaces above. In particular, we slightly deviate from Wall's definition.
It is useful to try and port concepts from manifold theory to the theory of Poincar\'e spaces. One concept that has a straighforward analog for Poincar\'e spaces is the dimension of a manifold.

\begin{terminology}[Formal dimensions]\label{terminology:Poincare_dimension}
    Let $X\in\spc$ be a Poincar\'{e}  space. 
    We say that it has \textit{formal dimension $d$} if for every point $x\colon \ast\rightarrow X$, we have $x^*D_X\simeq \Sigma^{-d}\sphere\in\picardSpace(\spectra)$. 
    If for every point $x\colon\ast\rightarrow X$, we have  $x^*D_X\simeq \Sigma^{-k}\sphere$ for some $0 \leq k \leq d$, then we will say that it has \textit{formal dimension at most $d$}.
\end{terminology}

\begin{fact}\label{fact:PD_of_low_dimensions}
    Here are some classical facts about nonequivariant Poincar\'{e} spaces that will be relevant to our investigations later.
    \begin{enumerate}[label=(\arabic*)]
        \item Let $X\in\spc^{\omega}$ be a connected Poincar\'{e}  space of formal dimension $d=0$. Then by \cite[Thm. 4.2]{WallPD}, we have $X\simeq \ast$.  In fact, in the aforementioned theorem, Wall even provided classifications of Poincar\'{e} spaces up to formal dimension 3.
        \item Every connected Poincar\'{e} space has formal dimension a nonnegative number. 
        This is since if $X$ has formal dimension $d$, then taking $\mathbb{F}_2$--homology, we get $H_0(X;\mathbb{F}_2)\cong H^d(X;\mathbb{F}_2)$. 
        Thus if $d<0$, then $H_0(X;\mathbb{F}_2)=0$, i.e. $X$ was the empty space.
    \end{enumerate}
\end{fact}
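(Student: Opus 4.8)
The plan is to derive both parts from the twisted ambidexterity equivalence by evaluating it on a single well-chosen local system. Recall that ``$X$ is a Poincar\'{e} space of formal dimension $d$'' means $X$ is compact, the dualising sheaf $D_X\in\picardSpace(\spectra)^X$ is fibrewise $\Sigma^{-d}\sphere$, and the capping map $\ambi{c}{D_X}{-}\colon\uniqueMapX_*(-)\to\uniqueMapX_!(-\otimes D_X)$ is an equivalence of functors $\spectra^X\to\spectra$.

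For (2), the first step is to apply this equivalence to the constant local system $\uniqueMapX^*(\mathrm{H}\mathbb{F}_2)$. The crucial observation is that the $\mathrm{H}\mathbb{F}_2$-linearisation $\uniqueMapX^*\mathrm{H}\mathbb{F}_2\otimes D_X$ is a local system of \emph{invertible} $\mathrm{H}\mathbb{F}_2$-modules, hence is classified by a map $X\to\picardSpace(\module_{\mathrm{H}\mathbb{F}_2})$; but $\mathrm{GL}_1(\mathrm{H}\mathbb{F}_2)$ is contractible --- its group of components $\mathbb{F}_2^{\times}$ is trivial and the higher homotopy of $\mathrm{H}\mathbb{F}_2$ vanishes --- so $\picardSpace(\module_{\mathrm{H}\mathbb{F}_2})\simeq\bbZ$ is homotopy-discrete and the classifying map of a connected space is constant. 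Thus $\uniqueMapX^*\mathrm{H}\mathbb{F}_2\otimes D_X\simeq\uniqueMapX^*(\Sigma^{-d}\mathrm{H}\mathbb{F}_2)$, and the equivalence becomes $\uniqueMapX_*\uniqueMapX^*\mathrm{H}\mathbb{F}_2\simeq\Sigma^{-d}\uniqueMapX_!\uniqueMapX^*\mathrm{H}\mathbb{F}_2$. Taking homotopy groups, and identifying the left side with the $\mathbb{F}_2$-cochains of $X$ and the right side with a shift of the $\mathbb{F}_2$-chains, gives natural isomorphisms $H^n(X;\mathbb{F}_2)\cong H_{d-n}(X;\mathbb{F}_2)$ for all $n$. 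Setting $n=d$ and using that $X$ is connected, hence nonempty, yields $H^d(X;\mathbb{F}_2)\cong H_0(X;\mathbb{F}_2)\cong\mathbb{F}_2\neq 0$; since cohomology vanishes in negative degrees, this forces $d\geq 0$. (One may avoid Picard-theoretic language here: the mod $2$ orientation local system is automatically trivial because $\mathbb{F}_2^{\times}=1$, so only the shift survives.)

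For (1) one specialises to $d=0$, where the displayed isomorphism becomes $H^n(X;\mathbb{F}_2)\cong H_{-n}(X;\mathbb{F}_2)$, so the connected compact space $X$ has the $\mathbb{F}_2$-homology of a point; this is precisely the statement that $X$ is a connected Poincar\'{e} complex of formal dimension $0$, and Wall's classification of Poincar\'{e} complexes in small dimensions \cite[Thm. 4.2]{WallPD} identifies the only such space as $\ast$, giving $X\simeq\ast$. The one genuinely non-formal point --- and the main obstacle --- is this final implication, which requires controlling $\pi_1X$ rather than just homology: a self-contained substitute for the appeal to Wall would first upgrade the above to $\widetilde{H}_*(X;\bbZ)=0$ by running the same argument at all primes and rationally (using that $d=0$ forces $X$ to be $\bbZ$-orientable, as one sees from the degree-zero case of integral duality, so all orientation twists are trivial), then observe that a finite acyclic complex has superperfect fundamental group, deduce from $\bbZ[\pi_1X]$-twisted duality that $\pi_1X$ must be finite with acyclic --- hence contractible --- universal cover, and conclude that $X\simeq B\pi_1X$ can be a finite complex only when $\pi_1X=1$.
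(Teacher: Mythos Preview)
Your argument is correct and follows the same route as the paper: for (2) you derive the $\mathbb{F}_2$--Poincar\'e duality isomorphism $H^d(X;\mathbb{F}_2)\cong H_0(X;\mathbb{F}_2)\neq 0$ from the capping equivalence (the paper states this isomorphism without the Picard-theoretic justification you supply, but the logic is identical), and for (1) you appeal to Wall's classification \cite[Thm.~4.2]{WallPD} exactly as the paper does. Your optional Wall-free sketch for (1) is not needed to match the paper and contains steps that would require more care (e.g.\ the passage from $\bbZ[\pi_1X]$-duality in degree zero to finiteness of $\pi_1X$ is not as immediate as written), but since you flag it as supplementary this does not affect the correctness of your main argument.
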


\subsection{Constructions with Spivak data}\label{subsection:constructions_with_spivak_data}

This subsection constitutes the heart of our parametrised Poincar\'{e} duality theory. We begin by studying compositions of Spivak data. Next, we shall study two types of basechange results, namely basechanging coefficient categories (\cref{thm:base_change_of_tw_amb_spivak_data,thm:small_base_change}) and basechanging the underlying topos \cref{thm:omnibus_geometric_pushforward_of_capping}. These are the main abstract results of this article and they will play a fundamental role in much of our equivariant work in \cref{section:equivariant_PD_elements,section:geometric_applications}. We then end this subsection by proving a descent result for Poincar\'{e} duality.

\subsubsection*{Compositions}
\begin{cons}[Compositions of Spivak data]\label{cons:composition_of_Spivak_data}
    Let $f\colon \obj{X}\rightarrow \obj{Y}$ and $g\colon \obj{Y}\rightarrow \obj{Z}$ be maps in $\B$ equipped with Spivak data
    \[\Big(\xi_f\in\underline{\sC}^{\underline{X}},\quad \unit_{\underline{\sC}^{\underline{Y}}}\xlongrightarrow{c_f}f_!\xi_f\Big)\quad\quad\quad \Big(\xi_g\in\underline{\sC}^{\underline{Y}},\quad \unit_{\underline{\sC}^{\underline{Z}}}\xlongrightarrow{c_g}g_!\xi_g\Big)\]
    We may then define the \textit{composition Spivak datum for the map $gf\colon \obj{X}\rightarrow \obj{Z}$} as 
    \[\Big(\xi_{gf}\coloneqq \xi_f\otimes f^*\xi_g\in \underline{\sC}^{\underline{X}},\quad c_{gf}\colon \unit_{\underline{\sC}^{\underline{X}}}\xlongrightarrow{c_g}g_!\xi_g\xlongrightarrow{g_!(c_f\otimes\id)}g_!(f_!\xi_f\otimes \xi_g)\xleftarrow[{\simeq}]{\beckChevalley}(gf)_!(\xi_f\otimes f^*\xi_g)\Big)\]
\end{cons}

\begin{lem}[Capping map for compositions]\label{lem:capping_for_compositions}
    The capping map for the composition Spivak datum is equivalent to the composition of the constituent capping maps. That is, in the situation of \cref{cons:composition_of_Spivak_data}, we have a commuting diagram
    \begin{center}
        \begin{tikzcd}
            (gf)_*(-) \ar[rrr,"{\ambi{c_{gf}}{}{(-)}}"]\dar["{\ambi{c_g}{}{f_*(-)}}"']&&& (gf)_!(\xi_f\otimes f^*\xi_g)\dar["\beckChevalley", "\simeq"']\\
            g_!( f_*(\--) \otimes \xi_g) \ar[rrr,"g_!((\ambi{c_f}{}{(\--)}) \otimes \id)"'] &&& g_!(f_!(\-- \otimes \xi_f) \otimes \xi_g) 
        \end{tikzcd}
    \end{center}
\end{lem}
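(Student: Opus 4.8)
The plan is to unwind the definition of the composition Spivak datum from \cref{cons:composition_of_Spivak_data} and the capping map from \cref{cons:capping_map}, and to verify the asserted square by pasting together smaller commuting squares built from the defining pieces: the collapse map $c_{gf}$, the projection formula equivalences, the Beck--Chevalley equivalences, and the counits of the $(-)_! \dashv (-)^*$ and $(-)^* \dashv (-)_*$ adjunctions. The key structural input is that all the maps involved are either natural transformations of known shape or Beck--Chevalley transformations associated to commuting squares, so that naturality and the pasting calculus for Beck--Chevalley transformations (as in \cite[\S2.2]{CarmeliSchlankYanovski2022}) will do all the work.

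First I would write out $\ambi{c_{gf}}{}{(-)}$ explicitly using $\xi_{gf} = \xi_f \otimes f^*\xi_g$ and $c_{gf}$ as the composite displayed in \cref{cons:composition_of_Spivak_data}: this expresses $\ambi{c_{gf}}{}{(-)}$ as the string
\[
(gf)_*(-) \xrightarrow{c_g \otimes -} g_!\xi_g \otimes (gf)_*(-) \xleftarrow{\sim} \text{(various $\beckChevalley$, $\projectionformula$)} \xrightarrow{} (gf)_!(\xi_f \otimes f^*\xi_g \otimes -),
\]
with the first tensoring actually factoring through $g_!(c_f \otimes \id)$ and the Beck--Chevalley equivalence $(gf)_! \simeq g_! f_!$. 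Similarly I would expand the left and bottom composites $g_!((\ambi{c_f}{}{(-)}) \otimes \id) \circ (\ambi{c_g}{}{f_*(-)})$ in terms of their constituents $c_g$, $c_f$, the projection formulas for $g$ and $f$, and the counits $\epsilon^g \colon g^*g_* \to \id$ and $\epsilon^f \colon f^*f_* \to \id$. The point is that both the top-right and the left-bottom composites are built from the same atomic pieces $c_f$, $c_g$, $\projectionformula^g$, $\projectionformula^f$, $\epsilon^g$, $\epsilon^f$, and the Beck--Chevalley equivalence $(gf)_! \simeq g_!f_!$; the square then becomes a matter of reorganising a commuting diagram.

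The main technical step, and the one I expect to be the genuine obstacle, is to handle the interplay between the projection formula for the composite $gf$ and those for $f$ and $g$ separately — i.e. to check that the ``big'' projection-formula equivalence $\projectionformula^{gf}$ for $gf$ decomposes compatibly as a pasting of $\projectionformula^g$ (applied after $g_!$) and $\projectionformula^f$ (applied inside $g_!$), intertwined with the Beck--Chevalley equivalence $(gf)_! \simeq g_!f_!$. This is a standard ``Beck--Chevalley pasting'' compatibility, but writing it out carefully requires some bookkeeping: one uses that $\projectionformula$ is itself a Beck--Chevalley transformation for a suitable square (as recorded in \cref{defn:beck_chevalley_transformation} and \cref{terminology:projection_formula}), and invokes the pasting lemma \cite[Lem. 2.2.4]{CarmeliSchlankYanovski2022} twice, once for the horizontal composite $gf$ and once for how the counits $\epsilon^g$, $\epsilon^f$ assemble into $\epsilon^{gf}$ under the identification $(gf)^* \simeq f^*g^*$. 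Once this compatibility is in place, the remaining squares commute by plain naturality of $\otimes$ and of the Beck--Chevalley maps, and the outer square is obtained by pasting, exactly as in the proof of \cref{prop:intertwining_principle_for_capping}. I would present the argument as one large commuting diagram with the individual cells annotated (``naturality'', ``$\beckChevalley$ pasting'', ``triangle identity for $\epsilon^f$'', etc.), mirroring the style already used in \cref{lem:collapse_of_capping} and \cref{prop:intertwining_principle_for_capping}.
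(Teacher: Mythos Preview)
Your proposal is correct and matches the paper's approach: the paper's proof consists precisely of one large commuting diagram whose outer boundary traces $\ambi{c_{gf}}{}{(-)}$ along one side (via the definition of $c_{gf}$ and the decomposition of $\projectionformula^{gf}$ into $\projectionformula^{g}$ and $\projectionformula^{f}$) and $g_!((\ambi{c_f}{}{(-)}) \otimes \id) \circ (\ambi{c_g}{}{f_*(-)})$ along the other, with the internal cells commuting by naturality and the counit compatibility $\epsilon^{gf} \simeq \epsilon^f \circ f^*\epsilon^g$. The paper does not explicitly invoke the Beck--Chevalley pasting lemma from \cite{CarmeliSchlankYanovski2022} here, but your identification of the projection-formula decomposition and the counit assembly as the two nontrivial cells is exactly what the diagram encodes.
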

\begin{proof}
    First note that we have the  commuting diagram
    \begin{center}\footnotesize
        \begin{tikzcd}
            g_*f_*(-)\ar[d,"c_g\otimes\id"] &&\\
             g_!\xi_g\otimes g_*f_*(-) \dar["g_!(c_f\otimes\id)"']& g_!\big(\xi_g\otimes g^*g_*f_*(-)\big)\lar["\simeq","\beckChevalley"']\dar["g_!(c_f\otimes\id)"'] \rar["g_!(\id\otimes\epsilon)"] & g_!\big(\xi_g\otimes f_*(-)\big) \dar["g_!(c_f\otimes\id)"]\\
             g_!(\xi_g\otimes f_!\xi_f)\otimes g_*f_*(-) & g_!\big(\xi_g\otimes f_!\xi_f\otimes g^*g_*f_*(-)\big)\lar["\simeq","\beckChevalley"'] \rar["g_!(\id\otimes\epsilon)"]& g_!\big(\xi_g\otimes f_!\xi_f\otimes f_*(-)\big)\\
            & g_!f_!\big(f^*\xi_g\otimes \xi_f\otimes f^*g^*g_*f_*(-)\big)\phantom{xx} \uar["g_!(\beckChevalley)", "\simeq"']\rar["g_!f_!(\id\otimes\epsilon)"]\ar[dr, "g_!f_!(\id\otimes\epsilon)"'] & \phantom{xx} g_!f_!\big(f^*\xi_g\otimes \xi_f\otimes f^*f_*(-)\big) \uar["g_!(\beckChevalley)"', "\simeq"]\dar["g_!f_!(\id\otimes\epsilon)"]\\
            && g_!f_!(f^*\xi_g\otimes \xi_f\otimes-).
        \end{tikzcd}
    \end{center}\normalsize
    The required commuting square is then obtained by taking the outer diagram.
\end{proof}

\begin{prop}[Duality composition formula]\label{prop:Poincare_duality_composition}
    Let $f \colon \obj{X} \to \obj{Y}$ and $g \colon \obj{Y} \to \obj{Z}$ be  maps in $\category{B}$ and $\udl{\category{C}}$ be a symmetric monoidal $\category{B}$--category satisfying the $f$-- and $g$--projection formulas. Suppose $f$ and $g$ are equipped with Spivak data $(\xi_f, c_f)$ and $(\xi_g, c_g)$ respectively. Then under the composition Spivak datum on $gf\colon \obj{X}\rightarrow \obj{Z}$ from \cref{cons:composition_of_Spivak_data} with dualising sheaf 
    \[\xi_{gf}\coloneqq \xi_f\otimes f^*\xi_g,\] we have that:
    \begin{enumerate}[label=(\arabic*)]
        \item if $f$ and $g$ are twisted ambidextrous, then so is $gf$,
        \item if $f, g, gf$ are all twisted ambidextrous and $g$ is furthermore Poincar\'{e} duality, then $f$ is Poincar\'{e} duality if and onlf if $gf$ is.
    \end{enumerate}
\end{prop}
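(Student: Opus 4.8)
The plan is to deduce both parts from the composite Spivak datum of \cref{cons:composition_of_Spivak_data} and the identification of its capping map in \cref{lem:capping_for_compositions}, with part (2) reduced to a short manipulation with invertible objects. It is convenient to work over $\obj{Z}$: replacing $\category{B}$ by $\category{B}_{/\obj{Z}}$ and $\udl{\sC}$ by $(\pi_{\obj{Z}})^*\udl{\sC}$ turns $g$ and $gf$ into maps to the terminal object, so that "$g$ (resp.\ $gf$) twisted ambidextrous/Poincar\'e" becomes the absolute notion for $\obj{Y}$ (resp.\ $\obj{X}$), while the relative notions for $f$ over $\obj{Y}$ are unchanged since $(\category{B}_{/\obj{Z}})_{/\obj{Y}} \simeq \category{B}_{/\obj{Y}}$ and $(\pi_{\obj{Y}})^*(\pi_{\obj{Z}})^*\udl{\sC} \simeq (\pi_{\obj{Y}})^*\udl{\sC}$. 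I would first note that the $f$-- and $g$--projection formulas force the $gf$--projection formula: $\projectionformula^{gf}_!$ is the pasting of $\projectionformula^{f}_!$ (for a sheaf $\eta$ on $\obj{X}$ against a pulled-back test object) with $\projectionformula^{g}_!$ (for the sheaf $f_!\eta$ on $\obj{Y}$), hence an equivalence, by Beck--Chevalley pasting \cite[\S2.2]{CarmeliSchlankYanovski2022}. In particular $\ambi{c_{gf}}{\xi_{gf}}{-}$ is defined.

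For (1), \cref{lem:capping_for_compositions} gives a commuting square exhibiting $\ambi{c_{gf}}{\xi_{gf}}{-}$ as the zig-zag
\[
(gf)_*(-) \xlongrightarrow{\ambi{c_g}{\xi_g}{f_*(-)}} g_!\bigl(f_*(-)\otimes\xi_g\bigr) \xlongrightarrow{g_!\bigl((\ambi{c_f}{\xi_f}{-})\otimes\id\bigr)} g_!\bigl(f_!(-\otimes\xi_f)\otimes\xi_g\bigr) \xleftarrow[\simeq]{\beckChevalley} (gf)_!\bigl(\xi_{gf}\otimes-\bigr).
\]
When $(\xi_f,c_f)$ and $(\xi_g,c_g)$ are twisted ambidextrous, each of the three maps is an equivalence: the first is the $g$--capping equivalence applied to the argument $f_*(-)$, the second is $g_!(-\otimes\xi_g)$ applied to the $f$--capping equivalence, and the third is the Beck--Chevalley equivalence already built into \cref{cons:composition_of_Spivak_data}. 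Hence $\ambi{c_{gf}}{\xi_{gf}}{-}$ is an equivalence, i.e.\ the composite Spivak datum exhibits $gf$ as twisted ambidextrous.

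For (2), since $f$, $g$ and $gf$ are all assumed twisted ambidextrous, \cref{defn:poincare_spivak_datum} makes "$f$ (resp.\ $gf$) is Poincar\'e" equivalent to "$\xi_f$ (resp.\ $\xi_{gf}$) factors through $\udl{\Pic}(\udl{\sC})$" — note that we quantify over the \emph{given} Spivak data, so no uniqueness is needed, and we use \cref{lem:picard_of_geometric_morphisms} to identify $\udl{\Pic}$ with its pullback along the \'etale morphisms of the slice reduction. Now $\xi_{gf}=\xi_f\otimes f^*\xi_g$. If $g$ is Poincar\'e then $\xi_g$ factors through $\udl{\Pic}(\udl{\sC})$, and since $f^*$ (restriction along $\obj{X}\to\obj{Y}$) is symmetric monoidal, so does $f^*\xi_g$, with pointwise inverse $f^*(\xi_g^{-1})$. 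Thus $-\otimes f^*\xi_g$ is an autoequivalence of $\udl{\func}(\obj{X},\udl{\sC})$ that preserves and reflects the full subcategory of functors landing in $\udl{\Pic}(\udl{\sC})$ (invertibility is a pointwise condition, closed under $\otimes$ and under passing to inverses). Hence $\xi_f$ factors through $\udl{\Pic}(\udl{\sC})$ if and only if $\xi_{gf}=\xi_f\otimes f^*\xi_g$ does, which is exactly the assertion that $f$ is Poincar\'e duality if and only if $gf$ is.

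The step I expect to demand the most care is not any of the three arguments above but the relative/absolute bookkeeping underlying them: verifying that the "relative" Spivak data for $f$ and $g$ appearing in \cref{cons:composition_of_Spivak_data} and \cref{lem:capping_for_compositions} are precisely the data meant by \cref{defn:twisted_ambidex_maps} and \cref{def:pd_map}, and that the capping map computed by the composition lemma is the one whose invertibility defines twisted ambidexterity of the \emph{map} $gf$. This amounts to chasing the \'etale base-change compatibilities of parametrised (co)limits and of $\udl{\Pic}$ established earlier (\cref{lem:parametrised_colimits_etale_base_change}, \cref{lem:picard_of_geometric_morphisms}) and introduces no new idea; granting it, both parts follow as sketched.
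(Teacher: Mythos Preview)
Your proof is correct and follows essentially the same approach as the paper: both invoke \cref{lem:capping_for_compositions} to identify the capping map of the composite as a composite of the individual capping maps (plus a Beck--Chevalley equivalence), and both deduce (2) directly from the formula $\xi_{gf}=\xi_f\otimes f^*\xi_g$ and the fact that tensoring with an invertible object is an autoequivalence. You add some bookkeeping the paper omits---the slice reduction over $\obj{Z}$, the verification that the $gf$--projection formula follows from those for $f$ and $g$, and the remarks on reconciling relative versus absolute Spivak data---but these are elaborations on the same core argument rather than a different route.
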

\begin{proof}
    By \cref{lem:capping_for_compositions}, we have the commuting square
    \begin{center}
        \begin{tikzcd}
            (gf)_*(-) \ar[rrr,"{\ambi{c_{gf}}{}{(-)}}"]\dar["{\ambi{c_g}{}{f_*(-)}}"']&&& (gf)_!(\xi_f\otimes f^*\xi_g)\dar["g_!\beckChevalley_!", "\simeq"']\\
            g_!( f_*(\--) \otimes \xi_g) \ar[rrr,"g_!((\ambi{c_f}{}{(\--)}) \otimes \id)"'] &&& g_!(f_!(\-- \otimes \xi_f) \otimes \xi_g). 
        \end{tikzcd}
    \end{center}
    Hence, if the left vertical and bottom horizontal maps are equivalences, then so is the top horizontal map. It is also clear that from the formula $\xi_{gf}=\xi_f\otimes f^*\xi_g$ that if two $\xi_g$ is invertible (and so also $f^*\xi_g$), then $\xi_{gf}$ is invertible if and only if $\xi_f$ is.
\end{proof}

\subsubsection*{Change of coefficients}

\begin{cons}[Basechanging Spivak data]\label{cons:basechanging_spivak_data}
    Let  $\udl{\category{C}}, \udl{\category{D}}$ be $\category{B}$--categories admitting $\obj{X}$-shaped (co)limits and satisfying the $\obj{X}$-projection formula. Suppose $F \colon \udl{\category{C}} \to \udl{\category{D}}$ is a symmetric monoidal functor of $\category{B}$--categories which preserves $\obj{X}$--shaped colimits. We define a new $\underline{\D}$--Spivak datum for $\obj{X}$ as follows
    \[ 
    F(\xi,c)  \coloneqq \big(F\xi \colon \obj{X} \xrightarrow{\xi} \udl{\category{C}} \xrightarrow{F} \udl{\category{D}}, 
    \:\: 
    Fc \colon \unit_{\udl{\D}} \simeq F(\unit_{\udl{\sC}}) \xrightarrow{Fc} F(\pointProjection_! \xi) \simeq \pointProjection_! F\xi\big). 
    \]
\end{cons}

\begin{thm}[Poincar\'{e} basechange - presentable version]
    \label{thm:base_change_of_tw_amb_spivak_data}
    Let $F \colon \udl{\category{C}} \rightarrow \udl{\category{D}}$ be a functor of presentably symmetric monoidal $\category{B}$--categories.
    Suppose that $(\xi,c)$ is a twisted ambidextrous Spivak datum with coefficients in $\udl{\category{C}}$ for the object $\obj{X} \in \category{B}$. Then $F(\xi,c)$ is a twisted ambidextrous Spivak datum with coefficients in $\udl{\category{D}}$ for $\obj{X}$. In particular, if $\obj{X}$ is $\udl{\category{C}}$--Poincar\'e, then $\obj{X}$ is also $\udl{\category{D}}$--Poincar\'e.
\end{thm}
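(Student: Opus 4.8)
The plan is to use the classification of $\udl{\category{C}}$--linear functors (\cref{thm:classification_of_C_linear_functors}) together with \cref{prop:twisted_ambidextrous_presentable_case} to reduce everything to a statement about adjunctions, and then transport that statement along $F$. First I would observe that since $(\xi,c)$ is twisted ambidextrous with coefficients in $\udl{\category{C}}$, part (2) of \cref{prop:twisted_ambidextrous_presentable_case} tells us that the map $\uniqueMapX^*\colon \udl{\category{C}}\to \udl{\category{C}}^{\udl{X}}$ is an internal left adjoint in $\module_{\udl{\category{C}}}(\presentable^L_{\category{B}})$, with right adjoint $\uniqueMapX_!(-\otimes \xi)$, and the adjunction unit is the map $\id(-)\xrightarrow{\id\otimes c} (-)\otimes \uniqueMapX_!\xi \xleftarrow{\simeq} \uniqueMapX_!(\uniqueMapX^*(-)\otimes \xi)$ from \cref{lem:adjunction_unit_for_twisted_ambi_spivak_data}.

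Next I would base change this adjunction along $F$. Since $F$ is a map of presentably symmetric monoidal $\category{B}$--categories, it is in particular a colimit-preserving symmetric monoidal functor, so it induces $\udl{\category{D}}$ the structure of a $\udl{\category{C}}$--algebra and hence (via restriction of scalars being a symmetric monoidal left adjoint, \cref{prop:base_change_module_categories}) there is a symmetric monoidal base-change functor $-\otimes_{\udl{\category{C}}}\udl{\category{D}}\colon \module_{\udl{\category{C}}}(\presentable^L_{\category{B}})\to \module_{\udl{\category{D}}}(\presentable^L_{\category{B}})$, which in particular preserves internal left adjoints (being a left adjoint, or more directly a symmetric monoidal functor of closed symmetric monoidal $\infty$-categories preserves the relevant 2-categorical structure). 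Applying it to $\uniqueMapX^*\colon \udl{\category{C}}\to\udl{\category{C}}^{\udl{X}}$ yields an internal left adjoint $(\uniqueMapX^*)_{\udl{\category{D}}}\colon \udl{\category{D}}\to \udl{\category{D}}\otimes_{\udl{\category{C}}}\udl{\category{C}}^{\udl{X}}$. The key identification needed is $\udl{\category{D}}\otimes_{\udl{\category{C}}}\udl{\category{C}}^{\udl{X}}\simeq \udl{\category{D}}^{\udl{X}}$ compatibly with the structure maps, so that $(\uniqueMapX^*)_{\udl{\category{D}}}$ becomes the restriction functor $\uniqueMapX^*\colon \udl{\category{D}}\to \udl{\category{D}}^{\udl{X}}$; this should follow since $\udl{\category{C}}^{\udl{X}}\simeq \udl{\func}^L(\udl{\category{C}}^{\udl{X},\vee}, \udl{\category{C}})\simeq \udl{\category{C}}\otimes (\text{something built from }\udl{X})$ by \cref{thm:classification_of_C_linear_functors}, but more robustly one simply checks that $\udl{\category{D}}\otimes_{\udl{\category{C}}}\uniqueMapX^*$ has the correct right adjoint, namely one which by \cref{thm:classification_of_C_linear_functors} (applied over $\udl{\category{D}}$) must be of the form $\uniqueMapX_!(-\otimes \xi')$ for a unique $\xi'\in\udl{\category{D}}^{\udl{X}}$, and one identifies $\xi'\simeq F\xi$ by computing the right adjoint on the unit. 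Then \cref{prop:twisted_ambidextrous_presentable_case}(1) applied in $\udl{\category{D}}$ produces a twisted ambidextrous Spivak datum $(F\xi, c')$ where $c'$ is the unit map $\unit_{\udl{\category{D}}}\to \uniqueMapX_!(\uniqueMapX^*\unit_{\udl{\category{D}}}\otimes F\xi)=\uniqueMapX_! F\xi$, and one checks $c'\simeq Fc$ by naturality of the base change — the unit of the base-changed adjunction, evaluated at $\unit_{\udl{\category{D}}}=F\unit_{\udl{\category{C}}}$, is $F$ applied to the original unit evaluated at $\unit_{\udl{\category{C}}}$, which is exactly $c$; so it agrees with the Spivak datum $F(\xi,c)$ from \cref{cons:basechanging_spivak_data}. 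This gives twisted ambidexterity of $F(\xi,c)$; by uniqueness of twisted ambidextrous Spivak data (last sentence of \cref{prop:twisted_ambidextrous_presentable_case}) it is \emph{the} datum $(D_{\udl{X}}^{\udl{\category{D}}}, c)$, and the Poincaré statement follows since if $\xi$ factors through $\udl{\Pic}(\udl{\category{C}})$ then $F\xi$ factors through $\udl{\Pic}(\udl{\category{D}})$ because symmetric monoidal functors preserve invertible objects.

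An alternative, perhaps more elementary route avoiding the base-changed module category would be to verify directly, using \cref{prop:intertwining_principle_for_capping} in the guise of \cref{example:master_setting}(a), that $F$ applied to the capping equivalence for $(\xi,c)$ is the capping map for $F(\xi,c)$: the intertwining square there reads $F(\ambi{c}{\xi}{-})$ against $\ambi{Fc}{F\xi}{F(-)}$ composed with a Beck--Chevalley map $\beckChevalley_!$, and one must then check that $\ambi{Fc}{F\xi}{-}$ agrees with $\ambi{Fc}{F\xi}{F(-)}$ precomposed appropriately and that the residual comparison maps are equivalences because $F$ preserves $\udl{X}$-shaped colimits and is symmetric monoidal. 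The main obstacle in either approach is the same: carefully establishing the compatibility of $F$ with the $\udl{X}$-shaped limit functor $\uniqueMapX_*$ — note $F$ is only assumed colimit-preserving, not limit-preserving — which is precisely why one cannot argue naively on $\uniqueMapX_*$ directly but must route through the $\udl{\category{C}}$-linear adjunction $\uniqueMapX^*\dashv \uniqueMapX_!(-\otimes\xi)$, where both functors \emph{are} colimit-preserving. I expect the module-theoretic argument via \cref{thm:classification_of_C_linear_functors} and \cref{prop:base_change_module_categories} to be the cleanest, with the identification $\udl{\category{D}}\otimes_{\udl{\category{C}}}\udl{\category{C}}^{\udl{X}}\simeq \udl{\category{D}}^{\udl{X}}$ and the tracking of the collapse map through the base change being the two points requiring genuine care.
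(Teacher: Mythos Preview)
Your proposal is correct and follows essentially the same route as the paper: base change the internal $\udl{\category{C}}$--linear adjunction $\uniqueMapX^*\dashv \uniqueMapX_!(-\otimes\xi)$ along $-\otimes_{\udl{\category{C}}}\udl{\category{D}}$, identify the result with the corresponding adjunction over $\udl{\category{D}}$, and read off the base-changed Spivak datum from the unit. The paper is slightly more explicit on the point you flag as needing care, obtaining the identification $\udl{\category{D}}\otimes_{\udl{\category{C}}}\udl{\category{C}}^{\udl{X}}\simeq \udl{\category{D}}^{\udl{X}}$ from self-duality of $\udl{\category{C}}^{\udl{X}}$ in $\module_{\udl{\category{C}}}(\presentable^L_{\category{B}})$ (so that the coassembly map for the limit $\udl{\category{C}}^{\udl{X}}=\lim_{\udl{X}}\udl{\category{C}}$ against $-\otimes_{\udl{\category{C}}}\udl{\category{D}}$ is an equivalence), and then identifying $(-\otimes\xi)\otimes_{\udl{\category{C}}}\udl{\category{D}}\simeq (-\otimes F\xi)$ and $(\uniqueMapX_{\udl{\category{C}}})_!\otimes_{\udl{\category{C}}}\udl{\category{D}}\simeq (\uniqueMapX_{\udl{\category{D}}})_!$ separately before combining them.
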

\begin{proof}
    Recall from \cref{prop:base_change_module_categories} that $\-- \otimes_{\udl{\category{C}}} \udl{\category{D}} \colon \udl{\module}_{\udl{\category{C}}}(\udl{\presentable}^L_{\category{B}}) \to \udl{\module}_{\udl{\category{D}}}(\udl{\presentable}^L_{\category{B}})$ 
    is symmetric monoidal $\category{B}$-colimit preserving.
    Using that $\udl{\category{C}}^{\udl{X}} \in \module_{\udl{\category{C}}}(\presentable^L(\category{B}))$ is self dual (see \cite[Corollary 2.27]{Cnossen2023}), one sees that  the coassembly map $(\lim_{\udl{X}} \udl{\category{C}}) \otimes_{\udl{\category{C}}} \udl{\category{D}} \to (\lim_{\udl{X}} \udl{\category{C}}) \otimes_{\udl{\category{C}}} \udl{\category{D}}$ is an equivalence (even a symmetric monoidal one).
    The commutative diagram
    \begin{equation*}
    \begin{tikzcd}
        \udl{\category{C}}^{\udl{X}} \ar[r, "- \otimes_{\udl{\category{C}}} \xi"] \ar[d, "F"]
        & \udl{\category{C}}^{\udl{X}} \otimes \udl{\category{C}}^{\udl{X}} \ar[d, "F"] \ar[r, "\-- \otimes_{\udl{\category{C}}} \--"]
        & \udl{\category{C}}^{\udl{X}} \ar[d, "F"]
        \\
        \udl{\category{D}}^{\udl{X}} \ar[r, "- \otimes_{\udl{\category{D}}} F \xi"']
        & \udl{\category{D}}^{\udl{X}} \otimes \udl{\category{D}}^{\udl{X}} \ar[r, "\-- \otimes_{\udl{\category{C}}} \--"']
        & \udl{\category{D}}^{\udl{X}}
    \end{tikzcd}
    \end{equation*}
    together with the equivalence $\func_{\udl{\category{D}}}(\udl{\category{D}}^{\udl{X}}, \udl{\category{D}}^{\udl{X}}) \simeq \func_{\udl{\category{C}}}(\udl{\category{C}}^{\udl{X}}, \udl{\category{D}}^{\udl{X}})$ then gives us an equivalence $(- \otimes \xi) \otimes_{\udl{\category{C}}} \udl{\category{D}} \simeq (- \otimes F \xi)$.

    By standard arguments, the functor $\-- \otimes_{\udl{\category{C}}} \udl{\category{D}} \colon \module_{\udl{\category{C}}}(\presentable^L_{\category{B}}) \to \module_{\udl{\category{D}}}(\presentable^L_{\category{B}})$  preserves internal adjunctions.
    Hence, we see that $(\uniqueMapX_{\udl{\category{C}}})_! \otimes_{\udl{\category{C}}} \udl{\category{D}}\colon \udl{\sC}^{\udl{X}}\otimes_{\udl{\sC}}\udl{\D}\simeq \udl{\D}^{\udl{X}}\rightarrow \udl{\sC}\otimes_{\udl{\sC}}\udl{\D}\simeq \udl{\D}$ is an internal left adjoint of $(\uniqueMapX_{\udl{\category{D}}})^* = (\uniqueMapX_{\udl{\category{C}}})^* \otimes_{\udl{\category{C}}} \udl{\category{D}}$ from which we obtain an equivalence $(\uniqueMapX_{\udl{\category{C}}})_! \otimes_{\udl{\category{C}}} \udl{\category{D}} \simeq (\uniqueMapX_{\udl{\category{D}}})_!$.
    Together with the first part, the internal right adjoint $(\uniqueMapX_{\udl{\category{C}}})_!(\xi \otimes \--)$ to $(\uniqueMapX_{\udl{\category{C}}})^*$ basechanges to an internal right adjoint 
    \begin{equation*}
        (\uniqueMapX_{\udl{\category{C}}})_!(\xi \otimes \--) \otimes_{\udl{\category{C}}} \udl{\category{D}} \simeq (\uniqueMapX_{\udl{\category{D}}})_!(F \xi \otimes \--)\colon \udl{\D}^{\udl{X}}\longrightarrow \udl{\D}
    \end{equation*}
    of $(\uniqueMapX_{\udl{\category{D}}})^*$. 
    Because the internal adjunction $(\uniqueMapX_{\udl{\D}})^*\dashv (\uniqueMapX_{\udl{\D}})_!(F\xi\otimes-)$ on $\udl{\D}$ is basechanged from the internal adjunction $\uniqueMapX_{\udl{\sC}}^*\dashv (\uniqueMapX_{\udl{\sC}})_!(\xi\otimes-)$ on $\udl{\sC}$, we see that the $\udl{\D}$--fundamental class, which is the unit of the former internal adjunction, is given by the composite
    \begin{equation*}
        \unit_{\udl{D}} \simeq F(\unit_{\udl{\category{C}}}) \xlongrightarrow{Fc} F(\uniqueMapX_{\udl{\sC}})_!\xi \simeq (\uniqueMapX_{\udl{\D}})_! F\xi. 
    \end{equation*}
    The final statement about Poincar\'{e} duality is clear since $F$ is symmetric monoidal and so preserves invertibility.
\end{proof}

\begin{cor}\label{cor:transferring_colimit_BC_to_limit_BC_by_ambidexterity}
     Let  $\Phi \colon \udl{\category{C}} \rightarrow \udl{\category{D}}$ be a symmetric monoidal functor of presentably symmetric monoidal $\category{B}$--categories and let $\udl{X}\in \B$ be a $\udl{\sC}$--twisted ambidextrous  space. Then the Beck--Chevalley transformation $\beckChevalley_*\colon \Phi\uniqueMapX_*(-)\rightarrow \uniqueMapX_*\Phi(-)$ is an equivalence.
\end{cor}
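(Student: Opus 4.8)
The plan is to read this off from the coefficient basechange result \cref{thm:base_change_of_tw_amb_spivak_data} together with the linear intertwining principle \cref{prop:intertwining_principle_for_capping}. First I would record that, since $\udl{X}$ is $\udl{\sC}$--twisted ambidextrous, it carries a twisted ambidextrous $\udl{\sC}$--Spivak datum $(\xi, c)$, and that by \cref{thm:base_change_of_tw_amb_spivak_data} its image $\Phi(\xi, c) = (\Phi\xi, \Phi c)$ is a twisted ambidextrous $\udl{\D}$--Spivak datum for $\udl{X}$. This places us in the situation of \cref{setting:master_setting} with $F = U = \Phi$ as in \cref{example:master_setting}~(a), taking $\udl{J} = \udl{X}$, $\udl{K} = \udl{\ast}$, $r = \rho = \uniqueMapX$, and associated $\udl{\D}$--Spivak datum $(\zeta, d) = (\Phi\xi, \Phi c)$; all required Kan extensions exist because $\udl{\sC}$ and $\udl{\D}$ are presentable.

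Next I would apply \cref{prop:intertwining_principle_for_capping} to obtain the commuting square
\begin{center}
    \begin{tikzcd}
        \Phi\uniqueMapX_*(-)\ar[d, "\beckChevalley_*"'] \ar[rr,"\Phi(c\cap-)"]&& \Phi\uniqueMapX_!(\xi\otimes-)\\
        \uniqueMapX_*\Phi(-) \ar[r,"d\cap\Phi-"']& \uniqueMapX_!(\zeta\otimes\Phi-)\rar["\simeq", "\mathrm{linearity}"'] & \uniqueMapX_!\Phi(\xi\otimes-)\uar["\beckChevalley_!"']
    \end{tikzcd}
\end{center}
and then argue that every edge except possibly the left-hand one is an equivalence: the top edge $\Phi(c\cap-)$ because $c\cap-$ is an equivalence ($(\xi,c)$ being twisted ambidextrous in $\udl{\sC}$) and $\Phi$ preserves equivalences; the bottom-left edge $d\cap\Phi-$ because it is the capping map $\ambi{d}{\zeta}{-}$ of the basechanged datum $(\zeta,d)=\Phi(\xi,c)$ precomposed with $\Phi$, and that datum is twisted ambidextrous in $\udl{\D}$ by the previous step; the ``linearity'' edge because $\Phi$ is symmetric monoidal, so $\zeta\otimes\Phi(-)=\Phi\xi\otimes\Phi(-)\simeq\Phi(\xi\otimes-)$; and the right-hand edge $\beckChevalley_!\colon\uniqueMapX_!\Phi(\xi\otimes-)\to\Phi\uniqueMapX_!(\xi\otimes-)$ because $\Phi$, being a morphism of presentably symmetric monoidal $\B$--categories, preserves $\B$--colimits, in particular $\udl{X}$--shaped colimits. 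Commutativity of the square then forces $\beckChevalley_*\colon\Phi\uniqueMapX_*(-)\to\uniqueMapX_*\Phi(-)$ to be an equivalence, which is exactly the assertion.

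I do not expect a genuine obstacle here, as all the substantive work is already packaged into \cref{thm:base_change_of_tw_amb_spivak_data} and \cref{prop:intertwining_principle_for_capping}. The only points needing a moment's care are bookkeeping ones: verifying that the edge $d\cap\Phi-$ in the intertwining square is literally the capping map of the basechanged Spivak datum restricted along $\Phi$, so that \cref{thm:base_change_of_tw_amb_spivak_data} applies to conclude it is invertible, and recalling that being a morphism in $\calg(\presentable^L_{\B})$ entails $\B$--cocontinuity, which is what makes $\beckChevalley_!$ invertible. Neither presents any difficulty.
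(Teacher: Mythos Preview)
Your proposal is correct and follows essentially the same approach as the paper's own proof: apply \cref{prop:intertwining_principle_for_capping} in the situation of \cref{example:master_setting}~(a) with $F=U=\Phi$, then use \cref{thm:base_change_of_tw_amb_spivak_data} for the bottom edge, $\udl{\sC}$--twisted ambidexterity for the top, and $\B$--cocontinuity of $\Phi$ for the right-hand $\beckChevalley_!$, forcing $\beckChevalley_*$ to be an equivalence.
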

\begin{proof}
    To disambiguate notations, we will denote by $\uniqueMapX_!$ and $\uniqueMapX_*$ for the $\udl{X}$--colimit and limit for the category $\udl{\D}$. Now, by \cref{prop:intertwining_principle_for_capping} applied to the case of \cref{example:master_setting} (1), we obtain a commuting square
    \begin{center}
        \begin{tikzcd}
            \Phi\uniqueMapX_*(-) \ar[rr,"\Phi(c\cap_{D_X}-)", "\simeq"'] \dar["\beckChevalley_*"]&& \Phi\uniqueMapX_!(D_X\otimes-)\\
            \uniqueMapX_*\Phi(-) \ar[rr,"\Phi c\:\cap_{\Phi D_X}\Phi-"', "\simeq"] && \uniqueMapX_!\Phi(D_X\otimes-)\simeq \uniqueMapX_!(\Phi D_X\otimes \Phi(-)) \uar["\beckChevalley_!", "\simeq"']
        \end{tikzcd}
    \end{center}
    where the top map is an equivalence by $\udl{\sC}$--twisted ambidexterity, the bottom an equivalence by \cref{thm:base_change_of_tw_amb_spivak_data}, and the right vertical is an equivalence since $\Phi$ preserves parametrised colimits by hypothesis. Thus the left vertical map is an equivalence too, as desired.
\end{proof}

In a limited sense, it is possible to exploit that an object $\obj{X} \in \category{B}$ admits a Poincar\'e Spivak datum with coefficients in $\udlcatC$ to get a Spivak datum with coefficients in $\udlcatD$ with interesting properties. To this end, it would be convenient to establish the following terminology:

\begin{terminology}\label{terminology:poincare_duality_for_a_functor}
    Let $\udl{X}\in\B$, $\udl{\D}\in\cmonoid(\cat_{\B})$ satisfying the $\udl{X}$--projection formula, and $\Phi\colon \udl{\sC}\rightarrow \udl{\D}$ a functor of $\B$--categories. Suppose we have a $\udl{\D}$--Spivak datum $(\zeta,d)$ for $\udl{X}$.  We say that the Spivak datum $(\zeta,d)$ is:
    \begin{enumerate}[label=(\alph*)]
        \item $\Phi$\textit{--twisted ambidextrous} if the capping transformation 
        \[\uniqueMapX_*\Phi(-) \xlongrightarrow{ d\cap_{\zeta}\Phi-}\uniqueMapX_!\big(\zeta\otimes \Phi(-)\big)\]
        of functors $\udl{\sC}^{\udl{X}}\rightarrow\udl{\D}$ is an equivalence,
        \item $\Phi$\textit{--Poincar\'{e} duality} if it is $\Phi$--twisted ambidextrous and $\zeta$ takes values in invertible objects.
    \end{enumerate}
\end{terminology}

\begin{thm}[Poincar\'{e} basechange - general version]\label{thm:small_base_change}
    Suppose $\Phi \colon \udl{\category{C}} \to \udl{\category{D}}$ is a symmetric monoidal functor of $\category{B}$--categories such that
    \begin{itemize}
        \item the $\category{B}$--categories $\udl{\category{C}}, \udl{\category{D}}$ admit $\udl{X}$-shaped (co)limits and satisfy the $\udl{X}$-projection formula;
        \item the $\category{B}$-functor $\Phi$ preserves $\udl{X}$-shaped limits and colimits.
    \end{itemize}
    If $(\xi,c)$ is a twisted ambidextrous $\underline{\sC}$--Spivak datum for $\udl{X}$, then  $\Phi(\xi,c)$ is a twisted ambidextrous  $\Phi$--Spivak datum for $\udl{X}$. In particular, if $(\xi,c)$ is a Poincar\'{e} duality $\underline{\sC}$--Spivak datum for $\udl{X}$, then $\Phi(\xi,c)$ is a  Poincar\'{e} duality $\Phi$--Spivak datum for $\udl{X}$.    
\end{thm}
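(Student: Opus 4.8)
The plan is to derive this from the linear intertwining principle \cref{prop:intertwining_principle_for_capping}, essentially as in the proof of \cref{cor:transferring_colimit_BC_to_limit_BC_by_ambidexterity} but running the logic in the opposite direction (there one uses the presentable basechange theorem to know the \emph{bottom} arrow is an equivalence and concludes about the left one; here the left and right arrows are equivalences by hypothesis and we conclude about the bottom one). First I would record that $\Phi(\xi,c)=(\Phi\xi,\Phi c)$ is a genuine $\udl{\D}$--Spivak datum for $\udl{X}$ via \cref{cons:basechanging_spivak_data}: this needs $\Phi$ symmetric monoidal and $\udl{X}$--shaped colimit preserving, so that $\Phi(\uniqueMapX_!\xi)\simeq\uniqueMapX_!\Phi\xi$, both of which hold. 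I would then place myself in \cref{setting:master_setting} in the instance \cref{example:master_setting}~(a), taking $\udl{J}=\udl{X}$, $\udl{K}=\udl{\ast}$, $r=\rho=\uniqueMapX$ and $U=F=\Phi$, with the $\udl{\sC}$--linear structure on $\udl{\D}$ and on $\Phi$ the one coming from symmetric monoidality of $\Phi$. Under this dictionary the associated $\udl{\D}$--Spivak datum $(\zeta,d)$ is precisely $\Phi(\xi,c)$, and all hypotheses of \cref{setting:master_setting,prop:intertwining_principle_for_capping} are in force: $\udl{\sC}$ and $\udl{\D}$ admit $\udl{X}$--shaped limits and colimits and satisfy the $\udl{X}$--projection formula, and $\Phi$ preserves $\udl{X}$--shaped colimits, which is all the colimit preservation those statements actually invoke.

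Applying \cref{prop:intertwining_principle_for_capping} then produces the commuting square
\[
\begin{tikzcd}
\Phi\uniqueMapX_*\ar[d,"\beckChevalley_*"']\ar[rr,"\Phi(\ambi{c}{\xi}{-})"] && \Phi\uniqueMapX_!(\xi\otimes -)\\
\uniqueMapX_*\Phi \ar[r,"\ambi{\Phi c}{\Phi\xi}{\Phi-}"'] & \uniqueMapX_!(\Phi\xi\otimes \Phi-)\ar[r,"\simeq"'] & \uniqueMapX_!\Phi(\xi\otimes-)\ar[u,"\beckChevalley_!"']
\end{tikzcd}
\]
in which I claim four of the five arrows are equivalences: the top map $\Phi(\ambi{c}{\xi}{-})$ because $\ambi{c}{\xi}{-}$ is an equivalence (twisted ambidexterity of $(\xi,c)$) and $\Phi$ preserves equivalences; the left map $\beckChevalley_*\colon\Phi\uniqueMapX_*\to\uniqueMapX_*\Phi$ because $\Phi$ preserves $\udl{X}$--shaped limits; the right map $\beckChevalley_!\colon\uniqueMapX_!\Phi(\xi\otimes-)\to\Phi\uniqueMapX_!(\xi\otimes-)$ because $\Phi$ preserves $\udl{X}$--shaped colimits; and the lower right map because it is the linearity equivalence of symmetric monoidality. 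Commutativity of the square then forces the remaining bottom arrow $\ambi{\Phi c}{\Phi\xi}{\Phi-}$ to be an equivalence, which is precisely the assertion that $\Phi(\xi,c)$ is a $\Phi$--twisted ambidextrous $\Phi$--Spivak datum in the sense of \cref{terminology:poincare_duality_for_a_functor}~(a). For the last sentence, I would add that a symmetric monoidal functor carries invertible objects to invertible objects, so if $\xi$ factors through $\udl{\picardSpace}(\udl{\sC})$ then $\Phi\xi$ factors through $\udl{\picardSpace}(\udl{\D})$, and together with the preceding this makes $\Phi(\xi,c)$ a $\Phi$--Poincar\'{e} duality Spivak datum, as desired.

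The one point I would write out carefully — the only place the argument could go wrong — is the identification of the bottom row of the intertwining square with the capping transformation of \cref{terminology:poincare_duality_for_a_functor}: one must check that ``$d\cap F-$'' from \cref{prop:intertwining_principle_for_capping}, instantiated at $F=\Phi$ and $(\zeta,d)=\Phi(\xi,c)$, is literally the capping map of \cref{cons:capping_map} for the $\udl{\D}$--Spivak datum $(\Phi\xi,\Phi c)$ precomposed with $\Phi\colon\udl{\sC}^{\udl{X}}\to\udl{\D}^{\udl{X}}$. This is built into the way \cref{setting:master_setting} is organised, so it should amount to unwinding notation rather than new work; everything else is purely formal and uses no input beyond the standing hypotheses. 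I would also remark that, in contrast with \cref{cor:transferring_colimit_BC_to_limit_BC_by_ambidexterity}, no appeal to the presentable basechange theorem \cref{thm:base_change_of_tw_amb_spivak_data} is needed here, because limit preservation of $\Phi$ is now assumed outright rather than being something one has to derive from ambidexterity.
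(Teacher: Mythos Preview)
Your proposal is correct and follows essentially the same approach as the paper's own proof: both apply \cref{prop:intertwining_principle_for_capping} in the instance \cref{example:master_setting}~(a) to obtain the intertwining square, then use twisted ambidexterity of $(\xi,c)$ for the top arrow and the hypothesis that $\Phi$ preserves $\udl{X}$--shaped limits and colimits for the vertical Beck--Chevalley maps to conclude that the bottom capping map is an equivalence, with the Poincar\'e statement following from symmetric monoidality preserving invertibles. Your write-up is more explicit about placing oneself in \cref{setting:master_setting} and about the identification of the bottom row, but the paper's proof is just a terser version of the same argument.
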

\begin{proof}
    To distinguish from the Kan extensions $\uniqueMapX_*, \uniqueMapX_!$ associated to the category $\underline{\sC}$, we will write $\uniqueMapX_*, \uniqueMapX_!$ for the functors $\underline{\D}^{\underline{X}}\rightarrow \underline{\D}$. By \cref{prop:intertwining_principle_for_capping}, we have a commuting square
    \begin{center}
        \begin{tikzcd}
            \Phi\uniqueMapX_*(-) \ar[rr,"\Phi(c\cap_{\xi}-)", "\simeq"'] \dar["\beckChevalley_*", "\simeq"']&& \Phi\uniqueMapX_!(\xi\otimes-)\\
            \uniqueMapX_*\Phi(-) \ar[rr,"\Phi c\:\cap_{\Phi \xi}\Phi-"'] && \uniqueMapX_!\Phi(\xi\otimes-)\simeq \uniqueMapX_!(\Phi\xi\otimes \Phi(-)) \uar["\beckChevalley_!", "\simeq"']
        \end{tikzcd}
    \end{center}
    from which the desired result is immediate.  The final statement about Poincar\'{e} duality is an immediate consequence of the fact that $\Phi$ is symmetric monoidal and so preserves invertible objects.
\end{proof}

We will exploit this later to reprove an injectivity result of Bredon and Browder as \cref{thm:bredonBrowderInjection}.

\subsubsection*{Change of base topoi}

\begin{nota}\label{notation:square_bracket}
    To state the next construction and result, it will be convenient to adopt the following notation: let $f^*\colon \B\rightleftharpoons \B' \cocolon f_*$ be a geometric morphism of topoi. If $\udl{\sC}\xrightarrow{F}\udl{\D}\xrightarrow{G} \udl{\E}$ are functors of $\B'$--categories, we write $f_*[G\circ F]$ to mean the composite $f_*\udl{\sC}\xrightarrow{f_*F}f_*\udl{\D}\xrightarrow{f_*G}\nolinebreak f_*\udl{\E}$ and similarly for $f^*$. Furthermore, since both $f^*$ and $f_*$ are product--preserving functors and so enhance to symmetric monoidal functors, we see that for an object $A\in \udl{\D}$, writing $f_*A\in f_*\udl{\D}$ under the equivalence $\map_{\cat_{\B}}(\constant_{\B}\ast,f_*\udl{\D})\simeq \map_{\cat_{\B'}}(\constant_{\B'}\ast,\udl{\D})$, the map $-\otimes A\colon \udl{\D}\rightarrow\udl{\D}$ is sent to $-\otimes f_*A\colon f_*\udl{\D}\rightarrow f_*\udl{\D}$, and similarly in the case when we apply $f^*$.
\end{nota}

\begin{cons}[Pushing Spivak data along geometric morphisms]\label{cons:pushing_forward_spivak_data_geometric_morphisms}
    Let $f^* \colon \category{B} \rightleftharpoons \category{B}' \cocolon f_*$ be a geometric morphism of topoi and consider $\obj{X} \in \category{B}$ and $\udl{\category{C}}$ a symmetric monoidal $\category{B}'$--category which admits $f^*\udl{X}$--indexed colimits. By \cref{lem:parametrised_colimits_base_change}, we know that $f_*\udl{\sC}$ admits $\udl{X}$--colimits.

    Suppose we are given a $\udl{\category{C}}$-Spivak datum $(\xi,c)$ for $f^*{\obj{X}}$ and  a $f_* \udl{\category{C}}$--Spivak datum $(\zeta, d)$ for ${\obj{X}}$.
    Using the symmetric monoidal identification from \cref{lem:pushforwar_fun_manoeuvre}, we obtain a $f_*\udl{\sC}$--Spivak datum $f_* (\xi, c)$ for ${\obj{X}}$ and  a $\udl{\category{C}}$--Spivak datum $f^* (\zeta, d)$ for $f^*{\obj{X}}$. Observe in particular that, by construction, we have $f^*f_*(\xi,c)\simeq (\xi,c)$ and $f_*f^*(\zeta,d)\simeq (\zeta,d)$.
    
    Here, for instance, $f_* \xi$ corresponds to $\xi$ under the equivalence $f_*\udl{\func}(f^* {\obj{X}}, \udl{\category{C}}) \simeq \udl{\func}({\obj{X}}, f_*\udl{\category{C}})$ and $f_* c \colon \unit_{f_*\udl{\category{C}}} \to \uniqueMapX_! f_* \xi$ corresponds to $c$ under the identification of adjunctions in \cref{diag:base_change_twisted_ambidexterity_geometric_morphism}. Explicitly, these new Spivak data are given by
    \begin{equation*}
        f_* (\xi,c) 
        \coloneqq \left( 
        f_* \xi \colon \udl{X} \xrightarrow{\eta} f_* f^* \udl{X} \xrightarrow{f_*\xi} f_* \udl{\category{C}}, 
        \hspace{1mm}
        f_* c \colon \unit_{f_*\udl{\category{C}}} = f_*[\unit_{\udl{\sC}}] \rightarrow \uniqueMapX_! f_*\xi = f_*[(f^*\uniqueMapX)_!\xi]
        \right),
    \end{equation*}
    \begin{equation*}
        f^* (\eta, d) 
        \coloneqq \left(
        f^* \udl{X} \xrightarrow{f^*\zeta} f^* f_* \udl{\category{C}} \xrightarrow{\epsilon} \udl{\category{C}},\hspace{1mm} f^*d \colon \unit_{\udl{\sC}} = f^*[\unit_{f_*\udl{\sC}}]\rightarrow (f^*\uniqueMapX)_!f^*\zeta = f^*[\uniqueMapX_!\zeta]
        \right).
    \end{equation*}
\end{cons}

\begin{cons}[Pushing Spivak data along \'{e}tale morphisms]\label{cons:pushing_spivak_data_etale_morphisms}
    Let $f^* \colon \category{B} \rightleftharpoons \category{B}' \cocolon f_*$ be an \'etale morphism of topoi, $\udl{\sC}\in\cmonoid(\cat_{\B})$, and $\udl{X}\in\B$. Suppose $(\xi,c)$ is a $\udl{\sC}$--Spivak datum for $\udl{X}$. Then we can construct a $f^*\udl{\sC}$--Spivak datum $f^*(\xi,c)$ for $f^*\udl{X}$ given by
    \[\Big(f^*\xi\colon f^*\udl{X}\xrightarrow{f^*\xi}f^*\udl{\sC}, \: f^*c\colon \unit_{f^*\udl{\sC}}\simeq f^*[\unit_{\udl{\sC}}]\xrightarrow{f^*[c]} (f^*X)_!f^*\xi\simeq f^*[\uniqueMapX_!\xi]\Big)\]
    where in the last equivalence, we have used $f^*[\uniqueMapX_!]\simeq (f^*\uniqueMapX)_!$ from \cref{lem:parametrised_colimits_etale_base_change}.
\end{cons}

For part (e) of the next result, see \cref{terminology:poincare_duality_for_a_functor}. 

\begin{thm}[Omnibus geometric basechange of Spivak data]\label{thm:omnibus_geometric_pushforward_of_capping}\label{prop:base_change_twisted_ambidexterity_geometric_morphism}\label{prop:base_change_twisted_ambidextrous_etale_morphism}
    Let $f^*\colon \B\rightleftharpoons \B' \cocolon f_*$ be a geometric morphism of topoi, $\udl{X}\in \B$,  $\udl{\D}\in \cmonoid(\cat_{\B'})$  satisfying the $f^*\udl{X}$--projection formula,  and $\udl{\E}\in\cmonoid(\cat_{\B})$ satisfying the $\udl{X}$--projection formula. Let $(\xi,c)$ be a $\udl{\D}$--Spivak datum for $f^*\udl{X}$ and $(\zeta,d)$ a $f_*\udl{\D}$--Spivak datum for $\udl{X}$. Then:
    \begin{enumerate}[label=(\alph*)]
        \item There is a  commuting square of capping maps
        \begin{center}
            \begin{tikzcd}
                \uniqueMapX_*(-)\ar[rr,"\ambi{f_*c}{f_*\xi}{-}"]\dar["\simeq"'] && \uniqueMapX_!(f_*\xi\otimes-)\dar["\simeq"]\\
                f_*\big[(f^*X)_*(-)\big] \ar[rr,"f_*{[\ambi{c}{\xi}{-}]}"] && f_*\big[(f^*X)_!(\xi\otimes-)\big] 
            \end{tikzcd}
        \end{center}
        of functors $\udl{\func}(\udl{X},f_*\udl{\D})\simeq f_*\udl{\func}(f^*\udl{X},\udl{\D})\rightarrow f_*\udl{\D}$
        \item Suppose that $f_*\colon\cat_{\B'}\rightarrow\cat_{\B}$ is fully faithful (resp. that $f^*\colon \B\rightleftharpoons \B' \cocolon f_*$ is \'{e}tale). Then there is a commuting square 
        \begin{center}
            \begin{tikzcd}
                f^*\big[\uniqueMapX_*(-)\big] \ar[rr,"f^*{[\ambi{d}{\zeta}{-}]}"] \dar["\simeq"']&& f^*\big[\uniqueMapX_!(\zeta\otimes-)\big]\dar["\simeq"]\\
                (f^*X)_*(-) \ar[rr, "\ambi{f^*d}{f^*\zeta}{-}"]&& (f^*X)_!(f^*\zeta\otimes-) 
            \end{tikzcd}
        \end{center}
        of functors $\udl{\func}(f^*\udl{X},\udl{\D}) \simeq f^*\udl{\func}(\udl{X},f_*\udl{\D}) \rightarrow \udl{\D}$ (resp. $\udl{\func}(f^*\udl{X},f^*\udl{\E})  \rightarrow f^*\udl{\E}$),

        \item If $(\xi,c)$ is a twisted ambidextrous (resp. Poincar\'{e}) $\udl{\D}$--Spivak datum for $f^*\udl{X}$, then $f_*(\xi,c)$ is a twisted ambidextrous (resp. Poincar\'{e})  $f_* \udl{\category{D}}$--Spivak datum for $\udl{X}$. 
        
        \item Suppose either that $f_*$ is fully faithful or that $\udl{\category{D}}$ is presentably symmetric monoidal. If $(\zeta,d)$ is a twisted ambidextrous (resp. Poincar\'{e}) $f_*\udl{\D}$--Spivak datum for $\udl{X}$, then $f^*(\zeta,d)$ is a twisted ambidextrous (resp. Poincar\'{e}) $\udl{\category{D}}$--Spivak datum for $f^*\udl{X}$.

        \item More generally: suppose  $f_*$ is fully faithful. Let $\udl{\sC}\in\cat_{\B}$ and $\Phi\colon \udl{\sC}\rightarrow f_*\udl{\D}$ be a  functor between $\B$--categories. Then $(\zeta,d)$ is $\Phi$--twisted ambidextrous (resp. --Poincar\'{e}) for $\udl{X}$ if and only if $f^*(\zeta,d)$ is $(f^*\Phi\colon f^*\sC\rightarrow \udl{\D})$--twisted ambidextrous (resp. --Poincar\'{e}) for $f^*\udl{X}$,
        \item Suppose the geometric morphism $f^*\colon \B\rightleftharpoons \B' \cocolon f_*$ is \'{e}tale. If $(\zeta,d)$ is a twisted ambidextrous (resp. Poincar\'{e}) $\udl{\E}$--Spivak datum for $\udl{X}$, then $f^*(\zeta,d)$ is a twisted ambidextrous (resp. Poincar\'{e}) $f^*\udl{\E}$--Spivak datum for $f^*\udl{X}$. 
    \end{enumerate}
\end{thm}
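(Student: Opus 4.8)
\textbf{Proof plan for \cref{thm:omnibus_geometric_pushforward_of_capping}.} The plan is to deduce all six items from the two commuting squares of capping maps in (a) and (b), together with the identifications of Kan-extension adjoints from \cref{lem:parametrised_colimits_base_change,lem:parametrised_colimits_etale_base_change}, the $\func$-manoeuvre identifications \cref{lem:pushforwar_fun_manoeuvre,lem:restriction_fun_manoeuvre}, the compatibility of $\udl{\picardSpace}$ with (co)restriction \cref{cor:factoring_through_picard_geometric_morphisms}, and the compatibility of projection formulae under basechange \cref{prop:projection_formula_geometric_pushforwards}. Concretely, first I would establish (a): using the symmetric monoidal equivalence $f_*\udl{\func}(f^*\udl{X},\udl{\D})\simeq\udl{\func}(\udl{X},f_*\udl{\D})$ from \cref{lem:pushforwar_fun_manoeuvre} and \cref{notation:square_bracket}, I apply $f_*$ termwise to the defining composite of $\ambi{c}{\xi}{-}$ in \cref{cons:capping_map}. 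Each of the four constituent maps (the map $c\otimes-$, the projection formula equivalence $\projectionformula^{f^*X}$, the counit $\epsilon$, the colimit) is carried by $f_*$ to the corresponding constituent of $\ambi{f_*c}{f_*\xi}{-}$: the first because $f_*$ is symmetric monoidal and $f_*c$ is \emph{defined} this way; the projection-formula square because $f_*$ sends the Beck--Chevalley equivalence to the Beck--Chevalley equivalence for $f_*\udl{\D}$ (this is \cref{prop:projection_formula_geometric_pushforwards}(1) together with the identification of adjunctions \cref{diag:base_change_twisted_ambidexterity_geometric_morphism}); and the counit because $f_*$ preserves adjunctions by \cite[Cor.~3.1.9]{MartiniWolf2024}. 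This gives the commuting square in (a). Item (b) in the fully-faithful case follows by the same argument applied to $f^*$, using $f^*f_*\simeq\id$; in the \'etale case one replaces \cref{lem:pushforwar_fun_manoeuvre} by \cref{lem:restriction_fun_manoeuvre} and \cref{lem:parametrised_colimits_base_change} by \cref{lem:parametrised_colimits_etale_base_change}, and uses that $f^*$ preserves adjunctions (again \cite[Cor.~3.1.9]{MartiniWolf2024}).

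Given (a), item (c) is immediate: in the square the vertical maps are equivalences, so the top horizontal $\ambi{f_*c}{f_*\xi}{-}$ is an equivalence if and only if the bottom one $f_*[\ambi{c}{\xi}{-}]$ is, and the latter holds if $\ambi{c}{\xi}{-}$ is an equivalence since $f_*$ preserves equivalences — here one should invoke \cref{lem:natural_equivalences_preserved_under_geometric_morphisms} to be careful about the $\Delta^1$-parameter. For the Poincar\'e refinement, $f_*\xi$ factors through $\udl{\picardSpace}(f_*\udl{\D})$ exactly when $\xi$ does, which is \cref{cor:factoring_through_picard_geometric_morphisms}(1) (applied with the roles suitably arranged, noting $f_*f^*\udl{X}$ receives $\udl{X}$ via the unit and $f^*f_*(\xi,c)\simeq(\xi,c)$ as recorded in \cref{cons:pushing_forward_spivak_data_geometric_morphisms}). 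Item (d), when $f_*$ is fully faithful, is symmetric: apply (b) and use that $f^*$ preserves equivalences and that $\udl{\picardSpace}$ is compatible with $f^*$ on the essential image of $f_*$ (again \cref{cor:factoring_through_picard_geometric_morphisms}(1), reading the ``if and only if'' the other way). When $f_*$ is not fully faithful but $\udl{\D}$ is presentably symmetric monoidal, one instead argues as in \cref{thm:base_change_of_tw_amb_spivak_data}: being twisted ambidextrous with presentable coefficients is a property detected by $f^*\colon\presentable^L_\B\to\presentable^L_{\B'}$ being symmetric monoidal colimit-preserving, so $f^*$ of the (unique) twisted ambidextrous Spivak datum $(\zeta,d)=(D_{\udl{X}}^{f_*\udl{\D}},d)$ is twisted ambidextrous with coefficient $f^*f_*\udl{\D}$; but there is a canonical symmetric monoidal functor $f^*f_*\udl{\D}\to\udl{\D}$ (the counit), and pushing forward along it using \cref{thm:base_change_of_tw_amb_spivak_data} yields the claim, the resulting datum being $f^*(\zeta,d)$ by the formula in \cref{cons:pushing_forward_spivak_data_geometric_morphisms}.

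Item (e) is the ``relative to a functor'' generalisation and is where a small amount of care is needed. Given $\Phi\colon\udl{\sC}\to f_*\udl{\D}$ with $f_*$ fully faithful, the $\Phi$-capping transformation $\uniqueMapX_*\Phi(-)\to\uniqueMapX_!(\zeta\otimes\Phi(-))$ of functors $\udl{\sC}^{\udl{X}}\to f_*\udl{\D}$ is obtained by precomposing the capping transformation $\ambi{d}{\zeta}{-}$ with $\udl{\func}(\udl{X},\Phi)\colon\udl{\sC}^{\udl{X}}\to(f_*\udl{\D})^{\udl{X}}$; applying $f^*$ and using $f^*f_*\simeq\id$ together with the square in (b) identifies this, after precomposition with $f^*\udl{\func}(\udl{X},\Phi)\simeq\udl{\func}(f^*\udl{X},f^*\Phi)$, with the $(f^*\Phi)$-capping transformation of $f^*(\zeta,d)$. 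Since $f^*$ is conservative on the essential image of $f_*$ (as $f^*f_*\simeq\id$) and, more to the point, reflects equivalences of the relevant natural transformations by \cref{lem:natural_equivalences_preserved_under_geometric_morphisms}, the two $\Phi$- and $(f^*\Phi)$-capping transformations are simultaneously equivalences; the Picard condition again transfers by \cref{cor:factoring_through_picard_geometric_morphisms}(1). Finally item (f) is the \'etale specialisation: for $f^*\colon\B\rightleftharpoons\B'\cocolon f_*$ \'etale, \cref{prop:projection_formula_geometric_pushforwards}(3) gives that $f^*\udl{\E}$ satisfies the $f^*\udl{X}$-projection formula, \cref{lem:parametrised_colimits_etale_base_change} identifies the adjoints, and the \'etale case of (b) produces the commuting square for $f^*(\zeta,d)$ as constructed in \cref{cons:pushing_spivak_data_etale_morphisms}; twisted ambidexterity transfers since the vertical maps are equivalences and $f^*$ preserves equivalences, and the Picard condition transfers by \cref{cor:factoring_through_picard_geometric_morphisms}(2). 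I expect the main obstacle to be purely bookkeeping: verifying that the four constituent maps of the capping transformation are carried term-by-term into the constituent maps of the basechanged capping transformation — in particular getting the Beck--Chevalley/projection-formula square to commute compatibly with $f_*$ — which amounts to assembling the already-stated lemmas \cref{lem:pushforwar_fun_manoeuvre,lem:parametrised_colimits_base_change,prop:projection_formula_geometric_pushforwards} into one large diagram, rather than any genuinely new idea.
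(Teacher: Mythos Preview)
Your plan for parts (a), (b), (c), (e), and (f) is essentially the paper's own argument, and the bookkeeping you anticipate is exactly what the paper carries out. The fully-faithful case of (d) is also fine.

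However, your argument for part (d) in the \emph{presentably symmetric monoidal but not fully faithful} case has a genuine gap. You propose to first show that $f^*(\zeta,d)$ is twisted ambidextrous with coefficient $f^*f_*\udl{\D}$, invoking that ``$f^*\colon\presentable^L_\B\to\presentable^L_{\B'}$ is symmetric monoidal colimit-preserving'', and then basechange along the counit $f^*f_*\udl{\D}\to\udl{\D}$ via \cref{thm:base_change_of_tw_amb_spivak_data}. But for a \emph{general} geometric morphism there is no claim in the paper that $f^*$ restricts to a functor $\presentable^L_\B\to\presentable^L_{\B'}$ at all (this is only established in \cref{lem:lax_monoidal_base_change_etale_morphism} for the \'etale case), let alone a symmetric monoidal one; in particular $f^*f_*\udl{\D}$ need not be a presentably symmetric monoidal $\B'$-category, so \cref{thm:base_change_of_tw_amb_spivak_data} does not apply to the counit. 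Your first step is thus circular: it presupposes exactly the kind of preservation you are trying to establish.

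The paper avoids this by a different, pointwise argument. To check that the capping transformation for $f^*(\zeta,d)$ is a natural equivalence, it suffices to check after evaluating at each $W\in\B'$. At level $W$ the capping transformation is obtained by basechanging along the symmetric monoidal $\B'$-functor $W^*\colon\udl{\D}\to\udl{\D}^{\udl{W}}$. Now work entirely inside $\B$: apply \cref{thm:base_change_of_tw_amb_spivak_data} along the morphism $f_*[W^*]\colon f_*\udl{\D}\to f_*(\udl{\D}^{\udl{W}})$ in $\calg(\presentable^L_\B)$ to see that the $f_*(\udl{\D}^{\udl{W}})$-Spivak datum $f_*[W^*](\zeta,d)$ is twisted ambidextrous; then use the square from part (a) (with $\udl{\D}$ replaced by $\udl{\D}^{\udl{W}}$) together with $f_*f^*(\zeta,d)\simeq(\zeta,d)$ to identify this with $f_*$ of the level-$W$ capping transformation for $f^*(\zeta,d)$; finally take global sections and use $\Gamma_\B f_*\simeq\Gamma_{\B'}$ from \cref{example:constant_and_global_sections}. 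The point is that this argument never leaves the category $\presentable^L_\B$, where the relevant functors are known to exist.
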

\begin{proof}
    First note by \cref{prop:projection_formula_geometric_pushforwards} (1, 3) that $f_*\udl{\D}$ satisfies the $\udl{X}$--projection formula and $f^*\udl{\E}$ satisfies the $f^*\udl{X}$--projection formula, and so the squares in (a) and (b) make sense. Now to prove  part (a), we have the commutative diagram
    \begin{center}
        \begin{tikzcd}
            \uniqueMapX_*(-) \ar[r, "\simeq"]  \ar[d, "\id\otimes f_*c"']&  f_*\big[(f^*\uniqueMapX)_* (\--)\big] \ar[d, "{f_*[\id\otimes c]}"]
            \\
            \uniqueMapX_* (\--) \otimes \uniqueMapX_! f_* \xi \ar[r, "\simeq"] &
            f_*\big[(f^*\uniqueMapX)_*(\--) \otimes  (f^*\uniqueMapX)_! \xi\big]
            \\
            \uniqueMapX_! (\uniqueMapX^* \uniqueMapX_* (\--) \otimes f_* \xi) \ar[r, "\simeq"] \ar[d, "\epsilon_*"'] \ar[u, "\beckChevalley_!","\simeq"'] &
            f_*\big[(f^*\uniqueMapX)_!((f^*\uniqueMapX)^* (f^*\uniqueMapX)_*(\--) \otimes \xi)\big] \ar[d, "f_*{\epsilon_*}"] \ar[u, "f_*{\beckChevalley_!}"', "\simeq"]
            \\
            \uniqueMapX_! (\-- \otimes f_* \xi) \ar[r, "\simeq"] &
            f_*\big[(f^*\uniqueMapX)_! (\-- \otimes \xi)\big]
        \end{tikzcd}
    \end{center}
    The horizontal arrows come from the identification in \cref{lem:parametrised_colimits_base_change}; the top square commutes by symmetric monoidality of the identification; the middle and bottom squares commute as the respective adjunction (co-)units are identified by \cref{diag:base_change_twisted_ambidexterity_geometric_morphism}. The required square is now obtained by extracting the outer square of the diagram above.

    The proof for (b) in the case that $f_*$ is fully faithful is done similarly as for (a), but using now the commuting squares of adjunctions obtained by applying $f^*$  to  \cref{lem:parametrised_colimits_base_change} ($f^*$ preserves adjunctions by \cite[Cor. 3.1.9]{MartiniWolf2024}) and that $f^*f_*\simeq \id$. The case of \'{e}tale morphisms is also done similarly, using instead the squares of adjunctions from \cref{lem:parametrised_colimits_etale_base_change}. 

    Next, we prove part (c). If $(\xi,c)$ is twisted ambidextrous, then by \cref{lem:natural_equivalences_preserved_under_geometric_morphisms}, the bottom map in the square from (a) is an equivalence, and so the top map is an equivalence too, i.e. $f_*(\xi,c)$ is twisted ambidextrous. The statements about being Poincar\'{e} is a straightforward consequence of the twisted ambidexterity statements we just proved and the characterisation of factoring  through invertible objects in  \cref{cor:factoring_through_picard_geometric_morphisms} (1).

    For the proof of (d), suppose now that $f_*$ is fully faithful and that $(\zeta,d)$ is twisted ambidextrous. 
    Then since $f^*f_*\simeq \id$, the top map in the square from (b) is an equivalence, and so the bottom map is an equivalence too, i.e. $f^*(\zeta,d)$ is twisted ambidextrous. Poincar\'{e} duality is then handled similarly as in (c).  
    
    Next, assume that $\udl{\category{D}}$ is presentably symmetric monoidal. We  show that the capping transformation $\ambi{f^*d}{f^*\zeta}{(-)}\colon \udl{\func}(f^*\udl{X},\udl{\D})\rightarrow \udl{\D}^{\Delta^1}$ is a natural equivalence in unparametrised categories when evaluated at every $W\in\B'$. Firstly,  note that the $\udl{\D}$--Spivak datum for $f^*\udl{X}$ at level $W$ is obtained via the symmetric monoidal biadjoint $\B'$--functor $W^* \colon \udl{\D}\rightarrow\udl{\D}^{\udl{W}}$, and so the transformation evaluated at $W\in \B'$ is given by applying global sections  
    $\Gamma_{\B'}$ to 
    \begin{equation}\label{eqn:capping_at_level_W}
        \ambi{W^*(f^*d)}{W^*\circ f^*\zeta}{(-)}\colon \udl{\func}(f^*\udl{X},\udl{\D}^{\udl{W}})\longrightarrow (\udl{\D}^{\udl{W}})^{\Delta^1}.
    \end{equation}
    Next, applying \cref{thm:base_change_of_tw_amb_spivak_data} along  $f_*[W^*] \colon f_* \udl{\category{D}} \to f_* (\udl{\category{D}}^{\udl{W}})$ shows that the $f_* (\udl{\category{D}}^{\udl{W}})$--Spivak datum $f_*[{W}^*](\zeta, d)$ is twisted ambidextrous, i.e.  \[\ambi{f_*[W^*](d)}{f_*[W^*]\circ\zeta}{(-)}\colon \udl{\func}(\udl{X},f_*(\udl{\D}^{\udl{W}})) \rightarrow (f_*(\udl{\D}^{\udl{W}}))^{\Delta^1}\] is a natural equivalence.
    But then by the square in part (a), this capping transformation is equivalent to $f_*[\ambi{W^*(f^*d)}{W^*\circ f^*\zeta}{(-)}]\colon f_*\udl{\func}(f^*\udl{X},\udl{\D}^{\udl{W}})\rightarrow f_*((\udl{\D}^{\udl{W}})^{\Delta^1})$, where we have also used that $f_*f^*(\zeta,d)\simeq (\zeta,d)$ from \cref{cons:pushing_forward_spivak_data_geometric_morphisms}. Thus, by using that $\Gamma_{\B}f_*\simeq \Gamma_{\B'}$ from \cref{example:constant_and_global_sections}, we may apply $\Gamma_{\B}$ to $f_*[\ambi{W^*(f^*d)}{W^*\circ f^*\zeta}{(-)}]$ to get that applying $\Gamma_{\B'}$ to \cref{eqn:capping_at_level_W} yields a natural equivalence, as desired. And as usual, Poincar\'{e} duality is handled by \cref{cor:factoring_through_picard_geometric_morphisms} (1).
    
    Now, the proof of parts (c, d) clearly goes through straightforwardly to yield a proof of (e).    Finally, the proof of (f) in the twisted ambidexterity case is done similarly as in the proof of (c) using the square from (b), and the Poincar\'{e} duality case is handled by  \cref{cor:factoring_through_picard_geometric_morphisms} (2).
\end{proof}

\subsubsection*{Descent}

For the next result, we briefly recall the notion of effective epimorphisms in a topos $\category{B}$. Given a morphism $f \colon \obj{X} \rightarrow \obj{Y}$ in $\category{B}$, its \textit{Čech nerve} is the simplicial object
\[ \check{\mathrm{C}}(f) \colon \Delta\op \to \category{B}, \hspace{3mm} [n] \mapsto \obj{X} \times_{\obj{Y}} \obj{X} \times_{\obj{Y}} \dots \times_{\obj{Y}} \obj{X} \hspace{3mm} \text{ ($n+1$ factors)}.  \]
Now $f$ is called an \textit{effective epimorphism} if the canonical map $\colim_{\Delta\op} \check{\mathrm{C}}(f) \to Y$ is an equivalence.

\begin{example}\label{ex:effective_epimorphism_presheaf_topos}
    A map in the topos $\spc$ of spaces is an effective epimorphism if and only if it is a $\pi_0$-surjection (see e.g. \cite[Corollary 7.2.1.15]{lurieHTT}).
    Applying this criterion pointwise, one sees that a map $f \colon X \to Y$ in a presheaf topos $\presheaf(T)$ over some category $T$ is an effective epimorphism if and only if for all $t \in T$ the map $f(t) \colon X(t) \to Y(t)$ is a $\pi_0$-surjection. 
    For example, a map of $G$--spaces $f \colon \udl{X} \rightarrow \udl{Y}$ is an effective epimorphism if and only if for each closed subgroup $H \leq G$ the map $f^H \colon X^H \rightarrow Y^H$ is a surjection on path components.
\end{example}

\begin{prop}[Poincar\'e duality and descent]\label{prop:Poincare_duality_descent}
    Let $\udl{\category{C}}$ be a presentably symmetric monoidal $\category{B}$--category.
    Consider a pullback square
    \begin{equation*}
    \begin{tikzcd}
        P \ar[r, "f'"] \ar[d, "g'"'] \ar[dr, phantom, "\lrcorner", very near start]
        & Z \ar[d, "g"]
        \\
        X \ar[r, "f"]
        & Y.
    \end{tikzcd}
    \end{equation*}
    in $\category{B}$.
    If $f$ is $\udl{\category{C}}$-twisted ambidextrous, then $f'$ is $\udl{\category{C}}$-twisted ambidextrous.
    Furthermore, there is an equivalence $(g')^* D_f \simeq D_{f'}$ where $D_f \in \category{C}(X)$ denotes the dualising object.
    In particular, if $f$ is a $\udl{\category{C}}$--Poincar\'e duality map, then $f'$ is  a $\udl{\category{C}}$--Poincar\'e duality map.

    The converse to both statements is true if $g$ is an effective epimorphism.
\end{prop}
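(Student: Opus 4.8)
The plan is to reduce the forward implication to the behaviour of twisted ambidextrous Spivak data under \'etale base change recorded in \cref{thm:omnibus_geometric_pushforward_of_capping}, and to obtain the converse from descent along the effective epimorphism $g$.

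For the forward direction I would first pass to slice topoi. By \cref{defn:twisted_ambidex_maps}, saying that $f$ is $\udl{\category{C}}$-twisted ambidextrous means that $f$, regarded as an object of $\category{B}_{/Y}$, carries a twisted ambidextrous Spivak datum $(D_f,c)$ with coefficients in $(\pi_Y)^*\udl{\category{C}}$, which is again presentably symmetric monoidal by \cref{lem:lax_monoidal_base_change_etale_morphism} and hence satisfies the $f$-projection formula. The map $g\colon Z\to Y$ induces an \'etale geometric morphism $\category{B}_{/Y}\rightleftharpoons(\category{B}_{/Y})_{/(Z\to Y)}\simeq\category{B}_{/Z}$ whose pullback functor sends $(W\to Y)$ to $(W\times_Y Z\to Z)$; applied to the object $f$ it returns exactly $f'\colon P\to Z$ by the pullback square, and applied to the coefficient category it returns $(\pi_Z)^*\udl{\category{C}}$ since the base-change functors compose. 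Then I would simply invoke \cref{thm:omnibus_geometric_pushforward_of_capping}~(f), which transports a twisted ambidextrous (resp.\ Poincar\'e) Spivak datum along an \'etale morphism to a twisted ambidextrous (resp.\ Poincar\'e) one; applied to $(D_f,c)$ this yields such a datum for $f'$ with coefficients in $(\pi_Z)^*\udl{\category{C}}$, i.e.\ $f'$ is $\udl{\category{C}}$-twisted ambidextrous (resp.\ a $\udl{\category{C}}$-Poincar\'e duality map). For the dualising sheaf: by \cref{cons:pushing_spivak_data_etale_morphisms} the \'etale base change of $(D_f,c)$ has as dualising sheaf the restriction of $D_f$ along $P\to X$, which under the above identification is $(g')^*D_f\in\category{C}(P)$; uniqueness of twisted ambidextrous Spivak data in the presentable case (\cref{prop:twisted_ambidextrous_presentable_case}) then gives $(g')^*D_f\simeq D_{f'}$, and invertibility of $D_f$ visibly passes to $(g')^*D_f$, which settles the ``in particular'' clause in the forward direction.

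For the converse I would use that, $g$ being an effective epimorphism, $Y\simeq\colim_{\Delta\op}\check{\mathrm{C}}(g)$, so that by descent in the topos the restriction functors $\category{B}_{/Y}\to\category{B}_{/\check{\mathrm{C}}(g)_n}$ are jointly conservative; composing with the projections $\check{\mathrm{C}}(g)_n\to Z$ this already forces $g^*\colon\category{B}_{/Y}\to\category{B}_{/Z}$ itself to be conservative. Setting $P_n\coloneqq X\times_Y\check{\mathrm{C}}(g)_n$, one has $P_0=P$, $\colim_{\Delta\op}P_\bullet\simeq X$ by universality of colimits, and each $P_n\to\check{\mathrm{C}}(g)_n$ is a base change of $f'$ along $\check{\mathrm{C}}(g)_n\to Z$; so by the forward direction each is $\udl{\category{C}}$-twisted ambidextrous with dualising sheaf the restriction of $D_{f'}$. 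By naturality of the dualising-sheaf construction these restrictions form a descent datum, gluing to $D_f\in\category{C}(X)\simeq\lim_\Delta\category{C}(P_\bullet)$, with the fundamental classes likewise gluing to a class $c$; this produces a $\udl{\category{C}}$-Spivak datum $(D_f,c)$ for $f$ whose \'etale base change along $g$ is $(D_{f'},c')$ (alternatively, one bypasses the gluing by using the characterisation of twisted ambidexterity via $f_*$ preserving fibrewise colimits and satisfying the projection formula, cf.\ \cref{prop:twisted_ambidextrous_presentable_case}, both properties being detected after the conservative, symmetric monoidal, colimit-preserving functor $g^*$ using $g^*f_*\simeq (f')_*(g')^*$). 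By \cref{thm:omnibus_geometric_pushforward_of_capping}~(b) the capping map of $(D_f,c)$ pulls back along $g$ to that of $(D_{f'},c')$, which is an equivalence, so by conservativity of $g^*$ the capping map of $(D_f,c)$ is an equivalence and $f$ is $\udl{\category{C}}$-twisted ambidextrous. If moreover $D_{f'}$ is invertible then so is $D_f$, since $(g')^*$ reflects invertibility ($g'$ being an effective epimorphism as a base change of $g$), which handles the Poincar\'e case.

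The hard part will be the converse, and within it the descent bookkeeping: one must check that the twisted ambidextrous data on the $P_n\to\check{\mathrm{C}}(g)_n$ are genuinely compatible with the simplicial structure so as to assemble into a single $\udl{\category{C}}$-Spivak datum for $f$ (equivalently, that the projection-formula transformation for $f_*$ is compatible with \'etale base change), and to match the \'etale base change of the resulting capping map with the capping map for $f'$. The forward direction, by contrast, is essentially a formal application of \cref{thm:omnibus_geometric_pushforward_of_capping}~(f) once the pullback square is translated into \'etale base change of slice topoi.
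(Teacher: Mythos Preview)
Your forward direction is exactly the paper's proof: translate the pullback square into \'etale base change of slice topoi and invoke \cref{thm:omnibus_geometric_pushforward_of_capping}~(f) (which the paper cites under the alias \cref{prop:base_change_twisted_ambidextrous_etale_morphism}).

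For the converse the paper takes a shortcut you do not: for twisted ambidexterity it simply cites \cite[Proposition~3.13~(5)]{Cnossen2023} rather than performing any descent bookkeeping. Your ``alternative'' route---checking that $f_*$ preserves colimits and satisfies the projection formula after the conservative base change $g^*$, using $g^*f_*\simeq f'_*(g')^*$---is correct and is essentially the content of Cnossen's argument, so you are re-deriving what the paper outsources. (Note that the precise reference for this characterisation is not \cref{prop:twisted_ambidextrous_presentable_case} but rather the lemma on internal left adjoints quoted from \cite[Lem.~2.21]{Cnossen2023}, combined with \cref{rmk:bastiaan_twisted_ambidex_agrees_with_ours}.) Your primary ``gluing'' approach would also work, and the coherence you flag as the hard part is in fact automatic: in the presentable setting twisted ambidextrous Spivak data are unique up to contractible choice (\cref{prop:twisted_ambidextrous_presentable_case}), so the levelwise data on the \v{C}ech nerve canonically assemble. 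For the Poincar\'e converse, the paper does essentially what you propose---use that $g'$ is an effective epimorphism to write $\category{C}(X)\simeq\lim_{\Delta}\category{C}(\check{\mathrm{C}}(g'))$ and check invertibility of $D_f$ levelwise, observing that each restriction factors through $\category{C}(P)$ where $D_{f'}=(g')^*D_f$ is invertible---though it phrases this via the limit description rather than ``$(g')^*$ reflects invertibility''; these are the same argument.
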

\begin{proof}
    Basechange along $g$ defines an \'etale morphism of topoi $g^* \colon \category{B}_{/Y} \rightarrow \category{B}_{/Z} \colon g_*$ where $g^*$ is given by pullback along $g$.
    Now suppose that $f$ is $\udl{\category{C}}$-twisted ambidextrous.
    This means that $f \in \nolinebreak \category{B}_{/Y}$ is $(\pi_Y)^* \udl{\category{C}}$-twisted ambidextrous.
    Applying \cref{prop:base_change_twisted_ambidextrous_etale_morphism} shows that $\nolinebreak{g^* f = f' \in  \category{B}_{/Z}}$ is $g^* \pi_Y^* \udl{\category{C}} = \pi_Z^* \udl{\category{C}}$-twisted ambidextrous with Spivak datum $(g^* \xi, g^* c)$.

    If $f$ is a $\udl{\category{C}}$--Poincar\'e duality map, then $D_f$ is invertible.
    By symmetric monoidality of the restriction map $(g')^* \colon \category{C}(X) \to \category{C}(P)$ we obtain that $D_{f'} = (g')^* D_f$ is invertible.

    Now suppose that $g$ is an effective epimorphism and that $f'$ is $\udl{\category{C}}$-twisted ambidextrous.
    It is shown in \cite[Proposition 3.13 (5)]{Cnossen2023} that $f$ is $\udl{\category{C}}$-twisted ambidextrous.
    As $g' \colon P \to X$ is an effective epimorphism and the map $\mathbb{\Delta}_{\mathrm{inj}}\op \to \mathbb{\Delta}\op$ is colimit cofinal, the map $\colim_{\mathbb{\Delta}_{\mathrm{inj}}\op}\check{\mathrm{C}}(g') \to X$ is an equivalence from which we obtain the symmetric monoidal equivalence $\category{C}(X) \xrightarrow{\simeq} \lim_{\mathbb{\Delta}_{\mathrm{inj}}} \category{C}(\check{\mathrm{C}}(g'))$.  Next,  suppose furthermore that $f'$ is a Poincar\'e duality map.
    As invertibility in limits can be checked pointwise, we have to show that each restriction of $D_f$ to $\category{C}(\check{\mathrm{C}}_n(g'))$ is invertible.
    Note that a restriction map $\category{C}(X) \to \category{C}(\check{\mathrm{C}}_n(g'))$ factors into the symmetric monoidal restriction maps $\category{C}(X) \to \category{C}(P) \to \category{C}(\check{\mathrm{C}}_n(g'))$ and the first part shows the restriction $D_{f'} = (g')^* D_f$ to $P$ is invertible.
\end{proof}

\begin{cor}[Finite products]\label{cor:pd_finite_products}
    Let $\udl{X}, \udl{Y}\in\category{B}$ and $\udl{\sC}$ be a presentably symmetric monoidal $\category{B}$--category.
    If $\udl{X}$ and $\udl{Y}$ are $\udl{\sC}$--Poincar\'e, then $\obj{X} \times \obj{Y}$ is $\udl{\sC}$--Poincar\'e and there is an equivalence
    \begin{equation*}
        D_{X \times Y} \simeq \pr_X^* D_X \otimes \pr_Y^* D_Y
    \end{equation*}
    where $\pr_X \colon \udl{X} \times \udl{Y} \to \udl{X}$ and $\pr_Y \colon \udl{X} \times \udl{Y} \to \udl{Y}$ denote the projections. 
\end{cor}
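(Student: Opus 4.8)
The plan is to deduce this from the descent result \cref{prop:Poincare_duality_descent} applied to the two projections, together with the composition formula \cref{prop:Poincare_duality_composition} for Poincar\'{e} duality maps. The key observation is that the absolute statement ``$\obj{X}$ is $\udl{\sC}$--Poincar\'{e}'' is the same as ``the map $\uniqueMap{\obj{X}}\colon \obj{X}\rightarrow \obj{\ast}$ is a $\udl{\sC}$--Poincar\'{e} duality map'' (unwinding \cref{defn:twisted_ambidex_maps,def:pd_map} in the case $\obj{Y}=\obj{\ast}$, where $(\pi_{\obj{\ast}})^*\udl{\sC}\simeq\udl{\sC}$), so it suffices to factor $\uniqueMap{\obj{X}\times\obj{Y}}$ as a composite of Poincar\'{e} duality maps.

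First I would consider the pullback square
\begin{equation*}
\begin{tikzcd}
    \obj{X}\times\obj{Y} \ar[r, "\pr_{\obj{Y}}"] \ar[d, "\pr_{\obj{X}}"'] \ar[dr, phantom, "\lrcorner", very near start]
    & \obj{Y} \ar[d, "\uniqueMap{\obj{Y}}"]
    \\
    \obj{X} \ar[r, "\uniqueMap{\obj{X}}"]
    & \obj{\ast}.
\end{tikzcd}
\end{equation*}
Since $\obj{Y}$ is $\udl{\sC}$--Poincar\'{e}, the map $\uniqueMap{\obj{Y}}$ is a $\udl{\sC}$--Poincar\'{e} duality map, so by \cref{prop:Poincare_duality_descent} the basechanged map $\pr_{\obj{X}}\colon \obj{X}\times\obj{Y}\rightarrow \obj{X}$ is a $\udl{\sC}$--Poincar\'{e} duality map, and moreover its dualising object is $(\uniqueMap{\obj{X}})^{\prime *}D_{\uniqueMap{\obj{Y}}}\simeq \pr_{\obj{X}}^*D_{\obj{Y}}$ where we identify $D_{\uniqueMap{\obj{Y}}}$ with $D_{\obj{Y}}\in\sC(\obj{Y})$ under the slice identification. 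On the other hand, $\uniqueMap{\obj{X}}\colon\obj{X}\rightarrow\obj{\ast}$ is a $\udl{\sC}$--Poincar\'{e} duality map with dualising object $D_{\obj{X}}$ since $\obj{X}$ is $\udl{\sC}$--Poincar\'{e}.

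Now I would apply the composition formula \cref{prop:Poincare_duality_composition} to $\pr_{\obj{X}}\colon \obj{X}\times\obj{Y}\rightarrow\obj{X}$ followed by $\uniqueMap{\obj{X}}\colon\obj{X}\rightarrow\obj{\ast}$: part (1) gives that the composite $\uniqueMap{\obj{X}\times\obj{Y}}=\uniqueMap{\obj{X}}\circ\pr_{\obj{X}}$ is twisted ambidextrous, and since both constituents are Poincar\'{e} duality maps and twisted ambidexterity of the composite was just established, the dualising object of the composite is $\xi_{\pr_{\obj{X}}}\otimes \pr_{\obj{X}}^*\xi_{\uniqueMap{\obj{X}}}\simeq \pr_{\obj{Y}}^*D_{\obj{Y}}\otimes \pr_{\obj{X}}^*D_{\obj{X}}$, which is invertible as a tensor product of invertibles; hence $\obj{X}\times\obj{Y}$ is $\udl{\sC}$--Poincar\'{e} with the claimed dualising sheaf. (Here I should be a little careful that the dualising object of $\pr_{\obj{X}}$, living a priori in $\sC(\obj{X}\times\obj{Y})$ via the slice topos over $\obj{X}$, is identified with $\pr_{\obj{Y}}^*D_{\obj{Y}}$; this uses that the \'{e}tale pullback $(\pi_{\obj{X}})^*\colon\B\rightarrow\B_{/\obj{X}}$ interacts correctly with $\pr_{\obj{Y}}^*$, which follows from \cref{lem:restriction_fun_manoeuvre} and the explicit description in \cref{prop:Poincare_duality_descent} that $(g')^*D_f\simeq D_{f'}$.)

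The main obstacle is precisely the bookkeeping of where each dualising object lives: one must match up the slice-topos formulation of ``Poincar\'{e} duality map'' in \cref{def:pd_map} with the absolute formulation in \cref{defn:PD_object_with_psm_coefficients}, and track the dualising objects through the equivalence $\B_{/\obj{X}\times\obj{Y}}\simeq(\B_{/\obj{X}})_{/\pr_{\obj{X}}}$ so that $D_{\pr_{\obj{X}}}$ really does become $\pr_{\obj{Y}}^*D_{\obj{Y}}$ after pullback. Once these identifications are in place the result is a formal consequence of \cref{prop:Poincare_duality_descent,prop:Poincare_duality_composition}, requiring no new computation.
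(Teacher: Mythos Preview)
Your proposal is correct and follows essentially the same argument as the paper: apply \cref{prop:Poincare_duality_descent} to the pullback square to see that one projection is a Poincar\'e duality map, then use \cref{prop:Poincare_duality_composition} to conclude for the composite $\obj{X}\times\obj{Y}\to\obj{X}\to\obj{\ast}$ (the paper uses the other factorisation through $\obj{Y}$, which is symmetric). One small slip: the dualising object of $\pr_{\obj{X}}$ obtained from descent is $\pr_{\obj{Y}}^*D_{\obj{Y}}$, not $\pr_{\obj{X}}^*D_{\obj{Y}}$ as you wrote once midway (you use the correct expression in the final formula).
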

\begin{proof}
    As $\obj{X}$ is $\udl{\sC}$--Poincar\'e, it follows from \cref{prop:Poincare_duality_descent} that the map $\pr_Y \colon \udl{X} \times \udl{Y} \to \udl{Y}$ is a $\udl{\sC}$--Poincar\'e map. Since $\obj{Y}$ is $\udl{\sC}$--Poincar\'e, \cref{prop:Poincare_duality_composition} implies that the composite $ \obj{X} \times \obj{Y} \to \obj{Y} \to *$
    is a $\udl{\sC}$--Poincar\'e map showing that $\obj{X} \times \obj{Y}$ is $\udl{\sC}$--Poincar\'e.
    \cref{prop:Poincare_duality_descent,prop:Poincare_duality_composition} then give the identifications $D_{X \times Y} \simeq \pr_Y^* D_Y \otimes D_{\pr_Y} \simeq \pr_Y^* D_Y \otimes \pr_X^* D_X$.
\end{proof}

\begin{lem}\label{lem:pd_map_infinite_coproducts}
    Let $\udl{\category{C}} \in \calg(\presentable^L_{\category{B}})$ be a symmetric monoidal $\category{B}$--category and $\nolinebreak{(f_i \colon \obj{X}_i \to \obj{Y}_i)_{i \in I}}$ be a collection of maps in $\category{B}$.
    Then the map $f= \coprod_i f_i \colon \coprod_i \obj{X}_i \to \coprod_i \obj{Y}_i$ is $\udl{\category{C}}$-twisted ambidextrous (or $\udl{\category{C}}$--Poincar\'e duality) if and only if for all $i \in I$ the map $f_i$ is $\udl{\category{C}}$-twisted ambidextrous (or $\udl{\category{C}}$--Poincar\'e duality). If this is so, then under the identification $\sC(\coprod_iX_i)\simeq \prod_i\sC(X_i)$, we have an equivalence $D_f \simeq (D_{f_i})_i$.
\end{lem}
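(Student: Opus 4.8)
The plan is to reduce the statement to a fact about product topoi, where everything in sight decomposes coordinatewise. First I would recall that coproducts in a topos are disjoint and universal, so that pullback along the coproduct inclusions $\iota_i \colon \udl{Y}_i \hookrightarrow \coprod_j \udl{Y}_j$ assembles into an equivalence of topoi $\B_{/\coprod_j \udl{Y}_j} \xrightarrow{\ \simeq\ } \prod_i \B_{/\udl{Y}_i}$; under it the object $f = \coprod_i f_i$ of $\B_{/\coprod_j \udl{Y}_j}$ goes to the tuple $(f_i)_i$, and since $\iota_i^*\pi_{\coprod_j \udl{Y}_j}^* \simeq \pi_{\udl{Y}_i}^*$, the coefficient category $\pi_{\coprod_j \udl{Y}_j}^*\udl{\sC}$ — which is presentably symmetric monoidal by \cref{lem:lax_monoidal_base_change_etale_morphism}, as $\pi_{\coprod_j \udl{Y}_j}$ is étale — goes to $(\pi_{\udl{Y}_i}^*\udl{\sC})_i$. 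By the very definitions in \cref{defn:twisted_ambidex_maps} and \cref{def:pd_map}, ``$f$ is $\udl{\sC}$-twisted ambidextrous (resp.\ $\udl{\sC}$--Poincar\'e duality)'' means that $f \in \B_{/\coprod_j \udl{Y}_j}$ is $\pi_{\coprod_j \udl{Y}_j}^*\udl{\sC}$-twisted ambidextrous (resp.\ --Poincar\'e), and likewise for each $f_i$ over $\B_{/\udl{Y}_i}$. So it suffices to prove: for topoi $(\B_i)_i$ and presentably symmetric monoidal $\udl{\sC}_i \in \calg(\presentable^L_{\B_i})$, writing $\udl{\sC} \coloneqq (\udl{\sC}_i)_i \in \calg(\presentable^L_{\prod_i \B_i})$, an object $(Z_i)_i$ of $\prod_i \B_i$ is $\udl{\sC}$-twisted ambidextrous (resp.\ --Poincar\'e) if and only if each $Z_i$ is $\udl{\sC}_i$-twisted ambidextrous (resp.\ --Poincar\'e), in which case $D_{(Z_i)_i} \simeq (D_{Z_i})_i$.

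For this I would use that a sheaf of (presentable) categories on $\prod_i \B_i$ is the same datum as a family of such on the $\B_i$, and that under this identification the parametrised functor categories $\udl{\func}(\udl{Z},\udl{\sC})$, the adjoint triples $(\pi_{\udl{Z}})_! \dashv \pi_{\udl{Z}}^* \dashv (\pi_{\udl{Z}})_*$, the projection-formula transformations of \cref{terminology:projection_formula}, and the Picard space $\udl{\Pic}(\udl{\sC})$ (a right adjoint, hence product-preserving) are all computed coordinatewise. It then follows that a $\udl{\sC}$-Spivak datum $(\xi,c)$ for $(Z_i)_i$ is exactly a family of $\udl{\sC}_i$-Spivak data $(\xi_i,c_i)$ for the $Z_i$, and that the capping transformation of \cref{cons:capping_map} attached to $(\xi,c)$ has $i$-th coordinate the one attached to $(\xi_i,c_i)$; since a morphism in a product $\infty$-category is an equivalence precisely when each coordinate is, $(\xi,c)$ is twisted ambidextrous iff every $(\xi_i,c_i)$ is, and the coordinatewise decomposition of $\udl{\Pic}$ upgrades this to the Poincar\'e statement. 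The formula $D_{(Z_i)_i} \simeq (D_{Z_i})_i$ then follows from uniqueness of the twisted ambidextrous Spivak datum (\cref{prop:twisted_ambidextrous_presentable_case}). Translating back, $D_f \in \sC(\coprod_j\udl{X}_j)\simeq \prod_j \sC(\udl{X}_j)$ corresponds to $(D_{f_i})_i$; alternatively one may read this off from \cref{prop:Poincare_duality_descent} applied to the pullback squares
\begin{center}
\begin{tikzcd}
\udl{X}_i \ar[r, "\iota_i'"] \ar[d, "f_i"'] & \coprod_j \udl{X}_j \ar[d, "f"] \\
\udl{Y}_i \ar[r, "\iota_i"'] & \coprod_j \udl{Y}_j
\end{tikzcd}
\end{center}
which give $D_{f_i} \simeq (\iota_i')^*D_f$, with $(\iota_i')^*$ the $i$-th projection $\prod_j\sC(\udl{X}_j)\to \sC(\udl{X}_i)$.

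No step here is deep: the hard part is purely the bookkeeping of the first two paragraphs, namely verifying that the equivalence $\B_{/\coprod_i\udl{Y}_i}\simeq \prod_i\B_{/\udl{Y}_i}$ is compatible with the formation of $\pi^*$, parametrised functor categories, parametrised (co)limits, the projection formula, and $\udl{\Pic}$. Each such compatibility is an instance of the general principle that sheaves on a disjoint union of topoi form a product and that the parametrised operations involved are evaluated objectwise on the base, so they respect these product decompositions; but making this fully precise would take some care, so that is where I would expect to spend most of the effort.
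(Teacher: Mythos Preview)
Your argument is correct and takes a genuinely different route from the paper's. The paper splits the two implications: the ``only if'' direction is deduced from the descent result \cref{prop:Poincare_duality_descent} (each $f_i$ is a pullback of $f$ along $\udl{Y}_i \hookrightarrow \coprod_j \udl{Y}_j$), while the ``if'' direction for twisted ambidexterity is simply cited from \cite[Proposition~3.13(3)]{Cnossen2023}; the Poincar\'e upgrade then follows from the identification $D_f = (D_{f_i})_i$ under $\sC(\coprod_i \udl{X}_i) \simeq \prod_i \sC(\udl{X}_i)$. You instead pass once and for all through the equivalence $\B_{/\coprod_i \udl{Y}_i} \simeq \prod_i \B_{/\udl{Y}_i}$ and argue that Spivak data, the capping transformation of \cref{cons:capping_map}, and $\udl{\Pic}$ all decompose coordinatewise, obtaining both directions simultaneously.

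Your approach is more self-contained---it avoids the external citation to Cnossen---but, as you yourself flag, the price is the bookkeeping of verifying that parametrised functor categories, the adjoint triple $r_! \dashv r^* \dashv r_*$, the projection formula, and Picard are all compatible with the product decomposition. None of this is hard, but making it precise in the Martini--Wolf framework would be longer than the paper's three-line proof. Your closing remark that $D_{f_i} \simeq (\iota_i')^* D_f$ can alternatively be read off from \cref{prop:Poincare_duality_descent} is in fact exactly how the paper obtains the dualising sheaf identification, so on that point the two arguments converge.
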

\begin{proof}
    The ``only if''-direction follows from \cref{prop:Poincare_duality_descent}.
    The ``if''-direction in the twisted ambidexterity case is \cite[Proposition 3.13(3)]{Cnossen2023}.
    If in addition all $f_i$ are Poincar\'e duality maps, then $\coprod_i f_i$ is a Poincar\'e duality map as $D_f = (D_{f_i})_i$ under the equivalence $\category{C}(\coprod_i \obj{X}_i) = \prod_i \category{C}(\obj{X}_i)$. 
\end{proof}

\begin{cor}\label{cor:poincare_duality_of_disjoint_unions}
    Let $\udl{\category{C}} \in \calg(\presentable^L_{\category{B}})$ be a symmetric monoidal $\category{B}$--category which is semiadditive and $\{\underline{X}_i\}_i$ a finite collection of objects in $\B$.  Then $\coprod_i\underline{X}_i$ is $\underline{\sC}$--Poincar\'{e} duality if and only if each $\underline{X}_i$ is $\underline{\sC}$--Poincar\'{e} duality. In this case,  under the identification $\sC(\coprod_iX_i)\simeq \prod_i\sC(X_i)$, we have an equivalence $D_{\coprod_i\underline{X}_i} \simeq (D_{\underline{X}_i})_i$.
\end{cor}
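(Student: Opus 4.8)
The plan is to factor the structure map $p\colon\coprod_i\udl{X}_i\to\udl{\ast}$ as $\nabla\circ f$, where $f=\coprod_if_i\colon\coprod_i\udl{X}_i\to\coprod_i\udl{\ast}$ is the coproduct of the structure maps $f_i\colon\udl{X}_i\to\udl{\ast}$ and $\nabla\colon\coprod_i\udl{\ast}\to\udl{\ast}$ is the finite fold map, and to play this factorisation off against the summand inclusions $\iota_k\colon\udl{X}_k\hookrightarrow\coprod_i\udl{X}_i$; everything then reduces to \cref{prop:Poincare_duality_composition}, \cref{prop:Poincare_duality_descent} and \cref{lem:pd_map_infinite_coproducts}. (The case of an empty index set is trivial, both sides holding by \cref{example:examples_of_ambidex_point_semiadditive}.)

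First I would collect the two Poincar\'e maps that make the argument run. Since $\udl{\sC}$ is semiadditive and the index set is finite, \cref{example:examples_of_ambidex_point_semiadditive}~(3) makes $\nabla$ a $\udl{\sC}$--Poincar\'e duality map with trivial (hence invertible) dualising sheaf. For the summand inclusions, note first that the $k$-th point inclusion $\iota_k\colon\udl{\ast}\to\coprod_i\udl{\ast}$ is itself a coproduct of Poincar\'e maps: writing $\udl{\ast}\simeq\coprod_i\udl{Y}_i$ with $\udl{Y}_k=\udl{\ast}$ and $\udl{Y}_i=\udl{\varnothing}$ for $i\neq k$, this inclusion is $\coprod_i(\udl{Y}_i\to\udl{\ast})$, whose $k$-th factor is $\mathrm{id}_{\udl{\ast}}$ and whose other factors are $\udl{\varnothing}\to\udl{\ast}$ (all $\udl{\sC}$--Poincar\'e by \cref{example:examples_of_ambidex_point_semiadditive}~(1),(2), using that a semiadditive category is pointed), so $\iota_k$ is a $\udl{\sC}$--Poincar\'e duality map by \cref{lem:pd_map_infinite_coproducts}. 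Since coproducts in a topos are disjoint and universal, $\udl{X}_k$ is the pullback of $f$ along $\iota_k\colon\udl{\ast}\to\coprod_i\udl{\ast}$, with the resulting map $\udl{X}_k\to\coprod_i\udl{X}_i$ being the summand inclusion; applying \cref{prop:Poincare_duality_descent} to this pullback square therefore shows that $\iota_k\colon\udl{X}_k\hookrightarrow\coprod_i\udl{X}_i$ is a $\udl{\sC}$--Poincar\'e duality map.

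Both implications are then formal consequences of the composition formula. If each $\udl{X}_i$ is $\udl{\sC}$--Poincar\'e, then $f$ is a $\udl{\sC}$--Poincar\'e duality map by \cref{lem:pd_map_infinite_coproducts}; as $f$ and $\nabla$ are twisted ambidextrous, \cref{prop:Poincare_duality_composition}~(1) makes $p=\nabla\circ f$ twisted ambidextrous, and \cref{prop:Poincare_duality_composition}~(2), applied with the Poincar\'e map $\nabla$, upgrades ``$f$ is Poincar\'e'' to ``$p$ is Poincar\'e'', i.e.\ $\coprod_i\udl{X}_i$ is $\udl{\sC}$--Poincar\'e. Conversely, if $\coprod_i\udl{X}_i$ is $\udl{\sC}$--Poincar\'e then $p$ is a Poincar\'e duality map, and applying \cref{prop:Poincare_duality_composition} to $f_k=p\circ\iota_k$ --- part (1) to see that $f_k$ is twisted ambidextrous, then part (2) with the Poincar\'e map $p$ to promote ``$\iota_k$ is Poincar\'e'' to ``$f_k$ is Poincar\'e'' --- shows each $\udl{X}_k$ is $\udl{\sC}$--Poincar\'e. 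For the dualising sheaf, \cref{cons:composition_of_Spivak_data} applied to $p=\nabla\circ f$ gives $D_{\coprod_i\udl{X}_i}\simeq\xi_f\otimes f^*\xi_\nabla\simeq D_f\otimes\unit\simeq D_f$, which under $\sC(\coprod_iX_i)\simeq\prod_i\sC(X_i)$ equals $(D_{X_i})_i$ by the dualising-sheaf statement of \cref{lem:pd_map_infinite_coproducts}; uniqueness of twisted ambidextrous Spivak data (\cref{prop:twisted_ambidextrous_presentable_case}) identifies this with $D_{\coprod_i\udl{X}_i}$.

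The only point requiring a little care --- and it is purely bookkeeping --- is the identification of $\udl{X}_k$ with the pullback of $f$ along the point inclusion $\iota_k$, together with the check that the induced map into $\coprod_i\udl{X}_i$ is exactly the summand inclusion; this is where disjointness and universality of coproducts in $\B$ get used. Beyond that, the argument is a mechanical assembly of the composition and descent results already established, so I do not expect any genuine obstacle.
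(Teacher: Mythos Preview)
Your proof is correct and follows essentially the same strategy as the paper: factor $p=\nabla\circ f$, use semiadditivity to make $\nabla$ Poincar\'e, use \cref{lem:pd_map_infinite_coproducts} for $f$, and handle the converse via the summand inclusions together with \cref{prop:Poincare_duality_composition}. The only cosmetic difference is that the paper shows $\iota_k\colon\udl{X}_k\hookrightarrow\coprod_i\udl{X}_i$ is Poincar\'e directly by writing it as $\coprod_i(\udl{Z}_i\to\udl{X}_i)$ with $\udl{Z}_k=\udl{X}_k$ and $\udl{Z}_i=\udl{\varnothing}$ otherwise, whereas you first do this for $\udl{\ast}\hookrightarrow\coprod_i\udl{\ast}$ and then pull back via \cref{prop:Poincare_duality_descent}; either route works.
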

\begin{proof}
    Suppose $\coprod_i\underline{X}_i$ is $\underline{\sC}$--Poincar\'{e} duality. By \cref{example:examples_of_ambidex_point_semiadditive} (1) and \cref{lem:pd_map_infinite_coproducts}, we see that the inclusion $\underline{X}_j\hookrightarrow\coprod_i\underline{X}_i$ is Poincar\'{e} duality for each $j$. An immediate application of \cref{prop:Poincare_duality_composition} using the triple of maps $\underline{X}_j\hookrightarrow\coprod_i\underline{X}_i, \coprod_i\underline{X}_i\rightarrow\underline{\ast}$, and $ \underline{X}_j\rightarrow \underline{\ast}$ then shows that $\underline{X}_j$ is also Poincar\'{e} duality. Next, suppose each $\underline{X}_j$ is Poincar\'{e} duality. By semiadditivity and \cref{example:examples_of_ambidex_point_semiadditive} (2), the map $\nabla\colon\coprod_i\underline{\ast}\rightarrow \underline{\ast}$ is $\underline{\sC}$--Poincar\'{e} duality with dualising sheaf $(\unit_{\underline{\sC}})_i\in\prod_i\sC(\ast)$.  Thus, a simple combination of \cref{prop:Poincare_duality_composition} and \cref{lem:pd_map_infinite_coproducts} using the triple of maps $\sqcup r_i\colon \coprod_i\underline{X}_i\rightarrow \coprod_{i}\underline{\ast}$, $\nabla$, and $\nabla\circ (\sqcup_ir_i)$ yields the desired conclusion.
\end{proof}

\subsection{Degree theory}\label{sec:degree_theory}

In this subsection we introduce the notion of the degree of a map between Poincar\'e spaces (or more generally objects with Spivak data).
We use this to construct Umkehr squares which will  important for our geometric applications.
In \cref{sec:equivariant_degree} we will specialise this to the case of $G$--spaces for a finite group $G$ which generalises classical constructions of the equivariant degree.

As a motivation for the definition, recall that given a map $f \colon X \to Y$ between closed connected manifolds of the same dimension, one can assign to it a degree if $f$ is compatible with the orientation behaviour of $X$ and $Y$: given an identification $\mathcal{O}_X \simeq f^* \mathcal{O}_Y$ of orientation local systems, the degree is given by the image of $[X]$ under $f_* \colon H_n(X; \mathcal{O}_X) \to H_n(Y; \mathcal{O}_Y) \simeq \bbZ$.
In our setting, we will replace orientation local systems and the  fundamental classes $[X]$ above with the dualising sheaves and fundamental classes from \cref{defn:spivak_data}.

\subsubsection*{The definition}

\begin{cons}[(Co)homological functoriality]\label{cons:cohomological_functoriality}
    Consider a map $f \colon \obj{X} \to \obj{Y}$ in $\category{B}$ and a $\category{B}$--category $\udl{\category{C}}$ which admits $\udl{X}$- and $\udl{Y}$-shaped limits and colimits.
    We obtain transformations
    \[\beckChevalley^f_! \colon \uniqueMapX_! f^* \longrightarrow \uniqueMapY_!\quad\quad\quad\quad \beckChevalley^f_* \colon \uniqueMapY_* \longrightarrow \uniqueMapX_* f^*\]
    of functors $\underline{\sC}^{\underline{Y}}\rightarrow \underline{\sC}$ coming from the left and right Beck--Chevalley transformations, respectively, associated to the commuting triangle
    \begin{center}
        \begin{tikzcd}
            \underline{\sC}^{\underline{X}} & \underline{\sC}^{\underline{Y}} \lar["f^*"']\\
            \underline{\sC} \uar["\uniqueMapX^*"]\ar[ur, "\uniqueMapY^*"']
        \end{tikzcd}
    \end{center}
    We call $\beckChevalley^f_!$ the \textit{homological functoriality map} and $\beckChevalley^f_*$ the \textit{cohomological functoriality map}.
\end{cons}

\begin{defn}[Degree of a map]\label{def:parametrised_degree}
    Consider a map $f \colon \obj{X} \to \obj{Y}$ in $\category{B}$ and a symmetric $\category{B}$--category $\udl{\category{C}}$ which admits $\udl{X}$- and $\udl{Y}$-shaped limits and colimits and satisfies the $\udl{X}$- and $\udl{Y}$-projection formula.
    Suppose we are given Spivak data $(\xi_{\obj{X}},c_{\obj{X}})$ for $\obj{X}$ and $(\xi_{\obj{Y}},c_{\obj{Y}})$ for $\obj{Y}$.
    A \textit{$\udl{\category{C}}$-degree datum} for $f$ is an equivalence $\alpha \colon \xi_{\obj{X}} \xrightarrow{\simeq} f^* \xi_{\obj{Y}}$ in $\func_{\category{B}}(\udl{X}, \udl{\category{C}})$.
    We define the \textit{$\udl{\category{C}}$-degree} of $(f, \alpha)$ as the point $\degree_{\udl{\category{C}}}(f, \alpha) \in  \map(\unit_{\udl{\sC}}, \uniqueMapY_! \xi_{\obj{Y}})$ given by the composite
    \begin{equation*}
        \unit_{\udl{\sC}} \xrightarrow{c_{\obj{X}}} \uniqueMapX_! \xi_{\obj{X}} \xrightarrow{\alpha}
        \uniqueMapX_! f^* \xi_{\obj{Y}} \xrightarrow{\beckChevalley^f_!} 
        \uniqueMapY_! \xi_{\obj{Y}}.
    \end{equation*}
    We say that an equivalence $c_{\udl{Y}} \simeq \deg_{\udl{\category{C}}}(f, \alpha)$ exhibits $f$ as a map of \textit{$\udl{\category{C}}$-degree one}.
\end{defn}

\begin{rmk}
    Note that an equivalence $c_{\udl{Y}} \simeq \deg_{\udl{\category{C}}}(f, \alpha)$ is the same datum as a homotopy rendering the following diagram commutative
        \begin{equation*}
    \begin{tikzcd}
        \unit_{\udl{\sC}} \ar[r, "c_{\obj{X}}"] \ar[drr, "c_{\obj{Y}}", bend right=10]
        & \uniqueMapX_! \xi_{\obj{X}} \ar[r, "\uniqueMapX_!\alpha","\simeq"']
        & \uniqueMapX_! f^* \xi_{\obj{Y}} \ar[d, "\beckChevalley^f_!"] 
        \\
        &
        & \uniqueMapY_! \xi_{\obj{Y}}.
    \end{tikzcd}
    \end{equation*}
\end{rmk}

\begin{cons}\label{cons:monoid_structure_degree_space}
    If the Spivak datum $(\xi_{\obj{Y}},c_{\obj{Y}})$ is $\udl{\category{C}}$-twisted ambidextrous, then the equivalence $\ambi{c_{\obj{Y}}}{\xi_{\obj{Y}}}{\unit_{\udl{\sC}}} \colon \uniqueMapY_* \uniqueMapY^*\unit_{\udl{\sC}} \simeq \uniqueMapY_! \xi_{\obj{Y}}$ endows $\uniqueMapY_! \xi_{\obj{Y}}$ with the structure of a commutative algebra in $\udl{\category{C}}$.
    This gives $\map(\unit_{\udl{\sC}}, \uniqueMapY_! \xi_{\obj{Y}})$ the structure of a commutative monoid in $\spc$ with unit $c_{\obj{Y}}$.
    This explains the name ``degree one'' in the previous definition.
\end{cons}

\begin{example}
    Here are some well-known sources of degree data in the case $\category{B} = \spc$ with respect to a presentably symmetric monoidal coefficients $\category{C}$. Let $f \colon X \rightarrow Y$ be a map of connected Poincar\'e spaces of the same formal dimension $d$ (c.f. \cref{terminology:Poincare_dimension}). We consider situations when a degree datum exists for the map $f$ with the Poincar\'e Spivak data $(D_X,c_X)$ and $(D_Y,c_Y)$ for $X$ resp. $Y$.
    \begin{enumerate}[label=(\arabic*)]
        \item For $\category{C} = \module_{\mathbb{F}_2}$, a degree datum exists uniquely since $\Pic(\module_{\mathbb{F}_2})\simeq \mathbb{Z}\times B\mathrm{Aut}(\mathbb{F}_2)\simeq \mathbb{Z}\times \ast$ has contractible components.
        \item For $\category{C} = \module_{\bbZ}$, writing $w_1(-)$ for the first Stiefel--Whitney class of a space, a  degree datum exists if and only if $f^*w_1(Y)=w_1(X)\in H^1(X;\mathbb{F}_2)$.   On homotopy groups, the composite $X_! D_X \simeq X_! f^* D_Y \to Y_! D_Y$
        then identifies with
        \begin{equation*}
            f_* \colon H_{d+*}(X; \mathcal{O}_X) \to H_{d+*}(Y; \mathcal{O}_Y) 
        \end{equation*}
        where $\mathcal{O}_X$ and $\mathcal{O}_Y$ denote the orientation local systems.
        The degree defined above is then given by $f_*[X] \in H_d(Y; \mathcal{O}_Y) \simeq \pi_0(\map(\unit_{\module_{\bbZ}}, Y_! D_Y))$ and agrees with the classical definition of the degree.        
        \item In surgery theory, one is often provided with a \textit{normal map}, i.e. a commuting diagram
        \begin{center}
        \begin{tikzcd}
            X \arrow[rd] \arrow[rdd, bend right, "D_X"'] \arrow[rr, "f"] 
            &                          
            & Y \arrow[ldd, bend left, "D_Y"] \arrow[ld] 
            \\                     
            & BO \times \bbZ \arrow[d, "J"] 
            &                                     
            \\
            & \Pic(\spectra)
            &                                    
        \end{tikzcd} 
        \end{center}
        and of course restricting to the outer commutative triangle gives rise to a degree datum.
    \end{enumerate}
\end{example}

\subsubsection*{Homological Umkehr squares}

Classically, given a map $f \colon M \to N$ of degree one between closed oriented manifolds, one can construct a ``homological Umkehr map'' $f^! \colon H_*(N) \to H_*(M)$ going the ``wrong way'' using Poincar\'e duality. The following result is a generalisation of this.

\begin{lem}[Umkehr square]\label{lem:umkehr_map_degree_one}
    Consider a map $f \colon \obj{X} \to \obj{Y}$ in $\category{B}$ and a symmetric monoidal $\category{B}$--category $\udl{\category{C}}$ which admits $\udl{X}$- and $\udl{Y}$-shaped limits and colimits and satisfies the $\udl{X}$- and $\udl{Y}$-projection formula.
    Suppose that there is a degree datum $\alpha$ for $f$ which is of degree one.
    Then the diagram
    \begin{equation*}
    \begin{tikzcd}
        \uniqueMapY_* (\--) \ar[rr, "\beckChevalley^f_*"] \ar[d, "\ambi{c_{\obj{Y}}}{\xi_{\obj{Y}}}{(-)}"']
        & & \uniqueMapX_* f^* (\--) \ar[d, "\ambi{c_{\obj{X}}}{\xi_{\obj{X}}}{f^*(-)}"] 
        \\
        \uniqueMapY_!(\xi_{\obj{Y}} \otimes (\--)) 
        & \uniqueMapX_!(f^* \xi_{\obj{Y}} \otimes f^* (\--)) \ar[l, "\beckChevalley^f_!"]
        & \uniqueMapX_!(\xi_{\obj{X}} \otimes f^* (\--)) \ar[l, "\alpha"]
    \end{tikzcd}
    \end{equation*}
    commutes.
\end{lem}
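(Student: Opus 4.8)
\textbf{Proof proposal for \cref{lem:umkehr_map_degree_one}.}

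The plan is to decompose the target square into two pieces that we can handle separately: a ``naturality of capping'' square that holds for \emph{any} degree datum (not necessarily of degree one), and a triangle that expresses precisely what the degree-one hypothesis buys us. First I would unfold the definition of both capping maps $\ambi{c_{\obj{Y}}}{\xi_{\obj{Y}}}{(-)}$ and $\ambi{c_{\obj{X}}}{\xi_{\obj{X}}}{f^*(-)}$ via \cref{cons:capping_map}, so that each side of the square becomes a three-step composite involving $c_{\obj{Y}}\otimes-$ (resp.\ $c_{\obj{X}}\otimes-$), the projection formula equivalence $\projectionformula^{\obj{Y}}$ (resp.\ $\projectionformula^{\obj{X}}$), and the counit $\epsilon\colon \uniqueMapX^*\uniqueMapX_*\to\id$ (resp.\ for $\obj{Y}$). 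The diagram chase then splits: the $\projectionformula$- and $\epsilon$-parts assemble into a commuting square by naturality of the left Beck--Chevalley transformation $\beckChevalley^f_!$ and compatibility of $\beckChevalley^f_*$ with the counits (this is essentially \cite[Lem.\ 2.2.4]{CarmeliSchlankYanovski2022} Beck--Chevalley pasting applied to the triangle of categories $\udl{\sC}^{\udl{X}}\leftarrow\udl{\sC}^{\udl{Y}}\leftarrow\udl{\sC}$), and what is left over is the statement that the two ways of producing a map $\uniqueMapY_*(-)\to \uniqueMapY_!(\xi_{\obj{Y}}\otimes -)$ from the fundamental classes and the degree datum $\alpha$ agree.

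The key reduction is therefore to the commutativity of the outer triangle in \cref{def:parametrised_degree}: $c_{\obj{Y}}\simeq \beckChevalley^f_!\circ \uniqueMapX_!(\alpha)\circ c_{\obj{X}}$, which is exactly the degree-one hypothesis. Concretely, after tensoring with an arbitrary object and inserting the $\obj{Y}$-adjunction unit $\eta\colon \id\to\uniqueMapY_*\uniqueMapY^*$, the clockwise composite around the big square starts with $\eta$ followed by $\beckChevalley^f_*$; using the triangle identity for the $f^*\dashv$-adjunction data underlying $\beckChevalley^f_*$, this rewrites as $\eta$ for the $\obj{X}$-adjunction precomposed with $f^*$, which is precisely the shape that feeds into $\ambi{c_{\obj{X}}}{\xi_{\obj{X}}}{f^*(-)}$. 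So I would (i) reduce both capping transformations to their defining composites; (ii) interpolate $f^*\xi_{\obj{Y}}$ between $\xi_{\obj{X}}$ and its image, using $\alpha$, so that the bottom row of the target square factors through $\uniqueMapX_!(f^*\xi_{\obj{Y}}\otimes f^*(-))$ as drawn; (iii) check the two resulting sub-squares — one purely Beck--Chevalley naturality, one the degree-one triangle tensored up and transported along $\eta$ — commute; (iv) paste.

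The main obstacle I expect is bookkeeping the interaction between the \emph{left} Beck--Chevalley map $\beckChevalley^f_!$ and the \emph{right} one $\beckChevalley^f_*$: the square mixes homological and cohomological functoriality, and showing they are ``adjoint'' in the right sense requires care with which counits/units are being used and in which functor category the natural transformations live. This is morally the same phenomenon handled in \cref{prop:intertwining_principle_for_capping} (the ``intertwining'' of $\beckChevalley_!$ and $\beckChevalley_*$), and indeed I would try to either invoke that proposition directly — taking $U=\id$, $F=\xi_{\obj{Y}}\otimes-$ after identifying $\xi_{\obj{X}}\simeq f^*\xi_{\obj{Y}}$, and $r=f$ — or mimic its large commuting diagram. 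If the direct invocation does not quite fit the hypotheses (the roles of $\obj{X}$ and $\obj{Y}$ versus the source/target of $r$ need checking), then the fallback is to write out the analogue of that proof's big diagram with the degree datum $\alpha$ inserted, where every small square commutes by one of: functoriality of $\otimes$, naturality of $\projectionformula$, a triangle identity, or the degree-one hypothesis. The rest is a routine, if lengthy, pasting argument.
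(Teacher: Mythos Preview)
Your proposal is correct and lands on exactly the paper's argument: the proof there is precisely your ``fallback'' large diagram, with cells commuting by (i) the degree-one triangle (top-left), (ii) naturality of $\beckChevalley^f_!$, $\beckChevalley^f_*$, and $\projectionformula$, and (iii) a triangle identity relating $\epsilon_{\obj{X}}$ to $\epsilon_{\obj{Y}}$ via $\beckChevalley^f_*$ (bottom-right), followed by the small observation that $\ambi{\alpha c_{\obj{X}}}{f^*\xi_{\obj{Y}}}{(-)}$ and $\ambi{c_{\obj{X}}}{\xi_{\obj{X}}}{(-)}$ differ by $\alpha$. Your instinct that \cref{prop:intertwining_principle_for_capping} does not apply directly is right: that proposition transports a \emph{single} Spivak datum along a module functor $F$, whereas here there are two independent Spivak data for $\obj{X}$ and $\obj{Y}$ linked by the degree datum $\alpha$, so the shapes do not match and one must build the analogous diagram by hand.
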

\begin{proof}
    Consider the diagram
    \begin{equation*}
    \begin{tikzcd}
        & \uniqueMapY_*(\--) \ar[r, "\beckChevalley^f_*"] \ar[dl, "c_{\obj{Y}} \otimes \--"', blue] \ar[d, "\alpha c_{\obj{Y}} \otimes \--"]
        & \uniqueMapX_* f^*(\--) \ar[d, "\alpha c_{\obj{Y}} \otimes \--", red]
        \\
        \uniqueMapY_! \xi_{\obj{Y}} \otimes \uniqueMapY_*(\--)
        & \uniqueMapX_! f^* \xi_{\obj{Y}} \otimes \uniqueMapY_*(\--) \ar[l, "\beckChevalley^f_!\otimes\id"] \ar[r, "\id\otimes\beckChevalley^f_*"]
        & \uniqueMapX_! f^* \xi_{\obj{Y}} \otimes \uniqueMapX_* f^*(\--)
        \\
        \uniqueMapY_! (\xi_{\obj{Y}} \otimes \uniqueMapY^* \uniqueMapY_*(\--)) \ar[u, "\projectionformula^{\obj{Y}}_!", "\simeq"', blue] \ar[d, "\epsilon_{\obj{Y}}", blue]
        & \uniqueMapX_! (f^* \xi_{\obj{Y}} \otimes \uniqueMapX^* \uniqueMapY_*(\--)) \ar[u, "\projectionformula^{\obj{X}}_!", "\simeq"'] \ar[l, "\beckChevalley^f_!"] \ar[r, "\beckChevalley^f_*"] \ar[d, "\epsilon_{\obj{Y}}"]
        & \uniqueMapX_! (f^*\xi_{\obj{Y}} \otimes \uniqueMapX^* \uniqueMapX_*f^*(\--)) \ar[u, "\projectionformula^{\obj{X}}_!", "\simeq"', red] \ar[dl, "\epsilon_{\obj{X}}", red]
        \\
        \uniqueMapY_! (\xi_{\obj{Y}} \otimes (\--))
        & \uniqueMapX_! (f^* \xi_{\obj{Y}} \otimes f^* (\--)) \ar[l, "\beckChevalley^f_!"]
        &
    \end{tikzcd}
    \end{equation*}
    The degree one datum makes the top left triangle commute.
    The bottom right triangle commutes using the definition of the restriction map and the triangle identities.   The top right and bottom left squares commute by naturality of $\beckChevalley$.
    The middle two squares commute by naturality of
    By definition, the composite of the blue arrows is given by $\ambi{c_{\obj{Y}}}{\xi_{\obj{Y}}}{(-)}$ and the composite of the red arrows is given by $\ambi{\alpha c_{\obj{X}}}{f^*\xi_{\obj{Y}}}{(-)}$.
    To finish,  observe that the diagram
    \begin{equation*}
    \begin{tikzcd}
        \uniqueMapX_* (\--) \ar[d, "\ambi{\alpha c_{\obj{X}}}{f^*\xi_{\obj{Y}}}{(-)}"'] \ar[dr, "\ambi{c_{\obj{X}}}{\xi_{\obj{X}}}{(-)}"]
        & 
        \\
        \uniqueMapX_! (f^* \xi_{\obj{Y}} \otimes (\--))
        & \uniqueMapX_! (\xi_{\obj{X}} \otimes (\--)) \ar[l, "\alpha"]
    \end{tikzcd}
    \end{equation*}
    commutes.
\end{proof}

\subsubsection*{Basechange}

\begin{cons}\label{cons:Change_of_coefficients_degree_datum}
    Suppose that $F \colon \udl{\category{C}} \to \udl{\category{D}}$ is a symmetric monoidal colimit preserving functor of presentably symmetric monoidal $\category{B}$--categories.
    Consider a map $f \colon \obj{X} \to \obj{Y}$ in $\category{B}$ and consider a $\udl{\category{C}}$-degree datum $\alpha$ for $f$.
    We obtain a $\udl{\category{D}}$-degree datum $F(\alpha) \colon F\xi_{\obj{X}} \xrightarrow{\simeq} Ff^*\xi_{\obj{Y}}\simeq f^*F\xi_Y$ for $f$ and the Spivak data $F(\xi_{\obj{X}},c_{\obj{X}})$ and $F(\xi_{\obj{Y}},c_{\obj{Y}})$ from \cref{cons:basechanging_spivak_data}.
\end{cons}
\begin{lem}\label{lem:map_degree_spaces_symmetric_monoidal_functor}
    In the situation of \cref{cons:Change_of_coefficients_degree_datum}, the image of $\deg_{\udl{\category{C}}}(f, \alpha)$ under the map
    \begin{equation}\label{diag:degree_comparison}
        \map(\unit_{\udl{\sC}}, \uniqueMapY_! \xi_{\obj{Y}}) \xrightarrow{F} \map(\unit_{\udl{\D}}, \uniqueMapY_! F\xi_{\obj{Y}})
    \end{equation}
    is equivalent to $\deg_{\udl{\category{D}}}(f, F(\alpha))$.
    Furthermore, if $(\xi_{\obj{Y}},c_{\obj{Y}})$ is $\udl{\category{C}}$-twisted ambidextrous, then the map \cref{diag:degree_comparison} refines to a map of commutative monoids for the commutative monoid structures from \cref{cons:monoid_structure_degree_space} and we have a commutative diagram
    \begin{equation*}
    \begin{tikzcd}
        \map(\unit_{\udl{\sC}}, \uniqueMapY_! \xi_{\obj{Y}}) \ar[r, "F"] \ar[d, "c_Y", "\simeq"',leftarrow] 
        & \map(\unit_{\udl{\D}}, \uniqueMapY_! F\xi_{\obj{Y}}) \ar[d, "F(c_Y)", "\simeq"',leftarrow] \\
        \map(\unit_{\udl{\sC}}, \uniqueMapY_* \uniqueMapY^* \unit_{\udl{\sC}}) \ar[r, "F"]
        & \map(\unit_{\udl{\D}}, \uniqueMapY_* \uniqueMapY^* \unit_{\udl{\D}}).
    \end{tikzcd}
    \end{equation*}

\end{lem}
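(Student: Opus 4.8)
The plan is to deduce both assertions from the single observation that a symmetric monoidal $\category{B}$-colimit-preserving functor $F$ commutes, up to the canonical Beck--Chevalley equivalences, with every ingredient appearing in \cref{def:parametrised_degree} and \cref{cons:monoid_structure_degree_space}: it preserves the monoidal unit, the Kan extensions $\uniqueMapX_!$ and $\uniqueMapY_!$ and the Kan extension $f_!$ (being $\category{B}$-colimit preserving), the restrictions $f^*$, $\uniqueMapX^*$, $\uniqueMapY^*$, and hence — since $\beckChevalley^f_!$ is by construction the mate $\uniqueMapY_!(f_!f^*\to\id)$ of \cref{cons:cohomological_functoriality} — also the homological functoriality map $\beckChevalley^f_!$, by the functoriality of mates recalled in \cite[$\S2.2$]{CarmeliSchlankYanovski2022}. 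For the first statement I would write $\deg_{\udl{\category{C}}}(f,\alpha)$ as the composite $\unit_{\udl{\sC}}\xrightarrow{c_{\obj{X}}}\uniqueMapX_!\xi_{\obj{X}}\xrightarrow{\uniqueMapX_!\alpha}\uniqueMapX_! f^*\xi_{\obj{Y}}\xrightarrow{\beckChevalley^f_!}\uniqueMapY_!\xi_{\obj{Y}}$ and apply $F$ termwise; using $F\unit_{\udl{\sC}}\simeq\unit_{\udl{\D}}$, the fact that $Fc_{\obj{X}}$ is by definition (\cref{cons:basechanging_spivak_data}) the fundamental class of $F(\xi_{\obj{X}},c_{\obj{X}})$, and the compatibilities just listed, the image is visibly the composite $\unit_{\udl{\D}}\xrightarrow{Fc_{\obj{X}}}\uniqueMapX_! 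F\xi_{\obj{X}}\xrightarrow{\uniqueMapX_! F(\alpha)}\uniqueMapX_! f^* F\xi_{\obj{Y}}\xrightarrow{\beckChevalley^f_!}\uniqueMapY_! F\xi_{\obj{Y}}$ defining $\deg_{\udl{\category{D}}}(f,F(\alpha))$ for the degree datum $F(\alpha)$ of \cref{cons:Change_of_coefficients_degree_datum}.

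For the second statement I would first invoke \cref{thm:base_change_of_tw_amb_spivak_data} to see that $F(\xi_{\obj{Y}},c_{\obj{Y}})$ is $\udl{\category{D}}$-twisted ambidextrous, so that the commutative monoid structures of \cref{cons:monoid_structure_degree_space} are defined on both sides of \cref{diag:degree_comparison}. Recall that the source structure is transported along the capping equivalence $\ambi{c_{\obj{Y}}}{\xi_{\obj{Y}}}{\unit_{\udl{\sC}}}\colon\uniqueMapY_*\uniqueMapY^*\unit_{\udl{\sC}}\xrightarrow{\simeq}\uniqueMapY_!\xi_{\obj{Y}}$ from the tautological commutative monoid structure on $\map(\unit_{\udl{\sC}},\uniqueMapY_*\uniqueMapY^*\unit_{\udl{\sC}})$ carried by the commutative algebra $\uniqueMapY_*\uniqueMapY^*\unit_{\udl{\sC}}$ ($\uniqueMapY_*$ being lax symmetric monoidal as right adjoint to the symmetric monoidal $\uniqueMapY^*$), and similarly on the target. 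The key input is then \cref{prop:intertwining_principle_for_capping} in the guise of \cref{example:master_setting}~(a): it identifies $F$ applied to the capping equivalence for $(\xi_{\obj{Y}},c_{\obj{Y}})$ with the capping equivalence for $F(\xi_{\obj{Y}},c_{\obj{Y}})$, after transporting along the Beck--Chevalley equivalence $F\uniqueMapY_*\simeq\uniqueMapY_* F$ of \cref{cor:transferring_colimit_BC_to_limit_BC_by_ambidexterity} (valid since $\udl{Y}$ is $\udl{\category{C}}$-twisted ambidextrous) together with $F\uniqueMapY^*\unit_{\udl{\sC}}\simeq\uniqueMapY^*\unit_{\udl{\D}}$. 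I would then check that the resulting identification $F\uniqueMapY_*\uniqueMapY^*\unit_{\udl{\sC}}\simeq\uniqueMapY_*\uniqueMapY^*\unit_{\udl{\D}}$ is an equivalence of commutative algebras — all of its constituents are (lax) symmetric monoidal — so that $F$ induces a map of commutative monoids on the $\map(\unit,-)$-spaces; transporting through the two rows then upgrades \cref{diag:degree_comparison} to a monoid map, and the final commutative square of the statement is exactly that same square with the vertical capping equivalences (here abbreviated $c_{\obj{Y}}$, $F(c_{\obj{Y}})$) drawn upward.

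The two diagram chases are routine; the genuine obstacle lies in the \emph{multiplicative} bookkeeping of the second part, namely promoting the a priori merely object-level equivalences $F\uniqueMapY_*(-)\simeq\uniqueMapY_* F(-)$ and the capping equivalence to equivalences of commutative algebras, so that the induced maps of mapping spaces are maps of commutative monoids rather than of bare spaces. I expect this to follow once one notes that every comparison map in sight is the mate of a (lax) symmetric monoidal transformation, so the identification already lives in $\calg$; making that precise — and keeping track of which adjunctions and lax structures are being used where — is where the care is needed, whereas the first part is a formal consequence of the functoriality of Beck--Chevalley transformations under symmetric monoidal colimit-preserving functors.
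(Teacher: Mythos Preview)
Your proposal is correct and follows essentially the same route as the paper's proof: the first part is reduced to $F$ commuting with the homological functoriality map $\beckChevalley^f_!$ (since $F$ is $\category{B}$-colimit preserving), and the second part hinges on \cref{prop:intertwining_principle_for_capping} in the case of \cref{example:master_setting}~(a) to identify $F$ of the capping equivalence with the capping equivalence for $F(\xi_{\obj{Y}},c_{\obj{Y}})$. The paper's proof is somewhat terser than yours on the multiplicative bookkeeping you flag at the end---it simply records that the intertwining square yields $\uniqueMapY_! F\xi_{\obj{Y}}\simeq F\uniqueMapY_!\xi_{\obj{Y}}$ \emph{as commutative algebras} and leaves the verification that the constituent Beck--Chevalley identifications are lax monoidal implicit---so your more careful account of why the induced map on $\map(\unit,-)$ is a monoid map is a welcome elaboration rather than a deviation.
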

\begin{proof}
    Note that as $F$ is colimit preserving, it commutes with the homological functoriality constructed in \cref{cons:cohomological_functoriality}.
    Thus it sends  \[\deg_{\udl{\category{C}}}(f, \alpha)\colon \unit_{\udl{\sC}} \xrightarrow{c_{\obj{X}}} \uniqueMapX_! \xi_{\obj{X}} \xrightarrow[\simeq]{\alpha}
        \uniqueMapX_! f^* \xi_{\obj{Y}} \xrightarrow{\beckChevalley_!} 
        \uniqueMapY_! \xi_{\obj{Y}}\]  \[\text{to}  \hspace{5mm} \deg_{\udl{\category{D}}}(f, F(\alpha))\colon \unit_{\udl{\D}} \xrightarrow{F(c_{\obj{X}})} \uniqueMapX_! F\xi_{\obj{X}} \xrightarrow[\simeq]{\alpha}
        \uniqueMapX_! f^* F\xi_{\obj{Y}} \xrightarrow{\beckChevalley_!} 
        \uniqueMapY_! F\xi_{\obj{Y}}\] as desired. Next, suppose that $(\xi_{\obj{Y}},c_{\obj{Y}})$ is $\udl{\category{C}}$-twisted ambidextrous.
    By \cref{prop:intertwining_principle_for_capping} applied to \cref{example:master_setting} (1), $F$ sends the equivalence $\uniqueMapY_* \uniqueMapY^* \unit_{\udl{\sC}} \simeq \uniqueMapY_! \xi_{\obj{Y}}$ induced by $(\xi_{\obj{Y}},c_{\obj{Y}})$ 
    to the equivalence $\uniqueMapY_* \uniqueMapY^* \unit_{\udl{\D}} \simeq \uniqueMapY_! F(\xi_{\obj{Y}})$ 
    induced by $F(\xi_{\obj{Y}},c_{\obj{Y}})$.
    Thus $\uniqueMapY_! F\xi_{\obj{Y}}\simeq F\uniqueMapY_! \xi_{\obj{Y}}$ as commutative algebras in $\udl{\category{D}}$.
\end{proof}

\begin{cons}\label{cons:degree_one_C_equivalence}
    Let $f \colon \obj{X} \to \obj{Y}$ be a map in $\category{B}$ and consider a symmetric monoidal $\category{B}$--category $\udl{\category{C}}$ which admits $\udl{X}$-shaped limits and colimits and satisfies the $\udl{X}$-projection formula.
    Furthermore, assume that the map $f^* \colon \internalfunc{\category{B}}(\udl{Y}, \udl{\category{C}}) \to \internalfunc{\category{B}}(\udl{X}, \udl{\category{C}})$ is an equivalence.
    It canonically refines to a symmetric monoidal equivalence.
    Hence, by \cref{cons:cohomological_functoriality}, we obtain canonical equivalences $\beckChevalley^f_!\colon \uniqueMapX_!f^* \xrightarrow{\simeq} \uniqueMapY_!$ and $\beckChevalley^f_*\colon \uniqueMapY_* \xrightarrow{\simeq} \uniqueMapX_*f^*$.
    Thus, for any Spivak datum $(\xi_{\obj{X}},c_{\obj{X}})$ for $\obj{X}$ we obtain the Spivak datum
    \begin{equation*}
        \left( \unit_{\udl{\sC}} \xrightarrow{c_{\obj{X}}} \uniqueMapX_! \xi_{\obj{X}} \simeq \uniqueMapY_! (f^*)^{-1} \xi_{\obj{X}} \right)   
    \end{equation*}
    for $\obj{Y}$.
    It is twisted ambidextrous (or Poincar\'e) if and only if $(\xi_{\obj{X}},c_{\obj{X}})$ is.
    Furthermore, note that with respect to these Spivak data, the map $f$ is clearly of degree one.
\end{cons}

\begin{lem}\label{lem:degree_one_base_change}
    Consider a geometric morphism $f^* \colon \category{B} \rightleftharpoons \category{B}' \cocolon f_*$ of topoi and $\udl{\category{C}}$ be a symmetric monoidal $\category{B}'$-category.
    Suppose that we are given a map $g \colon \obj{X} \to \obj{Y}$ in $\udl{\category{B}}$ together with $\udl{\category{C}}$-Spivak data for $f^*\obj{X}$ and $f^*\obj{Y}$.
    Then a $\category{C}$-degree (one) datum for $f^* g \colon f^* \obj{X} \to f^* \obj{Y}$ is equivalent to a $f_*\category{C}$-degree (one) datum for $g$, where we endow $\obj{X}$ and $\obj{Y}$ with the $f_* \udl{\category{C}}$-Spivak data from \cref{cons:pushing_forward_spivak_data_geometric_morphisms}.
\end{lem}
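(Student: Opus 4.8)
\textbf{Proof plan for \cref{lem:degree_one_base_change}.}
The plan is to trace through the three pieces of data involved in a degree datum and match them across the geometric morphism, using the identifications already established in the excerpt. Recall that a $\udl{\category{C}}$-degree datum for $f^*g$ consists of an equivalence $\alpha\colon \xi_{f^*\obj{X}}\xrightarrow{\simeq} (f^*g)^*\xi_{f^*\obj{Y}}$ in $\func_{\category{B}'}(f^*\obj{X},\udl{\category{C}})$, and a degree-\emph{one} datum is moreover an equivalence $c_{f^*\obj{Y}}\simeq\deg_{\udl{\category{C}}}(f^*g,\alpha)$ in $\map(\unit_{\udl{\category{C}}},(f^*\uniqueMapY)_!\xi_{f^*\obj{Y}})$. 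On the other side, a $f_*\udl{\category{C}}$-degree datum for $g$ is an equivalence $\beta\colon \xi_{\obj{X}}\xrightarrow{\simeq} g^*\xi_{\obj{Y}}$ in $\func_{\category{B}}(\obj{X},f_*\udl{\category{C}})$ where $(\xi_{\obj{X}},c_{\obj{X}})=f_*(\xi_{f^*\obj{X}},c_{f^*\obj{X}})$ and similarly for $\obj{Y}$, together with a degree-one witness $c_{\obj{Y}}\simeq\deg_{f_*\udl{\category{C}}}(g,\beta)$.

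First I would establish the correspondence between the $\alpha$'s and the $\beta$'s. By the symmetric monoidal equivalence $f_*\internalfunc{\category{B}'}(f^*\obj{X},\udl{\category{C}})\simeq\internalfunc{\category{B}}(\obj{X},f_*\udl{\category{C}})$ of \cref{lem:pushforwar_fun_manoeuvre}, taking global sections gives an equivalence $\func_{\category{B}'}(f^*\obj{X},\udl{\category{C}})\simeq\func_{\category{B}}(\obj{X},f_*\udl{\category{C}})$, and similarly for $\obj{Y}$; moreover these are compatible with the restriction functors $(f^*g)^*$ and $g^*$ (the latter defined via $f_*$ of the former, by naturality of \cref{lem:pushforwar_fun_manoeuvre} in the object variable). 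Under this equivalence $\xi_{f^*\obj{X}}\leftrightarrow\xi_{\obj{X}}$ and $(f^*g)^*\xi_{f^*\obj{Y}}\leftrightarrow g^*\xi_{\obj{Y}}$ by the very definition of $f_*$-Spivak data in \cref{cons:pushing_forward_spivak_data_geometric_morphisms}, so equivalences $\alpha$ correspond bijectively (on $\pi_0$, and compatibly on higher homotopy) to equivalences $\beta$.

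Next I would show $\deg_{\udl{\category{C}}}(f^*g,\alpha)$ corresponds to $\deg_{f_*\udl{\category{C}}}(g,\beta)$ under the identification $\map(\unit_{\udl{\category{C}}},(f^*\uniqueMapY)_!\xi_{f^*\obj{Y}})\simeq\map(\unit_{f_*\udl{\category{C}}},\uniqueMapY_!\xi_{\obj{Y}})$ coming from $\Gamma_{\category{B}}f_*\simeq\Gamma_{\category{B}'}$ (\cref{example:constant_and_global_sections}) together with the adjunction identifications in \cref{diag:base_change_twisted_ambidexterity_geometric_morphism}. The degree is the composite $\unit\xrightarrow{c_{\obj{X}}}\uniqueMapX_!\xi_{\obj{X}}\xrightarrow{\alpha}\uniqueMapX_!g^*\xi_{\obj{Y}}\xrightarrow{\beckChevalley^g_!}\uniqueMapY_!\xi_{\obj{Y}}$; one checks termwise that $f_*$ carries the $f^*g$-version to the $g$-version. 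The only point requiring care is that the homological functoriality map $\beckChevalley^g_!$ is carried correctly: this follows because $f_*$ preserves the relevant adjunctions (by \cite[Cor. 3.1.9]{MartiniWolf2024}) and hence preserves left Beck--Chevalley transformations built from them, exactly as in the proof of \cref{thm:omnibus_geometric_pushforward_of_capping}(a). Finally, the degree-one witness is an equivalence in one of these mapping spaces, so it transports across the identification verbatim.

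\emph{Main obstacle.} The essential content is purely bookkeeping; the one step that demands genuine attention is verifying that the homological functoriality transformation $\beckChevalley^g_!$ and its image under $f_*$ fit into the chain of identifications — i.e.\ that the square relating $f_*\big[(f^*\uniqueMapX)_!(f^*g)^*\big]$ and $\uniqueMapX_!g^*$ (and likewise at $\obj{Y}$) commutes compatibly with the adjunction identifications of \cref{lem:parametrised_colimits_base_change}. This is the analogue for the $g$-functoriality triangle of \cref{cons:cohomological_functoriality} of what was done in \cref{thm:omnibus_geometric_pushforward_of_capping}(a) for the capping map, and I would simply invoke the same argument: all the adjunction units and counits in sight are identified by \cref{diag:base_change_twisted_ambidexterity_geometric_morphism}, so the Beck--Chevalley transformations assembled from them are too.
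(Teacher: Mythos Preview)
Your proposal is correct and follows essentially the same approach as the paper, which simply cites \cref{lem:pushforwar_fun_manoeuvre} for the equivalence of degree data and \cref{lem:parametrised_colimits_base_change} for the degree-one statement. You have merely unpacked these two invocations in detail, including the bookkeeping for the Beck--Chevalley transformation that the paper leaves implicit.
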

\begin{proof}
    The equivalence of degree data follows from \cref{lem:pushforwar_fun_manoeuvre}.
    The statement about degree one data being equivalent follows from \cref{lem:parametrised_colimits_base_change}.
\end{proof}

\section{Equivariant Poincar\'{e} duality: elements}
\label{section:equivariant_PD_elements}

In this section we will apply the abstract theory of parametrised Poincar\'e duality developed in \cref{section:parametrised_PD} to the topos $\spc_G$ of $G$--spaces for a compact Lie group $G$ and use this as our definition of equivariant Poincar\'e duality spaces.
We begin in \cref{subsection:setting_the_stage} by explaining the definition in this special case in more detail, and then come to one of the key components of the theory in \cref{subsection:fixed_points_methods}, namely fixed points methods. After that, in \cref{subsection:construction_principles} we study how Poincar\'e duality interacts with various kinds of equivariant and homotopical operations such as restrictions, inflations, (co)inductions, fibre sequences, and quotients, and we then give natural examples of $G$--Poincar\'{e} spaces in \cref{sec:examples_G_PD}. Lastly, we will round off this section  by explaining some geometrically meaningful ramifications of the theory of fundamental classes in \cref{subsection:gluing_classes,sec:equivariant_degree}.

The reader who is not too familiar with the abstract categorical language can in most situations safely replace $G$ by a finite group and a presentably symmetric monoidal $G$--category $\udl{\category{C}}$ by the $G$--category $\myuline{\spectra}$ of genuine $G$--spectra or  even $\udl{\module}_{\udl{A}(G)}(\myuline{\spectra}) \simeq D(\udl{\mackey}_G(\abelianGroups))$ (c.f. \cite[$\S5.2$] {greenlees-shipley} or \cite[$\S5$]{patchkoria_derived_mackey}  for this equivalence), the derived category of $G$-Mackey functors with values in abelian groups.
As a sanity check for constructions not involving a change of groups, it might also be helpful to first read the statements for $G = 1$.

\subsection{Setting the stage}\label{subsection:setting_the_stage}

We specialise the definitions in \cref{subsection:parametrised_spivak_data,subsection:parametrised_twisteed_ambidex} from the abstract parametrised setting to the equivariant situation for a compact Lie group $G$. After giving the formal definitions, we will provide more intuition for them by  unraveling what these notions mean in \cref{rmk:unravelling_the_definition}.

\begin{defn}
    Let $\udl{X}\in\spc_G$ and $\udl{\sC}$ a symmetric monoidal $G$--category admitting $\udl{X}$--shaped colimits. A $\udl{\sC}$\textit{--Spivak datum} for $\udl{X}$ is a pair $(\xi,c)$ where $\xi\in {\func}_G(\udl{X},\udl{\sC})$ is called the dualising sheaf and  $c\colon \unit_{\udl{\sC}}\rightarrow\uniqueMapX_!\xi$ is a morphism in $\udl{\sC}$  called the fundamental class.
\end{defn}

Now let $\udlcatC$ be a symmetric monoidal $G$--category and $\udl{X} \in \spc_G$ a $G$--space. Suppose that $\udlcatC$ admits $\udl{X}$--shaped limits and colimits and satisfies the $\udl{X}$--projection formula (c.f. \cref{terminology:projection_formula}). For example, if either: (a) the $G$--category $\udl{\sC}$ were presentably symmetric monoidal, i.e. an object in $\calg(\presentable_G^L)$, or (b) if $G$ were a finite group, $\udl{X}\in\spc_G$ were compact, and $\udl{\sC}$ were a small $G$--stably symmetric monoidal category, i.e. an object in $\calg(\cat\exact_G)$, then these conditions are satisfied. Under these conditions, given a $\udl{\sC}$--Spivak datum, \cref{cons:capping_map} provides a a morphism in $\udl{\func}(\udlcatC^{\udl{X}},\udlcatC)$ 
\begin{equation}
    \label{eq:capping_map_equivariant_case}
    \ambi{c}{\xi}{-} \colon \uniqueMapX_*(-) \longrightarrow \uniqueMapX_!(- \otimes \xi)
\end{equation}
called the capping transformation. We refer the  reader to the preamble of \cref{section:parametrised_PD} for the motivation for these notations.

\begin{defn}
    A $\udl{\sC}$--Spivak datum $(\xi,c)$ for $\udl{X}$ is \textit{twisted ambidextrous} if the capping map \cref{eq:capping_map_equivariant_case} is an equivalence. It is \textit{Poincar\'e} if additionally, $\xi$ takes values in the subcategory $\udl{\Pic}(\udlcatC)$.
\end{defn}

If we take a presentably symmetric monoidal $G$--category $\udlcatC \in \calg(\presentable^L_G)$ as coefficient system, then the situation again simplifies a little, for example because a twisted ambidextrous Spivak datum is unique, if it exists, by \cref{prop:twisted_ambidextrous_presentable_case}.

\begin{defn}
    If $\udl{X}$ is a $G$--space and $\udlcatC$ is a presentably symmetric monoidal $G$--category, we say that $\udl{X}$ is \textit{$\udlcatC$--twisted ambidextrous} if it admits a twisted ambidextrous $\udl{\sC}$--Spivak datum $(D_{\udl{X}},c)$. Furthermore, $\udl{X}$ is \textit{$\udlcatC$--Poincar\'e} if additionally $D_{\udl{X}}$ takes values in $\udl{\Pic}(\udlcatC)$.
\end{defn}

\begin{terminology}
    In the special case where $\udlcatC = \myuline{\spectra}$, we just say that $\udl{X}$ is $G$--twisted ambidextrous or $G$--Poincar\'e.
\end{terminology}

Understanding the case of $\myuline{\spectra}$--Poincar\'{e} duality is our main motivation for this article. Because of its importance, we give a few explanations about this particular space.

\begin{rmk}[Unraveling the definition of $\myuline{\spectra}$-Poincar\'e duality]\label{rmk:unravelling_the_definition}
    To set up a good formalism, we needed to work in a generality that runs the risk of seeming overly abstract. We  stress that the task of checking if a space $\udl{X}$ is $G$--Poincar\'e  closely resembles classical Poincar\'e duality.

    First, one has to find the correct analog of a local system with respect to which $\udl{X}$ is supposed to satisfy Poincar\'e duality. We required a $\xi \in \func_G(\udl{X},\myuline{\spectra})$ that lands in $\udl{\Pic}(\myuline{\spectra})$, which  unravels to providing for each closed subgroup $H \leq G$ a local system of invertible $H$--spectra $\xi^H \colon X^H \rightarrow \Pic(\spectra_H)$
    together with compatibilities that amount to providing for each map $G/K \rightarrow G/H$ a homotopy in the diagram
    \begin{center}
        \begin{tikzcd}
            X^K \ar[r, "\xi^K"] \ar[d, "\res^K_H"'] &  \ar[d, "\res^K_H"] \Pic(\spectra_K)\\
            X^H \rar["\xi^H"]& \Pic(\spectra_H) 
        \end{tikzcd}
    \end{center}
    plus higher coherences between these homotopies. Additionally, we required a fundamental class $c \colon \sphere_G \rightarrow \uniqueMapX_!(\xi)$
    which the reader should think of as an equivariant homology class of $\udl{X}$ with coefficients in the local system $\xi$. The capping map $\ambi{c}{\xi}{-} \colon \uniqueMapX_*(-) \rightarrow \uniqueMapX_!(- \otimes \xi)$  should then be thought of as the cap product with the homology class $c$. Recall from the preamble to \cref{section:parametrised_PD} that classical Poincar\'e duality is the statement that capping with a certain ``fundamental class" induces an isomorphism between cohomology and homology, and what we ask here is exactly the same condition.
\end{rmk}

In the presentable setting, we are in the pleasant situation where we can identify a large class of twisted ambidextrous objects.

\begin{prop}\label{prop:cpt_twisted_ambidextrous}
    Every compact $G$--space $\obj{X}$ is $\myuline{\spectra}$--twisted ambidextrous. Consequently, every compact $G$--space $\obj{X}$ is $\udl{\category{C}}$--twisted ambidextrous for any $G$-stable presentably symmetric monoidal $G$--category.
\end{prop}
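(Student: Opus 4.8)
The plan is to reduce both assertions to the single fact that every compact $G$--space is $\myuline{\spectra}$--twisted ambidextrous, and to obtain that fact from \cite{Cnossen2023} together with \cref{rmk:bastiaan_twisted_ambidex_agrees_with_ours}, which identifies our notion of twisted ambidexterity (\cref{def:twisted_ambidexterity}) with the one of \cite[Def. 3.4]{Cnossen2023} for presentably symmetric monoidal coefficients.

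For the first sentence I would simply invoke the content of \cite{Cnossen2023}: the statement that every compact $G$--space is twisted ambidextrous with coefficients in $\myuline{\spectra}_G$ is precisely the key input to Cnossen's universal characterisation of $\myuline{\spectra}_G$ as the initial presentable fibrewise stable $G$--category with this property, and it then transfers to our framework via \cref{rmk:bastiaan_twisted_ambidex_agrees_with_ours}. Conceptually the proof of that input has two ingredients: (i) each orbit $\myuline{G/H}$ is $\myuline{\spectra}$--twisted ambidextrous — here the $\myuline{G/H}$--shaped left and right Kan extensions on $\myuline{\spectra}_G$ identify, under the \'etale equivalence $(\spc_G)_{/(G/H)}\simeq\spc_H$, with induction and coinduction, and the (twisted) Wirthm\"uller isomorphism for genuine $G$--spectra supplies the capping equivalence, with dualising sheaf the invertible representation sphere of the tangent $H$--representation $\mathfrak{g}/\mathfrak{h}$; and (ii) the class of $\myuline{\spectra}$--twisted ambidextrous $G$--spaces is closed under finite colimits and retracts, which one proves in the fibrewise stable setting. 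Since $\spc_G^\omega$ is generated under finite colimits and retracts by the orbits, this gives the claim. I would present ingredients (i) and (ii) by citation rather than reprove them.

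For the consequence I would observe that a $G$--stable presentably symmetric monoidal $G$--category is exactly an object $\udl{\sC}\in\calg(\prGst{G})$, and that $\myuline{\spectra}_G$ is the initial object of $\calg(\prGst{G})$ (it is the symmetric monoidal unit of $\prGst{G}$). Hence there is a unique symmetric monoidal $G$--colimit--preserving functor $\myuline{\spectra}_G\to\udl{\sC}$, and applying \cref{thm:base_change_of_tw_amb_spivak_data} to it carries the twisted ambidextrous $\myuline{\spectra}$--Spivak datum of $\obj{X}$ to a twisted ambidextrous $\udl{\sC}$--Spivak datum; thus $\obj{X}$ is $\udl{\sC}$--twisted ambidextrous, the Spivak datum being automatically unique by \cref{prop:twisted_ambidextrous_presentable_case}. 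This step is entirely formal.

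The main obstacle is ingredient (ii) above: twisted ambidexterity does not visibly propagate along a pushout, because the capping map is built from the projection formula and one must track how the fundamental classes of the pieces assemble into a fundamental class of the pushout. This is exactly the delicate part of \cite{Cnossen2023} (cf. \cite[Prop. 3.13]{Cnossen2023}), so in the write-up I would cite it; everything else reduces to bookkeeping with adjunctions, the basechange theorem \cref{thm:base_change_of_tw_amb_spivak_data}, and the initiality of $\myuline{\spectra}_G$.
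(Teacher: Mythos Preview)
Your proposal is correct and follows essentially the same approach as the paper: cite \cite{Cnossen2023} (the paper pins it to \cite[Thm.~4.8(5)]{Cnossen2023}) together with \cref{rmk:bastiaan_twisted_ambidex_agrees_with_ours} for the first claim, and then apply \cref{thm:base_change_of_tw_amb_spivak_data} along the unit map $\myuline{\spectra}_G\to\udl{\sC}$ for the second. Your additional explanation of the Wirthm\"uller and closure-under-finite-colimits ingredients is accurate background but not needed for the proof as stated.
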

\begin{proof}
    The first part is an immediate consequence of \cite[Thm. 4.8 (5)]{Cnossen2023} and \cref{rmk:bastiaan_twisted_ambidex_agrees_with_ours}, and the second part is  by \cref{thm:base_change_of_tw_amb_spivak_data}.
\end{proof}

Next, as may be expected of a well--behaved equivariant notion,  equivariant Poincar\'{e} duality is preserved under restrictions. To show this, first recall from \cref{rec:orbit_category} that  for a closed subgroup $H \leq G$ there is an identification $\orbit(H) \simeq \orbit(G)/_{/(G/H)}$ so that the induction $\spc_H \rightarrow \spc_G$ can be identified with the \'etale geometric morphism $\spc_G \rightleftharpoons (\spc_G)_{/(\myuline{G/H})}$.

\begin{cons}[Pushing Spivak data along restrictions]\label{cons:pushing_spivak_data_along_restrictions}
    Let $\udl{X}\in\spc_G$, $\udl{\sC}\in\cmonoid(\cat_G)$, and $(\xi,c)$ a $\udl{\sC}$--Spivak datum for $\udl{X}$. 
    Then by \cref{cons:pushing_spivak_data_etale_morphisms}, we obtain a $\res^G_H\udl{\sC}$--Spivak datum $\res^G_H(\xi,c)$ for $\res^G_H\udl{X}$ given by \small
    \[\Big(\res^G_H \xi \colon \res^G_H\udl{X} \xrightarrow{\res^G_H\xi} \res^G_H\udl{\sC}, \: \res^G_Hc \colon  \res^G_H{\unit_{\udl{\sC}}} \xrightarrow{\res^G_H[c]} (\res^G_H\uniqueMapX)_! \res^G_H\xi\Big)\]\normalsize
\end{cons}

\begin{prop}[Restriction stability of Poincar\'{e} duality]\label{prop:restriction_stability_of_Poincare_duality}
    Let $\underline{X}\in\spc_G$, $\udl{\sC}$ be a symmetric monoidal $G$--category, and  $(\xi,c)$ be a Poincar\'{e} $\underline{\sC}$--Spivak datum. Then for all closed subgroups $H\leq G$, $\res^G_H(\xi,c)$ is a Poincar\'{e} $\res^G_H\underline{\sC}$--Spivak datum for the $H$-space $\res_H^G \udl{X}$.
\end{prop}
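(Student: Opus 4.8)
The plan is to recognise this statement as a direct special case of the omnibus geometric basechange result \cref{thm:omnibus_geometric_pushforward_of_capping}, specifically part (f). First I would recall from \cref{rec:orbit_category} and \cref{cons:res_ind_coind}(a) that for a closed subgroup $H \leq G$ the equivalence $\orbit(H) \simeq \orbit(G)_{/(G/H)}$ identifies the restriction geometric morphism $\res^G_H \colon \spc_G \rightleftharpoons \spc_H \cocolon \coind^G_H$ with the \'etale geometric morphism $\pi_{\myuline{G/H}}^* \colon \spc_G \rightleftharpoons (\spc_G)_{/(\myuline{G/H})} \cocolon (\pi_{\myuline{G/H}})_*$. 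Thus $\res^G_H$ is an \'etale morphism of topoi, which is the hypothesis needed to invoke part (f) of the omnibus theorem.

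Next I would check that the conclusion even makes sense, i.e. that $\res^G_H\udl{\sC}$ is the kind of coefficient $H$--category for which one can speak of Poincar\'{e} Spivak data over $\res^G_H\udl{X}$. Since $(\xi,c)$ is by hypothesis a Poincar\'{e} $\udl{\sC}$--Spivak datum, the symmetric monoidal $G$--category $\udl{\sC}$ necessarily admits $\udl{X}$--shaped limits and colimits and satisfies the $\udl{X}$--projection formula (otherwise the capping transformation would be undefined). By \cref{lem:parametrised_colimits_etale_base_change}, $\res^G_H\udl{\sC}$ admits $\res^G_H\udl{X}$--shaped limits and colimits, and by \cref{prop:projection_formula_geometric_pushforwards}(3) it satisfies the $\res^G_H\udl{X}$--projection formula. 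So $\res^G_H\udl{\sC}$ is a legitimate coefficient $H$--category.

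Finally I would observe that the $\res^G_H\udl{\sC}$--Spivak datum $\res^G_H(\xi,c)$ of \cref{cons:pushing_spivak_data_along_restrictions} is, by construction, precisely the \'etale pullback $f^*(\xi,c)$ of \cref{cons:pushing_spivak_data_etale_morphisms} along $f^* = \res^G_H$. Applying \cref{thm:omnibus_geometric_pushforward_of_capping}(f) with $\B = \spc_G$, $\B' = \spc_H$, $f^* = \res^G_H$ and $\udl{\E} = \udl{\sC}$ then immediately yields that $\res^G_H(\xi,c)$ is a Poincar\'{e} $\res^G_H\udl{\sC}$--Spivak datum for $\res^G_H\udl{X}$ (the twisted ambidexterity part coming along for free in the same citation).

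Since every ingredient is already available, I do not expect a genuine obstacle here: the entire content is bookkeeping --- matching the \'etale basechange of Spivak data against the restriction functor and citing the appropriate part of the omnibus theorem. The only point meriting a sentence of care is the verification in the second paragraph that the hypotheses implicit in ``Poincar\'{e} $\res^G_H\udl{\sC}$--Spivak datum'' ($\res^G_H\udl{X}$--shaped (co)limits and the projection formula for $\res^G_H\udl{\sC}$) are inherited along the \'etale basechange, which is exactly what \cref{lem:parametrised_colimits_etale_base_change} and \cref{prop:projection_formula_geometric_pushforwards}(3) provide.
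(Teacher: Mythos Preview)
Your proposal is correct and takes essentially the same approach as the paper: both recognise the statement as an immediate application of the omnibus basechange theorem to the \'etale geometric morphism $\spc_G \rightleftharpoons (\spc_G)_{/(\myuline{G/H})} \simeq \spc_H$. The paper's proof cites part (e) of that theorem, whereas you cite part (f); your citation is in fact the cleaner one, since part (e) assumes $f_*$ is fully faithful (which coinduction $\coind^G_H$ generally is not), while part (f) is exactly the \'etale case and applies directly.
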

\begin{proof}
    This is a direct consequence of part (e) of \cref{prop:base_change_twisted_ambidextrous_etale_morphism} applied to \'etale the geometric morphism ${\spc}_G \rightleftharpoons ({\spc}_G)_{/(\myuline{G/H})}\simeq \spc_H$.
\end{proof}

\subsection{Fixed points methods}\label{subsection:fixed_points_methods}

Let $G$ be a compact Lie group. In this subsection, we study how $G$--Poincar\'e duality for a $G$--space $\udl{X}$ relates to Poincar\'e duality for its fixed points. In fact, we shall build upon the theory set up in \cref{subsection:equivariant_categories_and_families} and discuss these questions in the generality of isotropy separations with respect to a family of subgroups, of which the case of fixed points against a subgroup is a special case. Therefore, let us fix a family $\family$ of closed subgroups of $G$ throughout this subsection. Recall the notational package from \cref{nota:isotropy_separation_package}.

An important family to keep in mind as an intuitional guide is the following:

\begin{example}[Proper family]\label{example:proper_family}
    Denote by $\proper$ the family of proper closed subgroups of $G$, so that $\proper^{c} = \{G\}$ and $s\colon \ast\simeq \orbit_{\proper^c}(G)\op\hookrightarrow \orbit(G)\op$ is the inclusion of the orbit $G/G$. Note that for any $\udl{J}\in\cat_G$, we thus have $\udl{J}\sUpperStar{\proper} = s^*\udl{J}\simeq J^G\in\cat$.  In this special case, we know that the adjunction unit $\Phi\colon \myuline{\spectra}\rightarrow \myuline{\spectra}\sTwiddleLowerStar{\proper}\simeq s_*\widetilde{s}^*\myuline{\spectra}$ adjoints to the geometric fixed points functor $\Phi^G\colon s^*\myuline{\spectra}=\myuline{\spectra}\sUpperStar{\proper}\simeq\spectra_G\rightarrow \brauerQuotientFamily{\proper}\myuline{\spectra}\simeq\spectra$.
\end{example}

\begin{obs}\label{obs:parametrised_colimits_of_s_*-categories}
    Consider the case of the family of proper closed subgroups $\proper$ of $G$. In particular, we have that   $\udl{\func}(-,-)\sUpperStar{\proper} \simeq \udl{\func}(-,-)^G \simeq \func_G(-,-)$. 
    For a fixed $\sC\in\cat$ having the approparite (co)limits and a $G$-space $\udl{X}$, applying $(-)\sUpperStar{\proper}$ to the commuting diagram in \cref{lem:parametrised_colimits_base_change} and using that $(-)\sLowerStarInclusion{\proper}$ is fully faithful yields the left commuting diagram
    \begin{equation*}
    \begin{tikzcd} 
        \func(X^G, \sC) \dar[equal] \rar[shift left = 1, "X^G_!"] \rar[shift right = 1, " X^G_*"']
        & {\sC} \dar[equal] 
        && \func_G(\udl{X},\myuline{\spectra}) \dar["\Phi^G"']\rar["X_!"] & \spectra_G\dar["\Phi^G"]\\
        \func_G(\udl{X}, {\sC}\sLowerStarInclusion{\proper}) \rar[shift left = 1, "X_!"] \rar[shift right = 1, "X_*"']
        & {\sC} &&\func(X^G,\spectra) \rar["X^G_!"] & \spectra. 
    \end{tikzcd}
    \end{equation*}
    That is, parametrised (co)limits in $G$--categories of the form $\sC\sLowerStarInclusion{\proper}$ is given by the ordinary (co)limits of the fixed points of the indexing diagram. In particular, since $\Phi^G\colon \myuline{\spectra}\rightarrow \myuline{\spectra}\sTwiddleLowerStar{\proper} $ preserves parametrised colimits, the identifications above yield the right commuting square in the diagram above.
\end{obs}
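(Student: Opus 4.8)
The plan is to deduce both displayed diagrams by applying the fixed-points functor $(-)\sUpperStar{\proper}=s^*$ to diagrams that have already been established, simplifying with the fact that $s_*$ is fully faithful. First we would recall from \cref{cons:isotropy_separation_recollement} that, for the proper family, the adjunction $s^*\dashv s_*$ is the basechange adjunction of the geometric morphism $s^*\colon\spc_G\rightleftharpoons\spc_{G,\proper^c}\simeq\spc\cocolon s_*$, that $s^*\udl{J}\simeq J^G$, that $s_*$ is fully faithful, and that in the notation of \cref{nota:isotropy_separation_package} one has $\sC\sLowerStarInclusion{\proper}=s_*\sC$ for an ordinary category $\sC$ viewed as an $\spc$-category; concretely, $s_*$ is restriction along $(s^*)\op$, so $(\sC\sLowerStarInclusion{\proper})(\udl{Z})\simeq\func(Z^G,\sC)$ for every $\udl{Z}\in\spc_G$.

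For the left diagram, we would apply \cref{lem:parametrised_colimits_base_change} to this geometric morphism, with its ``$\udl{X}$'' taken to be our $G$-space $\udl{X}$ (so $s^*\udl{X}=X^G$) and its ``$\udl{\sC}$'' the category $\sC$, which we assume admits the relevant $X^G$-shaped (co)limits. The lemma yields an identification of adjoint triples between $s_*\func(X^G,\sC)\rightleftharpoons s_*\sC$ — whose structure functors are $s_*$ applied to the ordinary colimit, limit and constant-diagram functors of $\sC$ — and the adjoint triple $\udl{\func}(\udl{X},\sC\sLowerStarInclusion{\proper})\rightleftharpoons\sC\sLowerStarInclusion{\proper}$ of parametrised Kan extensions, the left-hand objects being matched by the symmetric monoidal equivalence of \cref{lem:pushforwar_fun_manoeuvre}. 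Applying the $2$-functor $s^*$ to this whole diagram (so internal adjunctions go to adjunctions), and using $s^*s_*\simeq\id$, the top row collapses to the ordinary adjoint triple $\func(X^G,\sC)\rightleftharpoons\sC$; on the bottom row $s^*(\sC\sLowerStarInclusion{\proper})\simeq\sC$ while $s^*\udl{\func}(\udl{X},\sC\sLowerStarInclusion{\proper})=\func_G(\udl{X},\sC\sLowerStarInclusion{\proper})$ by the definition of $\func_G$ as global sections (evaluation at $G/G$), and the vertical identifications become equalities. This is exactly the asserted left diagram.

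For the right square, we would take $\sC=\spectra$, so that $\sC\sLowerStarInclusion{\proper}=s_*\spectra\simeq s_*\widetilde{s}^*\myuline{\spectra}=\myuline{\spectra}\sTwiddleLowerStar{\proper}$, using $\widetilde{s}^*\myuline{\spectra}\simeq\spectra$ from \cref{lem:geometric_fixed_point_spectrum_weil_group}. The $G$-functor $\Phi\colon\myuline{\spectra}\to\myuline{\spectra}\sTwiddleLowerStar{\proper}$ preserves parametrised colimits — recorded in \cref{cons:geometric_fixed_points}, and also a consequence of $\widetilde{s}^*$ being a smashing localisation by \cref{prop:categorical_brauer_quotients} — so for the $G$-space $\udl{X}$ the Beck--Chevalley square comparing $X_!\colon\udl{\func}(\udl{X},\myuline{\spectra})\to\myuline{\spectra}$, its analogue for $\myuline{\spectra}\sTwiddleLowerStar{\proper}$, and postcomposition with $\Phi$ commutes. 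Applying $s^*$ to this square, identifying its bottom-left corner with $\func(X^G,\spectra)$ and its bottom map with $X^G_!$ via the left diagram (for $\sC=\spectra$), and using $s^*\Phi=\Phi^G$ together with $s^*\udl{\func}(\udl{X},\myuline{\spectra})=\func_G(\udl{X},\myuline{\spectra})$, $s^*\myuline{\spectra}=\spectra_G$ and $s^*\myuline{\spectra}\sTwiddleLowerStar{\proper}\simeq\spectra$, produces the right square.

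We do not anticipate a genuine obstacle; the only real work is the bookkeeping to check that $s^*$ applied to the parametrised Kan-extension functors of $\sC\sLowerStarInclusion{\proper}$ returns precisely the $\func_G$-level colimit, limit and constant-diagram functors, and that $s^*$ of the top-row functors in \cref{lem:parametrised_colimits_base_change} recover the ordinary ones over $X^G$ — both of which are formal from $s^*$ being a $2$-functor that preserves adjunctions, together with $s^*s_*\simeq\id$. Alternatively, one could bypass all of this by computing both sides of each diagram directly from the formula $(\sC\sLowerStarInclusion{\proper})(\udl{Z})\simeq\func(Z^G,\sC)$.
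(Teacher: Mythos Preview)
Your proposal is correct and follows essentially the same approach as the paper: apply $s^*$ to the identification of adjoint triples from \cref{lem:parametrised_colimits_base_change} and use full faithfulness of $s_*$ (i.e.\ $s^*s_*\simeq\id$) to obtain the left diagram, then use that $\Phi^G$ preserves parametrised colimits to deduce the right square. The paper records this inline in one sentence; your write-up simply unpacks the same steps in more detail.
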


\begin{lem}\label{lem:counit_equivalence_singular_part}
    Let $\udl{X}\in\spc_G$ and consider a family $\family$ of subgroups of $G$. Let $\udl{\category{C}} \in \catgrpcol{G}{\family^c}$.   Then the  map $\epsilon^* \colon \udl{\func}(\udl{X}, \udl{\category{C}}\sLowerStarInclusion{\family}) \to \udl{\func}(\udl{X}_{\widetilde{\family}}, \udl{\category{C}}\sLowerStarInclusion{\family})$ is an equivalence.
\end{lem}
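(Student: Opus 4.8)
The statement asserts that restriction along the counit $\epsilon \colon \udl{X}\singularPartTwiddle{\family} \to \udl{X}$ induces an equivalence $\epsilon^* \colon \udl{\func}(\udl{X}, \udl{\category{C}}\sLowerStarInclusion{\family}) \xrightarrow{\simeq} \udl{\func}(\udl{X}\singularPartTwiddle{\family}, \udl{\category{C}}\sLowerStarInclusion{\family})$ for $\udl{\category{C}} \in \catgrpcol{G}{\family^c}$. The plan is to exploit the adjunction $s_! \dashv s^*$ of \cref{cons:isotropy_separation_recollement} together with the fact that $\udl{X}\singularPartTwiddle{\family} = s_! s^* \udl{X}$ and $\epsilon$ is the counit of this adjunction, so that $s^*\epsilon$ is an equivalence (this is a general fact about Bousfield colocalisations: the colocalisation functor $s_!s^*$ applied to the counit is an equivalence). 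The key observation is that the functor categories in question only see the part of $\udl{X}$ living over $\family^c$, because $\udl{\category{C}}\sLowerStarInclusion{\family}$ has value $\ast$ on all orbits $G/H$ with $H \in \family$ by the formula recalled in \cref{cons:isotropy_separation_recollement} and \cref{nota:isotropy_separation_package}.

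First I would reduce to an unparametrised-looking statement fibrewise. By \cref{lem:pushforwar_fun_manoeuvre} applied to the \'etale morphism $\spc_G \rightleftharpoons (\spc_G)_{/\udl{X}}$, the $G$-category $\udl{\func}(\udl{X}, \udl{\category{C}}\sLowerStarInclusion{\family})$ is $(\pi_{\udl{X}})_* (\pi_{\udl{X}})^* (\udl{\category{C}}\sLowerStarInclusion{\family})$, and similarly for $\udl{X}\singularPartTwiddle{\family}$; so it suffices to show that the restriction functor $(\spc_G)_{/\udl{X}} \to (\spc_G)_{/\udl{X}\singularPartTwiddle{\family}}$ ... — but cleaner is to argue directly. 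Since $s_* \colon \cat_{G,\family^c} \hookrightarrow \cat_G$ is fully faithful, write $\udl{\category{C}}\sLowerStarInclusion{\family} = s_* \udl{\category{C}}$ and use the symmetric-monoidal (in particular, limit-compatible) identification: $\udl{\func}(\udl{X}, s_*\udl{\category{C}}) = \lim_{\udl{X}} s_* \udl{\category{C}}$. Now I would invoke the compatibility of $s_*$ with the relevant Kan extensions — concretely, the commuting square of adjunctions from \cref{lem:parametrised_colimits_base_change} applied to the geometric morphism $s^* \colon \spc_G \rightleftharpoons \spc_{\family^c} \cocolon s_*$ — to obtain $\lim_{\udl{X}} s_* \udl{\category{C}} \simeq s_* \lim_{s^*\udl{X}} \udl{\category{C}} = s_*\, \udl{\func}_{\spc_{\family^c}}(s^*\udl{X}, \udl{\category{C}})$, and similarly $\lim_{\udl{X}\singularPartTwiddle{\family}} s_*\udl{\category{C}} \simeq s_*\, \udl{\func}_{\spc_{\family^c}}(s^*\udl{X}\singularPartTwiddle{\family}, \udl{\category{C}})$. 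Under these identifications the map $\epsilon^*$ becomes $s_*$ applied to restriction along $s^*\epsilon \colon s^*\udl{X}\singularPartTwiddle{\family} \to s^*\udl{X}$.

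So the whole statement collapses to showing that $s^*\epsilon \colon s^* s_! s^* \udl{X} \to s^*\udl{X}$ is an equivalence in $\spc_{\family^c}$. This is immediate: the counit of the adjunction $s_! \dashv s^*$ is $\epsilon \colon s_! s^* \to \id$, and by the triangle identities $s^*\epsilon$ is a split epimorphism; but moreover $s^* s_! \simeq \id$ because $s \colon \orbit_{\family^c}(G) \hookrightarrow \orbit(G)$ is fully faithful, so $s_!$ is fully faithful and the counit $s^* s_! s^* \to s^*$ is an equivalence (equivalently, restriction of a left Kan extension along a fully faithful inclusion, evaluated back on the subcategory, recovers the original — standard Kan extension fact). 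Hence $\epsilon^* \simeq s_*(\text{equivalence})$ is an equivalence.

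\textbf{Main obstacle.} The only genuinely nontrivial point is verifying that the limit $\lim_{\udl{X}}(-)$ over the $G$-groupoid $\udl{X}$ commutes with $s_*$ in the way claimed, i.e. that $\udl{\func}(\udl{X}, s_*\udl{\category{C}}) \simeq s_*\, \udl{\func}_{\spc_{\family^c}}(s^*\udl{X}, \udl{\category{C}})$ naturally in $\udl{X}$; this is exactly the content of \cref{lem:pushforwar_fun_manoeuvre} combined with the observation that the \'etale slices interact correctly with the geometric morphism $s$, and one should be slightly careful that $s^*\udl{X}$ is the correct object (it is, since $s^*$ preserves $\B$-groupoids as a left-exact left adjoint). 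Everything else is formal manipulation of adjunctions. I expect the write-up to be short: identify the functor categories via $s_*$, reduce to $s^*\epsilon$, and cite that $s_!$ fully faithful forces $s^*\epsilon$ to be an equivalence.
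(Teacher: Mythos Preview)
Your proposal is correct and follows essentially the same approach as the paper: use \cref{lem:pushforwar_fun_manoeuvre} to identify $\udl{\func}(\udl{X}, s_*\udl{\category{C}}) \simeq s_*\,\udl{\func}_{\family^c}(s^*\udl{X}, \udl{\category{C}})$, observe that under this identification $\epsilon^*$ becomes $s_*$ applied to restriction along $s^*\epsilon$, and conclude since $s^*\epsilon$ is an equivalence (as $s_!$ is fully faithful). The paper's write-up is more terse and cites only \cref{lem:pushforwar_fun_manoeuvre} directly rather than detouring through \cref{lem:parametrised_colimits_base_change}, but the argument is the same.
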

\begin{proof}
    By \cref{lem:pushforwar_fun_manoeuvre}, the equivalence $\udl{\func}_G(\udl{X}, s_* \udl{\category{C}}) \simeq s_*\udl{\func}_{\family^c}(s^* \udl{X}, \udl{\category{C}})$ identifies restriction along $\epsilon \colon s_! s^* \udl{X} \to \udl{X}$ on the left side with restriction along $s^* \epsilon \colon s^* s_! s^* \udl{X} \to s^* \udl{X}$ on the right side.
    But $s^* \epsilon$ is an equivalence.
\end{proof}

\begin{cons}[Isotropy separation for Spivak data]\label{cons:isotropy_separation_spivak_Data}
    Let $\udl{X} \in \spc_G$ and $\udl{\category{C}}$ a symmetric monoidal $\family^c$--category which admits $\udl{X}\sUpperStar{\family}$--indexed colimits. By \cref{lem:parametrised_colimits_base_change}, we know that $\udl{\sC}\sLowerStarInclusion{\family}$ admits $\udl{X}$--colimits. Suppose we are given a $\udl{\category{C}}$-Spivak datum $(\xi,c)$ for ${\obj{X}}\sUpperStar{\family}$ and  a $ \udl{\category{C}}\sLowerStarInclusion{\family}$--Spivak datum $(\zeta, d)$ for ${\obj{X}}$.
    By \cref{cons:pushing_forward_spivak_data_geometric_morphisms}, we obtain a $\udl{\sC}\sLowerStarInclusion{\family}$--Spivak datum $(\xi, c)\sLowerStarInclusion{\family}$ for ${\obj{X}}$ and  a $\udl{\category{C}}$--Spivak datum $(\zeta, d)\sUpperStar{\family}$ for ${\obj{X}}\sUpperStar{\family}$. Observe in particular that, by construction, we have $((\xi,c)\sLowerStarInclusion{\family})\sUpperStar{\family}\simeq (\xi,c)$ and $((\zeta,d)\sUpperStar{\family})\sLowerStarInclusion{\family}\simeq (\zeta,d)$.
\end{cons}

\begin{cor}[Inclusion of singular part is degree one]\label{cor:inclusion_fixed_points_degree_one}
    Let $\udl{\category{D}}$ be a symmetric monoidal $\family^c$--category and $\udl{X} \in \spc_G$. Suppose $\udl{X}$ is equipped with a $\udl{\D}\sLowerStarInclusion{\family}$--Spivak datum. Then $\udl{X}_{\widetilde{\family}}\in\spc_{G}$ inherits a $\udl{\D}\sLowerStarInclusion{\family}$--Spivak datum under which the inclusion $\epsilon \colon \udl{X}_{\widetilde{\family}} \to \udl{X}$ is of $\udl{\D}\sLowerStarInclusion{\family}$--degree one.
\end{cor}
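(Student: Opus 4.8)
The plan is to obtain this from \cref{cons:degree_one_C_equivalence} once one knows that restriction along the inclusion of the $\family$--singular part is an equivalence on the relevant functor categories --- which is exactly \cref{lem:counit_equivalence_singular_part}. Write $\uniqueMapX \colon \udl{X} \to \udl{\ast}$ and $q \colon \udl{X}_{\widetilde{\family}} \to \udl{\ast}$ for the two unique maps, so that $q = \uniqueMapX \circ \epsilon$, where $\epsilon \colon \udl{X}_{\widetilde{\family}} = s_! s^* \udl{X} \to \udl{X}$ is the counit of \cref{defn:singular_part_inclusion}. Applying \cref{lem:counit_equivalence_singular_part} to the underlying $\family^c$--category of $\udl{\D}$ (an object of $\catgrpcol{G}{\family^c}$) shows that $\epsilon^* \colon \udl{\func}(\udl{X}, \udl{\D}\sLowerStarInclusion{\family}) \to \udl{\func}(\udl{X}_{\widetilde{\family}}, \udl{\D}\sLowerStarInclusion{\family})$ is an equivalence, and since restriction along a map of $G$--spaces canonically refines to a symmetric monoidal functor (the tensor product on these functor categories being computed pointwise), $\epsilon^*$ is a symmetric monoidal equivalence. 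The remaining structural hypotheses of \cref{cons:degree_one_C_equivalence} --- that $\udl{\D}\sLowerStarInclusion{\family}$ admits $\udl{X}_{\widetilde{\family}}$--shaped limits and colimits and satisfies the $\udl{X}_{\widetilde{\family}}$--projection formula --- transfer from the corresponding assertions for $\udl{X}$ along $\epsilon^*$, using the factorisation $q^* \simeq \epsilon^* \circ \uniqueMapX^*$; the $\udl{X}$--shaped colimits are in any case part of the data of the given $\udl{\D}\sLowerStarInclusion{\family}$--Spivak datum for $\udl{X}$.

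Next I would read \cref{cons:degree_one_C_equivalence} with $f = \epsilon$, in the direction from the target Spivak datum to the source one --- legitimate since $f^* = \epsilon^*$ is an equivalence. This transports the given Spivak datum $(\xi, c)$ on $\udl{X}$ to the $\udl{\D}\sLowerStarInclusion{\family}$--Spivak datum $(\epsilon^* \xi,\, c_{\widetilde{\family}})$ on $\udl{X}_{\widetilde{\family}}$, where $c_{\widetilde{\family}}$ is the composite
\[
c_{\widetilde{\family}} \colon \unit \xrightarrow{c} \uniqueMapX_! \xi \xrightarrow{(\beckChevalley^{\epsilon}_!)^{-1}} q_! \epsilon^* \xi ,
\]
with $\beckChevalley^{\epsilon}_! \colon q_! \epsilon^* \to \uniqueMapX_!$ the homological functoriality transformation of \cref{cons:cohomological_functoriality}, which is an equivalence precisely because $\epsilon^*$ is. This $(\epsilon^*\xi, c_{\widetilde{\family}})$ is the claimed Spivak datum on $\udl{X}_{\widetilde{\family}}$.

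For the degree-one assertion I would take the tautological $\udl{\D}\sLowerStarInclusion{\family}$--degree datum $\alpha = \id \colon \epsilon^* \xi \xrightarrow{\simeq} \epsilon^* \xi$ for $\epsilon$ with respect to the Spivak data $(\epsilon^*\xi, c_{\widetilde{\family}})$ and $(\xi, c)$. By \cref{def:parametrised_degree}, $\deg_{\udl{\D}\sLowerStarInclusion{\family}}(\epsilon, \alpha)$ is the composite $\unit \xrightarrow{c_{\widetilde{\family}}} q_! \epsilon^* \xi \xrightarrow{\id} q_! \epsilon^* \xi \xrightarrow{\beckChevalley^{\epsilon}_!} \uniqueMapX_! \xi$, which by the definition of $c_{\widetilde{\family}}$ is equivalent to $c$; hence $c \simeq \deg_{\udl{\D}\sLowerStarInclusion{\family}}(\epsilon, \alpha)$, exhibiting $\epsilon$ as a map of $\udl{\D}\sLowerStarInclusion{\family}$--degree one. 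The content of the argument is entirely concentrated in \cref{lem:counit_equivalence_singular_part}; everything afterwards is bookkeeping, and the one point meriting a little care is the transfer of the ancillary structural hypotheses (limits, projection formula) across $\epsilon^*$ --- which one may, if desired, circumvent by noting that the formulas for $c_{\widetilde{\family}}$ and for $\deg(\epsilon, \alpha)$ only involve the colimit functors $q_!$, $\uniqueMapX_!$ and the transformation $\beckChevalley^{\epsilon}_!$, so that the $\udl{X}$--shaped colimits already present in the hypotheses suffice.
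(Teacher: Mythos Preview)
Your proof is correct and follows the same approach as the paper's: invoke \cref{lem:counit_equivalence_singular_part} to get that $\epsilon^*$ is an equivalence, then apply \cref{cons:degree_one_C_equivalence}. The paper's proof is a terse two-liner; your version spells out the direction reversal (the construction as stated transports a Spivak datum from source to target, whereas here one needs target to source), the transfer of structural hypotheses, and the explicit degree computation --- all of which are implicit in the paper's ``follows immediately.''
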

\begin{proof} 
    By \cref{lem:counit_equivalence_singular_part}, the map  $\epsilon^* \colon \udl{\func}(\udl{X}, s_*  \udl{\category{D}}) \xrightarrow{\simeq} \udl{\func}(s_!s^*\udl{X}, s_* \udl{\category{D}})$ is an equivalence.    The result now follows immediately from \cref{cons:degree_one_C_equivalence}.
\end{proof}

\begin{lem}\label{lem:poincare_duality_adjunction}\label{lem:poincare_duality_in_trivial_bottoms}
    Let $\udl{X}\in\spc_G$, $\family$ be a family of closed subgroups of $G$, and $\underline{\D}$ a  symmetric monoidal $\family^c$--category. Then a $\udl{\category{D}}\sLowerStarInclusion{\family}$--Spivak datum $(\xi,c)$ for $\udl{X}$ is Poincar\'e  if and only if the $\udl{\category{D}}$--Spivak datum $(\xi,c)\sUpperStar{\family}$ for $\udl{X}\singularPartComplement{\family}$ is Poincar\'{e}.
\end{lem}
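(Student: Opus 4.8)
The idea is that passing between $\udl{\D}\sLowerStarInclusion{\family}$--Spivak data for $\udl{X}$ and $\udl{\D}$--Spivak data for $\udl{X}\singularPartComplement{\family}$ is a symmetric monoidal equivalence at the level of data (by \cref{cons:isotropy_separation_spivak_Data}), so the only thing to check is that the Poincar\'{e} condition --- twisted ambidexterity plus factoring through Picard --- is preserved in both directions. This is precisely what parts (c) and (d) of \cref{thm:omnibus_geometric_pushforward_of_capping} give us, applied to the geometric morphism $s^*\colon \spc_G\rightleftharpoons \spc_{\family^c} \cocolon s_*$ from \cref{cons:isotropy_separation_recollement}.

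First I would recall that $s^*\dashv s_*$ is a geometric morphism of topoi with $s_*$ fully faithful (this is \cref{cons:isotropy_separation_recollement}, and fully faithfulness of $s_*$ is the content of the recollement). Here I take $\udl{\D}$ to be the symmetric monoidal $\family^c$--category ``$\udl{\category{D}}$'' of the statement (an object of $\cmonoid(\cat_{\spc_{\family^c}})$), so that in the notation of \cref{thm:omnibus_geometric_pushforward_of_capping} we have the roles $\udl{\D}\rightsquigarrow\udl{\D}$, $f^*\rightsquigarrow s^*$, $f_*\rightsquigarrow s_*$, $\udl{X}\rightsquigarrow \udl{X}$ and $f^*\udl{X} = s^*\udl{X} = \udl{X}\singularPartComplement{\family}$. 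By hypothesis $\udl{\D}$ admits $\udl{X}\sUpperStar{\family}$--shaped colimits and satisfies the $\udl{X}\sUpperStar{\family}$--projection formula (this is needed for the capping map even to be defined on the $\udl{X}\singularPartComplement{\family}$ side, and it is implicit in the statement since we speak of Poincar\'e $\udl{\D}$--Spivak data there); by \cref{prop:projection_formula_geometric_pushforwards}(1) the pushforward $s_*\udl{\D} = \udl{\D}\sLowerStarInclusion{\family}$ then satisfies the $\udl{X}$--projection formula, so both Poincar\'e conditions in the statement make sense.

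For the forward direction, suppose $(\xi,c)$ is a Poincar\'{e} $\udl{\D}\sLowerStarInclusion{\family}$--Spivak datum for $\udl{X}$; that is, a Poincar\'e $s_*\udl{\D}$--Spivak datum for $\udl{X}$. Since $s_*$ is fully faithful, part (d) of \cref{thm:omnibus_geometric_pushforward_of_capping} applies and shows that $s^*(\xi,c) = (\xi,c)\sUpperStar{\family}$ is a Poincar\'{e} $\udl{\D}$--Spivak datum for $s^*\udl{X} = \udl{X}\singularPartComplement{\family}$. Conversely, suppose $(\xi,c)\sUpperStar{\family}$ is Poincar\'e; by \cref{cons:isotropy_separation_spivak_Data} we have the identification $((\xi,c)\sUpperStar{\family})\sLowerStarInclusion{\family}\simeq (\xi,c)$, so applying part (c) of \cref{thm:omnibus_geometric_pushforward_of_capping} --- pushforward along the geometric morphism of a Poincar\'e $\udl{\D}$--Spivak datum for $s^*\udl{X}$ yields a Poincar\'e $s_*\udl{\D}$--Spivak datum for $\udl{X}$ --- recovers the conclusion that $(\xi,c)$ itself is Poincar\'e. (Alternatively one can run the forward direction with the roles of ``datum upstairs'' and ``datum downstairs'' exchanged, using $s^*s_*\simeq \id$.)

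\textbf{Where the work lies.} Essentially all the content has been offloaded to \cref{thm:omnibus_geometric_pushforward_of_capping}(c),(d) and \cref{prop:projection_formula_geometric_pushforwards}; the only genuinely new bookkeeping is matching notation --- confirming that $s^*\udl{X} = \udl{X}\singularPartComplement{\family}$, that $s_*\udl{\D} = \udl{\D}\sLowerStarInclusion{\family}$, and that the Spivak data $(\xi,c)$ and $(\xi,c)\sUpperStar{\family}$ are related by $s_*$ and $s^*$ as in \cref{cons:isotropy_separation_spivak_Data}. There is one small subtlety to be careful about: one must ensure the hypotheses of \cref{thm:omnibus_geometric_pushforward_of_capping}(c),(d) (the relevant projection formulas and existence of colimits, together with fully faithfulness of $s_*$) are all in force, which is exactly the role of \cref{prop:projection_formula_geometric_pushforwards} and \cref{cons:isotropy_separation_recollement}. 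Beyond that the proof is a two-line invocation of the omnibus theorem in each direction.
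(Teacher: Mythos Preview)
Your proof is correct and takes the same approach as the paper: both reduce to the omnibus basechange result \cref{thm:omnibus_geometric_pushforward_of_capping} applied to the geometric morphism $s^*\colon \spc_G\rightleftharpoons \spc_{\family^c}\cocolon s_*$. The paper's proof is a one-liner citing only part (c), whereas you invoke (c) for one direction and (d) (using that $s_*$ is fully faithful) for the other; your version is more precise, since part (c) as stated only gives the implication from $\udl{X}\singularPartComplement{\family}$ to $\udl{X}$, and the reverse implication genuinely needs either (d) or the observation that the argument for (c) is in fact an ``if and only if'' via \cref{lem:natural_equivalences_preserved_under_geometric_morphisms} and \cref{cor:factoring_through_picard_geometric_morphisms}(1).
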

\begin{proof}
    This is a special case of \cref{prop:base_change_twisted_ambidexterity_geometric_morphism} (c).
\end{proof}

We  now come to  the main result of this subsection which says that we may perform isotropy separation on equivariant Poincar\'{e} spaces by appropriately isotropy--separating   the coefficient category. For the second part of the  result, we will need to recall \cref{terminology:poincare_duality_for_a_functor}.

\begin{thm}[Poincar\'e  isotropy basechange]\label{thm:poincare_isotropy} 
Let $\udl{X}\in\spc_G$, $\udl{Y}\in\spc_G^{\omega}$, $\udl{\sC}$ be a presentably symmetric monoidal fibrewise stable $G$--category, and $\udl{\D}$ be a $G$--stably symmetric monoidal category.   
\begin{enumerate}[label=(\arabic*)]
    \item  If $\udl{X}$ is $\udl{\category{C}}$--Poincar\'e, then $\udl{X}\sUpperStar{\family}$ is $\brauerQuotientFamily{\family}\udl{\category{C}}$--Poincar\'e;
    \item If $(\xi,c)$ is a Poincar\'{e} $\udl{\D}$--Spivak datum for $\udl{Y}$, then the Spivak datum $(\Phi\xi,\Phi c)\sUpperStar{\family}$ is a Poincar\'{e} $\nolinebreak{(\Phi\colon \udl{\D}\sUpperStar{\family}\rightarrow \brauerQuotientFamily{\family}\udl{\D})}$--Spivak datum for $\udl{Y}\sUpperStar{\family}$.
\end{enumerate}
\end{thm}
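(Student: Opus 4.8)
The strategy is to reduce both statements to the change-of-coefficients results proved in \cref{subsection:constructions_with_spivak_data}, namely \cref{thm:base_change_of_tw_amb_spivak_data} and \cref{thm:small_base_change}, combined with the geometric-morphism basechange \cref{thm:omnibus_geometric_pushforward_of_capping}. The key observation is that the Brauer quotient functor $\brauerQuotientFamily{\family} = \widetilde{s}^*$ factors the relevant symmetric monoidal functors through the topos change along the geometric morphism $s^*\colon \spc_G\rightleftharpoons\spc_{\family^c}\cocolon s_*$, so the isotropy separation of coefficient categories is exactly a composite of a topos-change pushforward and an ordinary change-of-coefficients.

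For part (1): suppose $\udl{X}$ is $\udl{\category{C}}$--Poincar\'e with twisted ambidextrous Spivak datum $(D_{\udl{X}},c)$ (unique by \cref{prop:twisted_ambidextrous_presentable_case}). The plan is to apply the symmetric monoidal parametrised-colimit-preserving functor $\Phi^{\family}\colon \udl{\category{C}}\rightarrow \udl{\category{C}}\sTwiddleLowerStar{\family} = s_*\brauerQuotientFamily{\family}\udl{\category{C}}$ from \cref{prop:categorical_brauer_quotients}/\cref{nota:isotropy_separation_package}. First, by \cref{thm:base_change_of_tw_amb_spivak_data} applied to $\Phi^{\family}$, the pushed-forward datum $\Phi^{\family}(D_{\udl{X}},c)$ is a twisted ambidextrous (in fact Poincar\'e, since $\Phi^{\family}$ is symmetric monoidal) Spivak datum for $\udl{X}$ with coefficients in $s_*\brauerQuotientFamily{\family}\udl{\category{C}}$; here one needs that $s_*\brauerQuotientFamily{\family}\udl{\category{C}}$ is presentably symmetric monoidal, which follows from \cref{lem:lax_monoidal_base_change_geometric_morphism} together with the fact (\cref{prop:categorical_brauer_quotients}) that the induced lax structure on $s_*$ agrees with the one coming from the geometric morphism. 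Next, apply \cref{thm:omnibus_geometric_pushforward_of_capping}(d) with the fully faithful $s_*$ (fully faithfulness by \cref{lem:base_change_presentable_fully_faithful_geometric_morphism}): this transports the twisted ambidextrous/Poincar\'e $s_*\brauerQuotientFamily{\family}\udl{\category{C}}$--Spivak datum for $\udl{X}$ to a twisted ambidextrous/Poincar\'e $\brauerQuotientFamily{\family}\udl{\category{C}}$--Spivak datum for $s^*\udl{X} = \udl{X}\sUpperStar{\family}$. Since $\udl{X}$ presentably symmetric monoidal coefficients have unique Spivak data, this says precisely $\udl{X}\sUpperStar{\family}$ is $\brauerQuotientFamily{\family}\udl{\category{C}}$--Poincar\'e.

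For part (2): now $\udl{D}$ is only $G$--stably symmetric monoidal (small), so being Poincar\'e is structure, not property, and we cannot appeal to uniqueness; instead we track the explicit Spivak data and use the general (non-presentable) basechange \cref{thm:small_base_change}. The plan is: apply $\Phi^{\family}\colon \udl{D}\rightarrow \udl{D}\sTwiddleLowerStar{\family} = s_*\brauerQuotientFamily{\family}\udl{D}$, which by \cref{prop:categorical_brauer_quotients} (small/finite-group case) is symmetric monoidal and preserves parametrised colimits, and which moreover preserves the relevant $\udl{Y}$--shaped (co)limits and projection formula because $\udl{Y}\in\spc_G^{\omega}$ is compact (so limits and colimits over $\udl{Y}$ agree up to the twisted norm, by \cref{prop:cpt_twisted_ambidextrous} and \cref{rmk:bastiaan_twisted_ambidex_agrees_with_ours}) --- this is the point where I expect to have to be a little careful and it is the main technical obstacle of the argument. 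Granting this, \cref{thm:small_base_change} applied to $\Phi^{\family}$ shows that $\Phi^{\family}(\xi,c) = (\Phi\xi,\Phi c)$ is a Poincar\'e $\Phi^{\family}$--Spivak datum for $\udl{Y}$ in the sense of \cref{terminology:poincare_duality_for_a_functor}. Finally, apply \cref{thm:omnibus_geometric_pushforward_of_capping}(e) with $f=s$, $f_*=s_*$ fully faithful, and the functor $\Phi^{\family}\colon \udl{D}\rightarrow s_*\brauerQuotientFamily{\family}\udl{D}$: this yields that $s^*(\Phi\xi,\Phi c) = (\Phi\xi,\Phi c)\sUpperStar{\family}$ is a Poincar\'e $(s^*\Phi^{\family}\colon s^*\udl{D}\rightarrow \brauerQuotientFamily{\family}\udl{D})$--Spivak datum for $s^*\udl{Y} = \udl{Y}\sUpperStar{\family}$, where I use $s^*\Phi^{\family}\simeq (\Phi\colon \udl{D}\sUpperStar{\family}\rightarrow\brauerQuotientFamily{\family}\udl{D})$ from the $(f^*f_* \simeq \id)$-identification and the construction of $\Phi$ as the adjunction unit in \cref{nota:isotropy_separation_package}. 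The main obstacle, as noted, is verifying the hypotheses of \cref{thm:small_base_change} --- specifically that $\Phi^{\family}$ preserves $\udl{Y}$--shaped limits for compact $\udl{Y}$; once that is in place, everything else is a bookkeeping composite of the cited results.
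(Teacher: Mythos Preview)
Your approach is essentially the same as the paper's. The paper packages the geometric-morphism step as \cref{lem:poincare_duality_adjunction} rather than invoking \cref{thm:omnibus_geometric_pushforward_of_capping}(d)/(e) directly, but that lemma is itself just a specialisation of the omnibus theorem, so there is no real difference in route.

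One correction on your part (2): the ``main technical obstacle'' you flag dissolves more cheaply than your justification suggests, and the justification you give is not quite right. The unit map $\Phi^{\family}\colon\udl{\D}\rightarrow\udl{\D}\sTwiddleLowerStar{\family}$ is by construction a morphism in $\catGst{G}$ (it is the adjunction unit of $\widetilde{s}^*\dashv s_*$ from \cref{prop:categorical_brauer_quotients}, an adjunction internal to $\catGst{G}$), hence $G$-exact. A $G$-exact functor between $G$-stable $G$-categories preserves finite $G$-limits as well as finite $G$-colimits, and for compact $\udl{Y}$ the $\udl{Y}$-shaped (co)limits are exactly finite $G$-(co)limits; so the limit-preservation hypothesis of \cref{thm:small_base_change} is automatic. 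Your appeal to \cref{prop:cpt_twisted_ambidextrous} and \cref{rmk:bastiaan_twisted_ambidex_agrees_with_ours} is misplaced: those are statements about presentable $G$-stable categories and do not apply to the small $\udl{\D}$; and even if limits and colimits over $\udl{Y}$ agreed up to a twist, that alone would not show a colimit-preserving functor preserves limits (you would still need to know the functor respects the twist, which is circular here). The paper's two-line proof simply takes the $G$-exactness of $\Phi^{\family}$ as understood and declares (2) an ``immediate consequence'' of \cref{thm:small_base_change} and \cref{lem:poincare_duality_adjunction}.
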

\begin{proof}
    For (1), applying the basechange result \cref{thm:base_change_of_tw_amb_spivak_data} along the symmetric monoidal $G$-colimit preserving unit map $\udl{\category{C}} \to  \udl{\category{C}}\sTwiddleLowerStar{\family}$ shows that $\udl{X}$ is $\udl{\category{C}}\sTwiddleLowerStar{\family}$--Poincar\'e.  Thus \cref{lem:poincare_duality_adjunction} shows that $\udl{X}\sUpperStar{\family}$ is $\brauerQuotientFamily{\family}\udl{\category{C}}$--Poincar\'e.    Point (2) is an immediate consequence of \cref{thm:small_base_change} and \cref{lem:poincare_duality_adjunction}.
\end{proof}

Having set up a general theory of equivariant fixed points for Poincar\'{e} spaces, we now specialise to the most important coefficient category, namely the presentably symmetric monoidal $G$--stable category $\myuline{\spectra}$ of genuine $G$--spectra.

\begin{cons}[Pushing Spivak data along geometric fixed points]\label{cons:pushing_spivak_data_geometric_fix_points}
    Let $\udl{X}\in\spc_G$, $(\xi,c)$ a $\myuline{\spectra}_G$--Spivak datum for $\udl{X}$, and $H\leq G$  a closed subgroup. By \cref{cons:pushing_spivak_data_along_restrictions}, we obtain a $\myuline{\spectra}_H$--Spivak datum 
    $\res^G_H(\xi,c)$ for $\res^G_H\udl{X}$. On the other hand, we may apply  \cref{cons:isotropy_separation_spivak_Data} to $\res^G_H(\xi,c)$ along the symmetric monoidal map $\Phi^H\colon \myuline{\spectra}_H\rightarrow s_*\spectra$ to get a nonequivariant $\spectra$--Spivak datum $\Phi^H(\xi,c)$ for $X^H$. Explicitly, this is given by\small
    \[\Big(\Phi^H \xi \colon X^H \xrightarrow{\xi^H} \spectra_H \xrightarrow{\Phi^H} \spectra, \: \Phi^Hc \colon \unit_{\spectra} = \Phi^H{\unit_{\spectra_H}} \xrightarrow{\Phi^Hc} \Phi^H(\res^G_H\uniqueMapX)_! \res^G_H\xi \simeq \uniqueMapX^H_! \Phi^H\xi \Big)\]\normalsize where the last equivalence is by \cref{obs:parametrised_colimits_of_s_*-categories}.
\end{cons}

Next, we unwind the general \cref{thm:poincare_isotropy} (1) for the geometric fixed points functor on spectra to show that the fixed points of an equivariant Poincar\'{e} space are Poincar\'{e} with the residual Weyl group action (c.f. \cite[Prop. 2.4]{costenoble2017equivariant} for the homological shadow of this).

\begin{thm}[Fixed points of Poincar\'e spaces]\label{thm:fixed_points_poincare_duality}
    Suppose $\udl{X} \in \spc_G$ is $G$--Poincar\'e.
    Then for any closed $H \le G$, $\udl{X}^H \in \spc_{W_G H}$  is a $W_G H$--Poincar\'e space.
    In particular, $X^H \in \spc$ is a nonequivariant Poincar\'e space with dualising sheaf $X^H \xrightarrow{D_{\udl{X}}} \spectra_H \xrightarrow{\Phi^H} \spectra$.
\end{thm}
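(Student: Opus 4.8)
The plan is to deduce the theorem by combining restriction stability of Poincaré duality (\cref{prop:restriction_stability_of_Poincare_duality}) with the Poincaré isotropy basechange theorem (\cref{thm:poincare_isotropy} (1)), applied to the family of subgroups not containing $H$ inside the normaliser. First I would record the standard facts that $N_G H\leq G$ is again a closed subgroup and $W_G H\coloneqq N_G H/H$ is a compact Lie group, so that $\myuline{\spectra}_{N_G H}$ and $\myuline{\spectra}_{W_G H}$ are defined. Applying \cref{prop:restriction_stability_of_Poincare_duality} to the closed subgroup $N_G H\leq G$ shows that $\res^G_{N_G H}\udl{X}$ is $\myuline{\spectra}_{N_G H}$--Poincaré with Spivak datum $\res^G_{N_G H}(D_{\udl{X}},c)$ (\cref{cons:pushing_spivak_data_along_restrictions}); restricting once more gives that $\res^G_H\udl{X}$ is $\myuline{\spectra}_H$--Poincaré.

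For the $W_G H$--statement, let $\Gamma_H$ be the family of closed subgroups $K\leq N_G H$ with $H\nleq K$, as in \cref{ex:normal_subgroup_family} for the normal subgroup $H\trianglelefteq N_G H$. Under the equivalence $\orbit(W_G H)\simeq \orbit_{\Gamma_H^c}(N_G H)$ of \cref{ex:normal_subgroup_family}, one identifies (using \cref{ex:isotropy_separation_coinduction}) the $W_G H$--space $(\res^G_{N_G H}\udl{X})\sUpperStar{\Gamma_H}$ with the genuine $H$--fixed points $\udl{X}^H$ carrying its residual $W_G H$--action. Since $\myuline{\spectra}_{N_G H}$ is a presentably symmetric monoidal fibrewise stable $N_G H$--category, \cref{thm:poincare_isotropy} (1) shows that $\udl{X}^H$ is $\brauerQuotientFamily{\Gamma_H}\myuline{\spectra}_{N_G H}$--Poincaré. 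Finally, \cref{prop:stability_for_quotient_groups} (via \cref{ex:isotropy_separation_coinduction}) identifies the smashing localisation $\brauerQuotientFamily{\Gamma_H}$ with $\stcoind{\groupSurjection}{}$ for $\groupSurjection\colon N_G H\twoheadrightarrow W_G H$, and \cref{lem:geometric_fixed_point_spectrum_weil_group} shows the symmetric monoidal unit map $\myuline{\spectra}_{W_G H}\to\brauerQuotientFamily{\Gamma_H}\myuline{\spectra}_{N_G H}$ is an equivalence. Transporting the Poincaré Spivak datum along this symmetric monoidal equivalence yields that $\udl{X}^H$ is $W_G H$--Poincaré.

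For the ``in particular'' clause I would run the analogous argument one level further, directly over $H$. Taking $\family=\proper$ to be the family of proper closed subgroups of $H$, we have $\orbit_{\proper^c}(H)=\{H/H\}$, so $(\res^G_H\udl{X})\sUpperStar{\proper}\simeq X^H\in\spc$, and by \cref{cons:geometric_fixed_points} together with \cref{lem:geometric_fixed_point_spectrum_weil_group} the category $\brauerQuotientFamily{\proper}\myuline{\spectra}_H\simeq \spectra$ via the geometric fixed points functor $\Phi^H$. Hence \cref{thm:poincare_isotropy} (1) shows that $X^H$ is a nonequivariant Poincaré space. To identify its dualising sheaf I would appeal to \cref{cons:pushing_spivak_data_geometric_fix_points}, which is exactly the composite of \cref{cons:pushing_spivak_data_along_restrictions} with \cref{cons:isotropy_separation_spivak_Data} along $\Phi^H\colon \myuline{\spectra}_H\to s_*\spectra$ and which, by construction, produces the very Spivak datum obtained by running \cref{thm:base_change_of_tw_amb_spivak_data} and \cref{lem:poincare_duality_adjunction} through the chain above; unwinding it gives the dualising sheaf $X^H\xrightarrow{D_{\udl{X}}}\spectra_H\xrightarrow{\Phi^H}\spectra$.

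The genuinely load-bearing inputs are the two identifications $(\res^G_{N_G H}\udl{X})\sUpperStar{\Gamma_H}\simeq \udl{X}^H$ with the correct $W_G H$--action and $\brauerQuotientFamily{\Gamma_H}\myuline{\spectra}_{N_G H}\simeq \myuline{\spectra}_{W_G H}$; both are already packaged by \cref{ex:normal_subgroup_family,ex:isotropy_separation_coinduction,prop:stability_for_quotient_groups,lem:geometric_fixed_point_spectrum_weil_group}, so the only real work is the bookkeeping of checking that the dualising sheaf tracked through the successive basechanges agrees on the nose with the composite $\Phi^H\circ D_{\udl{X}}$ rather than merely up to equivalence of underlying spectra — which is why I would route the identification through the tailor-made \cref{cons:pushing_spivak_data_geometric_fix_points}. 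I expect this compatibility bookkeeping to be the main (if modest) obstacle; no new ideas beyond assembling the cited results should be required.
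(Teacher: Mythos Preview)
Your proposal is correct and follows essentially the same route as the paper: restrict to $N_G H$ via \cref{prop:restriction_stability_of_Poincare_duality}, then apply \cref{thm:poincare_isotropy} (1) for the family $\Gamma_H$ and identify $\brauerQuotientFamily{\Gamma_H}\myuline{\spectra}_{N_G H}\simeq \myuline{\spectra}_{W_G H}$ via \cref{prop:stability_for_quotient_groups} and \cref{lem:geometric_fixed_point_spectrum_weil_group}. The only organisational difference is that the paper first isolates the case of a normal subgroup and then reduces the general case to it, whereas you go straight to $N_G H$; for the ``in particular'' clause the paper deduces nonequivariant Poincar\'e duality of $X^H$ by applying the $W_G H$--statement just proved to the $W_G H$--space $\udl{X}^H$ and taking $(-)^{W_G H}$, while you run the isotropy argument directly over $H$ with the proper family---your route has the minor advantage of tracking the dualising sheaf through \cref{cons:pushing_spivak_data_geometric_fix_points} more transparently.
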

\begin{proof}
    First consider the case where $H$ is normal in $G$.
    We apply \cref{thm:poincare_isotropy} (1) in the case $\udl{\category{C}} = \myuline{\spectra}_G$ and for the family $\Gamma_H \coloneqq \{K\leq G\: | \: H \nleq K \}$ of subgroups of $G$ not containing $H$.
    Thus, if $\udl{X}$ is a $G$--Poincar\'e space, then $s^* \udl{X}$ is a $\widetilde{s}^* \myuline{\spectra}_G$ Poincar\'e space.
    In \cref{prop:stability_for_quotient_groups} we saw that $\coind \colon \spc_{G/H} \to \spc_{G}$ induces an equivalence $\spc_{G/H} \simeq \spc_{\Gamma_H^c}$ endowing $s^* \udl{X}$ with a $G/H$ action.
    It also follows from \cref{lem:geometric_fixed_point_spectrum_weil_group} that $\widetilde{s}^* \myuline{\spectra}_G \simeq \myuline{\spectra}_{G/N}$ which completes the proof of this case.

    Now suppose that $H \le G$ is a general subgroup.
    We can apply \cref{prop:restriction_stability_of_Poincare_duality} to obtain that $\res^G_{N_{G}H} \udl{X}$ is a $N_{G}H$--Poincar\'e duality space.
    Then the normal subgroup case from above shows that $\res^G_H \udl{X} = \res^{N_{G}H}_H \res^G_{N_{G}H} \udl{X}$ is a $W_GH = N_{G}H/H$--Poincar\'e duality space.

    The first part in particular shows that $X^G$ is a $\spectra$--Poincar\'e duality space.
    Applying this to $X^H \in \spc_{W_G H}$, we see that $X^H$ is a nonequivariant $\spectra$--Poincar\'e duality space.
\end{proof}

To end our discussion on general fixed points methods, we provide a sort of converse to the previous statement. By \cref{prop:twisted_ambidextrous_presentable_case}, we know that in the presentable setting, a twisted ambidextrous Spivak datum is unique if it exists. Via the geometric fixed points functors, the following result gives a full characterisation for a candidate invertible Spivak datum to be the unique one for a twisted ambidextrous space in terms of nonequivariant Poincar\'{e} duality. It will be essential for constructing examples of equivariant Poincar\'e duality spaces in \cref{sec:examples_G_PD}.

\begin{thm}[Fixed point recognition principle of Poincar\'e spaces]\label{thm:PD_fixed_point_recognition}
    Suppose that $\udl{X} \in \spc_G$ is a twisted ambidextrous $G$--space (e.g. a compact $G$--space) and let $(\xi,c)$ be a $\myuline{\spectra}_G$-Spivak datum for $\udl{X}$ such that $\xi \colon \udl{X} \to \myuline{\spectra}_G$ takes values in $\udl{\picardSpace}(\myuline{\spectra}_G)$.  Then $(\xi, c)$ exhibits $\udl{X}$ as a $G$--Poincar\'e duality space if and only if for all closed subgroups $H \le G$, the Spivak datum $\Phi^H(\xi, c)$ from \cref{cons:pushing_spivak_data_geometric_fix_points} exhibits $X^H$ as a nonequivariant $\spectra$--Poincar\'e space.
\end{thm}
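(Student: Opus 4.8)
The plan is to prove the two implications separately. The forward direction is a refinement of \cref{thm:fixed_points_poincare_duality} that merely bookkeeps the Spivak datum, while the reverse direction is the substantive part.

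\textbf{The ``only if'' direction.} Suppose $(\xi,c)$ exhibits $\udl{X}$ as $G$--Poincar\'e and fix a closed subgroup $H\le G$. By \cref{prop:restriction_stability_of_Poincare_duality} the restricted datum $\res^G_H(\xi,c)$ is a Poincar\'e $\myuline{\spectra}_H$--Spivak datum for $\res^G_H\udl{X}$. Pushing it along the symmetric monoidal $H$--colimit preserving functor $\Phi^H\colon\myuline{\spectra}_H\to s_*\spectra$ (the Brauer quotient for the family $\proper_H$ of proper closed subgroups of $H$, for which $\orbit_{\proper_H^c}(H)\simeq\ast$) and invoking \cref{thm:base_change_of_tw_amb_spivak_data} shows that $\Phi^H\res^G_H(\xi,c)$ is a Poincar\'e $s_*\spectra$--Spivak datum for $\res^G_H\udl{X}$; since $\Phi^H$ is symmetric monoidal, invertibility of the dualising sheaf is preserved. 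Transporting along \cref{lem:poincare_duality_in_trivial_bottoms} yields a Poincar\'e $\spectra$--Spivak datum for $(\res^G_H\udl{X})^{\proper_H^c}=X^H$, which is by definition the datum $\Phi^H(\xi,c)$ of \cref{cons:pushing_spivak_data_geometric_fix_points}.

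\textbf{The ``if'' direction.} Now assume $\Phi^H(\xi,c)$ exhibits $X^H$ as a nonequivariant Poincar\'e space for every closed $H\le G$. As $\xi$ already factors through $\udl{\picardSpace}(\myuline{\spectra}_G)$, it suffices to prove that $(\xi,c)$ is twisted ambidextrous; then by uniqueness of twisted ambidextrous Spivak data (\cref{prop:twisted_ambidextrous_presentable_case}) it must agree with the dualising datum of the twisted ambidextrous space $\udl{X}$, so in particular $D_{\udl{X}}\simeq\xi$ is invertible and $(\xi,c)$ is a Poincar\'e datum. Twisted ambidexterity means that the capping transformation $\ambi{c}{\xi}{-}$ is an equivalence in $\udl{\func}(\myuline{\spectra}^{\udl{X}},\myuline{\spectra})$; a morphism of $G$--categories is an equivalence once its images in each $\sC(\myuline{G/L})$ are, and since restriction along the \'etale morphism $\spc_G\rightleftharpoons\spc_L$ intertwines capping maps (\cref{prop:base_change_twisted_ambidextrous_etale_morphism}), this unwinds to the assertion that for every closed $L\le G$ and every $\psi\in\func_L(\res^G_L\udl{X},\myuline{\spectra}_L)$ the map $\ambi{\res^G_L c}{\res^G_L\xi}{\psi}$ is an equivalence in $\spectra_L$. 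The proof of \cref{prop:joint_conservativity_geometric_fixed_points} records that $\{\Phi^K\colon\spectra_L\to\spectra\}_{K\le L}$ is jointly conservative, and since $\Phi^K$ factors through $\res^L_K$, which again intertwines capping maps, it is enough to show that $\Phi^K(\ambi{\res^G_K c}{\res^G_K\xi}{\phi})$ is an equivalence for every closed $K\le G$ and every $\phi\in\func_K(\res^G_K\udl{X},\myuline{\spectra}_K)$, where now $\Phi^K\colon\myuline{\spectra}_K\to s_*\spectra$.

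Here I would invoke the linear intertwining principle \cref{prop:intertwining_principle_for_capping} in the form of \cref{example:master_setting}(a) for the symmetric monoidal colimit preserving functor $\Phi^K\colon\myuline{\spectra}_K\to s_*\spectra$: it yields a commuting square identifying $\Phi^K(\ambi{\res^G_K c}{\res^G_K\xi}{-})$ with the capping transformation $\ambi{\Phi^K c}{\Phi^K\res^G_K\xi}{\Phi^K(-)}$ of the pushed-forward datum $\Phi^K\res^G_K(\xi,c)$, up to the Beck--Chevalley maps $\beckChevalley_!$ and $\beckChevalley_*$. Now $\beckChevalley_!$ is an equivalence because $\Phi^K$ preserves parametrised colimits, and $\beckChevalley_*\colon\Phi^K\uniqueMapX_*(-)\to\uniqueMapX_*\Phi^K(-)$ is an equivalence by \cref{cor:transferring_colimit_BC_to_limit_BC_by_ambidexterity} --- this is exactly where the standing hypothesis that $\udl{X}$ is twisted ambidextrous is used, since it forces $\res^G_K\udl{X}$ to be $\myuline{\spectra}_K$--twisted ambidextrous (via \cref{prop:cpt_twisted_ambidextrous} in the compact case, or in general because \'etale pushforward preserves twisted ambidextrous Spivak data by \cref{prop:base_change_twisted_ambidextrous_etale_morphism}). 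Finally, the capping transformation of $\Phi^K\res^G_K(\xi,c)$ is an equivalence: by \cref{lem:poincare_duality_in_trivial_bottoms} this is equivalent to $\Phi^K(\xi,c)$ being twisted ambidextrous for $X^K$, which is the hypothesis, and precomposition with $\Phi^K$ preserves equivalences. This completes the reverse direction.

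\textbf{Main obstacle.} The routine part is assembling the basechange statements of \cref{subsection:constructions_with_spivak_data,subsection:fixed_points_methods}; the genuinely delicate points are the reduction of the $G$--natural capping transformation to pointwise statements in $\spectra_L$ over all subgroups (arranged so that restriction followed by geometric fixed points is a jointly conservative family detecting it) and, above all, the verification that the Beck--Chevalley comparison $\beckChevalley_*$ between $\Phi^K$ and the relevant limit functor is an equivalence --- a statement which is false in general for geometric fixed points and is rescued only by the a priori twisted ambidexterity of $\udl{X}$ together with \cref{cor:transferring_colimit_BC_to_limit_BC_by_ambidexterity}.
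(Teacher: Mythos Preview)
The proposal is correct and follows essentially the same strategy as the paper: both directions hinge on the same ingredients, with the reverse direction reduced via joint conservativity of geometric fixed points to the intertwining square of \cref{prop:intertwining_principle_for_capping}, where $\beckChevalley_*$ is handled by \cref{cor:transferring_colimit_BC_to_limit_BC_by_ambidexterity} using the twisted ambidexterity hypothesis. The only organisational difference is that the paper invokes the parametrised joint conservativity of \cref{prop:joint_conservativity_geometric_fixed_points} (via \cref{obs:functors_preserve_joint_conservativity}) and then passes to adjoints to reduce without loss of generality to $\Phi^G$, whereas you first evaluate pointwise at each orbit and then apply the unparametrised joint conservativity---this is a cosmetic reshuffling of the same reduction.
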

\begin{proof}
    The ``only if'' direction is a consequence of \cref{thm:fixed_points_poincare_duality}.
    For the other direction, we have to show that the Spivak datum $(\xi, c)$ is twisted ambidextrous as $\xi$ is invertible by assumption. By \cref{obs:functors_preserve_joint_conservativity} and \cref{prop:joint_conservativity_geometric_fixed_points}, the collection \small
    \[\Big\{\udl{\func}(\udl{X},\myuline{\spectra}) \xrightarrow{\Phi^H} \udl{\func}(\udl{X},\coind^G_Hs_*\widetilde{s}^*\myuline{\spectra})\simeq \prod^G_Hs_*\func(X^H,\spectra) \: | \: H\leq G \text{ closed subgroups }\Big\}\]\normalsize is jointly conservative. Thus,  it suffices to show that the transformations
    \begin{equation}
        \Phi^H(\ambi{c}{\xi}{-}) \colon \Phi^H\uniqueMapX_*(\--) \to \Phi^H\uniqueMapX_!(\-- \otimes \xi) \label{diag:tw_ambi_map_fixed_point_recognition}
    \end{equation}
    are equivalences. By passing to the adjoint $\udl{\func}(\res^G_H\udl{X},\res^G_H\myuline{\spectra})\xrightarrow{\Phi^H}s_*\func(X^H,\spectra)$ to consider everything as $H$--categories, we may without loss of generality just consider the case $\Phi^G$. By \cref{prop:intertwining_principle_for_capping} applied to the case of \cref{example:master_setting} (1), the symmetric monoidal functor of presentably symmetric monoidal $G$--categories $\Phi^G\colon \myuline{\spectra}\rightarrow s_*\spectra$ yields a  square
    \begin{center}
        \begin{tikzcd}
            \Phi^G\uniqueMapX_*(-) \ar[rr,"\Phi^G(c\cap_{\xi}-)"] \dar["\beckChevalley_*", "\simeq"']& &\Phi^G\uniqueMapX_!(\xi\otimes-)\\
            \uniqueMapX_*\Phi^G(-) \ar[rr,"\Phi^G c\:\cap_{\Phi^G\xi}\Phi^G-"'] && \uniqueMapX_!\Phi^G(\xi\otimes-)\simeq \uniqueMapX_!(\Phi^G\xi\otimes \Phi^G(-)) \uar["\beckChevalley_!", "\simeq"']
        \end{tikzcd}
    \end{center}
    where the vertical Beck--Chevalley maps are equivalences, the right one by \cref{obs:parametrised_colimits_of_s_*-categories} and the left one by \cref{cor:transferring_colimit_BC_to_limit_BC_by_ambidexterity} since $\udl{X}$ was assumed to be twisted ambidextrous. By \cref{obs:parametrised_colimits_of_s_*-categories}, the bottom map identifies with $\Phi^G c\:\cap_{\Phi^G\xi}\Phi^G-\colon X^G_*\Phi^G(-)\rightarrow X^G_!(\Phi^G\xi\otimes \Phi^G-)$, which is an equivalence by hypothesis. Thus, in total, we see that the top horizontal map in the square above is an equivalence, as was to be shown.
\end{proof}

\subsection{Construction principles}\label{subsection:construction_principles}

In this section we will study various results on how to build new Poincar\'e duality spaces out of old ones.

\subsubsection*{Change of groups}

We begin by studying the effect of standard equivariant operations on $\udl{X}$. Recall the constructions and notations  from \cref{cons:res_ind_coind} and \cref{cons:isotropy_separation_recollement}.

\begin{prop}[Poincar\'e duality and restriction]\label{prop:restriction_poincare_complex}
    Suppose that $\alpha \colon H \to G$ is a continuous homomorphism of compact Lie groups and $\underline{X} \in \spc_G$.
    If $\underline{X}$ is a $G$--Poincar\'e space, then $\res_\alpha \underline{X}$ is a $H$--Poincar\'e space with Spivak datum $(\res_\alpha c, \res_\alpha D_{\udl{X}})$ where
    \begin{enumerate}
        \item the local system $\res_\alpha D_{\udl{X}}$ is $\res_\alpha \udl{X} \to \res_\alpha \myuline{\spectra}_G \to \myuline{\spectra}_H$ and
        \item the collapse $\res_\alpha c$ is $\unit_{\spectra_H} = \res_\alpha \unit_{\spectra_G} \to \res_\alpha \uniqueMapX_! D_{\udl{X}} \simeq \pointProjectiontopIndex{\res_\alpha \udl{X}}_! \res_\alpha D_{\udl{X}}$.
    \end{enumerate}
\end{prop}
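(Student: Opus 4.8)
The plan is to recognise this as a special case of the \'etale basechange machinery already developed, combined with the factorisation of an arbitrary homomorphism $\alpha$ into an epimorphism followed by a monomorphism. Concretely, I would first reduce to the two basic cases (a) $\alpha$ an injection $H \hookrightarrow G$ and (b) $\alpha$ an epimorphism $G \twoheadrightarrow Q$, since a general $\alpha$ factors as $H \twoheadrightarrow \alpha(H) \hookrightarrow G$ and $\res_\alpha$ then factors correspondingly. Actually, the cleanest path is: for the injective case, invoke \cref{prop:restriction_stability_of_Poincare_duality} directly, which already says that $\res^G_H(\xi,c)$ is a Poincar\'e $\res^G_H\myuline{\spectra}_G$-Spivak datum for $\res^G_H\udl{X}$; then observe that the restriction map $\res^G_H\colon \myuline{\spectra}_G\to\coind^G_H\myuline{\spectra}_H$ adjoints to a symmetric monoidal $H$-colimit preserving functor $\res^G_H\myuline{\spectra}_G\to\myuline{\spectra}_H$ (this is \cref{cons:spectral_restriction}), so applying the change-of-coefficients result \cref{thm:base_change_of_tw_amb_spivak_data} along this functor transports the Poincar\'e Spivak datum to a Poincar\'e $\myuline{\spectra}_H$-Spivak datum. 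For the epimorphic case $\groupSurjection\colon G\twoheadrightarrow Q$, the relevant functor is the inflation $\inflated\colon \myuline{\spectra}_Q\to\coind_\groupSurjection\myuline{\spectra}_G$ from \cref{cons:spectral_restriction}, so I would apply \cref{prop:base_change_twisted_ambidexterity_geometric_morphism} (for the geometric morphism $\res_\groupSurjection\colon\spc_Q\rightleftharpoons\spc_G$) together with \cref{thm:base_change_of_tw_amb_spivak_data} to land in $\myuline{\spectra}_Q$.

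The key steps in order: (1) Decompose $\alpha = \groupInjection\circ\groupSurjection$ with $\groupSurjection\colon H\twoheadrightarrow\alpha(H)$ and $\groupInjection\colon\alpha(H)\hookrightarrow G$, noting $\res_\alpha\simeq \res_\groupSurjection\circ\res_\groupInjection$. (2) Handle the injective case: $\res_\groupInjection$ is the \'etale geometric morphism $\spc_G\rightleftharpoons(\spc_G)_{/(\myuline{G/\alpha(H)})}\simeq\spc_{\alpha(H)}$ by \cref{rec:orbit_category}, so \cref{prop:base_change_twisted_ambidextrous_etale_morphism}(e) gives a Poincar\'e $\res_\groupInjection\myuline{\spectra}_G$-Spivak datum $\res_\groupInjection(\xi,c)$ for $\res_\groupInjection\udl{X}$, then \cref{thm:base_change_of_tw_amb_spivak_data} along the symmetric monoidal colimit-preserving functor $\res_\groupInjection\myuline{\spectra}_G\to\myuline{\spectra}_{\alpha(H)}$ (from \cref{cons:spectral_restriction}, using that $\myuline{\spectra}_{\alpha(H)}$ is the initial object of $\presentable^{L,\alpha(H)\text{-st}}_{\alpha(H)}$) lands it in $\myuline{\spectra}_{\alpha(H)}$. (3) Handle the epimorphic case: $\res_\groupSurjection$ is a geometric morphism $\spc_{\alpha(H)}\rightleftharpoons\spc_H$, so by \cref{prop:base_change_twisted_ambidexterity_geometric_morphism}(c) (taking $f^* = \res_\groupSurjection$, $f_* = \coind_\groupSurjection$, $\udl{\D}=\myuline{\spectra}_H$) we push the datum forward, then the inflation map $\inflated_\groupSurjection\colon\myuline{\spectra}_{\alpha(H)}\to\coind_\groupSurjection\myuline{\spectra}_H$ and \cref{thm:base_change_of_tw_amb_spivak_data} (or directly \cref{prop:base_change_twisted_ambidexterity_geometric_morphism}(d)) land the result in $\myuline{\spectra}_H$. (4) Chase through the explicit descriptions in \cref{cons:pushing_spivak_data_along_restrictions}, \cref{cons:pushing_forward_spivak_data_geometric_morphisms} and \cref{cons:spectral_restriction} to verify that the resulting local system is indeed the composite $\res_\alpha\udl{X}\to\res_\alpha\myuline{\spectra}_G\to\myuline{\spectra}_H$ and the collapse map is $\unit_{\spectra_H}=\res_\alpha\unit_{\spectra_G}\to\res_\alpha\uniqueMapX_!D_{\udl{X}}\simeq\pointProjectiontopIndex{\res_\alpha\udl{X}}_!\res_\alpha D_{\udl{X}}$, using \cref{lem:parametrised_colimits_base_change} and \cref{lem:parametrised_colimits_etale_base_change} for the Beck--Chevalley identification $\res_\alpha\uniqueMapX_!\simeq\pointProjectiontopIndex{\res_\alpha\udl{X}}_!\res_\alpha$.

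The main obstacle I anticipate is not any single hard step but the bookkeeping in step (4): making sure that the two separate applications of the basechange theorems compose to give precisely the stated local system and collapse map, rather than something merely equivalent in an unnamed way. In particular one has to track the coherence isomorphism $\res_\alpha[\uniqueMapX_!D_{\udl X}]\simeq \pointProjectiontopIndex{\res_\alpha\udl X}_![\res_\alpha D_{\udl X}]$ across the factorisation, and check it agrees with the one produced by \cref{lem:parametrised_colimits_base_change}/\cref{lem:parametrised_colimits_etale_base_change} applied to $\alpha$ directly; this is a standard but slightly tedious compatibility of Beck--Chevalley transformations under composition of geometric morphisms, for which \cite[\S2.2]{CarmeliSchlankYanovski2022} can be cited. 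A cleaner alternative, which I would actually prefer to write up, is to bypass the factorisation entirely: observe that $\res_\alpha\colon\spc_G\rightleftharpoons\spc_H\cocolon\coind_\alpha$ is a geometric morphism for any $\alpha$, apply \cref{prop:base_change_twisted_ambidexterity_geometric_morphism}(c,e) to push $(\xi,c)$ along it, obtaining a Poincar\'e $\coind_\alpha\myuline{\spectra}_H$-Spivak datum... wait, one must be careful about variance here, so in fact the factorisation approach is the safe one and I would present that, accepting the bookkeeping cost.
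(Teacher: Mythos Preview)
Your factorisation approach would work, but it is the roundabout route; the paper's proof is precisely the ``cleaner alternative'' you glimpsed and then abandoned. The variance confusion you hit is resolved by doing the two basechanges in the opposite order from what you attempted.

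Concretely, the paper argues in two lines, with no factorisation of $\alpha$ at all. First, \cref{cons:spectral_restriction} already provides, for \emph{any} homomorphism $\alpha$, a symmetric monoidal $G$-colimit-preserving functor $\res_\alpha\colon\myuline{\spectra}_G\to\coind_\alpha\myuline{\spectra}_H$ of presentably symmetric monoidal $G$-categories. Applying the coefficient basechange \cref{thm:base_change_of_tw_amb_spivak_data} along this functor shows that $\udl{X}$ is $\coind_\alpha\myuline{\spectra}_H$--Poincar\'e. Second, one applies \cref{prop:base_change_twisted_ambidexterity_geometric_morphism}(d) to the geometric morphism $\res_\alpha\colon\spc_G\rightleftharpoons\spc_H\cocolon\coind_\alpha$: since $\myuline{\spectra}_H$ is presentably symmetric monoidal (the second hypothesis in (d)), the Poincar\'e $f_*\udl{\D}=\coind_\alpha\myuline{\spectra}_H$--Spivak datum for $\udl{X}$ descends to a Poincar\'e $\udl{\D}=\myuline{\spectra}_H$--Spivak datum for $f^*\udl{X}=\res_\alpha\udl{X}$, with exactly the stated description.

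Your mistake was trying to push through the geometric morphism \emph{before} changing coefficients: starting from a $\myuline{\spectra}_G$-datum, there is no obvious target category on the $\spc_H$ side to land in, hence your variance worry. Changing coefficients first (to $\coind_\alpha\myuline{\spectra}_H$, still a $G$-category) puts you in exactly the form $f_*\udl{\D}$ that part (d) expects. This order also makes the bookkeeping of step (4) disappear: the explicit Spivak datum falls out directly from \cref{cons:basechanging_spivak_data} and \cref{cons:pushing_forward_spivak_data_geometric_morphisms}, with the identification $\res_\alpha\uniqueMapX_!\simeq\pointProjectiontopIndex{\res_\alpha\udl{X}}_!\res_\alpha$ supplied once by \cref{lem:parametrised_colimits_base_change}.
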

\begin{proof}
    If $\udl{X}$ is $\myuline{\spectra}_G$--Poincar\'e, then applying \cref{thm:base_change_of_tw_amb_spivak_data} for the symmetric monoidal $G$-colimit preserving functor $\res_\alpha \colon \myuline{\spectra}_G \to \coind_\alpha \myuline{\spectra}_H$ from \cref{cons:spectral_restriction} shows that $\udl{X}$ is $\coind_\alpha \myuline{\spectra}_H$--Poincar\'e.
    By \cref{prop:base_change_twisted_ambidexterity_geometric_morphism} (d) we see that $\res_\alpha \udl{X}$ is $\myuline{\spectra}_H$--Poincar\'e with claimed Spivak datum.
\end{proof}

\begin{prop}[Poincar\'e duality and inflation]\label{prop:pd_inflation}
    Consider a closed normal subgroup $N \le G$ and a $G/N$-space $\udl{X}$.
    Then $\udl{X}$ is a $G/N$--Poincar\'e duality space if and only if $\infl_G^{G/N} \udl{X}$ is a $G$--Poincar\'e duality space.
\end{prop}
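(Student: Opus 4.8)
The plan is to exploit the geometric morphism $f^{*} = \infl_{\theta} = \res_{\theta} \colon \spc_{G/N} \rightleftharpoons \spc_{G} \cocolon \coind_{\theta} = (-)^{N}$ attached to the quotient $\theta \colon G \twoheadrightarrow G/N =: Q$ (see \cref{cons:res_ind_coind}), together with the fact, recalled there, that $\infl_{\theta}$ is fully faithful; thus the unit $\id_{\spc_{Q}} \xrightarrow{\simeq} \coind_{\theta}\infl_{\theta}$ is an equivalence, so that $(\infl_{\theta}\udl{X})^{N} = \coind_{\theta}\infl_{\theta}\udl{X} \simeq \udl{X}$ for every $\udl{X} \in \spc_{Q}$.

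For the direction ``$\infl_{\theta}\udl{X}$ is $G$--Poincar\'e $\Rightarrow$ $\udl{X}$ is $Q$--Poincar\'e'' I would apply \cref{thm:fixed_points_poincare_duality} to the $G$--Poincar\'e space $\infl_{\theta}\udl{X}$ and the closed normal subgroup $N$: since $W_{G}N = G/N = Q$, it gives that $(\infl_{\theta}\udl{X})^{N} \in \spc_{Q}$ is $Q$--Poincar\'e, and by the previous paragraph this is $\udl{X}$. (Equivalently, one extracts this from \cref{thm:poincare_isotropy} (1) with $\udl{\sC} = \myuline{\spectra}_{G}$ and the family $\Gamma_{N}$ of \cref{ex:normal_subgroup_family}, using \cref{ex:isotropy_separation_coinduction} to see $(\infl_{\theta}\udl{X})\sUpperStar{\Gamma_{N}} \simeq \coind_{\theta}\infl_{\theta}\udl{X} \simeq \udl{X}$ and \cref{lem:geometric_fixed_point_spectrum_weil_group} to see $\brauerQuotientFamily{\Gamma_{N}}\myuline{\spectra}_{G} \simeq \myuline{\spectra}_{Q}$.)

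For the converse ``$\udl{X}$ is $Q$--Poincar\'e $\Rightarrow$ $\infl_{\theta}\udl{X}$ is $G$--Poincar\'e'' I would argue in two steps. First, \cref{cons:spectral_restriction} furnishes the inflation map $\infl_{\theta} \colon \myuline{\spectra}_{Q} \to \coind_{\theta}\myuline{\spectra}_{G}$, a symmetric monoidal $Q$--colimit preserving functor of presentably symmetric monoidal $Q$--categories ($\coind_{\theta}\myuline{\spectra}_{G}$ is presentably symmetric monoidal by \cref{lem:lax_monoidal_base_change_geometric_morphism} and satisfies the $\udl{X}$--projection formula by \cref{prop:projection_formula_geometric_pushforwards} (1), since $\myuline{\spectra}_{G}$ does); applying the change-of-coefficients \cref{thm:base_change_of_tw_amb_spivak_data} along it shows $\udl{X}$ is $\coind_{\theta}\myuline{\spectra}_{G}$--Poincar\'e. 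Second, I apply \cref{thm:omnibus_geometric_pushforward_of_capping} (d) to the geometric morphism $\infl_{\theta} \colon \spc_{Q} \rightleftharpoons \spc_{G} \cocolon \coind_{\theta}$ with $\udl{\D} = \myuline{\spectra}_{G}$ (presentably symmetric monoidal, so the hypothesis of (d) is satisfied): the Poincar\'e $f_{*}\udl{\D} = \coind_{\theta}\myuline{\spectra}_{G}$--Spivak datum for $\udl{X}$ pulls back to a Poincar\'e $\udl{\D} = \myuline{\spectra}_{G}$--Spivak datum for $f^{*}\udl{X} = \infl_{\theta}\udl{X}$, so $\infl_{\theta}\udl{X}$ is $G$--Poincar\'e.

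The only point that needs a moment's thought is that one cannot basechange directly between the coefficient $G$--category $\myuline{\spectra}_{G}$ and the $Q$--category $\myuline{\spectra}_{Q}$ — they record genuinely different isotropy, and indeed $\coind_{\theta}\myuline{\spectra}_{G} \not\simeq \myuline{\spectra}_{Q}$ — so in the ``$\Rightarrow$'' direction one has to route through the intermediate coefficient $\coind_{\theta}\myuline{\spectra}_{G}$ and verify it carries the structure (presentably symmetric monoidal, $\udl{X}$--projection formula) needed to feed it into \cref{thm:base_change_of_tw_amb_spivak_data} and \cref{thm:omnibus_geometric_pushforward_of_capping}, which is exactly what \cref{cons:spectral_restriction}, \cref{lem:lax_monoidal_base_change_geometric_morphism}, and \cref{prop:projection_formula_geometric_pushforwards} (1) provide. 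There is no circularity, since \cref{thm:fixed_points_poincare_duality} and \cref{thm:poincare_isotropy} appear earlier, in \cref{subsection:fixed_points_methods}.
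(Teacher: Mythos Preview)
Your proof is correct and takes essentially the same approach as the paper. The paper's one-line proof cites \cref{prop:restriction_poincare_complex} for the forward direction and \cref{thm:fixed_points_poincare_duality} for the converse; your argument for ``$\udl{X}$ $Q$--Poincar\'e $\Rightarrow$ $\infl_\theta\udl{X}$ $G$--Poincar\'e'' is precisely the proof of \cref{prop:restriction_poincare_complex} (specialised to $\alpha=\theta$) unfolded inline, and your backward direction is identical to the paper's.
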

\begin{proof}
    One direction is a consequence of \cref{prop:restriction_poincare_complex} while the other one follows from \cref{thm:fixed_points_poincare_duality}.
\end{proof}

\begin{prop}[Poincar\'e duality and induction]
    Let $\groupInjection \colon H \rightarrow G$ be an injective homomorphism of compact Lie groups. If $\udl{X}$ is a $H$--Poincar\'e space, then $\ind^G_H \udl{X}$ is $G$--Poincar\'e space.
\end{prop}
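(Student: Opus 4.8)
The plan is to realise the terminal map $\ind^G_H\udl{X}\to\udl{\ast}$ in $\spc_G$ as a composite of two $\myuline{\spectra}$--Poincar\'e duality maps, and then appeal to the duality composition formula \cref{prop:Poincare_duality_composition}.

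First I would set up the relevant \'etale picture. Recall from \cref{rec:orbit_category} and \cref{cons:res_ind_coind}(a) that the equivalence $\orbit(H)\simeq\orbit(G)_{/(G/H)}$ induces a symmetric monoidal equivalence $\spc_H\simeq(\spc_G)_{/\myuline{G/H}}$, under which $\res^G_H$ is identified with the \'etale pullback $\pi_{\myuline{G/H}}^*$ and $\ind^G_H$ with its further left adjoint $(\pi_{\myuline{G/H}})_!$. Concretely, this equivalence carries the $H$--space $\udl{X}$ to the total space $\ind^G_H\udl{X}$ together with its structure map $p\colon\ind^G_H\udl{X}\to\myuline{G/H}$, and it carries the terminal map $\udl{X}\to\udl{\ast}$ of $\spc_H$ to $p$, now regarded as a morphism to the terminal object $\id_{\myuline{G/H}}$ of the slice. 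Since $\res^G_H$ is symmetric monoidal (\cref{lem:restriction_presentable_categories}) and $\myuline{\spectra}_G$ is the unit of $\prGst{G}$, we get $(\pi_{\myuline{G/H}})^*\myuline{\spectra}_G\simeq\res^G_H\myuline{\spectra}_G\simeq\myuline{\spectra}_H$ as presentably symmetric monoidal $H$--categories.

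With this in hand, the next step is essentially a tautology: by \cref{def:pd_map} together with \cref{defn:poincare_spivak_datum}, the assertion that $p$ is a $\myuline{\spectra}_G$--Poincar\'e duality map is by definition the assertion that $p\in(\spc_G)_{/\myuline{G/H}}$ is a $(\pi_{\myuline{G/H}})^*\myuline{\spectra}_G$--Poincar\'e object, and transporting through the equivalence of the previous paragraph this is exactly the hypothesis that $\udl{X}$ is $\myuline{\spectra}_H$--Poincar\'e, i.e. an $H$--Poincar\'e space (there is nothing to check about the Picard condition, since it is the same condition on both sides of a symmetric monoidal equivalence; one may also invoke \cref{cor:factoring_through_picard_geometric_morphisms}). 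Hence $p$ is a $\myuline{\spectra}_G$--Poincar\'e duality map. On the other hand $\myuline{G/H}$ is a smooth closed $G$--manifold, hence a $G$--Poincar\'e space by \cref{thm:examples_G_pd}; equivalently, the projection $r\colon\myuline{G/H}\to\udl{\ast}$ is a $\myuline{\spectra}_G$--Poincar\'e duality map, and it is in particular $\myuline{\spectra}_G$--twisted ambidextrous since $\myuline{G/H}$ is compact (\cref{prop:cpt_twisted_ambidextrous}).

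Finally, the terminal map of $\ind^G_H\udl{X}$ factors as $\ind^G_H\udl{X}\xrightarrow{p}\myuline{G/H}\xrightarrow{r}\udl{\ast}$. As $\myuline{\spectra}_G$ is presentably symmetric monoidal it (and its \'etale pullbacks, cf.\ \cref{lem:lax_monoidal_base_change_etale_morphism}) satisfy all projection formulas, so \cref{prop:Poincare_duality_composition}(1) shows the composite is $\myuline{\spectra}_G$--twisted ambidextrous, and then part (2) of that proposition, applied with the Poincar\'e duality map $r$, shows that the composite is $\myuline{\spectra}_G$--Poincar\'e duality because $p$ is. Thus $\ind^G_H\udl{X}$ is $G$--Poincar\'e, and by \cref{cons:composition_of_Spivak_data} its dualising sheaf is $D_{\ind^G_H\udl{X}}\simeq p^*D_{\myuline{G/H}}\otimes D_p$, where $D_p$ is the image of the dualising sheaf of $\udl{X}$ under the \'etale identification. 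The only point in this argument that is not purely formal is the input that $\myuline{G/H}$ is $G$--Poincar\'e: I expect this to be the sole (mild) obstacle, and if one prefers to avoid invoking the general smooth--manifold case one can instead verify it directly via the fixed--point recognition principle \cref{thm:PD_fixed_point_recognition}, using the representation--sphere bundle of the stable normal bundle of $G/H$ as candidate dualising sheaf and the fact that each fixed point space $(G/H)^K$ is a closed manifold.
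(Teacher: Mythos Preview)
Your proof is correct and follows the same overall strategy as the paper: factor the terminal map of $\ind^G_H\udl{X}$ as $\ind^G_H\udl{X}\xrightarrow{p}\myuline{G/H}\xrightarrow{r}\udl{\ast}$, show each piece is a $\myuline{\spectra}_G$--Poincar\'e duality map, and compose via \cref{prop:Poincare_duality_composition}.

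Your verification that $p$ is a Poincar\'e duality map is in fact more direct than the paper's. You transport $p$ through the \'etale equivalence $(\spc_G)_{/\myuline{G/H}}\simeq\spc_H$ and correctly identify it with $\udl{X}$ itself, so the Poincar\'e condition on $p$ becomes literally the hypothesis. The paper instead asserts that under this equivalence $p$ corresponds to $\res^G_H\ind^G_H\udl{X}\simeq\udl{X}\times\res^G_H\myuline{G/H}$ and then invokes \cref{cor:pd_finite_products} and \cref{prop:restriction_poincare_complex}; this detour is unnecessary (and the stated identification is a minor slip---$\res^G_H\ind^G_H\udl{X}$ corresponds to the projection $\ind^G_H\udl{X}\times\myuline{G/H}\to\myuline{G/H}$, not to $p$). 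Your argument avoids this entirely. Both proofs rely on the same nonformal input, namely that $\myuline{G/H}$ is $G$--Poincar\'e.
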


\begin{proof}
    We first claim that the map $\ind_H^G \udl{X} \to \myuline{G/H}$ is a $G$--Poincar\'e map.
    Using the equivalence $(\spc_G)_{/\underline{G/H}} \simeq \spc_H$, this is equivalent to $\res_H^G \ind_H^G \udl{X}$ being a $H$--Poincar\'e space.
    Observe that $\res_H^G \ind_H^G \udl{X} \simeq \udl{X} \times \res_H^G \myuline{G/H}$.
    As $\myuline{G/H}$ is a $G$--Poincar\'e space, the claim follows from \cref{cor:pd_finite_products} and \cref{prop:restriction_poincare_complex}.
    Now \cref{prop:Poincare_duality_composition} implies that the composite $\ind_H^G \udl{X} \to \myuline{G/H} \to \udl{*}$ is a $G$--Poincar\'e map meaning that $\ind_H^G \udl{X}$ is a $G$--Poincar\'e space.
\end{proof}

For the next result, we will need to restrict to the case of finite groups since we will need to invoke the theory of $G$--symmetric monoidal structures as introduced in \cite{Nardin2017Thesis} and further developed in \cite{nardinShah}.

\begin{recollect}[Multiplicative norms]\label{recollect:multiplicative_norms}
    Nardin constructed in \cite{Nardin2017Thesis} a $G$--symmetric monoidal structure for the $G$--category of genuine $G$--spectra $\myuline{\spectra}$, packaging the multiplicative norms of \cite{greenleesMayMU, HHR} coherently. For a finite $G$--set, $U = \coprod_iG/H_i$, we write $\cat_U\coloneq \prod_i\cat_{H_i}$ and write $\myuline{\spectra}_U\coloneqq (\myuline{\spectra}_{H_i})_i\in\cat_U$. For a map of finite $G$--sets $f\colon U \rightarrow V$, we get an adjunction $f^* \colon \cat_V\rightleftharpoons \cat_U \cocolon f_*$ where $f^*$ is given by restrictions and $f_*$ is given by coinductions. As part of the $G$--symmetric monoidal structure on $\myuline{\spectra}$, we have a map $f_{\otimes}\colon f_*\myuline{\spectra}_U\rightarrow \myuline{\spectra}_V$ encoding the multiplicative norm along $f$. For example, when $f$ is the map $f\colon G/H\rightarrow
     G/G$, this encodes a map $f_{\otimes}\colon \coind_{H}^G\myuline{\spectra}_H\rightarrow \myuline{\spectra}_G$, upon applying the functor $(-)^G$ to which yields the multiplicative norm $\norm^G_H\colon \spectra_H\rightarrow \spectra_G$. By \cite[$\S3.3$]{nardinShah}, for a fixed $\udl{X}\in\spc_G$, we may obtain a pointwise $G$--symmetric monoidal structure on the functor category $\udl{\func}(\udl{X},\myuline{\spectra})$. From this, we ma for example extract the pointwise multiplicative norm functor
    \[f_{\otimes}\colon f_*\underline{\func}(\underline{X},f^*\myuline{\spectra})\longrightarrow f_{\otimes}\underline{\func}(\underline{X},f^*\myuline{\spectra})\simeq \underline{\func}(f_*\underline{X},\myuline{\spectra})\]
    where the equivalence is by \cite[Cor. 2.2.20]{kaifNoncommMotives}.
\end{recollect}

\begin{prop}[Poincar\'{e} duality and coinductions]\label{prop:coinduction_of_PD_spaces}
    Let $G$ be a finite group and $\{H_i\}_i$ a finite collection of subgroups of $G$. Suppose for each $i$, we have a $H_i$--Poincar\'{e} space $\udl{X}_i\in\spc_{H_i}$ with dualising sheaf $D_{\udl{X}_i}$. Then $\prod_i\coind^G_{H_i}\underline{X}_i\in\spc_G$ is a $G$--Poincar\'{e} space with dualising sheaf $\bigotimes_i\norm^G_{H_i}{D}_{\underline{X}_i}\in \underline{\func}(\prod_i\coind^G_{H_i}\myuline{X},\myuline{\spectra})$. 
\end{prop}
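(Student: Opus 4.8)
The strategy is to bootstrap from the single--coinduction case and then take products, exactly as in the induction proof above but dualised. First I would handle the case of a single subgroup $H = H_i$ and a single $H$--Poincar\'{e} space $\udl{X}_i$. The key input is \cref{recollect:multiplicative_norms}: the multiplicative norm gives a symmetric monoidal (indeed $G$--symmetric monoidal) colimit preserving functor $f_\otimes \colon \coind^G_H \myuline{\spectra}_H \to \myuline{\spectra}_G$ associated to the fold map $f \colon G/H \to G/G$, and more generally a pointwise version $f_\otimes \colon f_*\udl{\func}(\udl{X}_i, f^*\myuline{\spectra}) \to \udl{\func}(f_*\udl{X}_i, \myuline{\spectra})$. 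Since $\udl{X}_i$ is $H$--Poincar\'{e}, i.e.\ $\myuline{\spectra}_H$--Poincar\'{e}, I would first apply the coefficient basechange \cref{thm:base_change_of_tw_amb_spivak_data} along the symmetric monoidal $G$--colimit preserving map $\myuline{\spectra}_H \to \coind^G_H\myuline{\spectra}_H$ to view $\udl{X}_i$ (as an $H$--space) as $\coind^G_H\myuline{\spectra}_H$--Poincar\'{e}, and then apply \cref{prop:base_change_twisted_ambidexterity_geometric_morphism}(c) along the geometric morphism $\res^G_H \colon \spc_G \rightleftharpoons \spc_H \cocolon \coind^G_H$ to conclude that $\coind^G_H\udl{X}_i \in \spc_G$ carries a Poincar\'{e} $\coind^G_H\myuline{\spectra}_H$--Spivak datum, with dualising sheaf corresponding to $D_{\udl{X}_i}$ under the equivalence $\udl{\func}(\coind^G_H\udl{X}_i, \coind^G_H\myuline{\spectra}_H) \simeq \coind^G_H\udl{\func}(\udl{X}_i, \myuline{\spectra}_H)$. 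Now applying the coefficient basechange \cref{thm:base_change_of_tw_amb_spivak_data} once more, this time along $f_\otimes \colon \coind^G_H\myuline{\spectra}_H \to \myuline{\spectra}_G$, shows that $\coind^G_H\udl{X}_i$ is $\myuline{\spectra}_G$--Poincar\'{e}, with dualising sheaf $f_\otimes(D_{\udl{X}_i})$, which on global sections over each orbit is precisely the multiplicative norm $\norm^G_H D_{\udl{X}_i}$; since $f_\otimes$ is symmetric monoidal it preserves invertibility, so the resulting Spivak datum is genuinely Poincar\'{e} (not merely twisted ambidextrous).

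The second step is to assemble the product. Having shown each $\coind^G_{H_i}\udl{X}_i$ is $\myuline{\spectra}_G$--Poincar\'{e} with dualising sheaf $\norm^G_{H_i}D_{\udl{X}_i}$, I would invoke \cref{cor:pd_finite_products} (valid since $\myuline{\spectra}_G$ is presentably symmetric monoidal) repeatedly, or its evident finite iteration, to conclude that $\prod_i \coind^G_{H_i}\udl{X}_i$ is $\myuline{\spectra}_G$--Poincar\'{e} with dualising sheaf
\[
D_{\prod_i\coind^G_{H_i}\udl{X}_i} \simeq \bigotimes_i \mathrm{pr}_i^*\,\norm^G_{H_i}D_{\udl{X}_i},
\]
where $\mathrm{pr}_i \colon \prod_j\coind^G_{H_j}\udl{X}_j \to \coind^G_{H_i}\udl{X}_i$ is the projection; suppressing the pullbacks along the projections gives the stated formula $\bigotimes_i\norm^G_{H_i}D_{\udl{X}_i}$.

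The main obstacle I anticipate is bookkeeping the identification of dualising sheaves through the two basechange steps, specifically verifying that the composite $\udl{\func}(\coind^G_H\udl{X}_i, \coind^G_H\myuline{\spectra}_H) \simeq \coind^G_H\udl{\func}(\udl{X}_i,\myuline{\spectra}_H) \xrightarrow{f_\otimes} \udl{\func}(\coind^G_H\udl{X}_i, \myuline{\spectra}_G)$ really does send $D_{\udl{X}_i}$ to (the local system underlying) $\norm^G_H D_{\udl{X}_i}$ --- this is where the pointwise $G$--symmetric monoidal refinement of the functor category from \cref{recollect:multiplicative_norms} and the equivalence $f_*\udl{\func}(\udl{X}, f^*\myuline{\spectra}) \simeq \udl{\func}(f_*\udl{X}, \myuline{\spectra})$ of \cite[Cor.\ 2.2.20]{kaifNoncommMotives} must be used carefully, checking compatibility of $f_\otimes$ with the collapse maps. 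The finiteness of $G$ is essential here purely because the $G$--symmetric monoidal norm machinery is only available in that setting. Everything else is a formal consequence of the basechange theorems already established.
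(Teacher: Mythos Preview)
Your approach has a genuine gap at the crucial step: you propose to apply \cref{thm:base_change_of_tw_amb_spivak_data} along the norm map $f_\otimes \colon \coind^G_H\myuline{\spectra}_H \to \myuline{\spectra}_G$. That theorem requires $F$ to be a morphism of \emph{presentably} symmetric monoidal $G$--categories, in particular a $G$--colimit preserving functor. The multiplicative norm is symmetric monoidal but is \emph{not} colimit preserving --- already $\norm^G_e(X\vee Y)$ is not $\norm^G_e X \vee \norm^G_e Y$. So neither \cref{thm:base_change_of_tw_amb_spivak_data} nor \cref{thm:small_base_change} (which still demands preservation of $\udl{X}$--shaped colimits) can be invoked along $f_\otimes$. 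A secondary issue is that your first step does not type--check: $\myuline{\spectra}_H$ is an $H$--category and $\coind^G_H\myuline{\spectra}_H$ a $G$--category, so there is no map between them to basechange along; and \cref{prop:base_change_twisted_ambidexterity_geometric_morphism}(c) would transport data from $\res^G_H\udl{Y}$ to $\udl{Y}$, not from $\udl{X}_i$ to $\coind^G_H\udl{X}_i$ (and $\res^G_H\coind^G_H\udl{X}_i\not\simeq\udl{X}_i$).

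The paper circumvents exactly this obstruction by working one categorical level up. It treats the whole finite $G$--set $U=\coprod_i G/H_i$ at once, records the Poincar\'e hypothesis as an \emph{equivalence of functors} $\uniqueMapX_*\simeq \uniqueMapX_!(D_{\udl{X}}\otimes -)$ between $\udl{\func}(\udl{X},f^*\myuline{\spectra})$ and $f^*\myuline{\spectra}$, and then applies the $2$--functor $f_\otimes$ to this equivalence. The point is that $f_\otimes$ preserves adjunctions (using that $\uniqueMapX_*$ itself admits a right adjoint, via \cite[Lem.~4.4.3]{kaifPresentable}), and the $G$--symmetric monoidality of $\udl{\func}(-,\myuline{\spectra})$ identifies $f_\otimes(\uniqueMapX_!)\simeq (f_*\uniqueMapX)_!$, hence also $f_\otimes(\uniqueMapX^*)\simeq (f_*\uniqueMapX)^*$ and $f_\otimes(\uniqueMapX_*)\simeq (f_*\uniqueMapX)_*$; separately one checks $f_\otimes(D_{\udl{X}}\otimes -)\simeq f_\otimes D_{\udl{X}}\otimes -$ using that $f_\otimes$ is symmetric monoidal on functor categories. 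None of this requires $f_\otimes$ to preserve colimits of objects. Your reduction to a single coinduction followed by \cref{cor:pd_finite_products} would also become superfluous, since the indexed norm handles all factors simultaneously.
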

\begin{proof}
    We  consider a map of finite $G$--sets $f\colon U = \coprod_iG/H_i\rightarrow V= G/G$ as in \cref{recollect:multiplicative_norms}. Writing $\udl{X}\coloneqq (\udl{X}_i)_i\in \cat_U$, we have  an equivalence of the two functors
    \begin{equation}\label{eqn:poincareDualityOnHSpectra}
        \begin{tikzcd}
            \underline{\func}(\underline{X},f^*\myuline{\spectra}) \ar[r, "{D}_{\underline{X}}\otimes-"'] \ar[rr, bend left = 20, "\uniqueMapX_*"description]&\underline{\func}(\underline{X},f^*\myuline{\spectra})\rar["\uniqueMapX_!"'] & f^*\myuline{\spectra}
        \end{tikzcd}
    \end{equation}
    by our hypothesis. Now writing $f_*\uniqueMapX\colon f_*\underline{X}\rightarrow \terminalTCat$ for the unique map, note that since $\uniqueMapX_*$ itself has a right adjoint, we may use \cite[Lem. 4.4.3]{kaifPresentable} to see that applying $f_{\otimes}$ preserves the adjunctions $\uniqueMapX_!\dashv \uniqueMapX^*\dashv \uniqueMapX_*$ in the sense that we have the adjunctions
    \begin{center}
        \begin{tikzcd}
            f_{\otimes}\underline{\func}(\underline{X},f^*\myuline{\spectra}) \ar[rr,bend left = 25, "f_{\otimes}(\uniqueMapX_!)"description]\ar[rr,bend right = 25, "f_{\otimes}(\uniqueMapX_*)"description]&& f_{\otimes}f^*\myuline{\spectra} \ar[ll, "f_{\otimes}(\uniqueMapX^*)"'description]
        \end{tikzcd}
    \end{center}
    But then, since  $\underline{\func}(-,\myuline{\spectra}) \colon \underline{\cat}_G^{\underline{\times}}\rightarrow \underline{\presentable}_{L,\mathrm{st}}^{\underline{\otimes}}$ functorial in left Kan extensions is $\underline{\otimes}$--symmetric monoidal by \cite[After Cor. 6.0.11]{nardinShah} together with \cite[Cor. 2.2.20]{kaifNoncommMotives},  we get
    \[f_{\otimes}(\uniqueMapX_!)\simeq (f_*\uniqueMapX)_!\colon f_{\otimes}\underline{\func}(\underline{X},f^*\myuline{\spectra})\simeq \underline{\func}(f_*\underline{X},\myuline{\spectra}) \longrightarrow f_{\otimes}f^*\myuline{\spectra}\simeq \myuline{\spectra}\]
    and thus consequently, also that $f_{\otimes}(\uniqueMapX^*)\simeq (f_*\uniqueMapX)^*$ and $f_{\otimes}(\uniqueMapX_*)\simeq (f_*\uniqueMapX)_*$. Next, note that the functor ${D}_{\underline{X}}\otimes-$ may be written as
    \[f^*\myuline{\spectra}\otimes \underline{\func}(\underline{X},f^*\myuline{\spectra})\xlongrightarrow{{D}_{\underline{X}}\otimes\id} \underline{\func}(\underline{X},f^*\myuline{\spectra})\otimes \underline{\func}(\underline{X},f^*\myuline{\spectra}) \xlongrightarrow{\otimes}\underline{\func}(\underline{X},f^*\myuline{\spectra})\]
    Thus applying $f_{\otimes}$ to this composite and using that $f_{\otimes}$ is itself a symmetric monoidal functor, we get the identification of $f_{\otimes}({D}_{\underline{X}}\otimes-)$ as
    \[\myuline{\spectra}\otimes f_{\otimes}\underline{\func}(\underline{X},f^*\myuline{\spectra})\xlongrightarrow{f_{\otimes}\underline{D}_{\underline{X}}\otimes\id} f_{\otimes}\underline{\func}(\underline{X},f^*\myuline{\spectra})\otimes f_{\otimes}\underline{\func}(\underline{X},f^*\myuline{\spectra}) \xlongrightarrow{\otimes}f_{\otimes}\underline{\func}(\underline{X},f^*\myuline{\spectra})\]
    That is, that $f_{\otimes}({D}_{\underline{X}}\otimes-)\simeq f_{\otimes}{D}_{\underline{X}}\otimes -$. Therefore, all in all, applying $f_{\otimes} $ to the identification in \cref{eqn:poincareDualityOnHSpectra}, we obtain an equivalence
    \[(f_*\uniqueMapX)_*\simeq f_{\otimes}(\uniqueMapX_*) \simeq f_{\otimes}\big(\uniqueMapX_!({D}_{\underline{X}}\otimes-)\big)\simeq (f_*\uniqueMapX)_!(f_{\otimes}{D}_{\underline{X}}\otimes-)\] as was to be shown.
\end{proof}

\begin{prop}[Poincar\'e duality and Borelification]\label{lem:borelification_fixed_points_poincare}
    Let $\sC\in\calg(\presentable^L)$ and $\udl{X} \in \spc_G$ such that $X^e$ is nonequivariantly a $\category{C}$--twisted ambidextrous (resp. Poincar\'e) space. Then $\udl{X}$ is a $\udl{\borel}(\category{C})$--twisted ambidextrous (resp. Poincar\'e) space.
\end{prop}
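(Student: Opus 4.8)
The statement follows the same template as the other change-of-groups results in this subsection: realise the Borelification functor $\udl{\borel}\colon \spc^{BG}\to\spc_G$ (or rather the induced functor on coefficient categories $\udl{\borel}\colon \calg(\presentable^L_{BG})\to\calg(\presentable^L_G)$) as arising from a geometric morphism, and then invoke the omnibus basechange theorem \cref{thm:omnibus_geometric_pushforward_of_capping}. The key point is that, for the trivial family $\family=\{1\}$, the adjunction $b^*\dashv b_*$ from \cref{cons:isotropy_separation_recollement} is \'etale, being the basechange adjunction $\pi_{EG}^*\colon \spc_G\rightleftharpoons (\spc_G)_{/EG}\simeq\spc^{BG}\cocolon (\pi_{EG})_*$ (see \cref{cons:Borel_categories}); thus $b^*=\res^G_{\{1\}}$ composed with the canonical identification, and $b_*=\udl{\borel}$ is the pushforward along an \'etale geometric morphism, which is moreover fully faithful. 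First I would take the presentably symmetric monoidal $G$--category $b_*\sC=\udl{\borel}(\sC)$ (here I implicitly mean $b_*(\infl^{\{1\}}_{BG}\sC)$, the Borel $G$--category attached to the non-parametrised $\sC$, as in \cref{cons:Borel_categories}) and observe via \cref{lem:lax_monoidal_base_change_etale_morphism} and \cref{lem:base_change_presentable_fully_faithful_geometric_morphism} that it is again presentably symmetric monoidal, so that all the projection-formula and (co)limit hypotheses needed to run \cref{thm:omnibus_geometric_pushforward_of_capping} are automatic.

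With this setup, I would argue as follows. By hypothesis $X^e\simeq b^*\udl{X}$, viewed as an object of $\spc^{BG}$, is $\sC$--twisted ambidextrous (resp. Poincar\'e), i.e. it carries a twisted ambidextrous (resp. Poincar\'e) $\sC$--Spivak datum $(\xi,c)$. Since $b_*$ is fully faithful, we may apply \cref{thm:omnibus_geometric_pushforward_of_capping}(c) to the geometric morphism $b^*\colon \spc_G\rightleftharpoons\spc^{BG}\cocolon b_*$, with $\udl{X}\in\spc_G$ the given $G$--space and $b^*\udl{X}=X^e$: the pushforward $b_*(\xi,c)$ is then a twisted ambidextrous (resp. Poincar\'e) $b_*\sC=\udl{\borel}(\sC)$--Spivak datum for $\udl{X}$. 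In the presentable setting, twisted ambidextrous Spivak data are unique (\cref{prop:twisted_ambidextrous_presentable_case}), so $\udl{X}$ is $\udl{\borel}(\sC)$--twisted ambidextrous (resp. Poincar\'e) in the sense of \cref{def:twisted_ambidexterity} / \cref{defn:PD_object_with_psm_coefficients}, as desired.

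The only genuinely fiddly part is matching the identification $X^e\simeq b^*\udl{X}$ with how the statement phrases its hypothesis (``$X^e$ is nonequivariantly $\sC$--twisted ambidextrous''): one must make sure that $b^*\udl{X}$, as an object of $\spc^{BG}$, has underlying space $X^e$ and that being $\sC$--twisted ambidextrous \emph{in $\spc^{BG}$ with respect to the inflated coefficient category $\infl^{\{1\}}_{BG}\sC$} is literally the condition that $X^e$ is $\sC$--twisted ambidextrous in $\spc$ — this is because $\spc^{BG}=\func(BG,\spc)$ and the twisted-ambidexterity condition is checked on the single object of $BG$, i.e. on $X^e$ itself, together with its $G$-action, which does not affect whether the capping map is an equivalence since that can be detected after the forgetful functor $\spc^{BG}\to\spc$. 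I expect this bookkeeping — unwinding the base topos $\spc^{BG}$ and its coefficient category — to be the main (though entirely routine) obstacle; once it is in place, the result is a direct application of \cref{thm:omnibus_geometric_pushforward_of_capping}(c) together with uniqueness of twisted ambidextrous data. One could alternatively cite \cref{prop:base_change_twisted_ambidextrous_etale_morphism}(f) for the \'etale morphism $\pi_{EG}$ to get the pullback statement and combine it with \cref{prop:Poincare_duality_descent} along the effective epimorphism $EG\to\udl{\ast}$, but the direct route via (c) is cleaner.
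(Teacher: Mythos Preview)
Your proposal is correct and follows essentially the same strategy as the paper: identify the geometric morphism $b^*\colon\spc_G\rightleftharpoons\spc^{BG}\cocolon b_*$, establish that $b^*\udl{X}$ is $\pi_{BG}^*\sC$--twisted ambidextrous (resp.\ Poincar\'e) in $\spc^{BG}$, and then push forward via \cref{thm:omnibus_geometric_pushforward_of_capping}(c). The only difference lies in how the intermediate step is justified. The paper handles it by working in the topos $\spc$ and applying the descent result \cref{prop:Poincare_duality_descent} to the fibre sequence $X^e\to X_{hG}\xrightarrow{\pi}BG$ along the effective epimorphism $\ast\to BG$; this cleanly yields that $\pi$ is a $\sC$--twisted ambidextrous (resp.\ Poincar\'e) map, which by definition means $s^*\udl{X}\in\spc_{/BG}\simeq\spc^{BG}$ is $\pi_{BG}^*\sC$--twisted ambidextrous (resp.\ Poincar\'e). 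Your direct argument via conservativity of the forgetful functor $\spc^{BG}\to\spc$ is also valid, but the descent formulation is what packages precisely the ``bookkeeping'' you flag as the main obstacle --- it simultaneously constructs the Spivak datum in $\spc^{BG}$ and verifies that it is twisted ambidextrous (resp.\ Poincar\'e). Note that the descent is along $\ast\to BG$ in $\spc$, not along $\udl{EG}\to\udl{\ast}$ in $\spc_G$ as in your suggested alternative.
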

\begin{proof}
    Since $\ast\rightarrow BG$ is an effective epimorphism, we may apply \cref{prop:Poincare_duality_descent}
    to the fibre sequence $X^e \to X_{hG} \xrightarrow{\pi} BG$ to get that $\pi$ is a $\category{C}$--twisted ambidextrous (resp. Poincar\'e) map.
    Writing $s \colon BG \to \orbit(G)$ for the inclusion and using the identification $\spc_{/BG} = \func(BG, \spc)$ under which $\pi$ corresponds to $s^* \udl{X}$, this means by \cref{def:pd_map} that $s^* \udl{X}$ is $\pi_{BG}^* \category{C}$--twisted ambidextrous (resp. Poincar\'e), where $\pi_{BG}^* \colon \cat \to \cat_{BG}$ denotes the restriction functor.
    Now the basechange result \cref{prop:base_change_twisted_ambidexterity_geometric_morphism} shows that $\udl{X}$ is $s_* \pi_{BG}^* \category{C} = \udl{\borel}(\category{C})$--twisted ambidextrous (resp. Poincar\'e).
\end{proof}

\begin{lem}[Degree one data and Borelification]\label{lem:degree_one_and_borelifications}
    Let $\category{C} \in \calg(\presentable^L)$ be a presentably symmetric monoidal category and $f \colon \udl{X} \to \udl{Y}$ a map of $G$--spaces such that $X^e$ and $Y^e$ are nonequivariantly $\udl{\category{C}}$--twisted ambidextrous.
    Suppose that $\alpha \colon D_{X^e} \simeq f^* D_{Y^e}$ is a $G$--equivariant $\udl{\category{C}}$--degree datum for $f^e \colon X^e \to Y^e$, i.e. $\alpha$ is an equivalence in $\func(X^e, \category{C})^{hG}$.
    Then there is a $\udl{\borel}(\category{C})$--degree datum for $f \colon \udl{X} \to \udl{Y}$.
    If in addition the $G$--equivariant degree datum for $f^e$ is $G$-equivariantly of degree one, i.e. there is an equivalence $c_{Y^e} \simeq f_! c_{X^e}$ in $\map(\unit_{\category{C}}, (Y^e)_! D_{Y^e})^{hG}$, then the $\udl{\borel}(\category{C})$--degree datum for $f$ is of degree one.
\end{lem}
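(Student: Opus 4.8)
The plan is to recognise the statement as a direct application of \cref{lem:degree_one_base_change} along the geometric morphism underlying Borelification, feeding in the work already done in \cref{lem:borelification_fixed_points_poincare}. Recall from the proof of that lemma and from \cref{cons:Borel_categories} that if $s \colon BG \hookrightarrow \orbit(G)$ denotes the inclusion of the full subcategory on the free orbit, then the induced geometric morphism $s^* \colon \spc_G \rightleftharpoons \spc^{BG} \cocolon s_*$ (with $s_*$ fully faithful) satisfies $s^*\udl{X} \simeq X^e$ and $s^*\udl{Y} \simeq Y^e$ as objects of $\spc^{BG}$, i.e. as spaces with $G$--action, together with $\udl{\borel}(\category{C}) \simeq s_*\pi_{BG}^*\category{C}$ for the constant--diagram functor $\pi_{BG}^* \colon \cat \to \cat_{BG}$, and $\func_{\spc^{BG}}(s^*\udl{X}, \pi_{BG}^*\category{C}) \simeq \func(X^e, \category{C})^{hG}$, compatibly with the $\map$--spaces of fundamental classes.

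First I would pin down the relevant Spivak data. The descent argument in the proof of \cref{lem:borelification_fixed_points_poincare} (using \cref{prop:Poincare_duality_descent} over the effective epimorphism $\ast \to BG$ and then \cref{prop:base_change_twisted_ambidextrous_etale_morphism}) shows that $s^*\udl{X}, s^*\udl{Y} \in \spc^{BG}$ are $\pi_{BG}^*\category{C}$--twisted ambidextrous, and by uniqueness of twisted ambidextrous Spivak data in the presentable setting (\cref{prop:twisted_ambidextrous_presentable_case}) their underlying nonequivariant dualising sheaves and fundamental classes are exactly $(D_{X^e}, c_{X^e})$ and $(D_{Y^e}, c_{Y^e})$; these carry the canonical $G$--equivariant refinements implicit in the statement, so the datum $\alpha \in \func(X^e,\category{C})^{hG}$ is precisely a $\pi_{BG}^*\category{C}$--degree datum for $s^* f \colon s^*\udl{X} \to s^*\udl{Y}$ in the topos $\spc^{BG}$ in the sense of \cref{def:parametrised_degree}. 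Moreover the same proof obtains $\udl{\borel}(\category{C})$--twisted ambidexterity of $\udl{X}$ and $\udl{Y}$ by \emph{pushing forward} these Spivak data along $s^*$ via \cref{prop:base_change_twisted_ambidexterity_geometric_morphism}(c) and \cref{cons:pushing_forward_spivak_data_geometric_morphisms}, so by \cref{prop:twisted_ambidextrous_presentable_case} once more the $s_*$--pushforwards are the dualising Spivak data of $\udl{X}$ and $\udl{Y}$ with coefficients in $\udl{\borel}(\category{C})$.

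With this in hand I would simply invoke \cref{lem:degree_one_base_change} with the geometric morphism $s^*$ in the role of $f^*$, the symmetric monoidal $\spc^{BG}$--category $\pi_{BG}^*\category{C}$, the map $f \colon \udl{X} \to \udl{Y}$ in the role of $g$, and the $\pi_{BG}^*\category{C}$--Spivak data on $s^*\udl{X}$, $s^*\udl{Y}$ just described: it converts the $\pi_{BG}^*\category{C}$--degree datum $\alpha$ for $s^*f$ into a $s_*\pi_{BG}^*\category{C} = \udl{\borel}(\category{C})$--degree datum for $f$ relative to the pushed--forward Spivak data, which by the previous paragraph are the dualising ones, giving the asserted degree datum. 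For the last sentence, ``$G$--equivariantly of degree one'' says exactly that the equivalence $c_{Y^e} \simeq f_! c_{X^e}$ upgrades to one in $\map(\unit_\category{C}, (Y^e)_! D_{Y^e})^{hG}$, i.e. that $\alpha$ is a degree--one datum for $s^*f$ in $\spc^{BG}$; the ``degree one'' clause of \cref{lem:degree_one_base_change} then yields that the induced $\udl{\borel}(\category{C})$--degree datum for $f$ is of degree one.

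The main obstacle is bookkeeping rather than anything deep: one must check that the $\udl{\borel}(\category{C})$--dualising Spivak datum of $\udl{X}$ produced by \cref{lem:borelification_fixed_points_poincare} literally agrees with the $s_*$--pushforward of the $\pi_{BG}^*\category{C}$--dualising Spivak datum of $s^*\udl{X}$, which forces one to match the fibre--sequence descent over the slice topos $\spc_{/BG} \simeq \spc^{BG}$ with the geometric--morphism pushforward along $s^*$, and to invoke uniqueness of twisted ambidextrous Spivak data to identify the underlying nonequivariant data with $(D_{X^e}, c_{X^e})$, so that the hypothesised $\alpha$ genuinely is a degree datum relative to these Spivak data. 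Once those identifications are in place, both assertions fall straight out of \cref{lem:degree_one_base_change}.
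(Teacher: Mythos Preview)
Your proposal is correct and follows essentially the same approach as the paper: reinterpret the $G$--equivariant degree (one) datum as a $\pi_{BG}^*\category{C}$--degree (one) datum for $s^*f$ in $\spc^{BG}$ and then apply \cref{lem:degree_one_base_change} along the geometric morphism $s^* \colon \spc_G \rightleftharpoons \spc^{BG}$. The paper's proof is terser and leaves the Spivak--data bookkeeping implicit, whereas you spell out carefully why the pushed--forward Spivak data agree with the dualising ones via \cref{prop:twisted_ambidextrous_presentable_case}; both arguments rest on the same key lemma and the same identifications.
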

\begin{proof}
    With the notation from \cref{lem:borelification_fixed_points_poincare}, the assumption on $G$-equivariance of the degree datum implies that means that $\alpha$ is a $\pi_{BG}^* \category{C}$-degree datum for the map $f^e \colon X^e \to Y^e$ in $\spc^{BG}$.
    If $\alpha$ is $G$-equivariantly of degree one, then the $\pi_{BG}^* \category{C}$-degree datum for $f^e$ is of degree one.
    Now \cref{lem:degree_one_base_change} provides us with a $\udl{\borel}(\category{C})$ degree datum for $f \colon \udl{X} \to \udl{Y}$ (which is of degree one if $\alpha$ was $G$-equivariantly of degree one).
\end{proof}

\subsubsection*{Family nilpotence}
We now study how Poincar\'{e} duality interacts with the $\family$--nilpotence theory of \cite{MNN17}.

\begin{prop}\label{prop:family_completeness_poincare_duality}
    Let $\family$ be a family of subgroups of $G$, $\underline{\category{C}}$ a presentably symmetric monoidal $G$--category which is $\family$--Borel complete, and $\underline{X}\in\spc_G$. Then $\underline{X}$ satisfies $\underline{\category{C}}$--Poincare duality if and only if $\res^G_H\underline{X}$ satisfies $\res^G_H \underline{\category{C}}$--Poincare duality for all $H\in\family$.
\end{prop}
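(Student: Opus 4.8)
The plan is to exploit the fact that an $\family$--Borel complete coefficient category is, by definition, recovered from its Borel data, and combine this with the basechange result for \'etale morphisms (restriction along subgroups) together with an isotropy separation / joint conservativity argument.

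First, one direction is immediate: if $\udl{X}$ is $\udl{\sC}$--Poincar\'e, then for every $H\in\family$ (indeed every closed $H\leq G$), \cref{prop:restriction_stability_of_Poincare_duality} shows that $\res^G_H\udl{X}$ is $\res^G_H\udl{\sC}$--Poincar\'e. So the content is the converse. For this, recall the isotropy separation package \cref{nota:isotropy_separation_package} and the description of $\family$--Borelness from \cref{prop:characterisation_coborelness}(b): $\udl{\sC}$ being $\family$--Borel means the canonical map $\udl{\sC}\to b_*b^*\udl{\sC}$ is an equivalence, where $b\colon \orbit_\family(G)\hookrightarrow\orbit(G)$. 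The key point is that $b^*\colon \presentable^L_G\rightleftharpoons\presentable^L_{G,\family}\cocolon b_*$ is the basechange adjunction along the \'etale geometric morphism $\pi_{E\family}\colon \spc_G\rightleftharpoons(\spc_G)_{/E\family}$ (c.f. \cref{cons:isotropy_separation_recollement}), and $b_*$ is fully faithful. Since $\udl{\sC}\simeq b_*b^*\udl{\sC}$, we may apply \cref{thm:omnibus_geometric_pushforward_of_capping}(d): a Spivak datum with coefficients in $b_*\udl{\D}$ (here $\udl{\D}=b^*\udl{\sC}$) is twisted ambidextrous/Poincar\'e for $\udl{X}$ if and only if its pullback $b^*$ of it is twisted ambidextrous/Poincar\'e for $b^*\udl{X}=\pi_{E\family}^*\udl{X}$, with coefficients in $\udl{\D}=b^*\udl{\sC}$. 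Thus $\udl{X}$ is $\udl{\sC}$--Poincar\'e if and only if $b^*\udl{X}\in(\spc_G)_{/E\family}\simeq\spc_{G,\family}$ is $b^*\udl{\sC}$--Poincar\'e.

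So it remains to show: $b^*\udl{X}$ is $b^*\udl{\sC}$--Poincar\'e if and only if $\res^G_H\udl{X}$ is $\res^G_H\udl{\sC}$--Poincar\'e for all $H\in\family$. Here I would use that $\spc_{G,\family}=\func(\orbit_\family(G)\op,\spc)$ is generated under colimits by the representables $\myuline{G/H}$ with $H\in\family$, so that the collection of restriction-to-$H$ functors, for $H$ ranging over $\family$, is jointly conservative on $\spc_{G,\family}$; equivalently the corresponding \'etale basechange functors $\res^G_H\colon \presentable^L_{G,\family}\to\presentable^L_H$ (or the functor categories) detect equivalences of natural transformations, by \cref{rem:joint_conservativity_detectable_on_generators} and \cref{obs:functors_preserve_joint_conservativity}. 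Concretely, twisted ambidexterity of a Spivak datum for $b^*\udl{X}$ is the statement that the capping transformation is an equivalence in $\udl{\func}((b^*\udl{\sC})^{b^*\udl{X}},b^*\udl{\sC})$; this can be checked after applying each $\res^G_H$, and by \cref{thm:omnibus_geometric_pushforward_of_capping}(f) applied to the \'etale morphism $\spc_{G,\family}\to(\spc_{G,\family})_{/(\myuline{G/H})}\simeq\spc_H$, the $\res^G_H$ of the capping transformation is the capping transformation for the restricted Spivak datum $\res^G_H(b^*\xi,b^*c)$ on $\res^G_H b^*\udl{X}\simeq\res^G_H\udl{X}$ with coefficients $\res^G_H b^*\udl{\sC}\simeq\res^G_H\udl{\sC}$ (the last equivalences because for $H\in\family$ the orbit $\myuline{G/H}$ already lies in $\orbit_\family(G)$, so restriction factors through $b^*$). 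Likewise the dualising sheaf of $b^*\udl{X}$ lands in $\udl{\picardSpace}(b^*\udl{\sC})$ iff each $\res^G_H$ of it does, by joint conservativity and \cref{cor:factoring_through_picard_geometric_morphisms}(2). Assembling these equivalences gives the claim.

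The main obstacle is the bookkeeping in the last paragraph: one must be careful that "Poincar\'e" for $b^*\udl{X}$ is genuinely detected by the family of restrictions, which requires knowing both that twisted ambidexterity (an equivalence of parametrised natural transformations) and the Picard-factorisation condition descend through the jointly conservative family $\{\res^G_H\}_{H\in\family}$, and that these restrictions are compatibly identified with the a priori $G$--equivariant data via the \'etale basechange formulas of \cref{thm:omnibus_geometric_pushforward_of_capping}. One subtlety worth flagging explicitly: in the presentable setting a twisted ambidextrous Spivak datum is unique (\cref{prop:twisted_ambidextrous_presentable_case}), so once we produce, from the family of restricted Poincar\'e Spivak data, a candidate dualising sheaf and fundamental class for $b^*\udl{X}$ — which we can do since $b^*\udl{X}$ is still twisted ambidextrous (e.g. because compact $G$--spaces are $\udl{\sC}$--twisted ambidextrous by \cref{prop:cpt_twisted_ambidextrous}, or more robustly because twisted ambidexterity itself descends) — its restrictions must agree with the given ones, closing the argument.
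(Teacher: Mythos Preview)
Your first reduction, via $\family$--Borel completeness and \cref{thm:omnibus_geometric_pushforward_of_capping}(c,d), is exactly what the paper does. For the second reduction, however, the paper takes a more direct route: it rephrases ``$b^*\udl{X}$ is $b^*\udl{\sC}$--Poincar\'e'' as ``the map $\udl{X}\to\udl{E\family}$ is a $\udl{\sC}$--Poincar\'e duality map'' and then applies the descent result \cref{prop:Poincare_duality_descent} together with \cref{lem:pd_map_infinite_coproducts} along the effective epimorphism $\coprod_{H\in\family}\myuline{G/H}\to\udl{E\family}$. This yields in one stroke the equivalence with each pullback $\myuline{G/H}\times_{\udl{E\family}}\udl{X}\to\myuline{G/H}$ being a Poincar\'e duality map, which under $(\spc_G)_{/\myuline{G/H}}\simeq\spc_H$ is precisely $\res^G_H\udl{X}$ being $\res^G_H\udl{\sC}$--Poincar\'e.

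Your joint conservativity route is morally the same idea (the effective epimorphism is the geometric version of ``the orbits $\myuline{G/H}$ jointly detect''), but as written it has a gap. You correctly observe that the capping transformation of a \emph{given} Spivak datum can be tested for being an equivalence after each $\res^G_H$; the problem is producing that datum for $b^*\udl{X}$ in the first place. Of your two suggestions, compactness is not assumed in the proposition, and ``twisted ambidexterity itself descends'' is exactly the content of the converse clause of \cref{prop:Poincare_duality_descent} (which in turn rests on \cite[Prop.~3.13(5)]{Cnossen2023}); once you invoke that, your argument collapses into the paper's. An alternative repair that stays closer to your framing: argue directly that twisted ambidexterity, viewed as the property that $X_*$ is $\udl{\sC}$--linear and colimit--preserving, can be checked levelwise on $\orbit_\family(G)$ --- this avoids needing a candidate datum, but you would have to spell it out rather than leave it as a parenthetical.
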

\begin{proof}
    Using that $\category{C}$ is $\family$-Borel complete, \cref{prop:base_change_twisted_ambidexterity_geometric_morphism}
    shows that $\udl{X}$ is a $\category{C} \simeq b_* b^*\category{C}$--Poincar\'e space if and only if $b^* \udl{X}$ is a $b^*\udl{\category{C}}$--Poincar\'e space. 
    But by definition, this is equivalent to the map $\udl{X} \to \udl{E \family}$ being a $\udl{\category{C}}$--Poincar\'e duality map.

    There is an effective epimorphism $\coprod_{H \in \family} \myuline{G/H} \to \udl{E \family}$.
    The descent result \cref{prop:Poincare_duality_descent} together with \cref{lem:pd_map_infinite_coproducts} now shows that $\udl{X} \to \udl{E \family}$ is a $\udl{\category{C}}$--Poincar\'e duality map if and only if $\myuline{G/H} \times_{\udl{E \family}} \udl{X} \to \myuline{G/H}$ is a $\udl{\category{C}}$--Poincar\'e duality map for all $H \in \family$.
    Note that under the equivalence $(\spc_G)_{/\myuline{G/H}} \simeq \spc_H$ the map $\myuline{G/H} \times_{\udl{E \family}} \udl{X} \to \myuline{G/H}$ corresponds to $\res_H^G \udl{X}$.
    Similarly, this equivalence identifies $\pi_{\myuline{G/H}}^* \udl{\category{C}}$ and $\res_H^G \udl{\category{C}}$.
\end{proof}

Recall the notion of $\family$--nilpotent ring $G$--spectra from \cite[Def. 6.36]{MNN17}.

\begin{cor}\label{cor:ring_nilpotence_poincare_duality}
    Let $G$ be a finite group, $\family$  a family of subgroups,  $R\in\calg(\spectra_G)$ an $\family$--nilpotent ring $G$--spectrum, and $\udl{X}\in\spc_G$. Then $\udl{X}$ is an $R$--Poincar\'{e} space if and only if for all $H\in\family$, $\res^G_H\udl{X}$ is $\res^G_HR$--Poincar\'{e}. 
\end{cor}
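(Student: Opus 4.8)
The plan is to reduce the statement to \cref{prop:family_completeness_poincare_duality} by exhibiting a suitable presentably symmetric monoidal $G$--category that is $\family$--Borel complete and for which ``Poincar\'e duality'' coincides with $R$--Poincar\'e duality. The natural candidate is $\udl{\module}_{\myuline{\spectra}_G}(R)$, the $G$--category of modules over the ring $G$--spectrum $R$.

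First I would recall from \cref{example:modules_over_F-nilpotent-rings} that if $R \in \calg(\spectra_G)$ is $\family$--nilpotent, then $\udl{\module}_{\myuline{\spectra}_G}(R)$ is an $\family$--Borel $G$--category, using the concrete characterisation of $\family$--Borelness from \cref{prop:characterisation_coborelness} (b) together with \cite[Prop. 6.38 (1), Thm. 6.42]{MNN17}. Being $\family$--Borel is the same as being $\family$--Borel complete in the terminology of \cref{prop:family_completeness_poincare_duality}, i.e. the unit map $\udl{\module}_{\myuline{\spectra}_G}(R) \to b_* b^* \udl{\module}_{\myuline{\spectra}_G}(R)$ is an equivalence. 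Moreover, $\udl{\module}_{\myuline{\spectra}_G}(R)$ is presentably symmetric monoidal as $R$ is a commutative algebra, so \cref{prop:family_completeness_poincare_duality} applies: $\udl{X}$ is $\udl{\module}_{\myuline{\spectra}_G}(R)$--Poincar\'e if and only if $\res^G_H \udl{X}$ is $\res^G_H \udl{\module}_{\myuline{\spectra}_G}(R)$--Poincar\'e for all $H \in \family$.

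Next I would identify $\udl{X}$ being $\udl{\module}_{\myuline{\spectra}_G}(R)$--Poincar\'e with $\udl{X}$ being $R$--Poincar\'e. This is really a matter of matching terminology: by definition $R$--Poincar\'e duality for $\udl{X}$ \emph{means} Poincar\'e duality with coefficients in the presentably symmetric monoidal $G$--category $\udl{\module}_{\myuline{\spectra}_G}(R)$ (whose unit object is $R$ itself). Similarly, $\res^G_H \udl{\module}_{\myuline{\spectra}_G}(R) \simeq \udl{\module}_{\myuline{\spectra}_H}(\res^G_H R)$ by compatibility of restriction with forming module categories, so being $\res^G_H \udl{\module}_{\myuline{\spectra}_G}(R)$--Poincar\'e is the same as being $\res^G_H R$--Poincar\'e. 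Combining these identifications with the previous paragraph yields exactly the claimed equivalence.

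The only mildly delicate point---and the step I expect to need the most care---is verifying that restriction $\res^G_H \colon \presentable^L_G \to \presentable^L_H$ commutes with passing to module categories, i.e. that $\res^G_H \udl{\module}_{\myuline{\spectra}_G}(R) \simeq \udl{\module}_{\res^G_H \myuline{\spectra}_G}(\res^G_H R) \simeq \udl{\module}_{\myuline{\spectra}_H}(\res^G_H R)$ as presentably symmetric monoidal $H$--categories. The first equivalence is a basechange statement for module categories along the symmetric monoidal functor $\res^G_H$, compatible with the $\udl{\cat}$--level restriction; the second uses that $\res^G_H \myuline{\spectra}_G \simeq \myuline{\spectra}_H$ as symmetric monoidal $H$--categories (coming from $\myuline{\spectra}_H$ being the initial object of $\prGst{H}$, cf. \cref{cons:spectral_restriction}). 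Both are standard but worth stating explicitly; everything else is a direct citation of \cref{prop:family_completeness_poincare_duality} and \cref{example:modules_over_F-nilpotent-rings}.
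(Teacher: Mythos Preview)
Your proposal is correct and follows the same approach as the paper: invoke \cref{example:modules_over_F-nilpotent-rings} to see that $\udl{\module}_{\myuline{\spectra}_G}(R)$ is $\family$--Borel, then apply \cref{prop:family_completeness_poincare_duality}. The paper's proof is a two-line citation of these results and does not spell out the identifications $\res^G_H \udl{\module}_{\myuline{\spectra}_G}(R) \simeq \udl{\module}_{\myuline{\spectra}_H}(\res^G_H R)$ that you take care to justify, so your version is if anything more complete.
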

\begin{proof}
    By \cref{example:modules_over_F-nilpotent-rings}, we know that the presentably symmetric monoidal $G$--stable category $\udl{\module}_R(\myuline{\spectra})$ is $\family$--Borel and so we may apply \cref{prop:family_completeness_poincare_duality} to conclude.
\end{proof}

\begin{example}
    We collect here a small list of potentially interesting consequences of \cref{cor:ring_nilpotence_poincare_duality} using known nilpotence results from \cite[Table 2]{MNNDerivedInduction} for finite groups $G$. We invite the reader to consult the cited table for a quite exhaustive list of possibly interesting examples of coefficient ring $G$--spectra to consider. 
    \begin{enumerate}[label=(\arabic*)]
        \item Writing $\mathrm{KO}_G$ and $\mathrm{KU}_G$ for Segal's equivariant topological K--theories, we see that a $G$--space $\udl{X}$ is $\mathrm{KO}_G$-- or $\mathrm{KU}_G$--Poincar\'{e} if and only if $\res^G_C\udl{X}$ is  $\mathrm{KO}_C$-- or $\mathrm{KU}_C$--Poincar\'{e} for all cyclic subgroups $C\leq G$.
        \item A $G$--space $\udl{X}$ is $\borel_G(\eilenbergMacLaneCoeff)$--Poincar\'{e} if and only if $\res^G_E\udl{X}$ is $\borel_E(\eilenbergMacLaneCoeff)$--Poincar\'{e} for all elementary abelian $p$--subgroups $E\leq G$ for all primes $p$.
        \item A $G$--space $\udl{X}$ is $\borel_G(\mathrm{MU})$--Poincar\'{e} if and only if $\res^G_A\udl{X}$ is $\borel_A(\mathrm{MU})$--Poincar\'{e} for all  abelian $p$--subgroups $A\leq G$ for all primes $p$.
        \item A $G$--space $\udl{X}$ is $\borel_G(\mathrm{MO})$--Poincar\'{e} if and only if $\res^G_A\udl{X}$ is $\borel_A(\mathrm{MO})$--Poincar\'{e} for all elementary abelian $2$--subgroups $A\leq G$.
    \end{enumerate}
\end{example}

\subsubsection*{Poincar\'e integration}

In this section we study the equivariant generalisation of a well known result of Klein \cite[Corollary F]{Klein2001}\footnote{In \cite{Klein2001}, Klein mentioned that the result answered a question of Wall and also attributed the result to Quinn from an unpublished announcement and Gottlieb \cite{gottliebPoincare} who proved it in the manifolds setting. } which says  that in a fibration $F \to E \to B$ of finitely dominated spaces, $E$ is a Poincar\'e  space if $F$ and $B$ are Poincar\'e spaces. Since we have  $E\simeq \colim_BF$ by the straightening--unstraightening equivalence, the aforementioned result may be viewed as saying that integrating a Poincar\'{e} space along a diagram which is itself Poincar\'{e} yields a Poincar\'{e} space.

\begin{terminology}[Fibrewise twisted ambidextrous and Poincar\'{e} maps]\label{terminology:fibrewise_PD}
    Let $f\colon\udl{X}\rightarrow\udl{Y}$ be a map of $G$--spaces for $G$ a compact Lie group and $\udl{\category{C}}$ a presentably symmetric monoidal $G$-category. 
    We say that it is a \textit{fibrewise $\udl{\category{C}}$--twisted ambidextrous (resp. Poincar\'{e}) map} if for all closed subgroups $H\leq G$ and all maps $y\colon \myuline{G/H}\rightarrow\udl{Y}$, writing $F_y$ for the pullback $\myuline{G/H}\times_{\udl{Y}}\udl{X}$,
    the map $F_y\rightarrow \myuline{G/H}$ is $\udl{\category{C}}$--twisted ambidextrous (resp. Poincar\'{e}).
    Expanding \cref{defn:twisted_ambidex_maps,def:pd_map}, this means that viewed as an object in $\spc_H\simeq (\spc_G)_{/\myuline{G/H}}$, $F_y$ is $\res_H^G\udl{\category{C}}$--twisted ambidextrous (resp. Poincar\'{e}). 

    In the case where $\udl{\category{C}} = \myuline{\spectra}_G$ this means that $F_y$ is $\res_H^G \myuline{\spectra}_G = \myuline{\spectra}_H$--twisted ambidextrous (resp. Poincar\'e).
    Since $\myuline{G/H}$ is $G$--Poincar\'{e}, we see by \cref{prop:Poincare_duality_composition}, \cref{cor:poincare_duality_of_disjoint_unions}, and \cref{prop:restriction_stability_of_Poincare_duality} that in this case the preceding condition is also equivalent to $F_y$ being $G$--twisted ambidextrous (resp. Poincar\'{e}).
\end{terminology}

\begin{thm}[Equivariant Poincar\'{e} integration]\label{thm:poincare_integration}
    Let  $f \colon \udl{X} \to \udl{Y}$ be a map of $G$--spaces and $\udl{\category{C}}$ a presentably symmetric monoidal $G$-category.
    If $\udl{Y}$ is a $\udl{\category{C}}$--Poincar\'e space and $f$ is a fibrewise $\udl{\category{C}}$--Poincar\'{e} map,  then $\udl{X}$ is a $\udl{\category{C}}$--Poincar\'e space.
    Furthermore, there is an equivalence $D_{\udl{X}} \simeq f^* D_{\udl{Y}} \otimes D_f$, where $D_f \in \func_G(\udl{X}, \udl{\category{C}})$ such that $y^* D_f \simeq D_{F_y}$ is the dualising sheaf of the fibres.

    Conversely, suppose that $\udl{Y}$ is $\udl{\category{C}}$--twisted ambidextrous and that $f$ is fibrewise $\udl{\category{C}}$-twisted ambidextrous.
    Furthermore assume that for all closed subgroups $H \le G$ the map $f^H \colon X^H \to Y^H$ is a $\pi_0$-surjection.
    If $\udl{X}$ is a $\udl{\category{C}}$--Poincar\'e space, then $\udl{Y}$ is also a $\udl{\category{C}}$--Poincar\'e  space and  $f$ is a fibrewise Poincar\'{e} map.
\end{thm}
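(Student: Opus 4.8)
The plan is to run the argument of the first part of the theorem in reverse, feeding the multiplicativity of dualising sheaves from \cref{prop:Poincare_duality_composition} into the elementary observation that a tensor factor of an invertible object in a symmetric monoidal category is itself invertible; the hypothesis that every $f^H$ is a $\pi_0$-surjection is exactly what is needed to spread pointwise invertibility of $D_{\udl{Y}}$ from the image of $f$ to all of $\udl{Y}$.

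First I would upgrade the fibrewise hypothesis on $f$ to a statement about $f$ as a map. The canonical map $g \colon \coprod_{H \le G,\, y \colon \myuline{G/H} \to \udl{Y}} \myuline{G/H} \to \udl{Y}$ is an effective epimorphism by \cref{ex:effective_epimorphism_presheaf_topos}, and the pullback of $f$ along $g$ is $\coprod_y (F_y \to \myuline{G/H})$, which is $\udl{\category{C}}$-twisted ambidextrous by \cref{lem:pd_map_infinite_coproducts} since each $F_y \to \myuline{G/H}$ is so by the fibrewise hypothesis. The converse part of \cref{prop:Poincare_duality_descent} then shows that $f$ is a $\udl{\category{C}}$-twisted ambidextrous map. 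Since $\udl{Y}$ is $\udl{\category{C}}$-twisted ambidextrous, so is the map $\udl{Y} \to \udl{\ast}$, hence by \cref{prop:Poincare_duality_composition}(1) applied to $\udl{X} \xrightarrow{f} \udl{Y} \to \udl{\ast}$, together with uniqueness of twisted ambidextrous Spivak data in the presentable setting (\cref{prop:twisted_ambidextrous_presentable_case}), we obtain an equivalence $D_{\udl{X}} \simeq f^* D_{\udl{Y}} \otimes D_f$ in $\func_G(\udl{X}, \udl{\category{C}})$, where $D_f$ is the relative dualising sheaf of $f$, satisfying $y^* D_f \simeq D_{F_y}$ by \cref{prop:Poincare_duality_descent}.

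Next I would argue pointwise. Fix a closed subgroup $H \le G$ and an $H$-point $x \colon \myuline{G/H} \to \udl{X}$; pulling back the displayed equivalence along $x$ gives $x^* D_{\udl{X}} \simeq (fx)^* D_{\udl{Y}} \otimes x^* D_f$ in $\category{C}^H$. As $\udl{X}$ is $\udl{\category{C}}$-Poincar\'e, the left-hand side lies in $\picardSpace(\category{C}^H)$, so $(fx)^* D_{\udl{Y}}$ admits a one-sided tensor inverse and is therefore invertible (in a symmetric monoidal category, $b \otimes c \simeq \unit$ forces $b$ invertible via the braiding); consequently $x^* D_f \simeq x^* D_{\udl{X}} \otimes \bigl((fx)^* D_{\udl{Y}}\bigr)^{-1}$ is invertible as well. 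Since $f^H \colon X^H \to Y^H$ is a $\pi_0$-surjection, every component of the space $Y^H$ contains a point of the form $f^H(x)$, and the restriction of $D_{\udl{Y}}$ to the groupoid $Y^H$ takes equivalent values on points of the same component; hence $y^* D_{\udl{Y}} \in \picardSpace(\category{C}^H)$ for \emph{every} $H$-point $y$ of $\udl{Y}$. As factoring a functor out of a $\B$-groupoid through $\udl{\picardSpace}(\udl{\category{C}}) \hookrightarrow \udl{\category{C}}$ is detected orbitwise, $D_{\udl{Y}}$ factors through $\udl{\picardSpace}(\udl{\category{C}})$, that is, $\udl{Y}$ is $\udl{\category{C}}$-Poincar\'e. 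Finally, $x^* D_f$ being invertible for all $x$ shows $D_f$ factors through $\udl{\picardSpace}(\udl{\category{C}})$, so $f$ is a $\udl{\category{C}}$-Poincar\'e duality map (alternatively, one may now invoke \cref{prop:Poincare_duality_composition}(2), since $\udl{Y} \to \udl{\ast}$ has become Poincar\'e); basechanging along each $y \colon \myuline{G/H} \to \udl{Y}$ and applying \cref{prop:Poincare_duality_descent} then shows $F_y \to \myuline{G/H}$ is $\udl{\category{C}}$-Poincar\'e, i.e.\ $f$ is a fibrewise $\udl{\category{C}}$-Poincar\'e map.

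The main obstacle is not a single deep step but ensuring the two inputs interlock: that the factorisation $D_{\udl{X}} \simeq f^* D_{\udl{Y}} \otimes D_f$ is legitimately available already in the merely twisted-ambidextrous regime (it rests on \cref{prop:Poincare_duality_composition}(1) plus uniqueness of Spivak data, not on a Poincar\'e hypothesis for $\udl{Y}$), and that $\pi_0$-surjectivity of all the $f^H$ is precisely the combinatorial ingredient converting invertibility of $D_{\udl{Y}}$ over the image of $f$ into invertibility everywhere.
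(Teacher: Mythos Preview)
Your argument for the converse is correct and is essentially the paper's own: both establish the factorisation $D_{\udl{X}} \simeq f^* D_{\udl{Y}} \otimes D_f$ via descent and composition in the twisted-ambidextrous regime, then use that tensor factors of an invertible object are invertible together with the $\pi_0$-surjectivity hypothesis to pass from invertibility of $f^*D_{\udl{Y}}$ to invertibility of $D_{\udl{Y}}$. The only cosmetic difference is that the paper phrases the invertibility step directly in the functor category $\func_G(\udl{X},\udl{\category{C}})$, while you unpack it pointwise at each $x \colon \myuline{G/H} \to \udl{X}$.

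Note, however, that your proposal only treats the converse and does not explicitly prove the forward direction. That said, your first paragraph already contains the needed ingredient: the same effective-epimorphism-plus-\cref{lem:pd_map_infinite_coproducts}-plus-\cref{prop:Poincare_duality_descent} argument upgrades ``$f$ fibrewise $\udl{\category{C}}$-Poincar\'e'' to ``$f$ is a $\udl{\category{C}}$-Poincar\'e duality map'', after which \cref{prop:Poincare_duality_composition}(1) applied to $\udl{X} \xrightarrow{f} \udl{Y} \to \udl{\ast}$ gives that $\udl{X}$ is $\udl{\category{C}}$-Poincar\'e with the stated formula for $D_{\udl{X}}$.
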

\begin{proof}
    The map $\coprod_{H \le G} \coprod_{\pi_0(Y^H)} \myuline{G/H} \to \udl{Y}$ is a $\pi_0$ surjection on each fixed point space and thus an effective epimorphism in $\spc_G$ (see \cref{ex:effective_epimorphism_presheaf_topos}).
    It then follows from \cref{prop:Poincare_duality_descent} and \cref{lem:pd_map_infinite_coproducts} that $f$ is a $\udl{\category{C}}$--Poincar\'e  map if and only if the map $F_y \to \myuline{G/H}$ is  $\udl{\category{C}}$--Poincar\'e for all closed subgroups $H\leq G$ and all $y\colon \myuline{G/H}\rightarrow\udl{Y}$.
    This is precisely what it means that $f$ was fibrewise $\udl{\category{C}}$--Poincar\'{e}.
    Moreover, \cref{prop:Poincare_duality_descent} also provides an equivalence $y^* D_f \simeq D_{F_y}$.
    Since in addition $\udl{Y}$ is a $\udl{\category{C}}$--Poincar\'e  space, it follows from \cref{prop:Poincare_duality_composition} that $\udl{X}$ is a $\udl{\category{C}}$--Poincar\'e  space and there is an equivalence $D_{\udl{X}} \simeq y^* D_{\udl{Y}} \otimes D_{F_y}$ as desired.

    For the converse, as in the first we conclude from \cref{prop:Poincare_duality_descent} and \cref{prop:Poincare_duality_composition} that there is an equivalence $D_{\udl{X}} \simeq f^* D_{\udl{Y}} \otimes D_f$.
    If $\udl{X}$ is a $\udl{\category{C}}$--Poincar\'e, then $D_{\udl{X}}$ is invertible which implies that $f^* D_{\udl{Y}}$ and $D_f$ are invertible.
    The $\pi_0$ surjectivity hypothesis on $f$ implies that $D_{\udl{Y}}$ is invertible so $\udl{Y}$ is a $\udl{\category{C}}$--Poincar\'e space.
    From the equivalence $y^* D_f \simeq D_{F_y}$ we see that $(F_y\rightarrow\myuline{G/H})$ is a $\res_H^G\udl{\category{C}}$--Poincar\'e  space.   
\end{proof}

We now use the theorem above to obtain a characterisation of $G$--Poincar\'{e} duality for spaces with free actions in terms of Poincar\'{e} duality for a quotient group.

\begin{cor}[Poincar\'e duality and quotients by free actions] \label{cor:free_actions}
    Let $G$ be a compact Lie group, $N \leq G$ a closed normal subgroup and $Q \coloneqq G/N$.
    If $\udl{X}$ is a $G$--space such that the action of $N$ on $\udl{X}$ is free in the sense of \cref{def:free_action}, then $\udl{X}$ is $G$--Poincar\'e duality space if and only if $N \backslash \udl{X}$ is a $Q$--Poincar\'e duality space.
\end{cor}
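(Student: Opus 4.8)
The plan is to deduce the statement from \cref{thm:poincare_integration} (Poincar\'e integration) applied to the quotient map $f\colon \udl{X}\to \infl_G^Q N\backslash\udl{X}$, combined with \cref{prop:pd_inflation} to transfer Poincar\'e duality between $\infl_G^Q N\backslash\udl{X}$ as a $G$--space and $N\backslash\udl{X}$ as a $Q$--space. The key geometric input has already been isolated in \cref{cor:fibre_of_quotient}: for every closed subgroup $H\le G$ and every map $f_y\colon \myuline{G/H}\to\infl_G^Q N\backslash\udl{X}$, the fibre $F_y = \myuline{G/H}\times_{\infl_G^Q N\backslash\udl{X}}\udl{X}$ sits in a cartesian square exhibiting it as $\myuline{G/H}\times_{\myuline{G/K_1}}\myuline{G/K_0}$ with $\myuline{G/K_1}\simeq \infl\, N\backslash\myuline{G/K_0}$ and $K_0\in\family_N$. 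In particular $F_y$ is a finite coproduct of orbits $\myuline{G/L}$, hence a compact $G$--space, and so is $\myuline{spectra}_H$--twisted ambidextrous by \cref{prop:cpt_twisted_ambidextrous}.

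\textbf{Steps.} First I would show that $f$ is fibrewise $G$--Poincar\'e in the sense of \cref{terminology:fibrewise_PD}. By the discussion there, since $\myuline{G/H}$ is $G$--Poincar\'e, it suffices to show each fibre $F_y$ is a $G$--Poincar\'e space. Using \cref{cor:fibre_of_quotient}, $F_y\simeq \myuline{G/H}\times_{\myuline{G/K_1}}\myuline{G/K_0}$; this is a disjoint union of orbits $\myuline{G/L}$, and each such orbit is $G$--Poincar\'e (it is even the underlying $G$--space of a smooth closed $G$--manifold, or directly by \cref{cor:poincare_duality_of_disjoint_unions} applied to orbits, which are $G$--Poincar\'e since $G/L$ is a compact homogeneous manifold --- alternatively one can invoke \cref{thm:examples_G_pd}). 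So $F_y$ is $G$--Poincar\'e, hence $f$ is a fibrewise $G$--Poincar\'e map. Second, if $N\backslash\udl{X}$ is a $Q$--Poincar\'e space, then by \cref{prop:pd_inflation} so is $\infl_G^Q N\backslash\udl{X}$ as a $G$--space; combining with the first step, \cref{thm:poincare_integration} yields that $\udl{X}$ is $G$--Poincar\'e. Third, for the converse, suppose $\udl{X}$ is $G$--Poincar\'e. I would apply the converse half of \cref{thm:poincare_integration}: its hypotheses require $\infl_G^Q N\backslash\udl{X}$ to be $G$--twisted ambidextrous (which follows since it is compact, $N\backslash\udl{X}$ being a quotient of a compact space --- or more carefully, we know $\udl{X}$ compact implies $N\backslash\udl{X}$ compact as a colimit of finitely many orbits), that $f$ is fibrewise $G$--twisted ambidextrous (established above), and that $f^H\colon X^H\to(\infl_G^Q N\backslash\udl{X})^H$ is a $\pi_0$--surjection for all closed $H\le G$. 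This last point should follow from \cref{lem:fixed_points_of_quotients}: every component of $(N\backslash\udl{X})^{H'}$ (with $H'=H/(H\cap N)$) is hit by a map $\myuline{G/K}\to\udl{X}$ with $K\in\family_N$, which after taking $H$--fixed points gives surjectivity. Granting this, \cref{thm:poincare_integration} gives that $\infl_G^Q N\backslash\udl{X}$ is $G$--Poincar\'e, and then \cref{prop:pd_inflation} gives that $N\backslash\udl{X}$ is $Q$--Poincar\'e.

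\textbf{Main obstacle.} The routine parts are invoking the cited theorems; the delicate point is verifying the $\pi_0$--surjectivity of $f^H$ for the converse and, relatedly, making sure the compactness/twisted-ambidexterity hypotheses of \cref{thm:poincare_integration} are genuinely satisfied. Concretely, I need that $X^H\to(\infl_G^Q N\backslash\udl{X})^H \simeq (N\backslash\udl{X})^{H/(H\cap N)}$ is surjective on components; \cref{lem:fixed_points_of_quotients} provides, for each map $\myuline{Q/H'}\to N\backslash\udl{X}$, a lift through $\myuline{G/K}$ with $K\in\family_N$ mapping to $\udl{X}$, and one then checks that this produces a point of $X^H$ mapping appropriately --- here one uses that $K\cap N=\{1\}$ so that the relevant orbit has the right fixed points. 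Packaging \cref{lem:quotient_cartesian_natural_transformation} and \cref{lem:fixed_points_of_quotients} into the clean statement ``$f$ is surjective on all fixed-point components'' is where I expect to spend the most care, but no new ideas beyond the lemmas already proven in \cref{subsection:equivariant_categories_and_families} should be needed.
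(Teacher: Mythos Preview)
Your overall strategy coincides with the paper's: apply \cref{thm:poincare_integration} to $f\colon\udl{X}\to\infl_G^Q N\backslash\udl{X}$, verify the fibres are $G$--Poincar\'e via \cref{cor:fibre_of_quotient}, and transfer along \cref{prop:pd_inflation}. There is a minor slip in your fibre argument, however: for compact Lie $G$ the pullback $\myuline{G/H}\times_{\myuline{G/K_1}}\myuline{G/K_0}$ is generally \emph{not} a finite coproduct of orbits (take $G=S^1$, $H=K_0=\{e\}$, $K_1=S^1$, giving the torus). The paper instead observes that $\myuline{G/K_0}\to\myuline{G/K_1}$ is $\ind_{K_1}^G(S\to *)$ for a homogeneous $K_1$--space $S$, hence a Poincar\'e map; its pullback $F_y\to\myuline{G/H}$ is then Poincar\'e by descent, and composition with the Poincar\'e space $\myuline{G/H}$ yields that $F_y$ is $G$--Poincar\'e.

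There is a genuine gap in your treatment of the converse direction: the $\pi_0$--surjectivity of $f^H$ \emph{fails} whenever $H\cap N\neq\{1\}$. Freeness of the $N$--action forces $X^H=\varnothing$ in that case, while $(\infl_G^Q N\backslash\udl{X})^H=(N\backslash\udl{X})^{HN/N}$ can be nonempty; e.g.\ for $G=N=C_p$, $\udl{X}=\myuline{C_p/e}$, $H=C_p$ one has $X^{C_p}=\varnothing$ but $N\backslash\udl{X}=*$. Your appeal to \cref{lem:fixed_points_of_quotients} cannot repair this, since the $K\in\family_N$ it produces has $K\cap N=\{1\}$, and normality of $N$ then forces $(G/K)^H=\varnothing$. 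Consequently the converse half of \cref{thm:poincare_integration} is not applicable here, and your implicit compactness assumption on $\udl{X}$ (used to obtain twisted ambidexterity of the base) is also not among the corollary's hypotheses. The paper's own proof is in fact silent on the extra hypotheses needed for the converse.
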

\begin{proof}
    We will show that this follows from \cref{thm:poincare_integration}. To do so, it suffices to check that for each map $\myuline{G/H} \rightarrow \infl_G^Q N \backslash \udl{X}$ the space $\myuline{G/H} \times_{\infl_G^Q N\backslash \udl{X}} \udl{X} $ is a $G$--Poincar\'e space. But in \cref{cor:fibre_of_quotient} we have seen that  there exists a cartesian square of the following form.
    \begin{center}
        \begin{tikzcd}
            \myuline{G/H} \times_{\infl_G^Q N \backslash \udl{X}} \udl{X} \ar[r] \ar[d, "\text{proj}"] & \myuline{G/K_0} \ar[d, "f"]\\
            \myuline{G/H} \ar[r] & \myuline{G/K_1}
        \end{tikzcd}
    \end{center}
    Let $S$ denote the point-set fibre of the map of topological $G$--spaces $f \colon G/K_0 \rightarrow G/ K_1$. Then $S$ is a homogenous $K_1$-space, and $f = \ind_{K_1}^G(S \rightarrow *)$.
    Now note that since the right map is a Poincar\'e duality map, so is the left one, and as $\myuline{G/H}$ is $G$--Poincar\'e, \cref{prop:Poincare_duality_composition} implies that $\myuline{G/H} \times_{\infl_G^Q N \backslash \udl{X}} \udl{X}$ is $G$--Poincar\'e, as desired.
\end{proof}

\subsection{Examples}\label{sec:examples_G_PD}

The next paragraphs will introduce two different sources of equivariant Poincar\'e spaces. First, we show that smooth $G$-manifolds are equivariantly Poincar\'e. Their study is one of the main motivations for a theory of equivariant Poincar\'e duality, and equivariant Poincar\'e spaces should be viewed as their homotopical analogue. Let us mention that while the proofs given here \textit{depend} on the Wirthm\"uller isomorphism, the Wirthm\"uller isomorphism can also be proven using a different version of equivariant Poincar\'e duality, as is done for example in \cite{MaySigurdsson2006}. 

Our second source of examples are tom Dieck--Petrie's generalised homotopy representations. Here we will find what we consider to be the strangest equivariant Poincar\'e space we know: a $C_p$--Poincar\'e space $\udl{X}$ such that $X^{C_p}$ and $X^e$ are Poincar\'e of the same dimension, yet the map $X^{C_p} \rightarrow X^e$ is not an equivalence, see \cref{ex:strange_ghrep}. 

A general principle here is that \cref{thm:PD_fixed_point_recognition} provides us with a clear strategy to deduce equivariant Poincar\'e duality from nonequivariant Poincar\'e duality of fixed points, provided an appropriate Spivak datum has been constructed.

\subsubsection*{Smooth $G$-manifolds}

Let $G$ be a compact Lie group. A \textit{smooth $G$-manifold} is a smooth manifold on which $G$ acts such that the action map $G \times M \rightarrow M$ is a smooth map. An \textit{equivariant embedding} of smooth $G$-manifolds is a smooth embedding between smooth $G$-manifolds that is also equivariant. 
An \textit{equivariant vector bundle} on $M$ is a tuple $\xi = (E,p)$, where $E$ is a smooth $G$--manifold and $p \colon E \rightarrow M$ is an equivariant map which is a vector bundle where  $G$ acts by bundle maps. For $x \in M^H$, the vector space $E_x \coloneqq p^{-1}(x)$ carries an $H$-action by restriction. Smooth $G$-manifolds port nicely into our homotopical context by virtue of \cite[Cor. 7.2.]{Illman83} which guarantees that smooth $G$--manifolds admit the structure of $G$--CW complexes which is necessarily finite for compact manifolds.
We recommend \cite[Chapter IV]{bredonTrans} for an introduction to the theory of smooth $G$-manifolds.

\begin{fact}\label{fact:equivariant_smooth_manifold_theory}
We collect here some basic facts from equivariant smooth manifold theory that we will need for our purposes.
\begin{enumerate}[label = (\roman*)]
    \item The tangent bundle of a smooth $G$-manifold  can naturally be considered as an equivariant vector bundle \cite[p. 303]{bredonTrans}. If $f \colon M \rightarrow N$ is an equivariant embedding of smooth $G$-manifolds, then the equivariant tubular neighborhood theorem provides a smooth equivariant embedding of $\nu(f) = f^* TN / TM$ into $N$ \cite[Thm. VI.2.2.]{bredonTrans}.
    \item Let us denote the underlying $G$-homotopy type of $M$ by $\udl{M}$.
    Any $G$-vector bundle $p \colon E \rightarrow M$ over $M$ defines a stable equivariant spherical fibration  of the $G$-vector bundle $p \colon E \rightarrow M$. Furthermore, we can choose a $G$--invariant Riemannian metric for $p$ from which we obtain an associated unit disc bundle $D(p) \subset E$ and unit sphere bundle $S(p) \subset E$.  The fibrewise collapse maps $\udl{S^{E_x}} \rightarrow \cofib(\udl{S(p)}_x \rightarrow \udl{D(p)}_x)$ for each $x\in M$ then assemble into a $G$-equivalence
    \[ M_!(J(p)) \xlongrightarrow{\simeq} \Sigma^\infty \cofib\big[S(p) \rightarrow D(p)\big]. \]
    \item For each $G$-manifold $M$, there exists an equivariant embedding into some $G$-representation $V$. This is the content of the Mostow-Palais theorem, see \cite{Palais57}.
\end{enumerate}
\end{fact}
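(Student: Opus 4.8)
These three assertions are standard facts from equivariant differential topology; the proof consists of collecting the cited results and, for item~(ii), a short translation into the parametrised framework.

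For (i): the $G$--action on $M$ lifts to $TM$ via the derivatives of the action diffeomorphisms, which act by vector bundle automorphisms, as recorded in \cite[p.~303]{bredonTrans}. Given an equivariant embedding $f\colon M\hookrightarrow N$, averaging any Riemannian metric on $N$ over the compact group $G$ yields a $G$--invariant one, whose exponential map identifies a $G$--invariant neighbourhood of the zero section of $\nu(f)=f^{*}TN/TM$ with an open $G$--neighbourhood of $M$ in $N$; this is \cite[Thm.~VI.2.2]{bredonTrans}. For (iii): the Mostow--Palais theorem \cite{Palais57} provides, for any compact smooth $G$--manifold $M$, an equivariant smooth embedding into a finite--dimensional orthogonal $G$--representation. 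I would cite these references directly.

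For (ii) the content is the equivalence $M_{!}(J(p))\simeq \susps\cofib[S(p)\to D(p)]$. Averaging a fibre metric over $G$ produces a $G$--invariant one, giving the disc and sphere bundles $D(p),S(p)\subset E$, which are compact smooth $G$--manifolds; the zero--section inclusion $M\hookrightarrow D(p)$ is a $G$--deformation retract, so $\cofib[S(p)\to D(p)]\simeq \cofib[S(p)\to M]=:\thom(p)$ in pointed $G$--spaces. All of these constructions are strictly functorial at the level of topological bundles over the topological orbit category $\orbit(G)$, and so under straightening the pointed $G$--space $\thom(p)$ corresponds to the $G$--functor $\sigma'\colon\udl{M}\to\udl{\spc}_{*}$ with $\sigma'(x)=D(p)_{x}/S(p)_{x}$ for $x\in M^{H}$. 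The fibrewise collapse equivalences, available since $D(p)_{x}$ is a disc with boundary $S(p)_{x}$, assemble into an equivalence of $G$--functors $\sigma\xrightarrow{\simeq}\sigma'$, where $\sigma(x)=S^{E_{x}}$ is the fibrewise one--point compactification of $p$; by definition $J(p)=\susps_{G}\circ\sigma$ is the associated stable spherical fibration. Since the parametrised colimit of the fibres of a $G$--fibration recovers its total space and colimits preserve cofibres, the pointed parametrised colimit $\pointProjection_{!}$ along $\pointProjection\colon\udl{M}\to\terminalTCat$ satisfies $\pointProjection_{!}\sigma\simeq\pointProjection_{!}\sigma'\simeq D(p)/S(p)=\thom(p)$. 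Postcomposing with $\susps_{G}\colon\udl{\spc}_{*}\to\myuline{\spectra}$, which is $G$--symmetric monoidal and preserves parametrised colimits, then gives $M_{!}(J(p))=\pointProjection_{!}\susps_{G}\sigma\simeq\susps_{G}\pointProjection_{!}\sigma\simeq\susps\thom(p)$, as claimed.

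The only point requiring any care --- and it is bookkeeping, not a genuine obstacle --- is the assembly of the fibrewise collapse maps into an honest equivalence of $G$--functors $\sigma\Rightarrow\sigma'$. I would perform the metric, disc/sphere--bundle and collapse constructions strictly at the level of the topological $G$--equivariant bundle and only then pass to underlying $\infty$--categories, so that naturality and coherence come for free; alternatively one may simply quote a standard point--set model for parametrised Thom spectra over $G$--spaces.
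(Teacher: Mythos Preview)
The paper does not prove this statement at all: it is stated as a \texttt{fact} environment with citations to Bredon and Palais, and no proof follows. Your proposal is therefore not competing against any argument in the paper; it is simply an expansion of the citations, and as such it is correct and appropriately detailed. Your handling of (ii) --- identifying the Thom space as both the cofibre $D(p)/S(p)$ and the parametrised colimit of the fibrewise one-point compactifications, then commuting $\susps_G$ past $\pointProjection_!$ --- is the standard unwinding, and your caveat about assembling the collapse maps at the point-set level before passing to $\infty$-categories is the right way to handle the coherence bookkeeping.
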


\begin{prop}
    \label{thm:closed_smooth_G_manifolds_are_poincare}
    Let $M$ be a closed smooth $G$--manifold. Then the underlying $G$--space $\udl{M}$ is a $G$--Poincar\'e space with dualizing object $J(TM)^{-1}$.
\end{prop}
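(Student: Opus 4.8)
The plan is to verify the hypotheses of the fixed point recognition principle \cref{thm:PD_fixed_point_recognition}, using the smooth structure of $M$ together with the equivariant embedding and tubular neighbourhood facts recorded in \cref{fact:equivariant_smooth_manifold_theory}. First I would construct the candidate Spivak datum. By \cref{fact:equivariant_smooth_manifold_theory} (iii), choose an equivariant embedding $M \hookrightarrow V$ into a $G$--representation. The equivariant tubular neighbourhood theorem gives a normal bundle $\nu = \nu(M \subset V)$ with $\nu \oplus TM \simeq \underline{V}$ as equivariant vector bundles. Via \cref{fact:equivariant_smooth_manifold_theory} (ii) applied to $\nu$, the fibrewise collapse assembles into a $G$--equivalence $M_!(J(\nu)) \simeq \Sigma^\infty \cofib[S(\nu) \to D(\nu)]$, and the Pontryagin--Thom collapse $S^V \to \cofib[S(\nu)\to D(\nu)]$ together with this equivalence produces, after desuspending by $S^{-V}$ (which is legitimate since $\myuline{\spectra}_G$ is $G$--stable), a fundamental class $c \colon \sphere_G \to M_! (\Sigma^{-V} J(\nu)) = M_!(J(TM)^{-1})$, where the last identification uses $J(\nu) \otimes J(TM) \simeq J(\nu \oplus TM) \simeq \Sigma^{V}\sphere_G$ so that $\Sigma^{-V}J(\nu) \simeq J(TM)^{-1}$. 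Since $J(TM)$ is a stable spherical fibration, $J(TM)^{-1}$ is a functor $\udl{M} \to \udl{\picardSpace}(\myuline{\spectra}_G)$, so the invertibility hypothesis of \cref{thm:PD_fixed_point_recognition} is automatic; and $\udl{M}$ is a compact $G$--space by \cite{Illman83}, hence twisted ambidextrous by \cref{prop:cpt_twisted_ambidextrous}.

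Next I would check the fixed point condition: for each closed $H \le G$, the pushforward $\Phi^H(\xi,c)$ of \cref{cons:pushing_spivak_data_geometric_fix_points} must exhibit $M^H$ as a nonequivariant Poincar\'e space. The key geometric input is that $M^H$ is itself a closed smooth manifold (a standard fact for smooth actions of compact Lie groups), with $TM|_{M^H} \simeq TM^H \oplus (\nu^H)^\perp$ where $(\nu^H)^\perp$ is the $H$--fixed-point-free complement. One computes $\Phi^H$ of the restricted data: $\Phi^H$ applied to a stable spherical fibration associated to an equivariant bundle over a trivial-action base is the spherical fibration associated to its $H$--fixed subbundle, so $\Phi^H \circ (J(TM)^{-1})^H \simeq J(TM^H)^{-1}$ as a local system on $M^H$, and $\Phi^H c$ is identified with the classical Pontryagin--Thom collapse for the embedding $M^H \hookrightarrow V^H$. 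Thus $\Phi^H(\xi,c)$ is precisely the classical Spivak datum witnessing ordinary Poincar\'e duality for the closed manifold $M^H$, which is an equivalence by classical Poincar\'e duality (in the formulation of the preamble to \cref{section:parametrised_PD}, or \cite{Klein2001,markusPoincareSpaces}). Applying \cref{thm:PD_fixed_point_recognition} then concludes that $(\xi,c)$ exhibits $\udl{M}$ as a $G$--Poincar\'e space, and by uniqueness of twisted ambidextrous Spivak data (\cref{prop:twisted_ambidextrous_presentable_case}) the dualising sheaf is $D_{\udl{M}} \simeq J(TM)^{-1}$.

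The main obstacle I anticipate is the bookkeeping in the second paragraph: making precise and coherent the identification of $\Phi^H$ applied to the fibrewise-constructed stable spherical fibration $J(\nu)$ (and to the collapse map) with the corresponding nonequivariant data over $M^H$. This requires knowing that geometric fixed points commutes appropriately with the $M$--indexed colimit defining $M_!$ --- which is available via \cref{obs:parametrised_colimits_of_s_*-categories} and the fact that $\Phi^H$ is $\spc_{G,*}$--linear and symmetric monoidal colimit-preserving (\cref{cons:geometric_fixed_points}) --- and that the fibrewise collapse of \cref{fact:equivariant_smooth_manifold_theory} (ii) is natural enough that its $\Phi^H$ recovers the fibrewise collapse for $\nu^H$ over $M^H$. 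A clean way to organise this is to note that $\Phi^H\circ \susps_+(-) \simeq \susps_+((-)^H)$ on $G$--spaces, that Thom spectra of equivariant bundles are built from the associated sphere bundles by such suspension-colimit formulas, and that fixed points of the relevant Thom spaces are Thom spaces of fixed subbundles (a $G$--CW / cell-by-cell argument, or directly from the Mostow--Palais embedding picture). Once this naturality is in hand, the rest is a matter of assembling the already-established nonequivariant Poincar\'e duality for each $M^H$.
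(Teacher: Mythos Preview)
Your proposal is correct and follows essentially the same approach as the paper: construct the Spivak datum via an equivariant embedding and the Pontryagin--Thom collapse, then apply \cref{thm:PD_fixed_point_recognition} by identifying $\Phi^H$ of this datum with the classical Pontryagin--Thom Spivak datum for the closed manifold $M^H \hookrightarrow V^H$. The only cosmetic difference is that the paper works with $(J(\nu)\otimes\sphere^{-V},c)$ throughout and identifies this with $J(TM)^{-1}$ at the very end, whereas you make that identification up front; the paper also phrases the fixed-point step in terms of $\Phi^H J(\nu)$ being the normal bundle of $M^H$ in $V^H$ (citing \cite[Cor.~A.11]{markusPoincareSpaces}) rather than your equivalent formulation via $\Phi^H J(TM)^{-1}\simeq J(TM^H)^{-1}$.
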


\begin{proof}
    Choose an embedding $f \colon M \rightarrow V$ into some $G$-representation. Denote the normal bundle of $f$ by $\nu = (p\colon E \rightarrow M)$ and pick a tubular neighborhood of $M$ in $V$. 
    
    Consider the Pontryagin-Thom collapse map
    \begin{align*}
        c \colon \sphere \xrightarrow{\simeq} \sphere^V \otimes \sphere^{-V} \rightarrow \Sigma^\infty \cofib\big[S^V \setminus (D(\nu) \setminus S(\nu)) \rightarrow S^V\big] \otimes \sphere^{-V} \\\simeq \Sigma^\infty \cofib\big[S(\nu) \rightarrow D(\nu)\big] \otimes \sphere^{-V} \simeq M_!(J(\nu) \otimes \sphere^{-V}).
    \end{align*}
    We claim that the Spivak datum $(J(\nu)\otimes \sphere^{-V},c)$ is Poincar\'e. Since $M$ is a $G$--compact space and $J(\nu)$ is invertible, by \cref{thm:PD_fixed_point_recognition}, it suffices to check that for every $H \subset G$, the Spivak datum $(\Phi^H J(\nu), \Phi^H c)$ is a Poincar\'e Spivak datum for $M^H$. Recall that $\Phi^H J(\nu)$ is
    \[ \Phi^H J(\nu) \colon M^H \rightarrow \Pic(\spectra), \hspace{3mm} x \mapsto \Phi^H(J(\nu)(x)) = \Phi^H \Sigma^\infty S^{E_x} \simeq \Sigma^\infty S^{E_x^H}.  \]
    But this is just the underlying stable spherical fibration of the normal bundle of $M^H$ in $V^H$. The collapse map $\Phi^H c$ identifies with the geometric Pontryagin-Thom collapse map of the smooth manifold $M^H$ embedded in $V^H$. Thus, by \cite[Cor. A.11]{markusPoincareSpaces} the Spivak datum $(\Phi^H J(\nu),\Phi^H c)$ is Poincar\'e.

    Now note that the equivalence $\constant_V = TV|_{M} \simeq \nu \oplus TM$ shows that
    \[ J(\nu) \otimes \sphere^{-V} \simeq J(\nu) \otimes J(\constant_V)^{-1} \simeq J(\constant_V) \otimes J(TM)^{-1} \otimes J(\constant_V)^{-1} \simeq J(TM)^{-1} \]
    as claimed.
\end{proof}

\begin{rmk}
    We want to mention that versions of \cref{thm:closed_smooth_G_manifolds_are_poincare} are already contained in the literature so we do not claim any originality. In particular, May--Sigurdsson give an account of equivariant Poincar\'e duality and show that closed smooth $G$-manifolds satisfy Poincar\'e duality in their sense \cite[Chapter 18.6.]{MaySigurdsson2006}.
    Depending on which proof of the Wirthm\"uller isomorphism the reader has in mind, the reader might complain that the proof of \cref{thm:closed_smooth_G_manifolds_are_poincare} is circular, as the Wirthm\"uller isomorphism for compact Lie groups itself was proved by showing that smooth $G$-manifolds are $G$-Poincar\'e. 
    Another variant of \cref{thm:closed_smooth_G_manifolds_are_poincare} was given by Costenoble--Waner, see \cite{costenoble2017equivariant}.
\end{rmk}

\subsubsection*{Generalised homotopy representations}

We now turn our attention to another interesting source of equivariant Poincar\'e duality spaces, namely the class of generalised homotopy representations of tom Dieck--Petrie \cite{tomDieckPetrie82}. 

\begin{defn}
    A \textit{generalised homotopy representation} of a compact Lie group $G$ is a compact $G$--space $\underline{\hrep}$ such that for each closed subgroup $H \le G$  the space $\hrep^H$ is equivalent to $S^{n(H)}$ for some $n(H) \in \mathbb{N}$. The function $H \mapsto n(H)$ associated to a generalised homotopy representation is called its \textit{dimension function}.\footnote{Beware that it is also common in the literature to shift the dimension function by one.}
\end{defn}

Examples of generalised homotopy representations are unit spheres of finite dimensional orthogonal $G$-representations or one--point compactifications of finite dimensional linear $G$-representations.

\begin{rmk}
   While it will not play a role in this article, let us mention that  \cite{tomDieckPetrie82,tomDieck86} have also studied what are called \textit{homotopy representations}, namely generalised homotopy representations  for which the fixed points have CW--dimensions those of the respective spheres. A special feature of homotopy representations  is that they satisfy an equivariant Hopf degree theorem, i.e. $G$-homotopy classes of self maps are classified by their degree, an element in a Burnside ring.
\end{rmk}

To show that generalised $G$-homotopy representations are indeed $G$--Poincaré, we first recall a construction of a Poincar\'e Spivak datum for the nonequivariant spheres.

\begin{obs}[Spivak data for spheres]\label{obs:spivak_data_for_spheres}
    We construct a Spivak datum for $S^d\in\spc$. Let $E \coloneqq \fib(\Sigma^\infty_+ S^d \rightarrow \Sigma^\infty_+ * \simeq \sphere)$. Then $E \simeq \sphere^{d} \in \Pic(\spectra)$. Consider the composition
    \[ c \colon \sphere \xrightarrow{\simeq} E \otimes E^\vee \rightarrow \Sigma^\infty_+ S^d \otimes E^\vee \simeq S^d_! ({S^d})^* E^\vee. \]
    We argue now that $(({S^d})^*E^{\vee},c)$ is a Poincar\'e Spivak datum for $S^d$. As $S^d$ is stably parallelisable, we know that its dualising sheaf is constant with value $\sphere^{-d} \simeq E^{\vee}$. Assume $d \geq 1$, the case $d=0$ being easier. Now $\pi_0 S^d_! ({S^d})^* E^\vee \simeq \bbZ$, and $c \in \pi_0 S^d_! ({S^d})^* E^\vee \simeq \bbZ$ gives the collapse map of a Poincar\'e Spivak datum if and only if it corresponds to a generator. This  is indeed the case for $(({S^d})^*E,c)$, so it is Poincar\'e as claimed.
\end{obs}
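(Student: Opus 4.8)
The plan is to deduce the statement from the uniqueness of twisted ambidextrous Spivak data together with a $\pi_0$--computation. First I would dispose of the case $d=0$ separately: here $S^0 \simeq \ast \sqcup \ast$, each point carries the tautological Poincar\'e Spivak datum $(\unit,\id_\unit)$ by \cref{example:examples_of_ambidex_point_semiadditive}~(1), so $S^0$ is Poincar\'e by \cref{cor:poincare_duality_of_disjoint_unions}, and one checks that the collapse map $c$ constructed above is the one induced under the equivalence $\spectra^{S^0}\simeq \spectra\times\spectra$. So assume $d\ge 1$. Since $S^d$ is a compact space, \cref{prop:cpt_twisted_ambidextrous} shows that $S^d$ is $\spectra$--twisted ambidextrous, hence by \cref{prop:twisted_ambidextrous_presentable_case} it admits a \emph{unique} twisted ambidextrous Spivak datum $(D_{S^d},c_{S^d})$.

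Next I would identify $D_{S^d}$ with the constant local system $({S^d})^*\sphere^{-d}$. This follows from stable parallelisability of $S^d$ --- e.g. from \cref{thm:closed_smooth_G_manifolds_are_poincare} applied with trivial group, since $TS^d$ is stably trivial, or directly from Atiyah duality. In particular $D_{S^d}$ takes values in $\Pic(\spectra)$, so $S^d$ is a Poincar\'e space. As $E\simeq\sphere^d$ in $\Pic(\spectra)$, there is an equivalence $({S^d})^*E^\vee\simeq({S^d})^*\sphere^{-d}\simeq D_{S^d}$ of local systems, so the dualising sheaf of the candidate datum is already the correct one. By the uniqueness clause of \cref{prop:twisted_ambidextrous_presentable_case}, it then remains to show that, transported along this equivalence, the collapse map $c$ agrees with $c_{S^d}$ up to an automorphism of $D_{S^d}$; equivalently, that $(({S^d})^*E^\vee,c)$ and $(D_{S^d},c_{S^d})$ are equivalent as Spivak data.

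To compare the collapse maps I would compute $\pi_0$ of the common target. Splitting $\susps_+ S^d\simeq \sphere\oplus E$ off the basepoint and using $E\otimes E^\vee\simeq\sphere$, one gets
\[ S^d_!\big(({S^d})^*E^\vee\big)=\susps_+ S^d\otimes E^\vee\simeq \sphere^{-d}\oplus\sphere, \]
so that $\pi_0 S^d_!(({S^d})^*E^\vee)\cong\bbZ$ for $d\ge 1$. Unwinding the definition above, $c$ is the composite $\sphere\xrightarrow{\simeq}E\otimes E^\vee\xrightarrow{i\otimes E^\vee}\susps_+ S^d\otimes E^\vee$, with $i\colon E\hookrightarrow\susps_+ S^d$ the summand inclusion; under the splitting $i\otimes E^\vee$ is the inclusion of the $\sphere\simeq E\otimes E^\vee$ summand, so $c$ carries $1\in\pi_0\sphere$ to a generator. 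On the other hand, via the capping equivalence $S^d_*({S^d})^*\unit\simeq S^d_! D_{S^d}$ and \cref{lem:collapse_of_capping}, $c_{S^d}$ corresponds to the adjunction unit $\eta\colon\unit\to S^d_*({S^d})^*\unit=F(S^d_+,\sphere)$ --- equivalently, by \cref{cons:monoid_structure_degree_space}, $c_{S^d}$ is the multiplicative unit of $\pi_0 S^d_! D_{S^d}\cong\pi_0 F(S^d_+,\sphere)\cong\bbZ$ --- and since for $d\ge 1$ this $\pi_0$ is freely generated by the image of $\eta$, the class $c_{S^d}$ is likewise a generator.

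Finally I would observe that any two generators of $\pi_0 S^d_! D_{S^d}\cong\bbZ$ differ by a sign, while the constant automorphism $-\id$ of $D_{S^d}\simeq({S^d})^*\sphere^{-d}$ induces $-\id$ on $\pi_0 S^d_! D_{S^d}$ because $S^d_!$ is exact. Composing the equivalence $({S^d})^*E^\vee\simeq D_{S^d}$ with this automorphism if necessary produces an equivalence of Spivak data $(({S^d})^*E^\vee,c)\simeq(D_{S^d},c_{S^d})$; hence $(({S^d})^*E^\vee,c)$ is twisted ambidextrous, and since $({S^d})^*E^\vee$ lands in $\Pic(\spectra)$, it is Poincar\'e. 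The step I expect to be the main obstacle is precisely this last comparison: one must know both that the canonical collapse $c_{S^d}$ is a $\pi_0$--generator \emph{and} that matching $\pi_0$--generators suffices to produce an equivalence of Spivak data (not merely of the underlying dualising sheaves). This is where \cref{prop:twisted_ambidextrous_presentable_case} and the explicit capping description of collapse maps do the work; the remaining ingredients are formal or a routine stable computation.
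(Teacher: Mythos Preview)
Your proposal is correct and follows essentially the same approach as the paper: identify the dualising sheaf as the constant local system $({S^d})^*\sphere^{-d}$ via stable parallelisability, compute $\pi_0 S^d_!({S^d})^*E^\vee\cong\bbZ$ for $d\ge 1$, and argue that the constructed $c$ hits a generator. The paper leaves the step ``$c$ gives a Poincar\'e Spivak datum if and only if it corresponds to a generator'' as a bare assertion, whereas you have unpacked it carefully via the uniqueness clause of \cref{prop:twisted_ambidextrous_presentable_case} and the sign automorphism of $D_{S^d}$; this is a welcome elaboration rather than a different route.
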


Having this in mind, we can make an educated guess for the 
a Spivak datum of a generalised homotopy representation. To this end, the following terminology will be useful.

\begin{defn}
    A \textit{homotopical framing} for ${\xi} \in \underline{\func}(\underline{X},\myuline{\spectra})$ is a $G$--spectrum $E$ together with an equivalence $ {\xi} \xrightarrow{\simeq} \uniqueMapX^* E$. A compact  $G$--space $\underline{X}$ is \textit{homotopically parallelisable} if its dualising sheaf ${D}_{\underline{X}} \in \underline{\func}(\underline{X},\myuline{\spectra})$ admits a homotopical framing.
\end{defn}

\begin{thm}\label{thm:generalised_homotopy_representations}
    The dualising sheaf of a generalised homotopy representation $\underline{\hrep}$ admits a canonical homotopical framing ${D}_{\underline{\hrep}} \xrightarrow{\simeq} \hrep^* \fib(\Sigma^\infty_+  {\hrep}^{\vee} \rightarrow \Sigma^\infty_+ * \simeq \sphere)^\vee$.  In particular, generalised homotopy spheres are homotopically parallelisable $G$--Poincaré spaces.
\end{thm}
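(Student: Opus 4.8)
The plan is to apply the fixed point recognition principle, \cref{thm:PD_fixed_point_recognition}, using the putative Spivak datum built from the stable framing of $\underline{\hrep}$. First I would set $E \coloneqq \fib(\Sigma^\infty_+ \underline{\hrep}^\vee \to \Sigma^\infty_+ \udl{\ast} \simeq \sphere) \in \myuline{\spectra}$; here $\underline{\hrep}^\vee$ denotes the dual of the $G$--space $\underline{\hrep}$ in the pointed world, and for each closed $H \le G$ one has $\hrep^H \simeq S^{n(H)}$, so $\Phi^H E \simeq \fib(\Sigma^\infty_+ S^{n(H)} \to \sphere) \simeq \sphere^{n(H)}$ is invertible. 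Taking $\xi \coloneqq \hrep^* E^\vee \in \udl{\func}(\underline{\hrep}, \myuline{\spectra})$ (which visibly takes values in $\udl{\picardSpace}(\myuline{\spectra})$ since $E^\vee$ is invertible, using that $\hrep^*$ preserves invertibility as a symmetric monoidal functor), I would then define a fundamental class $c \colon \sphere_G \to \hrep_! \xi$ as the composite
\[
    c \colon \sphere_G \xrightarrow{\simeq} E \otimes E^\vee \to \Sigma^\infty_+ \underline{\hrep} \otimes E^\vee \simeq \hrep_!\hrep^* E^\vee = \hrep_! \xi,
\]
where the middle map is induced by the unit $\sphere_G \to \Sigma^\infty_+ \underline{\hrep}$ of the adjunction (i.e. $E \to \Sigma^\infty_+\underline{\hrep}$ is the natural map) tensored with $E^\vee$, and the last equivalence is the projection formula $\Sigma^\infty_+\underline{\hrep} \otimes E^\vee \simeq \hrep_!(\hrep^* E^\vee)$ since $\Sigma^\infty_+\underline{\hrep} \simeq \hrep_!\unit$.

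The key step is to verify the hypotheses of \cref{thm:PD_fixed_point_recognition}: since $\underline{\hrep}$ is a compact $G$--space, it is $\myuline{\spectra}$--twisted ambidextrous by \cref{prop:cpt_twisted_ambidextrous}, and $\xi$ is invertible-valued as noted; it then remains to check that for every closed $H \le G$, the Spivak datum $\Phi^H(\xi,c)$ from \cref{cons:pushing_spivak_data_geometric_fix_points} exhibits $\hrep^H \simeq S^{n(H)}$ as a nonequivariant $\spectra$--Poincar\'{e} space. Unwinding \cref{cons:pushing_spivak_data_geometric_fix_points} and using that $\Phi^H$ is symmetric monoidal and colimit preserving (so that $\Phi^H$ commutes with $\hrep_!$ via \cref{obs:parametrised_colimits_of_s_*-categories}, and $\Phi^H \hrep^* \simeq (\hrep^H)^* \Phi^H$), the datum $\Phi^H(\xi,c)$ is identified with $\big((\hrep^H)^*(\Phi^H E)^\vee,\ \Phi^H c\big)$, where $\Phi^H c$ is the composite $\sphere \xrightarrow{\simeq} \Phi^H E \otimes (\Phi^H E)^\vee \to \Sigma^\infty_+ \hrep^H \otimes (\Phi^H E)^\vee \simeq (\hrep^H)_! (\hrep^H)^*(\Phi^H E)^\vee$. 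Since $\Phi^H E \simeq \fib(\Sigma^\infty_+ S^{n(H)} \to \sphere) \simeq \sphere^{n(H)}$ and the map $\Phi^H E \to \Sigma^\infty_+ \hrep^H$ is the canonical inclusion, this is \emph{exactly} the Spivak datum for the sphere $S^{n(H)}$ constructed in \cref{obs:spivak_data_for_spheres} (with $d = n(H)$), which is shown there to be Poincar\'{e}. Hence \cref{thm:PD_fixed_point_recognition} applies and $\underline{\hrep}$ is $G$--Poincar\'{e} with dualising sheaf $D_{\underline{\hrep}} \simeq \xi = \hrep^* E^\vee$, which is the asserted homotopical framing; homotopical parallelisability is then immediate from the definition.

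The main obstacle will be the bookkeeping in the second step: one must check carefully that the equivalence $\Phi^H\big(\hrep^* E^\vee, c\big) \simeq \big((\hrep^H)^*(\Phi^H E)^\vee, \Phi^Hc\big)$ really does match the sphere Spivak datum of \cref{obs:spivak_data_for_spheres} \emph{on the nose}, including the identification of collapse maps — this requires tracking the compatibility of $\Phi^H$ with the projection formula equivalence and with the counit/unit maps, which is where \cref{obs:parametrised_colimits_of_s_*-categories} (identifying $\Phi^G$-pushforward of parametrised colimits with ordinary colimits of fixed points) does the real work. A secondary subtlety is checking that the natural map $E \to \Sigma^\infty_+\underline{\hrep}$ geometric-fixed-points to the correct map $\sphere^{n(H)} \to \Sigma^\infty_+ S^{n(H)}$; this follows from $\Phi^H$ being exact and $\Sigma^\infty_+$-compatible, so that $\Phi^H$ of the fibre sequence $E \to \Sigma^\infty_+\underline{\hrep} \to \sphere_G$ is the fibre sequence $\sphere^{n(H)} \to \Sigma^\infty_+ S^{n(H)} \to \sphere$ defining $E$ in \cref{obs:spivak_data_for_spheres}. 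Everything else — invertibility, compactness, the final identification of the dualising sheaf — is formal.
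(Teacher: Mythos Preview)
Your proposal is correct and follows essentially the same route as the paper's own proof: define $E$ as the fibre, set $\xi = \hrep^* E^\vee$, build the collapse map $c$ via the evaluation $\sphere \simeq E \otimes E^\vee \to \Sigma^\infty_+\underline{\hrep} \otimes E^\vee$, identify $\Phi^H(\xi,c)$ with the sphere Spivak datum of \cref{obs:spivak_data_for_spheres}, and conclude via \cref{thm:PD_fixed_point_recognition}. Your write-up is in fact more explicit than the paper's about the bookkeeping (compactness via \cref{prop:cpt_twisted_ambidextrous}, the role of \cref{obs:parametrised_colimits_of_s_*-categories} in tracking $\Phi^H$ through $\hrep_!$, and the exactness of $\Phi^H$ on the defining fibre sequence), but the argument is the same.
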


\begin{proof}
    To prove the theorem, we will construct a Poincar\'e Spivak datum whose underlying parametrised spectrum is constant with value $E^{\vee} \coloneqq \fib(\Sigma^\infty_+  {\hrep}^{\vee} \rightarrow \Sigma^\infty_+ * \simeq \sphere)^\vee$. 
    As in \cref{obs:spivak_data_for_spheres}, we have a map
    $c \colon \sphere \rightarrow E \otimes E^\vee \rightarrow \Sigma^\infty_+ {\hrep} \otimes E^\vee \simeq \hrep_! \hrep^* E^\vee$.   Upon taking geometric fixed points, \cref{obs:spivak_data_for_spheres} identifies the composition
    \[ \Phi^H c \colon \Phi^H\sphere \rightarrow \Phi^H E \otimes \Phi^H E^\vee \rightarrow \Phi^H\Sigma^\infty_+ \underline{\hrep} \otimes \Phi^H E^\vee \simeq \hrep^H_! (\hrep^H)^* \Phi^H E^\vee. \]
    as a Poincar\'e Spivak datum for $\hrep^H$. Thus, by \cref{thm:PD_fixed_point_recognition}, we get that $\hrep^*E^{\vee}$ is a Poincar\'{e} $G$--Spivak datum for $\udl{\hrep}$ and by \cref{prop:twisted_ambidextrous_presentable_case} we get $D_{\udl{\hrep}}\simeq \hrep^*E^{\vee}$ as claimed. 
\end{proof}

\begin{lem}
    Suppose that $X\in \spc_G^\omega$ is homotopically parallelisable and that $X^H$ is a Poincaré space for all $H \le G$. 
    Then $X$ is a $G$--Poincaré space. 
\end{lem}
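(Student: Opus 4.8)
The strategy is to apply the fixed-point recognition principle \cref{thm:PD_fixed_point_recognition}. Since $X \in \spc_G^\omega$ is compact, it is in particular $\myuline{\spectra}$-twisted ambidextrous by \cref{prop:cpt_twisted_ambidextrous}, so the recognition principle is available to us. We are given that the dualising sheaf $D_{\udl{X}} \in \func_G(\udl{X}, \myuline{\spectra})$ admits a homotopical framing, i.e.\ there is a $G$--spectrum $E$ together with an equivalence $D_{\udl{X}} \xrightarrow{\simeq} \uniqueMapX^* E$; here $E$ must be invertible since $D_{\udl X}$ takes values in $\udl{\picardSpace}(\myuline{\spectra})$ (the space $\udl{X}$ being $\myuline{\spectra}$--twisted ambidextrous, its dualising sheaf $D_{\udl X}$ is well-defined, and invertibility can be checked at any point). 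Pairing $D_{\udl X}$ with the fundamental class $c$ produced from twisted ambidexterity gives a candidate invertible $\myuline{\spectra}$--Spivak datum $(D_{\udl{X}}, c) = (\uniqueMapX^* E, c)$ for $\udl{X}$; the task is to check that it is Poincar\'{e}, and \cref{thm:PD_fixed_point_recognition} reduces this to checking that for each closed subgroup $H \le G$, the pushed-forward Spivak datum $\Phi^H(D_{\udl X}, c)$ from \cref{cons:pushing_spivak_data_geometric_fix_points} exhibits $X^H$ as a nonequivariant $\spectra$--Poincar\'{e} space.

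First I would unwind $\Phi^H(D_{\udl X}, c)$ explicitly using \cref{cons:pushing_spivak_data_geometric_fix_points}. Its underlying dualising sheaf is $X^H \xrightarrow{D_{\udl X}^H} \spectra_H \xrightarrow{\Phi^H} \spectra$, which by the homotopical framing identifies with the composite $X^H \xrightarrow{(r^H)^*} \spectra_H \xrightarrow{\Phi^H} \spectra$ applied to $\res^G_H E$ — that is, it is the \emph{constant} local system on $X^H$ with value $\Phi^H \res^G_H E$, which is an invertible spectrum since $E$ is invertible and $\Phi^H$ is symmetric monoidal. So $X^H$ carries a Spivak datum whose dualising sheaf is constant and invertible; call this value $\Sigma^{-d(H)} \sphere$ for the appropriate $d(H) \in \bbZ$. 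Now by hypothesis $X^H$ is a nonequivariant Poincar\'{e} space; by \cref{prop:twisted_ambidextrous_presentable_case} its twisted ambidextrous Spivak datum is \emph{unique}, so the dualising sheaf of $X^H$ in the sense of nonequivariant Poincar\'{e} duality is (up to the unique equivalence) this same constant invertible local system — more precisely, one knows a priori that $X^H$, being a compact space, has $D_{X^H}$ with invertible values, and the recognition principle only requires us to verify that the \emph{particular} fundamental class $\Phi^H c$ realises it, which we now address.

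The remaining point — and this is where the real work lies — is to check that the collapse map $\Phi^H c$ is a generator, equivalently that the capping map $\ambi{\Phi^H c}{}{-} \colon (X^H)_*(-) \to (X^H)_!(- \otimes \Phi^H(D_{\udl X}))$ is an equivalence. Since both $X^H$ is compact and the local system is constant, $(X^H)_!(-\otimes \Sigma^{-d(H)}\sphere) \simeq \Sigma^{-d(H)}\Sigma^\infty_+ X^H \otimes (-)$ and likewise $(X^H)_*$ is a dualising-spectrum twist; the statement that capping with $\Phi^H c$ is an equivalence is, via the argument in \cref{example:wall_poincare_complex}, equivalent to $\Phi^H c$ corresponding to a generator of $\pi_0$ of the relevant homology group (using connectivity/finiteness of $X^H$). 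I would deduce this from the fact that $X^H$ \emph{is} a Poincar\'{e} space: its (unique) collapse map $c_{X^H}$ is a generator, and both $c_{X^H}$ and $\Phi^H c$ are candidate fundamental classes for the same constant invertible dualising sheaf; then I would invoke the degree-theory machinery — specifically that for a Poincar\'{e} space the set $\map(\unit, (X^H)_! D_{X^H})$ of fundamental classes carries a commutative monoid structure (\cref{cons:monoid_structure_degree_space}) with the Poincar\'{e} ones being exactly the units, together with the observation that a framing forces $\Phi^H c$ to be a unit. The cleanest route, which I expect to be the main obstacle to write carefully, is actually to \emph{avoid} this last comparison by arguing directly as in the proof of \cref{thm:generalised_homotopy_representations}: there one already uses that a constant framed Spivak datum whose geometric fixed points are Poincar\'{e} Spivak data for the fixed points is automatically $G$--Poincar\'{e}, and the hypothesis here is precisely that each $\Phi^H(\uniqueMapX^* E, c)$ — being a constant-valued Spivak datum for the Poincar\'{e} space $X^H$ with the collapse map $\Phi^H c$ coming from the same framing-induced map $\sphere \to E \otimes E^\vee \to \Sigma^\infty_+ X^H \otimes E^\vee$ — is Poincar\'{e}. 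Thus once one verifies that $\Phi^H c$ genuinely is the framing-induced collapse map (a diagram chase through the definitions of $c$ from twisted ambidexterity and of $\Phi^H(-)$), \cref{obs:spivak_data_for_spheres}'s argument, or rather its local-system-constant generalisation, shows $\Phi^H c$ is a generator, hence $X^H$ with this datum is Poincar\'{e}, hence by \cref{thm:PD_fixed_point_recognition} the space $\udl X$ is $G$--Poincar\'{e}.
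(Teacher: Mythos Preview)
Your argument has a genuine gap at its very first substantive step. You assert that ``$E$ must be invertible since $D_{\udl{X}}$ takes values in $\udl{\picardSpace}(\myuline{\spectra})$'', but that $D_{\udl{X}}$ factors through the Picard groupoid is exactly the statement to be proved --- this is what it means for $\udl{X}$ to be $G$--Poincar\'e, and twisted ambidexterity alone (which is all compactness of $\udl{X}$ buys you) does not imply it. The recognition principle \cref{thm:PD_fixed_point_recognition} has ``$\xi$ takes values in $\udl{\picardSpace}(\myuline{\spectra}_G)$'' as a \emph{standing hypothesis}, so it cannot be invoked until invertibility of $D_{\udl{X}} = \uniqueMapX^* E$ is already in hand. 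In fact the recognition principle is the wrong tool here: it verifies that a given invertible--valued Spivak datum is twisted ambidextrous, whereas $(D_{\udl{X}}, c)$ is the unique twisted ambidextrous datum by construction. The moment one knows $E$ is invertible, the conclusion follows immediately from the definition, and your subsequent work on matching fundamental classes is superfluous.

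The paper establishes invertibility of $E$ directly. First, $E$ is compact: $\uniqueMapX_! D_{\udl{X}} \simeq \uniqueMapX_* \uniqueMapX^* \sphere_G$ is a compact $G$--spectrum since $\udl{X}$ is compact, and $E$ is a retract of $\uniqueMapX_! D_{\udl{X}} \simeq \Sigma^\infty_+ {X} \otimes E$. Second, each $\Phi^H E$ is invertible: it is the value at any point of the dualising sheaf $D_{X^H}$ (which equals $(X^H)^* \Phi^H E$ by basechanging the twisted ambidextrous datum along $\Phi^H$), and $D_{X^H}$ is invertible since $X^H$ is Poincar\'e by hypothesis. A compact $G$--spectrum with all geometric fixed points invertible is itself invertible. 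The compactness step is the key idea your proposal is missing; note that ``invertibility can be checked at any point'' still requires the value $\res^G_H E$ to be compact in $\spectra_H$, which you have not addressed.
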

\begin{proof}
    Since $\udl{X}$ was compact, note that $\uniqueMapX_! {D}_{\underline{X}} \simeq \uniqueMapX_*\uniqueMapX^*\sphere_G$ is a compact $G$--spectrum.
    Now suppose that there is $E \in \spectra_G$ such that ${D}_{\underline{X}} \simeq \uniqueMapX^* E$.
    As $E$ is a retract of $\uniqueMapX_! D_{\underline{X}} \simeq \Sigma^\infty_+ {X} \otimes E$ this implies that $E$ is compact itself.
    If all fixed points of $X$ are Poincaré spaces, then all geometric fixed points of $E$ are invertible.
    Together this shows that $E$ is invertible so that $X$ is a $G$--Poincaré space.
\end{proof}

\begin{example}
    \label{ex:strange_ghrep}
    In \cite[p. 391]{bredonTrans},  Bredon constructs a curious example of a generalised homotopy representation. Namely, he constructs examples of compact $C_p$-spaces $\underline{X}$ which satisfy that $X^{C_p} \simeq X^e \simeq S^2$ such that the map $X^{C_p} \rightarrow X^e$ has degree $q= kp+1$ for $k \in \mathbb{Z}$ arbitrary. Taking the unreduced suspension, examples of this type exist in arbitrary dimensions. From Smith theory we know that each generalised $C_p$-homotopy representation has the property that the dimension of the fixed point sphere does not exceed the dimension of the underlying space.
\end{example}

\subsection{Gluing classes}\label{subsection:gluing_classes}

Our next goal is to hint at nontrivial ways in which the fixed points interact. For this, we construct a certain homology class, the \textit{gluing class}, that should be thought of as passing information between the fundamental class of a Poincar\'e space and fundamental classes of various fixed point spaces.
The gluing class will be one of the main tools for our geometric applications. It is inspired by L\"uck's work on the Nielsen realisation problem, specifically by \cite[Notation 1.8 (H) and Lemma 1.8 (5)]{lueck2022brown}. Much of what we will present here will work for compact Lie groups too, but we nevertheless restrict our attention to finite groups $G$ for this subsection which is sufficient for our geometric purposes later. 

\begin{cons}[Nonsingular part]
    Fix $\underline{X}\in\spc_G$, $\family$ a family of subgroups of $G$, and $\udl{\sC}$ a $G$--stable category. Recall the adjunction counit $\epsilon\colon \underline{X}\singularPartTwiddle{\family}\rightarrow\underline{X}$ from \cref{defn:singular_part_inclusion}. This map then itself induces the adjunction $\epsilon_! \colon \underline{\func}(\underline{X}\singularPartTwiddle{\family},\underline{\sC})\rightleftharpoons \underline{\func}(\underline{X},\underline{\sC}) \cocolon \epsilon^*$. The adjunction (co)unit of \textit{this} adjunction then gives us  functors\small
    \[\underline{\func}(\underline{X},\underline{\sC}) \xlongrightarrow{c}\underline{\func}(\underline{X},\underline{\sC})^{\Delta^1}\: ::\: \xi\mapsto (\epsilon_!\epsilon^*\xi\rightarrow\xi),\quad\underline{\func}(\underline{X},\underline{\sC}) \xlongrightarrow{u}\underline{\func}(\underline{X},\underline{\sC})^{\Delta^1}\: ::\: \xi\mapsto (\xi\rightarrow\epsilon_*\epsilon^*\xi).\]\normalsize
    All in all, we can consider the compositions \small
    \[\alpha\colon \underline{\func}(\underline{X},\underline{\sC}) \xrightarrow{c}\underline{\func}(\underline{X},\underline{\sC})^{\Delta^1} \xrightarrow{\uniqueMapX_!} \underline{\sC}^{\Delta^1}\xlongrightarrow{\cofib}\underline{\sC},\quad\beta\colon \underline{\func}(\underline{X},\underline{\sC}) \xrightarrow{u}\underline{\func}(\underline{X},\underline{\sC})^{\Delta^1} \xrightarrow{\uniqueMapX_*} \underline{\sC}^{\Delta^1}\xlongrightarrow{\fib}\underline{\sC}.\]\normalsize
    Concretely, these take $\xi$ to the objects 
    \[\alpha(\xi)\simeq \cofib\Big((\uniqueMapX\singularPartTwiddle{\family})_!\epsilon^*\xi \longrightarrow \uniqueMapX_!\xi\Big),\quad\quad\quad \beta(\xi)\simeq\fib\Big(\uniqueMapX_*\xi \longrightarrow (\uniqueMapX\singularPartTwiddle{\family})_*\epsilon^*\xi\Big).\]
\end{cons}

\begin{cor}[Nonsingular vanishing]\label{lem:nonsingular_vanishing}
    Let $\underline{X}\in\spc_G^{\omega}$ and $\family$ a family of subgroups of $G$. Let $\nu\colon \underline{\sC}\rightarrow\underline{\D}$ be a $G$--exact functor of $G$--stable categories such that for all $H\in\family$, functor $\res^G_H\nu\colon \res^G_H\underline{\sC}\rightarrow \res^G_H\underline{D}$ is the zero map.  Then the compositions 
    \[\underline{\func}(\underline{X},\underline{\sC}) \xlongrightarrow{\alpha} \underline{\sC}\xlongrightarrow{\nu} \underline{\D}\quad\quad\quad \underline{\func}(\underline{X},\underline{\sC}) \xlongrightarrow{\beta} \underline{\sC}\xlongrightarrow{\nu} \underline{\D}\]
    have the property of being the zero functors. 
\end{cor}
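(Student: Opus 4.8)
The plan is to leverage the explicit descriptions of $\alpha$ and $\beta$ together with the fact that $\udl{X}\singularPartTwiddle{\family}$ is ``built from'' orbits $G/H$ with $H\in\family^c$, so that after applying $\nu$ the relevant comparison maps become equivalences. Concretely, recall $\alpha(\xi)\simeq\cofib\big((\uniqueMapX\singularPartTwiddle{\family})_!\epsilon^*\xi\to\uniqueMapX_!\xi\big)$. The key observation is that the map $\epsilon\colon\udl{X}\singularPartTwiddle{\family}\to\udl{X}$ is ``$\family$--locally an equivalence'' in the precise sense that $s^*\epsilon$ is an equivalence (\cref{lem:counit_equivalence_singular_part}), i.e. $\epsilon$ is an equivalence on all orbits $G/H$ with $H\in\family^c$ and is the map $\varnothing\to\myuline{G/H}$ for $H\in\family$. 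So the cofibre of $\epsilon$ (as a $G$--space, appropriately interpreted) is concentrated on $\family$, and hence the cofibre of $(\uniqueMapX\singularPartTwiddle{\family})_!\epsilon^*\xi\to\uniqueMapX_!\xi$ is a parametrised colimit of objects of the form $r_!$ of things supported on orbits $G/H$ with $H\in\family$.

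First I would make precise the claim that $\alpha(\xi)$ lies in the localising subcategory of $\udl{\sC}$ generated by objects induced from $H\in\family$. Since $\udl{X}$ is compact, write $\udl{X}$ as a finite colimit of orbits $\myuline{G/K}$; the cofibre sequence $\udl{X}\singularPartTwiddle{\family}\to\udl{X}\to\udl{X}/\udl{X}\singularPartTwiddle{\family}$ of pointed $G$--spaces has the third term built (as a finite colimit) from $\myuline{G/K_+}$ with $K\in\family$. Applying $\uniqueMapX_!(\xi\otimes\uniqueMapX^*(-))$-type functoriality (more directly: using that $\alpha(\xi)\simeq \uniqueMapX_!(\mathrm{cofib}(\epsilon_!\epsilon^*\xi\to\xi))$ and that $\mathrm{cofib}(\epsilon_!\epsilon^*\xi\to\xi)$ is, fibrewise over $\udl{X}$, supported on the $\family$--singular locus), one sees $\alpha(\xi)$ is a finite colimit of objects of the form $(\mathrm{ind}_K^G)(-)$ with $K\in\family$ — i.e. objects in the image of $\mathrm{ind}_K^G\colon\res^G_K\udl{\sC}\to\udl{\sC}$. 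The hypothesis that $\res^G_K\nu=0$ for $K\in\family$, together with $G$--exactness of $\nu$ and the compatibility of $\nu$ with inductions (which follows from $\nu$ being a $G$--functor, hence a map of $G$--stable categories, so it commutes with the left adjoints $\mathrm{ind}_K^G$ up to the Beck--Chevalley equivalence), then forces $\nu\alpha(\xi)\simeq 0$. The case of $\beta$ is dual: $\beta(\xi)\simeq\fib(\uniqueMapX_*\xi\to(\uniqueMapX\singularPartTwiddle{\family})_*\epsilon^*\xi)$, and by the same analysis of $\epsilon$ this fibre is built from \emph{coinduced} objects $\mathrm{coind}_K^G(-)$ with $K\in\family$; since $\nu$ is $G$--exact it also commutes with the right adjoints $\mathrm{coind}_K^G$ (up to Beck--Chevalley), so again $\res^G_K\nu=0$ yields $\nu\beta(\xi)\simeq 0$. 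Finally, to get the statement that $\nu\circ\alpha$ and $\nu\circ\beta$ are the \emph{zero functors} (not merely pointwise $0$), note that all the constructions involved are natural in $\xi$ and the vanishing was established by exhibiting natural equivalences to $0$; alternatively, a functor between stable categories which is pointwise $0$ on objects is automatically the zero functor, so pointwise vanishing suffices. One should also promote this from the ``$e$-level'' to all levels: the same argument applied to $\res^G_L\udl{X}$ and $\res^G_L\nu$ for each $L\le G$ (using $\res^G_L\udl{X}\singularPartTwiddle{\family}\simeq(\res^G_L\udl{X})\singularPartTwiddle{\family_L}$ from \cref{cons:isotropy_separation_recollement}/the \cref{nota:isotropy_separation_package} package) shows the composite is $0$ at every object of $\spc_G$, hence the zero $G$--functor.

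\textbf{Main obstacle.} The crux is making rigorous the claim that $\mathrm{cofib}(\epsilon_!\epsilon^*\xi\to\xi)\in\udl{\func}(\udl{X},\udl{\sC})$ — equivalently $\alpha(\xi)=\uniqueMapX_!$ of it — is built, as a parametrised colimit, from objects induced up from subgroups in $\family$. The honest way is to use the cofibre sequence $\udl{E\family}_+\to\udl{S}^0\to\widetilde{\udl{E\family}}$ (as in \cref{cons:isotropy_separation_recollement} and the proof of \cref{prop:categorical_brauer_quotients}): tensoring $\xi$ with this and pushing forward, $\alpha(\xi)$ becomes $\uniqueMapX_!(\widetilde{\udl{E\family}}\wedge_{\udl X}\xi)$-ish, and $\widetilde{\udl{E\family}}$ restricted to any $H\in\family$ is $\ast$ while its ``$\family$-singular part'' is all of it — so $\alpha(\xi)$ is the $\family$--Brauer-quotient flavoured piece, which is exactly what $\nu$ (vanishing on $\family$) kills by the universal property in \cref{terminology:family_brauer_quotient}. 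Indeed the cleanest route may be to observe directly: $\alpha$ factors through $\Phi^{\family}\colon \udl{\sC}\to\udl{\sC}\sTwiddleLowerStar{\family}$ precomposed appropriately — no wait, more carefully, $\nu$ factors through $\brauerQuotientFamily{\family}\udl{\sC}$ by \cref{terminology:family_brauer_quotient} since $\res^G_H\nu=0$ for $H\in\family$, and on the Brauer quotient the functor $\alpha$ (which measures the difference between $\udl{X}$ and its singular part) becomes null because the singular part inclusion becomes an equivalence after $\widetilde{s}^*$. This reduces everything to the already-established universal property of the Brauer quotient, and I expect that to be the efficient way to organise the proof; verifying that $\alpha$ and $\beta$ indeed become null after $\widetilde{s}^*$ is the one genuinely non-formal point, handled by \cref{lem:counit_equivalence_singular_part} applied in the $\widetilde{s}^*$-local category.
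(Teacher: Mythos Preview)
Your proposal is correct, and in your final paragraph you essentially rediscover the paper's argument; but the paper organises things more efficiently and sidesteps the structural analysis of $\alpha(\xi)$ that occupies most of your proposal. The paper first observes that, since $\nu$ is $G$--exact and $\udl{X}$ is compact, there is a commuting square $\nu\circ\alpha_{\udl{\sC}}\simeq\alpha_{\udl{\D}}\circ\nu$ (and likewise for $\beta$), so it suffices to show that $\alpha_{\udl{\D}}\colon\udl{\func}(\udl{X},\udl{\D})\to\udl{\D}$ is zero. Replacing $\udl{\D}$ by the $G$--stable subcategory generated by the image of $\nu$, one may assume $\D^H=0$ for all $H\in\family$, i.e.\ $\udl{\D}\simeq\udl{\D}\sTwiddleLowerStar{\family}$; then \cref{lem:counit_equivalence_singular_part} shows $\epsilon^*$ is an equivalence on $\udl{\func}(\udl{X},\udl{\D})$, so the (co)unit of $\epsilon_!\dashv\epsilon^*\dashv\epsilon_*$ are equivalences and $\alpha,\beta$ are (co)fibres of equivalences, hence zero. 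Your ``cleanest route'' via the Brauer-quotient universal property is the source-side dual of this: instead of replacing $\udl{\D}$ by the image of $\nu$, you factor $\nu$ through $\brauerQuotientFamily{\family}\udl{\sC}$ and then argue (again via \cref{lem:counit_equivalence_singular_part}) that $\alpha$ vanishes there. Same content, slightly heavier machinery. Your primary approach---exhibiting $\alpha(\xi)\in\udl{\sC}$ directly as a finite colimit of objects induced from $K\in\family$---also works, but as you yourself note it is the part that needs genuine care (e.g.\ $\epsilon$ is not a monomorphism of $G$--spaces in general, so ``supported on the $\family$-locus'' needs to be interpreted via $\widetilde{s}^*$ rather than pointwise); the paper's commuting-square manoeuvre neatly avoids this.
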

\begin{proof}
    First of all, since $\nu$ was $G$--exact, we have a commuting square
    \begin{center}
        \begin{tikzcd}
            \underline{\func}(\underline{X},\underline{\sC}) \dar["\alpha"']\rar["\nu"]& \underline{\func}(\underline{X},\underline{\D}) \dar["\alpha"]\\
            \udl{\sC}\rar["\nu"] & \udl{\D}.
        \end{tikzcd}
    \end{center}
    Thus it suffices to show that $\alpha\colon \underline{\func}(\underline{X},\underline{\D})\rightarrow\udl{\D}$ is the zero functor. By replacing $\udl{\category{D}}$ by the $G$-stable subcategory generated by the image of $\nu$ we can assume that that $\D^H = 0$ for all $H\in\family$. Therefore, we have that $\udl{\category{D}} \simeq \udl{\D}\sTwiddleLowerStar{\family}$ and \cref{lem:counit_equivalence_singular_part} shows that the functor $\epsilon^*$ is an equivalence, and so the counit $\epsilon_! \epsilon^* \to \id$ and unit  $\id \to \epsilon_* \epsilon^*$ are equivalences in $\udl{\func}(\udl{X}, \udl{\category{D}}) = \udl{\func}(\udl{X}, \udl{\category{D}}\sTwiddleLowerStar{\family})$.
    From this the claim directly follows.
\end{proof}

\begin{nota}
    The family of relevance to us in this subsection will be the singleton family $\trivialFamily$ consisting of the trivial subgroup. To reduce our notational cluttering, we will also write $\udl{X}^{>1}$ for $\udl{X}\singularPartTwiddle{\trivialFamily}$, so that for example, for $\udl{X}\in\spc_G$, we have the inclusion of the singular part $\nolinebreak{\epsilon \colon \udl{X}^{>1}\simeq \udl{X}\singularPartTwiddle{\trivialFamily} \rightarrow \udl{X}}$. The gluing class of $\udl{X}$ will live in $\pi_{-1} (\uniqueMapX^{>1}_! \epsilon^* D_{\udl{X}})_{hG}$. 
\end{nota}

\begin{cons}\label{cons:red_blue_routes}
    Let $\xi\in \func_G(\underline{X},\myuline{\spectra})$ and write $Q\coloneqq \cofib\big(\uniqueMapX^{>1}_!\epsilon^*\xi \rightarrow \uniqueMapX_!\xi\big)$. Consider
    \begin{equation}\label{eqn:black_magic_Atiyah-Bott}
        \begin{tikzcd}
            (\uniqueMapX^{>1}_!\epsilon^*\xi)_{hG}\rar\dar\ar[dr, phantom, very near end, "\ulcorner"] & (\uniqueMapX^{>1}_!\epsilon^*\xi)^{hG} \rar\dar& (\uniqueMapX^{>1}_!\epsilon^*\xi)^{tG} \rar[color =blue]\dar["\simeq",color =blue]& \Sigma(\uniqueMapX^{>1}_!\epsilon^*\xi)_{hG}\\
            (\uniqueMapX_!\xi)_{hG}\rar\dar & (\uniqueMapX_!\xi)^{hG}\rar[color =blue]\dar[color =red] & (\uniqueMapX_!\xi)^{tG}\\
            Q_{hG} \rar["\simeq",color =red]\dar[color =red]& Q^{hG} \\
            \Sigma(\uniqueMapX^{>1}_!\epsilon^*\xi)_{hG}
        \end{tikzcd}
    \end{equation}
    where the  equivalence $Q_{hG}\rightarrow Q^{hG}$ is since $Q^{tG}\simeq 0$ by virtue of \cref{lem:nonsingular_vanishing} applied to the functor $\nu\colon\udl{\sC}\rightarrow\udl{\D}$ given by $\widetilde{EG}\otimes F(EG_+,-)\colon \myuline{\spectra}\rightarrow \udl{\module}_{\widetilde{EG}\otimes F(EG_+,\sphere)}(\myuline{\spectra})$ and the identification $(\widetilde{EG}\otimes F(EG_+,A))^G \simeq A^{tG}$.
    Observe that by \cref{lem:black_magic}, up to a sign change, the red composite is equivalent to the blue composite in \cref{eqn:black_magic_Atiyah-Bott}.
\end{cons}

\begin{cons}[Gluing classes]\label{cons:generalised_gluing_class}
    Let $\underline{X}\in \spc_G^{\omega}$ and $D_{\underline{X}}\in\func_G(\underline{X},\myuline{\spectra})$ its dualising sheaf (which in this generality, need not be invertible). From the fundamental class $\sphere_G\xrightarrow{c} \uniqueMapX_! D_{\underline{X}}$ in $\spectra_G$, we may extract a nonequivariant fundamental class $\sphere \xlongrightarrow{\canonical} \sphere_G^{hG} \xlongrightarrow{c^{hG}} (\uniqueMapX_! D_{\underline{X}})^{hG}$   in $\spectra$ which we also denote by $c$. The \textit{gluing class} is defined to be the composition
    \[\sphere \xlongrightarrow{c} (\pointProjection_! D_{\underline{X}})^{hG} \longrightarrow \Sigma(\uniqueMapX^{>1}_! \epsilon^*D_{\underline{X}})_{hG}\]
    obtained by postcomposing $c$ with the blue route from \cref{eqn:black_magic_Atiyah-Bott}. 
\end{cons}

Our  goal now is to show \cref{cor:vanishing_of_pushforward_gluing_class} which says that under certain orientability assumptions, the gluing class ``adds up to zero" in group homology. This  supplies us with a useful obstruction class which will have meaningful geometric consequences as we shall in our applications in  \cref{subsection:theorem:single_fixed_points}.

\begin{lem}\label{lem:vanishing_of_pushforward_fundamental_class}
    Let $\underline{X}\in\spc_G$ and $\xi\simeq \uniqueMapX^*W\in\func_G(\underline{X},\myuline{\spectra})$ for some $W\in\spectra_G$. Then the composition in 
    $Q \longrightarrow \Sigma \pointProjectiontopIndex{>1}_!\epsilon^*\xi\simeq \Sigma \uniqueMapX_!^{>1}(\uniqueMapX^{>1})^*\xi \xlongrightarrow{\beckChevalley_!^{\uniqueMapX^{>1}}} \Sigma W$ in $\spectra_G$ is nullhomotopic.
\end{lem}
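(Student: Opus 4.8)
The key input is the cofibre sequence
\[
\underline{X}^{>1} \xlongrightarrow{\epsilon} \underline{X} \longrightarrow \underline{X}/\underline{X}^{>1}
\]
in pointed $G$--spaces, or rather what it induces after applying $\uniqueMapX_!((-)^*\xi)$: we get a cofibre sequence $\uniqueMapX^{>1}_!\epsilon^*\xi \to \uniqueMapX_!\xi \to Q$ in $\spectra_G$ by definition of $Q$. The boundary map $Q \to \Sigma\uniqueMapX^{>1}_!\epsilon^*\xi$ is the connecting map of this sequence, and the claim is that postcomposing it with the Beck--Chevalley map $\beckChevalley_!^{\uniqueMapX^{>1}}\colon \uniqueMapX^{>1}_!(\uniqueMapX^{>1})^*\xi \to W$ (which exists since $\xi \simeq \uniqueMapX^*W$, so $(\uniqueMapX^{>1})^*\xi \simeq (\uniqueMapX^{>1})^*\uniqueMapX^*W = (\uniqueMapX^{>1})^*W$, recalling $\uniqueMapX^{>1} = \uniqueMapX\circ\epsilon$) is null.

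\textbf{First step.} I would rewrite everything in terms of $W$. Since $\xi \simeq \uniqueMapX^*W$, the projection formula gives $\uniqueMapX_!\xi \simeq \uniqueMapX_!\uniqueMapX^*W \simeq \uniqueMapX_!\unit_{\myuline{\spectra}} \otimes W \simeq \Sigma^\infty_+\udl{X} \otimes W$, and likewise $\uniqueMapX^{>1}_!\epsilon^*\xi \simeq \uniqueMapX^{>1}_!(\uniqueMapX^{>1})^*W \simeq \Sigma^\infty_+\udl{X}^{>1}\otimes W$. Under these identifications the cofibre sequence defining $Q$ becomes $\Sigma^\infty_+\udl{X}^{>1}\otimes W \to \Sigma^\infty_+\udl{X}\otimes W \to \Sigma^\infty(\udl{X}/\udl{X}^{>1})\otimes W$ (using that $\Sigma^\infty_+$ of a cofibre of spaces gives the cofibre of the based suspensions), and the map $\beckChevalley_!^{\uniqueMapX^{>1}}$ becomes $\Sigma^\infty_+\udl{X}^{>1}\otimes W \xrightarrow{\uniqueMapX^{>1}_!\otimes\id}\sphere_G\otimes W \simeq W$, i.e. it is induced by collapsing $\udl{X}^{>1}$ to a point.

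\textbf{Second step — the heart.} The composite in question is then
\[
\Sigma^\infty(\udl{X}/\udl{X}^{>1})\otimes W \xlongrightarrow{\partial} \Sigma\,\Sigma^\infty_+\udl{X}^{>1}\otimes W \xlongrightarrow{\Sigma(r_+)} \Sigma W,
\]
where $r\colon \udl{X}^{>1}\to\udl{\ast}$ and $\partial$ is the connecting map. But $\Sigma(r_+)\circ\partial$ is, up to suspension and a sign, the map $\Sigma^\infty(\udl{X}/\udl{X}^{>1}) \to \Sigma\,\Sigma^\infty_+\udl{X}^{>1}$ smashed with $W$ and then collapsed — and I claim this is already null before tensoring with $W$. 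Indeed, consider the cofibre sequence of \emph{spaces} (or rather the associated Puppe sequence of spectra)
\[
\Sigma^\infty_+\udl{X}^{>1} \to \Sigma^\infty_+\udl{X} \to \Sigma^\infty(\udl{X}/\udl{X}^{>1}) \xrightarrow{\partial} \Sigma\,\Sigma^\infty_+\udl{X}^{>1} \xrightarrow{\Sigma\,\epsilon_+} \Sigma\,\Sigma^\infty_+\udl{X}.
\]
The map $\udl{X}^{>1}\to\udl{\ast}$ factors through $\udl{X}^{>1}\xrightarrow{\epsilon}\udl{X}\to\udl{\ast}$, so $\Sigma(r_+) = \Sigma(\uniqueMapX_+)\circ\Sigma(\epsilon_+)$, and $\Sigma(\epsilon_+)\circ\partial = 0$ by exactness of the Puppe sequence. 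Hence $\Sigma(r_+)\circ\partial = \Sigma(\uniqueMapX_+)\circ\Sigma(\epsilon_+)\circ\partial = 0$. Tensoring the nullhomotopy with $W$ gives the desired nullhomotopy in $\spectra_G$.

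\textbf{Where the care is needed.} The only genuinely nontrivial point is checking that the Beck--Chevalley map $\beckChevalley_!^{\uniqueMapX^{>1}}$ really is identified, under the projection-formula equivalences, with the map induced by the collapse $\udl{X}^{>1}\to\udl{\ast}$ — i.e. that $\beckChevalley_!$ for the triangle $\udl{X}^{>1}\to\udl{\ast}$, $\udl{X}\to\udl{\ast}$, $\epsilon\colon\udl{X}^{>1}\to\udl{X}$, precomposed with $\xi\simeq\uniqueMapX^*W$, is compatible with the canonical equivalence $\uniqueMapX^{>1}_!(\uniqueMapX^{>1})^*W\simeq \Sigma^\infty_+\udl{X}^{>1}\otimes W$ and the augmentation. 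This is a diagram chase using naturality of Beck--Chevalley transformations (\cref{defn:beck_chevalley_transformation} and the functoriality cited there from \cite{CarmeliSchlankYanovski2022}) together with the projection formula for $\myuline{\spectra}$, and it is the kind of routine-but-fiddly identification I would spell out carefully. Everything else is formal consequence of the Puppe sequence.
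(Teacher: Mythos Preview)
Your proof is correct and rests on the same idea as the paper's: the map $\udl{X}^{>1}\to\udl{\ast}$ factors through $\udl{X}$, so the connecting map of the cofibre sequence composed with the collapse/Beck--Chevalley map is null by Puppe exactness.

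The paper's route is a bit more direct: rather than first translating everything via the projection formula to $\Sigma^\infty_+(-)\otimes W$ and then arguing with Puppe, it simply writes down the map of cofibre sequences
\[
\begin{tikzcd}
\uniqueMapX^{>1}_!\epsilon^*\uniqueMapX^*W \rar["\beckChevalley^{\epsilon}_!"]\dar["\beckChevalley^{X^{>1}}_!"'] & \uniqueMapX_! \uniqueMapX^*W \rar\dar["\beckChevalley^{\uniqueMapX}_!"] & Q\dar\\
W \rar[equal] & W \rar & 0
\end{tikzcd}
\]
(the left square commutes by Beck--Chevalley pasting for the composite $\udl{X}^{>1}\xrightarrow{\epsilon}\udl{X}\to\udl{\ast}$) and observes that the induced map on cofibres of the right horizontal arrows exhibits the composition $Q\to\Sigma\uniqueMapX^{>1}_!\epsilon^*\uniqueMapX^*W\to\Sigma W$ as factoring through $0$. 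This bypasses entirely the ``fiddly identification'' you flag in your third step: there is no need to check that $\beckChevalley_!^{\uniqueMapX^{>1}}$ matches the collapse map under the projection-formula equivalence, because the argument never leaves the Beck--Chevalley framework. Your detour is harmless, but the paper's version shows it is unnecessary.
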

\begin{proof}
    By functoriality of colimits, we have the following map of cofibre sequences
    \begin{center}
        \begin{tikzcd}
            \uniqueMapX^{>1}_!\epsilon^*\uniqueMapX^*W \rar["\beckChevalley^{\epsilon}_!"]\dar["\beckChevalley^{X^{>1}}_!"'] & \uniqueMapX_! \uniqueMapX^*W \rar\dar["\beckChevalley^{\uniqueMapX}_!"] & Q\dar\\
            W \rar[equal] & W \rar & 0
        \end{tikzcd}
    \end{center}
    Thus taking the cofibre of the right horizontal maps gives a factorisation of the composition of interest through $0$.
\end{proof}

\begin{cor}\label{cor:vanishing_of_pushforward_gluing_class}
    Let $\underline{X}\in\spc_G$ and $W\in\spectra_G$. Then the composition
    \[(\uniqueMapX_!\uniqueMapX^*W)^{hG}\xlongrightarrow{\text{ red composite in \cref{eqn:black_magic_Atiyah-Bott}}} \Sigma(\uniqueMapX_!^{>1}(\uniqueMapX^{>1})^*W)_{hG} \xlongrightarrow{\beckChevalley^{\uniqueMapX^{>1}}_!} \Sigma W_{hG}\]
    is nullhomotopic.
\end{cor}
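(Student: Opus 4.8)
The statement to prove is \cref{cor:vanishing_of_pushforward_gluing_class}: the composite
\[
(\uniqueMapX_!\uniqueMapX^*W)^{hG}\longrightarrow \Sigma(\uniqueMapX_!^{>1}(\uniqueMapX^{>1})^*W)_{hG}\xlongrightarrow{\beckChevalley^{\uniqueMapX^{>1}}_!}\Sigma W_{hG}
\]
is nullhomotopic, where the first map is the ``red composite'' in the diagram \cref{eqn:black_magic_Atiyah-Bott}. The strategy is to reduce this $G$-homotopy-orbit statement to the genuine $G$-equivariant statement \cref{lem:vanishing_of_pushforward_fundamental_class}, which says exactly that the analogous composite $Q \to \Sigma \uniqueMapX^{>1}_!(\uniqueMapX^{>1})^*\xi \xrightarrow{\beckChevalley^{\uniqueMapX^{>1}}_!}\Sigma W$ in $\spectra_G$ is nullhomotopic, with $\xi = \uniqueMapX^*W$.

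\textbf{Key steps.} First I would unwind what the red composite in \cref{eqn:black_magic_Atiyah-Bott} is: it is the composite $(\uniqueMapX_!\xi)^{hG}\to Q^{hG}\xleftarrow{\simeq}Q_{hG}\to \Sigma(\uniqueMapX^{>1}_!\epsilon^*\xi)_{hG}$, where the middle equivalence is the one coming from $Q^{tG}\simeq 0$ and the last map is the homotopy-orbit image of the connecting map $Q\to \Sigma\uniqueMapX^{>1}_!\epsilon^*\xi$ of the defining cofibre sequence $\uniqueMapX^{>1}_!\epsilon^*\xi\to\uniqueMapX_!\xi\to Q$. Second, observe that the further composition with $\beckChevalley^{\uniqueMapX^{>1}}_!\colon\Sigma\uniqueMapX^{>1}_!\epsilon^*\xi = \Sigma\uniqueMapX^{>1}_!(\uniqueMapX^{>1})^*W\to\Sigma W$ is, by naturality of the homotopy-orbits functor $(-)_{hG}$, obtained by applying $(-)_{hG}$ to the genuine $G$-equivariant composite
\[
Q\longrightarrow\Sigma\uniqueMapX^{>1}_!(\uniqueMapX^{>1})^*W\xlongrightarrow{\beckChevalley^{\uniqueMapX^{>1}}_!}\Sigma W
\]
and then precomposing with the zig-zag $(\uniqueMapX_!\xi)^{hG}\to Q^{hG}\xleftarrow{\simeq}Q_{hG}$. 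Third, invoke \cref{lem:vanishing_of_pushforward_fundamental_class}: the displayed genuine $G$-equivariant composite $Q\to\Sigma W$ is nullhomotopic in $\spectra_G$. Since $(-)_{hG}=(\uniqueMapX_G)_!(\uniqueMapX_G)^*$ (colimit over $BG$) is a functor, applying it to a nullhomotopy produces a nullhomotopy of $Q_{hG}\to\Sigma W_{hG}$; precomposing with the zig-zag above, the full composite $(\uniqueMapX_!\xi)^{hG}\to\Sigma W_{hG}$ is nullhomotopic. One should double-check that the nullhomotopy survives the precomposition with the equivalence $Q_{hG}\xrightarrow{\simeq}Q^{hG}$ and the map $(\uniqueMapX_!\xi)^{hG}\to Q^{hG}$ — but these are just maps, so precomposing with a nullhomotopic map yields a nullhomotopic map, which is automatic.

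\textbf{Main obstacle.} The only real subtlety is bookkeeping: making sure that the ``red composite'' as drawn in \cref{eqn:black_magic_Atiyah-Bott} literally factors, after postcomposition with $\beckChevalley^{\uniqueMapX^{>1}}_!$, through $(-)_{hG}$ applied to the map $Q\to\Sigma W$ from \cref{lem:vanishing_of_pushforward_fundamental_class}. Concretely this requires commuting $(-)_{hG}$ past the connecting map of a cofibre sequence and past the Beck--Chevalley transformation $\beckChevalley^{\uniqueMapX^{>1}}_!$; both are formal since $(-)_{hG}$ is exact and is itself a left Kan extension, so Beck--Chevalley transformations compose well with it (see \cite[\S2.2]{CarmeliSchlankYanovski2022}). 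I would therefore organise the proof as: (i) identify the relevant genuine $G$-equivariant map $Q\to\Sigma W$; (ii) note it is null by \cref{lem:vanishing_of_pushforward_fundamental_class}; (iii) apply $(-)_{hG}$ and precompose with the zig-zag and the canonical map from $(\uniqueMapX_!\xi)^{hG}$, using that $(-)_{hG}\to(-)^{hG}$ on $Q$ is the equivalence from \cref{cons:red_blue_routes}. This gives the nullhomotopy of the composite in the corollary.
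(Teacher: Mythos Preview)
Your proposal is correct and follows essentially the same strategy as the paper's proof: you identify that the tail of the red composite (from $Q_{hG}$ onward) together with the postcomposition by $\beckChevalley^{\uniqueMapX^{>1}}_!$ is $(-)_{hG}$ applied to the genuine $G$--equivariant map $Q\to\Sigma W$ of \cref{lem:vanishing_of_pushforward_fundamental_class}, which is nullhomotopic, and then precompose with the zig--zag through $Q^{hG}$. The paper's one--line proof additionally cites the red--blue agreement from \cref{cons:red_blue_routes}, but this is not strictly needed for the corollary as stated (it is about the red route); it is there to connect the statement to the gluing class of \cref{cons:generalised_gluing_class}, which was defined via the blue route.
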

\begin{proof}
    This is an immediate combination of the fact from \cref{cons:red_blue_routes} that the red and blue routes in \cref{eqn:black_magic_Atiyah-Bott} agrees up to a sign with \cref{cor:vanishing_of_pushforward_gluing_class}.
\end{proof}

\begin{rmk}
    The gluing class really is an essential feature of equivariant Poincar\'e duality. It provides some information on how the ``free part" of an equivariant Poincar\'e space is glued to the singular part.
    We will exploit it in the proof of \cref{thm:generalised_atiyah_bott} and plan on clarifying it and its role in relation to L\"uck's work \cite{lueck2022brown} in the future.
\end{rmk}

\subsection{Equivariant degree theory}\label{sec:equivariant_degree}

A nice application of equivariant Poincar\'e duality is a theory of equivariant mapping degrees, as developed in \cite{lueckDegree}. 
For simplicity, we will assume that $G$ is a finite group throughout this section.

\subsubsection*{Recollections on the Burnside ring}

Our aim is to remind the reader of the classical connection between Burnside rings and the equivariant sphere spectrum.

\begin{recollect}[Character maps on the Burnside ring]
    The \textit{Burnside ring of finite $G$-sets} $A(G)$  is the group--completion of the semiring of  isomorphism classes of finite $G$-sets, with disjoint union as addition and the cartesian product as product. For each $H \leq G$, there is a unique ring homomorphism $\chi_H \colon A(G) \rightarrow \bbZ $ sending a finite $G$-set $S$ to the order of the finite set $S^H$, and these assemble into a ring map, called the \textit{character map}, 
    \begin{equation}
        \label{eq:character_map}
        \chi \colon A(G) \rightarrow  \prod_{(H)\leq G} \bbZ
    \end{equation}
    where $(H)$ runs through all conjugacy classes of finite subgroups. 
\end{recollect}

 The following classical theorem may be found  for instance in \cite[Prop. 1.3.5.]{tomDieck_Transformation_and_Repthy}.

\begin{thm}
    The character map \cref{eq:character_map}  is an injective ring homomorphism with finite cokernel. The image can be described through explicit congruences, the \textit{Burnside congruences}.
\end{thm}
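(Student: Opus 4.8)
The plan is to prove the three assertions—injectivity, finite cokernel, and the congruence description—in that order, following the classical approach which can be found in tom Dieck's book.

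\emph{Injectivity.} First I would show that $\chi$ is injective. The key point is that a finite $G$--set $S$ is determined up to isomorphism by the numbers $|S^H|$ as $(H)$ ranges over conjugacy classes of subgroups. Indeed, write $S = \coprod_{(K)} m_K \cdot (G/K)$ for uniquely determined multiplicities $m_K \in \mathbb{N}$. Then $|S^H| = \sum_{(K)} m_K \cdot |(G/K)^H|$, and one orders the conjugacy classes of subgroups by (the partial order induced by) subconjugacy so that $|(G/K)^H| \neq 0$ forces $H$ to be subconjugate to $K$, with $|(G/K)^K| = |W_G K| = |N_G K / K| \neq 0$. This makes the matrix $\big(|(G/K)^H|\big)_{(H),(K)}$ triangular with nonzero diagonal entries, hence invertible over $\mathbb{Q}$; therefore the multiplicities $m_K$ are recovered from the character values, and two $G$--sets with the same character are isomorphic. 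Since $A(G)$ is the group completion, an arbitrary element is a formal difference $[S] - [T]$, and $\chi([S]-[T]) = 0$ means $|S^H| = |T^H|$ for all $H$, whence $[S] = [T]$ by the same triangularity argument applied to the difference of multiplicity vectors.

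\emph{Finite cokernel.} For this I would use the same triangular matrix $M = \big(|(G/K)^H|\big)$: it represents $\chi$ in the basis $\{[G/K]\}_{(K)}$ of $A(G)$ and the standard basis of $\prod_{(H)} \mathbb{Z}$. Since $M$ is a square integer matrix with nonzero determinant (being triangular with diagonal entries $|W_G K| \neq 0$), its cokernel $\mathbb{Z}^r / M\mathbb{Z}^r$ is a finite abelian group of order $|\det M| = \prod_{(K)} |W_G K|$. Hence $\operatorname{coker}(\chi)$ is finite.

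\emph{The Burnside congruences.} Finally, to describe the image explicitly I would characterize which tuples $(n_H)_{(H)} \in \prod_{(H)} \mathbb{Z}$ lie in the image. The idea is a Möbius/counting argument: a tuple comes from a virtual $G$--set iff the multiplicities $m_K$ obtained by inverting the triangular system $n_H = \sum_K m_K |(G/K)^H|$ are integers. Rewriting this integrality condition in terms of the $n_H$ directly yields, for each subgroup $H \leq G$, a congruence of the shape
\[
\sum_{\bar{g} \in W_G H} n_{\langle H, g\rangle} \equiv 0 \pmod{|W_G H|},
\]
where $\langle H, g\rangle$ denotes the subgroup generated by $H$ and a representative $g$ of $\bar g$; this is obtained by counting fixed points and using the orbit-counting (Burnside) formula for the $W_G H$--action on $(G/K)^H$-type sets. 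I would verify that these congruences are satisfied by every actual $G$--set (an easy direct check from the $W_G H$--action on $S^H$) and that, conversely, they suffice to guarantee integrality of all the $m_K$, by inducting up the poset of subgroups. The main obstacle here is bookkeeping: setting up the subconjugacy poset and the associated Möbius inversion carefully enough that the congruences come out in clean closed form, and checking that the inductive step genuinely only needs the single congruence at each level. Since this is entirely standard, I would either carry out the Möbius inversion explicitly or simply cite \cite[Prop. 1.3.5]{tomDieck_Transformation_and_Repthy} for the precise statement of the congruences, as the excerpt already does.
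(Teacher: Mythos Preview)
The paper does not give its own proof of this theorem: it simply records the statement as a classical fact and cites \cite[Prop.~1.3.5]{tomDieck_Transformation_and_Repthy}. Your proposal is the standard argument one finds in that reference (triangularity of the mark matrix for injectivity and finite cokernel, Möbius inversion and orbit-counting for the congruences), so it is correct and in fact supplies more than the paper does; your closing suggestion to cite tom Dieck is exactly what the paper itself opts for.
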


We abstain from recalling the Burnside congurences in full generality, the reader may find them in the reference mentioned above. To give some intuition, and as we use it later in the proof of \cref{lem:fixed_point_orientability_from_underlying_orientability}, we describe them in the case of the group $C_p$.

\begin{example}\label{example:C_p_burnside_congruence}
    If $G = C_p$ then the image of the character homomorphism consists of those pairs $(a,b) \in \bbZ \times \bbZ$ that satisfy the congruence
    \[ a \equiv b \mod p. \]
    Indeed, for a finite $G$-set $S$, the orders of $S$ and $S^H$ agree modulo $p$. On the other hand, if $a +kp = b$ for integers $a, b, c$, then $a$ copies of the point and $k$ copies of $C_p$ define an element in $A(G)$ mapping to $(a,b)$.
\end{example}

\begin{cons}
    We may obtain a similar character map for the ring $\pi_0^G\sphere_G$: for each subgroup $H\leq G$, using that $\Phi^H\sphere_G\simeq \sphere\in\spectra$, we may assemble the geometric fixed points functors $\Phi^H$ together with the identification $\deg\colon \pi_0\map_{\spectra}(\sphere,\sphere)\xrightarrow{\cong}\mathbb{Z}$ to  obtain a ring map
    \begin{equation}\label{eqn:spherical_character_map}
        \pi_0^G\sphere_G \cong \pi_0 \map_{\spectra_G}(\sphere_G,\sphere_G) \longrightarrow \prod_{(H)} \bbZ, \hspace{3mm} f \mapsto \deg (\Phi^H f)
    \end{equation}
\end{cons}

\begin{thm}[Segal]
    \label{thm:Burnside_ring_as_pi_0_of_sphere}
    Let $G$ be a finite group. The map \cref{eqn:spherical_character_map}  is an injective ring homomorphism whose image agrees with the image of the character map $\chi \colon A(G) \rightarrow \prod_{(H)} \bbZ$, yielding an identification $\pi_0^G\sphere_G\cong \pi_0 \map_{\spectra_G}(\sphere_G,\sphere_G) \cong A(G)$ as commutative rings.
\end{thm}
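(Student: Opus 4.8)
\textbf{Proof strategy for \cref{thm:Burnside_ring_as_pi_0_of_sphere}.} The plan is to deduce this from the classical statement (the theorem attributed to Segal) together with the identification in \cref{thm:character_map_burnside} that the image of $\chi$ is cut out by the Burnside congruences. There are really two assertions to prove: (i) the ring map \cref{eqn:spherical_character_map} is injective, and (ii) its image coincides with the image of $\chi$. Since Segal's theorem already gives us the abstract isomorphism $\pi_0^G\sphere_G \cong A(G)$, the actual content here is to check that \emph{this particular map}, built out of geometric fixed points and degrees, is the composite of Segal's isomorphism with the character map $\chi$; injectivity and the image computation then follow formally.

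First I would recall the construction of Segal's isomorphism $A(G) \xrightarrow{\cong} \pi_0^G\sphere_G$: it sends the class of a finite $G$-set $S$ to the stable homotopy class of the transfer--collapse map $\sphere_G \to \Sigma^\infty_+ S \to \sphere_G$ associated to $S \to \ast$ (equivalently, it is induced by $\Sigma^\infty_+$ from the semiring of finite $G$-sets, using that $\pi_0^G$ of the free $G$-space on a $G$-set computes the Burnside ring). The key compatibility is then: for a finite $G$-set $S$ and a subgroup $H \le G$, one has $\deg\big(\Phi^H(\text{class of } S)\big) = |S^H|$. This is exactly the statement that $\Phi^H \Sigma^\infty_+ S \simeq \Sigma^\infty_+ S^H$ (which is recorded earlier in the paper, $\Phi^H \circ \Sigma^\infty_G \simeq \Sigma^\infty \circ (-)^H$, see \cref{cons:geometric_fixed_points}), combined with the fact that the degree of the collapse map of a finite set $T$ equals $|T|$. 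Spelling this out shows that the triangle
\begin{equation*}
\begin{tikzcd}
A(G) \ar[r, "\cong"] \ar[dr, "\chi"'] & \pi_0^G\sphere_G \ar[d, "\eqref{eqn:spherical_character_map}"] \\
& \prod_{(H)} \bbZ
\end{tikzcd}
\end{equation*}
commutes. Since both $A(G) \to \pi_0^G\sphere_G$ is an isomorphism and $\chi$ is injective with image the Burnside congruences (by the cited classical theorem), the map \cref{eqn:spherical_character_map} is then automatically injective with the same image, which is the assertion of the theorem.

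The main obstacle — really the only nontrivial point — is verifying the naturality/multiplicativity of the identification, i.e. that \cref{eqn:spherical_character_map} genuinely is a ring homomorphism and that it agrees with $\chi$ after Segal's isomorphism, rather than differing by some automorphism of $\prod_{(H)}\bbZ$. Ring-homomorphism-ness of \cref{eqn:spherical_character_map} follows because each $\Phi^H$ is symmetric monoidal (recorded in \cref{cons:geometric_fixed_points}) and $\deg$ is a ring map on $\pi_0\mathrm{End}(\sphere)$; the agreement with $\chi$ follows from the computation $\deg(\Phi^H S) = |S^H|$ above applied to a set of generators of $A(G)$ (finite $G$-sets). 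One should be slightly careful that the product in \cref{eqn:spherical_character_map} is indexed over conjugacy classes of subgroups and that $\Phi^H$ and $\Phi^{H'}$ agree for conjugate $H, H'$ — but this is immediate from the definition of $\Phi^H$ via $\coind^G_H$. With these checks in place the theorem is a direct consequence of the classical results, so I would keep the write-up short and largely reference-driven.
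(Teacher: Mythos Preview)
The paper does not actually prove this theorem: it is stated as a classical result attributed to Segal, with no proof given (the text immediately after the statement moves on to a consequence about $\Pic(\spectra_G)$). Your proposal therefore goes beyond what the paper does, supplying a reasonable outline of the standard argument --- identify Segal's isomorphism $A(G)\cong\pi_0^G\sphere_G$ explicitly and then check that composing with the geometric-fixed-point character recovers $\chi$ via the formula $\Phi^H\Sigma^\infty_+ S \simeq \Sigma^\infty_+ S^H$.

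One small issue: you cite \texttt{thm:character\_map\_burnside}, but the paper's theorem on the injectivity and image of the character map carries no label, so this cross-reference would be broken; you would need to refer to it descriptively or to the external source \cite[Prop.~1.3.5]{tomDieck_Transformation_and_Repthy} as the paper does.
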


Of course, this implies that the set of path components of the selfmaps of any $E \in \Pic(\spectra_G)$ is equivalent to $A(G)$ by the equivalence $\map_{\spectra_G}(E,E)\simeq \map_{\spectra_G}(\sphere_G,\sphere_G)$.

\subsubsection*{The equivariant degree}\

We now want to specialise the abstract definition of the degree from \cref{sec:degree_theory} to the case of maps of $G$--Poincar\'e spaces which should roughly encode the mapping degrees on the various fixed points spaces. Recall that the definition of the degree of a map between $G$--spaces $f \colon \udl{X} \to \udl{Y}$ with Spivak data $(\xi_{\obj{X}},c_{\obj{X}})$ and $(\xi_{\obj{Y}},c_{\obj{Y}})$ depends on an equivalence $\xi_{\obj{X}} \xrightarrow{\simeq} f^*\xi_{\obj{Y}}$.
The existence of such an equivalence is unreasonable to expect with coefficients in $\spectra_G$ but becomes more likely after linearising.
Here we choose to work with coefficients in the Burnside Mackey functor $\udl{A}(G)$.
Recall that for each subgroup $H \subset G$, restriction defines a ring homomorphism $\nolinebreak{A(G) \rightarrow A(H)}$ and induction a transfer map $A(H) \rightarrow A(G)$. These assemble into a Mackey functor, and hence a $G$--spectrum $\udl{A}(G)$ which has values $\udl{A}(G)^H = A(H)$.

\begin{defn}
    Let $\udl{X}$ and $\udl{Y}$ be Poincar\'e $G$--spaces. An \textit{$\udl{A}(G)$--degree datum} is a pair $(f, \psi)$ where $f \colon \udl{X} \rightarrow \udl{Y}$  is a map of $G$--spaces and $\psi \colon D_{\udl{X}} \otimes \udl{A}(G) \xrightarrow{\simeq} f^* D_{\udl{Y}} \otimes \udl{A}(G)$ is an equivalence
    of the $\udl{A}(G)$--linearised dualising sheaves.
\end{defn}

In other words, a $\udl{A}(G)$--degree datum is a $\udl{\module}_{\udl{A}(G)}(\myuline{\spectra})$--degree datum in the sense of \cref{def:parametrised_degree}.

\begin{defn}
    Let $\udl{X},\udl{Y}\in\spc_G$ be $G$--Poincar\'e and $(f, \psi)$ a $\udl{A}(G)$--degree datum.
    We define the \textit{equivariant degree} $\degree_G(f, \psi) \in \pi_0 \map(\sphere_G, \uniqueMapY_* \uniqueMapY^* \udl{A}(G)) \eqqcolon \burnside{\udl{Y}}$ as the composite
    \begin{equation*}
        \sphere_G \xrightarrow{c_X} \uniqueMapX_!(D_{\udl{X}} \otimes \udl{A}(G)) \xrightarrow{\psi} \uniqueMapX_!(f^* D_{\udl{Y}} \otimes \udl{A}(G)) \xrightarrow{\beckChevalley^f_!} \uniqueMapY_!(D_{\udl{Y}} \otimes \udl{A}(G)) \xleftarrow[\simeq]{\ambi{c_Y}{}{\--}} \uniqueMapY_* \uniqueMapY^* \udl{A}(G).
    \end{equation*}
\end{defn}
As explained in \cref{cons:monoid_structure_degree_space}, the commutative algebra structure on $\uniqueMapY_* \uniqueMapY^* \udl{A}(G)$ endows $\burnside{\udl{Y}}$ with the structure of a commutative ring with unit $c_Y$.

Our goal for the rest of this discussion is to  relate the equivariant degree of a map, which lives in $\burnside{\udl{X}}$, to the various degrees induced on fixed points via a character map similar to \cref{eq:character_map} constructed from the geometric fixed points functors. To this end, first recall the Bousfield localisation $\pi_0\colon \spc_G\rightleftharpoons \sets_G \cocolon \inclusion$ from \cref{cons:orbit_component_decomposition}. Notice that $\Omega^\infty \udl{A}(G)$ is levelwise 0--truncated with fixed points $(\Omega^\infty \udl{A}(G))^H = A(H)$.
\begin{lem}
    For   $\udl{X}\in\spc_G$, we have an equivalence
    $\burnside{\udl{X}} \simeq \pi_0 \map_{\spc_G}(\tau_{\leq 0} \udl{X},\Omega^\infty \udl{A}(G))$.
\end{lem}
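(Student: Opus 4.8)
The statement to prove is an equivalence $\burnside{\udl{X}} \simeq \pi_0 \map_{\spc_G}(\tau_{\leq 0} \udl{X},\Omega^\infty \udl{A}(G))$, where by definition $\burnside{\udl{X}} = \pi_0 \map_{\spectra_G}(\sphere_G, \uniqueMapX_* \uniqueMapX^* \udl{A}(G))$. The plan is to unwind both sides to an explicit computation in the heart of $\spectra_G$ (equivalently, $G$--Mackey functors valued in abelian groups) and match them via an adjunction and a truncation argument. First I would rewrite the left-hand side using the $(\Sigma^\infty_+, \Omega^\infty)$--type adjunction: since $\uniqueMapX^* \colon \myuline{\spectra} \to \myuline{\spectra}^{\udl{X}}$ has left adjoint $\uniqueMapX_!$ given by parametrised colimit, and $\uniqueMapX_* $ its right adjoint, one has
\[
\map_{\spectra_G}(\sphere_G, \uniqueMapX_* \uniqueMapX^* \udl{A}(G)) \simeq \map_{\myuline{\spectra}^{\udl{X}}}(\uniqueMapX^*\sphere_G, \uniqueMapX^*\udl{A}(G)),
\]
the mapping space of local systems over $\udl{X}$ from the constant sphere local system to the constant $\udl{A}(G)$ local system. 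The latter, by the definition of cotensoring of $\myuline{\spectra}$ over $\spc_G$ and adjunction $\uniqueMapX^* \dashv \uniqueMapX_*$ (c.f. the constant/global-sections adjunctions and \cref{lem:pushforwar_fun_manoeuvre}), is equivalent to $\map_{\spc_G}(\udl{X}, \Omega^\infty \udl{A}(G))$, where $\Omega^\infty \udl{A}(G)$ denotes the $G$--space with $(\Omega^\infty\udl{A}(G))^H = A(H)$ (more precisely $\Omega^\infty_H$ applied levelwise).

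Next I would pass to $\pi_0$. Since $\Omega^\infty \udl{A}(G)$ is levelwise $0$--truncated, the $G$--space $\Omega^\infty \udl{A}(G)$ lies in $\sets_G \hookrightarrow \spc_G$ in the sense of the Bousfield localisation $\tau_{\leq 0} = \pi_0 \colon \spc_G \rightleftharpoons \sets_G \cocolon \inclusion$ recalled earlier. Therefore mapping into it from $\udl{X}$ factors through the unit $\udl{X} \to \tau_{\leq 0}\udl{X}$, yielding
\[
\map_{\spc_G}(\udl{X}, \Omega^\infty \udl{A}(G)) \simeq \map_{\spc_G}(\tau_{\leq 0}\udl{X}, \Omega^\infty \udl{A}(G)) \simeq \map_{\sets_G}(\tau_{\leq 0}\udl{X}, \Omega^\infty \udl{A}(G)).
\]
This is already a set (the target is levelwise a set and $\sets_G = \func(\orbit(G)\op,\sets)$ is a $1$--category), so taking $\pi_0$ is harmless and we obtain the claimed identification. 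Assembling the chain of equivalences and applying $\pi_0$ gives the result.

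\textbf{Main obstacle.} The only genuinely non-formal point is justifying the second equivalence $\map_{\myuline{\spectra}^{\udl{X}}}(\uniqueMapX^*\sphere_G, \uniqueMapX^*\udl{A}(G)) \simeq \map_{\spc_G}(\udl{X}, \Omega^\infty\udl{A}(G))$, i.e. that $\map$ out of the constant sphere local system into a constant $\udl{A}(G)$ local system over $\udl{X}$ recovers $G$--maps from $\udl{X}$ to the $G$--space $\Omega^\infty\udl{A}(G)$. This should follow by combining: (i) the adjunction $\uniqueMapX^* \dashv \uniqueMapX_*$ (so the left side is $\map_{\spectra_G}(\sphere_G, \uniqueMapX_*\uniqueMapX^*\udl{A}(G))$, which by the constant-local-system description $\uniqueMapX_*\uniqueMapX^*(-) = \udl{\func}(\udl{X},-)$ from \cref{lem:pushforwar_fun_manoeuvre} applied to the \'etale morphism $\pi_{\udl{X}}$ is $\map_{\spectra_G}(\sphere_G, \udl{\func}(\udl{X}, \udl{A}(G)))$), and (ii) the $\Sigma^\infty_+ \dashv \Omega^\infty$ adjunction between $\spc_G$ and $\spectra_G$ together with $\sphere_G = \Sigma^\infty_+ \udl{\ast}$, which turns this into $\map_{\spc_G}(\udl{\ast}, \Omega^\infty\udl{\func}(\udl{X},\udl{A}(G))) = \Gamma(\Omega^\infty\udl{\func}(\udl{X},\udl{A}(G)))$, and finally the cartesian-closedness of $\cat_G$ (or rather $\spc_G$ together with compatibility of $\Omega^\infty$ with cotensoring) identifying this with $\map_{\spc_G}(\udl{X}, \Omega^\infty\udl{A}(G))$. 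I would spell this out carefully since it mixes the stable and unstable worlds, but each ingredient is available from the recollections above; no new idea is needed.
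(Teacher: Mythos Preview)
Your proposal is correct and follows essentially the same route as the paper. The paper's proof is the one-line chain
\[
\pi_0 \map_{\spectra_G}(\sphere_G, \uniqueMapX_* \uniqueMapX^* \udl{A}(G)) \simeq \pi_0\map_{\spectra_G}(\Sigma^\infty_+ \udl{X}, \udl{A}(G))\simeq \pi_0\map_{\spc_G}(\tau_{\leq 0} \udl{X},\Omega^\infty \udl{A}(G)),
\]
using $\uniqueMapX_!\uniqueMapX^*\sphere_G \simeq \Sigma^\infty_+\udl{X}$, the $\Sigma^\infty_+\dashv\Omega^\infty$ adjunction, and $0$--truncatedness of $\Omega^\infty\udl{A}(G)$; your argument unwinds the same adjunctions in a slightly different order (staying on the right via $\uniqueMapX_*$ and pulling the cotensor through $\Omega^\infty$ rather than passing to $\Sigma^\infty_+\udl{X}$ on the left), but the content is identical.
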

\begin{proof}
    Consider the computation of $\burnside{\udl{X}}$ as
    \[\pi_0 \map_{\spectra_G}(\sphere_G, \uniqueMapX_* \uniqueMapX^* \udl{A}(G)) \simeq \map_{\spectra_G}(\Sigma^\infty_+ \udl{X}, \udl{A}(G))\simeq \map_{\spc_G}(\tau_{\leq 0} \udl{X},\Omega^\infty \udl{A}(G)) \]
    where the last equivalence uses that $\Omega^\infty \udl{A}(G)$ is levelwise 0-truncated.
\end{proof}

\begin{rmk}\label{rem:Burnside_ring_of_G_space}
    This turns out to be quite simple to compute.
    Note that for two 0-truncated $G$--spaces $\udl{S}$ and $\udl{T}$, the map
    \begin{equation*}
        \map_{\spc_G}(\udl{S}, \udl{T}) \longrightarrow \prod_{(H)\leq G} \map_{\spc}(S^H, T^H)\simeq \map_{\sets}(S^H,T^H),
    \end{equation*}
    is injective with image given by all collections of maps $(f^H)_{(H)}$ compatible with the restrictions coming from  inclusions $K \le H$ or inner automorphsim $K \simeq H$.
    Specialising this to the case of interest, we obtain an injection
    \begin{equation*}
        \burnside{\udl{X}} \hookrightarrow \prod_{(H)} \left( A(H)^{\pi_0(X^H)} \right)^{W_GH}.
    \end{equation*}
    For example,  we have $\map(\tau_{\leq 0} \udl{X},\Omega^\infty \udl{A}(G)) \simeq A(G)$ if all fixed point sets of $\udl{X}$ are nonempty and connected.  For a more complicated example, consider the $C_2$-action on $S^1$ given by complex conjugation.
    Then $(\tau_{\leq_0} \udl{X})^{C_2} \simeq * \coprod *$ while  $(\tau_{\leq_0} \udl{X})^{e} \simeq *$. The set of equivalence classes of maps above identifies with
    the pullback $A(C_2) \times_{A(1)} A(C_2)$ where the two maps $A(C_2) \rightarrow A(1)$ are given by restriction along the group homomorphism $1 \to C_2$.
\end{rmk}

\subsubsection*{Recovering degrees on fixed points}

Now we want to recover different degrees on fixed point spaces from the equivariant degree by base changing along the geometric fixed points functor.
As it is not true that $\Phi^G \udl{A}(G) = \bbZ$, we need a small preparatory lemma.
For this, denote by $\spectra_G^{\geq 0}$ (resp. $\spectra_G^{\leq 0}$) the full subcategory of all $G$--spectra $X$ for which $X^H \in \spectra$ is connective (resp. coconnective) for each $H \leq G$. 
The pair ($\spectra_G^{\geq 0}$, $\spectra_G^{\leq 0}$) forms a $t$-structure on on $\spectra_G$.

\begin{lem}[Geometric fixed points of Mackey functors]\label{lem:geometric_fixed_points_of_mackey_functors}
    Let $X\in\spectra_{G}^{\geq0}$. 
    Then the canonical map $X\rightarrow\tau_{\leq0}X$ induces an isomorphism $\pi_0\Phi^GX \xrightarrow{\cong}\pi_0\Phi^G\tau_{\leq0}X$ of abelian groups. 
\end{lem}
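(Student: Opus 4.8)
The statement concerns the geometric fixed points $\Phi^G$ applied to a connective $G$--spectrum $X$, and it asks us to show that truncation $X \to \tau_{\leq 0} X$ becomes a $\pi_0$--isomorphism after applying $\Phi^G$. The plan is to reduce to the fact that $\Phi^G$ is exact and to understand how it interacts with the $t$--structure $(\spectra_G^{\geq 0}, \spectra_G^{\leq 0})$. First I would observe that since $X \in \spectra_G^{\geq 0}$, there is a fibre sequence $\tau_{\geq 1} X \to X \to \tau_{\leq 0} X$ in $\spectra_G$ with $\tau_{\geq 1} X \in \spectra_G^{\geq 1}$. Applying the exact functor $\Phi^G$ yields a fibre sequence $\Phi^G \tau_{\geq 1} X \to \Phi^G X \to \Phi^G \tau_{\leq 0} X$ in $\spectra$, and the associated long exact sequence on homotopy groups reduces the claim to showing that $\pi_0 \Phi^G(\tau_{\geq 1} X) = 0$ and $\pi_1 \Phi^G(\tau_{\geq 1} X) = 0$; in fact it suffices that $\Phi^G$ sends $\spectra_G^{\geq 1}$ into $\spectra^{\geq 1}$, i.e. that $\Phi^G$ is right $t$--exact up to the appropriate connectivity shift (here with no shift).

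The key input, then, is that $\Phi^G \colon \spectra_G \to \spectra$ is right $t$--exact, i.e. sends connective $G$--spectra to connective spectra and $n$--connective to $n$--connective. I would establish this by recalling from \cref{cons:geometric_fixed_points} that $\Phi^G$ is symmetric monoidal, colimit--preserving, and $\spc_{G,*}$--linear, with $\Phi^G \Sigma^\infty_G (-) \simeq \Sigma^\infty (-)^G$. Connective $G$--spectra are generated under colimits and extensions by the suspension spectra $\Sigma^\infty_G (\myuline{G/H}_+)$ (or their shifts upward); since $\Phi^G \Sigma^\infty_G(\myuline{G/H}_+) \simeq \Sigma^\infty((G/H)^G_+)$ is a connective spectrum (it is $\sphere$ if $H = G$ and $0$ otherwise), and since $\Phi^G$ preserves colimits and is exact, it follows that $\Phi^G$ carries $\spectra_G^{\geq 0}$ into $\spectra^{\geq 0}$, and by shifting, $\spectra_G^{\geq n}$ into $\spectra^{\geq n}$. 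In particular $\Phi^G(\tau_{\geq 1} X) \in \spectra^{\geq 1}$, so $\pi_0$ and $\pi_{1}$ of it vanish in the relevant range — precisely, $\pi_i \Phi^G(\tau_{\geq 1}X) = 0$ for $i \leq 0$, which is what the long exact sequence needs to conclude that $\pi_0 \Phi^G X \to \pi_0 \Phi^G \tau_{\leq 0} X$ is an isomorphism (surjectivity from $\pi_0 \Phi^G \tau_{\geq 1} X = 0$ wouldn't even be needed; what we need is $\pi_0 \Phi^G \tau_{\geq 1} X = 0$ for injectivity of the cofibre map at $\pi_0$ after noting $\pi_{-1}\Phi^G \tau_{\geq 1}X$ also does not interfere — actually both injectivity and surjectivity follow from $\pi_0 = \pi_{-1} = 0$, wait: surjectivity needs $\pi_{-1}\Phi^G\tau_{\geq 1}X = 0$ which holds, injectivity needs $\pi_0 \Phi^G \tau_{\geq 1}X = 0$ which holds).

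The main obstacle is therefore pinning down right $t$--exactness of $\Phi^G$ cleanly within the parametrised framework of the paper, rather than invoking classical computations of $\Phi^G$ on cells; but this is standard (see e.g. the fact that geometric fixed points commute with $\tau_{\geq 0}$ up to the monoidal unit, or that $\Phi^G$ is a composite of a smashing localisation and a left adjoint, both of which preserve connectivity). I would phrase the argument via the generators $\Sigma^\infty_G(\myuline{G/H}_+)$ and the fact that $\spectra_G^{\geq 0}$ is the smallest full subcategory closed under colimits and extensions containing them, invoking that $\Phi^G$ preserves both colimits and (being exact) cofibre sequences hence extensions. Once right $t$--exactness is in hand, the fibre sequence argument above finishes the proof in one line of long exact sequence bookkeeping.
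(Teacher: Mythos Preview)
Your proof is correct and, if anything, more direct than the paper's. Both arguments rest on the same key input, namely that $\Phi^G$ is right $t$--exact (it sends $\spectra_G^{\geq n}$ into $\spectra^{\geq n}$), and both establish this the same way: $\Phi^G$ is colimit--preserving and exact, and takes the generators $\Sigma^\infty_G(\myuline{G/H}_+)$ of $\spectra_G^{\geq 0}$ to connective spectra. Where the approaches diverge is in the deduction. You apply $\Phi^G$ to the fibre sequence $\tau_{\geq 1}X \to X \to \tau_{\leq 0}X$ and read off the conclusion from the long exact sequence on homotopy groups, using that $\Phi^G\tau_{\geq 1}X$ is $1$--connective. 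The paper instead invokes the right adjoint $\Xi^G$ to $\Phi^G$, observes that $\Xi^G$ is $t$--exact from its explicit formula, and then argues that the commuting square of right adjoints (inclusions of hearts into connectives, intertwined by $\Xi^G$) passes to a commuting square of left adjoints, yielding $\tau_{\leq 0}\Phi^G \simeq (\tau_{\leq 0}\Phi^G|_{\heartsuit}) \circ \tau_{\leq 0}$ on connectives. Your route is shorter and avoids introducing $\Xi^G$; the paper's route is more structural and additionally identifies $\tau_{\leq 0}\Phi^G$ on the heart as the left adjoint to $\Xi^G \colon \spectra^{\heartsuit} \hookrightarrow \spectra_G^{\heartsuit}$, which is not needed for the lemma as stated but is conceptually pleasant.

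One small remark: your parenthetical on which vanishings are needed is a bit tangled, but your final conclusion is right. From the long exact sequence $\pi_0\Phi^G\tau_{\geq 1}X \to \pi_0\Phi^G X \to \pi_0\Phi^G\tau_{\leq 0}X \to \pi_{-1}\Phi^G\tau_{\geq 1}X$, injectivity of the middle map follows from $\pi_0\Phi^G\tau_{\geq 1}X = 0$ and surjectivity from $\pi_{-1}\Phi^G\tau_{\geq 1}X = 0$; both hold since $\Phi^G\tau_{\geq 1}X \in \spectra^{\geq 1}$.
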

\begin{proof}
    Recall that the geometric fixed points participates in an adjunction $\Phi^G\colon\spectra_G\rightleftharpoons \spectra \cocolon \Xi^G$  where the right adjoint $\Xi^G$ is fully faithful and is given by the formula
    \begin{equation*}
        (\Xi^G Y)^H = \begin{cases}
            Y & \text{if } H = G; \\
            0 & \text{if } H \lneq G.
        \end{cases}
    \end{equation*}
    Observe that $\Phi^G$ preserves connective objects.
    This is because $\Phi^G$ sends $\Sigma^\infty_+ G/G$ to $\sphere$ and $\Sigma^\infty_+ G/H$ to $0$ for $H\lneq G$.
    Since connective $G$--spectra are built as colimits of the orbits $\{\Sigma^\infty_+ G/H\}_{H\leq G}$ and $\Phi^G$ preserves colimits, we see that connective $G$--spectra are sent to connective spectra.
    The formula for $\Xi^G$ shows that it preserves connective and coconnective objects.
    In particular, both restrict to functors $\Xi^G \colon \spectra^{\heartsuit}\hookrightarrow \spectra_G^{\heartsuit}$ and  $\tau_{\leq0}\Phi^G \colon \spectra_G^{\heartsuit} \to \spectra^{\heartsuit}$ and we claim that those are adjoint. 
    To see this, let $\underline{M}\in\spectra_G^{\heartsuit}$ and $N\in \spectra^{\heartsuit}$, and consider\small
    \[\map_{\spectra^{\heartsuit}}(\tau_{\leq0}\Phi^G\underline{M},N)  \simeq \map_{\spectra}(\Phi^G\underline{M},N)\simeq \map_{\spectra_G}(\underline{M},\Xi^G N)\simeq \map_{\spectra_G^{\heartsuit}}(\underline{M},\Xi^G N).\]\normalsize

    To conclude, since the solid square in 
    \begin{center}
        \begin{tikzcd}
            \spectra_G^{\geq0} \dar[shift right = 1, dashed, "\tau_{\leq0}"']\rar[shift left = 1,dashed, "\Phi^G"] & \spectra^{\geq0}\dar[shift right = 1, dashed, "\tau_{\leq0}"']\lar[shift left= 1, "\Xi^G", hook]\\
            \spectra_{G}^{\heartsuit} \rar[shift left = 1, "\tau_{\leq0}\Phi^G",dashed] \uar[hook, shift right = 1]& \spectra^{\heartsuit}\lar[shift left= 1, "\Xi^G",hook]\uar[hook,shift right = 1]
        \end{tikzcd}
    \end{center}
    commutes, so does the dashed square of left adjoints, as was to be shown.
\end{proof}

\begin{rmk}
    \cref{thm:Burnside_ring_as_pi_0_of_sphere} gives an equivalence $\tau_{\le 0} \sphere_G = \udl{A}(G)$.
    By \cref{lem:geometric_fixed_points_of_mackey_functors}, we have $
        \pi_0\Phi^G\underline{A}(G) = \pi_0\Phi^G\tau_{\leq0}\sphere_G \cong \pi_0\Phi^G\sphere_G\cong \pi_0\sphere\cong \mathbb{Z}$.
    Now note that the diagram
    \begin{equation}\label{diag:character_identification}
    \begin{tikzcd}
        \pi_0 (\sphere_G)^G \ar[r, "\chi^H"] \ar[d, "\cong"] 
        & \pi_0 \Phi^G  \sphere_G \ar[r, "\simeq"] \ar[d, "\simeq"] 
        & \pi_0 \tau_{\leq 0} \Phi^G \sphere_G \simeq \bbZ \ar[d, "\simeq"]
        \\
        \pi_0 \udl{A}(G)^G \ar[r] 
        & \pi_0 \Phi^G \udl{A}(G) \ar[r] 
        & \pi_0 \tau_{\leq 0} \Phi^G \udl{A}(G) \simeq \bbZ
    \end{tikzcd}
    \end{equation}
    commutes. 
    The lower horizontal composition thus agrees with the character map.
\end{rmk}

Now we come back to the problem of relating the equivariant degree to the degree on each fixed point space.
We have the symmetric monoidal colimit preserving functor
\begin{align*}
        \phi^H \colon \udl{\module}_{\udl{A}(G)}(\myuline{\spectra}_G) 
        &\xrightarrow{\Phi^H} \coind_H^G \rcoind_H^{1} \module_{\Phi^H \udl{A}(H)}(\spectra) \\
        &\xrightarrow{} \coind_H^G \rcoind_H^{1} \module_{\bbZ}(\spectra)
\end{align*}
where $\Phi^H$ is the parametrised geometric fixed point functor constructed in \cref{cons:geometric_fixed_points} and the second map is induced by the ring map $\Phi^H \udl{A}(H) \to \tau_{\leq 0} \Phi^H \udl{A}(H) \simeq \bbZ$.

\begin{prop}\label{prop:equivariant_degree_fixed_points}
    For a $G$--Poincar\'{e} $\udl{Y}\in\spc_G$, basechange along  $\phi^H$ induces a ring map
    \begin{equation*}
        \chi^H \colon \burnside{\udl{Y}} \longrightarrow H^0(Y^H, \bbZ).
    \end{equation*}
    Given a degree datum $(f \colon \udl{X} \to \udl{Y}, \psi)$, we have
    \begin{equation*}
        \chi^H(\deg_{\udl{A}(G)}(f, \psi)) = \deg_{\bbZ}(f^H \colon X^H \to Y^H, \psi^H).
    \end{equation*}
    For $\udl{Y} = \udl{*}$, $\chi^H$ agrees with the character map from \cref{eq:character_map}.
\end{prop}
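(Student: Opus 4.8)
The plan is to first establish the ring map $\chi^H$ via basechange, then identify its effect on degrees, and finally specialise to $\udl{Y} = \udl{*}$. For the construction of $\chi^H$, I would apply \cref{lem:map_degree_spaces_symmetric_monoidal_functor} to the symmetric monoidal colimit-preserving functor $\phi^H \colon \udl{\module}_{\udl{A}(G)}(\myuline{\spectra}_G) \to \coind_H^G \rcoind_H^1 \module_{\bbZ}(\spectra)$. By \cref{thm:base_change_of_tw_amb_spivak_data}, applying $\phi^H$ to the twisted ambidextrous $\udl{A}(G)$-linearised Spivak datum $(D_{\udl{Y}} \otimes \udl{A}(G), c_Y)$ of $\udl{Y}$ yields a twisted ambidextrous Spivak datum for $\udl{Y}$ with coefficients in $\coind_H^G \rcoind_H^1 \module_{\bbZ}(\spectra)$; invoking \cref{prop:base_change_twisted_ambidextrous_etale_morphism} (or rather its combination with \cref{cons:geometric_fixed_points} and the identification of $\coind_H^G$, $\rcoind_H^1$ with geometric morphisms), this transports to a $\module_{\bbZ}(\spectra)$-Spivak datum for $Y^H$. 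Concretely this means passing through $\Gamma$ and the equivalence $\burnside{\udl{Y}} = \pi_0\map(\sphere_G, \uniqueMapY_*\uniqueMapY^*\udl{A}(G))$ to land in $\pi_0\map(\sphere, (Y^H)_*(Y^H)^*\bbZ) = H^0(Y^H;\bbZ)$, and by the second part of \cref{lem:map_degree_spaces_symmetric_monoidal_functor} this is a map of commutative rings for the algebra structures from \cref{cons:monoid_structure_degree_space}.

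For the compatibility with degrees, the key step is the first part of \cref{lem:map_degree_spaces_symmetric_monoidal_functor}, which says precisely that a symmetric monoidal colimit-preserving functor sends $\deg_{\udl{\category{C}}}(f,\alpha)$ to $\deg_{\udl{\category{D}}}(f, F(\alpha))$. Here I would take $F = \phi^H$, so that $F$ sends the $\udl{A}(G)$-degree datum $\psi \colon D_{\udl{X}} \otimes \udl{A}(G) \xrightarrow{\simeq} f^* D_{\udl{Y}} \otimes \udl{A}(G)$ to $\phi^H(\psi)$. One then needs to identify $\phi^H$ evaluated at the relevant data with the nonequivariant degree datum $(f^H, \psi^H)$ for the map of fixed-point Poincar\'e spaces $f^H \colon X^H \to Y^H$ — that is, to check that $\phi^H(D_{\udl{X}} \otimes \udl{A}(G)) \simeq D_{X^H}$ under the identification of fixed-point dualising sheaves from \cref{thm:fixed_points_poincare_duality}, and similarly that $\phi^H$ of the fundamental class $c_X$ recovers the nonequivariant fundamental class $c_{X^H}$ up to the canonical equivalences. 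This identification uses that $\Phi^H \circ \Sigma^\infty_G(-) \simeq \Sigma^\infty(-)^H$ (\cref{cons:geometric_fixed_points}) together with \cref{cons:pushing_spivak_data_geometric_fix_points} and \cref{obs:parametrised_colimits_of_s_*-categories}; the ring map $\Phi^H\udl{A}(H) \to \bbZ$ does not change $\pi_0$ by the remark following \cref{lem:geometric_fixed_points_of_mackey_functors}, so the linearisation is harmless.

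Finally, for the case $\udl{Y} = \udl{*}$, the Spivak datum is the tautological one $(\unit, \id)$ and $\burnside{\udl{*}} = \pi_0\map(\sphere_G, \udl{A}(G)) = \pi_0^G\udl{A}(G) = A(G)$, while $H^0(\ast;\bbZ) = \bbZ$. Tracing through the construction, $\chi^H$ is then basechange along $\phi^H$ of the identity map $\unit_{\udl{\module}_{\udl{A}(G)}(\myuline{\spectra}_G)} \to \unit$, which by \cref{cons:geometric_fixed_points} and the commuting diagram \cref{diag:character_identification} is exactly the composite $A(G) \to \pi_0\Phi^H\udl{A}(G) \to \pi_0\tau_{\leq 0}\Phi^H\udl{A}(G) \simeq \bbZ$ — i.e. the classical character map of \cref{eq:character_map} (after passing from $G$ to $H$ via the restriction built into $\phi^H = \coind_H^G\rcoind_H^1(-)\circ \Phi^H$, matching the ``$(H)$'' indexing in \cref{eq:character_map}). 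The main obstacle I anticipate is the bookkeeping in the second paragraph: carefully matching the abstract basechanged Spivak datum $\phi^H(D_{\udl{X}}\otimes\udl{A}(G), c_X)$ with the concrete nonequivariant one $(D_{X^H}, c_{X^H})$ through the chain of equivalences (geometric fixed points as an iterated geometric/\'etale morphism, the Wirthm\"uller-type identification of coinduction with coinflation, and the linearisation map on $\pi_0$), since the uniqueness of twisted ambidextrous Spivak data in the presentable setting (\cref{prop:twisted_ambidextrous_presentable_case}) is what ultimately forces the identification, but one must check the relevant categories are presentable and that the fundamental classes agree and not merely the dualising sheaves.
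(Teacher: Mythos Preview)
Your proposal is correct and follows essentially the same approach as the paper: the key tool in both is \cref{lem:map_degree_spaces_symmetric_monoidal_functor} applied to the symmetric monoidal colimit-preserving functor $\phi^H$, giving both the ring map and the compatibility with degrees, and the case $\udl{Y}=\udl{*}$ is handled via the diagram \cref{diag:character_identification}. The paper resolves your anticipated bookkeeping obstacle more directly than you suggest: rather than invoking uniqueness of twisted ambidextrous Spivak data, it simply records the chain of equivalences $\func_G(\udl{Y}, \coind_H^G \rcoind_H^{1} \module_{\bbZ}(\spectra)) \simeq \func_H(\res_H^G \udl{Y}, \rcoind_H^{1} \module_{\bbZ}(\spectra)) \simeq \func(Y^H, \module_{\bbZ}(\spectra))$ and then uses \cref{obs:parametrised_colimits_of_s_*-categories} to identify $\uniqueMapY_!$ with $\uniqueMap{Y^H}_!$ (and likewise for $\uniqueMapY_*$, using that $\udl{Y}$ is Poincar\'e), which makes the matching of Spivak data automatic.
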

\begin{proof}
    Note that we have identifications
    \begin{align*}
        \func_G(\udl{Y}, \coind_H^G \rcoind_H^{1} \module_{\bbZ}(\spectra)) 
        &\simeq \func_H(\res_H^G \udl{Y}, \rcoind_H^{1} \module_{\bbZ}(\spectra)) \\
        &\simeq \func(Y^H, \module_{\bbZ}(\spectra)).
    \end{align*}
    Recall from \cref{obs:parametrised_colimits_of_s_*-categories} that this identifies $\uniqueMapY_!$ with $\uniqueMap{Y^H}_!$ (and also $\uniqueMapY_*$ with $\uniqueMap{Y^H}_*$ as $\udl{Y}$ is Poincar\'e). 
    Now applying \cref{lem:map_degree_spaces_symmetric_monoidal_functor} to basechange along $\phi^H$, we obtain a ring map  \small
    \[\burnside{\udl{Y}} = \pi_0 \map_{\module_{\udl{A}(G)}(\spectra_G)}(\unit, \uniqueMapY_* \uniqueMapY^* \unit) 
        \to \pi_0 \map_{\module_{\bbZ}(\spectra)}(\unit, \uniqueMap{Y^H}_* \uniqueMap{{Y^H}}^* \unit) \simeq H^0(Y^H, \bbZ).\]\normalsize
    The statment about the degrees follows from \cref{lem:map_degree_spaces_symmetric_monoidal_functor}.
    In the case $\udl{Y} = \udl{*}$, this map identifies with the character map by \cref{diag:character_identification}. 
\end{proof}

In the next corollary, we unravel \cref{prop:equivariant_degree_fixed_points} in a special case to illustrate how it can be used to deduce condruences between (nonequivariant) degrees between fixed point sets. 

\begin{cor}[Congruences between degrees on fixed point sets]
    Suppose that $\udl{Y}$ is a $G$--Poincar\'e space and assume that $Y^H$ is nonempty connected for all $H \le G$.
    Given a degree datum $(f\colon \udl{X} \rightarrow \udl{Y},\psi)$, the collection $(\deg_{\bbZ}(f^H, \psi^H))_{(H)}$ lies in the image of the character map
    \begin{equation*}
        \chi \colon A(G) \to \prod_{(H)} \bbZ.
    \end{equation*}
\end{cor}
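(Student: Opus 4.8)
The plan is to deduce this from \cref{prop:equivariant_degree_fixed_points} together with the identification $\burnside{\udl{Y}}\cong A(G)$ afforded by \cref{rem:Burnside_ring_of_G_space}. First I would record that, since an $\udl{A}(G)$--degree datum presupposes that both $\udl{X}$ and $\udl{Y}$ are $G$--Poincar\'e, the equivariant degree $x\coloneqq\deg_{\udl{A}(G)}(f,\psi)$ is a well-defined element of the ring $\burnside{\udl{Y}}=\pi_0\map_{\spectra_G}(\sphere_G,\uniqueMapY_*\uniqueMapY^*\udl{A}(G))$. By hypothesis $\pi_0(Y^H)=\ast$ for every closed subgroup $H\le G$; as truncation in the presheaf topos $\spc_G$ is computed pointwise, this gives $\tau_{\le 0}\udl{Y}\simeq\udl{\ast}$, so by \cref{rem:Burnside_ring_of_G_space} the map $\udl{Y}\to\udl{\ast}$ induces a ring isomorphism $A(G)=\burnside{\udl{\ast}}\xrightarrow{\ \cong\ }\burnside{\udl{Y}}$. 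Let $x_0\in A(G)$ be the preimage of $x$ under this isomorphism.

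Next I would identify, under the connectedness hypotheses, each character map $\chi^H\colon\burnside{\udl{Y}}\to H^0(Y^H;\bbZ)$ with the classical character homomorphism $\chi_H\colon A(G)\to\bbZ$. The construction of $\chi^H$ in \cref{prop:equivariant_degree_fixed_points} --- namely basechange of $\uniqueMapY_*\uniqueMapY^*(-)$ along the fixed symmetric monoidal functor $\phi^H$ in the sense of \cref{lem:map_degree_spaces_symmetric_monoidal_functor} --- is contravariantly functorial in the $G$--space, so applying it to $\udl{Y}\to\udl{\ast}$ yields a commuting square whose left vertical arrow is the isomorphism $\burnside{\udl{\ast}}\to\burnside{\udl{Y}}$ from the previous paragraph and whose right vertical arrow is the restriction $H^0(\ast;\bbZ)\to H^0(Y^H;\bbZ)$, which is an isomorphism since $Y^H$ is nonempty and connected. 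As the last sentence of \cref{prop:equivariant_degree_fixed_points} asserts that $\chi^H\colon\burnside{\udl{\ast}}=A(G)\to H^0(\ast;\bbZ)=\bbZ$ is the classical character map $\chi_H$, this square identifies $\chi^H\colon\burnside{\udl{Y}}\to H^0(Y^H;\bbZ)$ with $\chi_H\colon A(G)\to\bbZ$.

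Finally I would combine these: by \cref{prop:equivariant_degree_fixed_points} we have $\deg_{\bbZ}(f^H,\psi^H)=\chi^H(x)$, which under the identifications above equals $\chi_H(x_0)$, and therefore $\big(\deg_{\bbZ}(f^H,\psi^H)\big)_{(H)}=\big(\chi_H(x_0)\big)_{(H)}=\chi(x_0)$ lies in the image of $\chi$, as claimed. The only point I expect to require a little care is the naturality of $\chi^H$ used in the second paragraph, i.e.\ the commutativity of the square along $\udl{Y}\to\udl{\ast}$; this should follow from the functoriality of the cohomological functoriality maps $\beckChevalley^{(-)}_*$ of \cref{cons:cohomological_functoriality} together with the naturality of basechange along $\phi^H$, but if desired one can bypass it by re-running the argument of \cref{prop:equivariant_degree_fixed_points} directly for $f^H\colon X^H\to Y^H\simeq\ast$ and invoking \cref{diag:character_identification}.
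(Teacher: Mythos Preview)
Your proposal is correct and follows essentially the same approach as the paper: identify $\burnside{\udl{Y}}\cong A(G)$ via the map $\udl{Y}\to\udl{\ast}$ using the connectedness hypothesis, check that this identification matches $\chi^H$ with the classical character $\chi_H$, and then apply \cref{prop:equivariant_degree_fixed_points}. The paper packages the naturality you flag in your final paragraph as a commuting square with vertical arrows $\beckChevalley_*$ (which become isomorphisms for $\udl{Y}\to\udl{\ast}$ by the connectedness assumption), exactly as you anticipated.
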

\begin{proof}
    Any map $f \colon \udl{X} \to \udl{Y}$ of $G$--Poincar\'e spaces induces a commutative diagram
    \begin{equation*}
    \begin{tikzcd}
        \pi_0 \map_{\module_{\udl{A}(G)}(\spectra_G)}(\unit, \uniqueMapY_* \uniqueMapY^* \unit) \ar[r, "\chi^H"] \ar[d, "\beckChevalley_*"]
        & \pi_0 \map_{\module_{\bbZ}(\spectra)}(\unit, \uniqueMap{Y^H}_* \uniqueMap{{Y^H}}^* \unit) \ar[d, "\beckChevalley_*"]
        \\
        \pi_0 \map_{\module_{\udl{A}(G)}(\spectra_G)}(\unit, \uniqueMapX_* \uniqueMapX^* \unit) \ar[r, "\chi^H"]
        & \pi_0 \map_{\module_{\bbZ}(\spectra)}(\unit, \uniqueMap{X^H}_* \uniqueMap{{X^H}}^* \unit).
    \end{tikzcd}
    \end{equation*}
    Applying this to the unique map $\udl{Y} \to \udl{*}$, the vertical maps become equivalences by the assumption on the fixed points of $\udl{Y}$.
    By \cref{prop:equivariant_degree_fixed_points}, this $\chi^H \colon \burnside{\udl{Y}} \to \bbZ$ identifies with the character map $\chi^H \colon A(G) = \burnside{\myuline{G/G}} \to \bbZ$.
    By \cref{prop:equivariant_degree_fixed_points}, this $\chi^H \colon \burnside{\udl{Y}} \to \bbZ$ identifies with the character map $\chi^H \colon A(G) = \burnside{\myuline{G/G}} \to \bbZ$.
    The statement about the degrees is now a consequence of \cref{prop:equivariant_degree_fixed_points}.
\end{proof}

\section{Equivariant Poincar\'{e} duality: applications}\label{section:geometric_applications}

In this section, we employ the general theory developed in the article to investigate some problems of an equivariant geometric topological nature.  In \cref{subsection:pulling_back_fixed_points}, we study cohomological injectivity statements for degree one maps and prove \cref{thm:bredonBrowderInjection} along these lines; we then use it to obtain a rigidity result of equivariant Poincar\'{e} spaces in \cref{thm:Cp_PD_contractible_underlying_space}. Next, in \cref{subsection:theorem:single_fixed_points}, we prove the equivariant Poincar\'{e} generalisation of Atiyah--Bott and Conner--Floyd's theorem on single fixed points for group actions on smooth manifolds.

\subsection{Pulling back twisted fixed points}\label{subsection:bredonPoincareDuality}\label{subsection:pulling_back_fixed_points}

Let $f \colon M \rightarrow N$ be a map closed, connected, oriented manifolds of the same dimension $d$. If the degree of $f$ is nonzero, then $f$ is surjective. The theory of equivariant degrees immediately gives an equivariant application: if $f \colon M \rightarrow N$ is a map of closed, conneced, smooth, oriented $C_p$-manifolds with connected fixed point sets of the same dimension, then if $f$ is of degree coprime to $p$ (when considered as a nonequivariant map), then also the degree of $f^{C_p}$ is coprime to $p$. Thus, $f^{C_p}$ is surjective as well.

To detect if the degree of $f$ is coprime to $p$ it of course suffices to check that $H^*(f^{C_p};\bbF_p)$ is nonzero in the top degree. This line of thought led Browder \cite{browder_fixed_points} to interpret results about the injectivity of $H^*(f^{C_p};\bbF_p)$ as the ``ability to pull back fixed points from $N$ to fixed points of $M$". Browder's strategy is very successful to show actual surjectivity results on fixed points, even if one relaxes the conditions like smoothness, or takes $G$ to be a more general group like an abelian $p$-group. Let us mention \cite{hanke_puppe_pulling_back_fixed_points} for more information, and many interesting variations on this approach. In particular, see \cite[Thm. 4]{hanke_puppe_pulling_back_fixed_points} to see how to pass from cohomological injectivity results to surjectivity on fixed points.
We will content ourselves with showing how our methods can be used to derive cohomological injectivity results of the following type, which was first studied by Bredon \cite{bredon_poincare_duality} and generalised by Browder \cite{browder_fixed_points} under stronger manifold assumptions: 

\begin{thm}[Twisted Bredon--Browder injection]\label{thm:bredonBrowderInjection}
    Let $A$ be an elementary abelian $p$--group $C_p^{\times r}$. Let $f\colon \underline{X}\rightarrow \underline{Y}$ be a map of compact $A$--spaces. Suppose $X^e, Y^e$ are $\module_{\eilenbergMacLaneFp}$--Poincar\'{e} spaces such that $f^e\colon X^e\rightarrow Y^e$  is of $\module_{\eilenbergMacLaneFp}$--degree one (c.f. \cref{def:parametrised_degree}).  Then for any $\zeta\in\func(Y^A,\perfectCat_{\eilenbergMacLaneFp})$, the map induces an injection $H^*(Y^{A};\zeta)\rightarrow H^*(X^{A};f^*\zeta)$.
\end{thm}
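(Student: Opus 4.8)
The plan is to deduce this from the small-category basechange result \cref{thm:small_base_change} together with the isotropy basechange machinery of \cref{subsection:fixed_points_methods}, by iterating over the $r$ copies of $C_p$. First I would set up the coefficient category: take $\udl{\sC} = \udl{\module}_{\eilenbergMacLaneFp}\exact$, the $A$-category whose value at $A/H$ is the (small) symmetric monoidal stable category of perfect $\eilenbergMacLaneFp$-modules with its $H$-action — concretely the $A$-stable symmetric monoidal category $\udl{\borel}_A(\perfectCat_{\eilenbergMacLaneFp})$ or rather its incarnation coming from the Borel $A$-spectrum of $\eilenbergMacLaneFp$ — living in $\calg(\cat^{\mathrm{ex}}_A)$. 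Since $X^e$ and $Y^e$ are $\module_{\eilenbergMacLaneFp}$-Poincar\'e spaces and $A$ is a $p$-group, I would use the Borelification results (\cref{lem:borelification_fixed_points_poincare}, \cref{lem:degree_one_and_borelifications}) to upgrade the nonequivariant data: the underlying Poincar\'e Spivak datum and the degree-one datum for $f^e$ — which is $A$-equivariant because $f$ is a map of $A$-spaces and $\Pic(\module_{\eilenbergMacLaneFp})$ has contractible components over $\bbF_p$, so the degree datum $\alpha\colon D_{X^e}\simeq (f^e)^*D_{Y^e}$ and its degree-one property exist uniquely and $A$-equivariantly. This makes $\udl{X}$ and $\udl{Y}$ into $\udl{\sC}$-Poincar\'e spaces (working with the appropriate small, fibrewise-stable coefficient category, for which the hypotheses of the Spivak data formalism hold because $\udl{X},\udl{Y}$ are compact and $A$ is finite) and $f$ into a map of $\udl{\sC}$-degree one by \cref{lem:degree_one_and_borelifications}.

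Next I would apply the isotropy basechange \cref{thm:poincare_isotropy} (2), or rather its iterated small-category version, to the family $\proper$ of proper subgroups at each stage: geometric fixed points $\Phi^{C_p}$ turns an $A$-Poincar\'e datum into an $A/C_p$-Poincar\'e datum with coefficients in the Brauer quotient $\brauerQuotientFamily{\proper}\udl{\sC}$, which for the Borel $\eilenbergMacLaneFp$-category is again (the $A/C_p$-version of) $\perfectCat_{\eilenbergMacLaneFp}$ with Borel action — this is the homotopical Smith-theory input, that geometric fixed points of the Borel $\eilenbergMacLaneFp$-spectrum is again $\eilenbergMacLaneFp$ (Tate vanishing / Smith theory for elementary abelian $p$-groups). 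Iterating this $r$ times, and also using \cref{lem:map_degree_spaces_symmetric_monoidal_functor} / \cref{lem:degree_one_base_change} to carry the degree-one datum along each geometric-fixed-points functor, I arrive at the statement that $X^A$ and $Y^A$ are nonequivariant $\perfectCat_{\eilenbergMacLaneFp}$-Poincar\'e spaces and $f^A\colon X^A\to Y^A$ is a $\perfectCat_{\eilenbergMacLaneFp}$-degree-one map.

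Finally I would run the homological Umkehr argument of \cref{lem:umkehr_map_degree_one}: the degree-one datum for $f^A$ gives a commuting Umkehr square exhibiting a one-sided inverse, up to the capping equivalences (which hold since both $X^A,Y^A$ are Poincar\'e), to the pullback map $(f^A)^*\colon \uniqueMap{Y^A}_*(\zeta)\to \uniqueMap{X^A}_*(f^*\zeta)$ for any local system $\zeta\in\func(Y^A,\perfectCat_{\eilenbergMacLaneFp})$. Concretely, composing the capping equivalence for $Y^A$, the homological Umkehr map coming from $f$, and the inverse capping equivalence for $X^A$ produces a retraction of $(f^A)^*$ on cohomology spectra, hence $(f^A)^*$ is a split injection on homotopy groups, i.e. $H^*(Y^A;\zeta)\hookrightarrow H^*(X^A;f^*\zeta)$. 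The main obstacle I anticipate is bookkeeping the coefficient categories carefully through the iteration: one must work throughout with \emph{small}, fibrewise-stable symmetric monoidal coefficient categories (so that the relevant $\udl{X}$-shaped (co)limits and the $\udl{X}$-projection formula hold on the nose for the compact $A$-spaces in play, as in option (b) of the discussion in \cref{subsection:setting_the_stage}), check that each geometric-fixed-points functor is still symmetric monoidal and preserves the relevant parametrised (co)limits so \cref{thm:small_base_change} applies, and verify that the Brauer quotient of the Borel $\eilenbergMacLaneFp$-category really is the Borel $\eilenbergMacLaneFp$-category for the quotient group — this last point is exactly the classical Smith-theoretic fact $\widetilde{E C_p}\otimes \eilenbergMacLaneFp$-nullity only fails, so that $\Phi^{C_p}$ of Borel $\eilenbergMacLaneFp$ is $\eilenbergMacLaneFp$, which is special to $p$-groups and is why elementary abelian $A$ is the right hypothesis.
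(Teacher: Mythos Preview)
There is a genuine gap in the proposal, located in the key step of the iteration: the claim that ``the Brauer quotient of the Borel $\eilenbergMacLaneFp$--category really is the Borel $\eilenbergMacLaneFp$--category for the quotient group'' is false. For a single $C_p$ and the proper family $\proper$, the Brauer quotient of $\udl{\borel}_{C_p}(\perfectCat_{\eilenbergMacLaneFp})$ at level $C_p/C_p$ is by construction the Verdier quotient $\func(BC_p,\perfectCat_{\eilenbergMacLaneFp})/\langle \eilenbergMacLaneFp[C_p]\rangle = \stmodSmallProper_{C_p}(\eilenbergMacLaneFp)$, the (proper) stable module category. Its unit has endomorphism ring the Tate cohomology $\widehat{H}^*(C_p;\bbF_p)$, not $\bbF_p$; equivalently, on spectra $\Phi^{C_p}$ of the Borel spectrum $F(EC_{p+},\eilenbergMacLaneFp)$ is the Tate spectrum $\eilenbergMacLaneFp^{tC_p}$, not $\eilenbergMacLaneFp$. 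So the iteration lands you not in $\perfectCat_{\eilenbergMacLaneFp}$ but in an iterated stable module category, and your conclusion that $X^A$ and $Y^A$ are nonequivariant $\perfectCat_{\eilenbergMacLaneFp}$--Poincar\'e spaces does not follow. In fact the paper gives an explicit counterexample (just after \cref{prop:sphericalBredon}): there are compact $C_2$--spaces with $X^e$ contractible, hence Poincar\'e, but $X^{C_2}$ not a Poincar\'e space. The invoked ``Smith-theoretic fact'' is not the statement $\Phi^{C_p}\udl{\borel}(\eilenbergMacLaneFp)\simeq \eilenbergMacLaneFp$; rather, the relevant Smith--theoretic input in this circle of ideas is the \emph{nonvanishing} of the proper Tate $\eilenbergMacLaneFp\properTate$ for elementary abelian $A$, which is how the paper extracts the result.

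The paper's proof does not iterate and does not claim $X^A,Y^A$ are $\eilenbergMacLaneFp$--Poincar\'e. Instead it runs a single Brauer quotient for the proper family of $A$ to obtain only a \emph{partial} Poincar\'e duality for $X^A,Y^A$ in the stable module category $\stmodSmallProper_A(\eilenbergMacLaneFp)$ (\cref{prop:sphericalBredon}), applies the Umkehr square there to get a split injection in $\stmodSmallProper_A(\eilenbergMacLaneFp)$, then pushes this along the proper Tate functor $T$ to $\module_{\eilenbergMacLaneFp}$. This yields an injection only after tensoring with $\eilenbergMacLaneFp\properTate$ (\cref{prop:proper_tate_basechanged_injection}); the final step uses K\"unneth over the field $\bbF_p$ together with the injection $\bbF_p\hookrightarrow\pi_*(\eilenbergMacLaneFp\properTate)$ --- valid precisely because $\eilenbergMacLaneFp\properTate\neq 0$ for elementary abelian $A$ --- to conclude the untwisted $\bbF_p$--cohomology injection. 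The lift of the degree--one datum is handled by a direct computation with $\picardSpace(\module_{\eilenbergMacLaneFp})$ and a spectral sequence argument (\cref{lem:degree_one_calculation_p-groups}), matching your idea for that part.
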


Our approach is by default homotopical, and point-set techniques are avoided. As our systematic treatment of Poincar\'e duality allows us to also derive consequences for homology with twisted coefficients, orientability assumptions may even be relaxed. We will also illustrate the usefulness of Browder's cohomological injectivity results to prove a structural result for $G$-Poincar\'e spaces where $G$ is a solvable finite group as \cref{thm:Cp_PD_contractible_underlying_space}: if $X^e$ is contractible, then so is $X^H$ for any $H \leq G$.
This in turn can be applied to give an example of a compact $C_p$-space all of whose fixed points are Poincar\'e  spaces, while itself not being $C_p$-Poincar\'e.

\vspace{1mm}
The basic philosophy of our proof is similar to that of \cite{hanke_puppe_pulling_back_fixed_points}, namely, we proceed via equivariant localisations using the \textit{proper Tate construction}. To this end, we first show that the fixed points of an equivariant space which is underlying Poincar\'{e}  may  most naturally be viewed as a Poincar\'{e} space with coefficients in the \textit{stable module category}, which we now recall. 

\begin{cons}[Proper stable module categories]
    Let $\nolinebreak{R\in\calg(\spectra)}$ and $G$ be a finite group. Consider the $G$--stable category $\underline{\borel}(\perfectCat_R)\in\catGst{G}$ with value $\func(BH,\perfectCat_R)$ at $G/H$. Recall the notion and notations of Brauer quotients from \cref{subsection:equivariant_categories_and_families}. Using the family of proper subgroups $\proper$ of $G$, we may construct a new $G$--stable category $\underline{\stmodSmall}^{\proper}(R)\in \catGst{G}$ defined as $s_*\widetilde{s}^*\underline{\borel}(\perfectCat)$. This $G$--category has value $$\stmodSmallProper_G(R)\coloneqq \func(BG,\perfectCat_R)/\langle R[G/H] \: | \: H\lneq G\rangle$$ at $G/G$ and is trivial elsewhere. Furthermore, there exists a $G$--exact symmetric monoidal functor $\Phi\colon \underline{\borel}(\perfectCat_R)\longrightarrow \underline{\stmodSmall}^{\proper}(R)$.
\end{cons}

\begin{cons}[Descending Poincar\'{e} duality from large to small coefficients]\label{cons:descending_big_to_small_poincare_duality}
     Let  $\underline{X}\in\spc_{G}^{\omega}$ such that $X^e$ is an $R$--Poincar\'{e}  space. Since $\underline{X}$ was a compact $G$--space, the adjunctions $\uniqueMapX_!\dashv \uniqueMapX^* \dashv \uniqueMapX* \colon \underline{\func}(\underline{X},\underline{\borel}(\module_R))\rightleftharpoons \underline{\borel}(\module_R)$ restrict to adjunctions $\uniqueMapX_!\dashv \uniqueMapX^* \dashv \uniqueMapX_* \colon \underline{\func}(\underline{X},\underline{\borel}(\perfectCat_R))\rightleftharpoons \underline{\borel}(\perfectCat_R)$ on the full subcategories. Now by \cref{lem:borelification_fixed_points_poincare}, we know that $\underline{X}$ is $\underline{\borel}(\module_R)$--Poincar\'{e}  and we write $D_{\underline{X}}\in \underline{\func}(\underline{X},\underline{\borel}(\picardSpace_R))$ for the dualising sheaf. Since $D_{\underline{X}}\in \underline{\borel}(\perfectCat_R)$, we even obtain an equivalence
    \[\uniqueMapX_*(-)\simeq \uniqueMapX_!(D_{\underline{X}}\otimes-)\colon \underline{\func}(\underline{X},\underline{\borel}(\perfectCat_R))\longrightarrow \underline{\borel}(\perfectCat_R)\]
    and so $\underline{X}$ is also $\underline{\borel}(\perfectCat_R)$--Poincar\'{e}. We write $D^G_X \colon X^G \rightarrow\perfectCat_R^{BG}$ for the dualising sheaf evaluated at the fixed points.
\end{cons}

Via this construction, we may now prove the following as a simple consequence.

\begin{prop}[Proper stable module Poincar\'{e} duality]\label{prop:sphericalBredon}
    Let $G$ be a finite group and $R\in\calg(\spectra)$. If $\underline{X}\in\spc_{G}^{\omega}$ such that the underlying space $X^e$ is an $R$--Poincare duality space, then $X^{G}$ is a partial $\stmodSmallProper_{G}(R)$--Poincare duality space, i.e. for any $\zeta \in \func(X^G,\perfectCat_R^{BG})$, we have an equivalence in $\stmodSmallProper_G(R)$
    \[ \ambi{\Phi c}{\Phi D_X^G}{\Phi \zeta} \colon \uniqueMapX^{G}_*(\Phi \zeta)\xrightarrow{\sim} \uniqueMapX^{G}_!(\Phi D_{X}^G\otimes \Phi \zeta)\]
\end{prop}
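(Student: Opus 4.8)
The statement is an almost immediate consequence of the machinery already assembled, and the plan is to chain together three basechange results. First I would recall from \cref{cons:descending_big_to_small_poincare_duality} that the hypothesis on $X^e$ already gives us that $\udl{X}$ is $\udl{\borel}(\perfectCat_R)$--Poincar\'{e}, with dualising sheaf $D_{\udl{X}}\in\udl{\func}(\udl{X},\udl{\borel}(\picardSpace_R))$ and a fundamental class $c$. The point is that $\udl{\borel}(\perfectCat_R)$ is a (small) $G$--stably symmetric monoidal category and $\udl{X}$ is compact, so all the hypotheses needed to run \cref{thm:small_base_change} and \cref{thm:poincare_isotropy} (2) in the small setting are in place.

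The second step is to apply \cref{thm:poincare_isotropy} (2) with $\udl{\D} = \udl{\borel}(\perfectCat_R)$ and the family $\family = \proper$ of proper subgroups of $G$. Indeed, by construction $\udl{\stmodSmall}^{\proper}(R) = s_*\widetilde{s}^*\udl{\borel}(\perfectCat_R)$ is exactly $\brauerQuotientFamily{\proper}\udl{\borel}(\perfectCat_R)$ packaged inside $\cat_G$ via $s_*$, and the symmetric monoidal $G$--exact functor $\Phi\colon \udl{\borel}(\perfectCat_R)\to\udl{\stmodSmall}^{\proper}(R)$ is the unit of the Brauer quotient localisation. Recalling from \cref{example:proper_family} that $(-)\sUpperStar{\proper}$ evaluates a $G$--category (resp. $G$--space) at the top orbit $G/G$, so that $\udl{X}\sUpperStar{\proper}\simeq X^G$ and $\brauerQuotientFamily{\proper}\udl{\borel}(\perfectCat_R)\sUpperStar{\proper}\simeq\stmodSmallProper_G(R)$, the conclusion of \cref{thm:poincare_isotropy} (2) says precisely that $(\Phi D_{\udl{X}},\Phi c)\sUpperStar{\proper}$ is a Poincar\'{e} $(\Phi\colon \perfectCat_R^{BG}\to\stmodSmallProper_G(R))$--Spivak datum for $X^G$, in the sense of \cref{terminology:poincare_duality_for_a_functor}. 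Unwinding that terminology, this is exactly the assertion that the capping transformation
\[
\ambi{\Phi c}{\Phi D_X^G}{\Phi\zeta}\colon \uniqueMap{X^G}_*(\Phi\zeta)\longrightarrow \uniqueMap{X^G}_!(\Phi D_X^G\otimes\Phi\zeta)
\]
of functors $\func(X^G,\perfectCat_R^{BG})\to\stmodSmallProper_G(R)$ is an equivalence, which is the claim.

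The only genuine bookkeeping, and the step most likely to need care, is reconciling the parametrised (co)limits appearing in the abstract statement of \cref{thm:poincare_isotropy} (2) with the honest (co)limits $\uniqueMap{X^G}_*$, $\uniqueMap{X^G}_!$ over the space $X^G$ that appear in the proposition: this is handled by \cref{obs:parametrised_colimits_of_s_*-categories}, which identifies $\uniqueMapX_*$ and $\uniqueMapX_!$ on $G$--categories of the form $\sC\sLowerStarInclusion{\proper}$ with the (co)limits of the fixed-point diagram, together with \cref{lem:counit_equivalence_singular_part} to see that restricting along $\epsilon\colon \udl{X}\singularPartTwiddle{\proper}\to\udl{X}$ is harmless. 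I would also note that since $\udl{X}$ is compact, the adjunction triple $\uniqueMapX_!\dashv\uniqueMapX^*\dashv\uniqueMapX_*$ on $\udl{\func}(\udl{X},\udl{\borel}(\module_R))$ restricts to the perfect/small subcategories as recorded in \cref{cons:descending_big_to_small_poincare_duality}, so that everything genuinely takes place in $\stmodSmallProper_G(R)$ rather than its cocomplete enlargement. With these identifications made, the desired equivalence is literally the capping equivalence furnished by \cref{thm:poincare_isotropy} (2), and there is nothing further to prove.
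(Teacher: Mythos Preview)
Your proposal is correct and follows essentially the same approach as the paper: invoke \cref{cons:descending_big_to_small_poincare_duality} to obtain that $\udl{X}$ is $\udl{\borel}(\perfectCat_R)$--Poincar\'{e}, then apply \cref{thm:poincare_isotropy} (2) for the family $\proper$. The paper's proof is just the two-sentence version of what you wrote; your additional bookkeeping (the identifications via \cref{obs:parametrised_colimits_of_s_*-categories} and the remark about compactness ensuring everything lands in perfect modules) is exactly what is implicit in the paper's appeal to \cref{thm:poincare_isotropy} (2).
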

\begin{proof}
   By \cref{cons:descending_big_to_small_poincare_duality}, we know that $\underline{X}$ is  $\underline{\borel}(\perfectCat_R)$--Poincar\'{e} duality.  The statement of the proposition is now an immediate consequence of \cref{thm:poincare_isotropy} (2).
\end{proof}

\begin{rmk}\label{rmk:spherical_bredon}
    While the proposition above looks restrictive and artificial, it already contains some interesting content since the map $\Phi\colon \func_G(\underline{X},\underline{\borel}(\perfectCat_R))\rightarrow \func(X^G,\stmodSmallProper_G(R))$ is symmetric monoidal. In particular, it holds when $\zeta$ is the tensor unit $\unit$. This will then recover the usual untwisted cohomology of $X^{G}$.
\end{rmk}

\begin{example}[Underlying Poincar\'e duality does not imply Poincar\'e duality of the fixed points]
    There exist piecewise linear $C_2$-actions on the sphere $S^d$ whose fixed point sets are submanifolds $M$ which are not homology spheres, see e.g. \cite[p.5]{FarrellLafont} for an exposition. Let $\underline{X}$ be the (unreduced) suspension of such an action. Then $X^e \simeq S^{d+1}$ which is a Poincaré duality space. However, $X^{C_2}$ is the unreduced suspension of a manifold which is not a homology sphere, and hence clearly not Poincaré as Poincaré duality with integer coefficients must fail. 
\end{example}

Next, we recall the {proper Tate construction}. The significance of this to our proof is that combining \cref{prop:sphericalBredon} with the degree theory from \cref{sec:degree_theory}, we may obtain a version of the cohomological injection in proper Tate cohomology. We then extract the desired injection from this version by ``finding'' $\mathbb{F}_p$--cohomology inside proper Tate cohomology.
\begin{recollect}[Proper Tate]\label{recollect:proper_tate}
    Let $G$ be a finite group and $R\in\calg(\spectra)$. One way to define the \textit{$R$--based proper Tate functor} is as the lax symmetric monoidal composite
    \[(-)^{t_{\proper}G}\colon \func(BG,\module_R)\xhookrightarrow{b_*} \mackey_G(\module_R) \xlongrightarrow{\Phi^G}\module_R\]
    This functor kills the proper induced terms, i.e.  those $M\in\module_R^{BG}$ such that $M\simeq \ind^G_HN$ for some $H\lneq G$ and $N\in\module_R^{BH}$ since $\Phi^G$ does. Furthermore, since $(-)\GeneralProperTate{G}$ is a lax symmetric monoidal functor,  $R^{t_{\proper}G}$  canonically attains an $R$--algebra structure. 
    
    Now let $A$ be an elementary abelian $p$--group $A= C_p^{\times r}$. With the trivial action of $A$ on $\eilenbergMacLaneFp$, the $A$--proper Tate $\eilenbergMacLaneFp\properTate$ is a nontrivial $\eilenbergMacLaneFp$--algebra by \cite[Prop. 5.16]{MNNDerivedInduction}. Now let $T\colon\stmodSmallProper_A(\eilenbergMacLaneFp)\longrightarrow \module_{\eilenbergMacLaneFp}$ be the universal functor making the triangle
    \begin{center}
        \begin{tikzcd}
            \func(BA,\perfectCat_{\eilenbergMacLaneFp}) \rar["\canonical"] \ar[dr, "(-)\properTate"']& \stmodSmallProper_{A}(\eilenbergMacLaneFp)\dar["T"]\\
            & \module_{\eilenbergMacLaneFp}
        \end{tikzcd}
    \end{center}
    commute, coming from the universal property of $\stmodSmallProper_A(\eilenbergMacLaneFp)$. 
\end{recollect}

\begin{lem}[Projection formula at dualisables]\label{lem:lax_right_adjoint_preserve_dualisables}
    Let $\A,\sC,\D$ be stably symmetric monoidal categories and $u\colon \A\rightarrow \sC$ and $L\colon\sC\rightarrow \D$ be symmetric monoidal exact functors. Suppose $L$ admits a right adjoint $R$. Then for every $a\in \A$ dualisable and $d\in \D$, the canonical map $ua\otimes Rd \rightarrow R(Lua\otimes d)$ is an equivalence.
\end{lem}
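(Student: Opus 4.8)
The statement is the standard ``projection formula holds at dualisable objects'' lemma, and the plan is to prove it by the usual Yoneda argument, testing the canonical map against all objects via the adjunction $L \dashv R$. First I would pin down the canonical map itself: it is the mate of the composite $L(ua \otimes Rd) \simeq Lua \otimes LRd \xrightarrow{\id \otimes \epsilon} Lua \otimes d$, using that $L$ is symmetric monoidal (so $L(ua \otimes Rd) \simeq Lua \otimes LRd$) and $\epsilon \colon LR \to \id$ is the counit. Equivalently, one may describe it as $ua \otimes Rd \xrightarrow{\eta} R L(ua \otimes Rd) \simeq R(Lua \otimes LRd) \xrightarrow{R(\id \otimes \epsilon)} R(Lua \otimes d)$ using the unit $\eta$.

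The key step is then to check this map is an equivalence after applying $\map_{\D}(L(-), d')$ for every $d' \in \D$ — equivalently, that precomposition induces an equivalence $\map_{\D}(L(ua \otimes Rd), d') \simeq \map_{\D}(Lua \otimes d, d')$ — no wait, this is circular; instead I would test it directly in $\sC$ via $\map_{\sC}(-, -)$, or more cleanly, use that $a$ is dualisable. Here is where dualisability enters: since $a \in \A$ is dualisable with dual $a^\vee$, and $u$ is symmetric monoidal, $ua$ is dualisable in $\sC$ with dual $ua^\vee$. For a dualisable object $x$ in a closed (or merely symmetric monoidal) stable category, tensoring with $x$ is both left and right adjoint to tensoring with $x^\vee$, so $x \otimes (-)$ \emph{preserves all limits and colimits} and moreover commutes with any exact functor up to the natural comparison. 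Concretely, I would argue: for dualisable $ua$, the natural map $ua \otimes R(-) \to R(Lua \otimes (-))$ can be rewritten using the dual. We have $ua \otimes Rd \simeq \map_{\sC}^{\mathrm{internal}}(ua^\vee, Rd)$ if $\sC$ is closed — but the paper does not assume closedness. So instead I would use the purely formal fact that for \emph{dualisable} $x$, the lax structure map $x \otimes R(d) \to R(Lx \otimes d)$ is an equivalence for \emph{any} right adjoint $R$ of a symmetric monoidal functor $L$: this follows because $L x$ is dualisable with dual $L(x^\vee)$, and one has the chain of natural equivalences, for all $c \in \sC$,
\begin{align*}
\map_{\sC}(c, ua \otimes Rd) &\simeq \map_{\sC}(c \otimes (ua)^\vee, Rd) \simeq \map_{\D}(L(c \otimes (ua)^\vee), d) \\
&\simeq \map_{\D}(Lc \otimes L((ua)^\vee), d) \simeq \map_{\D}(Lc \otimes (Lua)^\vee, d) \\
&\simeq \map_{\D}(Lc, Lua \otimes d) \simeq \map_{\sC}(c, R(Lua \otimes d)),
\end{align*}
using duality adjunctions, the adjunction $L \dashv R$, symmetric monoidality of $L$, and the fact that $L$ sends the dual of $ua$ to the dual of $Lua$. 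A diagram chase (or naturality in $c$) identifies this composite equivalence with the map $\map_{\sC}(c, -)$ applied to the canonical comparison map, and Yoneda concludes.

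The main obstacle — really the only subtlety — is the bookkeeping that the abstract equivalence built from the duality and adjunction units actually \emph{is} $\map_{\sC}(c, -)$ applied to the specific canonical comparison map, rather than merely \emph{some} equivalence between the two objects. This is a standard but slightly fiddly coherence check: one writes the canonical map as the mate described in the first paragraph, and tracks it through the string of adjunction equivalences, using the triangle identities for $L \dashv R$ and for the duality data $(ua, (ua)^\vee)$. I would handle this by noting that all the equivalences above are natural in $c$ and that the composite sends $\id$ (in the case $c = ua \otimes Rd$, or after suitable reduction) to the image of the comparison map, or alternatively by invoking that this is precisely the content of the statement that $L$ being symmetric monoidal makes $R$ lax symmetric monoidal and that a lax monoidal functor's structure maps are isomorphisms on dualisable objects — a fact one could also cite rather than reprove. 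Finally, the roles of $\A$ and $u$ are purely to produce the dualisable object $ua \in \sC$ from the dualisable $a$; once we are inside $\sC$ and $\D$, only $L$, $R$, and dualisability of $ua$ are used.
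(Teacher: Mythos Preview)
Your proposal is correct and follows essentially the same approach as the paper: the paper's proof is precisely the Yoneda argument via the chain of equivalences
\[
\map_{\sC}(c, ua\otimes Rd)\simeq \map_{\sC}(c\otimes ua^{\vee}, Rd)\simeq \map_{\D}(Lc\otimes Lua^{\vee},d)\simeq \map_{\sC}(c, R(Lua\otimes d)),
\]
exactly as you outline. If anything, you are more scrupulous than the paper about the coherence check that this composite agrees with the canonical comparison map, which the paper leaves implicit.
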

\begin{proof}
    Let $c\in \sC$. By considering the equivalences
    \[\map_{\sC}(c, ua\otimes Rd)\simeq \map_{\sC}(c\otimes ua^{\vee}, Rd)\simeq \map_{\D}(Lc\otimes Lua^{\vee},d)\simeq \map_{\D}(c, R(Lua\otimes d)),\]
    we obtain the desired conclusion by an application of Yoneda's lemma.
\end{proof}

\begin{lem}\label{lem:commuting_proper_Tate_with_compact_limits}
    Let $X\in\spc^{\omega}$, $R\in\calg(\spectra)$, and $\zeta\in\func(X,\perfectCat_R)$. Then viewing $\zeta$ as having the trivial $G$--action, we have an equivalence $(\uniqueMapX_*\zeta)\GeneralProperTate{G}\simeq R\GeneralProperTate{G}\otimes_R\uniqueMapX_*\zeta$.
\end{lem}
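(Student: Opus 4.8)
The plan is to exhibit the proper Tate construction $(-)\GeneralProperTate{G}$ applied to the limit $\uniqueMapX_*\zeta$ as a base-change of the nonequivariant limit $\uniqueMapX_*\zeta$ along the $R$-algebra map $R \to R\GeneralProperTate{G}$. The key point is that, for a \emph{compact} space $X$ and \emph{perfect} (hence dualisable) coefficients $\zeta$, the limit functor $\uniqueMapX_*$ and the Tate functor $(-)\GeneralProperTate{G}$ interact via a projection formula. Concretely, I would unwind the definition from \cref{recollect:proper_tate}: with the trivial $G$--action, $(-)\GeneralProperTate{G}$ is the lax symmetric monoidal composite $\func(BG,\module_R)\xhookrightarrow{b_*}\mackey_G(\module_R)\xrightarrow{\Phi^G}\module_R$. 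The functor $\Phi^G$ is symmetric monoidal and colimit-preserving, while $b_*$ is a right adjoint; so $(-)\GeneralProperTate{G}$ is a composite of a symmetric monoidal exact functor with a lax symmetric monoidal right adjoint.

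\textbf{Key steps.} First I would reduce to showing that the canonical lax-monoidal assembly map
\[
R\GeneralProperTate{G}\otimes_R \uniqueMapX_*\zeta \longrightarrow (\uniqueMapX_*\zeta)\GeneralProperTate{G}
\]
is an equivalence. Second, since $X$ is compact, both $\uniqueMapX_*$ and $\uniqueMapX_!$ on $\func(X,\perfectCat_R)$ agree up to the (trivial, since $X$ is an ordinary space, hence $\spectra$-ambidextrous after linearising, or more simply because $\uniqueMapX_*$ preserves perfect objects on a compact space) — in any case $\uniqueMapX_*\zeta$ is itself a perfect/dualisable $R$-module. Third, because $\uniqueMapX_*$ is a \emph{finite} limit (colimit over the opposite diagram, $X$ compact) I would commute $(-)\GeneralProperTate{G}$ past it: the colimit-preservation of $\Phi^G$ handles the colimit part, and for $b_*$ I would invoke \cref{lem:lax_right_adjoint_preserve_dualisables} with $\A = \module_R$ (trivial $G$-action), $\sC = \func(BG,\module_R)$, $\D = \mackey_G(\module_R)$, $u = b^*$-section / constant-diagram functor, and $L = b_! = $ the colocalisation left adjoint — this gives that $b_*$ satisfies a projection formula when tested against the dualisable $\uniqueMapX_*\zeta$ coming from the constant-diagram direction. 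Concatenating, $b_*$ and $\Phi^G$ together turn $\uniqueMapX_*\zeta$ into $R\GeneralProperTate{G}\otimes_R(-)$ applied to the nonequivariant limit, which is the claim. The cleanest packaging is probably: $(-)\GeneralProperTate{G}$ restricted to the $R\GeneralProperTate{G}$-linear subcategory is (up to base change) the forgetful functor, and $\uniqueMapX_*$ is $R\GeneralProperTate{G}$-linear on compact $X$, so the two commute by \cref{thm:classification_of_C_linear_functors}-style uniqueness — but I would only reach for that if the direct projection-formula computation gets unwieldy.

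\textbf{Main obstacle.} The technical heart is justifying that $b_*\colon \func(BG,\module_R)\hookrightarrow \mackey_G(\module_R)$ (equivalently, the right adjoint exhibiting Borel categories as $\family$-Borel for the proper family $\proper$) satisfies the relevant projection formula against the particular class of perfect objects $\uniqueMapX_*\zeta$ which arrive as finite limits of restrictions of honest $R$-modules with trivial action. One subtlety is that $b_*$ is \emph{not} monoidal, so the assembly map is genuinely lax and one must use dualisability crucially; \cref{lem:lax_right_adjoint_preserve_dualisables} is tailored for exactly this, but applying it requires identifying the correct triple $(\A \xrightarrow{u}\sC\xrightarrow{L}\D)$ and checking $L$ is symmetric monoidal exact with $b_*$ as its right adjoint. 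A second, minor, point is ensuring $\uniqueMapX_*\zeta$ really is dualisable in $\module_R$: this follows since $X$ is compact and $\perfectCat_R$ is closed under the finite limits computing $\uniqueMapX_*$, but it should be stated explicitly. Once these two are in place, the equivalence $(\uniqueMapX_*\zeta)\GeneralProperTate{G}\simeq R\GeneralProperTate{G}\otimes_R\uniqueMapX_*\zeta$ falls out by chasing the assembly map through the two-step composite defining $(-)\GeneralProperTate{G}$.
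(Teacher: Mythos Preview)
Your overall strategy matches the paper's: note that $X_*\zeta\in\perfectCat_R$ is dualisable (since $X$ is compact and $\zeta$ lands in $\perfectCat_R$), then invoke \cref{lem:lax_right_adjoint_preserve_dualisables} to push this dualisable object past the lax monoidal $b_*$, and finish with the symmetric monoidal $\Phi^G$. However, the specific instantiation of the lemma you propose is misconfigured. You take $\sC=\func(BG,\module_R)$, $\D=\mackey_G(\module_R)$, and $L=b_!$; but $b_!$ is \emph{not} symmetric monoidal, and even if it were, its right adjoint is $b^*$, not the $b_*$ you need. So the lemma as you apply it would tell you nothing about $b_*$.

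The paper's fix is simply to swap the two categories: take $\A=\module_R$, $\sC=\mackey_G(\module_R)$, $\D=\func(BG,\module_R)$, with $u=\inflated$ (inflation, which is symmetric monoidal) and $L=b^*$ (restriction, which is genuinely symmetric monoidal). Then the right adjoint $R$ in the lemma is exactly $b_*$, and the conclusion reads $\inflated(X_*\zeta)\otimes b_*R\simeq b_*\big(b^*\inflated(X_*\zeta)\otimes R\big)=b_*(\trivial_G X_*\zeta)$. Applying $\Phi^G$ to both sides, using that $\Phi^G\inflated\simeq\id$ and $\Phi^G b_*(-)=(-)^{t_{\proper}G}$, gives the claimed equivalence directly. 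In particular, there is no separate step of ``commuting $(-)^{t_{\proper}G}$ past $X_*$'': once $X_*\zeta$ is dualisable, a single correct application of the projection-formula lemma suffices.
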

\begin{proof}
    Since $X$ was compact, we know that $X_*\zeta\in\perfectCat_R$, i.e. $X_*\zeta$ is a dualisable $R$--module. Setting $\inflated\colon \module_R\rightarrow \mackey_G(\module_R)$ and $b^*\colon\mackey_G(\module_R)\rightarrow \func(BG,\module_R)$ for the functors $u$ and $L$ in \cref{lem:lax_right_adjoint_preserve_dualisables} (and writing $\trivial_G\colon \module_R\rightarrow\func(BG,\module_R)$ for the composite), we see that by the lemma that
    \[(\trivial_GX_*R)\GeneralProperTate{G} = \Phi^Gb_*\big((\trivial_GX_*R)\otimes_R R\big)\simeq \Phi^G\big((\inflated X)_*R\otimes_R b_*R\big)\simeq(X_*R)\otimes_R R\GeneralProperTate{G}\] as was to be shown.
\end{proof}

We now come to the main general proposition.

\begin{prop}[Injection after basechanging to proper Tate]\label{prop:proper_tate_basechanged_injection}
    Consider a finite group $G$, $R\in\calg(\spectra)$, and  $f\colon \underline{X}\rightarrow \underline{Y}$  a map of compact $G$--spaces. Suppose $X^e, Y^e$ are $\module_{R}$--Poincar\'{e} spaces and $f\colon \underline{X}\rightarrow \underline{Y}$ is equipped with a $\udl{\borel}(\module_R)$--degree one datum (c.f. \cref{def:parametrised_degree}). Then for any $\zeta\in \func(Y^G,\perfectCat_R)$, the cohomological functoriality map  in $\module_R$
    \[R\GeneralProperTate{G}\otimes_R Y^G_*\zeta \longrightarrow R\GeneralProperTate{G}\otimes_R X^G_*f^*\zeta\] is a $\pi_*$--split injection.
\end{prop}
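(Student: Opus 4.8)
The plan is to assemble three ingredients: (1) the proper stable module Poincar\'e duality of \cref{prop:sphericalBredon}, (2) the Umkehr square of \cref{lem:umkehr_map_degree_one} applied to the map $f^G \colon X^G \to Y^G$ after suitable basechange, and (3) a splitting coming from the fact that the universal functor $T \colon \stmodSmallProper_G(R) \to \module_R$ from \cref{recollect:proper_tate} refines the proper Tate functor. First I would apply \cref{cons:descending_big_to_small_poincare_duality} to both $\udl{X}$ and $\udl{Y}$ to obtain that they are $\udl{\borel}(\perfectCat_R)$--Poincar\'e spaces, and package the given $\udl{\borel}(\module_R)$--degree one datum for $f$ into a $\udl{\borel}(\perfectCat_R)$--degree one datum (using that the dualising sheaves land in the perfect subcategory). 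Then, applying the symmetric monoidal $G$--exact functor $\Phi \colon \udl{\borel}(\perfectCat_R) \to \udl{\stmodSmall}^{\proper}(R)$ and invoking \cref{cons:Change_of_coefficients_degree_datum}, I get a $\Phi$--degree datum for $f$, and by \cref{lem:map_degree_spaces_symmetric_monoidal_functor} together with \cref{thm:poincare_isotropy} (2) this datum is still of degree one on the fixed points.

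Next I would feed this into the Umkehr square \cref{lem:umkehr_map_degree_one}, taking $\category{B} = \spc$, applied to $f^G \colon X^G \to Y^G$ with coefficient category $\stmodSmallProper_G(R)$ and the Poincar\'e Spivak data coming from \cref{prop:sphericalBredon}. This produces a commuting diagram whose vertical maps are the capping equivalences $\ambi{\Phi c}{\Phi D_X^G}{-}$ (which are equivalences by \cref{prop:sphericalBredon}) and whose horizontal maps are the cohomological functoriality $\beckChevalley^{f^G}_*$ and the homological functoriality $\beckChevalley^{f^G}_!$. Reading this square, the cohomological functoriality map $\uniqueMap{Y^G}_*(\Phi\zeta) \to \uniqueMap{X^G}_*(\Phi f^*\zeta)$ in $\stmodSmallProper_G(R)$ admits a one--sided retraction in $\stmodSmallProper_G(R)$, namely the composite $\beckChevalley^{f^G}_!$ conjugated by the capping equivalences and the degree datum $\alpha$; the point is that the degree--one condition forces the relevant composite $Y^G_* \to Y^G_!$ to be precisely the capping equivalence on the target side, so the square exhibits $\beckChevalley^{f^G}_*$ as a split monomorphism \emph{in the stable module category}.

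Finally I would apply the functor $T \colon \stmodSmallProper_G(R) \to \module_R$, which is symmetric monoidal and exact, hence preserves the split injection. By the defining triangle of $T$ in \cref{recollect:proper_tate} together with \cref{obs:parametrised_colimits_of_s_*-categories} (identifying parametrised limits over $X^G$ in $\stmodSmallProper_G$--valued functor categories with honest limits of the fixed points) and \cref{lem:commuting_proper_Tate_with_compact_limits} (commuting the proper Tate construction past the compact limit $X^G_*$), the image of $\uniqueMap{Y^G}_*(\Phi\zeta)$ under $T$ is identified with $R\GeneralProperTate{G}\otimes_R Y^G_*\zeta$ and similarly for $X$. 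Thus $T$ applied to the split mono gives the asserted $\pi_*$--split injection $R\GeneralProperTate{G}\otimes_R Y^G_*\zeta \to R\GeneralProperTate{G}\otimes_R X^G_*f^*\zeta$ of $R$--modules, and being split in $\module_R$ it is in particular split on homotopy groups. The main obstacle I anticipate is bookkeeping: one must carefully track that the Spivak data used on $X^G$ and $Y^G$ in the Umkehr square are exactly those pushed forward from the $\udl{\borel}(\perfectCat_R)$--Poincar\'e structures via \cref{cons:isotropy_separation_spivak_Data}/\cref{cons:pushing_forward_spivak_data_geometric_morphisms}, so that the degree--one hypothesis on $f^e$ genuinely propagates to a degree--one hypothesis for $f^G$ with respect to these data (rather than some a priori different Spivak datum) — this is where \cref{thm:small_base_change} and \cref{lem:map_degree_spaces_symmetric_monoidal_functor} must be invoked in concert, and it is the step most prone to a coherence gap.
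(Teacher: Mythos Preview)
Your proposal is correct and follows essentially the same route as the paper: use \cref{prop:sphericalBredon} to get partial Poincar\'e duality for $X^G$ and $Y^G$ in $\stmodSmallProper_G(R)$, apply the Umkehr square \cref{lem:umkehr_map_degree_one} to exhibit $\beckChevalley^{f^G}_*$ as a split monomorphism there, then push along $T$ and invoke \cref{lem:commuting_proper_Tate_with_compact_limits}. The paper's proof is terser---it does not explicitly name \cref{lem:map_degree_spaces_symmetric_monoidal_functor} or \cref{thm:small_base_change} for the degree--tracking step, and simply asserts that the Umkehr square applies once $\zeta$ is viewed with trivial $G$--action in $\func(Y^G,\perfectCat_R^{BG})$---but the substance is identical. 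One small caution: \cref{lem:map_degree_spaces_symmetric_monoidal_functor} is stated for presentably symmetric monoidal categories, whereas $\Phi \colon \udl{\borel}(\perfectCat_R) \to \udl{\stmodSmall}^{\proper}(R)$ is between small $G$--stable categories; the conclusion you want (that $\Phi$ preserves degree--one data) still holds because $\Phi$ is symmetric monoidal and $G$--exact, but you should invoke that directly rather than the lemma as stated.
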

\begin{proof}
    By \cref{prop:sphericalBredon} we know that $X^{G}$ and $Y^{G}$ are $\stmodSmallProper_G(R)$--partial Poincar\'{e} duality.  In particular, viewing $\zeta$ as an object in $\func(Y^A,\perfectCat_R^{BG})$ under the symmetric monoidal functor $\trivial_A\colon \perfectCat_R\rightarrow\perfectCat_R^{BG}$, we obtain using \cref{lem:umkehr_map_degree_one} the left commuting square 
    \begin{center}
        \begin{tikzcd}
            \uniqueMapY^A_*(\zeta) \rar["\beckChevalley_*^f"]\dar["\mathrm{PD}", "\simeq"'] & \uniqueMapX^A_*(f^*\zeta)\dar["\mathrm{PD}", "\simeq"'] \\
            \uniqueMapY^A_!(\Phi D_{Y^e}\otimes\Phi\zeta) & \uniqueMapX^A_!(\Phi f^*D_{Y^e}\otimes\Phi f^*\zeta) \lar["\beckChevalley_!^f"'] 
        \end{tikzcd}
    \end{center}
    in $\stmodSmallProper_G(R)$. Hence, the map $\beckChevalley^f_*\colon \uniqueMapY^G_*\zeta\rightarrow \uniqueMapX^G_*f^*\zeta$ is a split inclusion. Finally, applying $T$ to this map and using that $T\circ\Phi\simeq (-)\GeneralProperTate{G}$, we conclude from \cref{lem:commuting_proper_Tate_with_compact_limits} that the map stated in the proposition is a split inclusion in $\module_R$ and in particular is a $\pi_*$--split injection.
\end{proof}

We would like to apply \cref{prop:proper_tate_basechanged_injection} to prove \cref{thm:bredonBrowderInjection}, and for this, a small preliminary calculation will be needed.

\begin{lem}\label{lem:degree_one_calculation_p-groups}
    Let $G$ be a $p$--group, $f\colon\udl{X}\rightarrow \udl{Y}$ a morphism in $\spc_G$. Suppose that $X^e$ and $Y^e$ are $\eilenbergMacLaneFp$--Poincar\'{e} and that $f\colon X^e\rightarrow Y^e$ is equipped with an  $\eilenbergMacLaneFp$--degree one datum. Then this degree one datum lifts to yield a $\udl{\borel}(\module_{\eilenbergMacLaneCoeff})$--degree one datum for the map $f\colon\udl{X}\rightarrow\udl{Y}$.
\end{lem}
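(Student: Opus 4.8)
The statement claims that an $\eilenbergMacLaneFp$--degree one datum for $f^e \colon X^e \to Y^e$ lifts to a $\udl{\borel}(\module_{\eilenbergMacLaneCoeff})$--degree one datum for $f \colon \udl{X} \to \udl{Y}$. The key point is that $\udl{\borel}(\module_{\eilenbergMacLaneCoeff})$--degree data for $f$ are controlled by the $G$--homotopy fixed points of the space of degree data for $f^e$. Concretely, by \cref{lem:degree_one_and_borelifications} (combined with \cref{lem:borelification_fixed_points_poincare}), providing a $\udl{\borel}(\module_{\eilenbergMacLaneCoeff})$--degree datum for $f$ amounts to providing a $G$--equivariant $\module_{\eilenbergMacLaneCoeff}$--degree datum for $f^e$, i.e. an equivalence $\alpha \colon D_{X^e} \xrightarrow{\simeq} (f^e)^* D_{Y^e}$ in $\func(X^e, \module_{\eilenbergMacLaneCoeff})^{hG}$, and a $\udl{\borel}(\module_{\eilenbergMacLaneCoeff})$--degree \emph{one} datum additionally requires the nonequivariant degree-one equivalence $c_{Y^e} \simeq (f^e)_! c_{X^e}$ to be witnessed $G$--equivariantly in $\map(\unit, (Y^e)_! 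D_{Y^e})^{hG}$.

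\textbf{Main steps.} First I would reduce everything to the level of the $G$--action on the relevant mapping spaces: the space $\Theta$ of $\module_{\eilenbergMacLaneCoeff}$--degree-one data for $f^e$ (i.e. pairs consisting of an equivalence $\alpha$ of dualising sheaves together with a homotopy $c_{Y^e} \simeq \deg(f^e, \alpha)$) carries a natural $G$--action since $\udl{X}, \udl{Y} \in \spc_G$, and a lift as required is precisely a point of $\Theta^{hG}$. We are given a point of $\Theta$ (the $\eilenbergMacLaneFp$--degree-one datum, pushed to $\module_{\eilenbergMacLaneCoeff}$ via the symmetric monoidal functor $\module_{\eilenbergMacLaneFp} \to \module_{\eilenbergMacLaneCoeff}$ — wait, this goes the wrong way; rather one uses that a degree-one datum over $\eilenbergMacLaneFp$ produces one over $\eilenbergMacLaneCoeff$ only after checking that the relevant Picard groupoid components are handled correctly). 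The crucial arithmetic input is that, for a $p$--group $G$, the relevant obstruction groups to descending from $\Theta$ to $\Theta^{hG}$ are controlled by $\eilenbergMacLaneFp$--homology of $G$: one shows that $\Theta$ is (after a connectivity truncation coming from compactness of $X^e, Y^e$ being $\eilenbergMacLaneFp$--Poincar\'e of some formal dimension $d$) a space whose homotopy groups relevant to the $G$--fixed point spectral sequence are finite $p$--groups, or more precisely $\mathbb{F}_p$--vector spaces, so that the homotopy fixed point spectral sequence $H^s(G; \pi_t \Theta) \Rightarrow \pi_{t-s}\Theta^{hG}$ degenerates appropriately and the given class in $\pi_0 \Theta$ lifts. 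Here is where one invokes that $\Pic(\module_{\eilenbergMacLaneFp})$ has contractible components (so the $\alpha$ part is essentially unique and $G$--equivariance is automatic once connectivity is matched), while the degree-one homotopy part lives in $\pi_{\geq 1}$ of an $\eilenbergMacLaneFp$--module spectrum where $G$--invariance obstructions vanish because $H^{>0}(G; V) $ for a finitely generated $\mathbb{F}_p$--module $V$ are still $\mathbb{F}_p$--modules and a careful comparison of the $\eilenbergMacLaneFp$-- versus $\eilenbergMacLaneCoeff$--statements shows the relevant obstruction actually vanishes.

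\textbf{The hard part.} The main obstacle is the bookkeeping around passing between $\eilenbergMacLaneFp$ and $\eilenbergMacLaneCoeff$ coefficients: a degree-one datum is given over $\mathbb{F}_p$ but the target category is $\udl{\borel}(\module_{\eilenbergMacLaneCoeff})$, and the dualising sheaf $D_{X^e}$ with $\eilenbergMacLaneCoeff$-coefficients need not be detected by $\mathbb{F}_p$-coefficients on the nose. I expect the correct argument is: since $X^e$ is $\eilenbergMacLaneFp$--Poincar\'e and compact, $D_{X^e}^{\eilenbergMacLaneCoeff} := (D_{X^e} \otimes \eilenbergMacLaneCoeff)$ is a pointwise shift of $\eilenbergMacLaneCoeff$ (by an argument as in \cref{example:wall_poincare_complex}, using bounded-belowness and that $\otimes \mathbb{F}_p$ lands in $\Pic$), hence has contractible component space in $\func(X^e, \module_{\eilenbergMacLaneCoeff})$ after fixing the formal dimension; then the $G$--equivariant structure on $\alpha$ is essentially forced, and what remains is the degree-one homotopy, which is a path in a space whose $\pi_0$ is $H_d(Y^e; \mathcal{O}_{Y^e} \otimes \mathbb{Z})$-torsor-like and whose higher homotopy is finite, so that $G$--equivariance of a given path can be promoted using vanishing of Farrell--Tate style obstructions for $p$--groups acting on $p$--torsion spectra. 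I would structure the written proof to first dispose of the $\alpha$--part via contractibility of Picard components, then handle the degree-one path via the homotopy fixed point spectral sequence, flagging that the finiteness/$p$--torsion of the homotopy groups of $\map(\unit, (Y^e)_! D_{Y^e})$ in the relevant range is exactly what a $p$--group allows us to exploit and is the only place the $p$--group hypothesis is used.
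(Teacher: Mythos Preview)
Your overall plan --- reduce via \cref{lem:degree_one_and_borelifications} to producing $G$--equivariant lifts, then use the homotopy fixed point spectral sequence --- is exactly the paper's strategy. But there are two real problems with your write-up.

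\textbf{First, the statement contains a typo that you have taken at face value.} The conclusion should read $\udl{\borel}(\module_{\eilenbergMacLaneFp})$, not $\udl{\borel}(\module_{\eilenbergMacLaneCoeff})$: the paper's own proof works entirely over $\eilenbergMacLaneFp$, and the only use of the lemma (in the proof of \cref{thm:bredonBrowderInjection}) also invokes it for $\eilenbergMacLaneFp$. The $\mathrm{H}\mathbb{Z}$ version is not even well-posed, since $X^e$ is only assumed $\eilenbergMacLaneFp$--Poincar\'e, so there is no $\mathrm{H}\mathbb{Z}$--dualising sheaf to speak of. All of your ``hard part'' --- the attempts to pass between $\mathbb{F}_p$ and $\mathbb{Z}$ coefficients, the $D_{X^e}\otimes\eilenbergMacLaneCoeff$ construction, the bounded-below considerations --- is chasing this typo and should be deleted.

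\textbf{Second, your spectral sequence argument has the wrong inputs.} You claim $\picardSpace(\module_{\eilenbergMacLaneFp})$ has contractible components; it does not: $\picardSpace(\module_{\eilenbergMacLaneFp})\simeq\mathbb{Z}\times B\mathbb{Z}/(p-1)$. The paper's actual argument is that $\pi_t\map(X^e,\picardSpace(\eilenbergMacLaneFp))$ for $t\geq 1$ is built out of copies of $\mathbb{Z}/(p-1)$, and since $G$ is a $p$--group, $H^s(G;\mathbb{Z}/(p-1))=0$ for $s\geq 1$. Hence the edge map $\pi_0(-)^{hG}\to(\pi_0(-))^G$ is an isomorphism, and since both $D_{\udl{X}}$ and $f^*D_{\udl{Y}}$ are $G$--equivariant and agree nonequivariantly, they agree equivariantly. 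This coprimality is precisely where the $p$--group hypothesis enters; your phrase ``$H^{>0}(G;V)$ are still $\mathbb{F}_p$--modules'' does not give vanishing. For the degree-one homotopy, the paper's argument is simpler than yours: $\uniqueMapY^e_!D_{Y^e}\simeq\uniqueMapY^e_*\unit$ is coconnective, so $\pi_s=0$ for $s\geq 1$ and again only $H^0(G;\pi_0)$ contributes to $\pi_0(-)^{hG}$; no $p$--torsion analysis is needed here.
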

\begin{proof}
    First recall from \cref{lem:borelification_fixed_points_poincare} that $\udl{X}$ and $\udl{Y}$ are indeed $\udl{\borel}(\module_{\eilenbergMacLaneFp})$--Poincar\'{e}. So by \cref{lem:degree_one_and_borelifications}, we just need to find $G$--equivariant lifts of the equivalences $D_{X^e}\xrightarrow[\simeq]{\alpha}f^*D_{Y^e}\in\func(X^e,\picardSpace(\eilenbergMacLaneFp))\simeq \map(X^e,\picardSpace(\eilenbergMacLaneFp))$ and $c_Y\simeq \beckChevalley_!^f\circ \alpha\circ c_X\in \uniqueMapY^e_!D_{Y^e}$. That is, we would like to lift these equivalences to ones in $\map(X^e,\picardSpace(\eilenbergMacLaneFp))^{hG}$ and $(Y^e_!D_{Y^e})^{hG}$ respectively. For the first problem, note that $\picardSpace(\eilenbergMacLaneFp)\simeq \bbZ\times B\mathrm{Aut}(\bbF_p)\simeq \bbZ\times B\bbZ/(p-1)$. Thus, by a standard analysis of  the $(-)^{hG}$--spectral sequence \[H^s(G;\pi_t\map(X,\picardSpace(\eilenbergMacLaneFp)))\Rightarrow \pi_{t-s}\map(X,\picardSpace(\eilenbergMacLaneFp))^{hG},\] applying $\pi_0$ yields
    \[\pi_0\map(X,\picardSpace(\eilenbergMacLaneFp))^{hG}\cong (\pi_0\map(X,\picardSpace(\eilenbergMacLaneFp)))^G\longrightarrow\pi_0\map(X,\picardSpace(\eilenbergMacLaneFp))\]
    which in particular is an injection. Thus, since the $G$--equivariant lifts $D_{\udl{X}}$, $f^*D_{\udl{Y}}$ in the source get mapped to $D_{X^e}=f^*D_{Y^e}\in\pi_0\map(X,\picardSpace(\eilenbergMacLaneFp))$, we get that $D_{\udl{X}}=f^*D_{\udl{Y}}$ in the set $ \pi_0\map(X,\picardSpace(\eilenbergMacLaneFp))^{hG}$. That is, the equivalence $\alpha$ lifts to a $G$--equivariant one, as required.

    Next, note by Poincar\'{e} duality that $Y^e_!D_{Y^e}\simeq Y^e_*\unit_{\eilenbergMacLaneFp}$, and so since $\uniqueMapY^e_*$ preserves coconnectivity, we learn that $Y^e_!D_{Y^e}$ is coconnective. Again, by looking at the spectral sequence $H^s(G;\pi_tY^e_!D_{Y^e})\Rightarrow \pi_{t-s}(Y^e_!D_{Y^e})^{hG}$, since no higher cohomologies may contribute to $\pi_0(Y^e_!D_{Y^e})^{hG}$ by coconnectivity, on $\pi_0$ the map $(Y^e_!D_{Y^e})^{hG}\rightarrow Y^e_!D_{Y^e}$ induces the map $(\pi_0Y^e_!D_{Y^e})^G\rightarrow \pi_0Y^e_!D_{Y^e}$, which is an injection. Thus by a similar argument as above, we obtain a $G$--equivariant lift of the equivalence $c_Y\simeq \beckChevalley_!^f\circ \alpha\circ c_X$, as wanted.
\end{proof}

We are now ready to assemble the pieces to prove the theorem.

\begin{proof}[Proof of \cref{thm:bredonBrowderInjection}.]
    Since $\eilenbergMacLaneFp $ was a field, we have the K\"{u}nneth isomorphisms 
    \[\pi_{-*}\big(Y^A_*\zeta\otimes_{\eilenbergMacLaneFp} \eilenbergMacLaneFp\GeneralProperTate{A}\big) \cong H^*(Y^A;\zeta)\otimes_{\mathbb{F}_p}\pi_{-*}(\eilenbergMacLaneFp\GeneralProperTate{A})\]
    \[\pi_{-*}\big(X^A_*f^*\zeta\otimes_{\eilenbergMacLaneFp} \eilenbergMacLaneFp\GeneralProperTate{A}\big) \cong H^*(X^A;f^*\zeta)\otimes_{\mathbb{F}_p}\pi_{-*}(\eilenbergMacLaneFp\GeneralProperTate{A}).\]
    Now consider the commuting square
    \begin{center}
        \begin{tikzcd}
            H^*(Y^{A};\zeta)\otimes_{\mathbb{F}_p}\pi_{-*}(\eilenbergMacLaneFp\properTate) \rar["f^*", tail]& H^*(X^{A};f^*\zeta)\otimes_{\mathbb{F}_p}\pi_{-*}(\eilenbergMacLaneFp\properTate)\\
            H^*(Y^{A};\zeta) \rar["f^*"]\uar[tail]&  H^*(X^{A};f^*\zeta)\uar[tail]
        \end{tikzcd}
    \end{center}
    Here, the vertical arrows are induced by the injection $\mathbb{F}_p= \pi_{-*}(\eilenbergMacLaneFp)\rightarrow \pi_{-*}(\eilenbergMacLaneFp\properTate)$ and so are themselves injections: this is since we are tensoring over a field and so all modules are flat. The top horizontal map is an injection by \cref{prop:proper_tate_basechanged_injection} and the fact that,  by \cref{lem:degree_one_calculation_p-groups}, we have a lift of the given nonequivariant degree one datum to a $\udl{\borel}(\module_{\eilenbergMacLaneFp})$--degree one datum for the map $f\colon \udl{X}\rightarrow \udl{Y}$. Therefore all in all, we see that the bottom map $f^*$ is injective as desired.
\end{proof}

We end this subsection with an application of \cref{thm:bredonBrowderInjection}  where we show \cref{thm:Cp_PD_contractible_underlying_space} that, when $G$ is a $p$--group for an odd prime $p$, equivariant Poincar\'{e} spaces with contractible underlying spaces must already by $G$--contractible. Apart from perhaps being interesting in its own right, this result will also be a crucial ingredient in the inductive proof of the main theorem in the next subsection. We will need several preliminaries on orientations. 

\begin{lem}\label{lem:fixed_point_orientability_from_underlying_orientability}
    Let $\udl{X}$ be a $C_p$--Poincaré duality space, where $p$ is an odd prime. Assume $X^e$ is $\mathbb{Z}$--orientable. Then also $X^{C_p}$ is $\mathbb{Z}$--orientable.
\end{lem}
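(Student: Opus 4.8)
The plan is to show that the top geometric-fixed-point degree $\deg(\Phi^{C_p} D_{\udl{X}})$, which records the dimension of $X^{C_p}$ as a Poincar\'e space via \cref{thm:fixed_points_poincare_duality}, is forced to be compatible with a $\mathbb{Z}$--orientation because of the Burnside-congruence constraint relating it to the underlying degree. Concretely: the dualising sheaf $D_{\udl{X}}\colon \udl{X}\to\udl{\picardSpace}(\myuline{\spectra})$ is a local system of invertible $C_p$--spectra, and an orientation amounts to a trivialisation after linearising along $\sphere_{C_p}\to\eilenbergMacLaneCoeff_{C_p}$ (or rather its fixed-point-wise incarnation). By \cref{thm:fixed_points_poincare_duality}, $X^{C_p}$ is a nonequivariant Poincar\'e space with dualising sheaf $\Phi^{C_p}D_{\udl{X}}$, whose underlying orientation local system is classified by a map $X^{C_p}\to B\mathrm{Aut}_{\module_{\bbZ}}(\Sigma^{-n}\bbZ)\simeq B(\bbZ/2)$; the goal is to show this map is null.

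First I would reduce $\mathbb{Z}$--orientability of a connected Poincar\'e space $Y$ to the vanishing of a single class in $H^1(Y;\bbF_2)$: the first Stiefel--Whitney class $w_1(Y)$, i.e. the composite $Y\xrightarrow{D_Y}\Pic(\spectra)\to \Pic(\module_{\bbF_2})\simeq \bbZ\times B(\bbZ/2)$ projected to $B(\bbZ/2)$. Applying this to $Y=X^{C_p}$, we want $w_1(X^{C_p})=0\in H^1(X^{C_p};\bbF_2)$. The key input is \cref{thm:bredonBrowderInjection} (in fact only Bredon's case $A=C_p$): for a suitable auxiliary comparison map we would get an injection on $\bbF_2$-cohomology of fixed points, but more directly, since $p$ is \emph{odd}, the relevant transfer/character argument kills the $2$-torsion obstruction. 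Precisely: consider the self-map picture — $w_1(X^{C_p})$ is detected by whether the orientation double cover of $X^{C_p}$ is trivial, equivalently whether a certain degree computation on fixed points lands in $2\bbZ+1$ versus respects the $\bbZ$-structure. Here I would invoke \cref{prop:equivariant_degree_fixed_points} / the Burnside congruences from \cref{example:C_p_burnside_congruence}: the equivariant degree datum coming from $\mathrm{id}_{\udl{X}}$ (or from the orientation of $X^e$, pushed forward) gives that $\deg_{\bbZ}(\mathrm{id}^{C_p})\equiv \deg_{\bbZ}(\mathrm{id}^e)\pmod p$, and since $X^e$ is $\bbZ$-orientable the underlying orientation behaviour is trivial; combined with $p$ odd this rules out the nontrivial element of $\mathrm{Aut}(\bbF_p)\cong\bbZ/(p-1)$ contributing $2$-torsion obstructions, forcing $w_1(X^{C_p})=0$.

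More carefully, the cleanest route I would take: linearise the $C_p$--Poincar\'e datum $(D_{\udl{X}},c)$ along the symmetric monoidal functor $\myuline{\spectra}\to\udl{\module}_{\eilenbergMacLaneCoeff_{C_p}}(\myuline{\spectra})$ using \cref{thm:base_change_of_tw_amb_spivak_data}, obtaining an $\eilenbergMacLaneCoeff_{C_p}$-linear Poincar\'e datum. Its geometric $C_p$-fixed points (via \cref{thm:poincare_isotropy} and \cref{cons:pushing_spivak_data_geometric_fix_points}) give a Poincar\'e datum for $X^{C_p}$ with coefficients in $\Phi^{C_p}\eilenbergMacLaneCoeff_{C_p}$. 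Now $\Phi^{C_p}\eilenbergMacLaneCoeff_{C_p}$ is a well-understood ring spectrum whose $\pi_0$ is $\bbF_p$ (for $p$ odd, after $0$-truncation, by \cref{lem:geometric_fixed_points_of_mackey_functors} applied to $\tau_{\le 0}$, and the fact that $\pi_0\Phi^{C_p}\eilenbergMacLaneCoeff_{C_p}\cong\bbF_p$ since the Burnside-to-$\bbZ$ character at $C_p$ reduces mod $p$). The dualising sheaf of $X^{C_p}$ over this ring is pulled back from an invertible $\Phi^{C_p}\eilenbergMacLaneCoeff_{C_p}$-module, hence pointwise a shift of the unit; since $p$ is odd, $\mathrm{Aut}$ of an invertible $\bbF_p$-module of fixed degree is $\bbF_p^\times$ which has \emph{odd} order component detecting nothing in $H^1(-;\bbF_2)$, so the induced class $w_1(X^{C_p})\in H^1(X^{C_p};\bbF_2)$ is automatically zero. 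Thus $X^{C_p}$ is $\bbZ$-orientable.

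\textbf{Main obstacle.} The delicate point is matching the abstract ``dualising sheaf over $\Phi^{C_p}\eilenbergMacLaneCoeff_{C_p}$'' with the classical orientation local system of the Poincar\'e space $X^{C_p}$ and checking that the relevant $\mathrm{Aut}$-group genuinely has order prime to $2$ when $p$ is odd — i.e. that no $2$-torsion orientation obstruction can appear after passing to geometric fixed points. This is where the hypothesis ``$p$ odd'' is used essentially (for $p=2$ the statement is false), so the argument must be structured to make that dependence transparent; I expect this to require a careful identification of $\pi_0\Phi^{C_p}\eilenbergMacLaneCoeff_{C_p}$ together with the observation that the underlying $\bbZ$-orientation of $X^e$ trivialises the only potential source of $2$-torsion, via the Burnside congruence $\deg^{C_p}\equiv\deg^{e}\pmod p$.
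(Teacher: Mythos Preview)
Your proposal contains the right ingredients — the Burnside congruence for $C_p$, the reduction to $w_1(X^{C_p})=0$, and the relevance of $p$ being odd — but the way you assemble them does not go through.

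There are two concrete gaps. First, applying the Burnside congruence to the degree of $\mathrm{id}_{\udl{X}}$ is vacuous: the identity has degree $1$ in every coefficient system, so the congruence $\deg^{C_p}\equiv\deg^e\pmod p$ carries no information. You need to identify a nontrivial element of $A(C_p)$ to which the congruence applies. Second, your ``cleanest route'' via $\Phi^{C_p}\eilenbergMacLaneCoeff_{C_p}$ breaks at the claim that $\bbF_p^\times$ has odd order: for $p$ odd, $|\bbF_p^\times|=p-1$ is even, and in fact $-1\in\bbF_p^\times$ is exactly the image of the orientation-reversing automorphism. So passing to $\bbF_p$-coefficients cannot by itself kill a potential $w_1$.

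The paper's argument supplies the missing object: for each loop $\gamma\colon S^1\to X^{C_p}$, the monodromy of the \emph{equivariant} dualising sheaf $D_{\udl{X}}$ along $\gamma$ is an automorphism of an invertible $C_p$-spectrum, hence an element of $\pi_0\map_{\spectra_{C_p}}(\sphere_{C_p},\sphere_{C_p})\cong A(C_p)$. Under the character map this gives a pair $(a,b)\in\bbZ\times\bbZ$ with $a\equiv b\pmod p$; both coordinates are $\pm 1$ because the monodromy is invertible; for $p$ odd this forces $a=b$; and $b=+1$ because $X^e$ is $\bbZ$-orientable (so the monodromy of $D_{X^e}$ along the image of $\gamma$ is trivial). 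Since $a=\deg\Phi^{C_p}(\mathrm{Mdrmy}_\gamma)$ computes $w_1(X^{C_p})$ on $\gamma$ via the identification $D_{X^{C_p}}\simeq\Phi^{C_p}D_{\udl{X}}$, you are done. The point you were missing is that the Burnside congruence should be applied loop-by-loop to monodromy automorphisms, not to any global degree.
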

\begin{proof}
    We check that a class $w_1(X^{C_p}) \in H^1(X^{C_p}; \bbZ/2) \cong \hom(\pi_1(X^{C_p}), \bbZ/2)$, the first Stiefel-Whitney class of $X^{C_p}$, vanishes. Let $\gamma  \colon S^1 \rightarrow X^{C_p}$ be a loop. The value of $w_1$ at the loop $\gamma$ can be computed as the degree of  $\mathrm{Mdrmy}_{\gamma}^{\udl{X}} \colon D_{X^{C_p}}(\gamma(1)) \rightarrow D_{X^{C_p}}(\gamma(1)) \in \Pic(\spectra)$,  the induced monodromy automorphism map. 

    We also have the automorphism $\mathrm{Mdrmy}_\gamma^{\udl{X}} \colon D_{\udl{X}}(\gamma(1)) \rightarrow D_{\udl{X}}(\gamma(1)) \in \Pic(\spectra_G)$.  Using \cref{thm:fixed_points_poincare_duality}, we see that $\Phi^{C_p}\mathrm{Mdrmy}_{\gamma}^{\udl{X}} \simeq \mathrm{Mdrmy}_\gamma^{X^{C_p}}$. Now we know that
    \[ \deg \Phi^{C_p} \mathrm{Mdrmy}_{\gamma}^{\udl{X}} \equiv \deg \Phi^{e} \mathrm{Mdrmy}_{\gamma}^{\udl{X}} \mod p \]
    by \cref{thm:Burnside_ring_as_pi_0_of_sphere} and \cref{example:C_p_burnside_congruence}. But we also know that both degrees are $\pm 1$, as $\mathrm{Mdrmy}_{\gamma}^{\udl{X}}$ is invertible. Thus, in fact we even have $\deg \Phi^{C_p} \mathrm{Mdrmy}_{\gamma}^{\udl{X}} = \deg \Phi^{e} \mathrm{Mdrmy}_{\gamma}^{\udl{X}}$ since $p$ was odd. But $\deg \Phi^{e} \mathrm{Mdrmy}_{\gamma}^{\udl{X}}$ is the value of the first Stiefel Whithney class of the Poincaré space $X^e$ at the loop $S^1 \xrightarrow{\gamma} X^{C_p} \rightarrow X^e$, which  is $1$ as $X^e$ was assumed to be $\mathbb{Z}$--orientable.
\end{proof}

\begin{prop}[Rigidity of orientability]\label{prop:rigidity_or_orientability}
    Let $p$ be an odd prime and $G$ be a $p$--group. Let $\underline{X}$ be a $G$--Poincar\'{e}  space. 
    Suppose that $X^e \in \spc$ is nonequivariantly $\mathbb{Z}$--orientable. 
    Then for each subgroup $H\leq G$, the Poincar\'{e} space $X^H$ is also nonequivariantly $\mathbb{Z}$--orientable.
\end{prop}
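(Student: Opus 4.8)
The strategy is to reduce the general $p$-group case to the prime-order case handled in \cref{lem:fixed_point_orientability_from_underlying_orientability} by a downward induction along a subnormal series, using the fact that every $p$-group admits a composition series with cyclic quotients of order $p$. First I would fix a closed subgroup $H \leq G$; since $G$ is a $p$-group, so is $H$, and it suffices (after replacing $G$ by $H$ and $\udl{X}$ by $\res^G_H\udl{X}$, which is still $H$--Poincar\'{e} by \cref{prop:restriction_poincare_complex} and still has $\mathbb{Z}$--orientable underlying space) to prove that $X^H$ is $\mathbb{Z}$--orientable when $H = G$ itself. So the real content is: if $G$ is a $p$-group ($p$ odd) and $\udl{X}$ is $G$--Poincar\'{e} with $X^e$ being $\mathbb{Z}$--orientable, then $X^G$ is $\mathbb{Z}$--orientable.

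\textbf{Key steps.} I would argue by induction on $|G|$, the base case $|G| = p$ being exactly \cref{lem:fixed_point_orientability_from_underlying_orientability} (and $|G|=1$ being trivial). For the inductive step, since $G$ is a nontrivial $p$-group it has a central — in particular normal — subgroup $N$ of order $p$; write $Q = G/N$ and $\groupSurjection\colon G\twoheadrightarrow Q$. By \cref{thm:fixed_points_poincare_duality}, the fixed points $\udl{X}^N \in \spc_Q$ is a $Q$--Poincar\'{e} space. Its underlying space is $(\udl{X}^N)^e = X^N$, which is a nonequivariant Poincar\'{e} space; I claim it is $\mathbb{Z}$--orientable. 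Indeed, $X^N = (\res^G_N\udl{X})^N$, and $\res^G_N\udl{X}$ is a $C_p$--Poincar\'{e} space whose underlying space $X^e$ is $\mathbb{Z}$--orientable by hypothesis, so \cref{lem:fixed_point_orientability_from_underlying_orientability} applies directly to give that $X^N$ is $\mathbb{Z}$--orientable. Now the inductive hypothesis applies to the $Q$--Poincar\'{e} space $\udl{X}^N$ (with $|Q| < |G|$ and $(\udl{X}^N)^e = X^N$ orientable), yielding that $(\udl{X}^N)^Q$ is $\mathbb{Z}$--orientable. But $(\udl{X}^N)^Q = X^G$ under the identification of $Q$--fixed points of the $Q$--space $\udl{X}^N$ with $G$--fixed points of $\udl{X}$ (concretely: taking $N$--fixed points then $Q = G/N$--fixed points is the same as taking $G$--fixed points, which is just the iterated-fixed-points formula implicit in \cref{thm:fixed_points_poincare_duality}). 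This closes the induction.

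\textbf{The main obstacle.} The one point that needs care is the clean identification $(\udl{X}^N)^Q \simeq X^G$ together with the compatibility of the \emph{Poincar\'{e}} structures — i.e.\ that the dualising sheaf of $\udl{X}^N$ computed via $\Phi^N$ restricts, under $\Phi^Q$ applied to the $Q$--category, to the dualising sheaf of $\udl{X}$ computed via $\Phi^G$. This is precisely the kind of statement that \cref{thm:fixed_points_poincare_duality} (and its proof via \cref{thm:poincare_isotropy} and the iterated Brauer quotient $\brauerQuotientFamily{\Gamma_N}$) is designed to deliver, since geometric fixed points for $G$ factor as $\Phi^Q \circ \Phi^N$ up to the identification $\widetilde{s}^*\myuline{\spectra}_G \simeq \myuline{\spectra}_Q$ from \cref{lem:geometric_fixed_point_spectrum_weil_group}; so the monodromy-degree computation of \cref{lem:fixed_point_orientability_from_underlying_orientability} is being applied at each stage of the series to genuinely the same local systems. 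I expect the bookkeeping of these fixed-point identifications — rather than any new idea — to be the bulk of the write-up, but it is routine given the machinery already in place. Once that is granted, the induction is immediate, and the general closed-subgroup case follows by the restriction reduction noted above.
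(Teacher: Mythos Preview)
Your proposal is correct and follows essentially the same inductive strategy as the paper's proof: both peel off one copy of $C_p$ at a time and invoke \cref{lem:fixed_point_orientability_from_underlying_orientability} for the prime-order step. The only cosmetic difference is the direction of the peeling: the paper takes a normal $N\lneq H$ with $H/N\cong C_p$ (large normal subgroup, small quotient), applies the inductive hypothesis to $N$ first, and then the lemma at the top; you take a normal $N$ of order $p$ (small normal subgroup, large quotient), apply the lemma first at the bottom, and then the inductive hypothesis to $Q=G/N$. Your ``main obstacle'' is not an obstacle at all: the identification $(\udl{X}^N)^Q \simeq X^G$ is just iterated fixed points of $G$-spaces, and since $\mathbb{Z}$-orientability is a property of the \emph{nonequivariant} Poincar\'e space $X^G$ (whose dualising sheaf is uniquely determined by \cref{prop:twisted_ambidextrous_presentable_case}), no compatibility of equivariant Poincar\'e structures beyond \cref{thm:fixed_points_poincare_duality} is needed.
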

\begin{proof}
    We shall prove this by induction on the order of the subgroup, where the base case $|H|=1$ is given by the hypothesis that $X^e$ is nonequivariantly orientable. 
    Suppose we know the statement for all subgroups of order $p^{k-1}$ and consider $H\leq G$ with $\lvert H \rvert = p^k$. 
    Since $H$ is a $p$--group, we may find a normal subgroup $N\lneq H$ such that $H/N\cong C_p$.
    By \cref{prop:restriction_stability_of_Poincare_duality} and \cref{thm:fixed_points_poincare_duality}, we know that $(\res^G_H\underline{X})^{N}$ is a $H/N\cong C_p$--Poincar\'{e} duality space and by induction, we know that $(X^{N})^e$ is orientable. Thus, by \cref{lem:fixed_point_orientability_from_underlying_orientability}, we see that $X^H\simeq (X^N)^{H/N}\simeq (X^N)^{C_p}$ is also orientable as required.  
\end{proof}

\begin{thm}[Poincar\'{e} rigidity of contractible underlying spaces]\label{thm:Cp_PD_contractible_underlying_space}
    Let $G$ be a solvable group and $\udl{X} \in \spc_{G}^\omega$ a  compact $G$--Poincar\'e space with $X^e \simeq *$.    Then $\udl{X} \simeq \udl{\ast}$.
\end{thm}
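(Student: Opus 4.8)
The plan is to prove the theorem by induction on the order of $G$, reducing via a normal subgroup of prime index to the key case $G = C_p$. The first step is to establish the base case $G = C_p$: here $X^e \simeq \ast$ is certainly $\mathbb{Z}$--orientable, so \cref{prop:rigidity_or_orientability} (when $p$ is odd) or a direct argument (when $p=2$) tells us $X^{C_p}$ is a $\mathbb{Z}$--orientable Poincar\'e space; in fact we will show it is contractible. For this I would apply the twisted Bredon--Browder injection \cref{thm:bredonBrowderInjection} to the map $f\colon \udl{X}\to\udl{\ast}$ of compact $C_p$--spaces. Since $X^e\simeq\ast$, the underlying map $f^e\colon X^e\to \ast$ is an equivalence, hence trivially an $\eilenbergMacLaneFp$--Poincar\'e degree one map between the $\eilenbergMacLaneFp$--Poincar\'e spaces $X^e$ and $\ast$. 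The theorem then provides, for every $\eilenbergMacLaneFp$--local system $\zeta$ on $\ast^{C_p}=\ast$, an injection $H^*(\ast;\zeta)\hookrightarrow H^*(X^{C_p};f^*\zeta)$. Taking $\zeta$ to be the unit (i.e. constant $\eilenbergMacLaneFp$), this forces the unit map $\eilenbergMacLaneFp \to C^*(X^{C_p};\eilenbergMacLaneFp)$ to be $\pi_*$--injective, so $H^0(X^{C_p};\mathbb{F}_p)$ is at least one-dimensional and the map $H^0(\ast)\to H^0(X^{C_p})$ is injective. Since $X^{C_p}$ is a \emph{connected} Poincar\'e space (we must check connectivity — the fixed points of a compact $C_p$--space with contractible underlying space are nonempty by Smith theory, and a nonempty Poincar\'e space of formal dimension $\geq 0$ is connected once we know it has the right $\mathbb{F}_p$--cohomology), combined with the cohomological injectivity this pins down $H^*(X^{C_p};\mathbb{F}_p)$ to be concentrated in degree $0$, i.e. $X^{C_p}$ has the $\mathbb{F}_p$--cohomology of a point.

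Now I would upgrade ``$\mathbb{F}_p$--cohomology of a point'' to ``contractible''. The point $X^{C_p}$ is a Poincar\'e space by \cref{thm:fixed_points_poincare_duality}, and having the $\mathbb{F}_p$--homology of a point means its formal dimension $d$ satisfies $H^0(X^{C_p};\mathbb{F}_p)\cong H^d(X^{C_p};\mathbb{F}_p)\cong\mathbb{F}_p$ with everything else vanishing; by \cref{fact:PD_of_low_dimensions} (2) this forces $d=0$, and then by \cref{fact:PD_of_low_dimensions} (1) (Wall's classification in dimension $0$) a connected compact Poincar\'e space of formal dimension $0$ is $\simeq \ast$. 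So $X^{C_p}\simeq\ast$. Combined with $X^e\simeq\ast$, the cofibre sequence $\underline{X}_{\widetilde{\{1\}}}\to\underline{X}\to\cdots$, or more concretely the fact that a $C_p$--space whose underlying and $C_p$--fixed spaces are both contractible is itself $C_p$--contractible (the Elmendorf diagram is levelwise contractible), gives $\underline{X}\simeq\underline{\ast}$. This completes the case $G=C_p$.

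For the inductive step, let $G$ be solvable of order $>p$ and choose a normal subgroup $N\trianglelefteq G$ with $G/N$ cyclic of prime order $q$ (possible since $G$ is solvable — its commutator subgroup is proper, so there is a normal subgroup of prime index). By \cref{prop:restriction_stability_of_Poincare_duality}, $\res^G_N\underline{X}$ is an $N$--Poincar\'e space, and its underlying space is still $\simeq\ast$; since $N$ is solvable of smaller order, the inductive hypothesis gives $\res^G_N\underline{X}\simeq\underline{\ast}\in\spc_N$. In particular $X^H\simeq\ast$ for every $H\leq N$. It remains to handle subgroups $H\leq G$ not contained in $N$. For such $H$, consider the Weil-type quotient: by \cref{thm:fixed_points_poincare_duality}, $\underline{X}^N\in\spc_{G/N}$ is a $G/N\cong C_q$--Poincar\'e space, and its underlying space is $X^N\simeq\ast$. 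Applying the base case $G=C_q$ to $\underline{X}^N$ gives $\underline{X}^N\simeq\underline{\ast}\in\spc_{C_q}$, so in particular $(X^N)^{G/N}=X^G\simeq\ast$. More generally, for any $H$ with $N\leq H$, the space $X^H=(X^N)^{H/N}$ is a fixed point of the contractible $C_q$--space $\underline{X}^N$, hence contractible. Since every subgroup of $G$ either lies in $N$ or contains some conjugate structure controlled by $N$ and $G/N$ — more precisely, writing $K = H\cap N$ and using \cref{prop:restriction_stability_of_Poincare_duality} to pass to $\res^G_{HN}\underline{X}$ and then the quotient by $N$ — one reduces the computation of $X^H$ to the two cases already treated, concluding $X^H\simeq\ast$ for all $H\leq G$. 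By Elmendorf's theorem, $\underline{X}\simeq\underline{\ast}$.

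The main obstacle I anticipate is the bookkeeping in the inductive step for subgroups $H$ with $H\cap N$ proper in both $H$ and $N$: one needs to feed $\res^G_{HN}\underline{X}$ into the machinery, take the genuine $N$--fixed points to land in a $C_q$--category ($HN/N\cong C_q$ since $[HN:N]$ divides $|G/N|=q$), and verify that this fixed-point $C_q$--space still has contractible underlying space (namely $X^{HN\cap N} = X^{(H\cap N)\cdot\text{?}}$, which requires knowing $X^{H\cap N}\simeq\ast$ — this holds by the first part of the inductive step since $H\cap N\leq N$). Carefully chaining \cref{prop:restriction_stability_of_Poincare_duality}, \cref{thm:fixed_points_poincare_duality}, and the $C_p$/$C_q$ base case through this lattice argument is where the real care is needed; the topology (Wall's classification in dimension $0$, Smith theory for nonemptiness of fixed points, connectivity of Poincar\'e spaces) is standard once invoked. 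A secondary subtlety is the prime $p=2$ in the base case, where \cref{prop:rigidity_or_orientability} is stated only for odd $p$; but for the base case we only need $X^e\simeq\ast$, which is visibly $\mathbb{Z}$--orientable, so we do not actually invoke orientability rigidity there — the Bredon--Browder injection and Wall's classification suffice uniformly in $p$.
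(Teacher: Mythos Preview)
There is a genuine gap in your base case: you have applied \cref{thm:bredonBrowderInjection} in the wrong direction. For a map $f\colon \udl{X}\to\udl{Y}$, the theorem gives an injection $H^*(Y^{A};\zeta)\hookrightarrow H^*(X^{A};f^*\zeta)$. With your choice $f\colon \udl{X}\to\udl{\ast}$ you obtain only $H^*(\ast;\mathbb{F}_p)\hookrightarrow H^*(X^{C_p};\mathbb{F}_p)$, which merely says $H^0(X^{C_p};\mathbb{F}_p)\neq 0$ --- information you already had from nonemptiness. Your subsequent claim that ``this pins down $H^*(X^{C_p};\mathbb{F}_p)$ to be concentrated in degree $0$'' does not follow: an injection from $\mathbb{F}_p$ places no upper bound on the target.

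The paper instead first argues $X^{C_p}\neq\varnothing$ (otherwise $\udl{X}$ would be a compact model for $\udl{EC_p}$, which is impossible), then picks a map $f\colon \udl{\ast}\to\udl{X}$ and applies Bredon--Browder to \emph{this} map. Now $\udl{Y}=\udl{X}$ and one gets the useful injection $H^*(X^{C_p};\mathbb{F}_p)\hookrightarrow H^*(\ast;\mathbb{F}_p)$, forcing $H^{>0}(X^{C_p};\mathbb{F}_p)=0$ and connectedness. To conclude that the formal dimension is zero, one then needs $X^{C_p}$ to be $\mathbb{F}_p$--orientable so that the top cohomology is nonzero; this is automatic for $p=2$ and follows from \cref{prop:rigidity_or_orientability} for odd $p$. (Alternatively, since the twisted form of Bredon--Browder is available, one can feed in the orientation local system directly and bypass the orientability discussion.) Your inductive step, while more elaborate than necessary, is essentially sound --- the paper handles proper subgroups $H\lneq G$ simply by applying the full induction hypothesis to $\res^G_H\udl{X}$, so only $X^G$ requires the reduction to $C_p$.
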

\begin{proof}
    We prove this reducing to the case of $G=C_p$ using the solvability assumption. To wit, let us suppose we know the statement to be true for all solvable groups with size smaller than $|G|$. Choose a normal subgroup $N$ of $G$ such that $G/N=C_p$. By \cref{prop:restriction_stability_of_Poincare_duality}, we know that $\res^G_N\udl{X}$ is $N$--Poincar\'{e} with $(\res^G_N\udl{X})^e\simeq X^e\simeq\ast$, and so by induction, $\res^G_N\udl{X}\simeq \terminalTCat$. In particular, $X^N\simeq \ast$. Therefore, by \cref{thm:fixed_points_poincare_duality}, we have that $\udl{X}^N$ is a $G/N=C_p$--Poincar\'{e} space with $(\udl{X}^N)^e\simeq X^N\simeq\ast$. Thus, we are left to prove that for a $C_p$--Poincar\'e space $\udl{X}$, $X^e \simeq *$ implies $X^{C^p} \simeq *$.
    
    Observe that $X^{C_p} \neq \varnothing$ as $\udl{EC}_p$ is not compact.  Now pick a map $f \colon \udl{*} \to \udl{X}$.
    It is an equivalence on undelying spaces with $C_p$-action.
    By \cref{thm:bredonBrowderInjection}, $f$ induces an injection
    \begin{equation}\label{eq:Bredon_Browder_single_fixed_point_p-groups}
        f^* \colon H^*(X^{C_p}; \bbF_p) \rightarrowtail H^*(*; \bbF_p).
    \end{equation}
    In degree 0, this shows that $X^{C_p}$ is connected. 
    Furthermore, again by \cref{thm:fixed_points_poincare_duality}, $X^{C_p}$ is a Poincar\'{e} space. To conclude, by the classification of zero-dimensional Poincar\'e spaces (\cref{fact:PD_of_low_dimensions}) it suffices to show that the formal dimension of $X^{C_p}$ is zero.
    Note that $X^{C_p}$ is $\bbF_p$-orientable.
    In the case $p = 2$ this is clear while in the case $p \neq 2$ this follows from \cref{prop:rigidity_or_orientability}.
    Now, injectivity of \cref{eq:Bredon_Browder_single_fixed_point_p-groups} implies that the formal dimension of $X^{C_p}$ is zero, as zero is the highest degree in which $H^*(X^{C_p};\bbF_p)$ does not vanish.
\end{proof}

\begin{rmk}
    By Feit--Thompson's celebrated result, all finite groups of odd order are solvable. Hence, the Poincar\'{e} rigidity result above holds unconditionally for all odd finite groups.
\end{rmk}

\begin{cor}\label{cor:pointwise_PD_which_is_not_equivariant_PD}
    Let $p$ be an odd prime.
    There exists a compact $C_p$-space $\udl{X}$ with
    \begin{enumerate}
        \item the underlying space $X^e$ is contractible and
        \item the fixed point space $X^{C_p}$ is Poincar\'e and
        \item the $C_p$-space $\udl{X}$ is not $C_p$--Poincar\'e.
    \end{enumerate}
\end{cor}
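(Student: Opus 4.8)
The plan is to produce $\udl{X}$ as a compact $C_p$--space whose underlying space $X^e$ is contractible and whose fixed point space $X^{C_p}$ is a nonequivariant Poincar\'{e} space of \emph{positive} formal dimension; by \cref{thm:Cp_PD_contractible_underlying_space}, such an $\udl{X}$ cannot be $C_p$--Poincar\'{e}, since a $C_p$--Poincar\'{e} space with contractible underlying space is $C_p$--contractible and in particular has contractible (hence $0$--dimensional) fixed points. Thus the entire task reduces to exhibiting a single compact $C_p$--space $\udl{X}$ with $X^e \simeq \ast$ and $X^{C_p}$ a Poincar\'{e} space that is not contractible.

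First I would invoke Jones' theorem \cite{Jones} on the converse to Smith theory, which provides, for any finite simply connected CW complex $Y$ that is an $\bbF_p$--homology point (in particular for $Y \simeq \ast$, but more usefully for a suitable nontrivial $Y$), a finite $C_p$--CW complex with prescribed fixed points. More precisely, the relevant statement is: given a finite $CW$ complex $F$ and a finite $\bbF_p$--acyclic $CW$ complex in which $F$ embeds appropriately, there is a finite $C_p$--CW complex $\mathcal{X}$ with $\mathcal{X}^e \simeq_{\bbF_p} \ast$ and $\mathcal{X}^{C_p} \simeq F$. I would take $F = S^d$ for some $d > 0$ (or any nontrivial closed manifold / Poincar\'{e} space), producing a finite $C_p$--CW complex $\mathcal{X}$ with $\mathcal{X}^{C_p} \simeq S^d$ and $\mathcal{X}^e$ an $\bbF_p$--acyclic finite complex. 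The subtlety is that Jones' construction only guarantees $\bbF_p$--acyclicity of the underlying space, not contractibility; to get an honest contractible underlying space one either works $p$--locally throughout (replacing ``contractible'' with ``$\bbF_p$--acyclic'' in the statement and noting \cref{thm:Cp_PD_contractible_underlying_space} can be run with $\bbF_p$ coefficients as in its proof, which only uses $\bbF_p$--cohomology via \cref{thm:bredonBrowderInjection} and the classification of $0$--dimensional Poincar\'{e} spaces), or one appeals to a refined version of the converse to Smith theory guaranteeing an actually contractible underlying space (available when $F$ is suitably connected and $\pi_1$--restricted). The cleanest route for the paper is probably to use the $p$--local formulation, since that is exactly what the proof of \cref{thm:Cp_PD_contractible_underlying_space} actually needs --- note that result's proof only uses $X^e \simeq \ast$ through the map $\udl{\ast} \to \udl{X}$ being an underlying $\bbF_p$--equivalence.

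The key steps, in order, would be: (1) fix an odd prime $p$ and choose $F = S^d$ with $d > 0$; (2) apply Jones' converse to Smith theory to obtain a finite (hence compact) $C_p$--CW complex $\udl{X}$ with $X^{C_p} \simeq S^d$ and $X^e$ an $\bbF_p$--acyclic (ideally contractible) finite complex, establishing properties (1) and (2) of the statement --- property (2) since $S^d$ is a closed smooth manifold, hence a Poincar\'{e} space by the nonequivariant shadow of \cref{thm:closed_smooth_G_manifolds_are_poincare}; (3) suppose for contradiction that $\udl{X}$ is $C_p$--Poincar\'{e}; (4) apply \cref{thm:Cp_PD_contractible_underlying_space} (in its $\bbF_p$--local form, or verbatim if $X^e$ is genuinely contractible) to conclude $\udl{X} \simeq \udl{\ast}$, whence $X^{C_p} \simeq \ast$; (5) derive a contradiction with $X^{C_p} \simeq S^d$ and $d > 0$, since $S^d$ is not $\bbF_p$--acyclic. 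This establishes (3).

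The main obstacle is step (2): ensuring that the converse-to-Smith-theory input yields a space with the right level of control on the underlying homotopy type. If one only has $\bbF_p$--acyclicity of $X^e$, one must verify that \cref{thm:Cp_PD_contractible_underlying_space} still applies --- and indeed it does, because its proof deduces $X^{C_p} \simeq \ast$ purely from $\bbF_p$--cohomological injectivity (\cref{thm:bredonBrowderInjection}, which only requires $X^e, Y^e$ to be $\eilenbergMacLaneFp$--Poincar\'{e} and $f^e$ of degree one, both satisfied when $X^e \simeq_{\bbF_p} \ast$ and $Y = \ast$) together with the $\bbF_p$--orientability input from \cref{prop:rigidity_or_orientability} and the classification of formal-dimension-zero Poincar\'{e} spaces. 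So the honest statement to prove and cite is the $\bbF_p$--local one, and the ``strangeness'' is exactly that $X^{C_p}$ can be a high-dimensional sphere while $X^e$ is $\bbF_p$--trivial --- a direct manifestation of the failure of the naive converse to \cref{alphThm:fixed_points_PD}, as promised in the warning after \cref{alphThm:fixed_points_PD} and in the introduction.
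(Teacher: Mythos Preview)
Your overall strategy is right---use Jones' converse to Smith theory to build $\udl{X}$ with $X^e\simeq\ast$ and a prescribed noncontractible Poincar\'e fixed-point space, then invoke \cref{thm:Cp_PD_contractible_underlying_space}---but the specific choice $F=S^d$ cannot work, and the reason is exactly Smith theory. If $X^e$ is $\bbF_p$-acyclic (in particular contractible), then Smith theory forces $X^{C_p}$ to be $\bbF_p$-acyclic as well; since $S^d$ is not $\bbF_p$-acyclic for $d>0$, no compact $C_p$-space with $X^e\simeq\ast$ and $X^{C_p}\simeq S^d$ exists. You have also inverted the roles in Jones' theorem: the input hypothesis is that the prospective \emph{fixed-point} complex $K$ be $\bbF_p$-acyclic, and the output is a finite $C_p$-CW complex with genuinely contractible underlying space and $X^{C_p}\simeq K$. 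So there is no ``subtlety'' about $X^e$ being merely $\bbF_p$-acyclic, and your proposed $\bbF_p$-local salvage does not address the real obstruction.

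The paper's proof resolves this by choosing $K$ to be simultaneously $\bbF_p$-acyclic (so Jones applies), a Poincar\'e space, and not contractible. Since $p$ is odd, $K=\mathbb{R}\mathrm{P}^d$ for even $d>0$ does the job: it is a closed manifold (hence Poincar\'e), its integral homology is $2$-torsion above degree $0$ and vanishes in top degree, so it is $\bbF_p$-acyclic, yet it is not contractible. Jones then yields $\udl{X}$ with $X^e\simeq\ast$ and $X^{C_p}\simeq K$, and \cref{thm:Cp_PD_contractible_underlying_space} applies verbatim to rule out $C_p$-Poincar\'e duality. The only change you need to make is to replace $S^d$ by such a $K$.
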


\begin{proof}
    Pick a noncontractible $\bbF_p$-acyclic Poincar\'e space $K$ that is homotopy equivalent to a finite CW complex, for example $\mathbb{R}\mathrm{P}^d$ for $d>0$ an even number.
    By \cite[Thm. 1.1]{Jones}, we may pick a finite $C_p$--CW complex $\udl{X}$ with $X^e \simeq *$ and $X^{C_p} \simeq K$. By \cref{thm:Cp_PD_contractible_underlying_space}, we see that $\udl{X}$ can not be $C_p$--Poincar\'e, as then we would have $K \simeq *$.
\end{proof}

\subsection{The theorem of single fixed points}\label{subsection:theorem:single_fixed_points}
Throughout this subsection, we will fix an odd prime $p$. 

\vspace{1mm}

In \cite{ConnerFloyd64}, Conner-Floyd conjectured that a  smooth action by a cyclic group of odd prime power on a smooth, closed, orientable, positive--dimensional manifold cannot have exactly one fixed point. The first proof of this statement (in fact, a slightly more general version) was given by Atiyah-Bott in \cite{AtiyahBottLefschetzII} and soon after by \cite{ConnerFloydMapsOfOddPeriod} themselves. Many variations have been proven since then, and we mention \cite{lueckDegree, AssadiBarlowKnop92} as further examples. Atiyah-Bott's argument uses Atiyah--Singer's index theory, whereas Conner--Floyd's proof used a particular  bordism spectrum. In either case, and also in \cite{lueckDegree}, local structures of smooth manifolds were used in essential ways. We exemplify such local arguments with the following corollary of \cref{thm:bredonBrowderInjection} which answers the  Conner--Floyd question for elementary abelian $p$--groups. As will be clear from the proof, the result holds  more generally for locally smooth manifolds.

\begin{cor}[Conner--Floyd for elementary abelian groups]\label{cor:conner-floyd_for_elementary_abelians}
    Let $A$ be an elementary abelian $p$--group, and $M$ a closed, orientable, positive--dimensional, smooth $A$--manifold. Then $M^A\neq \ast$.  
\end{cor}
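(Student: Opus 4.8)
The strategy is to deduce this from \cref{thm:bredonBrowderInjection} by a suitable choice of map and coefficient system, following the Bredon--Browder philosophy of pulling back fixed points. Suppose for contradiction that $M^A = \ast$ is a single point. Since $M$ is a closed orientable smooth $A$--manifold, by \cref{thm:closed_smooth_G_manifolds_are_poincare} its underlying $A$--space $\udl{M}$ is $A$--Poincar\'e, and in particular $M^e$ is a nonequivariant Poincar\'e space; its $\mathbb{Z}$--orientability gives in particular $\eilenbergMacLaneFp$--orientability, so $M^e$ is an $\module_{\eilenbergMacLaneFp}$--Poincar\'e space of formal dimension $d = \dim M > 0$. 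Now consider the map $f \colon \udl{M} \to \udl{\ast}$ (or, more usefully, a map into a model of the point that remembers the $A$--action) and apply \cref{thm:bredonBrowderInjection}. The key is to arrange a map $f \colon \udl{X} \to \udl{Y}$ of compact $A$--spaces with $f^e$ of $\module_{\eilenbergMacLaneFp}$--degree one, so that $f^A \colon X^A \to Y^A$ induces a cohomological injection, and then obtain a contradiction from the mismatch between $X^A = M^A = \ast$ and the cohomology of $Y^A$.

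Concretely, I would take $\udl{X} = \udl{M}$ and $\udl{Y}$ to be a point with nontrivial $A$--fixed points --- for instance, take $\udl{Y}$ to be an $\mathbb{F}_p$--acyclic $A$--Poincar\'e space whose underlying space is contractible but whose $A$--fixed point space is a positive--dimensional Poincar\'e space (such exist by the theorem of Jones, c.f. the proof of \cref{cor:pointwise_PD_which_is_not_equivariant_PD}); alternatively, and more cleanly, run the argument with $\udl{X}$ the point and $\udl{Y} = \udl{M}$, using that the inclusion $\udl{\ast} \hookrightarrow \udl{M}$ of the fixed point is $\module_{\eilenbergMacLaneFp}$--degree one on underlying spaces (since $M^e$ is $\eilenbergMacLaneFp$--acyclic if... ). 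The cleanest route is the latter: pick the unique $A$--map $c \colon \udl{\ast} \to \udl{M}$ landing on $M^A$. On underlying spaces this is a map $\ast \to M^e$; for it to be $\module_{\eilenbergMacLaneFp}$--degree one we would need $M^e$ to be $\eilenbergMacLaneFp$--acyclic, which is not automatic. So instead I would follow the proof of \cref{thm:Cp_PD_contractible_underlying_space} as a template but in reverse: the point is that \cref{thm:bredonBrowderInjection} applied to $c \colon \udl{\ast} \to \udl{M}$ would yield an injection $H^*(M^A; \bbF_p) \hookrightarrow H^*(\ast; \bbF_p) = \bbF_p$, forcing $M^A$ to have the $\bbF_p$--cohomology of a point in all degrees; but by \cref{thm:fixed_points_poincare_duality} $M^A$ is a nonequivariant Poincar\'e space, and by \cref{prop:rigidity_or_orientability} (applied to the $p$--group $A$, using that $M^e$ is $\mathbb{Z}$--orientable) it is $\bbF_p$--orientable, hence having trivial reduced $\bbF_p$--cohomology forces formal dimension zero, whence $M^A \simeq \ast$ by \cref{fact:PD_of_low_dimensions}; combined with $M^A \neq \varnothing$ (which holds since $M^A$ is nonempty --- e.g. by Smith theory, or by the hypothesis, this is not a contradiction per se). The contradiction must come from comparing with $\dim M > 0$ together with the possibility that $M^A$ has positive dimension; so the argument really needs the degree one hypothesis to fail unless $M^A$ is already large.

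Reassessing: the correct formulation is to observe that for $\udl{X} = \udl{M}$ itself, the \emph{identity} map is trivially degree one, so \cref{thm:bredonBrowderInjection} gives nothing; the content must instead be extracted by choosing $\udl{Y}$ with $Y^e$ Poincar\'e but $Y^A$ \emph{large}, and $f\colon \udl{M} \to \udl{Y}$ degree one on underlying spaces, to force $H^*(Y^A;\bbF_p) \hookrightarrow H^*(M^A; \bbF_p) = H^*(\ast;\bbF_p) = \bbF_p$, contradicting positive--dimensionality of $Y^A$. The realisation of such a $\udl{Y}$: take $\udl{Y}$ to be a generalised homotopy representation or the unreduced suspension $S^V$ of an $A$--representation $V$ with $V^A \neq 0$ and $\dim V = d$, so that $Y^e = S^d$ and $Y^A = S^{\dim V^A}$ with $\dim V^A > 0$; a degree one map $M^e = $ (a $d$--dimensional orientable closed manifold) $\to S^d$ always exists (collapse the complement of a disc). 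The main obstacle is checking that this collapse map can be made $A$--equivariant with the required degree one datum at the underlying level --- this uses that $M$ is locally smooth, so near the fixed point $M^A = \ast$ the $A$--action looks like a linear representation, and collapsing a slice $A$--disc around it gives precisely such an equivariant map to $S^V$; this is the one step where the smooth (or locally smooth) structure enters essentially. Then \cref{thm:bredonBrowderInjection} immediately yields $H^*(S^{V^A};\bbF_p) \hookrightarrow H^*(\ast;\bbF_p)$, which is absurd once $\dim V^A \geq 1$, i.e. once $V$ has a trivial summand --- which we may arrange by choosing $V$ to be the tangent representation at the fixed point (whose $A$--fixed part is the tangent space to $M^A = \ast$, hence $0$)... so in fact the tangent representation has $V^A = 0$ and we instead argue that $M \setminus M^A$ must then be large; the genuine contradiction is organised exactly as in \cref{thm:Cp_PD_contractible_underlying_space}: the local slice representation $V$ at the isolated fixed point is $\bbF_p$--orientable, one applies \cref{thm:bredonBrowderInjection} to the equivariant collapse $\udl{M} \to S^V$, and concludes that $M^e \to S^{\dim V}$ being degree one forces a cohomological injection on $A$--fixed points, contradicting $\dim M > 0$. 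I expect the local equivariant collapse construction and the bookkeeping of which coefficient system to use to be the main obstacle; the rest is a direct invocation of the cited results.
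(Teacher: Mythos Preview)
Your final approach --- equivariant Pontryagin--Thom collapse $\udl{M} \to \udl{S^V}$ onto the one--point compactification of the tangent representation $V = T_x M$ at the isolated fixed point $x$, then apply \cref{thm:bredonBrowderInjection} --- is exactly what the paper does. The collapse is nonequivariantly degree one because $M$ is orientable and positive--dimensional, so the theorem yields an injection $H^*((S^V)^A;\bbF_p) \hookrightarrow H^*(M^A;\bbF_p) = H^*(\ast;\bbF_p)$.

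The gap is in your endgame. You correctly compute $V^A = T_x(M^A) = 0$, but then treat this as an obstruction (``which is absurd once $\dim V^A \geq 1$ \ldots\ so in fact the tangent representation has $V^A = 0$'') and never recover the contradiction. The point you are missing is that $(S^V)^A = S^{V^A} = S^0 = \ast \sqcup \ast$, so the injection reads
\[
H^0(\ast \sqcup \ast;\bbF_p) \cong \bbF_p^2 \hookrightarrow H^0(\ast;\bbF_p) \cong \bbF_p,
\]
which is already impossible. You do not need $\dim V^A \geq 1$; two points are enough. The phrase ``contradicting $\dim M > 0$'' at the end is not the mechanism --- positive--dimensionality enters only to guarantee the collapse is genuinely degree one (if $\dim M = 0$ the collapse $\ast \to S^0$ is not degree one), not to produce the final numerical contradiction.

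So: prune the false starts, keep the collapse construction, and replace the last sentence with the observation that $(S^V)^A$ has two points.
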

\begin{proof}
    Suppose $M^A=\ast$. Writing $x\in M$ for this single fixed point, we may thus find an $A$--representation $V$ equipped with a codimension zero equivariant embedding $V \subseteq M$ which sends $0 \in V$ to $x \in M$. Consider the collapse map
    $c \colon M \longrightarrow M/(M \setminus Y) \simeq S^V$. It is a map of $A$--Poincar\'e spaces with $D_{M^{e}} \otimes \eilenbergMacLaneCoeff \simeq c^* D_{(S^V)^{e}} \otimes \eilenbergMacLaneCoeff$ as both are orientable. Thus by \cref{thm:bredonBrowderInjection}, we have an injection $H^*((S^{V})^{A};\bbF_p) \rightarrow H^*(M^{A};\bbF_p)$.
    But note that $\nolinebreak{H^*((S^{V})^{A};\bbF_p) \simeq H^*(* \amalg *;\bbF_p)}$ while $H^*(M^{A};\bbF_p) \simeq H^*(*;\bbF_p)$. This is a contradiction.
\end{proof}

In this subsection, we will employ the theory of fundamental classes developed in this article to give a fully homotopical and global proof of the following generalisation of Atiyah--Bott and Conner--Floyd's theorem for $C_{p^k}$--Poincar\'{e}  spaces.
Philosophically, this says that the equivariant fundamental class packages enough structures so as to be able to provide a global obstruction to some naturally interesting geometric questions.

\begin{thm}[Generalised Atiyah--Bott--Conner--Floyd]\label{thm:generalised_atiyah_bott}
    Let $p$ be an odd prime, $G = C_{p^k}$ for some $k$, and suppose $\underline{X}\in\spc_G^{\omega}$ is $G$--Poincare  such that the underlying space $X^e\in\spc^{\omega}$ is connected, $\mathbb{Z}$--orientable, and has formal dimension $d>0$.   Then $X^G\not\simeq \ast$.
\end{thm}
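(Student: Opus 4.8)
The plan is to argue by induction on $k$, reducing the statement for $G = C_{p^k}$ to a statement about the $C_p$--fixed points combined with the gluing class machinery of \cref{subsection:gluing_classes}.

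\textbf{Setting up the induction.} For the base case $k=1$, suppose $X^{C_p} \simeq \ast$. Pick the essentially unique map $f \colon \udl{\ast} \to \udl{X}$; on underlying spaces it need not be an equivalence, but we know $X^e$ is connected and $\mathbb{Z}$--orientable of formal dimension $d > 0$. The strategy here is to extract an obstruction from the gluing class of \cref{cons:generalised_gluing_class}. Since $X^{C_p} \simeq \ast$, the singular part $\udl{X}^{>1} = \udl{X}\singularPartTwiddle{\trivialFamily}$ is just the fixed point space $X^{C_p} \simeq \ast$ viewed as a $C_p$-space with trivial action, so $\epsilon \colon \udl{X}^{>1} \to \udl{X}$ is the inclusion of the (single) fixed point. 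By \cref{prop:rigidity_or_orientability}, $X^{C_p}$ is $\mathbb{Z}$-orientable (here we use $p$ odd), so $\Phi^{C_p} D_{\udl{X}} \simeq \Sigma^{-m}\sphere$ for some $m \geq 0$ which by \cref{terminology:Poincare_dimension} and \cref{fact:PD_of_low_dimensions} equals the formal dimension of the Poincar\'e space $X^{C_p}$, which is $\geq 0$. Since $D_{\udl{X}}$ restricts to $\Sigma^{-d}\sphere$ on $X^e$, the dualising sheaf is pulled back from a shift of the sphere $G$-spectrum only if $m$ is compatible; the key point is that $\epsilon^* D_{\udl{X}}$ over the point $\udl{X}^{>1} \simeq \udl{\ast}$ is a single invertible $C_p$-spectrum $W$, with $\Phi^e W = \Sigma^{-d}\sphere$ and $\Phi^{C_p} W = \Sigma^{-m}\sphere$.

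\textbf{Using the gluing class.} The gluing class $\sphere \to \Sigma(\uniqueMapX^{>1}_!\epsilon^* D_{\udl{X}})_{hG} \simeq \Sigma(W)_{hC_p}$ is constructed in \cref{cons:generalised_gluing_class}. By \cref{cor:vanishing_of_pushforward_gluing_class} applied with the $G$-spectrum $W$ (noting $\epsilon^* D_{\udl{X}} \simeq (\uniqueMapX^{>1})^* W$ since $\udl{X}^{>1}$ is a point), the composite with $\beckChevalley^{\uniqueMapX^{>1}}_! \colon \Sigma(W)_{hC_p} \to \Sigma W_{hC_p}$ — which is an equivalence here since $\uniqueMapX^{>1}$ is the identity on the point — is nullhomotopic. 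But I would show that the gluing class itself, unwound via the fundamental class $c \colon \sphere_G \to \uniqueMapX_! D_{\udl{X}}$ and the orientability, corresponds on $\pi_0$ or $\pi_{-1}$ to a nonzero element determined by the Euler characteristic / degree data of the top cell, giving a contradiction. More precisely, the cofibre sequence $\uniqueMapX^{>1}_!\epsilon^* D_{\udl{X}} \to \uniqueMapX_! D_{\udl{X}} \to Q$ and the identification $Q^{tG} \simeq 0$ from \cref{cons:red_blue_routes} force the image of the fundamental class in $(\uniqueMapX_! D_{\udl{X}})^{tG}$ to come from $(W)^{tG}$; computing this Tate cohomology for $C_p$ acting trivially on a shifted sphere, and comparing the nonequivariant degree $d > 0$ of $X^e$ with the fixed-point dimension via the Burnside congruence $\deg\Phi^{C_p} \equiv \deg\Phi^e \bmod p$ (as in \cref{lem:fixed_point_orientability_from_underlying_orientability}), produces the obstruction: the top homology class of $X^e$ cannot be hit compatibly with a single fixed point when $d > 0$.

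\textbf{The inductive step.} For $G = C_{p^k}$ with $k \geq 2$, let $N \cong C_{p^{k-1}}$ be the index-$p$ subgroup. Suppose $X^G \simeq \ast$. By \cref{prop:restriction_stability_of_Poincare_duality}, $\res^G_N \udl{X}$ is $N$-Poincar\'e with underlying space still $X^e$ (connected, $\mathbb{Z}$-orientable, dimension $d > 0$), so by the inductive hypothesis $(\res^G_N\udl{X})^N = X^N \neq \ast$. On the other hand, by \cref{thm:fixed_points_poincare_duality}, $\udl{X}^N \in \spc_{G/N}$ is a $G/N \cong C_p$-Poincar\'e space with $(\udl{X}^N)^{G/N} = X^G \simeq \ast$. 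Now I would want to apply the base case to $\udl{X}^N$ to conclude $X^N \simeq \ast$ unless its underlying space $(\udl{X}^N)^e = X^N$ fails to be connected / orientable / positive-dimensional. If $X^N$ is connected, $\mathbb{Z}$-orientable (by \cref{prop:rigidity_or_orientability}), and of positive formal dimension, the $k=1$ case gives $X^N \simeq \ast$, contradicting the inductive hypothesis. The remaining case is that $X^N$ has formal dimension $0$, i.e. $X^N \simeq \ast$ by \cref{fact:PD_of_low_dimensions}(1) — but that again contradicts the inductive conclusion $X^N \neq \ast$. Either way we reach a contradiction, completing the induction. The main obstacle I anticipate is the base case $k=1$: making the gluing-class obstruction genuinely nonzero requires a careful computation of $(\Sigma^{-d}\sphere)^{tC_p}$ and tracking how the fundamental class $c$ and the orientation data interact with the red/blue route of \cref{eqn:black_magic_Atiyah-Bott}, together with verifying that the dimension hypothesis $d > 0$ (rather than $d = 0$) is exactly what obstructs the existence of a compatible single fixed point. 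This is where the hypothesis that $X^e$ has positive formal dimension must be used crucially, and it is the heart of the Atiyah--Bott phenomenon.
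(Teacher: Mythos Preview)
Your inductive reduction has a real gap. To apply the $k=1$ case to the $C_p$-Poincar\'e space $\udl{X}^N$ (with $N = C_{p^{k-1}}$), you need its underlying space $X^N$ to be connected --- but nothing guarantees this, and your case split (``positive formal dimension'' versus ``formal dimension $0$'') tacitly assumes it. If you pass to the component $\udl{X}^N_0 \subseteq \udl{X}^N$ containing the fixed point $X^G \simeq \ast$ and that component happens to have dimension $0$, you conclude only that $\udl{X}^N_0 \simeq \udl{\ast}$; the remaining components may well be nonempty (for instance a free $C_p$-Poincar\'e space), so you obtain $\udl{X}^N \simeq \udl{\ast} \sqcup \udl{W}$ with $W^{C_p} = \varnothing$ and \emph{no} contradiction with the inductive conclusion $X^N \neq \ast$. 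Your argument stalls here. (Separately, the sentence ``the $k=1$ case gives $X^N \simeq \ast$'' is miswritten: the $k=1$ theorem concludes $X^G \neq \ast$, contradicting the standing assumption rather than the inductive hypothesis.)

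The paper avoids this by running the induction through the \emph{other} intermediate subgroup: it takes $C_p$-fixed points, uses the inductive hypothesis for $G/C_p = C_{p^{k-1}}$ to obtain a $G/C_p$-decomposition $\udl{X}^{C_p} \simeq \udl{\ast} \sqcup \udl{Y}$ with $Y^{G/C_p} = \varnothing$, and then carries out the gluing-class contradiction at the level of the \emph{full} group $G = C_{p^k}$, not just $C_p$. After basechanging to $F(EG_+, \eilenbergMacLaneCoeff)$-coefficients and replacing $\udl{X}$ by $\udl{X} \times \udl{X}$ to force $d$ even, the composite of \cref{eqn:complicated_blue_composition} is shown to be both nullhomotopic (\cref{cor:vanishing_of_pushforward_gluing_class}) and $\pi_0$-surjective onto $\mathbb{Z}/p^k$. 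The key numerical input you never reach is \cref{lem:tate_torsion_computation}: because $Y^{G/C_p} = \varnothing$, the $\udl{Y}$-summand of $\udl{X}^{>1} \simeq \udl{\ast} \sqcup \infl_G^{G/C_p}\udl{Y}$ contributes only $p^{k-1}$-torsion to the relevant Tate term, so it cannot cancel the generator of $\mathbb{Z}/p^k$ coming from the $\udl{\ast}$-summand. Your base-case sketch is essentially the $Y = \varnothing$ specialisation of this, but even there the details --- the even-$d$ trick and the identification via \cref{lem:technical_pi_0_isomorphism} pinning down where $d>0$ and $\mathbb{Z}$-orientability actually bite --- are not supplied.
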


We obtain the theorem of Atiyah--Bott and Conner--Floyd as an immediate consequence.

\begin{cor}[{\cite[Thm. 7.1]{AtiyahBottLefschetzII}, \cite[8.3]{ConnerFloydMapsOfOddPeriod}}]
    Let $p$ be an odd prime and $G=C_{p^k}$. Let $M$ be a closed connected orientable smooth manifold of positive dimension equipped with a smooth $G$--action. Then $M^G\neq \ast$. 
\end{cor}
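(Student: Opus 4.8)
The plan is to deduce \cref{thm:generalised_atiyah_bott} from the homotopy-theoretic facts already assembled in the excerpt, and then observe that a smooth closed connected orientable positive-dimensional $G$-manifold $M$ satisfies the hypotheses of that theorem. First I would recall from \cref{thm:closed_smooth_G_manifolds_are_poincare} that the underlying $G$-space $\udl{M}$ of a closed smooth $G$-manifold is $G$--Poincar\'e (with dualising object $J(TM)^{-1}$), and in particular $\udl{M}\in\spc_G^\omega$ since smooth closed $G$-manifolds admit finite $G$--CW structures by Illman's theorem. So the only thing to check in order to invoke \cref{thm:generalised_atiyah_bott} is that the underlying space $M^e$ is connected, $\mathbb{Z}$--orientable, and has positive formal dimension in the classical sense.

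Connectedness of $M^e$ is part of the hypothesis on $M$. The formal dimension of the Poincar\'e space underlying a closed manifold $M^e$ of (manifold) dimension $d$ is exactly $d$: this is visible from \cref{thm:closed_smooth_G_manifolds_are_poincare}, since at each point $x$ the dualising object is $x^*J(TM)^{-1}\simeq \Sigma^{-d}\sphere$ using that $TM$ has rank $d$, so formal dimension equals manifold dimension, which is $>0$ by assumption. For $\mathbb{Z}$--orientability of the underlying space, I would note that orientability of the smooth manifold $M$ (i.e. orientability of $TM$ as a vector bundle) translates into triviality of $w_1$, hence into the dualising sheaf $D_{M^e}\simeq \Sigma^{-d}\sphere$ being equivalent, after tensoring with $\eilenbergMacLaneCoeff$, to the constant local system; this is precisely $\mathbb{Z}$--orientability in the sense used throughout \cref{subsection:theorem:single_fixed_points} (compare the discussion around \cref{lem:fixed_point_orientability_from_underlying_orientability} and \cref{prop:rigidity_or_orientability}).

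With all hypotheses verified, \cref{thm:generalised_atiyah_bott} gives $M^G\not\simeq\ast$, and in particular $M^G\neq\ast$ as a topological space (if $M^G$ were a single point it would certainly be homotopy equivalent to $\ast$). The proof is therefore a one-line reduction: apply \cref{thm:closed_smooth_G_manifolds_are_poincare} to see $\udl{M}$ is a compact $G$--Poincar\'e space, check the three conditions on $M^e$ as above, and invoke \cref{thm:generalised_atiyah_bott}.

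I do not expect any genuine obstacle here; this corollary is a pure specialisation. The only mild subtlety, which is really bookkeeping, is matching the notion of ``formal dimension in the classical sense'' and ``$\mathbb{Z}$--orientable'' appearing in the statement of \cref{thm:generalised_atiyah_bott} with the manifold-theoretic notions for $M$ — but both identifications are immediate from the explicit description of the dualising object of a closed smooth manifold in \cref{thm:closed_smooth_G_manifolds_are_poincare}, together with the standard fact (already used implicitly in \cref{terminology:formal_dimensions} and \cref{example:wall_poincare_complex}) that the formal dimension of a closed manifold agrees with its geometric dimension.
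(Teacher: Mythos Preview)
Your proposal is correct and matches the paper's approach exactly: the paper simply states that the corollary is an immediate consequence of \cref{thm:generalised_atiyah_bott}, and your write-up just makes explicit the routine verification that a closed, connected, orientable, positive-dimensional smooth $G$-manifold satisfies its hypotheses via \cref{thm:closed_smooth_G_manifolds_are_poincare}.
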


 The orientability assumption is crucial, as illustrated by the following: 
\begin{example}
\label{ex:counterexample_to_conner_floys_without_orientability}
   For $p$ odd, consider the suspension of the action of $C_p \subset S^1$ on $S^2$ which descends to an action of $C_p$ on $\mathbb{R}\mathrm{P}^2$ with a single fixed point. 
\end{example}

Consequently, we see that these no--go results for single fixed points cannot purely be a product of classical Smith theory since they must incorporate orientations in some fundamental way. From this perspective, our approach may be seen as a way to encode orientations by enconcing the discussion within the formalism of equivariant Poincar\'{e} duality, where Smith--theoretic fixed points methods are also available as afforded by \cref{subsection:fixed_points_methods}. 

Restricting to odd prime powers is essential as well, as the following example illustrates.

\begin{example}[\cite{ConnerFloyd64}, Chapter 45]
    The group $C_4$ acts on $\mathbb{C} \mathrm{P}^2$ with a single fixed point, by letting a generator act via $[z_0:z_1:z_2] \mapsto [\overline{z_0}:- \overline{z_2}:\overline{z_1}]$.
\end{example}  

To start work on \cref{thm:generalised_atiyah_bott}, we record some preliminaries on Tate cohomology which will be the computational input to our proof.

\begin{recollect}[Group (co)homologies]\label{recollection:group_cohomologies}
    Let $n\geq 2$ be an integer and $A$ an abelian group equipped with the trivial $C_n$--action. Then by definition, we have
    \[\pi_{d}HA^{hC_n}\cong H^{-d}(C_n;A), \quad\quad \pi_{d}HA^{tC_n}\cong \widehat{H}^{-d}(C_n;A),\quad\quad \pi_dHA_{hC_n}\cong H_d(C_n;A).\]
    Moreover, using the fibre sequence of spectra
    $HA_{hC_n} \longrightarrow HA^{hC_n} \longrightarrow HA^{tC_n}$, we get a long exact sequence 
    \[\cdots\rightarrow H^{-d}(C_n;A)\longrightarrow \widehat{H}^{-d}(C_n;A)\longrightarrow H_{d-1}(C_n;A) \longrightarrow H^{-(d-1)}(C_n;A) \rightarrow \cdots\]
    giving us
    \begin{equation*}
        \widehat{H}^{-d}(C_n;A) = \begin{cases}
            H^{-d}(C_n;A)\cong A/n & \text{ if } d\leq-1 \text{ and }d \text{ even};\\
            H^{-d}(C_n;A)\cong 0 & \text{ if } d\leq-1 \text{ and }d \text{ odd};\\
            A/n & \text{ if } d=0;\\
            H_{d-1}(C_n;A)\cong A/n & \text{ if } d\geq 1 \text{ and } d \text{ even};\\
            H_{d-1}(C_n;A)\cong 0 & \text{ if } d \geq 1 \text{ and } d \text{ odd}.
        \end{cases}
    \end{equation*}
\end{recollect}

It will be convenient to recall the notations of \cite{greenlees-may_tate} to manipulate the various forms of the Tate constructions.

\begin{lem}\label{lem:tate_of_induced}
    Let $H\leq G$ be a subgroup of a finite group $G$ and $A\in\spectra_H$. Then we have $(\ind^G_HA)^{tG}\simeq A^{tH}$.
\end{lem}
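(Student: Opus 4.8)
\textbf{Plan for the proof of \cref{lem:tate_of_induced}.}

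The statement is about the ``last'' Tate construction $(-)^{tG}$, so the first step is to recall that this functor can be realised as $\Phi^G b_*(-)$ where $b_*\colon \spectra^{BG}\hookrightarrow \mackey_G(\spectra)\simeq \spectra_G$ (in the finite group case) sends a Borel $G$-spectrum to the corresponding genuine $G$-spectrum, and $\Phi^G$ is the (categorical, and hence here also spectral) geometric fixed points functor; equivalently, and more usefully for us, one can work with the Greenlees--May formula $A^{tG}\simeq \bigl(\widetilde{EG}\otimes F(EG_+,A)\bigr)^G$ as in \cite{greenlees-may_tate}. The plan is to reduce to a statement about a genuine $G$-spectrum built from an $H$-spectrum. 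Concretely: for $A\in\spectra_H$ viewed as a Borel $H$-spectrum, the genuine induction $\ind_H^G$ sends it to a genuine $G$-spectrum, and the key input is the standard fact (the ``Adams isomorphism'' shadow, or just the definition of genuine induction via left Kan extension together with Wirthm\"uller) that $\Phi^G\ind_H^G A\simeq 0$ whenever $H\lneq G$, while this is not what we want either --- we want the \emph{Tate} of the \emph{Borel} induction.

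The cleaner route is to observe that Borel induction $\ind_H^{G,\mathrm{Bor}}\colon \spectra^{BH}\to\spectra^{BG}$ (left adjoint to restriction) fits into the isotropy separation picture: if we write $\infl$ and the relevant Borel functors, then $\widetilde{EG}\otimes F(EG_+,\ind_H^{G,\mathrm{Bor}}A)$ computes, on $G$-fixed points, exactly the Tate construction of the Borel-induced spectrum, and the projection/Frobenius formula for the $G$-action together with the fact that $\ind_H^G$ and $F(EG_+,-)$ interact via $EG\simeq \mathrm{Res}^G_H EG$ reduces the computation to the $H$-level. Spelled out: $F(EG_+,\ind_H^{G,\mathrm{Bor}}A)\simeq \ind_H^{G}F(EG_+,\mathrm{Res}^G_H A)$ by the Wirthm\"uller isomorphism (since $EG_+$ is built from free cells and $G/H_+$ is self-dual up to a shift by the tangent representation, which is trivial for finite groups), and then smashing with $\widetilde{EG}$ and taking $G$-fixed points, one uses $(\widetilde{EG}\otimes \ind_H^G Y)^G\simeq (\widetilde{EH}\otimes Y)^H$, which is precisely the statement that geometric fixed points kills proper induced terms \emph{except} that here the roles are arranged so that the surviving piece is the $H$-Tate. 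Thus $(\ind_H^G A)^{tG}\simeq (\widetilde{EH}\otimes F(EH_+,A))^H\simeq A^{tH}$.

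The main obstacle, and the step that requires care, is getting the variance right: there are (at least) three induction-type functors floating around --- genuine induction $\ind_H^G$ on $\spectra_G$, Borel induction on $\spectra^{BG}$, and coinduction --- and the statement $(\ind_H^G A)^{tG}\simeq A^{tH}$ is really about how genuine induction of a \emph{Borel} $H$-spectrum sits inside the isotropy separation of $\spectra_G$. I would handle this by being explicit that $A\in\spectra_H$ here means $A$ with its genuine (or Borel, it won't matter after applying $(-)^{tG}$ since Tate only sees the Borel part) structure, and then invoke the Greenlees--May description together with the elementary point-set identity $\widetilde{EG}\wedge (G_+\wedge_H Z)\simeq G_+\wedge_H(\mathrm{Res}^G_H\widetilde{EG}\wedge Z)$ and $\mathrm{Res}^G_H\widetilde{EG}\simeq \widetilde{EH}$ (as $\mathrm{Res}^G_H EG$ is a free, hence $H$-contractible-to-$EH$, space). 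Everything else is formal once these compatibilities are in place; I do not expect to need to grind through any spectral sequence, only to assemble the named equivalences in the correct order.
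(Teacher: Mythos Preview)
Your approach is essentially the same as the paper's: both use the Greenlees--May description $(-)^{tG}\simeq (\widetilde{EG}\otimes F(EG_+,-))^G$, the identifications $\res^G_H EG_+\simeq EH_+$ and $\res^G_H\widetilde{EG}\simeq\widetilde{EH}$, the projection formula to move $\ind^G_H$ past $\widetilde{EG}\otimes-$ and $F(EG_+,-)$ (the latter via Wirthm\"uller $\ind^G_H\simeq\coind^G_H$), and finally $(\ind^G_H-)^G\simeq(-)^H$. The paper's writeup is simply a one-line chain of these equivalences without your preliminary exploratory discussion of $\Phi^G b_*$ and the Borel-versus-genuine distinction, but the substance is identical.
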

\begin{proof}
    First observe that $\res^G_H\widetilde{EG}\simeq \widetilde{EH}$ and $\res^G_HEG_+\simeq EH_+$. The required result is now obtained from the computation of $(\ind^G_HA)^{tG}$ as \small
    \[ \big(\widetilde{EG}\otimes F(EG_+,\ind^G_HA)\big)^G\simeq \big(\ind^G_H\res^G_H\widetilde{EG}\otimes F(EG_+,A)\big)^G\simeq \big(\widetilde{EH}\otimes F(EH_+,A)
            \big)^H = A^{tH}\]\normalsize
    where the equivalence $(\ind^G_H-)^G\simeq (-)^H$ is since $\ind^G_H\simeq \coind^G_H$ and we have an equivalence of their left adjoints $\infl_H^1\simeq \res^G_H\infl_G^1$. 
\end{proof}

\begin{lem}\label{lem:tate_torsion_computation}
    Let $\udl{Y}\in\spc_{C_{p^k}}^\omega$ such that $Y^{C_{p^k}}\simeq \varnothing$. Then the change of coefficients map $(Y_+\otimes\eilenbergMacLaneCoeff)^{tC_{p^k}}\rightarrow (Y_+\otimes\mathrm{H}\mathbb{Z}/p^{k-1})^{tC_{p^k}}$ is an equivalence. In particular, the groups $\pi_n(Y_+\otimes\eilenbergMacLaneCoeff)^{tC_{p^k}}$ are $p^{k-1}$--torsion for all $n$.
\end{lem}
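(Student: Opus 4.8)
The plan is to reduce to the $C_{p^k}$--equivariant cells of $\udl{Y}$ and exploit that the only isotropy appearing is proper. Since $Y^{C_{p^k}} \simeq \varnothing$, the $C_{p^k}$--space $\udl{Y}$ lies in the subcategory $\spc_{C_{p^k}, \proper}$ of spaces with proper isotropy (in the language of \cref{cons:isotropy_separation_recollement}), hence $\Sigma^\infty_+ \udl{Y}$ is built under colimits from the orbits $\Sigma^\infty_+ \myuline{C_{p^k}/H}$ with $H \lneq C_{p^k}$, i.e. $H \leq C_{p^{k-1}}$. Now the Tate construction $(-)^{tC_{p^k}}$ is an exact functor on $\spectra_{C_{p^k}}$, but it need \emph{not} preserve infinite colimits; however, since $\udl{Y} \in \spc_{C_{p^k}}^{\omega}$ is compact, $\Sigma^\infty_+ \udl{Y}$ is a \emph{finite} $C_{p^k}$--spectrum built from finitely many such orbit cells, so by a finite induction along the cell structure it suffices to check the claim after smashing $\myuline{C_{p^k}/H}_+$ for $H \leq C_{p^{k-1}}$ with the relevant change-of-coefficients map and taking $(-)^{tC_{p^k}}$.

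For a single such cell, write $\myuline{C_{p^k}/H}_+ \otimes \eilenbergMacLaneCoeff \simeq \ind^{C_{p^k}}_H \res^{C_{p^k}}_H \eilenbergMacLaneCoeff$ (the Wirthm\"uller/self-induced identity for the inflated Eilenberg--MacLane spectrum, or directly $\Sigma^\infty_+ \myuline{C_{p^k}/H} \simeq \ind^{C_{p^k}}_H \unit$ followed by smashing with $\eilenbergMacLaneCoeff$). By \cref{lem:tate_of_induced} we get $(\ind^{C_{p^k}}_H \res \eilenbergMacLaneCoeff)^{tC_{p^k}} \simeq (\res^{C_{p^k}}_H \eilenbergMacLaneCoeff)^{tH} = \eilenbergMacLaneCoeff^{tH}$, and similarly for $\mathrm{H}\mathbb{Z}/p^{k-1}$. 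Since $H \leq C_{p^{k-1}}$, the order $|H|$ divides $p^{k-1}$, so by the computation recalled in \cref{recollection:group_cohomologies} all homotopy groups $\pi_* \eilenbergMacLaneCoeff^{tH}$ are quotients of $\mathbb{Z}$ by $|H|$ (hence $p^{k-1}$--torsion, since $|H| \mid p^{k-1}$), and the change-of-coefficients map $\eilenbergMacLaneCoeff^{tH} \to \mathrm{H}\mathbb{Z}/p^{k-1}{}^{tH}$ is induced by $\mathbb{Z} \to \mathbb{Z}/p^{k-1}$, which becomes an isomorphism after applying $-/|H|$ because $|H| \mid p^{k-1}$. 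Thus the map is an equivalence on each orbit cell.

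Assembling: running the finite induction over the cell structure of $\Sigma^\infty_+ \udl{Y}$, using exactness of $(-)^{tC_{p^k}}$ and the two-out-of-three property for equivalences in cofibre sequences, we conclude that $(\udl{Y}_+ \otimes \eilenbergMacLaneCoeff)^{tC_{p^k}} \to (\udl{Y}_+ \otimes \mathrm{H}\mathbb{Z}/p^{k-1})^{tC_{p^k}}$ is an equivalence. For the ``in particular'' statement, the target $(\udl{Y}_+ \otimes \mathrm{H}\mathbb{Z}/p^{k-1})^{tC_{p^k}}$ is a module over $\mathrm{H}\mathbb{Z}/p^{k-1}{}^{tC_{p^k}}$, whose homotopy groups are $p^{k-1}$--torsion (again by \cref{recollection:group_cohomologies}, as $\mathbb{Z}/p^{k-1}$ divided by $p^k$ is just $\mathbb{Z}/p^{k-1}$, already killed by $p^{k-1}$); since any module spectrum over a ring spectrum $R$ with $p^{k-1} \cdot \pi_* R = 0$ automatically has $p^{k-1} \cdot \pi_* M = 0$, the groups $\pi_n (\udl{Y}_+ \otimes \eilenbergMacLaneCoeff)^{tC_{p^k}}$ are $p^{k-1}$--torsion for all $n$.

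\textbf{Main obstacle.} The only genuinely delicate point is the reduction to orbit cells: one must be careful that $(-)^{tC_{p^k}}$ does not commute with arbitrary colimits, so the argument truly relies on \emph{compactness} of $\udl{Y}$ to reduce to a \emph{finite} cell complex, after which exactness of the Tate construction suffices. An alternative, perhaps cleaner, route avoiding cell induction: observe directly that $\udl{Y}_+ \otimes \eilenbergMacLaneCoeff$, having no $C_{p^k}$--fixed cells, is an object of $\module_{\eilenbergMacLaneCoeff}(\spectra_{C_{p^k}})$ that is ``$\proper$--built'', and the proper Tate functor $(-)^{t_{\proper}C_{p^k}}$ agrees with $(-)^{tC_{p^k}}$ on it (since the two Tate constructions only differ by the proper-induced terms, which are here already absorbed); then use that $\eilenbergMacLaneCoeff^{t_{\proper}C_{p^k}}$ has $p^{k-1}$--torsion homotopy directly. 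I would pick whichever of these is shortest given the lemmas already in hand, but expect the cell-induction argument to be the most self-contained.
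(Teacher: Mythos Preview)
Your approach is essentially identical to the paper's: reduce to proper orbits using compactness (and closure under retracts and finite colimits), apply \cref{lem:tate_of_induced} to identify $((C_{p^k}/C_{p^l})_+\otimes R)^{tC_{p^k}}\simeq R^{tC_{p^l}}$, and then invoke the change--of--coefficients map on $\eilenbergMacLaneCoeff^{tC_{p^l}}$ for $l<k$.

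There is, however, a genuine gap in the key orbit--level step, and it is worth flagging (the paper's proof has the same issue). Your justification that $\eilenbergMacLaneCoeff^{tH}\to (\mathrm{H}\mathbb{Z}/p^{k-1})^{tH}$ is an equivalence only checks the \emph{even} Tate groups: you argue that $\mathbb{Z}\to\mathbb{Z}/p^{k-1}$ becomes an isomorphism after $(-)/|H|$. But for $H=C_{p^l}$ with $1\le l\le k-1$, the \emph{odd} Tate cohomology of $\mathbb{Z}/p^{k-1}$ is ${}_{p^l}(\mathbb{Z}/p^{k-1})\cong\mathbb{Z}/p^l\neq 0$, whereas $\eilenbergMacLaneCoeff^{tC_{p^l}}$ has vanishing odd homotopy. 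Equivalently, from the cofibre sequence $\eilenbergMacLaneCoeff\xrightarrow{p^{k-1}}\eilenbergMacLaneCoeff\to \mathrm{H}\mathbb{Z}/p^{k-1}$ one sees that multiplication by $p^{k-1}$ is nullhomotopic on $\eilenbergMacLaneCoeff^{tC_{p^l}}$ (since $p^l=0$ in $\pi_0$ and $l\le k-1$), so the cofibre is $\eilenbergMacLaneCoeff^{tC_{p^l}}\oplus\Sigma\eilenbergMacLaneCoeff^{tC_{p^l}}$ rather than $\eilenbergMacLaneCoeff^{tC_{p^l}}$. Thus the change--of--coefficients map is \emph{not} an equivalence on the orbit $C_{p^k}/C_{p^l}$ for $l\ge 1$, and hence not for general $\udl{Y}$ with $Y^{C_{p^k}}=\varnothing$.

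What survives---and what is actually used downstream in the proof of \cref{thm:generalised_atiyah_bott}---is the ``in particular'' clause. Your own paragraph already contains a correct proof of that: on each orbit $C_{p^k}/C_{p^l}$ with $l<k$ one gets $\eilenbergMacLaneCoeff^{tC_{p^l}}$, whose homotopy groups are $p^l$--torsion and hence $p^{k-1}$--torsion; this property is preserved under retracts and finite colimits, so it passes to $(Y_+\otimes\eilenbergMacLaneCoeff)^{tC_{p^k}}$. You should present the argument that way and drop the equivalence claim.
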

\begin{proof}
    Note that the map being an equivalence is stable under retracts and finite colimits in the  $Y$--variable.
    As any compact $C_{p^k}$ space $\udl{Y}$ with $Y^{C_{p^k}} = \varnothing$ is a retract of a finite sequence of pushouts of orbits $\myuline{C_{p^k}/C_{p^l}}$ with $l < k$, it thus suffices to show the desired equivalence for each of these orbits. By \cref{lem:tate_of_induced}, we obtain for any $R \in \calg(\spectra)$ the natural equivalence
    \begin{equation*}
        \left((C_{p^k}/C_{p^l})_+ \otimes R \right)^{tC_{p^k}} =\left( \ind_{C_{p^l}}^{C_{p^k}} R \right)^{tC_{p^k}} \simeq R^{tC_{p^l}}.
    \end{equation*}
    Thus, the claim follows from the fact that the quotient map $\eilenbergMacLaneCoeff^{t C_{p^l}} \to \left(\eilenbergMacLaneCoeff/{p^{k-1}} \right)^{tC_{p^l}}$ is an equivalence for $l<k$, see e.g. \cref{recollection:group_cohomologies}.
\end{proof}

For the next lemma, recall the cofibre sequence
$EG_+ \rightarrow \sphere_G \rightarrow 
\widetilde{EG}$ in $\spectra_G$.

\begin{lem}\label{lem:technical_pi_0_isomorphism}
    Let $G$ be an odd finite group and $\picardObject \in\picardSpace(\spectra_G)$ with $\picardObject^e\simeq \Sigma^k\sphere$. Then there is a canonical equivalence $F({EG}_+,\eilenbergMacLaneCoeff)\otimes \picardObject \simeq \Sigma^kF({EG}_+,\eilenbergMacLaneCoeff) \in \spectra_G$. Consequently, the map 
    \[\big({\widetilde{EG}}\otimes F({EG}_+,\eilenbergMacLaneCoeff)\otimes \picardObject\big)^G \longrightarrow \big(\Sigma {EG}_+\otimes F({EG}_+,\eilenbergMacLaneCoeff)\otimes\picardObject\big)^G\]
    from the cofibre sequence ${EG}_+\rightarrow\sphere_G\rightarrow{\widetilde{EG}}$ may be identified with the usual connecting map $\Sigma^k\eilenbergMacLaneCoeff^{tG}\longrightarrow \Sigma^{1+k}\eilenbergMacLaneCoeff_{hG}$.
\end{lem}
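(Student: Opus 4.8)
\textbf{Proof plan for \cref{lem:technical_pi_0_isomorphism}.}
The first goal is to produce the equivalence $F({EG}_+,\eilenbergMacLaneCoeff)\otimes \picardObject \simeq \Sigma^k F({EG}_+,\eilenbergMacLaneCoeff)$ in $\spectra_G$. The key observation is that $F(EG_+, \eilenbergMacLaneCoeff)$ is a Borel-complete $G$--spectrum, so via the equivalence $\spectra_G^{\mathrm{Borel}} \simeq \spectra^{BG}$ it corresponds to the object $\eilenbergMacLaneCoeff$ with trivial $G$--action, and the smashing localisation $F(EG_+, -)$ is lax symmetric monoidal. Hence $F(EG_+, \eilenbergMacLaneCoeff) \otimes \picardObject \simeq F(EG_+, \eilenbergMacLaneCoeff \otimes \picardObject)$, and it suffices to identify $F(EG_+, \eilenbergMacLaneCoeff \otimes \picardObject)$ with $\Sigma^k F(EG_+, \eilenbergMacLaneCoeff)$. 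Now $\eilenbergMacLaneCoeff \otimes \picardObject$ is an invertible $\eilenbergMacLaneCoeff$--module $G$--spectrum; its Borelification is an invertible $\eilenbergMacLaneCoeff$--module in $\spectra^{BG}$, i.e. an element of $\picardSpace(\module_{\eilenbergMacLaneCoeff})^{hBG} \simeq \map(BG, \bbZ \times B\mathrm{Aut}(\eilenbergMacLaneCoeff))$. Since $\picardObject^e \simeq \Sigma^k \sphere$, the underlying invertible $\eilenbergMacLaneCoeff$--module is $\Sigma^k \eilenbergMacLaneCoeff$, so the classifying map lands in the component indexed by $k \in \bbZ$, and the monodromy is a map $BG \to B\mathrm{Aut}(\eilenbergMacLaneCoeff) \simeq B(\bbZ/2)$ (since $\mathrm{Aut}(\eilenbergMacLaneCoeff) = \bbZ/2$). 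As $G$ is odd, $\map(BG, B(\bbZ/2))$ is contractible, so the monodromy is trivial and $\eilenbergMacLaneCoeff \otimes \picardObject$ is Borel-equivalent to $\Sigma^k \eilenbergMacLaneCoeff$ with trivial action, giving the claimed canonical equivalence after applying $F(EG_+, -)$.

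For the second part, one feeds this equivalence into the cofibre sequence ${EG}_+\rightarrow\sphere_G\rightarrow{\widetilde{EG}}$. Tensoring this with $F(EG_+, \eilenbergMacLaneCoeff) \otimes \picardObject$ and using the identification just established, the sequence becomes $\Sigma^k EG_+ \otimes F(EG_+, \eilenbergMacLaneCoeff) \to \Sigma^k F(EG_+, \eilenbergMacLaneCoeff) \to \Sigma^k \widetilde{EG} \otimes F(EG_+, \eilenbergMacLaneCoeff)$ (using that $\picardObject$ smashing with $F(EG_+,\eilenbergMacLaneCoeff)$ absorbs into the suspension). Taking $G$--fixed points and recalling the standard identifications $(\widetilde{EG} \otimes F(EG_+, \eilenbergMacLaneCoeff))^G \simeq \eilenbergMacLaneCoeff^{tG}$ and $(EG_+ \otimes F(EG_+, \eilenbergMacLaneCoeff))^G \simeq (EG_+ \otimes \eilenbergMacLaneCoeff)^G \simeq \eilenbergMacLaneCoeff_{hG}$ (the latter using that $F(EG_+, \eilenbergMacLaneCoeff)$ is Borel-complete so $EG_+ \otimes F(EG_+,\eilenbergMacLaneCoeff) \simeq EG_+ \otimes \eilenbergMacLaneCoeff$), the connecting map of this cofibre sequence on fixed points is precisely $\Sigma^k$ of the connecting map $\eilenbergMacLaneCoeff^{tG} \to \Sigma \eilenbergMacLaneCoeff_{hG}$ in the Tate fibre sequence, i.e. the map $\Sigma^k \eilenbergMacLaneCoeff^{tG} \to \Sigma^{1+k}\eilenbergMacLaneCoeff_{hG}$. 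The naturality of the norm cofibre sequence in the coefficient $G$--spectrum is what ensures that the connecting map is identified compatibly, not just abstractly equivalent.

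The main obstacle I anticipate is bookkeeping the identification in the second part carefully enough that the connecting map really is \emph{the} canonical one and not merely an equivalent map differing by an automorphism of $\eilenbergMacLaneCoeff^{tG}$ or $\eilenbergMacLaneCoeff_{hG}$; this matters for the intended application where one reads off torsion orders and degrees. The cleanest way to handle this is to observe that the whole construction is the cofibre sequence $EG_+ \to \sphere_G \to \widetilde{EG}$ smashed with a \emph{fixed, canonically chosen} equivalence $F(EG_+,\eilenbergMacLaneCoeff) \otimes \picardObject \simeq \Sigma^k F(EG_+,\eilenbergMacLaneCoeff)$ of $F(EG_+,\eilenbergMacLaneCoeff)$--modules, so that the identification of connecting maps is automatic from functoriality of $(-)^G$ applied to a map of cofibre sequences. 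A secondary minor point is justifying $\mathrm{Aut}_{E_\infty}(\eilenbergMacLaneCoeff) \simeq \bbZ/2$ (or at least that $\pi_0$ of the relevant automorphism space is $\bbZ/2$, which is all that is needed to conclude the monodromy vanishes for $G$ odd); this is classical and can be cited.
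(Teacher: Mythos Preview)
Your approach is essentially the same as the paper's: pass to the Borel category, identify $\eilenbergMacLaneCoeff\otimes P^e$ as an element of $\func(BG,\picardSpace(\module_{\eilenbergMacLaneCoeff}))\simeq\map(BG,\bbZ\times BC_2)$, use that $G$ is odd to kill the $BC_2$ factor, and then read off the second statement from the standard identifications $(\widetilde{EG}\otimes F(EG_+,E))^G\simeq E^{tG}$ and $(EG_+\otimes F(EG_+,E))^G\simeq E_{hG}$.

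One correction is needed at the first step: Borelification $F(EG_+,-)$ is \emph{not} a smashing localisation (e.g.\ $F(EG_+,\sphere_G)\otimes G_+\not\simeq F(EG_+,G_+)$), and lax monoidality alone only gives a comparison map, not an equivalence. The paper establishes $F(EG_+,\eilenbergMacLaneCoeff)\otimes P\simeq F(EG_+,\eilenbergMacLaneCoeff\otimes P^e)$ by a direct Yoneda argument, but the fix from your side is even simpler: since $P$ is invertible, $-\otimes P$ is an autoequivalence of $\spectra_G$ and in particular commutes with the limit defining $F(EG_+,-)$, giving $F(EG_+,\eilenbergMacLaneCoeff)\otimes P\simeq F(EG_+,\eilenbergMacLaneCoeff\otimes P)$; then Borel-completeness lets you replace $\eilenbergMacLaneCoeff\otimes P$ by its underlying Borel spectrum $\eilenbergMacLaneCoeff\otimes P^e$. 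With that adjustment your argument goes through and matches the paper's.
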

\begin{proof}
    We first show that the Borelification map $F({EG}_+,\eilenbergMacLaneCoeff)\otimes\picardObject\rightarrow F({EG}_+,\eilenbergMacLaneCoeff\otimes\picardObject^e)$ is an equivalence. To wit, let ${Y}\in\spectra_G$. Then
    \begin{equation*}
        \begin{split}
            \map_{\spectra_G}({Y},F({EG}_+,\eilenbergMacLaneCoeff)\otimes\picardObject) & \simeq \map_{\spectra_G}\big({Y}\otimes\picardObject^{-1},F({EG}_+,\eilenbergMacLaneCoeff)\big)\\
            &\simeq \map_{\spectra^{BG}}(Y^e\otimes(\picardObject^e)^{-1},\eilenbergMacLaneCoeff)\\
            &\simeq \map_{\spectra^{BG}}(Y^e,\eilenbergMacLaneCoeff\otimes\picardObject^e)\\
            &\simeq \map_{\spectra_G}\big({Y},F({EG}_+,\eilenbergMacLaneCoeff\otimes\picardObject^e)\big)\\
        \end{split}
    \end{equation*}
    as claimed. But then, since $\eilenbergMacLaneCoeff\otimes\picardObject^e\in\func(BG,\picardSpace(\module_{\spectra}(\eilenbergMacLaneCoeff)))$ and $G$ was an odd group and $B\mathrm{Aut}(\eilenbergMacLaneCoeff)\simeq BC_2$, we know that $\eilenbergMacLaneCoeff\otimes\picardObject^e\simeq \trivial_G\Sigma^k\eilenbergMacLaneCoeff$, whence the first statement. The second statement is then immediate from the equivalences $\big({\widetilde{EG}}\otimes F({EG}_+,E)\big)^G \simeq E^{tG}$ and $\big({EG}_+\otimes F({EG}_+,E)\big)^G\simeq E_{hG}$  for all $E\in\spectra_G$.
\end{proof}

For the proof of the theorem, it will also be helpful to record the following:

\begin{cons}[Orbit--component decompositions]\label{cons:orbit_component_decomposition}
    Let $\underline{X}\in \spc_G$. By an easy adjunction computation, we have that $(\pi_0X^e)/G \cong \pi_0(X_{hG})$.  Let $S\sqcup T$ be a decomposition of $(\pi_0X^e)/G \cong \pi_0(X_{hG})$. By considering the triple of adjunctions
    \begin{equation}\label{eqn:triple_of_adjunctions}
        \begin{tikzcd}
            \spc_G \rar[shift left = 1, "\pi_0"] & \sets_G= \func(\orbit(G)\op,\sets) \rar[shift left = 1, "b^*"] \lar[shift left = 1, "\inclusion", hook]& \func(BG,\sets) \lar[shift left = 1, "b_*"]\rar[shift left =1, "r_!"] & \sets, \lar[shift left = 1, "r^*"]
        \end{tikzcd}
    \end{equation}
    we may obtain a decomposition $\udl{X}\simeq \udl{Y}\sqcup \udl{Z}\in\spc_G$ such that $\pi_0Y_{hG} \cong S$ and $\pi_0Z_{hG} \cong T$.
\end{cons}

We now come to the proof of our generalisation of Atiyah--Bott and Conner--Floyd's theorem. For this, recall the notion of formal dimensions from \cref{terminology:Poincare_dimension}.

\begin{proof}[Proof of \cref{thm:generalised_atiyah_bott}]
    We prove this by induction on $k$, where the base case of $k=0$ is trivial. Now suppose we know that it is true for $k-1$.
    To prove the inductive step for the case of $k$, the strategy is to obtain a contradiction using the gluing class.
    For this, note first that $\underline{X}^{C_p}$ is a $\myuline{\spectra}_{G/C_p}$--Poincar\'{e}  space by \cref{thm:fixed_points_poincare_duality}. We claim that there is a decomposition $\udl{X}^{C_p} = \udl{*} \sqcup \udl{Y}$ of $G/C_p$-spaces, where $Y^{G/C_p}\simeq \varnothing$ necessarily since $\ast\simeq X^G = (X^{C_p})^{G/C_p}\simeq \ast\sqcup Y^{G/C_p}$.
    If the component of $X^{C_p}$ containing $\udl{\ast}$ is of formal dimension larger than 0, then the induction hypothesis and \cref{cons:orbit_component_decomposition} gives  such a decomposition as the $G/C_p$-space $\udl{X}^{C_p}$ also satisfies the conditions of the theorem.
    If the component of $X^{C_p}$ containing $\udl{\ast}$ is of formal dimension 0, then it must be $G/C_p$--contractible by \cref{fact:PD_of_low_dimensions} (1) and \cref{thm:Cp_PD_contractible_underlying_space}. Thus, again by \cref{cons:orbit_component_decomposition}, we obtain the desired decomposition. 
    
    We will derive the contradiction by basechanging along $\myuline{\spectra}\rightarrow \underline{\module}_{F(EG_+,\eilenbergMacLaneCoeff)}$. Observe first that we may assume that $d>0$ is even  since we may replace $\underline{X}$ with $\underline{X}\times\underline{X}$ if necessary: this will still be a $\myuline{\spectra}$--Poincar\'{e} space satisfying the hypotheses of the theorem with $(\underline{X}\times\underline{X})^{G}\simeq \ast$ and $X^e\times X^e$ having formal dimension $2d>0$.

    To set up notation, recall the map $\underline{X}^{>1}\xrightarrow{\epsilon} \underline{X}$ from \cref{defn:singular_part_inclusion} and write $W\coloneqq \Sigma^{-d}\eilenbergMacLaneCoeff\in\module_{\eilenbergMacLaneCoeff}$. We write $D^{\bbZ}_{\udl{X}}\coloneqq F(EG_+,\eilenbergMacLaneCoeff)\otimes D_{\udl{X}}\in \udl{\module}_{F(EG_+,\eilenbergMacLaneCoeff)}^{\udl{X}}$. By the hypothesis of $\mathbb{Z}$--orientability, we get $D_{X^e}^{\mathbb{Z}}\simeq \eilenbergMacLaneCoeff\otimes D_{X^e}\simeq \uniqueMapX^*W\in\module_{\eilenbergMacLaneCoeff}^{X^e}$.  By \cref{prop:rigidity_or_orientability}, the $\myuline{\spectra}_{G/C_p}$--Poincar\'{e}  space $\underline{X}^{C_p}$ has $\mathbb{Z}$--orientable underlying dualising sheaf. 
    By \cref{cor:vanishing_of_pushforward_gluing_class}, the composition giving the gluing class
    \footnotesize
    \begin{equation}\label{eqn:complicated_blue_composition}
        \begin{tikzcd}
            \eilenbergMacLaneCoeff \dar["c"']&  (\uniqueMapX_!^{>1}\epsilon^*D_{\underline{X}}^{\mathbb{Z}})^{tG}\simeq (\uniqueMapX_!^{>1}(\uniqueMapX^{>1})^*W)^{tG} \rar["\canonical"]\ar[d,"\simeq"]& \Sigma(\uniqueMapX_!^{>1}\epsilon^*D_{\underline{X}}^{\mathbb{Z}})_{hG}\simeq \Sigma(\uniqueMapX_!^{>1}(\uniqueMapX^{>1})^*W)_{hG}\dar[ "\canonical"]\\
            (\uniqueMapX_!D_{\underline{X}}^{\mathbb{Z}})^{hG}\rar["\canonical"] & (\uniqueMapX_!D_{\underline{X}}^{\mathbb{Z}})^{tG} &\Sigma W_{hG}
        \end{tikzcd}
    \end{equation}\normalsize
    is nullhomotopic. To achieve a contradiction, we show that this composition is also $\pi_0$--surjective onto a nontrivial group, assuming that $X^G\simeq \ast$. We do this in three steps.

    \begin{itemize}
        \item[(1)] To this end, first note that since the composite $\terminalTCat \hookrightarrow \underline{X}^{>1} \xrightarrow{\epsilon} \underline{X}\rightarrow \terminalTCat$  is equivalent to the identity, by functoriality of colimits, the map in $\func(BG,\module_{\eilenbergMacLaneCoeff})$
        \[W \longrightarrow \uniqueMapX^{>1}_!\epsilon^*D_{{X^e}}^{\mathbb{Z}}\simeq \uniqueMapX^{>1}_!(\uniqueMapX^{>1})^*W \longrightarrow W\]
        is also equivalent to the identity. Thus the rightmost vertical map $\canonical$ in \cref{eqn:complicated_blue_composition} is (split) surjective on homotopy groups coming from the summand $\Sigma W_{hG}$ inside $\Sigma(\uniqueMapX_!^{>1}\epsilon^*D_{\underline{X}}^{\mathbb{Z}})_{hG}\simeq \Sigma W_{hG} \oplus \Sigma(\uniqueMapY_!\uniqueMapY^*W)_{hG}$, where we have used that $\udl{X}^{>1}\simeq \inflated_G^{G/C_p}\udl{X}^{C_p}\simeq \terminalTCat\sqcup \inflated_G^{G/C_p}\udl{Y}$ from the first paragraph of the proof. 

        \item[(2)] Next,  the Tate--to--orbit canonical map  breaks up to become
    \[(\uniqueMapX_!^{>1}\epsilon^*D_{\underline{X}}^{\mathbb{Z}})^{tG} \simeq W^{tG}\oplus (\uniqueMapY_!\uniqueMapY^*W)^{tG} \xlongrightarrow{\canonical \oplus \canonical} \Sigma W_{hG}\oplus \Sigma(\uniqueMapY_!\uniqueMapY^*W)_{hG}\]
    By \cref{recollection:group_cohomologies} and since $d-1$ is odd by our assumption, $\canonical\colon W^{tG}\rightarrow \Sigma W_{hG}\simeq \Sigma^{1-d}\eilenbergMacLaneCoeff_{hG}$ is a $\pi_0$--isomorphism onto $\mathbb{Z}/p^k$. Moreover, by \cref{lem:tate_torsion_computation}, the image of $\canonical\colon \pi_0(\uniqueMapY_!\uniqueMapY^*W)^{tG}\rightarrow\pi_0\Sigma(\uniqueMapY_!\uniqueMapY^*W)_{hG}$ is $p^{k-1}$--torsion since $Y^{G/C_p}\simeq \varnothing$.

        \item[(3)] Finally, consider the commuting diagram 
    \begin{center}\small
        \begin{tikzcd}
            \eilenbergMacLaneCoeff\dar["c"']\ar[dr, "c"]\\
            \Big(\widetilde{EG}\otimes \uniqueMapX_!D_{\underline{X}}^{\mathbb{Z}}\Big)^{G}\dar  & \Big(\widetilde{EG}\otimes \uniqueMapX^{>1}_!\epsilon^*D_{\underline{X}}^{\mathbb{Z}}\Big)^{G}\lar["\epsilon_!"', "\simeq"] \dar\\
            \big(\uniqueMapX_!D_{\underline{X}}^{\mathbb{Z}}\big)^{tG} & \big(\uniqueMapX^{>1}_!\epsilon^*D_{\underline{X}}^{\mathbb{Z}}\big)^{tG}\simeq \Big(\widetilde{EG}\otimes F(EG_+,\uniqueMapX^{>1}_!\epsilon^*D_{\underline{X}}^{\mathbb{Z}})\Big)^{G}\lar["\epsilon_!"',"\simeq"]
        \end{tikzcd}
    \end{center}\normalsize
    where  the top right  triangle involving the fundamental classes commutes since by \cref{cor:inclusion_fixed_points_degree_one} and \cref{lem:poincare_duality_in_trivial_bottoms},  the map $\epsilon\colon \underline{X}^{>1}\rightarrow\underline{X}$ is $\underline{\module}_{\widetilde{EG}\otimes F(EG_+,\eilenbergMacLaneCoeff)}$--degree one. Note importantly that the map 
    \[\eilenbergMacLaneCoeff \xlongrightarrow{c}  \big(\widetilde{EG}\otimes \uniqueMapX^{>1}_!\epsilon^*D_{\underline{X}}^{\mathbb{Z}}\big)^{G}\simeq (\widetilde{EG}\otimes \ast_!\epsilon^*D_{\underline{X}}^{\mathbb{Z}})^{G}\oplus (\widetilde{EG}\otimes \uniqueMapY_!\epsilon^*D_{\underline{X}}^{\mathbb{Z}})^{G}\] is the fundamental class of the $\widetilde{EG}\otimes F(EG_+,\eilenbergMacLaneCoeff)$--Poincar\'{e} space $\terminalTCat\sqcup \infl^G_{G/C_p}\underline{Y}$, and so by \cref{lem:pd_map_infinite_coproducts}, it hits the algebra unit $1$ of $\pi_0(\widetilde{EG}\otimes \ast_!\epsilon^*D_{\underline{X}}^{\mathbb{Z}})^{G}$. Next, consider 
    \begin{center}\small
        \begin{tikzcd}
            \Big(\widetilde{EG}\otimes \ast_!\epsilon^*D_{\underline{X}}^{\mathbb{Z}}\Big)^{G} \rar\dar["\simeq"'] & \Big(\Sigma {EG}_+\otimes \ast_!\epsilon^*D_{\underline{X}}^{\mathbb{Z}}\Big)^{G}  \dar["\simeq"]\\
            W^{tG}\simeq \Big(\widetilde{EG}\otimes F(EG_+,\ast_!\epsilon^*D_{\underline{X}}^{\mathbb{Z}})\Big)^{G} \rar& \Big(\Sigma {EG}_+\otimes F(EG_+,\ast_!\epsilon^*D_{\underline{X}}^{\mathbb{Z}})\Big)^{G} \simeq \Sigma W_{hG}.
        \end{tikzcd}
    \end{center}
    where the vertical equivalences are by \cref{lem:technical_pi_0_isomorphism}. Since the bottom horizontal map is a $\pi_0$--isomorphism onto $\mathbb{Z}/p^k$ as in step (2),  the composition $\eilenbergMacLaneCoeff\xrightarrow{c} W^{tG} \rightarrow \Sigma W_{hG}$  is a $\pi_0$--surjection onto the abelian group $\mathbb{Z}/p^k$.
    \end{itemize}

    All in all, putting the three steps together, we see that the image of $1\in\pi_0\eilenbergMacLaneCoeff$ under the composition map $\pi_0\eilenbergMacLaneCoeff\rightarrow \pi_0\Sigma W_{hG}\cong \mathbb{Z}/p^k$ in \cref{eqn:complicated_blue_composition} is of the form $1+p\cdot a$ for some element $a\in\mathbb{Z}/p^k$, and hence is nonzero. This finishes the proof of the claim, and thus also of the theorem.
\end{proof}

\begin{rmk}
    Step (3) in the proof above might seem labyrinthine at first glance, but the basic idea leading to it is quite simple. Namely, we know always from \cref{lem:nonsingular_vanishing} that the map $\epsilon_! \colon (\uniqueMapX_!^{>1}\epsilon^*D_{\underline{X}}^{\mathbb{Z}})^{tG}\rightarrow (\uniqueMapX_!D_{\underline{X}}^{\mathbb{Z}})^{tG}$   is an equivalence. However, this equivalence has no control over the fundamental class $c\colon \eilenbergMacLaneCoeff\rightarrow (\uniqueMapX_!D_{\underline{X}}^{\mathbb{Z}})^{tG}$, essentially because $(-)^{tG}$ is only a lax symmetric monoidal functor. In contrast, the functor $\widetilde{EG}\otimes-$ is a symmetric monoidal one, and so it is more suited to lift the fundamental class by virtue of the theory of degree one  maps as encapsulated in \cref{cor:inclusion_fixed_points_degree_one}.
\end{rmk}

\appendix

\section{\texorpdfstring{$G$}{G}--stability for presentable \texorpdfstring{$G$}{G}-categories}\label{sec:G_stability_presentable_categories}

In this section we study presentable $G$-stable categories for compact Lie groups.
In \cite{Nardin2017Thesis}, Nardin defines for a finite group $G$-stability as a property of fibrewise stable $G$--categories, that roughly translates to requiring certain Wirthmüller isomorphisms to hold.
Instead of developing his theory for compact Lie groups in full generality, we take a different approach following the general phenomenon that certain properties of categories can be classified through idempotent algebras.
For example, a presentable category is stable if and only if it is a module over the category of spectra, see \cite{GepnerGrothNikolaus2015,CarmeliSchlankYanovski2021} for more examples of this type.
We define presentable $G$-stable categories as those presentable $G$--categories which are modules over the $G$--category $\myuline{\spectra}_G$ of $G$--spectra.

Recall that we say that a map $u \colon \unit \to A$ exhibits an object $A$ in a symmetric monoidal category $\category{C}$ as an idempotent object if the map $A \simeq \unit \otimes A \xrightarrow{u \otimes A} A \otimes A$ is an equivalence.
An idempotent object admits a unique structure of a commutative algebra in $\category{C}$ with $u$ as its unit map, see \cite[Proposition 4.8.2.9]{lurieHA}.
Now given an idempotent algebra $A$ in a symmetric monoidal category $\category{C}$, it is a property of an object $X \in \category{C}$ to be a module over $A$, in the sense that the forgetful functor $\module_{A}(\category{C}) \to \category{C}$ is fully faithful.
Its image is characterised by those $X \in \category{C}$ for which the unit map $X \rightarrow A \otimes X$ is an equivalence, or equivalently admits a right inverse,
see \cite[Proposition 4.8.2.10]{lurieHA}.
Taking this point of view, we observe that the $G$--category of $G$--spectra is an idempotent algebra in $\presentable^L_G$ and use this for the definition of presentable $G$-stable categories.

\begin{defn}[{\cite[Definition C.1, Corollary C.7]{GepnerMeier2023}, \cite[Definition 4.1]{Cnossen2023}}]\label{def:G_spectra}
    The category of $G$--spectra is defined as the formal inversion
    \begin{equation*}
    \spectra_G = \spc_{G,*}[\{ S^V \}^{-1}].
    \end{equation*}
    Here $\{ S^V \}^{-1}$ denotes the collection in $\spc_{G,*}$ consisting of representation spheres of all finite dimensional $G$-representations $V$.
\end{defn}
This means that it comes together with a symmetric monoidal colimit preserving functor $\Sigma^\infty_G \colon \spc_{G, *} \to \spectra_G$ sending all representation spheres $S^V$ to invertible objects and is inital among those.
More details on formal inversions of presentably symmetric monoidal categories can be found in \cite[Section 2]{Robalo2014} and \cite[Section 6.1]{Hoyois2017}.
By \cite[Corollary C.7]{GepnerMeier2023}, the canoncial map 
\begin{equation}\label{eq:spectra_as_colimit}
    \stab_{\{S^V\}}( \spc_{G, *}) \to  \spc_{G, *}[\{S^V\}^{-1}]
\end{equation}
is an equivalence.
Here, we denote for a presentably symmetric monoidal category $\category{C}$ together with a small collection of objects $S \subseteq \category{C}$ the stablisiation of $\category{C}$ at $S$ by $\stab_S(\category{C}) = \colim_{F \subseteq S \ \mathrm{finite}} \stab_{\bigotimes F}(\category{C})$, where for an element $x \in \category{C}$ we denote
\begin{equation*}
    \stab_x(\category{C}) = \colim \left( \category{C} \xrightarrow{- \otimes x} \category{C} \xrightarrow{- \otimes x} \dots \right).
\end{equation*}

\begin{defn}[{\cite[Definition 4.2]{Cnossen2023}}]
    We define the $G$--categories of pointed $G$--spaces and of (genuine) $G$--spectra as
\begin{equation*}
    \udl{\spc}_{G, *} = \spc_{G, *} \otimes_{\spc_G} \Omega \hspace{1cm} \text{and} \hspace{1cm} \myuline{\spectra}_G = \spectra_G \otimes_{\spc_G} \Omega,
\end{equation*}
where $\-- \otimes_{\spc_G} \Omega \colon \module_{\spc_G}(\presentable^L) \to \presentable_G^L$ is the symmetric monoidal colimit preserving embedding from \cref{cons:embedding_presentable_into_parametrised_presentable_categories}.
\end{defn}

\begin{lem}
    The $G$--categories $\udl{\spc}_{G,*}$ and $\myuline{\spectra}_G$ are idempotent algebras in $\presentable_G^L$.
\end{lem}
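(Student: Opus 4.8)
The claim is that $\udl{\spc}_{G,*}$ and $\myuline{\spectra}_G$ are idempotent algebras in $\presentable^L_G$. The plan is to reduce this to the analogous fact about $\spc_{G,*}$ and $\spectra_G$ in $\presentable^L$, and then transport it across the symmetric monoidal embedding $\-- \otimes_{\spc_G} \Omega \colon \module_{\spc_G}(\presentable^L) \hookrightarrow \presentable^L_G$ of \cref{cons:embedding_presentable_into_parametrised_presentable_categories}. First I would recall that being an idempotent object is equivalent to: there is a map $u \colon \unit \to A$ such that $A \simeq \unit \otimes A \xrightarrow{u \otimes A} A \otimes A$ is an equivalence; and that symmetric monoidal functors preserve idempotent objects, since they preserve the unit, the tensor product, and equivalences. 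So it suffices to show that $\spc_{G,*}$ and $\spectra_G$ are idempotent algebras in $\module_{\spc_G}(\presentable^L)$, or equivalently (since the forgetful functor $\module_{\spc_G}(\presentable^L) \to \presentable^L$ is symmetric monoidal and conservative, with $\spc_G$ the unit) that they are idempotent $\spc_G$-algebras in $\presentable^L$.

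For $\spc_{G,*}$: the functor $(-)_+ \colon \spc_G \to \spc_{G,*}$ is the unit map of a well-known idempotent algebra — this is the classical statement that pointed objects form a smashing localisation, classified by the idempotent algebra $\spc_{G,*}$, i.e. a presentable category tensored over $\spc_G$ is pointed if and only if it is a module over $\spc_{G,*}$. Concretely one checks that the canonical map $\spc_{G,*} \simeq \spc_G \otimes_{\spc_G} \spc_{G,*} \to \spc_{G,*} \otimes_{\spc_G} \spc_{G,*}$ is an equivalence, which follows because the right-hand side is the free pointed category on the already-pointed $\spc_{G,*}$, hence $\spc_{G,*}$ itself. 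For $\spectra_G$: by \cref{eq:spectra_as_colimit} we have $\spectra_G \simeq \spc_{G,*}[\{S^V\}^{-1}] \simeq \colim_{F \subseteq \{S^V\}\ \text{finite}} \stab_{\bigotimes F}(\spc_{G,*})$, and a standard argument — formal inversion is a smashing/idempotent operation — shows that the universal functor $\spc_{G,*} \to \spectra_G$ exhibits $\spectra_G$ as an idempotent $\spc_{G,*}$-algebra: indeed $\spectra_G \otimes_{\spc_{G,*}} \spectra_G$ again inverts all representation spheres and receives a map from $\spc_{G,*}$ with the same universal property, hence is $\spectra_G$. Composing the two smashing localisations $\spc_G \to \spc_{G,*} \to \spectra_G$ gives that $\spectra_G$ is idempotent over $\spc_G$ as well, since a composite of idempotent-algebra unit maps is again an idempotent-algebra unit map (the tensor-square condition for the composite follows formally from the two individual ones).

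Applying the symmetric monoidal embedding $\-- \otimes_{\spc_G} \Omega$ then transports these idempotent algebra structures to $\udl{\spc}_{G,*} = \spc_{G,*} \otimes_{\spc_G} \Omega$ and $\myuline{\spectra}_G = \spectra_G \otimes_{\spc_G}\Omega$ in $\presentable^L_G$, which is exactly the claim. The main obstacle I anticipate is the bookkeeping around the composite-of-idempotent-algebras step and making the "formal inversion is idempotent" argument precise in the parametrised-but-really-just-$\spc_G$-linear setting; both are standard (cf.\ \cite[Prop.\ 4.8.2.9, 4.8.2.10]{lurieHA} and the references on formal inversion cited after \cref{def:G_spectra}), but require care to phrase without circularity, since the definition of $\myuline{\spectra}_G$ itself went through $\-- \otimes_{\spc_G} \Omega$. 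An alternative, possibly cleaner, route would be to observe directly that $\udl{\spc}_{G,*}$ is the idempotent algebra classifying fibrewise pointedness of presentable $G$-categories and $\myuline{\spectra}_G$ the one classifying fibrewise stability — but invoking that may beg the question given that the paper defines $G$-stability via modules over $\myuline{\spectra}_G$, so the transport argument is the safer choice.
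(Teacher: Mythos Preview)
Your proposal is correct and follows essentially the same approach as the paper: show that $\spc_{G,*}$ and $\spectra_G$ are idempotent algebras in $\module_{\spc_G}(\presentable^L)$ and then transport via the symmetric monoidal functor $-\otimes_{\spc_G}\Omega$. The only cosmetic difference is that the paper obtains idempotency of $\spc_{G,*}$ by pushing the idempotent algebra $\spc_*\in\presentable^L$ along the symmetric monoidal functor $-\otimes\spc_G$, rather than arguing directly about pointedness as a smashing localisation.
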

\begin{proof}
    First note that $\spc_{G,*} \in \module_{\spc_G}(\presentable^L)$ is an idempotent algebra as the image of the idempotent algebra $\spc_* \in \presentable^L$ under the symmetric monoidal functor $\-- \otimes \spc_G \colon \presentable^L \to \module_{\spc_G}$.
    It now follows from the definition of formal inversion that $\spectra_G \in \module_{\spc_G}(\presentable^L)$ is an idempotent algebra.
    This proves the claim as the symmetric monoidal functor $\-- \otimes_{\spc_G} \Omega \colon \module_{\spc_G}(\presentable^L) \to \presentable_G^L$ preserves idempotent algebras.
\end{proof}

Having this at hand, we can now give our definition of $G$-stability.

\begin{defn}
    We say that a presentable $G$--category $\udl{\category{C}}$ is \textit{$G$-stable} if it is a module over the idempotent algebra $\myuline{\spectra}_G \in \calg(\presentable^L_G)$.
    We denote by $\presentable^{L, G-\stable}_G \subseteq \presentable^{L}_G$ the full subcategory on $G$-stable presentable $G$--categories.
    It is closed under all limits and colimits.
\end{defn}

Our goal is to prove the following characterisation of presentable $G$-stable categories.
\begin{thm}[Characterisation of {$G$}-stability]\label{prop:characterisation_G_stability}
    For a presentable $G$--category $\udl{\category{C}}$ the following are equivalent:
    \begin{enumerate}
        \item $\udl{\category{C}}$ is $G$-stable.
        \item $\udl{\category{C}}$ is fibrewise pointed and for all closed subgroups $H \le G$ and all finite dimensional $H$-representations $V$ tensoring with $S^V \in \spc_{H,*}$ induces an equivalence $\-- \otimes S^V \colon \category{C}^H \xrightarrow{\simeq} \category{C}^H$.
        \item $\udl{\category{C}}$ is fibrewise pointed and for all finite dimensional $G$-representation $V$ tensoring with $S^V \in \spc_{G,*}$ induces an equivalence $\-- \otimes S^V \colon \udl{\category{C}} \xrightarrow{\simeq} \udl{\category{C}}$.
    \end{enumerate}
\end{thm}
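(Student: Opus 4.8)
The plan is to prove the cycle of implications $(1) \Rightarrow (3) \Rightarrow (2) \Rightarrow (1)$. The implications $(1) \Rightarrow (3)$ and $(3) \Rightarrow (2)$ are the soft part, while $(2) \Rightarrow (1)$ is where the real work lies.

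For $(1) \Rightarrow (3)$: if $\udl{\category{C}}$ is $G$-stable, it is in particular a module over the idempotent algebra $\udl{\spc}_{G,*}$ (since $\udl{\spc}_{G,*} \to \myuline{\spectra}_G$ is a map of idempotent algebras, so any $\myuline{\spectra}_G$-module is an $\udl{\spc}_{G,*}$-module), hence fibrewise pointed. Moreover, $\myuline{\spectra}_G$ is obtained from $\udl{\spc}_{G,*}$ by formally inverting the representation spheres $S^V$, so in a $\myuline{\spectra}_G$-module every $S^V$ acts invertibly; concretely, $\myuline{\spectra}_G \simeq \spectra_G \otimes_{\spc_G} \Omega$ and the colimit presentation \cref{eq:spectra_as_colimit} exhibits $\spectra_G$ as $\stab_{\{S^V\}}(\spc_{G,*})$, from which one reads off that the unit $\udl{\category{C}} \to \myuline{\spectra}_G \otimes_{\udl{\spc}_{G,*}} \udl{\category{C}}$ being an equivalence forces each $\-- \otimes S^V$ to be an equivalence on $\udl{\category{C}}$. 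The implication $(3) \Rightarrow (2)$ will follow by restricting along closed subgroups: by \cref{lem:restriction_presentable_categories}, restriction preserves fibrewise pointedness, and by \cref{lem:parametrised_colimits_etale_base_change} the functor $\res^G_H(\-- \otimes S^V)$ is identified with $\-- \otimes S^{\res^G_H V}$; combined with the Peter--Weyl theorem, which says every finite dimensional $H$-representation $W$ is a summand of some $\res^G_H V$, one deduces that $\-- \otimes S^W$ is invertible on $\category{C}^H$ (note $S^W$ is a retract of $S^V$ for the decomposition, so invertibility of one transfers to the other). Actually I should be careful here --- $(3)$ is a statement about the $G$-category $\udl{\category{C}}$, and $(2)$ asks for it over all closed subgroups; the cleanest route is to observe that $(3)$ applied to the slice topoi $(\spc_G)_{/(\myuline{G/H})} \simeq \spc_H$ gives the statement for $\res^G_H \udl{\category{C}}$, since $\myuline{\spectra}_G$ restricts to $\myuline{\spectra}_H$.

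For the main implication $(2) \Rightarrow (1)$: we must show that if $\udl{\category{C}}$ is fibrewise pointed and all representation spheres act invertibly on all fixed-point categories $\category{C}^H$, then the unit map $u_{\udl{\category{C}}} \colon \udl{\category{C}} \to \myuline{\spectra}_G \otimes_{\udl{\spc}_{G,*}} \udl{\category{C}}$ (formed after first promoting $\udl{\category{C}}$ to an $\udl{\spc}_{G,*}$-module, which is possible by fibrewise pointedness) is an equivalence --- equivalently, admits a retraction, by \cite[Prop. 4.8.2.10]{lurieHA}. The strategy is to use the explicit colimit presentation of $\myuline{\spectra}_G$: since $\myuline{\spectra}_G \simeq \spectra_G \otimes_{\spc_G} \Omega$ and $\spectra_G \simeq \colim_{F \subseteq \{V\}\ \mathrm{finite}} \stab_{\bigotimes_{V\in F} S^V}(\spc_{G,*})$, the relative tensor product becomes $\myuline{\spectra}_G \otimes_{\udl{\spc}_{G,*}} \udl{\category{C}} \simeq \colim_{F} \stab_{\bigotimes_F S^V}(\udl{\category{C}})$ where $\stab_x(\udl{\category{C}}) = \colim(\udl{\category{C}} \xrightarrow{- \otimes x} \udl{\category{C}} \to \cdots)$. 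Since colimits and tensoring with $S^V$ in $\presentable^L_G$ are computed fibrewise over each orbit $\myuline{G/H}$ (using that the structure maps of a presentable $G$-category are colimit-preserving and the relevant tensor structure is pointwise), it suffices to check fibrewise: on each $\category{C}^H$, the hypothesis says $\-- \otimes S^V$ is already an equivalence for every $G$-representation $V$, so each transition map in the colimit diagram defining $\stab_{\bigotimes_F S^V}(\category{C}^H)$ is an equivalence, hence the canonical map $\category{C}^H \to \colim_F \stab_{\bigotimes_F S^V}(\category{C}^H)$ is an equivalence. Therefore $u_{\udl{\category{C}}}$ is a fibrewise equivalence, hence an equivalence of $G$-categories.

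\textbf{Main obstacle.} The delicate point is verifying that the relative tensor product $\myuline{\spectra}_G \otimes_{\udl{\spc}_{G,*}} \udl{\category{C}}$ and the filtered colimit presenting it are genuinely computed pointwise over the orbits, so that the fibrewise hypothesis in $(2)$ suffices. This requires knowing that the forgetful functor $\presentable^L_G \to \largecat_G$ (and the module-category constructions over it) preserve the relevant colimits and that the symmetric monoidal structure on $\presentable^L_G$ restricts correctly to the slices --- this is essentially the content of \cite[Section 8.2--8.3]{MartiniWolf2022Presentable} together with \cref{lem:lax_monoidal_base_change_etale_morphism}, but assembling it carefully (especially tracking that $\-- \otimes_{\spc_G} \Omega$ commutes with the formal inversion, which is where \cite[Corollary C.7]{GepnerMeier2023} enters) is the technical heart of the argument. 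A secondary subtlety is the passage in $(3) \Rightarrow (2)$ through the Peter--Weyl retraction: one must ensure that "$S^W$ a retract of $S^{\res^G_H V}$ in $\spc_{H,*}$" genuinely implies "$\-- \otimes S^W$ invertible given $\-- \otimes S^{\res^G_H V}$ invertible", which uses that tensoring with a retract of an invertible object (more precisely, that $S^W$ smashed with the complementary sphere gives $S^{\res^G_H V}$, up to the stable identification) forces invertibility --- a small but real point requiring the representation $V$ to split $H$-equivariantly, which is exactly Peter--Weyl.
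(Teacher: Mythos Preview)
Your proposal is correct, and the easy implications $(1)\Rightarrow(3)$ and $(3)\Rightarrow(2)$ match the paper's treatment (the paper labels the Peter--Weyl step as $2\Rightarrow 3$, but the argument written there is in fact $3\Rightarrow 2$; since $2\Rightarrow 3$ is immediate this is harmless). The genuine difference lies in the hard step. The paper proves $(3)\Rightarrow(1)$ via the auxiliary \cref{lem:restriction_spectra_spaces_equivalence}: setting $\udl{\category{D}}=\udl{\func}^L_G(\udl{\category{C}},\udl{\category{C}})$, it shows that restriction $\func^L_G(\myuline{\spectra}_G,\udl{\category{D}})\to\func^L_G(\udl{\spc}_G,\udl{\category{D}})$ is an equivalence, which produces the desired retraction of the unit $\udl{\category{C}}\to\myuline{\spectra}_G\otimes\udl{\category{C}}$ by adjunction. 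You instead compute $\myuline{\spectra}_G\otimes\udl{\category{C}}$ directly from the colimit presentation \cref{eq:spectra_as_colimit}.

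Your route works, but your ``main obstacle'' is a red herring: you do not need colimits in $\presentable^L_G$ to be computed fibrewise (and in any naive sense they are not). Once you observe that under $(3)$---which $(2)$ implies trivially---each transition map $-\otimes S^V\colon\udl{\category{C}}\to\udl{\category{C}}$ is already an equivalence of $G$-categories, the colimit diagram is constant with value $\udl{\category{C}}$ in \emph{any} category, so no fibrewise analysis is needed. The paper's approach is slightly slicker in that \cref{lem:restriction_spectra_spaces_equivalence} only requires checking invertibility on the single ordinary category $\func^L_G(\udl{\category{C}},\udl{\category{C}})$, whereas yours must track the colimit through the embedding $-\otimes_{\spc_G}\Omega$. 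One small expositional slip: you write ``$S^W$ is a retract of $S^V$'', which is false; rather $W$ is a summand of $\res^G_H V$, so $S^W\wedge S^{W'}\simeq S^{\res^G_H V}$ and invertibility of $-\otimes S^W$ follows from two-out-of-three for equivalences---you say this correctly in your final paragraph.
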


To clarify the statement, recall that the $\udl{\spc}_G$-module structure on $\udl{\category{C}}$ restricts to a $\spc_H$-module structure on $\category{C}^H$ which refines to a $\spc_{H, *}$-module structure as $\category{C}^H$ is pointed.
The map $\-- \otimes S^V \colon \category{C}^H \xrightarrow{\simeq} \category{C}^H$ is now just the multiplication map induced by this module structure.

For the proof of \cref{prop:characterisation_G_stability}, we need the following preliminary result.

\begin{lem}\label{lem:restriction_spectra_spaces_equivalence}
    Suppose that $\udl{\category{D}}$ is a presentable $G$--category  such that $\category{D}^G$ is pointed and $\-- \otimes S^V \colon \category{D}^G \to \category{D}^G$ is an equivalence for any finite dimensional $G$-representation $V$.
    Then the restriction map $\func_G^L(\myuline{\spectra}_G, \category{D}) \to \func_G^L(\udl{\spc}_G, \category{D})$ is an equivalence.
\end{lem}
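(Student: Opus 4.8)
The plan is to leverage the universal property of $\myuline{\spectra}_G$ as a formal inversion, applied fibrewise. The key point is that by \cref{cons:embedding_presentable_into_parametrised_presentable_categories}, both $\udl{\spc}_G \simeq \spc_G \otimes_{\spc_G}\Omega$ and $\myuline{\spectra}_G \simeq \spectra_G \otimes_{\spc_G}\Omega$ come from $\spc_G$-modules in $\presentable^L$, and the symmetric monoidal colimit-preserving embedding $-\otimes_{\spc_G}\Omega$ has a right adjoint $\Gamma^{\mathrm{lin}}$ refining the global sections functor. So I would first reduce the statement about $\func^L_G$ between the two $G$-categories to a statement about $\spc_G$-linear colimit-preserving functors out of $\spc_G$ and $\spectra_G$ into the global sections, or more precisely, I would use that $\func^L_G(\myuline{\spectra}_G,\udl{\category{D}})$ and $\func^L_G(\udl{\spc}_G,\udl{\category{D}})$ are controlled by the adjunction $(-\otimes_{\spc_G}\Omega)\dashv \Gamma^{\mathrm{lin}}$ and the formal-inversion universal property $\spc_{G,*}[\{S^V\}^{-1}]\simeq \spectra_G$. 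The statement to prove then becomes: the restriction map along $\spc_G \to \spectra_G$ (adjoint to pointing and then inverting the representation spheres) induces an equivalence on colimit-preserving $\spc_G$-linear functors into $\Gamma^{\mathrm{lin}}(\udl{\category{D}})$, provided the latter has the right inversion property.

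\textbf{Key steps.} First I would identify $\func^L_G(\udl{\spc}_G,\udl{\category{D}})$ with $\Gamma(\udl{\category{D}})$ (as $\udl{\spc}_G$ is the unit of $\presentable^L_G$) and $\func^L_G(\myuline{\spectra}_G,\udl{\category{D}})$ with the full subcategory of $\Gamma(\udl{\category{D}})$ on those objects at which tensoring with all representation spheres is invertible; this uses that $\myuline{\spectra}_G$ is an idempotent algebra, so being a module is a property, together with \cite[Proposition 4.8.2.10]{lurieHA}. Concretely, via the adjunction $(-\otimes_{\spc_G}\Omega)\dashv\Gamma^{\mathrm{lin}}$ and \cref{eq:spectra_as_colimit} expressing $\spectra_G$ as the filtered colimit $\stab_{\{S^V\}}(\spc_{G,*})$, we get
\[
\func^L_G(\myuline{\spectra}_G,\udl{\category{D}}) \simeq \func^L_{\module_{\spc_G}(\presentable^L)}(\spectra_G, \Gamma^{\mathrm{lin}}(\udl{\category{D}})) \simeq \lim_{F\subseteq\{S^V\}\ \mathrm{finite}} \func^L(\stab_{\bigotimes F}(\spc_{G,*}),\Gamma^{\mathrm{lin}}(\udl{\category{D}})),
\]
and each $\stab_x$-stabilisation computes, on colimit-preserving functors, the full subcategory of the target on $x$-periodic objects. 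Then I would observe that the hypothesis — $\category{D}^G$ pointed and $-\otimes S^V$ an equivalence on $\category{D}^G$ for every finite-dimensional $G$-representation $V$ — says exactly that $\Gamma^{\mathrm{lin}}(\udl{\category{D}})$ already lies in this periodicity-defined full subcategory at the relevant object (the unit, or rather the whole category's global sections), so that the inclusion of $\func^L_G(\myuline{\spectra}_G,\udl{\category{D}})$ into $\func^L_G(\udl{\spc}_G,\udl{\category{D}})$ is the identity on the relevant piece, hence an equivalence. Finally I would check the restriction map in the statement is indeed this inclusion, which follows from functoriality of formal inversion: $\myuline{\spectra}_G$ is characterised as initial among $\udl{\spc}_{G,*}$-modules in which the representation spheres act invertibly, so precomposition along $\udl{\spc}_G\to\udl{\spc}_{G,*}\to\myuline{\spectra}_G$ is the claimed restriction.

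\textbf{Main obstacle.} The technical heart is handling the \emph{parametrised} (i.e.\ $\spc_G$-linear, not merely $\spc$-linear) module structure correctly: one must make sure that inverting representation spheres of all finite-dimensional $G$-representations $V$ at the level of $\spectra_G\in\module_{\spc_G}(\presentable^L)$ translates, after applying $-\otimes_{\spc_G}\Omega$ and passing to $\Gamma^{\mathrm{lin}}$, precisely into the fibrewise condition on $\category{D}^G$ rather than something weaker or stronger. Here I expect to lean on the fact that $\Gamma^{\mathrm{lin}}$ refines $\Gamma$ (so $\Gamma^{\mathrm{lin}}(\udl{\category{D}})$ has underlying category $\category{D}^G$ with its $\spc_G$-module structure), and on \cite[Corollary C.7]{GepnerMeier2023}/\cref{eq:spectra_as_colimit} to reduce the formal inversion to the more tractable iterated stabilisation $\colim(\category{C}\xrightarrow{-\otimes S^V}\category{C}\to\cdots)$, whose colimit-preserving-functor-out behaviour is standard. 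The remaining bookkeeping — that a filtered colimit of these stabilisations produces exactly the full subcategory on objects periodic with respect to all $S^V$ simultaneously — is routine once the module-theoretic setup is pinned down.
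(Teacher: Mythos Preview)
Your approach is essentially the same as the paper's: both reduce via the adjunction $(-\otimes_{\spc_G}\Omega)\dashv\Gamma^{\mathrm{lin}}$ to a statement about $\spc_G$-linear (respectively $\spc_{G,*}$-linear) left adjoints out of $\spc_{G,*}$ and $\spectra_G$ into $\category{D}^G$, and then invoke the colimit description \cref{eq:spectra_as_colimit} of $\spectra_G$ together with the hypothesis that each $-\otimes S^V$ is already invertible on $\category{D}^G$. The paper splits this cleanly into two steps, first handling pointedness ($\udl{\spc}_G$ versus $\udl{\spc}_{G,*}$) and then the inversion ($\udl{\spc}_{G,*}$ versus $\myuline{\spectra}_G$), whereas you fold these together; but the content is the same.

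One remark: your invocation of the idempotent-algebra property of $\myuline{\spectra}_G$ and \cite[Proposition 4.8.2.10]{lurieHA} to identify $\func^L_G(\myuline{\spectra}_G,\udl{\category{D}})$ with a ``full subcategory of $\Gamma(\udl{\category{D}})$ on objects at which tensoring is invertible'' is misplaced. That proposition concerns $\udl{\category{D}}$ being a $\myuline{\spectra}_G$-module in $\presentable^L_G$, not individual objects of $\category{D}^G$; and invertibility of $-\otimes S^V$ is a property of the whole category $\category{D}^G$, not a condition cutting out a full subcategory of objects. Your ``Concretely'' paragraph, which computes $\func^L_G(\myuline{\spectra}_G,\udl{\category{D}})$ as an inverse limit over the stabilisation tower and observes that the transition maps are equivalences under the hypothesis, is the actual argument and stands on its own --- drop the idempotent-algebra detour.
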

\begin{proof}
    Using that $\-- \otimes \udl{\spc}_G \colon \presentable^L \to \presentable^L_G$ is left adjoint to $\Gamma$, we obtain an equivalence
    \begin{equation*}
        \func_G^L(\udl{\spc}_{G, *}, \udl{\category{D}}) \simeq \func^L(\spc_*, \category{D}^G) \xrightarrow{\simeq} \func^L(\spc, \category{D}^G) \simeq \func_G^L(\udl{\spc}_{G}, \udl{\category{D}})
    \end{equation*}
    where the middle equivalence uses that $\category{D}^G$ is pointed.
    Similarly, the restriction map
    \begin{equation*}
        \func_G^L(\myuline{\spectra}_{G}, \udl{\category{D}}) \simeq \func^L_{\spc_{G, *}}(\spectra_G, \category{D}^G) \xrightarrow{\simeq} \func^L_{\spc_{G,*}}(\spc_{G, *}, \category{D}^G) \simeq \func_G^L(\udl{\spc}_{G,*}, \udl{\category{D}})
    \end{equation*}
    is an equivalence by employing the colimit description of $\spectra_G = \spc_{G,*}[\{S^V\}^{-1}]$ from \cref{eq:spectra_as_colimit}.
\end{proof}

\begin{proof}[Proof of \cref{prop:characterisation_G_stability}]
    \underline{$1 \implies 2$:} Observe that $\myuline{\spectra}_G$ is fibrewise pointed and satisfies the assumption on invertible actions of representations spheres as $\myuline{\spectra}_G(G/H) = \spectra_H$ is the formal inversion of $\spc_{H,*}$ at representation spheres of finite dimensional $H$-represenations.
    But this also holds for any $G$-stable category $\udl{\category{C}}$ as $\category{C}^H$ then is a module over $\spectra_H$.

    \underline{$2 \implies 3$:}
    Recall that, by the Peter-Weyl theorem, for any finite dimensional $H$-represenation $W$ there is a finite dimensional $G$-representation $V$ such that $W$ is a summand of $\res_H^G V$. 
    In particular, if $\-- \otimes S^V \colon \category{C}^H \xrightarrow{\simeq} \category{C}^H$ is an equivalence, this implies that $\-- \otimes S^{\res_H^G V} \colon \category{C}^H \xrightarrow{\simeq} \category{C}^H$ is an equivalence.
    But then also $\-- \otimes S^{W}$ is an equivalence

    \underline{$3 \implies 1$:}
    We want to construct a right inverse $\udl{\category{C}} \otimes \myuline{\spectra}_G \to \udl{\category{C}}$ to the unit map.
    By adjunction, this is equivalent to finding a factorisation of the unit map $\udl{\spc}_G \to \udl{\func}^L_G(\udl{\category{C}}, \udl{\category{C}})$ through the unit map $\udl{\spc}_G \to \myuline{\spectra}_G$.
    For this, we apply \cref{lem:restriction_spectra_spaces_equivalence} for $\udl{\category{D}} = \udl{\func}^L_G(\udl{\category{C}}, \udl{\category{C}})$.
    It thus remains to show that $\func^L_G(\udl{\category{C}}, \udl{\category{C}}) = \category{D}^G$ is pointed and tensoring with representation spheres is invertible.
    The assumption on $\udl{\category{C}}$ being fibrewise pointed implies that $\func^L_G(\udl{\category{C}}, \udl{\category{C}})$ is pointed.
    Furthermore, $S^V$ acts invertibly on $\func^L_G(\udl{\category{C}}, \udl{\category{C}})$ as it does so on $\udl{\category{C}}$.
\end{proof}

\section{Reflecting pushout squares}\label{section:reflecting_pushout_squares}

Let $A \rightarrow B \rightarrow B/A$ be a cofibre sequence in a stable category. Recall that there is a natural identification of the cofibre of $B \rightarrow B/A$ as follows, constructed as follows. Consider the diagram
\begin{equation}
\begin{tikzcd}
A \arrow[r] \arrow[rr, "\simeq 0", bend left] & B \arrow[r] \arrow[rr, "\simeq 0"', bend right] & B/A \arrow[r] & \cofib(B \rightarrow B/A)
\end{tikzcd}
\end{equation}
and note that the two nullhomotopies of bent arrows - coming from them being the structure of the cofibre sequences - define a map $\Sigma  A \rightarrow \cofib(B/A)$, which turns out to be an equivalence. This equivalence is natural in maps of cofibre sequences, and we will always use it to identify $\cofib(B \rightarrow B/A)$ with $\Sigma A$.

\begin{lem}
    \label{lem:black_magic}
    Consider a pushout square
    \begin{equation*}
    \begin{tikzcd}
        A \ar[r] \ar[d]
        & B \ar[d] \\
        C \ar[r]
        & D
    \end{tikzcd}
    \end{equation*}
    in a stable category.
    Then the two composites \[\phi_C \colon D \to D/C \simeq B/A \to \Sigma A \hspace{3mm}\text{ and }\hspace{3mm} \phi_B \colon D \to D/B \simeq C/A \to \Sigma A\] coming from the following diagram satisfy $\phi_B \simeq \pm \phi_C$.    
    \begin{equation*}
    \begin{tikzcd}
        A \ar[r] \ar[d]
        & B \ar[d] \ar[r]
        & B/A \ar[r] \ar[d, "\simeq"]
        & \Sigma A 
        \\
        C \ar[r] \ar[d]
        & D \ar[r] \ar[d]
        & D/C 
        \\
        C/A \ar[r, "\simeq"] \ar[d]
        & D/B 
        \\
        \Sigma A
    \end{tikzcd}
    \end{equation*}
\end{lem}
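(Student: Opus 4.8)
The plan is to reduce the claim to the statement that a certain square of spectra is commutative up to sign, which in turn follows from a standard octahedron-type argument using the identification of $\cofib(B \to B/A)$ with $\Sigma A$ recalled just above. First I would observe that everything in sight is natural, so that the two maps $\phi_B, \phi_C \colon D \to \Sigma A$ can be assembled into a single diagram: both arise by taking cofibres in two compatible directions of the original pushout square, which we may regard as an $\id$-indexed diagram of cofibre sequences. Concretely, from the pushout square we obtain the $3\times 3$ diagram in which all rows and columns are cofibre sequences (using that pushouts agree with cofibres in a stable category and that the total cofibre of a pushout square vanishes); the object in the bottom-right position is then canonically $\Sigma A$ read two ways, once via the bottom row (giving $\phi_C$ through $D/C \simeq B/A$) and once via the right column (giving $\phi_B$ through $D/B \simeq C/A$).

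The key step is to make precise that these two ``reading directions'' of the iterated cofibre $\Sigma A$ differ only by the sign coming from swapping the two suspension coordinates. I would do this by recalling that for a cofibre sequence $X \to Y \to Z$ the connecting map $Z \to \Sigma X$ is defined using the nullhomotopy $X \to Y \to Z \simeq 0$, and that iterating this construction in the two orders of a bicartesian square corresponds to the two composites $\Sigma A \to \Sigma A$ obtained by rotating a distinguished triangle, which are classically $\pm\id$ (see e.g. the sign in the octahedral axiom, or the fact that the ``rotation by $180^\circ$'' on a triangle acts by $-1$). Thus the map $D/C \simeq B/A \to \Sigma A$ from the bottom row and the map $D/B \simeq C/A \to \Sigma A$ from the right column, both composed with $D \to D/C$ resp. $D \to D/B$, agree after precomposing with the common map out of $D$, up to the sign $\pm 1$. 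I would phrase this cleanly by factoring each $\phi$ through the total cofibre construction: the total cofibre of the square is $0$, and the two induced maps $D \to \Sigma A$ are precisely the two boundary maps of the two evident cofibre sequences presenting this vanishing, hence equal up to sign.

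I expect the main obstacle to be purely bookkeeping: tracking the precise sign conventions for connecting maps and ensuring the two nullhomotopies used to define $\phi_B$ and $\phi_C$ are the ones coming from the bicartesian structure of the square (rather than unrelated choices), so that the comparison is genuinely $\pm\id$ and not some other automorphism of $\Sigma A$. Since the statement only claims agreement up to sign, I would not attempt to pin down the exact sign; it suffices to note that both $\phi_B$ and $\phi_C$ are, up to the canonical identifications drawn in the diagram, the connecting map of one and the same cofibre sequence $A \to \text{(pushout data)} \to 0$ traversed in opposite orders, and appeal to the standard fact that such a reversal costs at most a sign. This is exactly the input needed in \cref{cons:red_blue_routes}, where the lemma is invoked to compare the red and blue composites.
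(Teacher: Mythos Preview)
Your approach is genuinely different from the paper's, and the comparison is instructive. The paper avoids diagram-chasing and sign-tracking entirely: it observes that there is a \emph{universal} pushout square, living in the stable category $\cospancategory(\spectra)^f$ freely generated by a span, and that every pushout square in any stable category is its image under an exact functor. In that universal case one computes $\pi_0\map(D,\Sigma A)\cong\bbZ$, so $\phi_B,\phi_C$ correspond to integers $n_B,n_C$; testing on the suspension square of $\sphere$ (where each $\phi$ is visibly an equivalence) forces $n_B,n_C\in\{\pm1\}$, and the claim follows. No octahedra, no coordinate swaps, no bookkeeping.

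Your $3\times 3$/octahedral route is the classical one and can certainly be made to work, but the proposal does not actually execute the key step. The sentence ``$\phi_B$ and $\phi_C$ are the connecting map of one and the same cofibre sequence $A\to(\text{pushout data})\to 0$ traversed in opposite orders'' is not correct as written: they are boundary maps of two \emph{different} cofibre sequences ($A\to B\to B/A$ and $A\to C\to C/A$), precomposed with different quotient maps out of $D$, and comparing them is precisely the content of the lemma. Also, in the $3\times 3$ grid of cofibres the bottom-right entry is $0$, not $\Sigma A$; to see $\Sigma A$ you must extend by one more Puppe step, and it is in that extension that the coordinate-swap sign appears. To finish your argument rigorously one would, for instance, identify each $\phi$ with the Mayer--Vietoris boundary of $A\to B\oplus C\to D$ up to sign, or run the extended $3\times 3$ diagram carefully; the bare appeal to ``rotation by $180^\circ$ acts by $-1$'' does not by itself supply this comparison. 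The paper's universality trick is designed exactly to bypass this work.
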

\begin{proof}
    To prove this, we consider the universal example of a span in a stable category.
    Denote by $\spancategory(\category{C}) = \func(\bullet \xleftarrow{} \bullet \xrightarrow{} \bullet, \category{C})$ the category of spans in the stable category $\category{C}$.
    Note that there is an equivalence
    \begin{align*}
        \spancategory(\category{C}) &= \func(\bullet \xleftarrow{} \bullet \xrightarrow{} \bullet, \category{C}) \\
        &\simeq \func(\bullet \xleftarrow{} \bullet \xrightarrow{} \bullet, \funcex(\spectra^\omega, \category{C})) \\
        &\simeq \funcex(\spectra^\omega \otimes (\bullet \xleftarrow{} \bullet \xrightarrow{} \bullet), \category{C}),
    \end{align*}
    where $\spectra^\omega \otimes (\bullet \xleftarrow{} \bullet \xrightarrow{} \bullet)$ denotes the tensoring of $\catex$ over $\cat$.
    The construction of the tensoring in \cite[Section 6.4]{Hermitian1} shows that $\spectra^\omega \otimes (\bullet \xleftarrow{} \bullet \xrightarrow{} \bullet)$ is given by the stable subcategory $\cospancategory(\spectra)^f$ of $\cospancategory(\spectra)$ generated by the three objects in the span \cref{diag:black_magic_pushout_universal_case} (which are given as the values of the left Kan extensions of the inclusions of the individual objects in the category $(\bullet \xrightarrow{} \bullet \xleftarrow{} \bullet)$ at the sphere).
    Using this description, the equivalence $\funcex(\cospancategory(\spectra)^f, \category{C}) \simeq \spancategory(\category{C})$ is given by evaluation at the universal span
    \newsavebox{\cospanoso}
    \begin{lrbox}{\cospanoso}
        \begin{tikzcd}[row sep = small, column sep = small]
            & \sphere & \\
            0 \ar[ur] & & 0 \ar[ul]
        \end{tikzcd}
    \end{lrbox}
    \newsavebox{\cospansso}
    \begin{lrbox}{\cospansso}
        \begin{tikzcd}[row sep = small, column sep = small]
            & \sphere & \\
            \sphere \ar[ur, equal] & & 0 \ar[ul]
        \end{tikzcd}
    \end{lrbox}
    \newsavebox{\cospanoss}
    \begin{lrbox}{\cospanoss}
        \begin{tikzcd}[row sep = small, column sep = small]
            & \sphere & \\
            0 \ar[ur] & & \sphere \ar[ul, equal]
        \end{tikzcd}
    \end{lrbox}
    \newsavebox{\cospansss}
    \begin{lrbox}{\cospansss}
        \begin{tikzcd}[row sep = small, column sep = small]
            & \sphere & \\
            \sphere \ar[ur, equal] & & \sphere \ar[ul, equal]
        \end{tikzcd}
    \end{lrbox}
    \newsavebox{\cospansoo}
    \begin{lrbox}{\cospansoo}
        \begin{tikzcd}[row sep = small, column sep = small]
            & 0 & \\
            \sphere \ar[ur] & & 0 \ar[ul]
        \end{tikzcd}
    \end{lrbox}
    \newsavebox{\cospanoos}
    \begin{lrbox}{\cospanoos}
        \begin{tikzcd}[row sep = small, column sep = small]
            & 0 & \\
            0 \ar[ur] & & \sphere \ar[ul]
        \end{tikzcd}
    \end{lrbox}
    \newsavebox{\cospanosigmaso}
    \begin{lrbox}{\cospanosigmaso}
        \begin{tikzcd}[row sep = small, column sep = small]
            & \Sigma \sphere & \\
            0 \ar[ur] & & 0 \ar[ul]
        \end{tikzcd}
    \end{lrbox}
    \begin{equation} \label{diag:black_magic_pushout_universal_case}
    \begin{tikzcd}
        \left(\usebox{\cospanoso}\right) \ar[r] \ar[d] 
        & \left(\usebox{\cospanoss}\right)
        \\
        \left(\usebox{\cospansso}\right).
    \end{tikzcd}
    \end{equation}
    It suffices to prove the claim in this specific case.
    Any span in $\category{C}$ is the image of this universal span under an exact functor and thus also satisfies the statment of the lemma.
    
    The possibilites for $\phi_B$ and $\phi_C$ are limited, since
    \begin{align*}
        &\pi_0 \map\left(\usebox{\cospansss}, \usebox{\cospanosigmaso} \right)  \\ \simeq &\pi_0 \map\left(\sphere, \lim\left( \usebox{\cospanosigmaso} \right) \right)  \simeq \pi_0\map(\sphere,\sphere) \simeq \bbZ.
    \end{align*}
    so $\phi_B$ and $\phi_C$ identify with integers $n_B$ and $n_C$. Note that if $n_B$ is divisible by $k \in \bbZ$, then $\phi_B$ is divisible by $k$ for any pushout in any stable category. But in the case of the following pushout in $\spectra$
    \begin{equation}
        \label{diag:suspension_pushout}
        \begin{tikzcd}
            \sphere \ar[r] \ar[d] & 0 \ar[d] \\
            0 \ar[r] & \Sigma \sphere
        \end{tikzcd}
    \end{equation}
    the map $\phi_B$ is clearly an equivalence, so $n_B =\pm_1$. The same holds for $\phi_C$, so by lack of alternatives we see $\phi_B = \pm \phi_C$.
\end{proof}

\begin{rmk}
    A more careful analysis of \cref{diag:suspension_pushout} in fact yields that $\phi_B = -\phi_C$. 
\end{rmk}

\emergencystretch 4em

\printbibliography

@article{Haine2022,
      title={From nonabelian basechange to basechange with coefficients}, 
      author={Peter J. Haine},
      year={2022},
     journal = {\href{https://arxiv.org/abs/2108.03545}{\underline{arXiv:2108.03545}}},
}

@article{Jones,
 author = {Lowell Jones},
 journal = {Annals of Mathematics},
 number = {1},
 pages = {52--68},
 publisher = {Annals of Mathematics},
 title = {The Converse to the Fixed Point Theorem of P. A. Smith: I},
 volume = {94},
 year = {1971}
}

@article {Klein2007,
    AUTHOR = {Klein, John R.},
     TITLE = {The dualizing spectrum. {II}},
   JOURNAL = {Algebr. Geom. Topol.},
  FJOURNAL = {Algebraic \& Geometric Topology},
    VOLUME = {7},
      YEAR = {2007},
     PAGES = {109--133}
}

@article {Bieri_Eckmann,
    AUTHOR = {Bieri, R. and Eckmann, B.},
     TITLE = {Groups with homological duality generalizing {P}oincar\'{e}
              duality},
   JOURNAL = {Invent. Math.},
    VOLUME = {20},
      YEAR = {1973},
     PAGES = {103--124}
}

@article{ScholzeSix,
    AUTHOR = {Scholze, Peter},
    TITLE = {Lecture notes on Six-Functor-Formalisms},
    YEAR = {2023},
    journal = {\href{https://people.mpim-bonn.mpg.de/scholze/SixFunctors.pdf}{Lecture notes}},
}

@article {Illman83,
    AUTHOR = {Illman, S\"{o}ren},
     TITLE = {The equivariant triangulation theorem for actions of compact
              {L}ie groups},
   JOURNAL = {Math. Ann.},
  FJOURNAL = {Mathematische Annalen},
    VOLUME = {262},
      YEAR = {1983},
    NUMBER = {4},
     PAGES = {487--501}
}

@article {Klein2001,
    AUTHOR = {Klein, John R.},
     TITLE = {The dualizing spectrum of a topological group},
   JOURNAL = {Math. Ann.},
    VOLUME = {319},
      YEAR = {2001},
    NUMBER = {3},
     PAGES = {421--456}
}

@book {MaySigurdsson2006,
    AUTHOR = {May, J. P. and Sigurdsson, J.},
     TITLE = {Parametrized homotopy theory},
    SERIES = {Mathematical Surveys and Monographs},
    VOLUME = {132},
 PUBLISHER = {American Mathematical Society, Providence, RI},
      YEAR = {2006},
     PAGES = {x+441}
}

@article {Palais57,
    AUTHOR = {Palais, Richard S.},
     TITLE = {Imbedding of compact, differentiable transformation groups in
              orthogonal representations},
   JOURNAL = {J. Math. Mech.},
  FJOURNAL = {J. Math. Mech.},
    VOLUME = {6},
      YEAR = {1957},
     PAGES = {673--678}
}

@article {CarmeliSchlankYanovski2021,
    AUTHOR = {Carmeli, S. and Schlank, T.M. and Yanovski, L.},
     TITLE = {Ambidexterity and height},
   JOURNAL = {Adv. Math.},
    VOLUME = {385},
      YEAR = {2021},
     PAGES = {Paper No. 107763, 90}
}

@article {Davis98,
    AUTHOR = {Davis, Michael W.},
     TITLE = {The cohomology of a {C}oxeter group with group ring
              coefficients},
   JOURNAL = {Duke Math. J.},
  FJOURNAL = {Duke Mathematical Journal},
    VOLUME = {91},
      YEAR = {1998},
    NUMBER = {2},
     PAGES = {297--314}
}

@article {GepnerMeier2023,
    AUTHOR = {Gepner, David and Meier, Lennart},
     TITLE = {On equivariant topological modular forms},
   JOURNAL = {Compos. Math.},
    VOLUME = {159},
      YEAR = {2023},
    NUMBER = {12},
     PAGES = {2638--2693}
}

@article {Hoyois2017,
    AUTHOR = {Hoyois, Marc},
     TITLE = {The six operations in equivariant motivic homotopy theory},
   JOURNAL = {Adv. Math.},
    VOLUME = {305},
      YEAR = {2017},
     PAGES = {197--279}
}

@article {CarmeliSchlankYanovski2022,
    AUTHOR = {Carmeli, S. and Schlank, T.M. and Yanovski, L.},
     TITLE = {Ambidexterity in chromatic homotopy theory},
   JOURNAL = {Invent. Math.},
    VOLUME = {228},
      YEAR = {2022},
    NUMBER = {3},
     PAGES = {1145--1254}
}

@article{costenoble2017equivariant,
    title={The equivariant Spivak normal bundle and equivariant surgery for compact Lie groups}, 
    author={Costenoble, S.R. and Waner, S.},
    year={2017},
    journal = {\href{https://arxiv.org/abs/1705.10909}{\underline{arXiv:1705.10909}}}
}

@article {zouNonabelianPD,
    AUTHOR = {Zou, F.},
     TITLE = {A geometric approach to equivariant factorization homology and nonabelian Poincar\'{e} duality},
   JOURNAL = {Math. Z.},
    VOLUME = {303},
      YEAR = {2023},
    NUMBER = {98}
}

@article{horevKlangZou,
  title={{Equivariant nonabelian Poincar\'{e} duality and equivariant
factorization homology of Thom spectra}},
  author={Horev, A. and Klang, I. and Zou, F.},
  year ={2024},
  journal = {\href{https://arxiv.org/abs/2006.13348}{\underline{arXiv:2006.13348}}}
}

@book {costenoble_waner_book,
    AUTHOR = {Costenoble, S.R. and Waner, S.},
     TITLE = {Equivariant ordinary homology and cohomology},
    SERIES = {Lecture Notes in Mathematics},
    VOLUME = {2178},
 PUBLISHER = {Springer--Verlag},
      YEAR = {2016}
}

@article {costenoble_waner_finite_group_PD,
    AUTHOR = {Costenoble, S.R. and Waner, S.},
     TITLE = {Equivariant Poincar\'{e} duality},
   JOURNAL = {Michigan Math. J.},
    VOLUME = {39},
      YEAR = {1992},
    NUMBER = {2},
     PAGES = {325--351}
}

@book {tomDieck_Transformation_and_Repthy,
    AUTHOR = {tom Dieck, T.},
     TITLE = {Transformation groups and representation theory},
    SERIES = {Lecture Notes in Mathematics},
    VOLUME = {766},
 PUBLISHER = {Springer, Berlin},
      YEAR = {1979},
     PAGES = {viii+309},
}

@book {tomDieck_Transformation_Gruyter,
    AUTHOR = {tom Dieck, T.},
     TITLE = {Transformation groups},
    SERIES = {de Gruyter Studies in Mathematics},
    VOLUME = {8},
      YEAR = {1987}
}

@book {lewis_may_steinberger,
    AUTHOR = {Lewis, L.G. and May, J.P. and Steinberger, M.},
     TITLE = {Equivariant stable homotopy theory},
    SERIES = {Lecture Notes in Mathematics},
    VOLUME = {1213},
 PUBLISHER = {Springer--Verlag},
      YEAR = {1986}
}

@book {greenlees-may_tate,
    AUTHOR = {Greenlees, J.P.C. and May, J.P.},
     TITLE = {Generalized Tate cohomology},
    VOLUME = {113 (543)},
 PUBLISHER = {Mem. Amer. Math. Soc.},
      YEAR = {1995},
      PAGES = {viii+178}
}

@article{segalNice,
  title={Equivariant stable homotopy theory},
  author={Segal, G.B.},
  journal={Actes, Congres Intern. Math.},
  volume={Tome 2},
  year={1970},
  pages={59--63}
}

@article{Cnossen2023,
      title={Twisted ambidexterity in equivariant homotopy theory}, 
      author={Bastiaan Cnossen},
      year={2023},
      journal = {\href{https://arxiv.org/abs/2303.00736}{\underline{arXiv:2303.00736}}},
}

@article {Hermitian1,
    AUTHOR = {Calm\`es, Baptiste and Dotto, Emanuele and Harpaz, Yonatan
              and Hebestreit, Fabian and Land, Markus and Moi, Kristian
              and Nardin, Denis and Nikolaus, Thomas 
              and Steimle, Wolfgang},
    TITLE = {Hermitian {K}-theory for stable {$\infty$}-categories {I}:
              {F}oundations},
    JOURNAL = {Selecta Math. (N.S.)},
    VOLUME = {29},
    YEAR = {2023},
    NUMBER = {1},
    PAGES = {Paper No. 10, 269}
}

@article{nardinExposeIV,
  title={{Parametrized higher category theory and higher algebra: Expos\'{e} IV – Stability with respect to an
orbital $\infty$--category}},
  author={Nardin, D.},
  year ={2016},
  journal = {\href{https://arxiv.org/abs/1608.07704}{\underline{arXiv:1608.07704}}}
}

@article{expose1Elements,
  title={{Parametrized Higher Category Theory and Higher Algebra: Exposé I -- Elements of Parametrized Higher Category Theory}},
  author={Barwick, C. and Dotto, E. and Glasman, S. and Nardin, D. and Shah, J.},
  year ={2016},
  journal = {\href{https://arxiv.org/abs/1608.03657}{arXiv:1608.03657}}
}

@article{parametrisedIntroduction,
      title={{Parametrized Higher Category Theory and Higher Algebra: A General Introduction}},
        author={Barwick, C. and Dotto, E. and Glasman, S. and Nardin, D. and Shah, J.},
      year={2016},
      journal={\href{https://arxiv.org/abs/1608.03654}{arXiv:1608.03654}}
}

@article{shahThesis,
  title={{Parametrized Higher Category Theory}},
  author={Shah, J.},
  journal = {Alg. Geom. Top.},
  year = {2023}, 
    volume = {23},
    number = {2},
    pages = {509--644}
}

@article {ewingStong,
    AUTHOR = {Ewing, J. and Stong, R.},
     TITLE = {Group actions having one fixed point},
   JOURNAL = {Math. Z.},
    VOLUME = {191},
      YEAR = {1986},
     PAGES = {159--164}
}

@article{HopkinsLurie2013,
    title = {Ambidexterity in K(n)-Local Stable Homotopy Theory},
    author = {Hopkins, Michael and Lurie, Jacob},
    year = {2013},
    journal = {\href{https://people.math.harvard.edu/~lurie/papers/Ambidexterity.pdf}{available on author's webpage}}
}

@article {tomDieck86,
    AUTHOR = {tom Dieck, T.},
     TITLE = {Dimension functions of homotopy representations},
   JOURNAL = {Bull. Soc. Math. Belg. S\'{e}r. A},
  FJOURNAL = {Bulletin de la Soci\'{e}t\'{e} Math\'{e}matique de Belgique.
              S\'{e}rie A},
    VOLUME = {38},
      YEAR = {1986},
     PAGES = {103--129}
}

@article {tomDieckPetrie82,
    AUTHOR = {tom Dieck, T. and Petrie, Ted},
     TITLE = {Homotopy representations of finite groups},
   JOURNAL = {Publ. Math. I.H.E.S.},
  FJOURNAL = {Institut des Hautes \'{E}tudes Scientifiques. Publications
              Math\'{e}matiques},
    NUMBER = {56},
      YEAR = {1982},
     PAGES = {129--169}
}

@article {gottliebPoincare,
    AUTHOR = {Gottlieb, D.H.},
     TITLE = {Poincar\'{e} duality and fibrations},
   JOURNAL = {Proc. A.M.S.},
    NUMBER = {76},
      YEAR = {1979},
     PAGES = {148--150}
}

@article{LinskensNardinPol2022global,
      title={Global homotopy theory via partially lax limits}, 
      author={Sil Linskens and Denis Nardin and Luca Pol},
      journal={\href{https://arxiv.org/abs/2206.01556}{arXiv:2206.01556}},
      year={2022},
}

@article {Quinn_Normal_Spaces,
    AUTHOR = {Quinn, Frank},
     TITLE = {Surgery on {P}oincar\'{e} and normal spaces},
   JOURNAL = {Bull. Amer. Math. Soc.},
    VOLUME = {78},
      YEAR = {1972},
     PAGES = {262--267}
}

@article{Martini2022Yoneda,
      title={Yoneda's lemma for internal higher categories}, 
      author={Louis Martini},
      year={2022},
          journal={\href{https://arxiv.org/abs/2103.17141}{\underline{arXiv:2103.17141}}},
}

@article{Martini2022Cocartesian,
      title={Cocartesian fibrations and straightening internal to an $\infty$-topos}, 
      author={Louis Martini},
      year={2022},
          journal={\href{https://arxiv.org/abs/2204.00295}{\underline{arXiv:2204.00295}}},
}

@article{MartiniWolf2024,
      title={Colimits and cocompletions in internal higher category theory}, 
      author={Louis Martini and Sebastian Wolf},
      year={2024},
    journal={\href{https://arxiv.org/abs/2111.14495}{\underline{arXiv:2111.14495}}},
}

@article{MartiniWolf2022Presentable,
      title={Presentable categories internal to an $\infty$-topos}, 
      author={Louis Martini and Sebastian Wolf},
      year={2022},
    journal={\href{https://arxiv.org/abs/2209.05103}{\underline{arXiv:2209.05103}}},
}

@article{Nardin2017Thesis,
      title={Stability and distributivity over orbital $\infty$-categories}, 
      author={Denis Nardin},
      year={2017},
      note={PhD Thesis}
}

@article{greenleesMayMU,
  title={{Localization and Completion Theorems for MU--module Spectra}},
  author={Greenlees, J.P.C. and May, J.P.},
  volume={146},
   journal={Ann. of Math. (2)},
   year={1997},
   pages={509--544}
}

@article{HHR,
  title={{On the Non-Existence of Elements of Kervaire Invariant One}},
  author={Hill, M. and Hopkins, M. and Ravenel, D.},
  JOURNAL = {Ann. of Math. (2)},
  FJOURNAL = {Annals of Mathematics. Second Series},
    VOLUME = {184},
      YEAR = {2016},
    NUMBER = {1},
     PAGES = {1--262}
}

@article {Robalo2014,
    AUTHOR = {Robalo, Marco},
     TITLE = {{$K$}-theory and the bridge from motives to noncommutative
              motives},
   JOURNAL = {Adv. Math.},
    VOLUME = {269},
      YEAR = {2015},
     PAGES = {399--550}
}

@article {WallPD,
    AUTHOR = {Wall, C. T. C.},
     TITLE = {Poincar\'{e} complexes. {I}},
   JOURNAL = {Ann. of Math. (2)},
    VOLUME = {86},
      YEAR = {1967},
     PAGES = {213--245}
}

@article {AssadiBarlowKnop92,
    AUTHOR = {Assadi, Amir H. and Barlow, Rebecca and Knop, Friedrich},
     TITLE = {Abelian group actions on algebraic varieties with one fixed
              point},
   JOURNAL = {Math. Z.},
    VOLUME = {210},
      YEAR = {1992},
    NUMBER = {1},
     PAGES = {129--136}
}

@article{AtiyahBottLefschetzII,
 author = {M. F. Atiyah and R. Bott},
 journal = {Ann. Math. (2)},
 number = {3},
 pages = {451--491},
 title = {A Lefschetz Fixed Point Formula for Elliptic Complexes: II. Applications},
 volume = {88},
 year = {1968}
}

@book {ConnerFloyd64,
    AUTHOR = {Conner, P. E. and Floyd, E. E.},
     TITLE = {Differentiable periodic maps},
    SERIES = {Ergebnisse der Mathematik und ihrer Grenzgebiete, (N.F.)},
    VOLUME = {Band 33},
 PUBLISHER = {Springer-Verlag, Berlin-G\"{o}ttingen-Heidelberg; Academic
              Press, Inc., Publishers, New York},
      YEAR = {1964},
     PAGES = {vii+148},
}

@article {Browder_Poincare,
    AUTHOR = {Browder, W.},
     TITLE = {Poincar\'{e} spaces, their normal fibrations and surgery},
   JOURNAL = {Invent. Math.},
    VOLUME = {17},
      YEAR = {1972},
     PAGES = {191--202}
}

@article{browder_fixed_points,
    AUTHOR = {Browder, William},
     TITLE = {Pulling back fixed points},
   JOURNAL = {Invent. Math.},
    VOLUME = {87},
      YEAR = {1987},
    NUMBER = {2},
     PAGES = {331--342}
}

@article{bredon_poincare_duality,
    AUTHOR = {Bredon, Glen E.},
     TITLE = {Fixed point sets of actions on {P}oincar\'{e} duality spaces},
   JOURNAL = {Topology},
    VOLUME = {12},
      YEAR = {1973},
     PAGES = {159--175}
}

@article{hanke_puppe_pulling_back_fixed_points,
 author = { Hanke, B. and  Puppe, V.},
 journal = {Trans. Amer. Math. Soc.},
 number = {2},
 pages = {687--702},
 title = {Equivariant Gysin Maps and Pulling Back Fixed Points},
 volume = {358},
 year = {2006}
}

@article{patchkoria_derived_mackey,
    author = {Patchkoria, I. and Sanders, B. and Wimmer, C.},
    title = {The spectrum of derived Mackey functors},
    journal = {Trans. Amer. Math. Soc.},
    year = 2022,
    volume ={375},
    pages = {4057--4105}
}

@article{greenlees-shipley,
author = {Greenlees, J.P.C. and  Shipley, B.},
title = {{Fixed point adjunctions for equivariant module spectra}},
volume = {14},
journal = {Alg.  Geom. Topol.},
number = {3},
publisher = {MSP},
pages = {1779 -- 1799},
year = {2014}
}

@article{ConnerFloydMapsOfOddPeriod,
 author = {P. E. Conner and E. E. Floyd},
 journal = {Ann. Math.},
 number = {2},
 pages = {132--156},
 title = {Maps of Odd Period},
 volume = {84},
 year = {1966}
}

@article{lueck2022brown,
      title={On Brown's Problem, Poincar\'e models for the classifying spaces for proper actions and Nielsen Realization}, 
      author={Wolfgang L\"uck},
      year={2022},
      journal={\href{https://arxiv.org/abs/2201.10807}{arXiv:2201.10807}},
}

@article{kaifPresentable,
  title={{Parametrised Presentability over Orbital Categories}},
  author={Hilman, K.},
  year = {2022},
  journal = {\href{https://arxiv.org/abs/2202.02594}{\underline{arXiv:2202.02594}}},
}

@article{nardinShah,
  title={{Parametrized and equivariant higher algebra}},
  author={Nardin, D. and Shah, J.},
  year ={2022},
  journal = {\href{https://arxiv.org/abs/2203.00072}{\underline{arXiv:2203.00072}}}

}

@article{kaifNoncommMotives,
  title={Parametrised noncommutative motives and equivariant cubical descent in algebraic K-theory}, 
  author={Hilman, K.},
  journal = {\href{https://arxiv.org/abs/2202.02591}{\underline{arXiv:2202.02591}}},
  year = {2024}
}

@article{lueckDegree,
    author = {Wolfgang L\"uck},
    title = {The equivariant degree},
    journal = {Lecture Notes in Mathematics 1361},
    publisher = {Springer},
    volume = {Konferenzbericht der Göttinger Topologie Tagung 1987},
    year = {1988}
}

@book {Schwede_Global,
    AUTHOR = {Schwede, Stefan},
     TITLE = {Global homotopy theory},
    SERIES = {New Mathematical Monographs},
    VOLUME = {34},
 PUBLISHER = {Cambridge University Press, Cambridge},
      YEAR = {2018},
     PAGES = {xviii+828}
}

@article{quigleyShahParametrisedTate,
  title={{On the parametrised Tate construction}},
  author={Quigley, J.D. and Shah, J.},
  journal = {\href{https://arxiv.org/abs/2110.07707}{arXiv:2110.07707}},
  year = {2022}
}

@book{bredonTrans,
    AUTHOR = {Bredon, Glen E.},
     TITLE = {Introduction to compact transformation groups},
    SERIES = {Pure and Applied Mathematics},
    VOLUME = {Vol. 46},
 PUBLISHER = {Academic Press, New York-London},
      YEAR = {1972},
     PAGES = {xiii+459},
}

@article{FarrellLafont,
    title = {Finite automorphisms of negatively curved Poincaré duality groups},
    author = {Farrell, F.T. and Lafont, J.-F.},
    year = {2004},
    journal = {Geom. Funct. Anal. (GAFA)},
    year = {2004},
    number = {14},
    pages = {283--294}
}

@article {GepnerGrothNikolaus2015,
    AUTHOR = {Gepner, David and Groth, Moritz and Nikolaus, Thomas},
     TITLE = {Universality of multiplicative infinite loop space machines},
   JOURNAL = {Algebr. Geom. Topol.},
  FJOURNAL = {Algebraic \& Geometric Topology},
    VOLUME = {15},
      YEAR = {2015},
    NUMBER = {6},
     PAGES = {3107--3153},
}

@article{lurieHA,
    title={{Higher Algebra}},
    author={Lurie, J.},
    journal = {\href{https://www.math.ias.edu/~lurie/papers/HA.pdf}{available on author's webpage}},
    YEAR = {2017}
}

@article{Lurie_LTheorySurgery,
    author = {Lurie, Jacob},
    title = {Lecture notes on algebraic L-theory and surgery},
    journal = {\href{https://www.math.ias.edu/~lurie/287x.html}{Lecture notes}},
    year = {2011},
}

@article {markusPoincareSpaces,
    AUTHOR = {Land, Markus},
     TITLE = {Reducibility of low-dimensional {P}oincar\'{e} duality spaces},
   JOURNAL = {M\"{u}nster J. Math.},
  FJOURNAL = {M\"{u}nster Journal of Mathematics},
    VOLUME = {15},
      YEAR = {2022},
    NUMBER = {1},
     PAGES = {47--81}
}

@book {lurieHTT,
    AUTHOR = {Lurie, Jacob},
     TITLE = {Higher topos theory},
    SERIES = {Annals of Mathematics Studies},
    VOLUME = {170},
 PUBLISHER = {Princeton University Press, Princeton, NJ},
      YEAR = {2009},
     PAGES = {xviii+925}
}

@article{nikolausScholze,
  title={{On Topological Cyclic Homology}},
  author={Nikolaus, T. and Scholze, P.},
  journal =  {Acta Math.},
    VOLUME = {221},
      YEAR = {2018},
    NUMBER = {2},
     PAGES = {203--409}
}

@article {MNN17,
    AUTHOR = {Mathew, Akhil and Naumann, Niko and Noel, Justin},
     TITLE = {Nilpotence and descent in equivariant stable homotopy theory},
   JOURNAL = {Adv. Math.},
  FJOURNAL = {Advances in Mathematics},
    VOLUME = {305},
      YEAR = {2017},
     PAGES = {994--1084},
}

@article{MNNDerivedInduction,
author = {A. Mathew and N. Naumann and J. Noel},
title = {{Derived induction and restriction theory}},
volume = {23},
journal = {Geom. Topol.},
number = {2},
publisher = {MSP},
pages = {541 -- 636},
year = {2019}
}
\end{document}